\documentclass[11pt]{article}
\usepackage[margin=1in]{geometry}
\usepackage[export]{adjustbox}
\usepackage{amsmath}
\usepackage{amsthm}
\usepackage{wrapfig}

\usepackage{titletoc}
\usepackage{thm-restate}
\usepackage[font=small]{caption}%make figure captions a bit smaller
\usepackage[hidelinks]{hyperref}
\usepackage[capitalise]{cleveref}
\usepackage{amsfonts}
\usepackage{amssymb}
\usepackage[textsize=tiny]{todonotes}
\usepackage[inline]{enumitem}
\usepackage{mathtools}
\usepackage{cases}
\usepackage[only,llbracket,rrbracket]{stmaryrd}
\newcommand{\smallblacksquare}{{\vcenter{\hbox{\rule{.6ex}{.6ex}}}}}
\usepackage{array}
\usepackage{multirow}
\usepackage{multicol}
\newtheorem{theorem}{Theorem}[section]
\newtheorem{corollary}[theorem]{Corollary}
\newtheorem{lemma}[theorem]{Lemma}
\newtheorem{claim}[theorem]{Claim}
\newtheorem{proposition}[theorem]{Proposition}
\newtheorem{conjecture}[theorem]{Conjecture}
\newtheorem{property}[theorem]{Property}
\newtheorem{fact}[theorem]{Fact}
\newtheorem{observation}[theorem]{Observation}
\theoremstyle{definition}
\newtheorem{definition}[theorem]{Definition}
\newtheorem{example}[theorem]{Example}
\DeclareMathOperator{\fw}{fw}
\usepackage{enumitem}
\setlist[itemize]{topsep=4pt,itemsep=3pt,parsep=0pt} 
\setlist[enumerate]{topsep=4pt,itemsep=3pt,parsep=0pt} 

\Crefname{figure}{Figure}{Figures}

\renewcommand{\cal}{\mathcal}

\newcommand{\eps}{\varepsilon}

\renewcommand{\int}{\mathrm{int}}
\newcommand{\const}{\mathrm{const}}

\newcommand{\rk}{\mathrm{rk}}

% Niko: use G^+ instead of \wh G for colored graphs
\newcommand{\wh}[1]{#1^+}

\newcommand{\NN}[0]{\mathrm{\mathbb{N}}}
\newcommand{\N}[0]{\mathrm{\mathbb{N}}}

\newcommand{\otp}{\mathrm{otp}}

\newcommand{\atp}{\mathrm{atp}}
\newcommand{\ex}{\mathrm{ex}}
\renewcommand{\subset}{\subseteq}

\newcommand{\col}{\textnormal{col}}

\newcommand{\dist}{\textnormal{dist}}

\renewcommand{\phi}{\varphi}

\newcommand{\Start}{\mathrm{start}}
\newcommand{\End}{\mathrm{end}}

\newcommand{\LL}{\mathcal{L}}
\newcommand{\lc}{\mathrm{lc}}

\newcommand{\flip}{\mathrm{flip}}
\newcommand{\prep}{\mathrm{prep}}

\newcommand{\DD}{\mathcal{D}}
\newcommand{\BB}{\mathcal{B}}
\newcommand{\TT}{\mathcal{T}}
\newcommand{\II}{\mathcal{I}}

\newcommand{\Bs}{\BB^\star}
\newcommand{\Bc}{\BB^\bullet}
\newcommand{\Bo}{\BB^{\scriptscriptstyle \blacktriangleleft}}
\newcommand{\Bg}{\BB^{\smallblacksquare}}
\newcommand{\twins}{\mathrm{twins}}
\newcommand{\counttwins}{\mathrm{\#twins}}

\newcommand{\dropbounds}[1]{\llbracket#1\rrbracket}

\newcommand{\effective}[1]{\smallskip\noindent#1}

\newcommand{\set}[1]{\{#1\}}
\newcommand{\setof}[2]{\{#1 : #2\}}

\newcommand{\gc}[1]{\mathcal{#1}}

\newcommand{\CC}{\mathcal{C}}
\newcommand{\EE}{\mathcal{E}}

\newcommand{\PP}{\mathcal{P}}

\renewcommand{\le}{\leqslant}
\renewcommand{\leq}{\le}
\renewcommand{\ge}{\geqslant}
\renewcommand{\geq}{\ge}
\newcommand{\KK}{\mathcal{K}}

\newenvironment{claimproof}[1][\proofname]{%
  \begin{proof}[#1]%
}{%
  \end{proof}%
}

\newcommand{\from}{\colon}

\begin{document}

\newcommand{\funding}{
N.M. was supported by the German Research Foundation (DFG) with grant agreement No. 444419611.
S.T. was supported by the project BOBR that is funded from the European Research Council (ERC) under the European Union’s Horizon 2020 research and innovation programme with grant agreement No. 948057.
The BOBR project also supported a stay of N.M. at the University of Warsaw, where part of this work was done.
}

\title{Flip-Breakability: A Combinatorial Dichotomy \\ for Monadically Dependent Graph Classes\thanks{\funding}}
\date{}
\author{
  Jan Dreier \\
  \small{TU Wien} \\
  \small{\texttt{dreier@ac.tuwien.ac.at}}
  \and
  Nikolas M\"ahlmann \\
  \small{University of Bremen} \\
  \small{\texttt{maehlmann@uni-bremen.de}}
  \and
  Szymon Toru\'nczyk \\
  \small{University of Warsaw} \\
  \small{\texttt{szymtor@mimuw.edu.pl}}
   %Anonymous Authors
}
\maketitle

\titlecontents{section}
[5pt]                                               % left margin
{}%
{\contentsmargin{0pt}                               % numbered entry format
    \thecontentslabel\enspace%
    \large}
{\contentsmargin{0pt}\large}                        % unnumbered entry format
{\titlerule*[.5pc]{.}\contentspage}                 % filler-page format (e.g dots)
[]                                                  % below code (e.g vertical space)

%let page numbers only start at introduction
\pagenumbering{gobble}
\begin{abstract}
A conjecture in algorithmic model theory predicts 
that the model-checking problem for first-order logic is fixed-parameter tractable on a hereditary graph class if and only if the class is \emph{monadically dependent}.
Originating in model theory, this notion is defined in terms of logic, and encompasses 
nowhere dense classes, monadically stable classes, and classes of  bounded twin-width.
Working towards this conjecture, we provide the first two combinatorial characterizations
of monadically dependent graph classes. This yields the following dichotomy.

On the structure side, we characterize 
monadic dependence by a Ramsey-theoretic property called \emph{flip-breakability}.
This notion generalizes the notions of uniform quasi-wideness, flip-flatness, and bounded grid rank, which characterize nowhere denseness,  monadic stability, and bounded twin-width, 
respectively, and played a key role in their respective model checking algorithms.
Natural restrictions of flip-breakability additionally characterize bounded treewidth and cliquewidth and bounded treedepth and shrubdepth.

On the non-structure side, we characterize 
monadic dependence by explicitly listing few families of \emph{forbidden induced subgraphs}.
This result is analogous to the characterization of nowhere denseness via forbidden subdivided cliques, 
and allows us to resolve one half of the motivating conjecture: First-order model checking is AW[$*$]-hard on every hereditary graph class that is monadically independent.
The result moreover implies that hereditary graph classes which are small, have almost bounded twin-width, or have almost bounded flip-width, are monadically dependent.

Lastly, we lift our result to also obtain a combinatorial dichotomy in the more general setting of monadically dependent classes of binary structures.
\end{abstract}

\paragraph*{Acknowledgements.}
N.M.\ thanks Micha{\l} Pilipczuk for hosting a stay at the University of Warsaw, where part of this work was done.

\clearpage

% ignore subsections in the table of contents
\setcounter{tocdepth}{1} 
\tableofcontents

\clearpage
\setcounter{page}{1}
\pagenumbering{arabic}

\newpage
\section{Introduction}

Algorithmic model theory 
studies the interplay between 
the computational complexity 
of computational problems defined 
using logic,  and the
structural properties of the considered 
instances.
 In this context, \emph{algorithmic meta-theorems}~\cite{Kreutzer_2011,grohe2008logic} are results that establish the tractability of {entire families of  computational problems}, which are defined in terms of logic, while imposing structural restrictions on the input instances.
The archetypical example is Courcelle's theorem, which states that every problem that can be expressed in monadic second order logic (MSO), can be solved in linear time, whenever the considered input graphs have bounded treewidth~\cite{courcelle1990monadic,ArnborgLS91}.
This implies that the model checking problem for MSO is \emph{fixed-parameter tractable} (fpt) on every class $\CC$ of bounded treewidth. That is, 
there is an algorithm 
which determines whether a given input  graph $G\in \CC$ satisfies a given  MSO formula $\phi$ in time
$f(|\phi|)\cdot |V(G)|^c$ for some 
function $f\from \N\to \N$ and some constant $c$ (in this case $c=1$).
More generally, the model checking problem for MSO is fpt on all classes of bounded cliquewidth~\cite{courcelle2000linear}.

In this paper, we focus on 
the model-checking problem for 
\emph{first-order logic} (FO), which allows to relax the structure of the input graphs greatly, at the cost of restricting the logic.
Graph classes for which this problem is fpt 
include classes of bounded degree \cite{seese1996linear}, the class of planar graphs \cite{frick2001deciding}, classes which exclude a minor \cite{flum2001fixed}, classes of bounded expansion \cite{DvorakKT13-journal}, and more generally, nowhere dense classes \cite{grohe2017deciding}, all of which are \emph{sparse} graph classes 
(specifically, every $n$-vertex graph in such a class has $O(n^{1+\varepsilon})$ edges, for every fixed $\varepsilon>0$).
The problem is moreover fpt on
proper hereditary classes of permutation graphs, some classes of bounded twin-width \cite{twwI}, structurally nowhere dense classes \cite{ssmc}, monadically stable classes \cite{wip}, and others~\cite{bonnet2022model}.
The central question in the area, first phrased in~\cite[Sec.~8.2]{grohe2008logic}, is the following.
\begin{quote}
What are the structural properties that exactly characterize the hereditary\footnote{A graph class is \emph{hereditary} if it is closed under vertex removal.} graph classes with fpt first-order model checking?
\end{quote}

\emph{Sparsity theory}~\cite{nevsetvril2012sparsity}, initiated by Ne\v{s}et\v{r}il and Ossona de Mendez,
has provided solid structural foundations, and a very general notion of structural  tameness for sparse graph classes.
More recently, \emph{twin-width theory}~\cite{twwI} provides
another notion of  graph classes 
which are structurally tame, and not necessarily sparse.
While \emph{twin-width theory} and \emph{sparsity theory} bear striking similarities, they are fundamentally incomparable in scope.
The community eagerly anticipates a theory that unifies both frameworks, and answers the central question.
 \emph{Stability theory} -- 
an area in model theory developed initially by Shelah --
provides very general notions of \emph{logical} tameness of classes of graphs (or other structures), called (monadic) \emph{stability} and \emph{dependence}\footnote{In the literature, \emph{dependence} is also referred to as \emph{NIP}, which stands for \emph{negation of the independence property}.}, which subsume the notions studied in sparsity theory and in twin-width theory,
but are not easily amenable to combinatorial or algorithmic treatment.
Let us briefly review \emph{sparsity theory}, \emph{twin-width theory}, 
and \emph{stability theory}, which we build upon. 
\Cref{fig:classes} shows the relationships between the properties of graph classes discussed in this paper.

\begin{figure}[h]
    \vspace{-0.3cm}
      \begin{center}
       \includegraphics[width=0.9\textwidth]{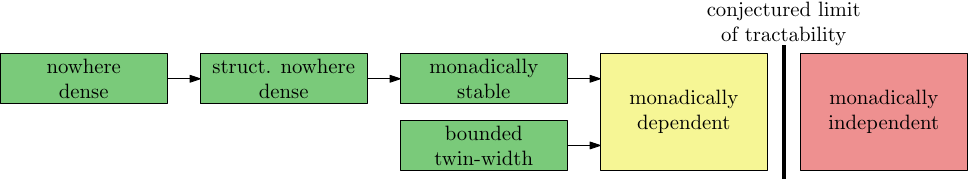}
      \end{center}
      \vspace{-0.6cm}
      \caption{Hierarchy of selected properties of graph classes.
          Classes in green boxes are known do admit fpt model checking algorithms
          (with additional assumptions required in the case of bounded twin-width).
          \emph{Monadically dependent} classes are known to generalize all these notions. 
          It is conjectured that a hereditary class is monadically dependent if and only if its model checking problem is tractable.
      }\label{fig:classes}
      \vspace{-0.3cm}
\end{figure}

\paragraph{Sparsity.}
The central notion of sparsity theory~\cite{nevsetvril2012sparsity} is that of a \emph{nowhere dense} graph class.
This is a very general notion of structural tameness of sparse graph classes, and encompasses all classes with bounded degree, bounded treewidth, the class of planar graphs, and classes that exclude some graph as a minor.
After a long sequence of prior work~\cite{seese1996linear,flum2001fixed,frick2001deciding,dawar2007locally,DvorakKT13-journal},
the celebrated result of Grohe, Kreutzer and Siebertz~\cite{grohe2017deciding}
established that the model checking problem is fpt on every nowhere dense graph class.
For classes that are \emph{monotone} (closed under removal of vertices and edges) this is optimal~\cite{DvorakKT13-journal, Kreutzer_2011},
yielding the following milestone result.
\begin{quote}
A monotone graph class admits fpt model checking if and only if it is \emph{nowhere dense}
(assuming \(\textnormal{FPT} \neq \textnormal{AW}[*]\))~\cite{grohe2017deciding}.
\end{quote}

Here, \(\textnormal{FPT} \neq \textnormal{AW}[*]\) is a standard complexity assumption in parameterized complexity, equivalent to the statement that 
model checking is not fpt on the class of all graphs.
The major shortcoming of this result 
is that it only captures monotone classes, 
and thus \emph{sparse} graph classes, while there are dense graph classes which are not monotone, and have fpt model checking.
A trivial example is the class of cliques.
More generally, all classes of bounded cliquewidth have fpt model checking.
For many years, not many examples of 
 graph classes for which the model checking problem is fpt -- beyond nowhere dense classes and classes of bounded cliquewidth -- were known.

A recent line of research
extends sparsity theory beyond the sparse setting using \emph{transductions}.
We say a graph class \(\mathcal{C}\) \emph{transduces} a graph class \(\mathcal{D}\) if
there exists a first-order formula \(\phi(x,y)\) such that
every graph in \(\mathcal{D}\) can be obtained by the following four steps:
(1) taking a graph \(G \in \mathcal{C}\), (2) coloring the vertices of \(G\),
(3) replacing the edge set of \(G\) with \(\{ uv \mid u, v \in V(G), u \neq v, G \models \phi(u,v) \lor \phi(v,u)\}\), 
and (4) taking an induced subgraph.
Step 3 can construct, for example, the \emph{edge-complement} of \(G\) with \(\phi(x,y)=\neg E(x,y)\)
or the \emph{square} of \(G\) with \(\phi(x,y)= \exists z~E(x,z) \land E(z,y)\).
Note that the formula \(\phi\) has access to the colors of \(G\) from step~2.
Therefore, for example, the class of {all graphs} is transduced by the class of \(1\)-subdivided cliques 
(by coloring the subdivision vertices that should be turned into edges),
but is \emph{not} transduced by the class of {cliques}.
It was recently shown that model checking is fpt on \emph{transductions of nowhere dense classes}~\cite{ssmc},
extending the work of Grohe, Kreutzer, and Siebertz~\cite{grohe2017deciding} beyond the sparse setting. This result has been  generalized very recently in \cite{wip}, as described below.

\begin{wrapfigure}{r}{0.17\textwidth}
    \vspace{-0.5cm}
   \begin{center}
       \includegraphics{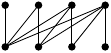}
   \end{center}
\vspace{-0.5cm}
\caption{A half-graph of order 4.}\label{fig:ladder}
\vspace{-0.1cm}
\end{wrapfigure}

\paragraph{Monadic Stability.}
Stability, and its variants, are logical tameness conditions arising in  Shelah's classification program~\cite{shelah1986monadic}.
A graph class is \emph{monadically}\footnote{\emph{Monadically} refers to the coloring step of a transduction, which is an expansion of the graph with unary/monadic predicates. In classical model theory monadically stable (monadically dependent) classes are equivalently defined as those classes that remain stable (dependent) under unary expansions.}
\emph{stable}
if it does not transduce the class of all \emph{half-graphs} (see \Cref{fig:ladder}).
Monadically stable classes include all structurally nowhere dense classes~\cite{adler2014interpreting,stable_graphs}.
In~\cite{wip} it is shown that FO model checking is fpt on all monadically stable classes,
and moreover obtains matching hardness bounds via a new characterization of such classes in terms of forbidden induced subgraphs.
A class is \emph{orderless}
if it avoids some half-graph as a semi-induced%
\footnote{Semi-induced half-graphs look similar to \Cref{fig:ladder}, 
but the connections within the top row and within the bottom row may be arbitrary.}
subgraph.
This establishes monadically stable classes as the \emph{limit of tractability} among orderless classes.
\begin{quote}
An orderless, hereditary graph class admits fpt model checking if and only if it is \emph{monadically stable}
(assuming \(\textnormal{FPT} \neq \textnormal{AW}[*]\))~\cite{wip}.
\end{quote}
\paragraph{Twin-Width.}
Twin-width~\cite{twwI} is a recently introduced notion which has its roots in enumerative combinatorics and the Stanley-Wilf conjecture/Marcus-Tardos theorem.
Graph classes with bounded twin-width include the class of 
planar graphs, all classes of bounded clique\-width, the class of unit interval graphs, and every proper hereditary class of permutation graphs.
It was shown that model checking is fpt on graph classes of bounded twin-width,
assuming an appropriate decomposition of the graph (in form of a so-called \emph{contraction sequence}) 
is given as part of the input~\cite{twwI}.
On \emph{ordered graphs} (that is, graphs equipped with a total order on the vertex set, which can be accessed by the formulas)
a recent breakthrough result has lifted this additional proviso, and also characterized classes of bounded twin-width as the \emph{limit of tractability}~\cite{twwIV}.
\begin{quote}
A hereditary class of ordered graphs admits fpt model checking if and only if it has bounded \emph{twin-width}
(assuming \(\textnormal{FPT} \neq \textnormal{AW}[*]\))~\cite{twwIV}.
\end{quote}

\paragraph{Monadic Dependence.}
As discussed above, an exact characterization 
of classes with fpt model-checking 
has been established in three settings:
for monotone graph classes, for the more general hereditary orderless graph classes, and for hereditary classes of ordered graphs, in terms of the notions of nowhere denseness, monadic stability, and bounded twin-width, respectively.
Those notions turn out to be three facets of a single notion, again originating in stability theory.
A graph class $\CC$ is \emph{monadically independent} 
if it  transduces the class of all graphs~\cite{baldwin1985second}. 
Otherwise $\CC$ is \emph{monadically dependent}.
In all three settings where we have a complete characterization of fpt model checking,
monadic dependence precisely captures the limit of tractability.
\begin{itemize}
    \item On monotone classes, nowhere denseness is equivalent to monadic dependence~\cite{stable_graphs, adler2014interpreting}.
    \item On orderless classes, monadic stability is equivalent to monadic dependence~\cite{nevsetvril2021rankwidth}.
    \item On classes of ordered graphs, bounded twin-width is equivalent to monadic dependence~\cite{twwIV}.
\end{itemize}
This suggests that the known tractability limits~\cite{grohe2017deciding,twwIV,ssmc,wip} are fragments of a larger picture, 
where monadically dependent classes unify sparsity theory, twin-width theory, and stability theory into a single theory of tractability.

\begin{conjecture}[e.g., \cite{warwick-problems,twwIV,ssmc}]\label{conj:warwick}%
 Let \(\mathcal{C}\) be a hereditary class of graphs. Then the model checking
problem for first-order logic is fpt on \(\mathcal{C}\) if and only if \(\mathcal{C}\) is monadically dependent.\footnote{The conjecture stated in~\cite{warwick-problems} mentions dependence instead of monadic dependence, but for hereditary classes, those notions are equivalent \cite{braunfeld2022existential}. 
Furthermore, the conjecture in \cite{warwick-problems} states only one implication, but the other one was also posed at the same workshop as an open problem.}
\end{conjecture}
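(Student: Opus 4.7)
The plan is to attack the two implications of the conjecture separately, since they require very different techniques. For the easier direction, I would show that if \(\CC\) is monadically independent (that is, transduces the class of all graphs), then FO model checking on \(\CC\) is AW[*]-hard. The natural route is to first establish a combinatorial characterization of monadic dependence by an explicit, finite list of families of \emph{forbidden induced subgraphs}, in the spirit of the characterization of nowhere dense classes by forbidden subdivided cliques. Each obstruction family should be rich enough to support an efficient FO interpretation (transduction) of arbitrary graphs: given any graph \(H\) and a large enough member of the obstruction family, one encodes \(H\) by selecting a vertex coloring of the obstruction. Reducing from FO model checking on the class of all graphs, which is AW[*]-hard, then yields hardness on every hereditary class containing such a family. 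The delicate part is ensuring that a genuinely \emph{bounded} list of obstructions already covers all monadically independent classes; I expect this to follow from a Ramsey-style compactness argument applied to a failure of flip-breakability at some radius \(r\).

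For the harder direction, I would aim for a general model-checking algorithm on any monadically dependent hereditary class \(\CC\), driven by flip-breakability as the structural engine. Flip-breakability asserts that for every radius \(r\) and every large vertex set \(A\), one can find large \(B,B'\subseteq A\) and a bounded ``flip'' of \(\CC\)--a Boolean combination of edge-flips between a bounded number of colour classes--such that in the flipped graph the \(r\)-balls around \(B\) and \(B'\) only interact through a bounded exceptional set. This should play the role that uniform quasi-wideness plays for nowhere dense classes in the Grohe--Kreutzer--Siebertz algorithm, and that flip-flatness plays in \cite{wip}. The strategy is then: reduce FO model checking on \(\CC\) to a bounded-quantifier-rank local problem via a Gaifman-style \emph{flip-locality} theorem adapted to transductions; recursively apply flip-breakability to separate the graph into pieces of smaller FO complexity; handle the bounded residual ``unbreakable'' cores by a game-theoretic argument (a flip-splitter game) showing that their types stabilize after bounded depth.

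The main obstacle, by a wide margin, is the tractability direction. Unlike in the sparse setting, flips permanently perturb all distances in the graph, so a naive iteration of flip-and-separate destroys the very locality one needs to exploit; there is no a~priori guarantee that the sequence of flips composes into a single manageable transduction. A further hurdle is algorithmic: flip-breakability as proved combinatorially may only guarantee the \emph{existence} of separating flips, whereas a model-checking algorithm needs to compute them efficiently, which in turn seems to require a dense analogue of low-treedepth colourings together with an effective Gaifman locality theorem for monadically dependent classes. Bridging this gap appears to demand genuinely new ideas that go beyond both sparsity theory and twin-width theory, and it is exactly the piece of the conjecture that the present paper leaves open while providing, via flip-breakability and the forbidden-subgraph characterization, the combinatorial scaffolding on which any such algorithm would have to rest.
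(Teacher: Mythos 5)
This statement is a \emph{conjecture}, and the paper does not prove it in full. What the paper actually establishes is the hardness half: Theorem~\ref{thm:hardness-main} shows that FO model checking is AW[$*$]-hard on every hereditary, monadically independent class, via Theorem~\ref{thm:hardness-interprets} (every such class efficiently \emph{interprets} all graphs) and the forbidden-pattern characterization Theorem~\ref{thm:mainforbiddenpatterns}. The tractability half remains open, and you correctly identify it as such; the flip-breakability and forbidden-subgraph machinery are offered as scaffolding, not as a proof of the algorithmic direction. On that count your assessment matches the paper's own stance.

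For the hardness half, your route (forbidden induced subgraphs, Ramsey to extract them from a failure of flip-breakability, reduce from FO model checking on all graphs) is the same as the paper's in broad strokes, but there is one concrete gap in the step where you ``encode $H$ by selecting a vertex coloring of the obstruction.'' A coloring-based encoding only shows that the class of \emph{colored} graphs over $\CC$ efficiently interprets all graphs, which would yield AW[$*$]-hardness of model checking on \emph{colored} graphs from $\CC$ --- a strictly weaker conclusion than the conjecture demands, since the colors are additional structure not present in the input. The paper flags exactly this obstacle at the start of Section~\ref{sec:hardness}: one must build an \emph{interpretation}, i.e.\ a transduction without the nondeterministic coloring step, whose domain is definable by a single formula $\delta(x)$. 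The mechanism the paper uses to close this gap is nontrivial: it pads the pattern with controlled collections of twins of distinct sizes (the $\prep(\cdot)$ construction and Lemma~\ref{lem:hardness-undo-flips-sco}), so that color classes become first-order definable from twin-class cardinalities, which in turn lets a fixed formula ``undo'' the unknown layer-flip. Without this twin-marking idea, or some replacement for it, the reduction does not go through for uncolored graphs, which is precisely the regime the conjecture speaks about.
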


Originally stated in 2016~\cite{warwick-problems}, the above conjecture is now the central open problem in the area.
Both directions have been open.

\subsection*{Contribution}

To approach Conjecture~\ref{conj:warwick}, 
a combinatorial characterization of monadic dependence is sought.
Based 
on the development in the sparse, orderless, and ordered cases discussed above,
it appears that what is needed are
 \emph{combinatorial dichotomy results}, stating that all monadically dependent graph classes
exhibit structure which can be used to design efficient algorithms,
while all other graph classes exhibit a sufficient lack of structure,
which can be used to prove hardness results.
The three known restricted classifications of classes with fpt model checking 
were enabled in part (see discussion below) thanks to such combinatorial dichotomies
for nowhere dense, monadically stable, and bounded twin-width graph classes.
However, for monadically dependent classes not a single combinatorial characterization has been known.
Previous characterizations of monadic dependence (for example, via indiscernible sequences or existentially defined canonical configurations~\cite{braunfeld2021characterizations,braunfeld2022existential}) all have a logical, rather than combinatorial, aspect. This limits their algorithmic usefulness. 

In this paper, we provide the first two \emph{purely combinatorial} characterizations of monadically dependent classes,
which together constitute a combinatorial dichotomy theorem for these classes.
\begin{itemize}
    \item On the one hand, we show that monadically dependent graph classes have a Ramsey-like property called
        \emph{flip-breakability}, which guarantees that any large set of vertices \(W\) contains
        two still-large subsets \(A,B\) that in a certain sense are strongly separated.
    \item 
        On the other hand, we show that graph classes that are monadically independent
        contain certain highly regular patterns as induced subgraphs, which are essentially two highly interconnected sets.
\end{itemize}
As argued below, \emph{flip-breakability}
might be a crucial step towards establishing fixed-parameter tractability of the model checking problem for monadically dependent classes,
and settling the tractability side of \Cref{conj:warwick}.
Moreover, we use the patterns of the second characterization to prove
the hardness side of \Cref{conj:warwick}: we show that first-order model checking is  AW[$*$]-hard on every hereditary graph class that is monadically independent (\cref{thm:hardness-main}).
We now present our two combinatorial characterizations of monadically dependent classes in more detail.

\subsubsection*{Flip-Breakability}
\emph{Flips} are a central emerging mechanism in the study of well-behaved graph classes~(e.g., \cite{shrubdepth-journal,bonnet2022model,flipper-game,flipwidth}).
A \emph{\(k\)-flip} of a graph \(G\) is a graph $H$ obtained from $G$ by partitioning the vertex set into $k$ parts and,
for every pair $X,Y$ of parts, either leaving the adjacency between $X$ and $Y$ intact, or complementing it (see also \Cref{sec:prelims}).

We use {flips} to measure the interaction between vertex sets \(A,B\) at a fixed distance \(r\) in a graph $G$.
Intuitively, if for some small value $k$ there is a $k$-flip $H$ of $G$  
in which the distance \(\dist_{H}(A,B)\) is larger than $r$,
then this witnesses that the sets $A$ and $B$ are ``well-separated'' at distance $r$.

\begin{figure}[h]
    \begin{center}
    \includegraphics[scale=1]{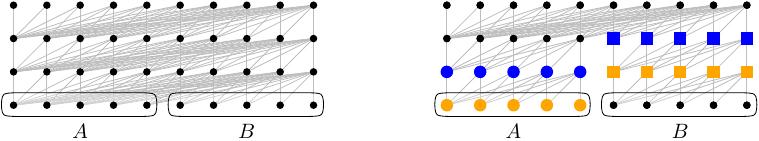}%
    \end{center}%
    \vspace{-0.5cm}
    \caption{
        The sets \(A\) and \(B\) are far away after a \(5\)-flip shown on the right side.
        We flip between the blue squares and the blue circles,
        and between the orange squares and the orange circles.
    }\label{fig:stack}
\end{figure}
Consider for example the graph $G$ on the left side of \Cref{fig:stack}, consisting of stacked half-graphs.
Applying a flip between the blue squares and the blue circles (as depicted to the right of Fig.~\ref{fig:stack}),
and between the orange squares and the orange circles,
we obtain a $5$-flip of $G$ in which the sets \(A\) and \(B\) have distance at least $6$.

Roughly, our first main result expresses that for graphs from a monadically dependent class,
in every sufficiently large vertex set
\(W\), we can find two still-large subsets \(A\) and \(B\)
whose distance in some \(O(1)\)-flip 
is larger than a given constant $r$.
This is formally expressed as follows.

\begin{restatable}[Flip-Breakability]{definition}{defFlipBreakableOne}\label{def:flip-breakability1}
A class of graphs $\CC$ is $\emph{flip-breakable}$
if for every radius \(r \in \N\) there exists a function
\(N_r : \N \to \N\) and a constant \(k_r \in \N\) such that for all \(m \in \N\), \(G \in \CC\) and \(W \subseteq V(G)\) with
\(|W| \ge N_r(m)\) there exist subsets $A,B\subset W$ with \(|A|,|B| \ge m\)
and a \(k_r\)-flip \(H\) of \(G\) such that:
\[
    \dist_{H}(A,B) > r.
\]
\end{restatable}

\begin{restatable}{theorem}{thmFlipbreakable}\label{thm:mainflipbreakable}
    A class of graphs is monadically dependent if and only if it is flip-breakable.
\end{restatable}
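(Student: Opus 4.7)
The plan is to prove the two directions of the biconditional separately. The backward direction (flip-breakable implies monadically dependent) is the easier one, provable by contrapositive; the forward direction (monadically dependent implies flip-breakable) is where the main technical difficulty lies, and I would approach it by leveraging the companion combinatorial characterization of monadic dependence in terms of forbidden induced subgraphs (the paper's second main combinatorial result) as a crucial stepping stone.

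For the backward direction, suppose $\CC$ is monadically independent, so $\CC$ transduces the class of all graphs via some formula $\varphi(\bar x, \bar y)$. I would first verify that flip-breakability is preserved under transductions with a controlled blow-up of the constants $k_r$ and function $N_r$: a $k$-flip in the transduced graph can be pulled back to a bounded flip in the original graph equipped with the transduction colors, because a fixed-rank formula has only finitely many behavior-types on a bounded partition. It then suffices to show that the class of all graphs is not flip-breakable. A probabilistic construction works: for fixed $r=1$, $k$, and $m$, take a random bipartite graph on two parts of size $N \gg m\cdot 2^{k^2}$; a counting argument shows that the number of $k$-flips of such a graph is at most $2^{k^2}\cdot k^N$, and with positive probability every pair of size-$m$ subsets is joined by at least one edge under every such flip, so no $k$-flip can push their distance beyond $1$.

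For the forward direction, I would proceed in two phases. First, establish the forbidden-pattern characterization: every monadically dependent class avoids certain highly regular bipartite patterns as induced subgraphs. Second, use the absence of these patterns to deduce flip-breakability. Fix $r$, a target size $m$, and a sufficiently large set $W \subseteq V(G)$ with $G \in \CC$. The strategy is: (i) by a Ramsey-type extraction, find inside $W$ a long sequence $(w_1,\ldots,w_t)$ that is indiscernible with respect to the first-order types of bounded quantifier rank relevant for $r$-ball structure; (ii) invoke the forbidden-pattern characterization to constrain the ``profile'' of the indiscernible sequence, ruling out the configurations that would inherently defeat flip-breakability; (iii) split the sequence into its first and second halves $A$ and $B$ of size $\geq m$; (iv) use the regularity of the sequence to construct a $k_r$-flip $H$ of $G$ that removes all paths of length at most $r$ between $A$ and $B$, where $k_r$ depends only on $r$ and the dependence rank of $\CC$.

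The main obstacle is step (iv): the geometric content of flip-breakability — which demands distance strictly greater than $r$ in the flipped graph, i.e.\ the complete absence of short paths — must be reconciled with the purely combinatorial constraints arising from forbidden patterns and indiscernibility. I expect the argument to proceed by induction on $r$, handling at each step paths extended by one additional edge and introducing a bounded number of new colors that refine the final flip partition. The delicate bookkeeping issue is ensuring that the cumulative number of colors remains bounded by a function of $r$ alone, which requires that in a monadically dependent class the types of neighbors along any indiscernible sequence can be uniformly regularized by a bounded flip. I would formalize this via a local flip lemma supplying the inductive step, which together with the forbidden-pattern characterization closes out the argument.
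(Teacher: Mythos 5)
Your high-level plan — prove the two directions separately, and for the forward direction route through the forbidden-pattern characterization — is broadly aligned with the paper's scheme. But there are two concrete gaps.

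\textbf{Backward direction.} Your ``pullback'' claim is oriented the wrong way and understates the difficulty. If $\CC$ transduces $\DD$, the needed statement is that flip-breakability of $\CC$ implies flip-breakability of $\DD$: given $H\in T(G)$ and a flip of $G$ that separates $A,B$ at distance $>r'$, you must \emph{push forward} to a bounded flip of $H$ in which $A,B$ are still far apart. The issue is not merely counting behaviour-types; it is that the transduction formula can collapse large distances, so you must control how fast distances shrink. This requires Gaifman locality (the paper's Flip Transfer Lemma), not a pure type-count. More seriously, your probabilistic construction showing that the class of all graphs is not flip-breakable does not go through. To refute the definition, you fix the adversary's $N_1$ and $k_1$ and must exhibit $m$ and a graph $G$ with $|W|\ge N_1(m)$ that has no breakability witness. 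Taking $W=V(G)$ of size $N:=N_1(m)$, your union bound over $k_1$-flips (at most $k_1^{N}\cdot 2^{k_1^2}$ of them) and pairs $(A,B)$ requires $m^2 \gtrsim N\log k_1$. But $N_1$ can be chosen to grow faster than any polynomial — for instance $N_1(m)=2^m$ — and then the union bound term $k_1^{N_1(m)}\cdot 2^{-m^2}$ blows up. The paper avoids this entirely by applying Gaifman locality to a shattered set directly: if $\varphi$ monadically shatters $W$, take $r$ to be the Gaifman radius of $\varphi$, flip-break $W$ into $A,B$ at distance $>r$, and then pigeonhole on the Gaifman colours of $A$ and of $B$ to find $a_1,a_2\in A$ and $b_1,b_2\in B$ with the same local colour; shattering provides a witness vertex distinguishing $a_1$ from $a_2$ and $b_1$ from $b_2$, which by locality must be in $N_r(A)\cap N_r(B)$, a contradiction. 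No counting of flips is needed.

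\textbf{Forward direction.} Your step (iv) — construct a $k_r$-flip that removes \emph{all} paths of length $\le r$ between $A$ and $B$ — is exactly the technical crux, and your ``local flip lemma proved by induction on $r$'' is a placeholder for what the paper spends Part I (insulators, sample sets, insulator-growing lemmas) building. The missing idea is the \emph{insulator}: a grid-like structure of cells arranged in rows and columns, where adjacency is controlled by a bounded colouring and the column order, with explicit guarantees (properties (O.1)--(O.5)) that let one coarsen the columns and read off the separating flip. The induction on $r$ you anticipate is realized as a row-extension argument: at each step, either one grows the insulator's height by one or one extracts a prepattern; prepattern-freeness (which follows from monadic dependence via the forbidden-pattern side) guarantees the growth always succeeds. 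Your sketch does not supply the grid structure, the precise insulation invariants, or the ``grow or find a prepattern'' dichotomy, and without them the bookkeeping you flag as delicate — keeping the total number of colours bounded by a function of $r$ alone — is not just delicate but open. Finally, note a mild circularity concern: the forbidden-pattern characterization (Theorem~\ref{thm:mainforbiddenpatterns}) is proved in the paper within the same circle of implications as flip-breakability; the easy direction (monadically dependent $\Rightarrow$ patterns excluded) is indeed available as a stepping stone, but the hard direction (patterns excluded $\Rightarrow$ flip-breakable) is precisely what remains to be proved, so invoking the characterization as a black box does not shortcut the work in step (iv).
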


Our proof is algorithmic: For a fixed monadically dependent class $\CC$ and radius $r$, the subsets $A$ and $B$ and the witnessing flip $H$ can be computed in time $O_{\CC,r}(|V(G)|^2)$ (\cref{thm:alg-flip-breakability}).

As we discuss in~\Cref{sec:technicalOverview},
our results are based on a new notion called the \emph{insulation property} (\Cref{def:insulator,def:insulationProperty}),
which trivially implies flip-breakability, but imposes an even more regular structure on monadically dependent graph classes.

\subsubsection*{Relation of Flip-Breakability to Other Work}
The model checking algorithms for nowhere dense classes~\cite{grohe2017deciding}
and monadically stable classes~\cite{ssmc,wip} 
are respectively based on winning strategies for \emph{pursuit-evasion games} called the \emph{splitter game}~\cite{grohe2017deciding} and the \emph{flipper game}~\cite{flipper-game}.
These winning strategies were obtained from
a characterization of nowhere dense classes in terms of \emph{uniform quasi-wideness}~\cite{dawar2010homomorphism,nevsetvril2011nowhere}
and a characterization of monadically stable classes in terms of \emph{flip-flatness}~\cite{dreier2022indiscernibles}.
Both are Ramsey-like properties
which are very similar to flip-breakability.
The definitions of these three notions match the same template:
\begin{quote}
A class of graphs $\CC$ is \dots\  
if for every radius \(r \in \N\) there exists a function
\(N_r : \N \to \N\) and a constant \(k_r \in \N\) such that for all \(m \in \N\), \(G \in \CC\) and \(W \subseteq V(G)\) with
\(|W| \ge N_r(m)\) there exist \dots
\end{quote}
In all three cases we continue the definition by stating the existence of a large number of vertices that are in a certain sense ``well-separated''.
However, the sentence is completed in an increasingly more general way for the three notions.
The crucial differences are highlighted in bold.

\begin{itemize}
    \item uniform quasi-wide~\cite{dawar2010homomorphism,nevsetvril2011nowhere}: \quad \dots
        a set \(\boldsymbol S \subseteq V(G)\) of at most \(\boldsymbol{k_r}\) \textbf{vertices} and
        \(A \subseteq W\setminus S\) with \(|A| \ge m\),
        such that in \(G \setminus S\), all vertices from \(A\) have \textbf{pairwise distance greater than~\(\boldsymbol{r}\)}.

    \item flip-flat~\cite{dreier2022indiscernibles}: \quad \dots
        a \textbf{\(\boldsymbol{k_r}\)-flip $\boldsymbol H$} of $G$ and
    \(A \subseteq W\) with \(|A| \ge m\),
    such that in \(H\), all vertices from \(A\) have \textbf{pairwise distance greater than \(\boldsymbol r\)}.

\item 
    flip-breakable: \quad \dots 
    a \textbf{\(\boldsymbol {k_r}\)-flip} $\boldsymbol H$ of $G$ and
    \(A,B \subseteq W\) with \(|A|,|B| \ge m\),
    such that in \(H\), \textbf{all vertices in \(\boldsymbol A\) have distance greater than \(\boldsymbol r\) from all vertices in \(\boldsymbol B\)}.
\end{itemize}

In the context of ordered graphs, the model checking algorithm for classes of bounded twin-width
crucially relied on a characterization of these classes in terms of \emph{bounded grid rank}~\cite[Sec. 3.4]{twwIV}.
Rephrasing this characterization in the language of this paper, it reads as follows.

\begin{quote}\label{def:grid-rank}
A class of ordered graphs $\CC$ has \emph{bounded grid rank}
if there exists \(k \in \N\) such that for all \(G \in \mathcal{C}\)
and ordered sequences of vertices
\(a_1 < \dots < a_k \in V(G)\), \(b_1 < \dots < b_k \in V(G)\),
there exists 
a $k$-flip $H$ of $G$
and indices \(i,j \in [k-1]\) defining ranges 
\(A = \{ a \in V(G) \mid a_i \le a \le a_{i+1}\}\),
\(B = \{ b \in V(G) \mid b_j \le b \le b_{j+1}\}\),
such that there are no edges incident to both \(A\) and \(B\) in $H$.
\end{quote}

Flip-breakability combines the distance-\(r\)-based aspects of uniform quasi-wideness and flip-flatness with
the ``no edges incident to both \(A\) and \(B\)'' criterion of grid rank.
Using our \emph{insulation property}, one can easily reprove the characterization of nowhere dense classes
in terms of uniform quasi-wideness~\cite{nevsetvril2011nowhere}, of monadically stable classes in terms of flip-flatness~\cite{dreier2022indiscernibles},
and of classes of ordered graphs with bounded twin-width in terms of grid rank~\cite{twwIV}.

Given that flip-breakability characterizes monadically dependent graph classes
and naturally generalizes the previous notions,
we believe that it will also play a crucial role in a future model checking algorithm for monadically dependent classes:
Mirroring the situation for nowhere dense and monadically stable classes,
flip-breakability might lead to a characterization of monadically dependent classes
in terms of a \emph{pursuit-evasion game}.
Just like the previous two algorithms~\cite{grohe2017deciding,ssmc,wip}, 
winning strategies for this game might then be useful for model checking,
and settling the backward direction of \Cref{conj:warwick}.
(Note however that in each of these cases, 
the games where only one of the main ingredients
of the model checking algorithm).

\paragraph{Binary Structures.}
Our results apply in the more general setting of binary structures, rather than graphs,
that is, of structures equipped with one or more binary relation.
In particular, we prove that monadically dependent classes of binary structures can be equivalently characterized in terms of flip-breakability, defined suitably for binary structures.
As an application, we derive a key result of \cite{twwIV}: that monadically dependent classes of ordered graphs have bounded grid rank (see definition above).

\begin{theorem}[\cite{twwIV}]\label{thm:grid-rank-intro}
    Let $\CC$ be a monadically dependent class of ordered graphs.
    Then $\CC$ has bounded grid rank.
\end{theorem}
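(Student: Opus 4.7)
The plan is to derive Theorem~\ref{thm:grid-rank-intro} as an application of the binary-structure analogue of Theorem~\ref{thm:mainflipbreakable}, combined with the insulation property developed in the paper. Viewing an ordered graph as a binary structure $(V,E,<)$, a monadically dependent class $\CC$ of ordered graphs is, under the extended framework, flip-breakable (and in fact satisfies the stronger insulation property). Here a ``$k$-flip'' only touches the edge relation $E$, but the $k$-partition underlying the flip is allowed to exploit the order $<$, which is crucial.

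First, I would instantiate flip-breakability at radius $r=1$: this yields a constant $k_1$ and a function $N_1$ such that for every $G \in \CC$ and $W \subseteq V(G)$ with $|W| \ge N_1(m)$, there exist $A, B \subseteq W$ of size at least $m$ and a $k_1$-flip $H$ with no $H$-edge between $A$ and $B$. Given sequences $a_1 < \dots < a_k$ and $b_1 < \dots < b_k$ with $k$ chosen large enough in terms of $k_1$ and $N_1$, I would apply flip-breakability to $W = \{a_1,\dots,a_k\} \cup \{b_1,\dots,b_k\}$. A pigeonhole on $W = \{a_i\} \sqcup \{b_j\}$ (iterating flip-breakability in the degenerate cases where $A,B$ land on the same side) allows us to assume $A \subseteq \{a_i\}$ and $B \subseteq \{b_j\}$; picking any $a_i, a_{i+1} \in A$ and $b_j, b_{j+1} \in B$ that are consecutive in the original sequence supplies the candidate indices.

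The central obstacle is that flip-breakability disconnects only the \emph{selected vertices}, whereas grid rank demands that the full order-intervals $[a_i,a_{i+1}]$ and $[b_j,b_{j+1}]$---which may contain arbitrarily many other vertices of $G$---have no $H$-edges between them. To close this gap I would invoke the insulation property: in the ordered setting its insulators can be chosen to be \emph{order-convex}. Intuitively, since the flip partition is computed inside the binary structure $(V,E,<)$, it may be refined along the order without substantially increasing the number of parts, yielding a flip whose parts are unions of order-intervals. Consequently the disconnection of $\{a_i,a_{i+1}\}$ from $\{b_j,b_{j+1}\}$ extends automatically to the enclosing order-intervals. This is precisely the point at which the ``binary structure'' extension of Theorem~\ref{thm:mainflipbreakable} is genuinely used: flip-breakability of the underlying unordered graph would be insufficient, as there would be no reason for the insulators to respect the order.

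The remaining work is bookkeeping: absorbing the flip-breakability constant $k_1$ together with the $O(1)$ parts introduced by the order-refinement into the single grid-rank parameter $k$, and verifying that $k$ depends only on $\CC$ (via $k_1$ and $N_1$) and not on $G$ or on the sequences. Once order-convexity of the insulators has been established in the ordered-graph setting, this last step is routine, and yields the bounded grid-rank conclusion.
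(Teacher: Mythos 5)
Your instinct to go through the binary-structure flip-breakability (and its two-set variant, which your pigeonhole remark essentially rediscovers) is the right starting point, but the proposal has a genuine gap at exactly the place you flag as ``the central obstacle,'' and the proposed fix does not work.

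The claimed fix is that, because the flip partition lives inside the binary structure $(V,E,<)$, it ``may be refined along the order without substantially increasing the number of parts, yielding a flip whose parts are unions of order-intervals.'' This is false: a $k_1$-part partition of $V$ can have parts that alternate arbitrarily often along $<$, and making such a partition order-convex can blow it up to $\Omega(|V|)$ parts. Nothing in flip-breakability or the insulation property forces the flip partition to be (or to be refinable into) order-convex with bounded part count, so the disconnection of the two selected vertex sets does \emph{not} automatically extend to the enclosing order-intervals. Related to this, the choice $r=1$ in your instantiation of flip-breakability is also too small; even once the right mechanism is in place, one needs more slack.

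What the paper actually does at this point (Lemma~\ref{lem:order-flips}) is quite different and worth internalizing. Rather than trying to reshape the flip partition, it observes a structural property of any $k$-flip $L'$ of the bare linear order $(V,<)$: there is a small set $S$ with $|S|\le k$ such that any two vertices of $V\setminus S$ with no $S$-vertex between them in the order are at Gaifman distance at most $2$ in $L'$. Thus the $<$-relation, once flipped, automatically makes each $S$-free interval ``short-diameter.'' The proof then invokes the two-set variant of flip-breakability at radius $r=5$ (not $1$) on the set of interval-minima: this disconnects two representatives $\min(A)\in L'$, $\min(B)\in R'$ by Gaifman distance $>5$, and for $\ell$ large enough one can pick $A\in\cal L$, $B\in\cal R$ avoiding $S$. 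Then any putative edge of $G'$ between $A$ and $B$, chained through the diameter-$2$ bound inside $A$ and inside $B$, would give a path of length $\le 5$ between the two minima, a contradiction. Hence $A$ and $B$ span no edges in $G'$, and since $G'$ is a $k$-flip of $G$, the adjacency pattern of $G$ between $A$ and $B$ has only $\const(k)$ distinct rows/columns. So the missing idea is that the flipped \emph{order} relation, not a reshaped partition, is what supplies the connectivity needed inside each interval.
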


\paragraph{Variants of Flip-Breakability.}

The previously highlighted similarities between flip-breakability, flip-flatness, and uniform quasi-wideness 
suggest the following natural variations of flip-breakability.
We allow to modify the graph using either
\emph{(1)~flips or vertex deletions} 
and demand that the resulting subset is either 
\emph{(2)~flat or broken},    
that is, either pairwise separated or separated into two large sets.
We further additionally parameterize the type of separation to be either 
\emph{(3)~\mbox{distance-$r$} or \mbox{distance-$\infty$}}.   
While \emph{distance-\(r\)} separation corresponds to the usual kind given in \Cref{def:flip-breakability1},
\emph{distance-\(\infty\)} separation demands sets to be in different connected components of the graph.
This is formalized by the following definition.

\begin{definition}
    A class of graphs $\CC$ is \emph{distance-$\infty$ flip-breakable},
    if there exists a function
    \(N : \N \to \N\) and a constant \(k \in \N\) such that for all \(m \in \N\), \(G \in \CC\) and \(W \subseteq V(G)\) with
    \(|W| \ge N(m)\) there exist subsets $A,B\subset W$ with \(|A|,|B| \ge m\) 
    and a $k$-flip $H$ of $G$ such that in $H$, no two vertices $a\in A$ and $b\in B$ are in the same connected component.
\end{definition}

Using this terminology, uniform quasi-wideness, for example, becomes \emph{distance-\(r\) deletion-flatness}.
For formal definitions of these variants, we refer to \Cref{sec:variants}.
As summarized in the following \cref{tab:variants},
each of the eight possible combinations of (1), (2) and (3) characterizes a well-studied property of graph classes.

\begin{table}[htbp]
    \centering

    \begin{tabular}{|c|c|>{\centering}p{5.7cm}|>{\centering}p{6cm}|}
        \cline{3-4} \cline{4-4} 
        \multicolumn{1}{c}{} &  & flatness & breakability\tabularnewline
        \hline 
        \multirow{2}{*}{dist-$r$} & flip- & monadic stability \hfill \cite{dreier2022indiscernibles} & monadic dependence \hfill (Thm.\ \ref{thm:main-circle})\tabularnewline
        \cline{2-4} \cline{3-4} \cline{4-4} 
         & deletion- & nowhere denseness \hfill \cite{dawar2010homomorphism,nevsetvril2011nowhere} & nowhere denseness \hfill (Thm.\ \ref{thm:nd}) \tabularnewline
        \hline 
        \multirow{2}{*}{dist-$\infty$} & flip- & bd. shrubdepth \hfill (Thm.\ \ref{thm:shrubdepth}) & bd. cliquewidth \hfill ~ (Thm.\ \ref{thm:cw})\tabularnewline
        \cline{2-4} \cline{3-4} \cline{4-4} 
         & deletion- & bd. treedepth \hfill (Thm.\ \ref{thm:treedepth}) & bd. treewidth \hfill (Thm.\ \ref{thm:tw})\tabularnewline
        \hline 
        \end{tabular}
    \caption{Variants of flip-breakability.}
    \label{tab:variants}
\end{table}
Notably, the right column of \cref{tab:variants} corresponds to conjectured tractability limits of the model checking problem, where the distance-$r$ and distance-$\infty$ variants correspond to, respectively, first-order and monadic second-order logic, and the flip and deletion variants correspond to hereditary and monotone classes.
Furthermore, it is interesting to see that the seemingly more general deletion-breakability collapses to deletion-flatness, as both properties characterize nowhere denseness.

\subsubsection*{Forbidden Induced Subgraphs}
Imposing bounds on the size of certain patterns is a common and powerful mechanism for defining well-behaved graph classes.
For example, by definition, a class $\CC$ is \emph{nowhere dense} if and only if for every \(r \in \N\),
$\CC$ avoids some \(r\)-subdivided clique as a subgraph.
In similar spirit, it is shown \cite{wip} that an orderless class $\CC$ is \emph{monadically stable} if and only if for every \(r \in \N\),
the class $\CC$ avoids all $r$-flips of the \(r\)-subdivided clique, and its line graph, as an induced subgraph.
A similar characterization for monadically dependent classes has so far been elusive.
In this paper, we show that a class $\CC$ is \emph{monadically dependent} if and only if for every \(r \in \N\),
the class $\CC$ avoids certain variations of the \(r\)-subdivision of some complete bipartite graph, as an induced subgraph.
Let us start by defining these patterns.

For \(r \ge 1\), the \emph{star \(r\)-crossing} of order \(n\) is the \(r\)-subdivision of \(K_{n,n}\).
More precisely, it consists of \emph{roots} \(a_1,\dots,a_n\) and \(b_1,\dots,b_n\)
together with \(r\)-vertex paths \(\{ \pi_{i,j} \mid i,j \in [n] \}\) that are pairwise vertex-disjoint (see \Cref{fig:pattern}).
We denote the two endpoints of a path \(\pi_{i,j}\) by \(\Start(\pi_{i,j})\) and \(\End(\pi_{i,j})\).
We require that roots appear on no path, that each root \(a_i\) is adjacent to \(\{ \Start(\pi_{i,j}) \mid j \in [n] \}\),
and that each root \(b_j\) is adjacent to \(\{ \End(\pi_{i,j}) \mid i \in [n] \}\).
The \emph{clique \(r\)-crossing} of order $n$ is the graph obtained from the star \(r\)-crossing of order $n$
by turning the neighborhood of each root into a clique.
Moreover, we define the \emph{half-graph \(r\)-crossing} of order $n$ similarly to the star \(r\)-crossing of order $n$,
where each root \(a_i\) is instead adjacent to \(\{ \Start(\pi_{i',j}) \mid i',j \in [n], i \le i' \}\),
and each root \(b_j\) is instead adjacent to \(\{ \End(\pi_{i,j'}) \mid i,j' \in [n], j \le j' \}\).
Each of the three $r$-crossings contains no edges other than the ones described.
At last, the \emph{comparability grid} of order \(n\) consists of vertices
\(\{ a_{i,j} \mid i,j \in [n] \}\) and edges between vertices \(a_{i,j}\) and \(a_{i',j'}\) if and only if either $i=i'$, or $j=j'$,
or $i<i'\Leftrightarrow j<j'$.

\begin{figure}[h]
    \begin{center}
    \includegraphics[width=\textwidth]{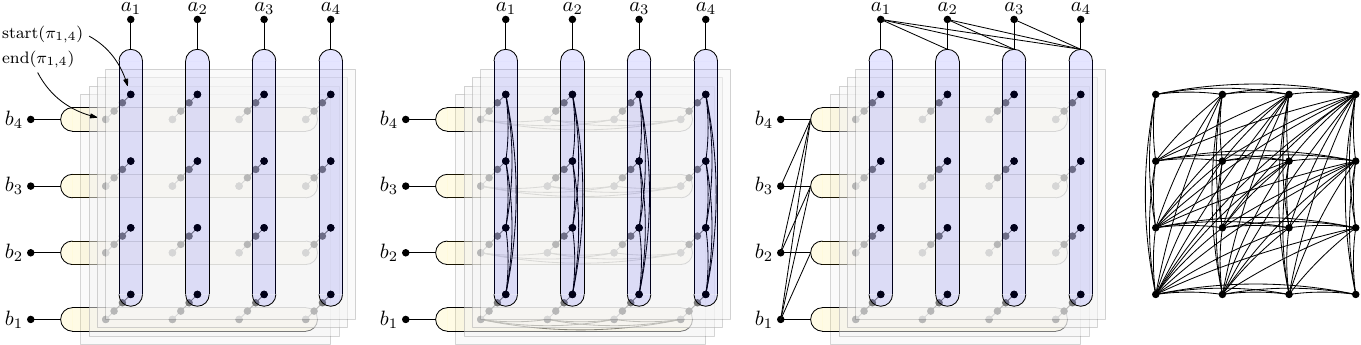}%
    \end{center}%
    \vspace{-0.5cm}
    \caption{
    (\emph{i}) star 4-crossing of order 4. 
    (\emph{ii}) clique 4-crossing of order 4. 
    (\emph{iii}) half-graph 4-crossing of order 4. 
    (\emph{iv}) comparability grid of order 4.
    In (\emph{i}), (\emph{ii}), (\emph{iii}), the roots are adjacent to all vertices in their respective colorful strip.
    }\label{fig:pattern}
\end{figure}
It is easy to see that these patterns are logically complicated:
For every fixed~\(r\), the four graph classes containing these four types of patterns are monadically independent.

We also need to consider certain flips of the above patterns.
To this end, we partition the vertices of star, clique, and half-graph \(r\)-crossings into \emph{layers}:
The 0th layer consists of the vertices \(\{a_1,\dots,a_n\}\).
The \(l\)th layer, for \(l \in [r]\), consists of the \(l\)th vertices of the paths \(\{ \pi_{i,j} \mid i,j \in [n] \} \)
(that is, the 1st and \(r\)th layer, respectively, are
\(\{ \Start(\pi_{i,j}) \mid i,j \in [n] \}\) and
\(\{ \End(\pi_{i,j}) \mid i,j \in [n] \}\)).
Finally, the \((r+1)\)th layer consists of the vertices \(\{b_1,\dots,b_n\}\). 
A \emph{flipped} star/clique/half-graph \(r\)-crossing
is a graph obtained from a 
star/clique/half-graph \(r\)-crossing
by performing a flip where the parts of the specifying partition are the layers of the $r$-crossing.
Note that while there is only one star/clique/half-graph  $r$-crossing of order $n$, there are multiple flipped star/clique/half-graph  $r$-crossings of order $n$. Their number is however bounded by $2^{(r+2)^2}$: an upper bound for the number of possible flips specified by a single partition of size~$(r+2)$.

\begin{restatable}{theorem}{thmMainforbiddenpatterns}\label{thm:mainforbiddenpatterns}
    Let \(\mathcal{C}\) be a graph class.
    Then \(\mathcal{C}\) is monadically dependent if and only if for every \(r \geq 1\) there exists \(k \in \N\)
    such that \(\mathcal{C}\) excludes as induced subgraphs
    \begin{itemize}
        \item all flipped star $r$-crossings of order $k$, and
        \item all flipped clique $r$-crossings of order $k$, and
        \item all flipped half-graph $r$-crossings of order $k$, and
        \item the comparability grid of order \(k\).
    \end{itemize}
\end{restatable}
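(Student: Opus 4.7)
The theorem has two directions. The forward direction (monadic dependence implies the excluded-pattern property) is the easier one: I would prove its contrapositive by showing that each of the four pattern families is itself monadically independent, so that containing unbounded copies of any one forces $\mathcal{C}$ to transduce an unbounded class and thus to be monadically independent. For example, from a star $r$-crossing of order $n$, coloring one internal vertex of each path $\pi_{i,j}$ lets us FO-transduce the complete bipartite graph on $\{a_1,\dots,a_n,b_1,\dots,b_n\}$ with any prescribed adjacency: define $a_ib_j$ to be an output edge iff there is a length-$(r+1)$ walk from $a_i$ to $b_j$ through a colored vertex. The class of all bipartite graphs is monadically independent. Analogous transductions handle clique and half-graph $r$-crossings (after first undoing the local clique or half-graph around each root via one extra flip), and the comparability grid directly encodes a linear order. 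Since a fixed flip is itself a quantifier-free transduction, each flipped pattern family also transduces its un-flipped version, finishing this direction.

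The backward direction (excluded patterns implies monadic dependence) is the hard one, and I would attack its contrapositive via Theorem~\ref{thm:mainflipbreakable}. If $\mathcal{C}$ is monadically independent, then by Theorem~\ref{thm:mainflipbreakable} it is not flip-breakable, so there is a fixed radius $r$ such that for every $k$ and every function $N$ one can find $m$, a graph $G \in \mathcal{C}$, and $W \subseteq V(G)$ with $|W| \geq N(m)$ for which every pair of $m$-sized subsets $A,B \subseteq W$ satisfies $\dist_H(A,B) \leq r$ in every $k$-flip $H$. The plan is then to mine this non-breakability, for each target order $k$, into an induced copy of one of the four patterns of order $k$. The extraction proceeds in three phases. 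Phase~A iterates the failure of breakability and applies product Ramsey to produce, inside $W$, long ordered sequences $a_1,\dots,a_n$ and $b_1,\dots,b_n$ together with a single $O(1)$-flip $H$ in which every pair $(a_i,b_j)$ is joined in $G$ by a path $\pi_{i,j}$ of some common length $r' \leq r$. Phase~B applies further Ramsey to homogenize the atomic types of all paths and their interactions with one another and with the roots, and passes to a sub-family with pairwise internally-disjoint paths. Phase~C performs a case analysis on the resulting homogenized edge pattern: a purely incidence-based interaction yields a flipped star $r'$-crossing; additional cliques among each root's path-endpoints yield a flipped clique $r'$-crossing; a monotone half-graph interaction yields a flipped half-graph $r'$-crossing; and an irreducibly two-dimensional ordered interaction (which can arise when paths collapse to direct edges) yields a comparability grid.

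The main obstacle is Phase~C: proving that the four listed patterns exhaust all possible homogenized configurations. The delicate case is distinguishing the comparability grid from a flipped half-graph crossing, which requires detecting a genuine two-dimensional monotonicity in the edge data that is not expressible as a bounded flip of a subdivided bipartite graph. A secondary challenge is absorbing the auxiliary flips introduced during Phases~A and~B into the final flipped pattern without exceeding the $(r'+2)$-layer partition permitted by the definition of a flipped $r'$-crossing; the insulation property underlying Theorem~\ref{thm:mainflipbreakable} provides exactly the control needed to keep the number of layers bounded.
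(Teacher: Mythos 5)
Your forward direction (monadic dependence $\Rightarrow$ excludes patterns, via showing each pattern family is monadically independent by a path-coloring transduction, and absorbing the flip by a quantifier-free pre-transduction) matches the paper's argument in Proposition~\ref{lem:transduceAll}.

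The backward direction has a genuine gap in Phase~A, and it is exactly where the hard work of the paper lives. You propose to pass from ``$\mathcal{C}$ is not flip-breakable'' (which you correctly extract from the easy half of Theorem~\ref{thm:mainflipbreakable}, i.e.\ Proposition~\ref{prop:fb-implies-mnip}) directly to ``in some $G\in\mathcal{C}$ there are root sequences $a_1,\dots,a_n$, $b_1,\dots,b_n$ and a single $O(1)$-flip $H$ in which every pair $(a_i,b_j)$ is joined by a path $\pi_{i,j}$ of common length.'' But non-flip-breakability asserts only that, for every $k$-flip $H$ and every pair of large subsets $A,B\subseteq W$, $\dist_H(A,B)\le r$. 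A single short $A$--$B$ path per flip, or a central ``hub'' that is close to everything, already witnesses this; nothing about it produces $n^2$ pairwise internally disjoint paths, and the witnessing flip changes with every choice of $A,B$. Product Ramsey cannot turn a bound on a distance \emph{for one pair per flip} into a coherent biclique-subdivision-like structure across all pairs in a \emph{single} flip. The paper never attempts this extraction. Instead, it goes through the insulation property: it builds insulators row by row, using Ramsey-regularized atomic types, and proves (Proposition~\ref{prop:prepatternImpliesInsulation}) that if the growth ever fails, the obstruction can be packaged as a \emph{prepattern}. Prepatterns are then converted into transformers (Proposition~\ref{prop:transformer}), minimalized, turned into converters, and only then into crossings or comparability grids (Proposition~\ref{prop:patterns-main}). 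Flip-breakability is a \emph{consequence} of the insulation property (Proposition~\ref{prop:ge-implies-fb}), not the engine of extraction; the chain is deliberately one-directional here, forming a cycle of implications (Theorem~\ref{thm:main-circle}) rather than a biconditional you can invert pointwise.

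Concretely, what your sketch is missing is the intermediary structure between logic and combinatorics: the notion of an insulator (a grid-like partition with carefully controlled vertical, horizontal, and external adjacencies), the two insulator-growing lemmas (Lemmas~\ref{thm:orderless-grid-growing} and~\ref{lem:order-growing}) that either grow an insulator by one row or exhibit a prepattern, and the long Ramsey-theoretic normalization pipeline from prepatterns to transformers to converters to crossings (Sections~\ref{sec:transformers}--\ref{sec:conv-and-cross}). Your Phase~C (case analysis of the homogenized interaction) roughly mirrors the paper's analysis of generalized grids and capped meshes (Lemmas~\ref{lem:horizontal-vertical-cases},~\ref{lem:generalized-grids},~\ref{lem:to-converter},~\ref{lem:crossings}), including the delicate comparability-grid-versus-half-graph-crossing distinction, but Phase~C only makes sense once the regular grid of paths has been produced, and Phase~A as stated does not produce it.
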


This characterization by forbidden induced subgraphs opens the door for various algorithmic, logical, and combinatorial hardness results.
On the algorithmic side, we prove the hardness part of \cref{conj:warwick}.

\begin{restatable}{theorem}{thmHardnessMain}\label{thm:hardness-main}
    The first-order model checking problem is $\mathrm{AW}[*]$-hard on every hereditary, monadically independent graph class.
\end{restatable}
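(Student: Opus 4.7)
The plan is to use \Cref{thm:mainforbiddenpatterns} to extract explicit hard patterns from $\CC$, and then convert each pattern family into an effective encoding of arbitrary graphs, yielding an fpt reduction from first-order model checking on the class of all graphs -- known to be $\mathrm{AW}[*]$-hard -- to first-order model checking on $\CC$. Because $\CC$ is monadically independent and hereditary, \Cref{thm:mainforbiddenpatterns} provides a fixed $r \ge 1$ and arbitrarily large orders $k$ for which $\CC$ contains as an induced subgraph at least one of: a flipped star, clique, or half-graph $r$-crossing of order $k$, or the comparability grid of order $k$. Since the number of flip types per crossing is at most $2^{(r+2)^2}$, a pigeonhole argument pins down a single pattern type that appears in $\CC$ as an induced subgraph for arbitrarily large orders.

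The core of the proof is to show that, for each pattern type, this infinite family together with the hereditary closure of $\CC$ efficiently encodes arbitrary graphs under a fixed FO-interpretation. For the three $r$-crossing cases, given an input graph $H$ on $N$ vertices, I would take a pattern $P \in \CC$ of order $n$ polynomially larger than $N$ and construct $G \in \CC$ as the induced subgraph of $P$ obtained by (i) keeping $N$ designated roots on each side together with a few $H$-independent ``calibration'' roots that preserve the FO-definability of roots and layers, and (ii) for each pair $(i,j)$ removing or keeping the vertices of the path $\pi_{i,j}$ according to whether $v_iv_j$ is a non-edge or an edge of $H$. A fixed FO-interpretation then recovers $H$ from $G$ in three steps: first, the bounded flip is undone by an FO formula over the layers, which are themselves FO-definable from local structural invariants of the particular crossing variant (degree, presence of local cliques, or local half-graph ordering); second, roots are identified as vertices belonging to the $a$- and $b$-layers; third, the edge relation of $H$ is captured by the existence of a length-$(r+1)$ path between the corresponding roots through the retained path vertices in the restored pattern. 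Translating the input sentence $\phi$ syntactically through this interpretation yields $\phi'$ with $G \models \phi' \iff H \models \phi$ and $|\phi'|$ depending only on $|\phi|$ and $r$, which gives the required fpt-reduction.

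The comparability grid case demands a separate encoding: here I would exploit the FO-definable linear orders on the rows and columns of the grid and design an induced subgraph of the comparability grid of polynomial order in which each vertex of $H$ is represented by a dedicated cell, while the presence or absence of auxiliary ``witness'' cells in shared rows and columns encodes edges and non-edges of $H$. The principal obstacle of the entire proof lies in this case, because unlike in the subdivided crossing patterns there are no subdivision paths to toggle, so arbitrary adjacency of $H$ must be encoded purely by selecting vertices within a rigid adjacency structure. A secondary difficulty, common to all four cases, is guaranteeing that after the pattern has been pruned to encode $H$ the distinguishing features of roots, layers, rows, and columns remain FO-definable; I would handle this by always including enough $H$-independent calibration structure in $G$ to preserve these invariants.
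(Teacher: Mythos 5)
Your high-level plan matches the paper's: extract a fixed pattern family via \Cref{thm:mainforbiddenpatterns} and pigeonhole, build for each input graph an induced-subgraph encoding inside the pattern, and exhibit a fixed efficient interpretation that recovers the input, thereby giving an fpt reduction (the paper packages this as \Cref{thm:hardness-interprets} plus \Cref{lem:hardness-reductions}, and routes through bipartite graphs without isolated vertices via \Cref{lem:hardness-sc-to-bipartite}, \Cref{lem:hardness-o-to-bipartite}, and \Cref{lem:hardness-bipartite-to-all}).

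The genuine gap is the step you describe as ``the bounded flip is undone by an FO formula over the layers, which are themselves FO-definable from local structural invariants of the particular crossing variant (degree, presence of local cliques, or local half-graph ordering).'' These are invariants of the \emph{unflipped} crossing, so relying on them to recognize layers \emph{before} you have undone the flip is circular: a layer-wise flip can radically alter degrees (e.g.\ flipping $L_1$ with $L_2$ turns degree-$2$ path vertices in a star crossing into vertices of degree $\Theta(n^2)$), destroy local cliques, and swap sparse/dense adjacency between layers. Worse, two layers assigned the same flip-color become indistinguishable by adjacency, while two layers assigned different flip-colors can only be told apart via their adjacency to a third color class that you also have not yet identified. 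The paper's resolution is the twin-class bootstrapping trick in \Cref{lem:hardness-undo-flips-sco}: pad the encoding with, for each color $i\in[k]$, exactly $i+1$ vertices that are \emph{isolated} in the unflipped graph and carry color $i$. Because they are isolated before the flip, their post-flip adjacency is a pure function of color; and because the input graph is arranged to be twin-free, after the flip the only nontrivial twin classes are exactly these padded sets, of pairwise distinct sizes. The formula $\counttwins_{=j}(z)$ then identifies a representative of each color class, and $\col_i(x)$ recovers the color of any vertex $x$ by testing its adjacency to those representatives according to the flip relation $R$, allowing $\flip(E)(x,y)$ to reverse the flip entirely. Your ``$H$-independent calibration structure'' gestures in this direction, but without the isolated-vertex/twin-class-size mechanism you have no way to break the chicken-and-egg cycle, and the interpretation fails. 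You are also vague on the comparability-grid case; the paper handles it (\Cref{lem:hardness-o-to-bipartite}) by showing the grid contains, as an induced subgraph, a half-graph $1$-crossing with extra within-layer edges, so that the same auxiliary-vertex and degree-based formulas used for half-graph crossings apply with essentially no change.
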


The result builds on the following logical hardness result, which is of independent interest.

\begin{restatable}{theorem}{hardnessInterprets}\label{thm:hardness-interprets}
    Every hereditary, monadically independent graph class efficiently interprets the class of all graphs.
\end{restatable}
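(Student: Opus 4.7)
The plan is to derive Theorem~\ref{thm:hardness-interprets} from the forbidden-induced-subgraph characterization of monadic dependence given by Theorem~\ref{thm:mainforbiddenpatterns}. Its contrapositive yields a fixed radius $r \in \N$ such that for every $k$ at least one of the four families -- flipped star, clique, or half-graph $r$-crossings, or the comparability grid, all of order $k$ -- has an instance that occurs as an induced subgraph of some graph in $\CC$; by hereditariness, that instance lies in $\CC$ itself. Since for fixed $r$ the number of flipped variants of each crossing is bounded by $2^{(r+2)^2}$, only finitely many pattern types arise, so a pigeonhole argument singles out one pattern family $P$ that occurs in $\CC$ at arbitrarily large orders.

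Next I would encode an arbitrary graph $H$ on $n$ vertices as an induced subgraph of a $P$-instance. Take a bipartite encoding $B(H)$ with parts $\{u_1,\dots,u_n\}$, $\{v_1,\dots,v_n\}$ and $u_i v_j \in E(B(H))$ iff $i=j$ or $ij \in E(H)$; the distinguished diagonal lets $H$ be recovered from $B(H)$ by a fixed FO formula. Fix a $P$-instance $P_n \in \CC$ of order $n$, and let $G$ be the induced subgraph obtained by keeping all roots of $P_n$ together with exactly those paths $\pi_{i,j}$ for which $u_i v_j \in E(B(H))$ (or, in the comparability grid case, the analogous induced subgrid). Hereditariness yields $G \in \CC$, the construction runs in polynomial time, and $|G|$ is polynomial in $|H|$.

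The core remaining task is to exhibit a fixed FO formula $\phi$, depending only on $P$, that decodes $H$ from $G$. In the unflipped star $r$-crossing, the formula $\phi_E(x,y) \equiv \exists z_1,\dots,z_r\,(xz_1 \wedge z_1 z_2 \wedge \dots \wedge z_r y)$ recovers $E(B(H))$ directly, and precomposing with the bipartite decoding formula recovers $E(H)$. For clique and half-graph crossings, the presence/absence of the path $\pi_{i,j}$ remains first-order expressible with quantifier depth $O(r)$ after accounting for the additional edges within root-neighborhoods. For flipped patterns, the key structural observation is that the $(r+2)$-layer partition of $P_n$ is invariant under flipping -- only adjacencies between pairs of layers get toggled -- and the layers themselves are FO-definable with a constant number of landmark parameters; then each flip type admits an "unflipping" Boolean combination on top of $\phi_E$, and we take the disjunction over the finitely many flip types. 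The comparability-grid case is analogous and does not require unflipping. Once the interpretation is established, Theorem~\ref{thm:hardness-main} follows by composing FO sentences on the class of all graphs with $\phi$, transferring the known $\mathrm{AW}[*]$-hardness to $\CC$.

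The principal obstacle is the uniform treatment of flipped crossings: I need that the $(r+2)$-layer partition survives the path-deletions performed to encode $H$ and remains FO-definable from a bounded tuple of landmarks, despite the fact that flipping scrambles degrees and local neighborhoods arbitrarily. The natural mitigation is to pick landmarks from each layer before deleting paths, and to keep enough of each layer intact (e.g., by also keeping a fixed linear number of "reference paths") so that the surviving vertices in each layer remain definable from the landmarks; ruling out spurious path-like connections created by the flip then reduces to a bounded case analysis.
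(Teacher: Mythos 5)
Your high-level architecture agrees with the paper's: reduce via efficient interpretations, invoke \cref{thm:mainforbiddenpatterns} together with hereditariness and a pigeonhole step to fix a single pattern family at a single flip type, encode an arbitrary (bipartite) graph by selectively keeping paths in a large crossing, and recover it with a fixed first-order formula. You also correctly identify where the difficulty lies: undoing the flip without access to colors.

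However, the mitigation you sketch does not close that gap, and I think this is a genuine hole rather than mere under-specification. In an interpretation in the paper's sense, $\delta(x)$ and $\phi(x,y)$ carry no parameters, so the ``landmarks'' you want to use to define layers must themselves be uniquely first-order-definable in the flipped, colorless graph. After a layer-wise flip, all vertices within a given layer are locally identical -- same flip status applied to the same set of layers -- so degrees, local neighborhoods, and ``reference paths'' are all scrambled in a way that makes your proposed landmarks indistinguishable from arbitrary path vertices; saying this ``reduces to a bounded case analysis'' begs the question of which invariant survives the flip. The paper's actual device is different and essential: it first adds, for the $i$-th color class, exactly $i+1$ isolated vertices (the $\prep$ operation), and then proves (\cref{clm:hardness-twinclasses}) that in $\flip(\prep(G))$ the twin classes are precisely these added clusters plus singletons. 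Because a twin class of size exactly $j+1$ is definable by $\counttwins_{=j}(x)$, this converts cardinalities of twin classes into a colorless, parameter-free coordinate system; the formula $\col_i(x)$ in \cref{clm:hardness-color} then recovers each vertex's layer color, and $\flip(\phi)$ in \cref{clm:hardness-rewrite} undoes the flip syntactically. Nothing in your ``reference paths'' proposal plays this role. A secondary but real omission: the paper must also ensure the encoded pattern lies in $\BB \setminus (\TT \cup \II)$, i.e.\ is twin-free and has no isolated vertices (achieved via the added pendant stars in \cref{lem:hardness-sc-to-bipartite,lem:hardness-o-to-bipartite}); your encoding $B(H)$ with the diagonal does not guarantee this, and twin-freeness of $G$ is exactly what makes the twin-class analysis above go through. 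So the crux -- a colorless, parameter-free mechanism for reconstructing the layer partition after an adversarial flip -- is missing from your proposal.
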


Recall that monadically independent classes were defined as those which transduce the class of all graphs. For hereditary classes, the above result strengthens this definition: \emph{interpretations} are a more restrictive version of transductions, that do not include a nondeterministic coloring step. Moreover, \emph{efficient} interpretations come with a polynomial time algorithm which,
given an arbitrary input graph $G$, outputs a graph in $\CC$
in which $G$ is encoded. This allows to reduce the model checking problem for all graphs to the model checking problem on the class $\CC$, thus proving \cref{thm:hardness-main}.

On the combinatorial side, we show that no hereditary, monadically independent graph class
is \emph{small}, has \emph{almost bounded twin-width}, or has \emph{almost bounded flip-width}.
Let us quickly explain the three notions.

A graph class $\CC$ is \emph{small}
if it contains at most $n!c^n$ distinct labeled $n$-vertex graphs, for some constant $c$.
This notion has been studied in enumerative combinatorics \cite{MCDIARMID2005187,NORINE2006754}.
It is known that all classes of bounded twin-width are small \cite{twwII}. 
The converse implication was conjectured,
and was subsequently refuted \cite{twwVII}.
In the context of ordered graphs, 
it was shown that all small classes of ordered graphs have bounded twin-width~\cite{twwIV}.
We prove the following.

\begin{restatable}{theorem}{thmSmall}\label{thm:small}
    Every hereditary, small graph class is monadically dependent.
\end{restatable}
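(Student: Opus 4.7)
I will prove the contrapositive: a hereditary, monadically independent class $\CC$ is not small. The strategy is to apply \cref{thm:hardness-interprets}, which provides an efficient first-order interpretation of the class of all graphs in $\CC$: a polynomial-time algorithm that, given an $n$-vertex graph $G$, outputs a graph $H_G \in \CC$ of size at most $p(n)$ for some polynomial $p$, such that $G \cong I(H_G)$ under a fixed interpretation $I$. Since $I$ recovers $G$ up to isomorphism from $H_G$, the induced map on isomorphism classes is injective, so $\CC$ contains at least $2^{\binom{n}{2}}/n! = 2^{\Omega(n^2)}$ non-isomorphic graphs of size at most $N = p(n)$.

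By pigeonhole over the at most $N$ possible vertex counts, there exists $m \leq N$ with $\mathrm{lab}_m(\CC) \geq 2^{\Omega(n^2)}/N$. For $\CC$ to be small we would need $\mathrm{lab}_m(\CC) \leq m!c^m = 2^{O(m\log m)}$ for some fixed $c$ and all $m$. If $p$ is sub-quadratic, say $p(n) = O(n^{2-\varepsilon})$, then $\Omega(n^2) = \Omega(N^{1+\delta})$ for some $\delta > 0$, comfortably exceeding the smallness bound for every $c$ and establishing non-smallness. This would finish the proof whenever the polynomial blowup of the interpretation is mild.

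The main obstacle is the case where $p(n) = \Theta(n^k)$ has degree $k \geq 2$, since then $2^{\Omega(n^2)} = 2^{\Omega(N^{2/k})}$ need not exceed the factorial threshold $2^{\Theta(N \log N)}$. To handle this, I would inspect the construction behind \cref{thm:hardness-interprets}: its hardness proof should build $H_G$ by embedding $G$ into one of the forbidden patterns of \cref{thm:mainforbiddenpatterns}, each of order $n$ having $\Theta(n^2)$ vertices and encoding $n$-vertex structure. A careful analysis of these gadgets should either yield a blowup of $p(n) = \widetilde{O}(n)$ via a more economical interpretation or, at least, enough additional structural rigidity (e.g.\ forced labelings and trivial automorphism groups of the output graphs) to push the labeled growth past the factorial bound even for $k = 2$. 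Alternatively, I would bypass the interpretation entirely and argue directly from \cref{thm:mainforbiddenpatterns}: since $\CC$ contains arbitrarily large instances of at least one of the four pattern families, a careful enumeration of their induced subgraphs---exploiting path-selection flexibility in the flipped star, clique, and half-graph $r$-crossings, and arbitrary cell subsets of the $k \times k$ product poset in the comparability grid case---should force super-factorial labeled speed, invoking the classical speed-gap dichotomy for hereditary graph classes (factorial vs.\ super-factorial) to clinch non-smallness. The comparability grid case is the most delicate, as its rich substructure must be leveraged beyond the naive counting of sub-posets to clear the factorial threshold.
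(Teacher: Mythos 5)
Your strategy---proving the contrapositive via an interpretation of all graphs in $\CC$, then counting---mirrors the paper's high-level plan, and you are right to flag the polynomial blowup of the interpretation as the central obstacle. Indeed, the efficient interpretation of \cref{thm:hardness-interprets} maps an $n$-vertex graph to a graph in $\CC$ of size $\Theta(n^2)$, since the underlying forbidden patterns of order $n$ have $\Theta(n^2)$ vertices, and as you compute, $2^{\Omega(n^2)}$ is not large enough to beat the factorial threshold at size $\Theta(n^2)$. The gap in your proposal is that neither of the two repairs you sketch is carried to completion, and the repair the paper actually uses is not quite among them. Rather than improving the interpretation of \emph{all} graphs to one of size $\widetilde{O}(n)$, the paper proves a sharpened encoding statement (\cref{lem:lin-size}): a bipartite graph $G$ with $n$ vertices and $m$ edges can be recovered from a colored graph of $\CC$ of size only $O_{\CC}(n+m)$. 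It then specializes $G$ to run over the class $\DD$ of bipartite \emph{subcubic} graphs, for which $m\le\tfrac{3}{2}n$, so the encoding is \emph{linear} in $n$, and invokes the known fact \cite{twwII} that $\DD$ is itself not small. This restriction to a sparse non-small source class is the essential idea your proposal is missing; without it, the quadratic size of the encodings cannot be overcome. Your appeal to a speed-gap dichotomy for hereditary classes is not the right tool, since that dichotomy does not line up with the $n!\,c^n$ threshold defining smallness, and your fallback of directly counting induced subgraphs of the forbidden patterns would only succeed by rediscovering the same linear-size observation about radius-$r$ encodings of sparse bipartite graphs.
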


Say that a class $\CC$ of graphs has \emph{almost bounded twin-width}
if for every $\eps>0$ 
the twin-width of every $n$-vertex graph $G\in\CC$ is bounded by  $O_{\eps,\CC}(n^{\eps})$.
We prove the following.

\begin{restatable}{theorem}{almostBoundedTww}\label{thm:almost-bounded-tww}
    Every hereditary, almost bounded twin-width graph class is monadically dependent.
\end{restatable}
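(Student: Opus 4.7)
The plan is to prove the contrapositive by combining Theorem~\ref{thm:hardness-interprets} with the counting bound for graph classes of bounded twin-width from~\cite{twwII}, following the same template as the proof of Theorem~\ref{thm:small}.

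Suppose that $\CC$ is hereditary and monadically independent. By Theorem~\ref{thm:hardness-interprets}, $\CC$ efficiently interprets the class of all graphs: there is a fixed FO interpretation $\psi$ and a polynomial-time algorithm that, given an $n$-vertex labeled graph $H$, outputs a graph $G_H \in \CC$ on at most $p(n)$ vertices such that $H$ is recovered from $G_H$ via $\psi$. The critical quantitative ingredient, inherited from the construction of the efficient interpretation (and the very reason that the same construction can also drive the counting proof of Theorem~\ref{thm:small}), is that the output size is \emph{subquadratic}, $p(n) = O(n^c)$ for some $c < 2$. Since the map $H \mapsto G_H$ is injective, with $N := p(n)$ this yields
\[
|\CC_N| \;\ge\; 2^{\binom{n}{2}} \;=\; 2^{\Omega(N^{2/c})} \;=\; 2^{\Omega(N^{1+\delta})}
\]
for some constant $\delta > 0$ depending on $c$.

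On the other hand,~\cite{twwII} establishes that a graph class of twin-width at most $d$ contains at most $N! \cdot (d+2)!^{N}$ labeled $N$-vertex graphs. Under the almost bounded twin-width hypothesis, every $N$-vertex graph in $\CC$ has twin-width at most $c_\eta \cdot N^\eta$ for every $\eta > 0$, so
\[
|\CC_N| \;\le\; 2^{O(N^{1+\eta}\log N)}
\]
for every $\eta > 0$. Choosing $\eta := \delta/2$ makes this upper bound strictly smaller than the lower bound $2^{\Omega(N^{1+\delta})}$ for all sufficiently large $N$, a contradiction. Hence $\CC$ cannot simultaneously be monadically independent and of almost bounded twin-width.

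The main obstacle is ensuring the subquadratic encoding $c < 2$ in Theorem~\ref{thm:hardness-interprets}. This follows from the construction of the efficient interpretation, which exploits the combinatorial density of the forbidden induced subgraphs produced by Theorem~\ref{thm:mainforbiddenpatterns}: each pattern of order $k$ (a star, clique, or half-graph $r$-crossing, or the comparability grid) provides $\Omega(k^2)$ independent coding slots inside $O(r k^2)$ vertices, which, after inverting the relation between encoded and encoding sizes, translates into an encoding of total size $O(n^{2-\Omega(1)})$ rather than $\Omega(n^2)$. Once the subquadratic encoding is in place, both Theorems~\ref{thm:small} and~\ref{thm:almost-bounded-tww} fall out of a single counting template, calibrated against smallness in one case and against polynomially growing twin-width in the other.
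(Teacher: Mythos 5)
The counting template you propose has a critical gap at the step that needs to be quantitative: the claim that the efficient interpretation of Theorem~\ref{thm:hardness-interprets} produces graphs of \emph{subquadratic} size $p(n) = O(n^c)$ with $c < 2$. That claim is false. The coding structures at the heart of the construction --- radius-$r$ encodings inside flipped crossings or comparability grids --- use roughly one vertex per encoded edge of the bipartite graph being represented (cf.\ Lemma~\ref{lem:lin-size}: a bipartite graph with $n$ vertices and $m$ edges is encoded in $O(n+m)$ vertices). When $H$ is an arbitrary $n$-vertex graph, $m$ can be $\Theta(n^2)$, so $p(n) = \Theta(n^2)$. Your own heuristic actually confirms this: a pattern of order $k$ on $O(rk^2)$ vertices carries $\Theta(k^2)$ coding slots, hence the encoding achieves $\Theta(1)$ bits per vertex, not the polynomially-improved rate you assert. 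With $N = \Theta(n^2)$, injectivity gives only $|\CC_N| \ge 2^{\Omega(N)}$, which does not contradict the upper bound $|\CC_N| \le 2^{O(N^{1+\eta}\log N)}$ obtainable from the twin-width counting bound with $\tww = O_\eta(N^\eta)$ --- you need $2^{\Omega(N^{1+\delta})}$ for some fixed $\delta > 0$, and that requires precisely the subquadratic encoding you do not have.

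The gap is not patchable by restricting to a sparser class of target graphs. Encoding bipartite graphs with $m = n^{1+\epsilon}$ edges yields a linear-size encoding $N = O(m)$ and about $2^{\Theta(m \log n)}$ graphs, hence only $|\CC_N| \ge 2^{\Omega(N\log N)}$, which is still dominated by $2^{O(N^{1+\eta})}$ for every fixed $\eta > 0$. This is exactly why the smallness proof (Theorem~\ref{thm:small}) works --- there the upper bound is $n!\,c^n = 2^{O(N\log N)}$, so the $N\log N$ lower bound suffices with the right constant --- but it does not transfer to the almost-bounded-twin-width regime, where the permitted upper bound grows polynomially faster.

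The paper instead proves Theorem~\ref{thm:almost-bounded-tww} by a direct argument (Lemmas~\ref{lem:tww-flipped-crossings} and~\ref{lem:tww-cgrid}): it shows that a flipped star/clique/half-graph $r$-crossing of order $n$ has twin-width at least $n^{1/(r+1)} - 2$ and a comparability grid of order $n+1$ has twin-width at least $n^{1/2} - 2$, by analyzing where red edges must accumulate along any contraction sequence. Since such a pattern has $O_r(n^2)$ vertices, its twin-width is $\Omega_r\bigl(|V|^{\delta}\bigr)$ for a fixed $\delta > 0$ depending only on $r$; since by Theorem~\ref{thm:main-circle} a hereditary monadically independent class contains arbitrarily large such patterns, the class cannot have almost bounded twin-width. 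You should pursue this direct bound on the twin-width of the forbidden patterns rather than a counting reduction.
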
    

In \cite{flipwidth}, a family of graph-width parameters called \emph{flip-width of radius} $r$, for $r\ge 1$, is introduced,
together with the ensuing notion of classes of \emph{almost bounded flip-width}.
Classes of almost bounded flip-width include all nowhere dense classes, all classes of bounded twin-width, and more generally, and all classes of almost bounded twin-width.
It is conjectured that for hereditary graph classes, almost bounded flip-width coincides with monadic dependence:

\begin{restatable}[\cite{flipwidth}]{conjecture}{conjFW}\label{conj:fw}
    A hereditary graph class $\CC$ has almost bounded flip-width if and only if it is monadically dependent.
\end{restatable}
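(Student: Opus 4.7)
The plan is to prove the two directions of the conjecture separately, leveraging the two combinatorial characterizations of monadic dependence established in this paper: flip-breakability (\cref{thm:mainflipbreakable}) for the forward direction, and the forbidden induced subgraph list (\cref{thm:mainforbiddenpatterns}) for the backward direction.

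For the backward direction --- almost bounded flip-width implies monadic dependence --- I would proceed by contraposition. Suppose $\CC$ is monadically independent. By \cref{thm:mainforbiddenpatterns}, there exist a fixed $r \ge 1$ and arbitrarily large orders $n$ for which $\CC$ contains, as an induced subgraph, a flipped star $r$-crossing, a flipped clique $r$-crossing, a flipped half-graph $r$-crossing, or a comparability grid of order $n$. For each of these four families, the task is to lower-bound the flip-width at some bounded radius $r'=r'(r)$ by $\Omega(n^c)$ for a constant $c>0$. For the star $r$-crossing, a direct fugitive strategy works: the $n^2$ internally vertex-disjoint subdivision paths provide enough hiding room that any sublinear-token flipper is evaded, since $k$ flip tokens can only collapse the interaction pattern between $O(k^2)$ blocks. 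For the clique and half-graph crossings, one first undoes the $2^{(r+2)^2}$ possible layer flips --- flip-width only changes by a bounded multiplicative factor under $O(1)$-flips --- and then applies the star-crossing argument. The comparability grid is handled by the rank-based arguments of \cite{twwIV} showing it defies bounded twin-width, and hence bounded flip-width at the appropriate radius. In all four cases, one obtains an $\eps>0$ and an $r'$ for which $\fw_{r'}$ is not $O(n^\eps)$ on $\CC$, as required.

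For the forward direction --- monadic dependence implies almost bounded flip-width --- the strategy is to convert flip-breakability, or rather a strengthening of it via the insulation property alluded to in the technical overview, into a winning strategy for the flipper in the flip-width game. Fix a radius $r$ and an $\eps>0$. Given $G \in \CC$ on $n$ vertices and a current arena $W$ in which the fugitive may reside, I would apply flip-breakability at an appropriately boosted radius to obtain a $k$-flip $H$ and subsets $A,B \subset W$ of size $\Omega(n^{1-\eps'})$ with $\dist_H(A,B) > r$. Using $H$ as a flipper move and recursing into whichever side the fugitive enters shrinks the arena by a factor $n^{\eps'}$ per level. After $O(1/\eps')$ levels the fugitive is localized, and the total number of flip tokens used is $O_r(1/\eps')$ times a factor accounting for the accumulation of flips across levels. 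To reach the target bound of $O(n^\eps)$, one amortizes the recursion depth against $\eps$ and arranges, via the insulation property, that the flips used at all levels can be implemented by a single master flip of bounded size.

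The main obstacle is making the recursion \emph{flip-compatible}: a naive iteration of \cref{def:flip-breakability1} produces flips at consecutive levels whose part boundaries need not align, so that a later flip can destroy the distance guarantees secured by an earlier one. The essential technical step is therefore an iteration lemma, asserting that the insulation property can be lifted from a single step to an entire hierarchical decomposition realized by a single bounded-size flip. This is precisely the kind of statement the insulator-based machinery of the paper is designed to produce, but extracting from it a bound of the form $O(n^\eps)$ on flip-width for every fixed radius $r$ requires delicate control of how the radius and the arena size interact across levels. A secondary subtlety is that the flip-width game is played uniformly over all radii, so the hierarchical flip must insulate at every fixed radius simultaneously, which forces the boost function to be chosen as a joint function of $r$ and $\eps$. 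With these ingredients in place, the remainder of the argument should reduce to a standard game-theoretic translation.
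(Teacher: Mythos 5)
The statement you are proving is labeled a \emph{conjecture} in the paper, and the paper proves only one of the two implications: that almost bounded flip-width implies monadic dependence (\cref{thm:fw-intro}, proved as \cref{thm:fw}). The converse --- that every monadically dependent hereditary class has almost bounded flip-width --- is explicitly left open, and indeed the Conclusions section treats this as the central remaining problem, formulating intermediate targets (\cref{conj:fw-restated}, \cref{conj:fw-density}, \cref{conj:fw-quant}) whose resolution would still leave work to do.

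Your proposal for the backward direction (the one the paper actually proves) is broadly aligned with the paper's method: both show that the forbidden patterns have flip-width $\Omega(n^c)$ at some bounded radius, with the reduction that flip-width changes only by a bounded factor under bounded flips, so it suffices to analyze the unflipped crossings. The paper's argument is more delicate than your sketch suggests, however: it does not use a naive ``hiding room'' count but constructs a \emph{bi-hideout} (a pair of root sets from which the runner can shuttle back and forth indefinitely), built on \cref{lem:half-graph-forest} and \cref{lem:path-flips}; and for comparability grids it gives a direct quantifier-free interpretation of a half-graph $1$-crossing rather than appealing to rank-based twin-width arguments as a black box. Your claim that ``$k$ flip tokens can only collapse the interaction pattern between $O(k^2)$ blocks'' is not a proof and glosses over exactly the structural work the hideout machinery is designed to do.

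The serious problem is your forward direction. You attempt to derive almost bounded flip-width from flip-breakability by a recursive flipper strategy, and you yourself identify the obstruction --- flips produced at different levels of the recursion need not be compatible, so a later flip can undo the distance guarantee of an earlier one. You then assert that ``the essential technical step is therefore an iteration lemma'' that makes all levels realizable by ``a single master flip of bounded size,'' and wave the rest away as ``a standard game-theoretic translation.'' No such lemma exists in the paper, and it is not a routine technicality: it is precisely the substance of \cref{conj:fw}. The qualitative flip-breakability result (\cref{thm:mainflipbreakable}) gives, for each fixed $r$, a $\const(\CC,r)$-flip producing one break; turning this into a globally consistent hierarchical decomposition with a \emph{quantitative} token bound of $O(n^\eps)$ for every $r$ simultaneously is the open heart of the conjecture (and the paper's \cref{thm:quant} is explicitly proposed only as a potential ``first step towards developing a quantitative theory''). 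Your proposal therefore does not constitute a proof of the conjecture; it proves, at best in sketch, the implication that the paper itself proves, and describes a wish-list for the other implication.
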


We prove one implication of this conjecture.
\begin{theorem}\label{thm:fw-intro}
    Every hereditary, almost bounded flip-width graph class is monadically dependent.
\end{theorem}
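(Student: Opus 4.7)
The plan is to argue by contraposition through the forbidden induced subgraph characterization of \cref{thm:mainforbiddenpatterns}. Suppose $\CC$ is hereditary and monadically independent. Then there exists a radius $r\ge 1$ such that $\CC$ contains, as induced subgraphs, arbitrarily large members of at least one of the following families: flipped star $r$-crossings, flipped clique $r$-crossings, flipped half-graph $r$-crossings, or comparability grids. By hereditariness, representatives of each order $n$ are themselves in $\CC$, so it suffices to exhibit a polynomial lower bound on the flip-width of the order-$n$ pattern that contradicts almost bounded flip-width.

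The key step is the following flip-width lower bound: for each of the four families there is a radius $r'$ and an $\epsilon>0$, depending only on $r$, such that the radius-$r'$ flip-width of the order-$n$ member is at least $n^{\epsilon}$. For comparability grids this should be obtained at radius $1$, as they contain arbitrarily large half-graphs as induced subgraphs, and half-graphs already have unbounded radius-$1$ flip-width by \cite{flipwidth}. For the three crossing families, I would use the (unflipped) star $r$-crossing of order $n$ as the backbone: its $n^2$ internally vertex-disjoint paths of length $r$ form a subdivided biclique, and a robber-strategy argument in the flip-width cops-and-robber game, analogous to the lower bound given in \cite{flipwidth} for $r$-subdivided cliques, yields a bound of order $n$ on the radius-$(r+2)$ flip-width. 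The clique and half-graph crossings are obtained from the star crossing by a single flip whose partition has size $r+2$, and so is every flipped variant of any of the three crossings.

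To transport the lower bound from the star $r$-crossing to all flipped variants, I would invoke the basic fact — established in \cite{flipwidth} — that the radius-$r'$ flip-width is stable under constant-size flips: if $H$ is a $k$-flip of $G$, then $\fw_{r'}(H)$ and $\fw_{r'}(G)$ differ by at most a function of $k$ and $r'$. Indeed, a flip is a ``cheap'' move in the flip-width game and can be simulated by adding a bounded number of cops that pin down the partition. Combining this with the bound for the star crossing gives a lower bound of order $n$ on the radius-$r''$ flip-width of every flipped $r$-crossing of order $n$, for some $r''$ depending on $r$. Since an order-$n$ pattern has $\Theta(n^2 r)$ vertices, this yields a lower bound of $\Omega(N^{1/2})$ on the radius-$r''$ flip-width of $N$-vertex graphs in $\CC$, contradicting almost bounded flip-width (for any $\epsilon<1/2$).

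The main obstacle is the flip-width lower bound for the unflipped star $r$-crossing itself, and, more delicately, the robustness of flip-width under flips of the host graph. Both are foundational properties that I expect to extract directly from \cite{flipwidth}: the lower bound is of the same escaping-robber flavor as the bounds recorded there for nowhere dense obstructions, while stability under flips reduces to the observation that a partition witnessing a $k$-flip can be tracked by a constant number of additional cops. Given these two ingredients, the reduction from \cref{thm:mainforbiddenpatterns} is clean and mechanical.
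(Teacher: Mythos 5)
Your high-level plan matches the paper's, and the use of the forbidden-pattern characterization plus flip-stability of flip-width (\cite[Thm.~8.2]{flipwidth}) is exactly what the paper does. However, two of your reduction steps are incorrect and would cause the proof to fail if carried out literally.

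First, the class of half-graphs has \emph{bounded} flip-width at every radius, not unbounded radius-$1$ flip-width: half-graphs are graphs of bounded twin-width (indeed bounded cliquewidth), and bounded twin-width implies bounded flip-width by \cite[Thm.~10.21]{flipwidth}. So the comparability-grid case cannot be dispatched by extracting a large induced half-graph. The paper instead observes that a comparability grid quantifier-freely interprets (with two colors) a half-graph $1$-\emph{crossing}, which is a fundamentally different and genuinely complex graph, and transfers the lower bound back through the interpretation (\cref{lem:fw-comp-grid}).

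Second, the clique and half-graph $r$-crossings are \emph{not} layer-wise flips of the star $r$-crossing. In a clique $r$-crossing, layer $L_1$ induces a disjoint union of $n$ cliques indexed by $i$; a layer-wise flip of the star $r$-crossing can only make $L_1$ an independent set or a complete graph, never this. Likewise, between $L_0$ and $L_1$ the star crossing has the diagonal $i=i'$ while the half-graph crossing has $i\le i'$, and these bipartite graphs are not complements of each other. Consequently the lower bound cannot be transported from the star crossing via flip-stability. The paper handles this by a unified bi-hideout argument (\cref{lem:fw-crossings}) that proves the radius-$(r+1)$ flip-width lower bound directly and simultaneously for all three unflipped crossing types, drawing on structural lemmas about flips of half-graphs between consecutive layers (\cref{lem:half-graph-forest}, \cref{lem:path-flips}). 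Your use of flip-stability is only correct for passing from the \emph{unflipped} to the \emph{flipped} variant of the same crossing type, which is indeed where the paper invokes it.
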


\subsubsection*{Relation of the Patterns to Other Work}
Our forbidden patterns characterization~\Cref{thm:mainforbiddenpatterns}
 generalizes similar previous characterizations.
 Recall that monadic dependence is captured by
 monadic stability, 
 for hereditary orderless graph classes,
and by bounded twin-width, for hereditary classes of ordered graphs.
The results \cite{wip,twwIV} 
characterizing monadic dependence in those two settings
can be restated as follows.
\begin{itemize}
    \item
        For orderless graph classes, a class \(\mathcal{C}\) is monadically dependent 
        if and only if for every \(r \in \N\) there exists \(k \in \N\)
        such that no graph in \(\mathcal{C}\) contains a flipped star \(r\)-crossing or clique \(r\)-crossing
        of order \(k\) as an induced subgraph~\cite{wip}.
    \item
        For classes of ordered graphs, a class \(\mathcal{C}\) is monadically dependent 
        if and only if there exists \(k \in \N\)
        and a specific, finite family of  ordered graphs with $2k$ vertices (similar to a matching  
on $2k$ vertices ordered suitably) 
        which are avoided by \(\mathcal{C}\) as  semi-induced ordered subgraphs~\cite{twwIV}
        (see \cref{thm:tww-patterns} for a formulation).
\end{itemize}
Our approach towards proving the forbidden patterns characterization and model checking hardness originates in \cite{wip} and \cite{twwIV}.
As a result, we reprove some of their results.
A subset of the patterns identified in this paper are sufficient to characterize monadic dependence 
in the setting of orderless graph classes.
As orderless classes cannot contain large half-graph crossings or comparability grids,
\Cref{thm:mainforbiddenpatterns} implies the result of~\cite{wip} characterizing monadically stable classes in terms of forbidden induced subgraphs.
It is worth noting that in \cite{wip}, for hereditary, orderless, monadically independent graph classes, hardness is shown even for \emph{existential} model checking.
Adapting our results to the setting of binary structures, we show how to derive the characterization from \cite{twwIV} of monadically dependent classes of ordered graphs in terms of forbidden patterns.

A recent paper by Braunfeld and Laskowski shows that a hereditary class of relational structures is monadically dependent if and only if it is dependent~\cite{braunfeld2022existential}.
Dependence is a generalization of monadic dependence studied in model theory.
For the special case of graph classes, our \cref{thm:hardness-interprets} reproves that result.
Similarly to our paper, the proof of Braunfeld and Laskowski exhibits certain large configurations (called pre-coding configurations)
in classes that are monadically independent.
As pre-coding configurations are defined in terms of formulas with tuples of free variables, these results are not a purely combinatorial characterization of monadically dependent graph classes. In particular, they seem insufficient for obtaining algorithmic hardness results.
However, we believe that one could also prove \cref{thm:small}, stating that small, hereditary classes are dependent, using the results of \cite{braunfeld2022existential}.

In \cite{DBLP:journals/corr/abs-2306-12611},
Eppstein and McCarty
prove that
many different types of geometric graphs have unbounded flip-width, and in fact \cite[App. A]{DBLP:journals/corr/abs-2306-12611}, form monadically independent graph classes.
These include interval graphs, permutation graphs, circle graphs, and others. It is shown that these classes contain large \emph{interchanges}, which 
are similar to $1$-subdivided cliques,
 and to our patterns.
Containing interchanges of arbitrarily large order is a sufficient, but not necessary condition to monadic independence.

\section{Technical Overview}\label{sec:technicalOverview}

All our proofs are fully constructive and only use elementary tools such as Ramsey's theorem.

\paragraph{Insulators.}
In order to prove flip-breakability for a graph \(G\) and a large set \(W \subseteq V(G)\),
we try to enclose a sizeable subset $W_\star\subset W$
in a structure $\gc A$ that ``insulates'' the elements of $W_\star$ from each other, and from external vertices that are not enclosed in $\gc A$.
We call this structure $\gc A$ an \emph{insulator} (\Cref{def:insulator}). It is a grid-like partition of a subset of $V(G)$.
An example is shown on the left side of \Cref{fig:technicalOverview} (in general, each cell may contain more than one vertex).

Roughly speaking, there is a coloring of the vertex set, using a bounded number of colors, such that
the adjacency between a pair of vertices in non-adjacent rows of the grid is determined only by their colors.
For a pair of vertices in adjacent rows, the adjacency is determined by their colors and the order of their columns.
Finally, the adjacencies between vertices in the exterior of the grid 
and the vertices inside the grid (without the first and last column, and last row) are determined by their colors. 
Exceptionally, the connections within the top row and between adjacent cells of the grid may be arbitrary.

The large subset $W_\star\subset W$ is distributed in the bottom row of the insulator, in a way such that every vertex of $W_\star$ is contained in a different column.
Assume the insulator has height at least~$r$.
We can choose two large subsets $A$ and $B$ of $W_\star$, such that 
for any two vertices $v\in A$ and $w\in B$, it takes at least $r$ steps along edges which are not controlled by the insulator to get from $v$ to $w$.

\begin{figure}[h]
    \begin{center}
    \includegraphics[width=\textwidth]{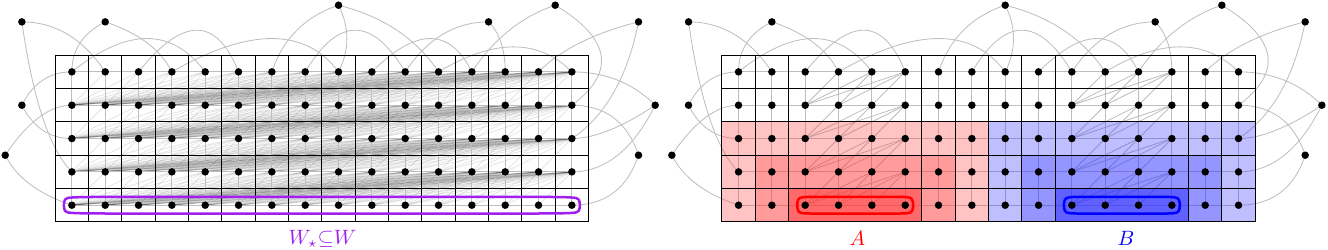}%
    \end{center}%
    \vspace{-0.5cm}
    \caption{
        Left: An insulator surrounding the set \(W_\star\). 
        The shown stacked half-graph layout is easily described by a bounded number of colors and the column order.
        A bounded number of flips are sufficient to ensure the highlighted subsets \(A \subseteq W_\star\) and \(B \subseteq W_\star\)
        have distance larger than \(4\). The shades of red and blue highlight upper bounds on the distance of other vertices to \(A\) and \(B\).
        In particular, all unshaded vertices have distance larger than two to these sets.
    }\label{fig:technicalOverview}
\end{figure}

Given $r \in \N$,
we strive for insulators of height $r$, where the number of colors is bounded by some constant, depending on $r$.
Thus, if we embed large sets \(A,B\) in such an insulator and coarsen the columns as shown on the right side of \Cref{fig:technicalOverview},
we can ensure with a bounded number of flips (depending on the number of colors) that edges only go between adjacent cells, and thus \(A\) and \(B\) have distance at least \(r\).
Hence, we can use the insulators to obtain flip-breakability.

\paragraph{Constructing Insulators or Prepatterns.}
One can trivially construct an insulator of height one that embeds a set \(W\):
Build a single row by placing every vertex of \(W\) into a distinct cell.
The central step of our construction, presented in \Cref{part:structure}, takes a large insulator of height \(r\),
and adds another row on top to obtain a still-large insulator of height \(r+1\).
To this end, we build upon and significantly extend 
techniques developed in the context of flip-flatness~\cite{dreier2022indiscernibles}, based on \emph{indiscernible sequences}, a fundamental tool in model theory.

Given a large  insulator of height $r$ in any graph \(G\),
the key step in our construction
shows that there is either
\emph{enough disorder} to connect the topmost row of the grid into a preliminary pattern, called \emph{prepattern} (\Cref{def:prepattern}), resembling a crossing (\Cref{fig:gridOrRow}, left), or
\emph{enough order} to add an additional row \(r+1\) on top (\Cref{fig:gridOrRow}, right).
While both cases reduce the number of columns in the insulator, there still remains a large number of them.

\begin{figure}[h]
    \begin{center}
    \includegraphics[scale=1]{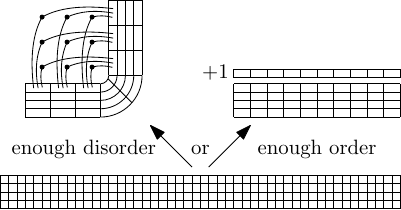}%
    \end{center}%
    \vspace{-0.5cm}
    \caption{
        In every graph, we can either increase the height of an insulator of find a large prepattern.
    }\label{fig:gridOrRow}
\end{figure}

We define \emph{prepattern-free} classes (\Cref{def:prepatternfree}) 
by excluding certain prepatterns.
In such classes, we therefore always find \emph{enough order} to construct \emph{large insulators} row-by-row (\Cref{prop:prepatternImpliesInsulation}).
We say that such classes have the \emph{insulation property} (\Cref{def:insulationProperty}).
By the arguments outlined in \Cref{fig:technicalOverview}, such classes are easily shown to be \emph{flip-breakable} (\Cref{prop:ge-implies-fb}).
Lastly, using the locality property of first-order logic, it is easy to show that \emph{flip-breakable} classes are monadically dependent (\Cref{prop:fb-implies-mnip}).
Thus, any graph class satisfies the following chain of implications.
\newcommand{\superarrow}{\ {=\joinrel=\joinrel=\joinrel\Longrightarrow}\ }
\[
    \textnormal{prepattern-free}~\overset{\textnormal{Prop.\,\ref{prop:prepatternImpliesInsulation}}}{\superarrow}~
    \textnormal{insulation property}~\overset{\textnormal{Prop.\,\ref{prop:ge-implies-fb}}}{\superarrow}~
    \textnormal{flip-breakable}~\overset{\textnormal{Prop.\,\ref{prop:fb-implies-mnip}}}{\superarrow}~
    \textnormal{mon.\ dependent}
\]

\paragraph{Cleaning Up Prepatterns.}
Afterwards, in \Cref{part:nonstructure}, we clean up the prepatterns.
This involves heavy use of Ramsey's theorem to regularize the patterns
until we either obtain a flipped star/clique/half-graph crossing, or a comparability grid.
From such patterns one can trivially transduce all graphs.
This is summarized by the following chain of implications.
\[
    \neg\textnormal{prepattern-free} ~\overset{\,\textnormal{Def.}}{\Leftarrow\joinrel=\joinrel\Rightarrow}~
    \textnormal{large prepatterns}~\overset{\textnormal{Prop.\,\ref{prop:patterns-main}}}{\superarrow}~
    \textnormal{large patterns}~\overset{\textnormal{Prop.\,\ref{lem:transduceAll}}}{\superarrow}~
    \textnormal{mon.\ independent} 
\]
Hence, by contrapositive, monadic dependence implies prepattern-freeness. Together with the previous chain of implications,
we obtain the desired equivalences.
%Both chains of implications together imply the desired equivalences.
\[
    \textnormal{insulation property}~\Longleftrightarrow~
    \textnormal{flip-breakable}     ~\Longleftrightarrow~
    \textnormal{no large patterns}  ~\Longleftrightarrow~
    \textnormal{mon.\ dependent}
\]

\paragraph{Hardness.}
Consider a hereditary class $\CC$ that is monadically independent.
As discussed, $\CC$ contains large patterns witnessing this.
To obtain hardness of model checking, we reduce from the model checking problem in general graphs.
This requires an encoding of an arbitrary graph into a large pattern, using a first-order formula.
We know that when adding colors, such an encoding is possible: classes that are monadically independent transduce the class of all graphs.
However, for our reduction we require encodings that do not use colors: we want to show that every hereditary class that is monadically independent \emph{interprets} the class of all graphs.
Here, an \emph{interpretation} is a transduction that does not use colors and where instead of taking arbitrary induced subgraphs, the vertex set of the interpreted graph must be definable in the original graph by a formula $\delta(x)$.
Heavily relying on the fact that in hereditary classes we can take induced subgraphs \emph{before} applying the interpretation, it is not too hard to show that for each $r \geq 1$, the hereditary closures of the class of all comparability grids and the classes of all \emph{non-flipped} star/clique/half-graph $r$-crossings interpret the class of all graphs.
The main challenge is to ``reverse'' the flips using first-order formulas in the case of \emph{flipped} $r$-crossings. This is achieved by using sets of twins to mark layers of the $r$-crossing.

\section{Conclusions and Future Work}
In this paper, we obtain the first combinatorial characterizations of monadically dependent graph classes, 
which open the way to generalizing the results of sparsity theory to the setting of hereditary graph classes.
Our main results, \Cref{thm:mainflipbreakable} and \Cref{thm:mainforbiddenpatterns}, may be seen as analogues of the 
result \cite{nevsetvril2011nowhere} characterizing nowhere dense graph classes 
as exactly the \emph{uniformly quasi-wide} classes \cite{dawar2010homomorphism}, and 
the result \cite{adler2014interpreting,stable_graphs}
characterizing nowhere dense classes 
as exactly those, whose monotone closure is monadically dependent.

Central results in sparsity theory, characterizing 
nowhere dense classes, can be usually grouped into two 
types: \emph{qualitative characterizations} and \emph{quantitative characterizations}.
Qualitative characterizations typically say that 
for every radius $r$, some quantity is bounded by a constant depending on $r$ for all graphs in the class.
%  Examples 
%  include the characterizations of nowhere denseness in terms of the bounded size of $r$-subdivisions of cliques as subgraphs, or the characterization in terms of uniform quasi-wideness.
 Our two main results fall within this category.

 On the other hand, quantitative characterizations of nowhere dense classes
are phrased in terms of a fine analysis of densities of some parameters,
such as degeneracy, minimum degree, weak coloring numbers,
neighborhood complexity, or VC-density;
those results almost always involve  bounds of the form $n^\eps$ or $n^{1+\eps}$, where 
$n$ is the number of vertices of the considered graph, and \(\eps > 0\) can be chosen arbitrarily small.
% Many of the key resuts of sparsity theory,
% including the algorithmic results concerning model checking \cite{grohe2017deciding},
% rely on \emph{quantitative} characterizations of nowhere dense classes, allowing to measure the density of graphs in the class.
For instance, given a nowhere dense graph class \(\CC\),
for every fixed $r\in\N$ and \(\eps > 0\), all graphs $G$ whose $r$-subdivision is a subgraph of a graph in $\CC$
satisfy $$|E(G)|\le O_{\CC,r,\eps}(|V(G)|^{1+\eps}).$$
Similarly, a parameter called the \emph{weak $r$-coloring number} (for any fixed $r\in \N$, \(\eps > 0\)),
is bounded by $O_{\CC,r,\eps}(n^\eps)$, 
for every $n$-vertex graph $G$ in a nowhere dense class.

Both the qualitative and quantitative results are of fundamental importance in sparsity theory, 
and lifting them to the setting of monadically dependent graph classes is therefore desirable. 
Some of the most elaborate results of sparsity theory combine both aspects of the theory.
In particular for nowhere dense model checking  \cite{grohe2017deciding},
the \emph{splitter game} \cite{grohe2017deciding} -- a qualitative characterization -- is combined with the quantitative characterization in terms of weak coloring numbers.
The characterization in terms of the splitter game relies on uniform quasi-wideness, and 
has been extended to the setting of monadically stable graph classes, in terms of the flipper game \cite{flipper-game}, which in turn relies on the characterization in terms of flip-flatness.
Basing on this, we believe that flip-breakability may be a first step towards obtaining a game characterization of monadic dependence.

\medskip
As we observe now, our results also provide a first quantitative characterization of monadically dependent classes.
As an analogue of the notion of \emph{containing 
the $r$-subdivision of a graph as a subgraph}, we introduce the following concept of \emph{radius-$r$ encodings}.
Fix an integer $r\ge 1$.
Let $G=(A,B,E)$ be a bipartite graph with $|A|=|B|=n$ for some $n$,
and let $A=\set{a_1,\ldots,a_n}$ and $B=\set{b_1,\ldots,b_n}$.
Consider a graph $H$ which is a star/clique/half-graph $r$-crossing
with roots $a_1,\ldots,a_n$ and $b_1,\ldots,b_n$.
Recall that $V(H)$ can be partitioned into $r+2$ \emph{layers}, and there are $n^2$ distinguished $r$-vertex \emph{paths} $\pi_{i,j}$
connecting \(a_i\) and \(b_j\), for $i,j\in[n]$.
Let $H'$ be a graph  obtained from $H$ by:
\begin{enumerate}
    \item adding arbitrary edges within each layer of $H$,
    \item removing all vertices of the paths $\pi_{i,j}$ for $i,j\in[n]$ such that $\set{a_i,b_j}\notin E(G)$,
    \item flipping pairs of layers arbitrarily.
\end{enumerate}
We call $H'$ a \emph{radius-$r$ encoding of $G$}.
In particular, every flipped star/clique/half-graph $r$-crossing of order $n$ 
is a radius-$r$ encoding of the complete bipartite graph $K_{n,n}$.
Moreover, the comparability grid of order $n+1$ contains as an induced subgraph a radius-$1$ encoding of the complete bipartite graph $K_{n,n}$,
which can be obtained from the half-graph $1$-crossing of order $n+1$
by adding edges within the three layers.

The following result follows easily from our \cref{thm:main-circle},
and from results of Dvo{\v r}{\'a}k \cite{dvorak-thesis,dvovrak2018induced}.
\begin{restatable}{theorem}{thmQuant}\label{thm:quant}
    Let  $\CC$ be a hereditary graph class.
    The following conditions are equivalent:
    \begin{enumerate}[label=(\roman*)]
        \item\label{it:quant-mdep} $\CC$ is monadically dependent, 
        \item\label{it:quant-eps} for every real $\eps>0$ and integer $r\ge 1$, 
        for every bipartite graph $G$
        such that $\CC$ contains some radius-$r$ encoding of $G$,
        we have that $|E(G)|\le O_{\CC,r,\eps}(|V(G)|^{1+\eps})$,
        \item\label{it:quant-eps-1/2} for every integer $r\ge 1$
        there is an integer $N\ge 1$ such that for every bipartite graph $G$ with $|V(G)|>N$
        such that $\CC$ contains some radius-$r$ encoding of $G$,
        we have that $|E(G)|< \frac{|V(G)|^2}4$.
    \end{enumerate}
\end{restatable}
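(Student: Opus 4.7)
The plan is to prove the equivalences as a small cycle: (i) $\Rightarrow$ (iii) via \Cref{thm:mainforbiddenpatterns} combined with a Ramsey cleanup, (iii) $\Rightarrow$ (ii) via Dvo{\v r}{\'a}k's density theorem, together with the straightforward (ii) $\Rightarrow$ (iii) and the contrapositive of (i) $\Rightarrow$ (iii) to close the loop.

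For (i) $\Rightarrow$ (iii), I will fix $r \geq 1$ and let $k$ be the bound supplied by \Cref{thm:mainforbiddenpatterns}, so that $\CC$ excludes as induced subgraphs all flipped star/clique/half-graph $r$-crossings of order $k$ and the comparability grid of order $k$. It suffices to show that for $n$ large, $K_{n,n}$ has no radius-$r$ encoding in $\CC$, since $K_{n,n}$ is the unique bipartite graph with balanced bipartition of size $n$ achieving $|E|=|V|^2/4$. Assume for contradiction that $H \in \CC$ is a radius-$r$ encoding of $K_{n,n}$; then $H$ is a star/clique/half-graph $r$-crossing of order $n$ augmented in step (1) with arbitrary intra-layer edges and in step (3) with a layer-partition flip. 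By standard iterated Ramsey arguments over the $r+2$ layers (using bipartite/grid Ramsey to handle the path layers indexed by $[n]\times[n]$), for $n$ sufficiently large in terms of $k$ and $r$ one extracts $I, J \subseteq [n]$ of size $k$ so that, keeping only roots indexed in $I$ and $J$ together with the paths $\pi_{i,j}$ for $(i,j) \in I \times J$, the step-(1) intra-layer edges become homogeneous in every layer. A uniformly complete intra-layer edge pattern can be absorbed as an intra-layer complementation into the flip of step (3), so the resulting sub-pattern is itself a flipped star/clique/half-graph $r$-crossing of order $k$, contradicting \Cref{thm:mainforbiddenpatterns}.

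I handle (iii) $\Rightarrow$ (i) by contrapositive: if $\CC$ is monadically independent, then \Cref{thm:mainforbiddenpatterns} furnishes some $r$ for which $\CC$ contains arbitrarily large flipped star/clique/half-graph $r$-crossings or comparability grids. Each flipped $r$-crossing of order $n$ is itself a radius-$r$ encoding of $K_{n,n}$, and the comparability grid of order $n+1$ contains a radius-$1$ encoding of $K_{n,n}$ as an induced subgraph by the observation recalled immediately before the theorem; either way, $\CC$ contains radius-$r$ encodings of $K_{n,n}$ for arbitrarily large $n$, with $|E(K_{n,n})|=|V(K_{n,n})|^2/4$, violating (iii). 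The implication (ii) $\Rightarrow$ (iii) is immediate: pick any $\eps \in (0,1)$; then $O_{\CC,r,\eps}(|V|^{1+\eps})$ falls below $|V|^2/4$ once $|V|$ exceeds a threshold depending on $\CC$, $r$, and $\eps$. For the substantive step (iii) $\Rightarrow$ (ii), let $\CC_r$ denote the class of bipartite graphs $G$ with balanced bipartition such that some radius-$r$ encoding of $G$ lies in $\CC$. Hereditariness of $\CC$ ensures that $\CC_r$ is closed under edge deletion (removing paths from an encoding removes the corresponding edges from $G$), hence closed under bipartite subgraphs. Condition (iii) reads $K_{n,n} \notin \CC_r$ for $n$ large, and Dvo{\v r}{\'a}k's density-boosting results~\cite{dvorak-thesis,dvovrak2018induced}, originally developed for hereditary graph classes closed under $r$-subdivision containment, convert exactly such a $K_{n,n}$-avoidance hypothesis into the subquadratic edge bound $|E(G)| = O_\eps(|V(G)|^{1+\eps})$ for every $\eps > 0$, giving (ii).

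\textbf{Main obstacle.} The subtlest step will be (iii) $\Rightarrow$ (ii). Dvo{\v r}{\'a}k's theorem is classically phrased in terms of $r$-subdivisions (or $r$-shallow topological minors), whereas radius-$r$ encodings additionally permit flips and arbitrary intra-layer edges, so one cannot simply quote the classical statement. The bridge I envisage is to precede the invocation of Dvo{\v r}{\'a}k's theorem with the same Ramsey cleanup used in (i) $\Rightarrow$ (iii): pass from any radius-$r$ encoding of a dense $G \in \CC_r$ to an induced $r$-subdivision of a still-dense bipartite subgraph of $G$ sitting inside some $H \in \CC$, at which point Dvo{\v r}{\'a}k's density machinery applies directly. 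The resulting $O_\eps(n^{1+\eps})$ bound then transfers back to $\CC_r$ by subgraph closure, since losing a constant fraction of vertices under the Ramsey cleanup alters the edge count by at most a constant factor and a polynomial rescaling of $\eps$.
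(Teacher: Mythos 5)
The crux of the difficulty is in your step~(iii)~$\Rightarrow$~(ii), and the remedy you propose does not close the gap. You write that ``Dvořák's density-boosting results \dots convert exactly such a $K_{n,n}$-avoidance hypothesis into the subquadratic edge bound,'' but this is not what Dvořák's theorem says. Forbidding $K_{n,n}$ as a subgraph (weak sparsity) in a subgraph-closed class yields only the Kővári--Sós--Turán bound $O(m^{2-1/n})$ on the edge count, and this is tight: incidence graphs of projective planes are $K_{2,2}$-free bipartite graphs with $\Theta(m^{3/2})$ edges, which violates $O(m^{1+\eps})$ once $\eps<1/2$. The $n^{1+\eps}$-for-all-$\eps$ bound is equivalent to nowhere denseness, which is strictly stronger than weak sparsity, and condition (iii) alone gives you only weak sparsity of $\CC_r$. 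The paper's actual proof of the hard implication is (i)~$\Rightarrow$~(ii), and the key ingredient you bypass is \Cref{lem:encode}: the class $\CC$ \emph{transduces} the class $\mathcal{B}_r^{(1)}$ of $1$-subdivisions of graphs in $\mathcal{B}_r$. Since transductions preserve monadic dependence, $\mathcal{B}_r^{(1)}$ is monadically dependent; being weakly sparse (it is $K_{2,2}$-free) and monadically dependent, it is nowhere dense by the theorem of Dvořák used via \cite{nevsetvril2021rankwidth}; this then transfers back to $\mathcal{B}_r$, and \emph{now} Dvořák's density bound applies. Your Ramsey cleanup cannot substitute for this transduction: Ramsey extraction only retains $U(n)$ vertices and destroys density, so you cannot pass from a dense $G\in\CC_r$ to a still-dense induced $r$-subdivision.

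A secondary imprecision sits in your (i)~$\Rightarrow$~(iii) step. After regularizing the path layers $L_t$ (which are indexed by $[n]^2$) via bipartite Ramsey, the intra-layer adjacency between $p_{i,j,t}$ and $p_{i',j',t}$ depends only on $\otp(i,i')$ and $\otp(j,j')$, and this regularized pattern need not be complete or empty --- it can be any of the paper's $P$-meshes, including the comparability-grid pattern. Your claim that ``a uniformly complete intra-layer edge pattern can be absorbed as an intra-layer complementation into the flip'' therefore does not cover the general case; handling it would require essentially all of the converter/mesh machinery of Part~\ref{part:nonstructure}. Fortunately, (i)~$\Rightarrow$~(iii) is obtainable cheaply as the composition (i)~$\Rightarrow$~(ii)~$\Rightarrow$~(iii), which is what the paper does. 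In short, the transduction step via \Cref{lem:encode} is unavoidable, and once you include it you are carrying out the paper's argument.
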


\cref{thm:quant} may be a first step towards developing a quantitative theory of monadically dependent classes. 
A challenging goal here is to generalize the characterization of nowhere denseness in terms of weak coloring numbers, to monadically dependent classes.

\paragraph{Flip-width.}
The \emph{flip-width} parameters were introduced \cite{flipwidth} 
with the aim of obtaining quantitative characterizations of monadic dependence.
The flip-width at radius $r\ge 1$, denoted 
$\fw_r(\cdot)$, is an analogue of the weak $r$-coloring number, and the notion of classes of bounded flip-width,
and almost bounded flip-width,  generalize classes of bounded expansion and nowhere dense classes to the dense setting.
A graph class $\CC$ has \emph{bounded flip-width} 
if for every integer $r\ge 1$ 
there is a constant $k_r$ such that  $\fw_r(G)\le k_r$ for all graphs $G\in\CC$. 
A hereditary graph class $\CC$ has \emph{almost bounded flip-width}
if for every integer $r\ge 1$ and real $\eps>0$
we have  $\fw_r(G)\le O_{\CC,r,\eps}(n^\eps)$ for all $n$-vertex graphs $G\in\CC$.
\cref{conj:fw} -- equating monadic dependence with almost bounded flip-width -- therefore postulates a quantitative characterization of monadic dependence, parallel to the characterization of nowhere denseness in terms of weak coloring numbers.

As mentioned, our results imply the forward implication of \Cref{conj:fw}:
that every hereditary class of almost bounded flip-width is monadically dependent.
 \Cref{thm:quant} might be an initial step towards resolving the backwards direction in \Cref{conj:fw}, as 
by \Cref{thm:quant},
\Cref{conj:fw} is equivalent to the following:
\begin{conjecture}\label{conj:fw-restated}
    Let $\CC$ be a hereditary graph class. Then the following conditions are equivalent:
    \begin{enumerate}[label=(\roman*)]
         \item for every real $\eps>0$ and integer $r\ge 1$, 
        for every bipartite graph $G$
        such that $\CC$ contains some radius-$r$ encoding of $G$,
        we have: $$\frac{|E(G)|}{|V(G)|}\le O_{\CC,r,\eps}(|V(G)|^{\eps}),$$
        (equivalently, by \cref{thm:quant}, $\CC$ is monadically dependent)
        \item $\CC$ has almost bounded flip-width: For every real $\eps>0$ and integer $r\ge 1$, 
        and graph $G\in \CC$,
        we have:  $$\fw_r(G)\le O_{\CC,r,\eps}(|V(G)|^{\eps}).$$
    \end{enumerate}
\end{conjecture}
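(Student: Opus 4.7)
The plan is to prove the equivalence of (i) and (ii) in \Cref{conj:fw-restated} by handling the two directions independently. By \Cref{thm:quant}, condition (i) is equivalent to $\CC$ being monadically dependent, so the conjecture reduces to showing that a hereditary graph class is monadically dependent if and only if it has almost bounded flip-width. The forward direction (ii)~$\Rightarrow$~(i) is already provided by \Cref{thm:fw-intro}, which combined with \Cref{thm:quant} directly yields one implication. The substantive content is the reverse direction (i)~$\Rightarrow$~(ii): given a monadically dependent hereditary class $\CC$, show that for every $r\ge 1$ and every $\eps>0$, every $n$-vertex graph $G\in\CC$ satisfies $\fw_r(G)\le O_{\CC,r,\eps}(n^\eps)$.

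I would attack (i)~$\Rightarrow$~(ii) by contrapositive, in close analogy with the sparse theory where large weak $r$-coloring numbers are turned into dense $r$-subdivisions. Concretely, assume some $n$-vertex $G\in\CC$ has $\fw_r(G)>n^\eps$; the objective is to extract from $G$ a dense radius-$r$ encoding of a bipartite graph $H$ with $|E(H)|>|V(H)|^{1+\eps'}$, contradicting condition (i) and thereby (via \Cref{thm:quant}) forcing monadic independence. The first step is to promote the qualitative insulator construction of \Cref{prop:prepatternImpliesInsulation} to a quantitative variant: starting from a sufficiently large vertex set and applying the insulation property iteratively, one either produces a nested hierarchy of insulators of depth $r$ whose combinatorial complexity controls $\fw_r(G)$, or else exhibits a large prepattern. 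The second step is to observe that a strategy witnessing large flip-width must evade such a depth-$r$ insulator hierarchy; by the alternative in the dichotomy underlying \Cref{prop:prepatternImpliesInsulation}, this evasion forces the presence of many prepatterns, which the cleanup machinery from \Cref{prop:patterns-main} then assembles into a flipped star/clique/half-graph crossing dense enough to serve as the sought radius-$r$ encoding.

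The main obstacle is the quantitative loss in the insulator construction: the bounds obtained in this paper are of Ramsey type, and are far too weak to yield $n^\eps$-style dependencies, so a refined insulator-building procedure with polynomial control on the sizes at each level will be required. A Marcus--Tardos flavored density argument applied to the flip patterns of the supposed cop-and-robber strategy seems the natural tool, since it produced the original polynomial bounds on twin-width and would be the right mechanism to convert a high-flip-width escape route into quadratic edge density on the encoded bipartite graph. A secondary difficulty is that the flip-width game permits a fresh flip between every round, so one must isolate an invariant---plausibly a form of \emph{stability} of the robber's escape across layers of flips---that survives this recoloring and can be realigned with the layered structure of an $r$-crossing. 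I expect this last point, reconciling the per-round flips of the game with the fixed layer-flips of the pattern definition, to be the genuinely hard combinatorial step.
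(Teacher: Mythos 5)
This statement is presented in the paper as a \emph{conjecture}, not a theorem, and the paper does not prove it. The paper explicitly states immediately after \Cref{conj:fw-restated} that ``the implication (ii)$\rightarrow$(i) in \cref{conj:fw-restated} holds, by \cref{thm:quant} and \cref{thm:fw-intro}''---and says nothing more, because the converse is the substance of \Cref{conj:fw}, which remains open. So there is no ``paper's own proof'' of the full equivalence against which to compare your attempt.

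To your credit, your proposal correctly identifies exactly this state of affairs: you observe that (i) is equivalent to monadic dependence by \Cref{thm:quant}, that (ii)~$\Rightarrow$~(i) follows from \Cref{thm:fw-intro}, and that (i)~$\Rightarrow$~(ii) is the hard direction. What follows, however, is not a proof of (i)~$\Rightarrow$~(ii); it is a research sketch, and you yourself flag the two obstructions that make it a sketch rather than an argument. The first---that the Ramsey-type bounds in the insulator machinery are exponentially too weak to deliver $n^\eps$ control---is real and is essentially why the quantitative theory is singled out as future work in the paper's conclusion. The second---reconciling the round-by-round flips of the flip-width game with a fixed layered flip on a static pattern---is also a genuine gap, not a technicality: there is currently no known invariant that survives re-flipping and aligns with a crossing's layer structure, and the paper's own \Cref{conj:fw-density} is precisely the missing lemma you would need to close this. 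Your proposed Marcus--Tardos mechanism is a plausible heuristic analogy (it is what drove the ordered-graph case in \cite{twwIV}), but no reduction from flip-width strategies to grid-like density is exhibited. So the honest verdict is: the (ii)~$\Rightarrow$~(i) direction of your write-up is correct and matches the paper; the (i)~$\Rightarrow$~(ii) direction is an open problem and your sketch, while pointing in a sensible direction, does not resolve it and cannot, with the tools in the paper, be completed into a proof.
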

Note that the implication (\(ii\))$\rightarrow$(\(i\)) in \cref{conj:fw-restated} holds, by \cref{thm:quant} and \cref{thm:fw-intro}.
The following conjecture would imply the converse implication.

\begin{conjecture}\label{conj:fw-density}
    For every $r\ge 1$ there are integers  $s,k\ge 1$ such that for every graph $G$ we have:
    $$\fw_r(G)\le \max_{H} \left(\frac{|E(H)|}{|V(H)|}\right)^k,$$
    where the maximum ranges over all bipartite graphs $H$ such that $G$ contains some radius-$s$ encoding of $H$ as an induced subgraph.
\end{conjecture}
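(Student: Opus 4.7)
The plan is to prove the conjecture by contrapositive: starting from a graph $G$ with $\fw_r(G) \ge K$, we extract as an induced subgraph a radius-$s$ encoding of some bipartite graph $H$ with $|E(H)|/|V(H)| \ge K^{1/k}$, for suitable $s$ and $k$ depending only on $r$. The main tool should be the game characterization of flip-width from \cite{flipwidth}: $\fw_r(G)\ge K$ means the robber has a winning strategy against $K-1$ cops in the radius-$r$ flip-width game. Unfolding such a strategy, in the spirit of how winning robber strategies in the splitter game yield subdivided cliques in the nowhere dense setting \cite{grohe2017deciding}, we would extract a large family of robber trajectories, giving many pairs of vertices joined by short paths that robustly survive any bounded-flip separation.

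Next, we would feed this extracted configuration into the machinery developed in this paper. The idea is to attempt, iteratively, the insulator construction from \Cref{part:structure} on a growing set of robber witnesses; either the construction succeeds, giving a tall insulator and hence (via \Cref{prop:ge-implies-fb}) a flip-breaking separation of the witnesses that must contradict the robber's winning strategy, or it fails with a prepattern. By the cleanup from \Cref{part:nonstructure} summarized around \Cref{prop:patterns-main}, prepatterns can be refined into flipped star/clique/half-graph $r$-crossings or comparability grids. The crucial observation, mirroring the construction of radius-$r$ encodings preceding \Cref{thm:quant}, is that a \emph{partial} crossing whose paths $\pi_{i,j}$ exist only for $(i,j)\in E(H)$ for some bipartite graph $H$ is precisely a radius-$s$ encoding of $H$ as an induced subgraph of $G$. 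The number of path-pairs produced by the robber strategy gives a lower bound on $|E(H)|$, while the number of root candidates bounds $|V(H)|$, yielding the density lower bound in terms of $K$.

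The hard part will be the quantitative tracking. Our \Cref{thm:mainflipbreakable} is qualitative: the bounds on the insulator width coming from Ramsey's theorem are tower-type and therefore destroy any hope of a polynomial relationship between $\fw_r(G)$ and $|E(H)|/|V(H)|$. To prove \Cref{conj:fw-density}, one would need a substantially more efficient replacement for the Ramsey steps throughout \Cref{part:structure} and \Cref{part:nonstructure}: each application of Ramsey, each regularization of types, and each promotion step in the insulator construction must lose only a polynomial factor in the relevant parameters. This is analogous to the gap between the qualitative characterization of nowhere denseness by uniform quasi-wideness and the quantitative characterization by weak coloring numbers, and indeed the authors explicitly flag this as the direction in which their theory must be developed. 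A promising technical angle would be to seek density-increment arguments, in the style of the regularity method, which directly produce a dense bipartite pattern whenever the insulator construction fails at a given height, thereby avoiding iterated Ramsey blow-ups and retaining polynomial control.

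Finally, once a polynomial extraction is achieved, combining it with \Cref{thm:fw-intro} (giving the forward implication of \Cref{conj:fw}) and \Cref{thm:quant} would complete the proof of \Cref{conj:fw-density}, and simultaneously resolve \Cref{conj:fw} for hereditary graph classes. The interplay between the qualitative tools developed here and the quantitative refinements needed is the principal obstacle, and overcoming it appears to require genuinely new ideas beyond the ones used in the present paper.
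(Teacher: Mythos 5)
The statement you were asked to prove is Conjecture~\ref{conj:fw-density}, which the paper explicitly leaves open: there is no proof of it anywhere in the paper, and the authors present it precisely as a direction for future work. So there is no ``paper's own proof'' to compare against, and what you have written is, by your own admission, not a proof but a research sketch.

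Reading it on its own terms, you correctly identify the central obstacle --- the Ramsey-driven bounds in Parts~\ref{part1} and~\ref{part2} are of tower type, so chaining the insulator/prepattern/crossing machinery through the robber-strategy extraction cannot possibly yield the polynomial bound $\fw_r(G)\le \max_H (|E(H)|/|V(H)|)^k$; one would need a fundamentally different, Ramsey-free ``density increment'' argument, which does not exist yet. That diagnosis matches what the authors themselves say in the Conclusions (this conjecture is proposed as a quantitative analogue of the weak-colouring-number characterization, and the paper emphasizes that such quantitative refinements are a challenging open goal). But a sketch of the obstruction is not a proof, and your proposal does not supply the missing ingredient: there is no concrete mechanism turning a robber winning strategy for $K-1$ cops into a radius-$s$ encoding of a bipartite graph of density at least $K^{1/k}$ while keeping all losses polynomial. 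The allusion to the splitter-game argument in the nowhere dense setting does not transfer, because that argument already loses more than a polynomial factor, and no replacement is given. One smaller point: in your final paragraph the logical direction is off. \Cref{thm:fw-intro} and \Cref{thm:quant} give the implication $(ii)\rightarrow(i)$ in \Cref{conj:fw-restated}; they do not help \emph{prove} \Cref{conj:fw-density}. Rather, \Cref{conj:fw-density} is what the paper says \emph{would supply} the missing implication $(i)\rightarrow(ii)$ and hence resolve \Cref{conj:fw}. They are inputs on the wrong side of the argument to be useful for establishing the conjecture itself.

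In short: you have reproduced the paper's own assessment of why the conjecture is hard, not a proof, and the step from $\fw_r(G)\ge K$ to a dense radius-$s$ encoding with polynomial control remains entirely missing.
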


An analogue of \cref{conj:fw-density} holds in the sparse setting. There, the flip-width parameters are replaced by weak coloring numbers, and the maximum ranges over all  graphs $H$ such that $G$ contains some $s$-subdivision of $H$ as a subgraph.
\Cref{conj:fw-density} would furthermore imply the following characterization of classes of bounded flip-width,
analogous to a known characterization of classes with bounded expansion in terms of weak coloring numbers~\cite{ZHU20095562}.

\begin{conjecture}\label{conj:fw-quant}
    Let $\CC$ be a hereditary graph class. Then the following conditions are equivalent:
    \begin{enumerate}[label=(\roman*)]
        \item for every integer $r\ge 1$ there is a constant $k_r$
        such that 
        for every bipartite graph $G$
        such that $\CC$ contains some radius-$r$ encoding of $G$,
        we have that $|E(G)|\le k_r\cdot |V(G)|$,

        \item $\CC$ has bounded flip-width: For every integer $r\ge 1$ there is a constant $k_r$ such that for every 
     graph $G\in \CC$,
        we have that $\fw_r(G)\le k_r$.
    \end{enumerate}
\end{conjecture}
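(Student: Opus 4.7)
The plan is to establish the equivalence of the two conditions in Conjecture~\ref{conj:fw-quant} by treating the two implications separately, relying heavily on the combinatorial machinery developed earlier in this paper and on the analogy with the sparse characterization of bounded expansion via weak coloring numbers.

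For the implication $(ii) \Rightarrow (i)$, I would show that bounded flip-width at some sufficiently large radius $r'$ (depending on $r$) bounds the edge density of any bipartite graph admitting a radius-$r$ encoding in $\CC$. The key point is that, in the unflipped case, each edge of a bipartite graph $G'=(A,B,E)$ corresponds in its radius-$r$ encoding to an internally vertex-disjoint path of length $r+1$ between two roots. Many edges in $G'$ therefore yield many disjoint short paths. Running the cop-robber game defining $\fw_{r+1}$ against a robber hopping between roots along these paths, a counting argument in the spirit of the sparse case forces any super-linear dependence of $|E(G')|$ on $|V(G')|$ to defeat a bounded number of cops. To absorb the at most $2^{(r+2)^2}$ possible flip patterns used in the definition of a radius-$r$ encoding, one inflates the number of cops by a factor that depends only on $r$, placing cops in each of the $r+2$ layers to neutralize the flips before the robber moves.

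For the harder implication $(i) \Rightarrow (ii)$, the natural route is to establish Conjecture~\ref{conj:fw-density}, from which $(i) \Rightarrow (ii)$ follows immediately. My strategy would be to invert the insulator construction of this paper: starting with a graph $G \in \CC$ of high flip-width $\fw_r(G)$, a winning robber strategy against $k$ cops produces a large set $W \subseteq V(G)$ whose vertices resist separation by $k$-flips at radius $r$. One then iteratively applies the prepattern-or-insulator dichotomy (\Cref{prop:prepatternImpliesInsulation}). Either the construction succeeds, and the resulting insulator yields a cop strategy bounding $\fw_r(G)$ in terms of the insulator's width; or it fails, producing a large prepattern which, after Ramsey-style clean-up as in \Cref{prop:patterns-main}, gives a dense radius-$s$ encoding of a complete (or near-complete) bipartite graph in $\CC$, contradicting the assumed density bound.

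The main obstacle is certainly the implication $(i) \Rightarrow (ii)$, and specifically Conjecture~\ref{conj:fw-density}. In the sparse analogue, the link between bounded density of shallow topological minors and bounded weak coloring numbers rests on Zhu's decomposition~\cite{ZHU20095562}, which crucially exploits the fact that vertex deletions commute cleanly with shortest-path distances. In the flip setting, this clean commutativity breaks down: a single flip can reshape distances dramatically, so one cannot simply recurse on distance classes. Bridging this gap likely demands a new ``weak flip-coloring'' invariant that behaves well under both flips and distances, together with a strengthening of the insulator construction in which the radius of the resulting encoding is tightly controlled by the flip-width radius $r$, rather than blowing up uncontrollably with the number of rows added.
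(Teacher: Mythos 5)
This statement is a \emph{conjecture} in the paper, not a theorem: the authors do not prove it, and they say so explicitly. Immediately after stating it they note that the implication $(ii)\Rightarrow(i)$ follows from results of \cite{flipwidth} (every weakly sparse transduction of a class of bounded flip-width has bounded expansion), and that the converse would resolve Conjecture~11.4 of \cite{flipwidth}, which is open. So there is no proof in the paper for you to match against, and your honest admission that the direction $(i)\Rightarrow(ii)$ has a gap is exactly right — you have correctly located the same open problem the authors identify, namely the analogue of Zhu's decomposition (their Conjecture~\ref{conj:fw-density}).

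On the parts you do sketch: for $(ii)\Rightarrow(i)$ you propose a direct cop-robber/counting argument along the paths of a radius-$r$ encoding, absorbing the flip patterns by increasing the cop budget by a factor $\const(r)$. This is a legitimate alternative to the route the paper cites, which goes through the structural statement in \cite{flipwidth} that weakly sparse transductions of bounded flip-width classes have bounded expansion, plus the fact (used in the proof of \cref{thm:quant}) that bounded expansion implies the linear density bound for shallow subdivisions. Your game argument is closer in spirit to the paper's own \cref{lem:fw-crossings}, where bi-hideouts are built from flipped $r$-crossings; to push it from the complete-bipartite case to an arbitrary bipartite $G$ with $|E(G)|\ge c\,|V(G)|$ you would need a version of \cref{lem:half-graph-forest}/\cref{lem:path-flips} that works with only a positive density of surviving paths rather than all of them, so the counting details are not automatic — but the plan is sound. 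The citation route has the advantage of being already established.

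For $(i)\Rightarrow(ii)$, the proposed reduction to Conjecture~\ref{conj:fw-density} is exactly what the paper also does, and neither the paper nor you have a proof of \ref{conj:fw-density}. Your description of what would be needed — a flip-stable analogue of weak coloring numbers that behaves well under recursion, together with control of the radius of the extracted encodings — is a fair assessment of the obstacle. But I want to flag one place where the sketch is more optimistic than the current machinery supports: you suggest that the insulator construction (``the prepattern-or-insulator dichotomy'') can be driven directly by a robber strategy that survives $k$ cops at radius $r$, producing a single set $W$ that resists $k$-flips. However, the insulator growing lemmas of the paper give no quantitative control tying the number of rows, the order lost at each step, or the cost $k_r$ to the flip-width radius; they are purely qualitative (for fixed $r$, constants depend on $r$ in an unspecified way). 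Extracting a \emph{linear} density bound from this would require making those dependences explicit and polynomial, which is a substantive open problem in its own right, not merely a strengthening of an existing lemma. So the gap is genuine and is the same gap the paper acknowledges.
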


Note that the implication (\(ii\))$\rightarrow$(\(i\))
follows from the results 
of \cite{flipwidth} (that every weakly sparse 
transduction of a class of bounded flip-width 
has bounded expansion).
The converse implication would resolve
Conjecture 11.4 from \cite{flipwidth}, 
which predicts that if a class $\CC$ has unbounded flip-width,
then $\CC$ transduces a weakly sparse class of unbounded expansion.

\paragraph{Near-twins.}
A specific goal, not involving flip-width, 
which would be implied by the above conjectures, 
can be phrased in terms of \emph{near-twins}. 
Say that two distinct vertices $u,v$ in a graph $G$ 
are $k$-near-twins, where $k\in\N$, if the symmetric difference 
of the neighborhoods of $u$ and of $v$ consists of at most $k$ vertices.
It is known \cite{flipwidth} that every graph $G$ with more than one vertex 
contains a pair of $(2\fw_1(G))$-near-twins.
Consequently, for every class $\CC$ of bounded flip-width
there is a constant $k$ such that every graph $G\in\CC$ 
with more than one vertex contains a pair of $k$-near-twins.
Similarly, if $\CC$ has almost bounded flip-width then 
every $n$-vertex graph $G\in\CC$ with $n>1$
contains a pair of $O_{\CC,\eps}(n^\eps)$-near-twins.

Therefore, a first step towards \cref{conj:fw} is to 
prove that for all monadically dependent classes, every $n$-vertex graph $G\in\CC$ with $n>1$
contains a pair of $O_{\CC,\eps}(n^\eps)$-near-twins.
Similarly, a step towards \cref{conj:fw-quant} 
is to prove the following consequence of Conjecture~\ref{conj:fw-density} 
(in the case $r=1$),
stated below.

\begin{conjecture}
    There is an integer $s\ge 1$ and an unbounded function $f\from\N\to\N$ such that every graph $G$ with 
    more than one vertex 
    and no pair of $d$-near twins, for some $d\in\N$, contains as an induced subgraph
    a radius-$s$ encoding of some bipartite graph $H$ 
    with $|E(H)|/|V(H)|\ge f(d)$.        
\end{conjecture}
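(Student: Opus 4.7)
The plan is to translate the absence of $d$-near-twins into a dense induced encoding by routing through the radius-$1$ flip-width. The starting observation, from \cite{flipwidth}, is that every graph on more than one vertex contains a pair of $(2\fw_1(G))$-near-twins, so the hypothesis forces $\fw_1(G) > d/2$. The target then becomes: from a graph of large radius-$1$ flip-width, extract as an induced subgraph a radius-$s$ encoding (with $s$ fixed, plausibly $s=1$) of a bipartite graph $H$ whose edge-to-vertex ratio grows unboundedly with $\fw_1(G)$.

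The main route I would follow is to run the insulator construction of \Cref{part:structure} in a refined way that also keeps track of the local adjacencies preventing a bounded-size flip from collapsing distances. Large $\fw_1(G)$ should force the insulator-or-prepattern dichotomy (\Cref{prop:prepatternImpliesInsulation}) to repeatedly produce many parallel copies of a prepattern; a Ramsey cleanup in the style of \Cref{part:nonstructure} should then align these copies into a single uniform induced configuration which, by its shape, is already a radius-$s$ encoding whose two sides of roots form the bipartition of some bipartite graph $H$, with the surviving paths $\pi_{i,j}$ exactly encoding the edges of $H$.

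The main obstacle is ensuring that $H$ is dense, and it is precisely here that the no-$d$-near-twins hypothesis must be exploited. The contrapositive target is: if every extracted $H$ had $|E(H)|/|V(H)| = o(d)$, then an iterated pigeonhole over the roots should yield two roots whose distinguishing neighborhoods are almost contained in a small common set, producing a pair of $d$-near-twins and a contradiction. The subtlety is that the intra-layer edges and layer-flips permitted in the definition of a radius-$s$ encoding can cosmetically boost apparent density without witnessing genuine neighborhood diversity, so this pigeonhole step has to account for all such spurious contributions. A reasonable intermediate milestone is to first settle the conjecture in the weakly sparse case, where classical density-versus-subdivision arguments should adapt directly, and then bootstrap to general $G$ via the weakly sparse transduction reductions discussed in \cite{flipwidth}.
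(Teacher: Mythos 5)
The statement you are addressing is a \emph{conjecture} in the paper's Conclusions section (under ``Near-twins''); the paper explicitly presents it as an open problem, motivated as a consequence of \Cref{conj:fw-density} (itself open) in the case $r=1$. There is no proof in the paper to compare against, and your text is a sketch of an approach, not a complete argument.

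The sketch has genuine gaps. First, the opening reduction to large $\fw_1(G)$ is fine, but the insulator/prepattern machinery of \Cref{part:structure} and the cleanup of \Cref{part:nonstructure} that you propose to invoke are engineered to deliver a \emph{qualitative, class-level} dichotomy (a class is monadically dependent, or it contains arbitrarily large patterns), with all bookkeeping done via anonymous class-dependent constants $\const(\CC,r)$ and unbounded functions $U_{\CC,r}(\cdot)$. Nothing in that machinery produces, for a \emph{single} graph $G$, an encoded bipartite graph $H$ whose density is lower-bounded by an explicit function of $d$; the machinery simply does not track that kind of per-graph quantitative information, and the conjecture is precisely a quantitative strengthening that the paper flags as future work.

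Second, you correctly single out the density of $H$ as the crux, but the proposed contrapositive pigeonhole (``if every extracted $H$ were sparse, produce a pair of $d$-near-twins'') is stated as a target, not carried out; in fact you immediately flag a failure mode of your own approach --- the intra-layer edges and layer flips permitted in a radius-$s$ encoding can inflate apparent density without witnessing neighborhood diversity --- and offer no mechanism for ruling it out. Third, the fallback of settling the weakly sparse case and then ``bootstrapping to general $G$ via the weakly sparse transduction reductions'' of \cite{flipwidth} is not available: the weakly sparse case collapses to the classical density-versus-subdivision story, and it is exactly the dense case that makes this a conjecture; the transduction results in \cite{flipwidth} do not transport a quantitative, per-graph density lower bound across a transduction. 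As written, your proposal restates the difficulty rather than resolving it.
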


% Indeed, as every graph $G$ has a pair of $(2fw_1(G))$-near-twins, 
% Conjecture~\ref{conj:fw-density}, in the case $r=1$,
% implies that for every graph $G$,
% if $t$ is the maximum of the expression in the right-hand side 
% of the inequality in Conjecture~\ref{conj:fw-density},
% then $G$ contains a pair of $2t$-near-twins.
% In particular, if $G$ contains no pair of $d$-near-twins,
% then $2t>d$, proving the existence of a bipartite graph $H$ 
% with $|E(H)|

\paragraph{VC-density and Neighborhood Complexity.}
Another, related conjecture \cite[Conj.\ 2]{wip} bounds the 
\emph{neighborhood complexity}, or \emph{VC-density} of set systems defined by neighborhoods in graphs from a monadically dependent graph class, and is phrased as follows.
\begin{conjecture}[\cite{wip}]
    Let $\CC$ be a monadically dependent graph class and let $\eps>0$ be a real.
    Then for every graph $G\in\CC$ and set $A\subset V(G)$,
    we have that
    $$|\setof{N(v)\cap A}{v\in V(G)}|\le O_{\CC,\eps}(|A|^{1+\eps}).$$
\end{conjecture}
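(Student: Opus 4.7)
The plan is to derive the conjecture as an application of the insulation property (\cref{prop:prepatternImpliesInsulation}), in analogy with how uniform quasi-wideness and weak coloring numbers are combined to bound the neighborhood complexity of nowhere dense classes. Flip-breakability (\cref{thm:mainflipbreakable}), and in particular its strengthening to the insulation property, produces a large subset $W_\star\subseteq A$ whose adjacencies to the rest of the graph are almost entirely controlled by a bounded-color coloring; I would exploit this to bound the number of distinct traces $N(v)\cap W_\star$ linearly in $|W_\star|$, then peel $W_\star$ off $A$ and iterate.

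The local step: suppose $W_\star$ sits in the bottom row of an insulator with a constant number of colors $c$. For any vertex $v\in V(G)$ the trace $N(v)\cap W_\star$ is determined by (i) the color class of $v$, (ii) a bounded amount of ``exceptional'' adjacency information near the column boundaries and the top row of the insulator, and (iii) possibly the column position of $v$ if $v$ itself lies inside the insulator. A careful count of these possibilities should yield $|\{N(v)\cap W_\star : v\in V(G)\}|\leq c'\cdot |W_\star|$ for some constant $c'=c'(c)$.

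The iteration: given $A$ with $|A|=n$, apply the local step to obtain $W_\star\subseteq A$ of size at least $n^{1-\eps/2}$, then recurse on $A\setminus W_\star$, and combine the per-round trace counts additively (the peeled sets partition $A$, and tagging a trace by its ``phase of peeling'' only adds a $\log n$ overhead). After $O(n^{\eps/2})$ rounds this yields $O_{\CC,\eps}(n^{1+\eps})$ total traces, as required.

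The \emph{main obstacle} is controlling the ``exception'' contribution in the local step: the insulator's top row and the adjacencies between neighbouring columns are allowed to be arbitrary, and a naive count of their possible patterns could already exceed the linear target on $|W_\star|$. Overcoming this likely requires either refining the insulator construction so that the uncontrolled boundary is asymptotically negligible relative to $|W_\star|$, or combining the insulator with a uniform VC-dimension bound for the edge relation on a monadically dependent class. A secondary, quantitative obstacle is that the paper's proof of \cref{thm:mainflipbreakable} may yield only an iterated-exponential bound on the function $N_r(m)$; a polynomial bound is needed for the iteration above to deliver the $n^{1+\eps}$ rate, and the discussion around \cref{conj:fw,conj:fw-quant} suggests that such a quantitative refinement is itself of comparable difficulty to the present conjecture.
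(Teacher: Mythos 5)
The statement you are trying to prove is not a theorem of the paper: it is presented verbatim as an open \emph{conjecture} from \cite{wip}, and the paper explicitly records that it is only known for nowhere dense classes, monadically stable classes, and classes of bounded twin-width. So there is no proof in the paper to compare against, and your plan, as you already concede, does not close the gap either. The two obstacles you flag are genuine, and I would add a third.

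First, the insulator construction deliberately leaves the top row and adjacencies between close cells uncontrolled (see the ``exceptionally'' clause in \ref{itm:adjacency} and the commentary following \cref{def:insulator}); nothing in \cref{prop:prepatternImpliesInsulation} gives a linear bound on the number of distinct patterns vertices can realize against those boundary cells, so your local step already stalls. Second, the insulation property only certifies $|W_\star|\ge U_{\CC,r}(|A|)$ where $U$ is an \emph{arbitrary} unbounded function arising from repeated applications of Ramsey's theorem; its inverse can be tower-type, so you cannot extract $|W_\star|\ge |A|^{1-\eps/2}$, and the recursion would run for far more than $O(|A|^{\eps/2})$ rounds. Third, the recursion bookkeeping itself is off: if $W_1,\dots,W_k$ partition $A$, a trace $N(v)\cap A$ is determined by the tuple $(N(v)\cap W_1,\dots,N(v)\cap W_k)$, so the trace counts multiply rather than add. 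The known arguments for nowhere dense classes sidestep this by working with a \emph{quantitative} parameter (weak $r$-coloring numbers) rather than a Ramsey-type property like uniform quasi-wideness. The paper's own ``Conclusions and Future Work'' section makes exactly this qualitative-versus-quantitative distinction and names the missing quantitative tool for monadically dependent classes as a principal open problem; until some analogue of weak coloring numbers (or almost bounded flip-width, \cref{conj:fw}) is established, the flip-breakability machinery of this paper is not by itself strong enough to run your argument.
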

This conjecture is confirmed for all nowhere dense classes \cite{EickmeyerGKKPRS17, PilipczukST18a}, 
for all monadically stable classes \cite{wip},
and for all classes of bounded twin-width \cite{DBLP:journals/algorithmica/BonnetKRTW22, DBLP:conf/lics/Przybyszewski23}.
% has a pair of $d$-near-twins, for some $d\le \max_{H}
% (|E(H)|/|V(H)|)^k$, where $H$ ranges over all bipartite graphs such that $G$ contains some radius-$s$ encoding of $H$ as an induced subgraph.

\clearpage
\section{Preliminaries}\label{sec:prelims}

\newcommand{\tpred}{\mathrm{pred}}
\newcommand{\tsucc}{\mathrm{succ}}
\newcommand{\core}{\mathrm{core}}
\newcommand{\tail}{\mathrm{tail}}

\paragraph*{Sequences.}
To address and order the combinatorial objects of this paper,
we use \emph{indexing sequences}.
These are sequences (usually denoted by \(I,J\)) of elements without duplicates.
We denote the sequence \((1,\dots,n)\) also sometimes by \([n]\).
We write \(I \subseteq J\) if \(I\) is a subsequence of \(J\).
We use the usual comparison operators \(<,>\) to indicate the order of elements within a sequence.
Given a sequence $I$, and an element $i \in I$, we denote by $\tpred_I(i)$ and $\tsucc_I(i)$ the predecessor and successor of $i$ in $I$.
Moreover, if $I = (a_{1}, \ldots, a_{n})$, we define $\tail(I) := (a_{2}, \ldots, a_{n})$.

\paragraph*{Graphs.}
All graphs are simple and undirected. Unless a graph $G$ is considered to be an input to an algorithm, we do not need to assume that $G$ is a finite graph.

The \emph{length} of a path equals its number of edges.
The \emph{distance} between two vertex sets \(A\) and \(B\) in a graph \(G\), denoted by \(\dist_G(A,B)\),
is the length of a shortest path with endpoints in \(A\) and \(B\).
Two sets $A,B$ are \emph{non-adjacent} if \(\dist(A,B) > 1\).
The \emph{open} and \emph{closed \(r\)-neighborhoods} of a vertex \(v\) are denoted, respectively, by
\[
    N^G_r[u] = \{ v \in V(G) \mid \dist_G({\{u\},\{v\}}) \le r \},
    \quad\quad
    \quad\quad
    N^G_r(u) = N^G_r[u] \setminus \{v\}.
\]
The \emph{complement graph} of a graph \(G\) is denoted by \(\bar G\).
By default, graphs have no colors, but we speak of \emph{colored graphs} when we allow vertex-colors.
In this case we treat the colors as a fixed set of unary predicates which partition the vertex set, that is, each vertex has exactly one color.
We call \(G^+\) an \emph{\(s\)-coloring of \(G\)}, if \(G^+\) is obtained by coloring $G$ with $s$ many colors.

A \emph{graph class} is a set $\CC$ of graphs. 
(In Section~\ref{sec:small} we additionally assume that $V(G)\subset \N$ for all $G\in \CC$, and that 
$\CC$ is closed under isomorphism, but this assumption is not essential 
in other sections.)
A graph class $\CC$ is \emph{hereditary} if for every graph $G\in \CC$ and set $W\subset V(G)$ we have that  $G[W]\in\CC$, where $G[W]$ denotes the subgraph of $G$ induced by $W$.

\paragraph*{Flips.}
Fix a graph $G$ and a partition $\KK$ of its vertices.
We will think of $\KK$ as a coloring of the vertices of $G$. 
For every vertex $v \in V(G)$ we denote by $\KK(v)$ the unique color $X \in \KK$ satisfying $v \in X$.
Let $F \subseteq \KK^2$ be a symmetric relation.
The \emph{flip} $G \oplus_\KK F$ of $G$ is defined as the (undirected) graph with vertex set $V(G)$,
and edges defined by the following condition, for distinct $u,v\in V(G)$:
\[
    \set{u,v} \in E(G \oplus_\KK F) \Leftrightarrow \begin{cases}
        \set{u,v} \notin E(G) & \text{if } (\KK(u), \KK(v)) \in F,\\
        \set{u,v} \in E(G) & \text{otherwise.}
    \end{cases}
\]

We call $G \oplus_\KK F$ a \emph{$\KK$-flip} of $G$.
If $\KK$ has at most $k$ parts, we say that $G \oplus_\KK F$ a \emph{$k$-flip} of $G$.
A crucial property of flips is that they are reversible using first order-logic.
We can recover the edges of the original graph in a coloring of its flip as follows.
Let $H := G \oplus_\KK F$ and $H^+$ be the coloring of $H$ where each part of $\KK$ is assigned its own color.
Define the symmetric binary formula
\[
    \phi_{\KK,F}(x,y) := x \neq y \wedge \bigvee_{X,Y \in \KK} x \in X \wedge y \in Y \wedge \bigl( E(x,y) \texttt{ XOR } (X,Y) \in F\bigr).
\]
We now have $G \models E(u,v) 
    \Leftrightarrow
    H^+ \models \phi_{\KK,F}(u,v).$

\paragraph*{Logic.}
All formulas are over the signature of (possibly colored) graphs.
Let $\alpha(x;y_1,\ldots,y_k)$ be a formula, with free variables partitioned into $x$ and $y_1,\ldots,y_k$, as indicated by the colon.
Given a graph $G$, vertices $v_1,\ldots,v_k$, and a set $U\subset V(G)$,
we denote
$$\alpha(U;v_1,\ldots,v_k):=\setof{u\in U}{G\models \alpha(u;v_1,\ldots,v_k)}.$$

Let $G^+$ be a graph with colors $U_1,\ldots,U_l$.
The \emph{atomic type} of a tuple $\bar v=(v_1,\ldots,v_k)$ of vertices 
in $G^+$ 
is the quantifier-free formula $\alpha(x_1,\ldots,x_k)$ defined as the conjunction of all literals  $\beta(x_1,\ldots,x_k)$ 
(that is, formulas $x_i=x_j$, $E(x_i,x_j)$, $U_1(x_i), \ldots, U_l(x_i)$, or their negations)
such that 
 $$G^+ \models\beta(\bar v).$$
 We write $\atp_{G^+}(v_1,\ldots,v_k)$ to denote the atomic type of $\bar v$ in $G^+$. 

\paragraph*{Transductions and Monadic Dependence.}
In the Introduction we defined the notions 
that a graph class $\CC$ \emph{transduces} a graph class $\DD$,
and that a graph class $\CC$ is monadically dependent.
See \cref{sec:notions} for a more formal treatment.

\paragraph*{Computation Model.}
Our proofs are effective. In the algorithmic statements, we assume that 
the input graph is represented by its adjacency matrix. We assume the standard word RAM model of computation, with machine words of length $O(\log n)$, where $n$ is the size of the input. In this model, it is possible 
to store each vertex of the input graph in a single machine word,
and to determine the adjacency of two vertices in time $O(1)$.

\paragraph*{Asymptotic Notation.}
We introduce two new notations that simplify our statements and proofs.

\begin{quote}\itshape
    \(\const(p_1,\ldots,p_k)\) denotes a natural number, only depending on the parameters $p_1,\ldots,p_k$. 
\end{quote}
We allow parameters of any kind, in particular, graph classes.
Moreover, as an analogue of the $O$-notation,

\begin{quote}\itshape
    $U_{p_1,\ldots,p_k}(n)$ denotes an anonymous function that is non-negative and unbounded in $n$, and only depends on the parameters $p_1,\ldots,p_k$.
\end{quote}
More precisely, the $i$th occurrence of the notation $U_{p_1,\ldots,p_k}(n)$ in the text should be interpreted as
$f^i_{p_1,\ldots,pk}(n)$, for some fixed unbounded function $f^i_{p_1,\ldots,p_k}:\N\rightarrow\N$ that depends only on the parameters $p_1,\ldots,p_k$ and $i$.
To illustrate the use, we state Ramsey's theorem with this notation.

\begin{example}
    For every $k$-coloring of the edges of the complete graph on vertex set $[n]$, 
    there exists a set $X \subset [n]$ of size $U_k(n)$ which induces a monochromatic clique.
\end{example}

\paragraph*{Ramsey Theory.}
For $\ell\in \N$ and a set $I$, let $I^{(\ell)}$ denote the set of 
subsets  $J\subset I$ of size $\ell$.

\begin{lemma}[Ramsey's Theorem, \cite{ramsey}]\label{lem:ramsey}
    For every $k,\ell,n\in\N$ there exists 
    $N\in \N$ such that for every $$c\from [N]^{(\ell)}\to [k]$$ 
    there is some $I\in [N]^{(n)}$
    such that $c(J)=c(J')$ for all $J,J'\in I^{(\ell)}$.

    \effective{Moreover, there is a function $f\from \N^3\to\N$
    such that for every $k,\ell\in\N$
    the function $m\mapsto f(k,\ell,m)$ is monotone and unbounded,
    and there is an algorithm that, given 
    numbers $k,\ell,m\in \N$ and 
    a function $c\from [m]^{(\ell)}\to[k]$, 
    computes in time $O_{k,\ell}(m^\ell)$
    a set $I\subset [m]$ of size $f(k,\ell,m)$
    such that $c(J)=c(J')$ for all $J,J'\in I^{(\ell)}$.}
\end{lemma}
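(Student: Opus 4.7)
The plan is to prove the lemma by induction on $\ell$, using the classical Erd\H{o}s--Szekeres-style construction. The base case $\ell=1$ is the pigeonhole principle: for any $c\from [m]\to[k]$, some color class has size at least $\lceil m/k\rceil$, and it can be identified in time $O(m)$ by a single sweep through the elements. I would set $f(k,1,m):=\lceil m/k\rceil$, which is monotone and unbounded in $m$.

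For the inductive step from $\ell$ to $\ell+1$, I would proceed by greedy pivoting. Given $c\from [m]^{(\ell+1)}\to[k]$, set $S_0:=[m]$, and iterate as follows: while $|S_{i-1}|\ge 2$, pick the smallest element $x_i\in S_{i-1}$, define an auxiliary $k$-coloring $c_i$ on $(S_{i-1}\setminus\{x_i\})^{(\ell)}$ by $c_i(J):=c(\{x_i\}\cup J)$, and apply the inductive hypothesis to obtain $S_i\subseteq S_{i-1}\setminus\{x_i\}$ of size $f(k,\ell,|S_{i-1}|-1)$ that is $c_i$-monochromatic with some color $\gamma_i\in[k]$. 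Let $t$ be the number of completed iterations. By pigeonhole on $\gamma_1,\ldots,\gamma_t\in[k]$, some color $\gamma$ occurs on a set of indices $T\subseteq[t]$ of size $\lceil t/k\rceil$, and $I:=\{x_i:i\in T\}$ is $c$-monochromatic of color $\gamma$. Indeed, for any $(\ell+1)$-subset $J\subseteq I$ with smallest element $x_i$ (so $i\in T$), the remaining $\ell$ elements lie in $S_i$ by the nesting $S_j\subseteq S_i$ for $j>i$, so $c(J)=c_i(J\setminus\{x_i\})=\gamma_i=\gamma$. I would then define $f(k,\ell+1,m):=\lceil t(m)/k\rceil$, where $t(m)$ counts iterations starting from $|S_0|=m$; monotonicity and unboundedness of $f(k,\ell+1,\cdot)$ follow directly from the same properties of $f(k,\ell,\cdot)$ by a straightforward induction, yielding a concrete bound of iterated-logarithm type.

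For the algorithmic claim, I would represent $c$ as an array indexed by $(\ell+1)$-subsets of $[m]$, so that each query $c(J)$ takes $O(1)$ time. Each iteration at level $\ell+1$ enumerates the $O(|S_{i-1}|^\ell)$ elements of the domain of $c_i$ in time $O(|S_{i-1}|^\ell)$ and then invokes the level-$\ell$ algorithm recursively on an input of size $O(|S_{i-1}|^\ell)$, which by the inductive complexity bound costs $O_{k,\ell}(|S_{i-1}|^\ell)$. With at most $m$ iterations and $|S_{i-1}|\le m$, the total running time is $O_{k,\ell}(m\cdot m^\ell)=O_{k,\ell+1}(m^{\ell+1})$, matching the claimed bound at level $\ell+1$. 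The main technical point --- which I do not expect to be a serious obstacle --- is the bookkeeping of the shrinking sets $S_i$ and their auxiliary colorings within this time budget; every primitive operation either reads a cell of the input array or writes a cell of bounded auxiliary storage, and the worst-case estimate $t\le m$ (rather than a more refined one using the rapid shrinkage of $|S_i|$) already suffices.
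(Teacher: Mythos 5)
Your proof is correct and follows essentially the route the paper alludes to: the paper itself does not spell out a proof but cites standard sources and remarks that the algorithmic part ``proceeds by induction on $\ell$, where in each stage of the construction we iterate over some subset of $[m]$'', which is precisely your greedy-pivot construction. The one technicality you gloss over --- when $|S_i|<\ell$ the color $\gamma_i$ is not determined because $S_i^{(\ell)}=\emptyset$ --- is harmless, since you may assign such $\gamma_i$ arbitrarily before applying pigeonhole, and your own argument shows that no $(\ell+1)$-subset of $I$ can have such an $x_i$ as its minimum.
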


See, e.g., \cite[Thm. C]{ramsey} or 
\cite[Thm. 5.4]{gasarch2012ramsey-proofs}
for a (standard) proof of \cref{lem:ramsey}.
The ``moreover'' part of the statement above follows by tracing the construction,
which proceeds by induction on $\ell$, where in each stage of the construction we iterate over some subset of $[m]$.

For a pair $(a,b)$ of elements of a linearly ordered set $(A,\le)$,
let $\otp(a,b)\in\set{<,=,>}$ indicate whether $a<b$, $a=b$, or $a>b$ holds.
For $\ell\ge 1$ and an $\ell$-tuple of elements $a_1,\ldots,a_\ell$ of a linearly ordered set $(A,\le)$,
define the \emph{order type} of $(a_1,\ldots,a_\ell)$, denoted $\otp(a_1,\ldots,a_\ell)$ as the tuple $(\otp(a_i,a_j))_{1\le i<j\le \ell}$.

We now reformulate Ramsey's theorem 
using the $U$-notation, and also so that it talks about 
$\ell$-tuples, rather than $\ell$-element subsets.

\begin{lemma}[Reformulation of Ramsey's Theorem]
    \label{lem:reramsey}
    For every $k,\ell,n$ and coloring 
    $$c\from [n]^\ell\to [k]$$ there is 
    a subset $I\subset [n]$ of size $U_{k,\ell}(n)$
    such that
     $c(a_1,\ldots,a_\ell)$ depends only on 
    $\otp(a_1,\ldots,a_\ell)$,  for all $(a_1,\ldots,a_\ell)\in I^\ell$.

    \effective{Moreover, there is an algorithm that, given $c$, computes $I$ in time $O_{k,\ell}(n^\ell)$.}
\end{lemma}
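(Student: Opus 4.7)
The plan is to reduce \cref{lem:reramsey} to the ``unordered'' Ramsey theorem of \cref{lem:ramsey} by splitting $\ell$-tuples according to their order type and applying \cref{lem:ramsey} once per order type.

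First, I would observe that the \emph{order types} of $\ell$-tuples over a linearly ordered set are in bijection with pairs $(\sim,\prec)$, where $\sim$ is an equivalence relation on $[\ell]$ (recording which coordinates coincide) and $\prec$ is a linear order on $[\ell]/{\sim}$. In particular, the set $\mathcal{T}_\ell$ of order types is finite, of size $\const(\ell)$. For each $\tau \in \mathcal{T}_\ell$ let $s_\tau \in [\ell]$ denote the number of distinct coordinate values in a tuple of type $\tau$; then every subset $S$ of a linearly ordered set with $|S|=s_\tau$ determines a unique $\ell$-tuple $\mathrm{tup}_\tau(S)$ of order type $\tau$ whose set of coordinate values is exactly $S$.

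The main step is an iterative Ramsey argument. I would enumerate $\mathcal{T}_\ell = \{\tau_1,\ldots,\tau_m\}$ with $m = \const(\ell)$, set $I_0 := [n]$, and for each $t \in [m]$ define the auxiliary coloring
\[
c_t \colon I_{t-1}^{(s_{\tau_t})} \to [k], \qquad c_t(S) := c(\mathrm{tup}_{\tau_t}(S)).
\]
Applying \cref{lem:ramsey} to $c_t$ yields a $c_t$-monochromatic subset $I_t \subseteq I_{t-1}$ of size $f(k,s_{\tau_t},|I_{t-1}|)$, where $f$ is the function from \cref{lem:ramsey}. I take $I := I_m$ as the output. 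By construction, for every tuple $(a_1,\ldots,a_\ell)\in I^\ell$ the color $c(a_1,\ldots,a_\ell)$ is determined by extracting the underlying set of coordinates and querying $c_{\tau}$ for $\tau = \otp(a_1,\ldots,a_\ell)$, so it depends only on $\otp(a_1,\ldots,a_\ell)$.

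For the quantitative bound, since each map $n' \mapsto f(k,s_{\tau_t},n')$ is monotone and unbounded in $n'$ and $m = \const(\ell)$, the $m$-fold composition is monotone and unbounded in $n$ with a rate depending only on $k$ and $\ell$, giving $|I| \ge U_{k,\ell}(n)$. Algorithmically, each of the $m$ applications of the algorithm of \cref{lem:ramsey} runs in time $O_{k,\ell}(n^{s_{\tau_t}}) \le O_{k,\ell}(n^\ell)$, so the total running time is $O_{k,\ell}(n^\ell)$. I do not anticipate a genuine obstacle here; the only point requiring a line of care is verifying that the iterated composition of monotone unbounded functions remains unbounded, which is immediate from monotonicity.
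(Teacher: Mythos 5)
Your proof is correct. The paper states \cref{lem:reramsey} without proof, treating it as a routine reformulation of \cref{lem:ramsey}, so there is no ``paper's approach'' to compare against; your iterated application of \cref{lem:ramsey}, once per order type $\tau \in \mathcal{T}_\ell$ via the bijection between $s_\tau$-element subsets and $\ell$-tuples of type $\tau$, is exactly the standard argument. The quantitative claim (an $m$-fold composition of monotone unbounded functions, with $m=\const(\ell)$, is itself a $U_{k,\ell}(n)$) and the running-time bound ($\const(\ell)$ invocations, each in time $O_{k,\ell}(n^\ell)$ including the construction of the derived coloring $c_t$) are both sound. One could alternatively do a single Ramsey application on $\ell$-element subsets by encoding all $|\mathcal{T}_\ell|$ order-type values into the color of a set (using its first $s_\tau$ elements as the canonical representative of type $\tau$), at the cost of discarding a small suffix of $I$ at the end to guarantee extendability; your per-type iteration sidesteps that bookkeeping cleanly.
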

The conclusion of the lemma means that there is a function $f$ 
such that $c(a_1,\ldots,a_\ell)=f(\otp(a_1,\ldots,a_\ell))$,
for all $(a_1,\ldots,a_\ell)\in I^\ell$.

\begin{lemma}[Bipartite Ramsey Theorem]\label{gridramsey}
    For every $k,\ell_1,\ell_2,n$
    and coloring 
    $$c\from [n]^{\ell_1}\times[n]^{\ell_2}\to [k]$$
    there are subsets $I_1,I_2\subset [n]$ of size $U_{k,\ell_1,\ell_2}(n)$
    such that 
    $c(\bar a,\bar b)$ depends only on 
    $\otp(\bar a)$ and $\otp(\bar b)$,
    for all $\bar a\in I_1^{\ell_1}$ and $\bar b\in I_2^{\ell_2}$.
\end{lemma}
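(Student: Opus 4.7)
The plan is to derive the bipartite Ramsey theorem as a direct corollary of the reformulated (single-sequence) Ramsey theorem in \cref{lem:reramsey}, by concatenating tuples and then splitting the resulting monochromatic-order-type set into two halves whose elements do not interleave.

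Concretely, I would define an auxiliary coloring $c' \from [n]^{\ell_1+\ell_2} \to [k]$ by
\[
c'(a_1,\ldots,a_{\ell_1},b_1,\ldots,b_{\ell_2}) := c\bigl((a_1,\ldots,a_{\ell_1}),(b_1,\ldots,b_{\ell_2})\bigr),
\]
and apply \cref{lem:reramsey} to obtain a set $I \subseteq [n]$ of size $U_{k,\ell_1+\ell_2}(n)$ such that $c'(\bar a \bar b)$ depends only on $\otp(\bar a \bar b)$ for all $(\bar a,\bar b) \in I^{\ell_1}\times I^{\ell_2}$. Then I would split $I$ into two subsets $I_1, I_2$ of size $\lfloor |I|/2 \rfloor$ such that every element of $I_1$ is strictly smaller than every element of $I_2$ (take, say, the lower and upper halves of $I$ in its natural order). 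Since $\lfloor |I|/2 \rfloor$ is still unbounded in $n$ and depends only on $k,\ell_1,\ell_2$, both $|I_1|$ and $|I_2|$ are $U_{k,\ell_1,\ell_2}(n)$.

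The key point is that for any $\bar a \in I_1^{\ell_1}$ and $\bar b \in I_2^{\ell_2}$, we have $a_i < b_j$ for all indices $i,j$, so all the ``cross'' pairwise comparisons between entries of $\bar a$ and entries of $\bar b$ are fixed to ``$<$''. Therefore the joint order type $\otp(\bar a \bar b)$ is completely determined by $\otp(\bar a)$ together with $\otp(\bar b)$. Combined with the conclusion of \cref{lem:reramsey} applied to $c'$, this yields
\[
c(\bar a,\bar b) = c'(\bar a\bar b) = f(\otp(\bar a\bar b)) = g(\otp(\bar a),\otp(\bar b))
\]
for some functions $f,g$, which is exactly what we want. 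There is no real obstacle here; the only thing to check is that the splitting step preserves the unbounded-in-$n$ size, which is automatic. For the effective/algorithmic aspect, the bound $O_{k,\ell_1,\ell_2}(n^{\ell_1+\ell_2})$ from \cref{lem:reramsey} carries over unchanged, and the splitting of $I$ into two halves is trivially linear time.
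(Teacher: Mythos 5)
Your proposal is correct and follows essentially the same route as the paper: view $c$ as a coloring of $(\ell_1+\ell_2)$-tuples, apply \cref{lem:reramsey} to get a homogeneous set $I$, and split $I$ into its lower and upper halves $I_1,I_2$ so that all cross-comparisons between $I_1$ and $I_2$ are fixed to ``$<$'', which makes $\otp(\bar a\bar b)$ a function of $\otp(\bar a)$ and $\otp(\bar b)$. Nothing to add.
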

The conclusion of the lemma means that there is a function $f$ 
    such that $c(\bar a,\bar b)=f(\otp(\bar a),\otp(\bar b))$,
    for all $\bar a\in I_1^{\ell_1}$ and $\bar b\in I_2^{\ell_2}$.

\begin{proof}
    Set $\ell=\ell_1+\ell_2$.
    The coloring $c$ can be viewed as a coloring  $c\from [n]^{\ell}\to [k]$.
     By Lemma~\ref{lem:reramsey}, there is a subset $I\subset [n]$
     of size $U_{k,\ell}(n)$
     such that the restriction of $c$ to $I$ is homogeneous,
     that is, 
     $c(\bar a)$ depends only on $\otp(\bar a)$ for $\bar a\in I^\ell$.
     We can assume that $|I|$ is even.
     Let $I_1$ be the first $|I|/2$ elements of $I\subset [n]$,
     and $I_2$ be the remaining $|I|/2$ elements of $I$.
     Then $|I_1|=|I_2|\ge U_{k,\ell}(n)$.
     Let $(\bar a,\bar b),(\bar a',\bar b')\in I_1^{\ell_1}\times I_2^{\ell_2}$ 
     be two pairs such that 
     $\otp(\bar a)=\otp(\bar a')$ and 
     $\otp(\bar b)=\otp(\bar b').$
     Then $\otp(\bar a,\bar b)=\otp(\bar a',\bar b')$,
     since 
$\otp(a,b)={<}$
 for all $a\in I_1$ and $b\in I_2$.
     Therefore, $c(\bar a,\bar b)=c(\bar a',\bar b')$,
     by homogeneity of $c$ restricted to $I$.
     \end{proof}

\newpage
\part{Structure}\label{part:structure}
\label{part1}
\newcommand{\spacedRightarrow}{\quad\Rightarrow\quad}

In this part we define three tameness conditions for graph classes:
\begin{itemize}
    \item \emph{Prepattern-freeness}: the absence of certain combinatorial obstructions called \emph{prepatterns}.
    \item \emph{Insulation property}:
    the ability to 
    guard any vertex set
    using so-called \emph{insulators}.
    \item \emph{Flip-breakability}: the ability to break any vertex set into two distant parts using few flips.
\end{itemize}
We show that for any graph class the following implications hold:
\[
    \text{prepattern-free}
    \spacedRightarrow
    \text{insulation-property}
    \spacedRightarrow
    \text{flip-breakable}
    \spacedRightarrow
    \text{mon.\ dependent}
\]
We later close the circle of implications in \cref{part:nonstructure}, where we show:
\[
    \text{not prepattern-free}
    \spacedRightarrow
    \text{large flipped crossings/comparability grids}
    \spacedRightarrow
    \text{mon.\ independent}
\]

\section{Grids and Insulators}

\paragraph{Grids.}
The most basic building block of this section is the notion of \emph{grids}.
\begin{definition}[Grids]\label{def:grids}
    Fix a non-empty sequence $I$ and an integer $h \ge 1$.
    A \emph{grid} $A$ \emph{indexed by} $I$ and of \emph{height} $h$ in a graph $G$ is a collection of pairwise disjoint sets $A[i,r] \subseteq V(G)$, for $i\in I$ and $r \in [h]$, called \emph{cells}.
    Each grid is either tagged as \emph{orderless} or \emph{ordered}.
\end{definition}
To facilitate notation, we often assume, up to renaming, the indexing sequence to be   $I= (1, \ldots, n)$.
We do so in the following.
For subsets \(J \subseteq I\) and \(R \subseteq [h]\), we write
\(A[J,R] = \bigcup_{i \in J, r \in R} A[i,r]\).
We often use implicitly defined sets via wildcards and comparisons. For example \(A[{\le}i,*]\) stands for \(A\bigl[\{1,\dots,i\},[h]\bigr]\).
In particular, we use $A[i,*]:=\bigcup_{r\in [h]} A[i,r]$ and $A[*,r]:=\bigcup_{i\in I}A[i,r]$,
and refer to those sets as to \emph{columns} and \emph{rows} of \(A\), respectively.
In slight abuse of notation, we often write \(A\) instead of \(A[*,*]\) to denote the set of all elements inside the grid.
We define the \emph{interior} of $A$ as $\mathrm{int}(A) := A \setminus (A[1,*] \cup A[n,*] \cup A[*,h])$.
Moreover, we say two columns $A[i,*]$ and $A[j,*]$  are \emph{close}, if $|i-j| \le 1$
and two rows $A[*,r]$ and $A[*,t]$ are \emph{close}, if $|r-t| \le 1$.
Two cells are \emph{close} if their respective columns and rows are close.

Note that unlike standard matrix notation, to highlight the hierarchical relationship between columns and rows,
our notation \(A[i,r]\) first mentions the column index \(i \in I\) and then the row index~\(r \in [h]\).

\paragraph{Insulators.}
The following notion of \emph{insulators} serves a twofold purpose.
On the one hand, insulators enforce the necessary structure to obtain \emph{flip-breakability} (see \Cref{sec:flip-breakability}).
On the other hand, in \Cref{part2}, we use insulators to build the patterns presented in~\Cref{thm:mainforbiddenpatterns}.

\newcommand{\explanation}{$\blacktriangleright$\ }

\begin{definition}[Insulators]\label{def:insulator}
    An \emph{insulator} $\gc{A} = (A, \KK, F, R)$ \emph{indexed by a sequence $I$} of \emph{height} $h$ and \emph{cost} \(k\) in a graph $G$ consists of
    \begin{itemize}
        \item a grid $A$ indexed by $I$ and of height $h$,
        \item a partition $\KK$ of $V(G)$ into at most \(k\) color classes,
        \item a symmetric relation $F\subseteq \KK^2$ specifying a flip $G' := G \oplus_\KK F$,
        \item a relation $R \subseteq \KK^2$.
    \end{itemize}
    We furthermore say that $\gc A$ is \emph{orderless} (\emph{ordered}), if $A$ is \emph{orderless} (\emph{ordered}).

    \medskip\noindent
    If $\gc A$ is \emph{orderless}, we demand:
    
    \begin{enumerate}[leftmargin= 4em, label={(U.$\arabic*$)}]
    \item \label{itm:orderless} For all $i \in I$ there exists $a_i \in V(G)$ such that
    \[
        A[i,1] = \{a_i\}  \quad \text{and} \quad A[i,{\leq} r] = N^{G'}_{r-1}[a_i] \text{ for all } r \in [h].
    \]

    \explanation In particular, a column of an orderless insulator is just the radius $h-1$ ball in a $k$-flip around single vertex sitting in the bottom cell of that column.

    \end{enumerate}

    \medskip\noindent
    If $\gc A$ is \emph{ordered}, we demand:
    \begin{enumerate}[leftmargin= 4em, label={(O.$\arabic*$)}] 
        \item \label{itm:consistent-rows} If two vertices are in different rows of $A$, then they have different colors in $\KK$. 

        \smallskip
        \explanation This technical property ensures that the rows of the insulator are sufficiently distinguishable. We will use this to argue that certain modifications on the insulator can be performed without increasing its cost.

        \item\label{itm:rootedness} Every vertex $v \in A[i,r]$ with $r>1$, \(i \in I\) has a neighbor in the cell $A[i,r-1]$ in $G'$. 

        \smallskip
        \explanation The mandatory downward edge, together with \ref{itm:adj-bot-left} and \ref{itm:ordered}, ensures that each column is cohesive: we will later observe that the columns of the insulator are first-order definable.
        This property is crucial to obtain hardness results.

        \item\label{itm:outside} 
            For every $v \notin A[*,*]$ and $X \in \KK$ we require that $v$ is \emph{homogeneous} to $X \cap \mathrm{int}(A)$ in $G$ \\
            (that is, either all or no vertices in $X \cap \mathrm{int}(A)$ are adjacent to \(v\)).

            \smallskip
            \explanation The inside of the insulator is ``insulated'' from its outside:
            the adjacencies between the two are described using only colors.

        \item\label{itm:adjacency} 
            For every $u \in A[i,r]$ with $r < h$, \(i \in I\) and $v \in A$ we have the following: \\
            (Up to renaming, we assume $I = (1,\ldots,n)$.)
            \begin{enumerate}[leftmargin= 5em, label={(O.4$.\arabic*$)}]
                \item \label{itm:adj-different-rows} If $u$ and $v$ are in rows that are not close and $u\in\mathrm{int}(A)$, \\
                    then they are non-adjacent in $G'$.

                \item \label{itm:adj-bot-left} If $v \in A[{<}i,r-1] \cup A[{>}i,r+1]$, then $u$ and $v$ are non-adjacent in $G'$.
                \item \label{itm:adj-left} If $v \in A[{>}i+1, \{r,r-1\}]$, then $G \models E(u,v) \Leftrightarrow (\KK(u),\KK(v))\in R$.
                \item \label{itm:adj-right} If $v \in A[{<}i-1, \{r,r+1\}]$, then $G \models E(u,v) \Leftrightarrow (\KK(v),\KK(u))\in R$.
            \end{enumerate}
            Otherwise, we make no claims regarding the adjacency of $u$ and $v$.

            \smallskip
            \explanation 
            Properties \ref{itm:adj-different-rows}, \ref{itm:adj-left}, and \ref{itm:adj-right} provide vertical and horizontal insulation inside the insulator.
            Property \ref{itm:adj-bot-left} helps to keep each column cohesive.
            See \Cref{fig:adjacency} for an illustration.
            \begin{figure}[h]
                \begin{center}
                    \includegraphics[scale = 1]{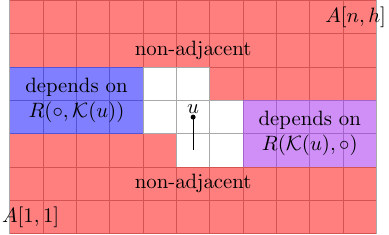}%
                \end{center}%
                \vspace{-0.5cm}
                \caption{Illustration of how \ref{itm:adjacency} controls the adjacency of a vertex \(u\) within the insulator.}\label{fig:adjacency}
            \end{figure}

        \item \label{itm:ordered}  There exists a bound $r < h$ and a $k$-flip $H$ of $G$
        such that for every two distinct vertices $u,v \in A[*,1]$.
        \[
            N_r^{H}[u] \cap N_r^{H}[v] = \varnothing. 
        \]
        Moreover, there are vertices   
        $\{b(v) \in N_r^{H}[v] : v\in A[*,1]\}$ and 
        $\{c_i \in V(G): i\in I\}$
        and a symbol ${\sim} \in \{{\leq},{\geq}\}$
        such that for every $i,j \in I$, $v \in A[j,1]$ and 
        \[
            G \models E(c_i, b(v))
            \quad
            \text{if and only if}
            \quad 
            i \sim j.
        \]
        \explanation This property orders the columns. It will later be used to first-order define the columns as intervals in the order.
\end{enumerate}
\end{definition}

\begin{observation}\label{obs:orderless-just-as-good}
    Every orderless insulator $\gc A = (A, \KK, F,R)$ where $F = R$ also satisfies the properties
    \ref{itm:rootedness}, \ref{itm:outside}, and \ref{itm:adjacency} of an ordered insulator. 
    Since the requirements of an orderless insulator put no restrictions on $R$, in the orderless case we can always assume $F=R$.
\end{observation}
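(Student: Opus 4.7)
The plan is to derive \ref{itm:rootedness}, \ref{itm:outside}, and \ref{itm:adjacency} from the single structural identity in \ref{itm:orderless}, namely $A[i,{\le}r] = N^{G'}_{r-1}[a_i]$, which says that the cell $A[i,r]$ is exactly the sphere of radius $r-1$ around $a_i$ in $G'$. I will first state a short \emph{master non-adjacency lemma}: if $u \in A[i,r]$ with $r < h$ and $v \in A[j,s]$ are $G'$-adjacent, then $v \in N^{G'}_r[a_i] = A[i,{\le}r+1]$, and disjointness of cells forces $i = j$; the bound $|\dist_{G'}(a_i,u) - \dist_{G'}(a_i,v)| \le 1$ then yields $|r-s| \le 1$. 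Contrapositively, whenever $i \neq j$ or $|r-s| > 1$, the vertices $u$ and $v$ are non-adjacent in $G'$.

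With this in hand, \ref{itm:rootedness} is immediate: any $v \in A[i,r]$ with $r > 1$ lies at $G'$-distance exactly $r-1$ from $a_i$, so the penultimate vertex of any shortest $a_i$–$v$ path in $G'$ is a $G'$-neighbor of $v$ belonging to $A[i,r-1]$. For \ref{itm:adjacency}, I inspect the four subcases: \ref{itm:adj-different-rows} hypothesizes $|r-s| \ge 2$, while \ref{itm:adj-bot-left}, \ref{itm:adj-left}, and \ref{itm:adj-right} each force $i \neq j$. In all four cases the master lemma delivers non-adjacency in $G'$. For \ref{itm:adj-different-rows} and \ref{itm:adj-bot-left} this is exactly the stated conclusion; for \ref{itm:adj-left} and \ref{itm:adj-right} I then use $F = R$ and $G' = G \oplus_\KK F$ to translate back to $G$: a $G'$-non-edge on a $\KK$-pair is a $G$-edge iff $(\KK(u),\KK(v)) \in F = R$, which is the required equivalence.

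For \ref{itm:outside}, the argument is a one-line distance calculation. Fix $v \notin A$ and $u \in \mathrm{int}(A) \cap A[j,r]$. Since $u \in \mathrm{int}(A)$ we have $r \le h-1$, so $\dist_{G'}(u,a_j) = r-1 \le h-2$; meanwhile $v \notin A[j,{\le}h] = N^{G'}_{h-1}[a_j]$ forces $\dist_{G'}(v,a_j) \ge h$, whence $u$ and $v$ are non-adjacent in $G'$. As $F = R$, the $G$-adjacency of $v$ to any such $u$ depends only on $(\KK(u),\KK(v))$, and hence only on $\KK(u)$ once $v$ is fixed; this yields homogeneity of $v$ towards $X \cap \mathrm{int}(A)$ for every color class $X \in \KK$. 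There is no genuine obstacle in the argument — the identity supplied by \ref{itm:orderless} does all the work, and the closing remark of the observation is for free, since \ref{itm:orderless} places no condition on $R$, so one may simply set $R := F$.
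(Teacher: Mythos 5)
Your proposal is correct. The paper states this as an \texttt{observation} without supplying a proof, so there is no explicit argument to compare against; what you wrote is a valid reconstruction of the intended reasoning. The structure is exactly the natural one: under \ref{itm:orderless}, membership in $A[i,r]$ is equivalent to being at $G'$-distance exactly $r-1$ from $a_i$, so a ``master non-adjacency lemma'' follows immediately (with the caveat $r<h$ so that $A[i,\le r+1]$ is still defined), and then each of \ref{itm:rootedness}, \ref{itm:outside}, \ref{itm:adjacency} follows by a short distance calculation plus, for \ref{itm:adj-left}/\ref{itm:adj-right}/\ref{itm:outside}, translating the $G'$-non-edge back to $G$ via $G'=G\oplus_\KK F$ and $F=R$. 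Two small points worth noting explicitly if you write this up. First, in \ref{itm:adj-right} the required condition is $(\KK(v),\KK(u))\in R$ rather than $(\KK(u),\KK(v))\in R$; this is fine because $R=F$ is symmetric, but the step deserves a word. Second, in your proof of \ref{itm:outside} the hypothesis $F=R$ is not actually used (the homogeneity statement is phrased purely in $G$ via $F$); the phrase ``As $F=R$'' there is vestigial and can be dropped or replaced by ``Since $G'$ is a $\KK$-flip of $G$''.
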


Two example insulators are depicted in \cref{fig:insulator-examples}.

\begin{figure}[htbp]
    \centering
    \includegraphics[scale=1.2]{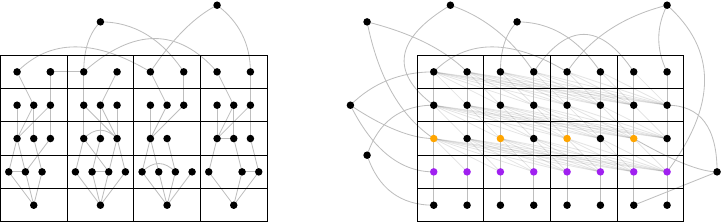}
    \caption{On the left: an orderless insulator. Each column is just the ball (in a flip) around the single vertex of its bottom cell. Apart from the boundary, there are no connections in between columns. 
    On the right: an ordered insulator. It contains connections between columns, but they are well controlled by property \ref{itm:adjacency}. Property \ref{itm:ordered} is witnessed by the highlighted vertices.
    The purple $b(\cdot)$ vertices are contained in disjoint radius $r=1$ balls around the vertices of the bottom cell.
    They are preordered by the yellow $c(\cdot)$ vertices.
    }
    \label{fig:insulator-examples}
\end{figure}

We will often identify an insulator \(\gc A\) and its underlying grid \(A\),
and write, for example \(v \in \gc A\) to indicate that \(v \in A[*,*]\).
We start by observing basic properties of insulators.
The following property will be crucial to obtain the model checking hardness result.

\begin{lemma}\label{lem:instructive-columns}
    The columns of an insulator are definable in first-order logic in a coloring of $G$.

    \smallskip\noindent
    More precisely:
    Let $\gc A$ be an insulator with grid $A$ of cost $k$ and height $h$ indexed by $I$ in a graph~$G$.
    There exists a formula $\alpha(x,y)$ (depending only on \(k\) and \(h\)), a $\const(k,h)$-coloring $G^+$ of $G$, and vertices $\{ a_i : i \in I \}$ such that for each $i \in I$
    \[
        A[i,*] = \{ v \in V(G) : G^+ \models \alpha(v, a_i)  \}.
    \]
\end{lemma}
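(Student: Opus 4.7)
\emph{Proof plan.} The orderless case is essentially immediate from \ref{itm:orderless}: let $G^+$ denote $G$ colored by its $\KK$-partition (using $\leq k$ unary predicates), so that $G' = G \oplus_\KK F$ is first-order definable via the flip formula $\phi_{\KK,F}$. Then \ref{itm:orderless} gives $A[i,*] = N^{G'}_{h-1}[a_i]$, which is expressible as a formula $\alpha(x,y)$ of quantifier depth $h-1$ stating that $x$ and $y$ are joined by a walk of length at most $h-1$ in $G'$.

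For the ordered case, I would enrich the coloring to $\const(k,h)$ unary predicates recording (i) the $\KK$-partition, (ii) the partition of the flip $H$ from \ref{itm:ordered}, and (iii) markers for row $1$, for $\{b(v) : v \in A[*,1]\}$, and for $\{c_i : i \in I\}$. By \ref{itm:consistent-rows}, each row of $A$ is a union of $\KK$-classes, so ``$x$ lies in row $s$'' and ``$x \in A$'' are first-order. Since the $H$-balls of radius $r$ around vertices of $A[*,1]$ are pairwise disjoint by \ref{itm:ordered}, the vertex $b(u)$ is the unique $b$-marked vertex in $N^H_r[u]$, and hence first-order definable from $u$. This lets me first-order define a preorder $\leq_\col$ on $A[*,1]$ by declaring $u \leq_\col v$ iff every $c$-marked $G$-neighbor of $b(u)$ is also a $G$-neighbor of $b(v)$ (swapping ${\leq}$ and ${\geq}$ depending on the symbol $\sim$ from \ref{itm:ordered}); by \ref{itm:ordered} this preorder coincides with the column-index order on $A[*,1]$.

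Picking any $a_i \in A[i,1]$, I then define $\alpha(v,y)$ to assert that $v \in A$, that $y$ lies in row $1$, and that the $\leq_\col$-minimum row-$1$ vertex reachable from $v$ via a \emph{downward path} in $G'$ is $=_\col y$; here a downward path is a $G'$-walk $v = v_r, v_{r-1}, \ldots, v_1$ with $v_t$ in row $t$, which is expressible with quantifier depth at most $h$. Correctness reduces to a drift lemma: every downward path from $v \in A[j,r]$ satisfies $j = j_r \leq j_{r-1} \leq \cdots \leq j_1$, where $v_t \in A[j_t,t]$. I expect this drift analysis to be the main technical hurdle, as it requires a case split on whether the top row is involved. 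For edges $v_{t+1}v_t$ with $t < h$, applying \ref{itm:adj-bot-left} to $u = v_t$ directly excludes $v_{t+1} \in A[{>}j_t, t+1]$ and forces $j_{t+1} \leq j_t$; for the edge $v_h v_{h-1}$, \ref{itm:adj-bot-left} has to instead be applied in reverse, to $u = v_{h-1} \in A[j_{h-1}, h-1]$, which excludes $v_h \in A[{>}j_{h-1}, h]$ and still yields $j_h \leq j_{h-1}$. Combined with \ref{itm:rootedness}, which produces an explicit straight-down path from $v$ to some vertex of $A[j,1]$, the drift lemma shows that the $\leq_\col$-minimum row-$1$ vertex reachable from $v \in A[j,r]$ lies in $A[j,1]$, so $\alpha(v, a_i)$ holds iff $i = j$, as required. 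The resulting formula depends only on $k$ and $h$.
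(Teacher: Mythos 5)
Your orderless case matches the paper's. In the ordered case you take a genuinely different route. The paper works bottom-up: it first identifies the bottom cell $A[i,1]$ as the $\prec$-equivalence class of $a_i$, and then inductively defines $A[i,\le r+1]$ as the vertices in $A[i,\le r]$ together with those that are $G'$-adjacent to $A[i,\le r]$ but not to the preorder-predecessor column $A[i-1,\le r]$, invoking \ref{itm:rootedness} and \ref{itm:adj-bot-left} to verify the induction. You instead characterize the column of an arbitrary $v\in A$ by tracing a downward $G'$-path to row $1$ and taking the $\le_\col$-minimum reachable bottom cell. Your drift lemma is correct: for each path edge $v_{t+1}v_t$ with $t\le h-1$ one applies \ref{itm:adj-bot-left} to $u=v_t$ (which lies below row $h$) and gets $j_{t+1}\le j_t$; the straight-down path from \ref{itm:rootedness} attains the minimum. (In fact no special case for the top row is needed — every edge has its lower endpoint below row $h$, so the same application works throughout; your explicit case split is harmless redundancy.) Both approaches cost $\const(k,h)$ colors; yours replaces the paper's induction on $\alpha_r$, which is entangled with the preorder, by a single monotonicity argument, at the modest cost of a disjunction over the row of $v$.

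There is one step that does not hold as written. You claim that by \ref{itm:consistent-rows} ``each row of $A$ is a union of $\KK$-classes,'' and deduce that ``$x$ lies in row $s$'' and ``$x\in A$'' are first-order from the $\KK$-coloring. This misreads \ref{itm:consistent-rows}: it says only that vertices in \emph{different} rows of $A$ receive different $\KK$-colors, i.e.\ each $\KK$-class meets at most one row of $A$ — but a $\KK$-class may also contain vertices outside $A$, so it need not be contained in any row, and neither ``$x\in A$'' nor ``$x$ is in row $s$'' is $\KK$-definable. The fix is to add explicit unary markers for ``$x\in A$'' and for ``$x$ is in row $s$'', $s\in[h]$, to the coloring (the paper does the analogous thing when it says that vertices outside $\gc A$ ``can be marked in the coloring and ignored''); this multiplies the number of colors by only $\const(h)$ and the rest of your argument then goes through.
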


We do not rely directly on this lemma, as we will later further process insulators before proving hardness. However, we sketch a proof for instructive purposes.
\begin{proof}[Proof sketch for \cref{lem:instructive-columns}] 
    All the formulas we write will work in a $\const(k,h)$-coloring $G^+$ of $G$, but we omit the details of the coloring to streamline the presentation.
    If $\gc A$ is orderless we use property \ref{itm:orderless}. 
    The $a_i$ vertices are the singleton vertices in the bottom row of $\gc A$ and the formula $\alpha(x,y)$ is defined as 
    \begin{center}
        ``$x$ and $y$ are at distance at most $h-1$ in the flip~$G'$ of \(G\)''.    
    \end{center}
    If $\gc A$ is ordered, we use \ref{itm:ordered}.
    There is a formula $\beta(x,y)$ that defines $b(v)$ for each $v \in A[i,*]$: 
    \begin{center}
        ``$x$ is a $b$-vertex and contained in the $r$-ball around $y$ in the flip $H$ of \(G\)''.
    \end{center}
    Building on $\beta$, there is a formula $\gamma(x,y)$ which, given $v \in A[j,*]$, defines $\{ c_i : R(i,j), i \in I\}$:
    \begin{center}
        ``$x$ is a $c$-vertex and adjacent to $b(y)$ in $G$''.
    \end{center}
    As $R \in \{\leq,\geq\}$, we can now define a preorder \(\prec\) on the vertices of the bottom row $A[*,1]$ which respects the column order by comparing their $\gamma$-neighborhoods:
    \[
        x \prec y := \exists z : \gamma(z,x) \wedge \neg \gamma(z,y)
        \qquad
        \text{or}
        \qquad
        x \prec y := \exists z : \neg\gamma(z,x) \wedge \gamma(z,y)
    \]
    depending on the choice of $R$.
    We choose
    an arbitrary vertex $a_i$ of each cell $A[i,1]$ of the bottom row.
    Using the preorder, we can write a formula $\alpha_1(x,y)$ that defines $A[i,1]$ from $a_i$ as its equivalence class in the preorder.
    For $1 < r \leq h$, we can now inductively write a formula $\alpha_r(x,y)$ that defines $A[i,{\leq}r]$ from $a_i$ as all the vertices that are already in $A[i,{<}r]$, or adjacent to $A[i,{<}r]$ but not to $A[i - 1,{<}r]$ in $G'$. Here we again use the preorder to define $A[i - 1,{<}r]$ from $a_i$.
    The correctness for vertices inside $\gc A$ follows from the properties \ref{itm:rootedness} and \ref{itm:adj-bot-left}.
    Vertices from outside $\gc A$ can be marked in the coloring and ignored.
    Setting $\alpha := \alpha_h$ finishes the proof sketch.
\end{proof}

\begin{definition}[Subgrids and subinsulators]\label{def:subgrids}
    Let $A$ be a grid indexed by a sequence $J$ and of height $h$ in a graph~$G$.
    For every subsequence $I \subseteq J$ of length at least two we define the \emph{subgrid} $A\vert_I$ as the grid indexed by $\tail(I)$ and of height $h$, 
    containing the following cells.
    For all $i \in \tail(I)$ and $r \in [h]$,
    depending on whether $A$ is \emph{orderless} or \emph{ordered}, we respectively set
    \[
        A\vert_I[i,r] := A[i,r]
        \quad
        \text{or}
        \quad
        A\vert_I[i,r] := \bigcup\{A[m,r] : m \in I \text{ and } \tpred_I(i) < m \leq i\}.
    \]
    $A\vert_I$ is ordered (orderless) if and only if \(A\) is ordered (orderless). See \cref{fig:subgrid} for a depiction.
    For every insulator \((A\vert_I,\KK,F,R)\) indexed by \(J\), we moreover define the \emph{subinsulator} $\gc{A}\vert_I := (A\vert_I,\KK,F,R)$.
    The upcoming \Cref{lem:subinsulator} will prove the validity of this definition.
\end{definition}

\begin{figure}[htbp]
    \centering
    \includegraphics[]{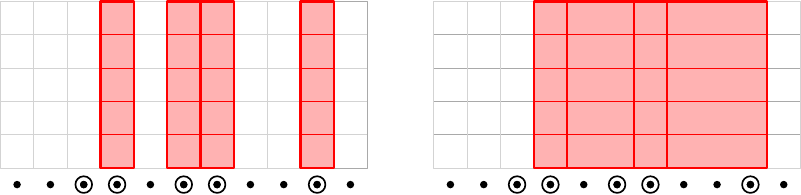}
    \caption{On the left/right: a subgrid of an orderless/ordered grid. The original grid $A$ is depicted in gray. 
    The dots at the bottom represent the sequence $J$ indexing $A$. 
    The subsequence $I \subseteq J$ is marked with circles.
    The subgrid $A\vert_I$ is overlaid in red.
    It is indexed by $\tail(I)$.}
    \label{fig:subgrid}
\end{figure}

For an ordered insulator $\gc A$, in the definition of a subinsulator $\gc A\vert_I$, it is necessary that the indexing sequence $\tail(I)$ of $\gc A\vert_I$ excludes the first element of $I$: each $i \in I$ represents the interval $(\tpred_I(i), i]$ which is undefined for the first element of $I$.
In orderless insulators this problem does not arise, but we choose to also exclude the first element of $I$ to allow for uniform proofs which do not distinguish between the two. 
The following observation about subgrids is crucial to their definition and will later be used to build well-behaved subgrids using Ramsey-arguments.

\begin{lemma}\label{lem:subgrid-ramseyness}
    Let $A$ be a grid indexed by $J$ and of height $h$ in a graph $G$.
    For every subsequence $I\subset J$ of length at least two, in the subgrid $A\vert_I$, the content of the column $A\vert_I[i,*]$ depends only on $A$, $i$ and $\tpred_I(i)$, instead of the whole sequence~$I$.
    More precisely, there exists a function $M_A : J \times J \rightarrow 2^{V(G)}$ such that for every $I \subseteq J$ and $i\in \tail(I)$ we have
    \[
        A\vert_I[i,*] = M_A(\tpred_I(i),i).    
    \]
\end{lemma}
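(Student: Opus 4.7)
The plan is to define the function $M_A$ by direct inspection of \Cref{def:subgrids}, treating the orderless and ordered cases separately and checking in each that the specification of the column $A\vert_I[i,*]$ refers to $I$ only through the value $\tpred_I(i)$.

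In the orderless case, the definition immediately gives $A\vert_I[i,r] = A[i,r]$ for every $r \in [h]$, so $A\vert_I[i,*] = A[i,*]$, which is manifestly independent of $I$. Setting $M_A(j,i) := A[i,*]$ for all $j,i \in J$ then trivially satisfies $A\vert_I[i,*] = M_A(\tpred_I(i), i)$. In the ordered case, commuting the two unions yields
\[
    A\vert_I[i,*] \;=\; \bigcup_{r \in [h]} A\vert_I[i,r] \;=\; \bigcup\bigl\{A[m,*] \;:\; m \in J,\ \tpred_I(i) < m \leq i\bigr\},
\]
and the index set on the right is exactly the intersection of $J$ with the half-open interval $(\tpred_I(i), i]$. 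This intersection is visibly determined by the two endpoints together with $J$ (and $J$ is fixed by $A$). So defining $M_A(j,i) := \bigcup\{A[m,*] : m \in J,\ j < m \leq i\}$ gives the required identity $A\vert_I[i,*] = M_A(\tpred_I(i), i)$.

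As the argument amounts to unpacking the definition, I do not anticipate any substantial obstacle. The value of the lemma is conceptual rather than technical: since the column content of $A\vert_I$ is a function of the pair $(\tpred_I(i), i) \in J \times J$ only, later Ramsey-style arguments aimed at extracting homogeneous subinsulators can be phrased as colorings of pairs of elements of $J$, without having to fix the eventual subsequence $I \subseteq J$ in advance.
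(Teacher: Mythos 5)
Your proof is correct and follows essentially the same approach as the paper's: in both cases one reads off the definition of $A\vert_I[i,*]$ and observes that the resulting expression depends on $I$ only through $\tpred_I(i)$ and $i$, packaging this into the function $M_A$. (You implicitly corrected what appears to be a typo in the paper's Definition of subgrids — the union should range over $m \in J$, not $m \in I$ — and the paper's own proof of the lemma makes the same correction.)
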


\begin{proof}
    We define the function $\mu_A : J \times J \times [h] \rightarrow 2^{V(G)}$ as follows.
    For all $i<j \in J$ and every $r\in [h]$, depending on whether $A$ is \emph{orderless} or \emph{ordered}, we respectively set
    \[
        \mu_A(i,j,r) := A[j,r] 
        \quad
        \text{or}
        \quad
        \mu_A(i,j,r) := \bigcup\{A[m,r] : m \in I \text{ and } i < m \leq j\}.
    \]
    We can now define for all $i<j \in J$
    \[
        M_A(i,j) := \bigcup_{r \in [h]} \mu_A(i,j,r). \qedhere
    \]
\end{proof}

\begin{observation}[Transitivity]\label{obs:transitivity}

    Let $A$ be a grid indexed by $I_0$, $I_1$ be a subsequence of $I_0$, and $I_2$ be a subsequence of $\tail(I_1)$. 
    If \(B = A\vert_{I_1}\)  and \(C = B\vert_{I_2}\)
    then \(C = A\vert_{I_2}\).
\end{observation}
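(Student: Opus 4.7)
Because the subgrid operation preserves the orderless/ordered tag, both $B$ and $C$ inherit the tag of $A$; and since $I_2 \subseteq \tail(I_1)$ is a subsequence of the indexing sequence of $B = A\vert_{I_1}$, the subgrid $B\vert_{I_2}$ is well defined, with indexing sequence $\tail(I_2)$ matching that of $A\vert_{I_2}$. It therefore suffices to verify $C[i,r] = A\vert_{I_2}[i,r]$ cell by cell for every $i \in \tail(I_2)$ and $r \in [h]$, and I would split on the tag.

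In the \emph{orderless} case the verification is purely symbolic: the definition gives $B[i,r] = A[i,r]$ and $C[i,r] = B[i,r]$, so both sides equal $A[i,r] = A\vert_{I_2}[i,r]$, with no combinatorics to check. The \emph{ordered} case is the real content. Unfolding the definition twice expresses
\[
    C[i,r] = \bigcup\bigl\{A[m',r] \,:\, m \in \tail(I_1),\ \tpred_{I_2}(i) < m \le i,\ m' \in I_0,\ \tpred_{I_1}(m) < m' \le m\bigr\},
\]
while $A\vert_{I_2}[i,r]$ is the single union over $m' \in I_0$ with $\tpred_{I_2}(i) < m' \le i$. The plan is to show that these two sets of indices $m'$ coincide.

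Writing $p := \tpred_{I_2}(i)$ and observing that $p \in I_2 \subseteq \tail(I_1) \subseteq I_1$, the ``$\subseteq$'' direction is an easy chase: for any valid witness $m$ one has $p < m$ with $p \in I_1$, so $p \le \tpred_{I_1}(m) < m' \le m \le i$. The reverse inclusion is the only step that requires a genuine choice: given $m' \in I_0$ with $p < m' \le i$, I would take $m$ to be the smallest element of $I_1$ satisfying $m \ge m'$, which exists because $i \in I_1$. Then $m \le i$, and $m > p$ (since $p \in I_1$ with $m \ge m' > p$), which places $m \in \tail(I_1)$ with $p < m \le i$; minimality of $m$ forces $\tpred_{I_1}(m) < m'$ as required. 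I expect the only subtle point in the final write-up to be keeping the predecessor relations in $I_1$ and $I_2$ straight, in particular the monotonicity $\tpred_{I_1}(m) \ge \tpred_{I_2}(i)$ used in the forward direction.
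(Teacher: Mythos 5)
Your proof is correct. The paper states this as an unproved observation, so there is no official proof to compare against, but your argument is the natural one and it goes through. A few remarks on the write-up, in decreasing order of importance.

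First, you implicitly fixed a minor inconsistency in the paper's Definition~\ref{def:subgrids}: the displayed formula for the ordered case reads "$m \in I$", but (as the figure, \cref{lem:subgrid-ramseyness}, and the use of "$B[*,*]=A[\{i_1,\dots,i_n\},*]$" in the proof of \cref{lem:subinsulator} all confirm) the intended range is $m$ over the \emph{original} indexing sequence $J$ of $A$. Your unfolding with $m'\in I_0$ and $m\in\tail(I_1)$ is the correct reading; it would be worth stating this explicitly, since under the literal reading the ordered and orderless cases would collapse and the observation would be vacuous.

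Second, the content of your ordered-case argument is exactly the index-set identity
\[
\{m'\in I_0 : \tpred_{I_2}(i)<m'\le i\} \;=\; \bigcup_{\substack{m\in\tail(I_1)\\ \tpred_{I_2}(i)<m\le i}}\{m'\in I_0 : \tpred_{I_1}(m)<m'\le m\},
\]
and both inclusions are handled correctly: the forward direction uses $p:=\tpred_{I_2}(i)\in I_2\subseteq\tail(I_1)\subseteq I_1$ together with $p<m$ to get $p\le\tpred_{I_1}(m)$; the reverse direction chooses $m$ to be the least element of $I_1$ with $m\ge m'$, which exists since $i\in I_1$ and $i\ge m'$, and then $p<m'\le m$ forces $m\in\tail(I_1)$ and $\tpred_{I_1}(m)<m'$ by minimality. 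All membership claims ($i\in I_1$, $p\in I_1$) check out via the chain $\tail(I_2)\subseteq I_2\subseteq\tail(I_1)\subseteq I_1\subseteq I_0$.

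Third, you correctly note that the tags and indexing sequences of $C$ and $A\vert_{I_2}$ agree (both are indexed by $\tail(I_2)$ and inherit the tag of $A$), so a cell-by-cell verification suffices. The orderless case is indeed immediate. No gaps.
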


\begin{observation}[Monotonicity and Coverability]\label{obs:mon-and-cov}
    Let $A$ be a grid indexed by $J$ of height $h$ and $I$ be a subsequence of $J$.
    For all $i \in \tail(I)$ and $r \in [h]$ we have 
    \[
        A[i,r] ~
        \subseteq  ~
        A\vert_I[i,r] ~
        \subseteq ~
        \bigcup\{A[m,r] : m \in I \text{ and } \tpred_I(i) < m \leq i\}
        .
    \]
\end{observation}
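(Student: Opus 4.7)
The plan is to prove the observation by directly unfolding the definition of the subgrid from \Cref{def:subgrids} and splitting into the two cases based on whether $A$ is orderless or ordered. In both cases one of the two inclusions is immediate (in fact an equality), and the other requires a one-line check that the index $i$ itself lies in the range $\{m \in I : \tpred_I(i) < m \leq i\}$.

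More concretely, first I would handle the orderless case. Here \Cref{def:subgrids} gives $A\vert_I[i,r] = A[i,r]$, so the left inclusion $A[i,r] \subseteq A\vert_I[i,r]$ is an equality. For the right inclusion, observe that since $i \in \tail(I) \subseteq I$ we have $\tpred_I(i) < i$, so $m = i$ satisfies $m \in I$ and $\tpred_I(i) < m \leq i$. Hence $A[i,r]$ appears as one of the terms of the union on the right-hand side, giving $A\vert_I[i,r] = A[i,r] \subseteq \bigcup\{A[m,r] : m \in I, \tpred_I(i) < m \leq i\}$.

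Next I would handle the ordered case, which is symmetric. Here \Cref{def:subgrids} gives directly
\[
    A\vert_I[i,r] = \bigcup\{A[m,r] : m \in I \text{ and } \tpred_I(i) < m \leq i\},
\]
so the right inclusion is an equality. For the left inclusion, the same observation as above, namely that $m = i$ satisfies the range constraint, shows that $A[i,r]$ is one of the summands of this union, and therefore $A[i,r] \subseteq A\vert_I[i,r]$.

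There is essentially no obstacle here: the only thing to verify is that $i$ lies in the set $\{m \in I : \tpred_I(i) < m \leq i\}$, which is automatic from $i \in \tail(I)$ (so $i \in I$ and $\tpred_I(i)$ exists and is strictly less than $i$). The observation is really just a sanity check on \Cref{def:subgrids}, making explicit that subgrid cells always contain the original cells, and are contained in the union of original cells between consecutive indices of $I$.
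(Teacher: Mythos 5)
Your proof is correct, and the paper states this as an observation without giving a proof, so there is nothing to compare against. Your approach — unfolding \Cref{def:subgrids}, splitting into the orderless and ordered cases, noting that one inclusion is an equality by definition and the other follows because $m = i$ lies in the range $\tpred_I(i) < m \leq i$ (since $i \in \tail(I)$ guarantees $\tpred_I(i)$ exists and is strictly smaller) — is exactly the routine verification the authors intended the reader to do.
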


We finally show that taking a subinsulator preserves its good properties without increasing its~cost.

\begin{lemma}\label{lem:subinsulator}
    Let $\gc{A} = (A,\KK,F,R)$ be an insulator indexed by $J$ on a graph $G$. For every subsequence $I \subseteq J$ we have that 
    $\gc{A}\vert_I := (A\vert_I,\KK,F,R)$ is also an insulator on $G$.
\end{lemma}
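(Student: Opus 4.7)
The plan is to verify each of the defining axioms of an insulator for $\gc A\vert_I$, invoking the corresponding axiom of $\gc A$. The handle connecting the two is \cref{obs:mon-and-cov}: each cell $A\vert_I[i,r]$ is sandwiched between $A[i,r]$ and a union of cells $A[m,r]$ for $m$ ranging over a bounded interval around $i$ in $J$. Every axiom transfer will pass through this sandwich to translate between old and new indices.

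The orderless case is essentially immediate, since columns of $A\vert_I$ coincide with columns of $A$ indexed by $\tail(I)$, so the root vertices $a_i$ witness \ref{itm:orderless} for $\gc A\vert_I$. In the ordered case, I would dispatch \ref{itm:consistent-rows}, \ref{itm:rootedness}, and \ref{itm:adjacency} by routine case analysis: rows of $A\vert_I$ sit inside rows of $A$ of the same level (so distinct colors persist); the downward neighbor required by \ref{itm:rootedness} is found inside the single originating cell $A[m,r-1]$; and two cells of $A\vert_I$ that are non-close in the subinsulator indexing are built from cells of $A$ that are non-close in $J$ as well, so the appropriate subcase \ref{itm:adj-different-rows}--\ref{itm:adj-right} of $\gc A$ applies.

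The two subtler cases are \ref{itm:outside} and \ref{itm:ordered}. For \ref{itm:outside}, I would first establish $\mathrm{int}(A\vert_I)\subseteq \mathrm{int}(A)$ by checking that non-extremal indices of $\tail(I)$ remain non-extremal in $J$ and that row indices are unchanged. Then for $v\notin A\vert_I[*,*]$ I split into the subcase $v\notin A[*,*]$ (where \ref{itm:outside} of $\gc A$ transfers directly using the inclusion just established) and the subcase $v\in A[*,*]\setminus A\vert_I[*,*]$ (where $v$ sits in an ambient column strictly separated from every interior column of $A\vert_I$, so \ref{itm:adjacency} of $\gc A$ pins down $v$'s adjacencies with $\mathrm{int}(A\vert_I)$ purely in terms of the colors of $\KK$, using the flip $F$ to translate between $G$- and $G'$-edges where \ref{itm:adj-different-rows} is invoked).

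For \ref{itm:ordered}, I would reuse the flip $H$ and the $b$-vertices of $\gc A$ restricted to $A\vert_I[*,1]\subseteq A[*,1]$, and define $c'_{i'}$ as $c_i$ for an appropriately shifted index in $J$, possibly toggling the symbol $\sim$. The main obstacle I anticipate is this last compatibility step: since a single bottom-row cell $A\vert_I[j',1]$ can aggregate multiple cells $A[m,1]$ of $A$, the shifted companion vertex must separate $j'$ from $\tsucc_{\tail(I)}(j')$ while lying uniformly on one side of every aggregated cell, so that the equivalence $c'_{i'}\text{ adj } b(v)\Leftrightarrow i'\sim' j'$ holds independently of which $m$-cell actually contains $v$.
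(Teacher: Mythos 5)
Your proposal is correct and follows essentially the same route as the paper's proof: verify each insulator axiom for $\gc A\vert_I$ by passing through the containments of \cref{obs:mon-and-cov}, with the two genuinely delicate cases being \ref{itm:outside} (split on whether $v$ is outside $A$ or in $A\setminus A\vert_I$, after noting $\mathrm{int}(A\vert_I)\subseteq\mathrm{int}(A)$) and \ref{itm:ordered} (reuse $H$, $b$, and a shifted family of $c$-vertices). One small clarification: the symbol $\sim$ never needs to be toggled---when $\sim$ is $\leq$ one takes $c'_i := c_{\tsucc_J(\tpred_I(i))}$, and when $\sim$ is $\geq$ one takes $c'_i := c_i$; in either case the original $\sim$ survives because every $m$ in the aggregated range $(\tpred_I(j),j]$ compares to these shifted indices the same way $j$ does.
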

\begin{proof}
    Up to renaming, we assume $J = (1,\ldots,n)$.
    Let $B:={A}\vert_{I}$ and $G' := G \oplus_{\KK} F$.
    %As we leave $\KK$ untouched, it still has $k$ parts.
    In the orderless case, as the graph $G'$ remains the same and $B$ is obtained from $A$ by just dropping columns, it is easy to see that \ref{itm:orderless} still holds.
    It remains to check the ordered case.
    \begin{itemize}

        \item To prove \ref{itm:consistent-rows} and \ref{itm:adjacency}, we observe that for every $i \in I$ and $r \in [h]$
        \[
            B[*,r] \subseteq A[*,r] \quad \text{and} \quad B[<i,r] \subseteq A[<i,r] \quad \text{and} \quad B[>i,r] \subseteq A[>i,r].
        \]
        Property \ref{itm:consistent-rows} follows directly.
        To prove, for example, \ref{itm:adj-right}, assume \(u \in B[i,r]\) and $v \in B[<i-1, \{r,r+1\}]$.
        Then also \(u \in B[>\tpred_I(i),r]\) and $v \in B[<\tpred_I(i), \{r,r+1\}]$.
        As argued above, \(u \in A[> \tpred_{I}(i),r]\) and $v \in A[<\tpred_{I}(i), \{r,r+1\}]$.
        By property \ref{itm:adj-right} of \(\gc A\),
        we have $G \models E(u,v) \Leftrightarrow (\KK(v),\KK(u))\in R$, as desired.
        The remaining statements of \ref{itm:adjacency} follow similarly.

        \item To prove \ref{itm:rootedness}, let $u \in B[i,r]$ for $r > 1$ and let us show that $u$ has a neighbor in $B[i,r-1]$ in $G$.
        By construction, we have $u \in A[i',r]$ for some $\tpred_I(i) < i' \leq i$. By \ref{itm:rootedness} of $\gc A$, $u$ has a neighbor $v$ in $A[i',r-1]$ in $G'$.
        Again by construction, $A[i',r-1] \subseteq B[i,r-1]$, so also $v \in B[i,r-1]$.

        \item 
        For \ref{itm:outside} to hold we must check,
        for every $u \notin B$ and $X \in \KK$, that $u$ is homogeneous to $X_B := X \cap \mathrm{int}(B)$ in $G$.
        As \(G'\) is a \(\KK\)-flip of \(G\), we can check the property in $G'$ instead.
        If $u \notin A$ this holds as $X_B \subseteq \mathrm{int}(A)$ and \ref{itm:outside} was already true in $\gc A$.
        
        Assume now $u \in A[i,r] \setminus B$ for some $i\in J$ and $r\in[h]$.
        As we already established \ref{itm:consistent-rows}, we know that all vertices from $X_B$ are in the same row $B[*,r'] \subseteq A[*,r']$ for some $r' \in [h]$.
        If $|r - r'| > 1$ then $u$ and $X_B$ are non-adjacent in $G'$ as $\gc A$ satisfies \ref{itm:adjacency}.
        Now assume $|r - r'| \leq 1$.
        Let $i_0,i_1, \ldots, i_n$ be the continuous subsequence of $J$ where $i_0$ and $i_n$ are the first and last elements of $I$.
        By construction, we have
        \[
            B[*,*] = A[\{i_1,\ldots,i_n\},*]
            \quad
            \text{ and }
            \quad
            \mathrm{int}(B) = 
            A[\{i_2,\ldots,i_{n-1}\}, {<}h].
        \]
        Since $u \notin B$ we have $i < i_1$ or $i_n < i$.
        Assume $i < i_1$. 
        Then we have $X_B \subseteq A[{\leq} i_1,r'] \subseteq A[{>}i,r']$, and we can again use \ref{itm:adjacency} of $\gc A$.
        Now if $r' = r + 1$ then $u$ is non-adjacent to $X_B$ in $G'$. If $r' \in \{r,r-1\}$ then $u$ is adjacent to all of $X_B$ if $(\KK(u),X) \in R$ and non-adjacent to all of $X_B$ otherwise.
        In each case $u$ is homogeneous to $X_B$.
        The case where $i_n < j$ follows by a symmetric argument.

        \item
        The property \ref{itm:ordered} of $\gc A$ is witnessed by
        \begin{itemize}
            \item a symbol ${\sim} \in \{ \leq, \geq \}$, 
            \item a $k$-flip $H$, and 
            \item vertices $\{b(v) \in V(G): v\in A[*,1]\}$ and 
            $\{c_i \in V(G): i\in J\}$.
        \end{itemize}
        To witness \ref{itm:ordered} of $\gc B$, we use $\sim$, $H$, $\{b(v) \in V(G): v\in B[*,1]\}$, and
        \begin{itemize}
            \item 
            $\{c_{i'} : i'= \tsucc_J(\tpred_I(i)), i\in \tail(I)\}$ if $(\sim)=(\leq)$, or
            \item
            $\{c_i : i\in \tail(I)\}$ if $(\sim)= (\geq)$.
            \qedhere
        \end{itemize}
    \end{itemize}
\end{proof}

\section{Prepatterns}

Throughout the following sections, we will either make progress constructing large insulators,
or will obtain the following kind of preliminary patterns, which are then processed further in \Cref{part2}.

\begin{definition}\label{def:bi-pattern}
    Let $\gc A$ be an insulator indexed by a sequence $K$ with grid $A$ in a graph $G$.
    Say that  $G$ contains a \emph{bi-prepattern} of order $t$ on $\gc A$ if there exist
    \begin{itemize}
        \item index sequences $I,J\subset K$ of size $t$ and with $|I|=|J|=t$,
        \item vertices $c_{i,j} \in V(G)$ for all $i\in I,j \in J$,
        \item quantifier-free formulas $\alpha_1(x;y,s_1)$ and  $\alpha_2(x;y,s_2)$ with parameters $s_1, s_2 \in V(G)$,
        \item symbols $\sim_1,\sim_2\in\set{=,\neq}$,
    \end{itemize}
        such that for all $i\in I,j\in J$,
        \begin{align*}
            i&= \min \big\{ i' \in I : \alpha_1(A[i',*];c_{i,j},s_1)\sim_1\emptyset \big\},\\
            j&= \min \big\{ j' \in J : \alpha_2(A[j',*];c_{i,j},s_2)\sim_2\emptyset \big\}.
        \end{align*}
\end{definition}
Let us give some intuition for the above definition.
The bi-prepattern consists of two sequences of columns indexed by $I$ and $J$. 
For each pair of columns $(i,j) \in I \times J$, there exists a private vertex $c_{i,j}$ pairing them up in the following sense. Column $i$ is the first column in $I$, in which
\begin{itemize}
    \item $c_{i,j}$ has no $\alpha_1$-neighbor (if ${\sim_1}$  is $=$), or
    \item $c_{i,j}$ has an $\alpha_1$-neighbor (if ${\sim_1}$ is $\neq$).
\end{itemize}
Similarly, $j$ is the first column in $J$ in which has an $\alpha_2$-neighbor (resp.\ no $\alpha_2$-neighbor).
\cref{fig:bi-prepattern} (\emph{left}) illustrates this column pairing.

\begin{figure}[htbp]
    \centering
    \includegraphics[scale=1.0]{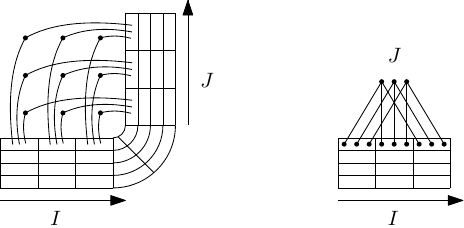}
    \caption{Schematic depiction of a bi-prepattern (left) and mono-prepattern (right).}
    \label{fig:bi-prepattern}
\end{figure}

This pairing aspect will be used later to argue that large bi-prepatterns witness monadic independence (or equivalently: they are obstructions for monadic dependence). 
By \cref{lem:instructive-columns}, each column of the insulator can be first-order defined from a single representative vertex.
Bi-prepatterns therefore logically resemble $1$-subdivided bicliques, where the column representatives are the principle vertices and the vertices  $c_{i,j}$ form the subdivision vertices.
Subdivided bicliques of unbounded size are witnesses for monadic independence: they encode arbitrary bipartite graphs by coloring the subdivision vertices.
Therefore, bi-prepatterns may be thought of as obstructions to monadic dependence.
Moreover, by the strong structure properties of insulators and since $\alpha_1$ and $\alpha_2$ are quantifier-free, we can use bi-prepatterns to later extract from them our concrete forbidden induced subgraph characterization.

In addition to the bi-prepatterns, our analysis produces a second kind of obstruction called a \emph{mono-prepattern}.

\begin{definition}\label{def:mono-pattern}
    Let $\gc A$ be an insulator indexed by a sequence \(K\) with grid $A$ in a graph $G$.
    Say that $G$ contains a \emph{mono-prepattern} of order $t$ on $\gc A$ if there exist
    \begin{itemize}
        \item index sequences $I,J$ with \(I \subseteq K\) and $|I|=|J|=t$, 
        \item vertices $c_j \in V(G)$ for all $j \in J$,
        \item vertices $b_{i,j} \in A[i,*]$ for all $i\in I,j \in J$,
        \item a symbol ${\sim} \in \{=, \neq, \leq, <, \geq, >\}$,
    \end{itemize}
    such that
    for all $i\in I$ and $j,j'\in J$,
        \[
            G \models E(c_j,b_{i,j'}) \Leftrightarrow j \sim j'.
        \]
\end{definition}

Similarly as in a bi-prepattern, in a mono-prepattern the $c_j$ vertices can be used to pair up columns $(i,j) \in I\times I$.
While the bi-prepattern logically resembles a subdivided biclique (we pair elements from two sequences $I$ and $J$), the mono-prepattern corresponds to a subdivided clique (we pair elements from the same sequence $I$).
\cref{fig:bi-prepattern} (\emph{right}) illustrates mono-prepatterns.

\begin{definition}\label{def:prepattern}
    $G$ contains a \emph{prepattern} of order $t$ on an insulator $\gc A$ if it either contains a bi-prepattern of order $t$ on $\gc A$ or a mono-prepattern of order $t$ on $\gc A$.
\end{definition}

\begin{definition}\label{def:prepatternfree}
    A class of graphs $\CC$ is \emph{prepattern-free}, if for every $k,r\in \N$, there exists $t\in \N$ 
    such that every graph $G \in \mathcal{C}$ does not contain prepatterns of order $t$ on insulators of cost at most \(k\) and height at most \(r\) in \(G\).
\end{definition}

We later show in \Cref{thm:main-circle} that these  obstructions are exhaustive: prepattern-freeness is equivalent to monadic dependence.
In the following sections, we 
start by deriving structural properties of prepattern-free classes.

\section{The Insulation Property}\label{sec:insulation-property}
Towards proving that prepattern-free classes are flip-breakable, we first show that they possess a more fine-grained structure property which we call the \emph{insulation property}.

\begin{definition}
    Let $\gc A$ be an insulator with grid $A$ indexed by a sequence $I$ in a graph $G$ and let $W \subseteq V(G)$.
    We say that $\gc A$ \emph{insulates} $W$
    if there is a bijection $f: W \rightarrow I$, such that for all $v \in W$
    \[
        v \in A[f(v),1].
    \]
    A set $W$ is \emph{$(r,k)$-insulated} in $G$ if there is an insulator $\gc A$ of height $r$ and cost $k$ that insulates $W$.
\end{definition}

\begin{definition}\label{def:insulationProperty}
    A class of graphs $\CC$ has the \emph{insulation property}, if for every radius $r\in \N$ there exist a function $N_r:\N\rightarrow\N$ and a constant $k_r \in \N$ such that for every $m\in\N$, $G\in \CC$, $W\subseteq V(G)$ with $|W| \geq N_r(m)$, there is a subset $W_\star\subseteq W$ of size at least $m$ that is $(r,k_r)$-insulated in $G$.
\end{definition}
More briefly: $\CC$ has the insulation property if for every $r\in\N$,
$G\in\CC$ and $W\subset V(G)$ there is a subset $W_\star\subset W$ of size $U_{\CC,r}(|W|)$ that is $(r,\const(\CC,r))$-insulated in $G$.

\medskip
The goal of this section is to prove the following.

\begin{proposition}\label{prop:prepatternImpliesInsulationNoAlg}
    Every prepattern-free graph class has the insulation property.
\end{proposition}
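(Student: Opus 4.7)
The plan is to proceed by induction on the height $r$, and in each inductive step to invoke a ``row-extension dichotomy'' that is the combinatorial heart of this part of the paper. The base case $r=1$ is essentially trivial: given any $W\subseteq V(G)$ with $|W|\ge m$, pick an $m$-element subset $W_\star\subseteq W$, index an orderless grid by $W_\star$ with $A[v,1]=\{v\}$, and take the trivial partition $\KK=\{V(G)\}$ with $F=\varnothing$ and $R=\varnothing$. Then $G'=G$ and $N_0^{G'}[v]=\{v\}$, so property \ref{itm:orderless} holds and $W_\star$ is $(1,1)$-insulated. We can therefore take $N_1(m):=m$ and $k_1:=1$.

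For the inductive step, I would rely on a row-extension dichotomy (the content of Figure~\ref{fig:gridOrRow}, to be proved separately in subsequent sections of Part~\ref{part:structure}) of roughly the following shape: there are functions $g_{r,k}, h_{r,k}:\N\to\N$ such that whenever $\gc A$ is an insulator of height $r$ and cost $k$ insulating a set $W_0$ of size at least $g_{r,k}(m')$ in a graph $G$, one of the following two things must happen:
\begin{enumerate}[label=(\alph*)]
    \item a subinsulator of $\gc A$ can be extended by an $(r{+}1)$-st row into an insulator of height $r+1$ and cost at most $h_{r,k}$ that insulates some subset $W'\subseteq W_0$ of size at least $m'$; or
    \item $G$ contains a prepattern of order $m'$ on some subinsulator of $\gc A$ (which has height $r$ and cost $k$).
\end{enumerate}

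Granting this dichotomy, the proposition follows by a clean iteration. Assume the statement at radius $r$ with parameters $N_r$ and $k_r$. Let $t=t(k_r,r)\in\N$ be the threshold furnished by prepattern-freeness, so that no graph in $\CC$ admits a prepattern of order $t$ on any insulator of cost at most $k_r$ and height at most $r$. Define $k_{r+1}:=h_{r,k_r}$ and $N_{r+1}(m):=N_r\bigl(g_{r,k_r}(\max\{m,t\})\bigr)$. Given $W\subseteq V(G)$ with $|W|\ge N_{r+1}(m)$, the induction hypothesis furnishes a subset $W_0\subseteq W$ of size at least $g_{r,k_r}(\max\{m,t\})$ insulated by some insulator $\gc A$ of height $r$ and cost $k_r$. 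Applying the dichotomy with $m':=\max\{m,t\}$ rules out case~(b) by prepattern-freeness (since the subinsulator inherits height $\le r$ and cost $\le k_r$ via \Cref{lem:subinsulator}), so case~(a) must hold and yields an insulator of height $r+1$ and cost $k_{r+1}$ insulating a subset $W_\star\subseteq W_0\subseteq W$ of size at least $m$, as required.

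The main obstacle is of course the dichotomy itself, not this assembly. Constructing the new row requires, for each column $i$ of a selected subinsulator, choosing a cell $A[i,r{+}1]$ so that all the adjacency and ordering constraints \ref{itm:consistent-rows}--\ref{itm:ordered} (or \ref{itm:orderless}) continue to hold, while the number of colors in $\KK$ grows by only a bounded additive amount. The natural strategy is a Ramsey-type regularization: color candidate extensions by their ``interaction types'' with the existing rows and with each other, then pass to a large monochromatic subsequence of columns; whenever a candidate fails to homogenize, the offending inhomogeneity must be repackaged as one of the two shapes of \Cref{def:bi-pattern} or \Cref{def:mono-pattern}, so that case~(b) is realized concretely. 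Once that dichotomy is in hand, the present proposition is just its iteration, starting from the trivial height-one insulator described above.
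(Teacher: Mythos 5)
Your proof is correct and follows essentially the same route as the paper: induction on height starting from a trivial height-one insulator (\cref{lem:insulate-base-case}), with the inductive step driven by the growing-lemma dichotomy and prepattern-freeness closing off the bad branch. The only small wrinkle not visible in your black-boxed two-outcome dichotomy is that the paper's orderless growing lemma (\cref{thm:orderless-grid-growing}) has a third outcome, ``orderable,'' which is handled by first converting to an ordered insulator via \cref{lem:orderless-to-ordered-insulator} and then invoking the ordered growing lemma, but this folds straightforwardly into your stated interface.
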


We first notice that insulation for radius $r = 1$ is trivial.

\begin{lemma}\label{lem:insulate-base-case}
    Fix a graph $G$. Every set $W \subseteq V(G)$ is $(1,1)$-insulated in $G$.
\end{lemma}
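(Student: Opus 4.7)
The plan is to exhibit an explicit trivial orderless insulator of height $1$ and cost $1$ that insulates $W$. The orderless variant is the most economical choice here, because with $h=1$ there is only one row, and \ref{itm:orderless} is the sole axiom to verify; the ordered axioms \ref{itm:consistent-rows}--\ref{itm:ordered} need not be touched. This lemma should serve as the base case of a later induction that grows insulators one row at a time, so I want the construction to be as canonical as possible.

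Concretely, I would enumerate $W$ as a sequence $I = (v_1, \ldots, v_n)$ and build a grid $A$ of height $h = 1$ indexed by $I$ by putting each vertex in its own cell, $A[i, 1] := \{v_i\}$ for $i \in I$, tagged as orderless. Then I would take $\KK := \{V(G)\}$ to be the trivial one-class partition (so the cost is $1$) and set $F := \varnothing$ and $R := \varnothing$; in particular the induced flip $G' := G \oplus_{\KK} F$ coincides with $G$.

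To verify \ref{itm:orderless}, fix $i \in I$ and set $a_i := v_i$. By construction $A[i,1] = \{v_i\} = \{a_i\}$, and the only relevant radius is $r = 1$, for which $A[i,{\leq}1] = \{a_i\} = N^{G'}_0[a_i]$. Hence $\gc A := (A, \KK, F, R)$ is an orderless insulator of height $1$ and cost $1$ in $G$. The bijection $f \colon W \to I$ sending $v_i \mapsto i$ plainly satisfies $v_i \in A[f(v_i), 1]$, so $\gc A$ insulates $W$, witnessing that $W$ is $(1,1)$-insulated. I do not expect any real obstacle: the statement is essentially an unpacking of definitions, and the only ``choice'' being made is to tag the one-row grid as orderless so that \ref{itm:orderless} is the only axiom one must check.
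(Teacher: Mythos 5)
Your proposal is correct and matches the paper's proof essentially verbatim: both enumerate $W$, build the one-row orderless grid with singleton cells, take the trivial partition with empty flip and auxiliary relations, and check that property \ref{itm:orderless} holds at radius $1$. The only cosmetic difference is whether the indexing sequence is $(1,\ldots,n)$ or the vertices themselves.
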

\begin{proof}
    Fix any enumeration $(a_{1}, \ldots, a_{n})$ of $W$.
    Let $A$ be the orderless grid indexed by the sequence $I := (1, \ldots, n)$ and of height~$1$ such that $A[i,1] := \{ a_i \}$ for all $i \in I$.
    Now $\gc A := (A,\{V(G)\},\varnothing,\varnothing)$ is an orderless insulator with cost and height~$1$ that insulates $W$.
\end{proof}

In order to insulate sets with higher radii, we will grow insulators in height.

\begin{definition}[Row-Extensions]
    Let $A$ be a grid indexed by $I$ and of height $h$.
    We say a grid $B$ is a \emph{row-extension} of $A$, if it satisfies the following properties.
    \begin{itemize}
        \item $B$ is indexed by $I$ and has height $h + 1$.
        \item For all $i\in I$ and $j \in [h]$ we have $B[i,j] = A[i,j]$.
        \item Either $A$ and $B$ are both \emph{orderless} or both \emph{ordered}.
    \end{itemize}
    Similarly, we say an insulator $\gc B$ is a \emph{row-extension} of an insulator $\gc A$, if the grid of $\gc B$ is a row-extension of the grid of $\gc A$.
\end{definition}

Depending on whether the insulator at hand is orderless or ordered, we will create row-extensions using one of the following two insulator growing lemmas.
To keep the presentation streamlined, 
the (quite technical) proofs of the two lemmas are deferred to \cref{sec:growing}.

\begin{restatable}[Orderless Insulator Growing]{lemma}{lemOrderlessGrowing}
    \label{thm:orderless-grid-growing}
    Fix \(k,t \in \N\).
    For every graph \(G\) and orderless insulator~\(\mathcal{A}\) indexed by \(J\) of cost \(k\) in $G$,
    we can compute a subsequence \(I \subseteq J\) of length \(U_{t}(|J|)\) 
    such that either
    \begin{itemize}
        \item $G$ contains a prepattern of order $t$ on $\gc A\vert_I$
        \item $\gc A\vert_{I}$ is orderable, or
        \item there exists a row-extension of $\gc A\vert_I$ of cost $\const(k,t)$ in \(G\).
    \end{itemize}

    \effective{Moreover, there is an algorithm that, given $G$ and $\gc A$, computes the sequence $I$ and one of the three outcomes (a prepattern, a witnesses for $\gc A\vert_I$ being orderable, or a row-extension) in time $O_{k,t}(|V(G)|^2)$.
    } 
\end{restatable}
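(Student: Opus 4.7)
The plan is to start from the orderless insulator $\mathcal{A}=(A,\mathcal{K},F,R)$ of height $h$ with associated flip $G' := G \oplus_\mathcal{K} F$, and attempt to add a new top row by defining, for each $i \in J$, the candidate new cell $S_i := N^{G'}_h[a_i] \setminus N^{G'}_{h-1}[a_i]$, where $\{a_i\} = A[i,1]$. By property \ref{itm:orderless} the existing columns are already the $(h-1)$-balls around the $a_i$'s in $G'$, so the $S_i$'s together with the existing grid form a valid row-extension precisely when the sets $\{N^{G'}_h[a_i]\}$ remain pairwise disjoint after restricting to the subsequence we produce. The strategy is to apply Ramsey-type regularizations that either verify this disjointness, or extract asymmetric orderability witnesses, or harvest a prepattern from any remaining symmetric overlap.

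The first step is to apply \cref{lem:reramsey} to the coloring $c(i,j) := \min(\dist_{G'}(a_i,a_j),\, 2h+1)$ of ordered pairs $i<j \in J$, yielding a subsequence $J_1 \subseteq J$ of size $U_{h}(|J|)$ on which the clipped pairwise distance is some constant $d$. If $d > 2h$, then the $h$-balls around the $a_i$'s are pairwise disjoint, so placing $B[i,h+1] := S_i$ on top of $A|_{J_1}$ yields a row-extension of cost $k$, and we are in the third outcome. Otherwise $d \leq 2h$, so every two distinct $a_i,a_j$ are joined by a shortest $G'$-path of length $d$, and I would further regularize using \cref{gridramsey} applied to a coloring that records, for each pair $(i,j)$, the atomic type (in a bounded coloring of $G$ that encodes $\mathcal{K}$ and thus recovers $G'$ atomically) of a canonical witness sequence along such a shortest path.

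After this second regularization, one of two patterns emerges. If some witness adjacency depends \emph{asymmetrically} on the column order, e.g.\ a canonical vertex $b(a_j)$ at distance $\leq h$ from $a_j$ in an appropriate flip $H$ satisfies $E(c_i, b(a_j))$ iff $i \leq j$ (or iff $i \geq j$), then I take those witnesses as the $b(\cdot)$ and $c_i$ required by \ref{itm:ordered}, together with the flip $H$ and the corresponding symbol ${\sim}\in\{\leq,\geq\}$; pairwise disjointness of the required small-radius balls in $H$ around distinct column bottoms is also a simple Ramsey regularization. This places us in the second outcome: $\mathcal{A}|_{I}$ is orderable for a suitable $I \subseteq J_1$. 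Otherwise the witness adjacency is symmetric yet non-trivial, and I would read off a bi- or mono-prepattern of order $t$: for each pair $(i,j)$ a private witness $c_{i,j}$ (resp.\ $c_j$) chosen from the regularized path, together with quantifier-free formulas $\alpha_1,\alpha_2$ over the $\mathcal{K}$-colored signature expressing ``belonging to a specific ring around the parameter vertex'' so that the ``first column where'' condition of \cref{def:bi-pattern} (or the pairing condition of \cref{def:mono-pattern}) holds; here the fact that $G'$ is a $\mathcal{K}$-flip of $G$ lets us write adjacency in $G'$ as a quantifier-free formula over $G$ with parameters, as required by \cref{def:bi-pattern}.

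The main obstacle will be the bookkeeping in the second case: extracting genuinely quantifier-free prepattern formulas from what is \emph{a priori} distance information in a flip requires careful use of the $\mathcal{K}$-coloring and the freedom to supply parameters, and separating the asymmetric (orderable) sub-case from the symmetric (prepattern) sub-case requires applying \cref{gridramsey} rather than \cref{lem:reramsey} to colorings indexed by \emph{pairs} of column indices so that the resulting witness pattern is simultaneously homogeneous in both indices. The quantitative bound on the final subsequence $I$ is the composition of the Ramsey bounds invoked, which remains of the form $U_t(|J|)$; the running time $O_{k,t}(|V(G)|^2)$ comes from an initial BFS in $G'$ to compute the clipped distances among the $a_i$'s, followed by the effective Ramsey extractions in \cref{lem:reramsey} and \cref{gridramsey} on the $|J|$-sized index set.
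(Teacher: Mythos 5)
Your proposal diverges from the paper's approach and contains a fatal gap.

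The central error is that you only ever consider the \emph{given} flip $G'$. You set $S_i := N^{G'}_h[a_i]\setminus N^{G'}_{h-1}[a_i]$ and then try to make the $h$-balls pairwise disjoint \emph{in that same flip} by Ramsey on pairwise distances. But the lemma's third outcome explicitly allows the row-extension to have cost $\const(k,t)\gg k$: the new insulator is allowed, indeed expected, to use a \emph{finer} flip than $G'$. Your Ramsey step cannot force pairwise distance $>2h$ on a large subsequence: take, say, a graph where a single hub vertex is adjacent to every root $a_i$ in $G'$; then \emph{all} pairwise distances equal $2$, Ramsey returns $d=2$, and there is no large sub-family with disjoint $h$-balls in $G'$ for any $h\ge 1$. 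Such a graph is of course entirely harmless (one more color for the hub separates everything), so neither orderability nor a prepattern is obtainable from it — yet your proposal has no mechanism to produce the required row-extension with a refined flip. In other words, your ``favorable case'' is a measure-zero special case, and your two fallback cases do not cover the remainder.

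The step that is missing is exactly what makes this lemma nontrivial: you need to understand, for every vertex $v$ in $G$ (not just the roots), how $v$ interacts with \emph{all} columns of the insulator, and then design a new coloring $\KK_\star$ refining $\KK$ so that the ring $S_i$ becomes a valid new cell $C[i,h+1]$ in the refined flip $G\oplus_{\KK_\star}F_\star$. The paper does this through the \emph{sampling} machinery (\cref{def:sampled,def:samples,lem:sampling,lem:orderless-sampling}): one either finds a small set $S$ that ``approximately determines'' every vertex's behavior over the columns with margin $1$, or, in trying to build such a set, one encounters disorder witnessing a prepattern, or asymmetry witnessing orderability. With the sample set in hand, the new row is $C[i,h+1] := \{v \notin B : \atp(v/C[i,h]) \neq \atp(s(v)/C[i,h])\}$, and $\KK_\star$ encodes the sample function $s(\cdot)$ and adjacency to $S$, which is what makes property \ref{itm:orderless} provable for the \emph{new} flip $G_\star$. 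None of this can be recovered from ``a canonical shortest path between $a_i$ and $a_j$'': the prepattern condition requires, for each pair $(i,j)$, a vertex $c_{i,j}$ for which $i$ is the \emph{first} column where a fixed quantifier-free formula has a (non-)empty trace; shortest-path witnesses between two roots give you nothing of the sort, and you supply no argument for how they would. Your sketch is therefore not a proof strategy but an intention; the whole technical content of the lemma is in the step you wave through.
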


The definition of an \emph{orderable} insulator will be given shortly after the following lemma.

\begin{restatable}[Ordered Insulator Growing]{lemma}{lemOrderedGrowing}
    \label{lem:order-growing}
    Fix \(k,t \in \N\).
    For every graph \(G\) and ordered insulator \(\mathcal{A}\) with cost \(k\), indexed by \(J\) in $G$, we can compute a subsequence \(I \subseteq J\)
    of length \(U_{t}(|J|)\) 
    such that either
    \begin{itemize}
        \item $G$ contains a prepattern of order $t$ on $\gc A\vert_I$, or
        \item $G$ contains a row-extension of $\gc A\vert_I$ with cost $\const(k,t)$.
    \end{itemize}

    \effective{Moreover, there is an algorithm that, given $G$ and $\gc A$, computes the sequence $I$ and one of the two outcomes (a prepattern or a row-extension) in time $O_{k,t}(|V(G)|^2)$.
    } 
\end{restatable}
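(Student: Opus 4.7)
The plan is to construct candidate vertices for a new top row, regularize their adjacency types by Ramsey theory, and then either assemble them into a row-extension of $\mathcal{A}$ or harvest a bi-prepattern from the irregularities. For each $i \in J$, let $V_i$ be the set of vertices $v \in V(G) \setminus A[*,*]$ eligible to sit in the new cell $B[i, h+1]$. By \ref{itm:outside} and \ref{itm:adj-different-rows}, the adjacencies between any such $v$ and cells of $\mathrm{int}(A)$ in rows $r \leq h-1$ are already pinned down by the color $\KK(v)$ together with $F$ and $R$; the only genuinely free part of $v$'s adjacency profile to $\mathcal{A}$ lies in its connections to the top row $A[*, h]$ (and, in a harmless way that is later absorbed by passing to a subinsulator, to the boundary columns).

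Next, I would classify, for each column $i$, the possible candidates in $V_i$ by (a) their color in $\KK$, and (b) their adjacency pattern to $A[j, h]$ for each other column $j$, measured as agreement or disagreement with the prediction of $R$. Since the colour set has size at most $k$ and each deviation from $R$ is a single bit per column, one application of the Bipartite Ramsey Theorem (\cref{gridramsey}) to pairs $(i,j) \in J \times J$ yields a subsequence $I \subseteq J$ of length $U_{t}(|J|)$ on which the candidate-types are homogeneous, depending only on the order type of $(i, j)$.

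Two cases then arise. \emph{Well-behaved case:} for every $i \in I$ there is a $v_i \in V_i$ whose adjacencies to all non-close top cells $A[j, h]$, i.e.\ those with $|i - j| > 1$, agree with $R$. I would form the row-extension by setting $B[i, h+1] := \set{v_i}$, refining $\KK$ with $\const(k, t)$ new color classes (a dedicated class for the new row as demanded by \ref{itm:consistent-rows}, plus a constant number of auxiliary classes absorbing the ``close column'' freedom allowed by \ref{itm:adjacency}), and extending $F, R$ to $F', R'$ so that \ref{itm:rootedness}, \ref{itm:outside}, and \ref{itm:adjacency} are all maintained on the new row. Property \ref{itm:ordered} carries over from $\mathcal{A}\vert_I$ unchanged by \cref{lem:subinsulator}, since the bottom row of $B$ coincides with that of $A\vert_I$. \emph{Irregular case:} for many $i \in I$ there is no such well-behaved $v_i$. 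Then for each such $i$ and each $j \in I$ there is a witness vertex $c_{i,j}$ that deviates from the $R$-prediction at the top cell $A[i,h]$ but not at earlier columns of $I$, and symmetrically for $j$. Packaging these witnesses with the quantifier-free edge formula and a parameter $s$ recording the relevant colour class assembles, after a further Ramsey pass on the pair $(i,j)$, a bi-prepattern of order $t$ on $\mathcal{A}\vert_I$ in the sense of \cref{def:bi-pattern}.

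The main obstacle is the bookkeeping in the irregular case: one must verify that the failure of the well-behaved case simultaneously yields \emph{both} the left- and right-minimization conditions in \cref{def:bi-pattern}, which requires a careful choice of the two quantifier-free formulas $\alpha_1, \alpha_2$ and of the sign-symbols $\sim_1, \sim_2$ so that the ``first column of deviation'' is pinned down on both sides from a single colour-bounded adjacency pattern. The algorithmic time bound $O_{k,t}(|V(G)|^2)$ follows because the dominating cost is the Ramsey step applied to a coloring of pairs of vertices, and the sets $V_i$ and their colours can be read off from the adjacency matrix of $G'$ in quadratic time.
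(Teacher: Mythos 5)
Your proposal misses the central mechanism of the paper's proof and, as stated, the row-extension step does not go through. The key issue is property~\ref{itm:outside}: when you pass from $\mathcal A$ of height $h$ to a row-extension $C$ of height $h+1$, the old top row $A[*,h]$ moves into $\int(C)$, so every vertex $v\notin C$ must now be \emph{homogeneous} to $X\cap A[*,h]$ for each refined color $X$. But in $\mathcal A$ the property~\ref{itm:outside} only constrains adjacencies to $\int(A)$, which excludes $A[*,h]$, so external vertices can have completely arbitrary adjacency to the top row. Setting $C[i,h+1] := \{v_i\}$ for a single chosen $v_i$ leaves all other external vertices outside $C$, and no refinement of $\KK$ by $\const(k,t)$ color classes can make their adjacency to $A[*,h]$ homogeneous — the set of possible adjacency patterns to a row of unbounded width is unbounded. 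The paper instead first proves (\cref{lem:sampling}) that there is a small \emph{sample set} $S$ of size $\const(t)$ such that every vertex of $G$ agrees with some $s\in S$ on all but a bounded "margin" of columns; it then puts \emph{every} vertex that disagrees with its sample at column $i$ into the new cell $C[i,h+1]$. Vertices left outside $C$ agree with their sample everywhere, so their adjacency to $\int(C)$ is determined by color plus the identity of the sample — a bounded amount of information — and \ref{itm:outside} can be restored. Your approach never introduces anything like $S$, so there is no bounded summary against which homogeneity could be certified.

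A secondary problem is that your Ramsey step and the bi-prepattern harvest are not well-formed. You propose to apply Bipartite Ramsey to pairs $(i,j)\in J\times J$ colored by "candidate types" and "agreement with $R$", but $R$ governs adjacency only between pairs of cells \emph{inside} the insulator (\ref{itm:adj-left}, \ref{itm:adj-right}), not between external candidates and cells, so the coloring is not defined, and without a sample set there is no way to cap the number of candidate types by $\const(k,t)$. In the paper, the bi-prepattern arises from vertices with two far-apart \emph{alternation points} with respect to the sample set (\cref{lem:new-sample-vtx2}, \cref{claim:alternationpoints}); this is what produces the two first-column-of-change minimization conditions in \cref{def:bi-pattern}, with the parameters $s_1,s_2$ being sample vertices. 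Your sketch correctly intuits that the formulas $\alpha_1,\alpha_2$ and symbols $\sim_1,\sim_2$ are where the difficulty lies, but without the sampling framework there is no mechanism that pins down the "first column of deviation" for both coordinates simultaneously.
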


Both insulator growing lemmas follow the same scheme. 
Given an insulator we find a large subinsulator that either witnesses a lot of non-structure (in form of a prepattern), or improves the structural guarantees of the original insulator (in form of a row-extension).
In the orderless case a third outcome may appear: the subinsulator may be \emph{orderable}.
Orderable insulators are orderless insulators which can be converted into ordered ones, as made precise by the following definition and lemma.

\begin{definition}[Orderable Insulators]\label{def:orderable}
    Let $\gc A$ be an orderless insulator with grid $A$ indexed by $I$ in a graph $G$. 
    We say that $\gc A$ is \emph{orderable} if 
    there exist vertices $\{b_i \in A[i,*] : i \in I\}$ and $\{c_i \in V(G) : i \in I\}$ and a symbol ${\sim} \in \{\leq,{\ge}\}$ such that for all $i,j \in I$
    \[
        G \models E(c_i,b_j)    
        \quad
        \text{if and only if}
        \quad
        i \sim j.
    \]
\end{definition}

\begin{lemma}\label{lem:orderless-to-ordered-insulator}
    Let $\gc A$ be an orderless insulator of cost $k$ and height $h$ in a graph $G$ that insulates a set $W \subseteq V(G)$.
    If $\gc A$ is orderable, then there exists an ordered insulator $\gc B$ of cost $k \cdot h$ and height~$h$ that also insulates $W$.

    \effective{Moreover, there is an algorithm that, given $G$ and $\gc A$, computes $\gc B$ in time $O_{k,h}(|V(G)|)$.}
\end{lemma}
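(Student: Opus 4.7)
I will construct $\gc B$ on the \emph{same underlying grid} as $\gc A$, re-tagged as ordered, but with a coloring refined so as to distinguish rows. By \cref{obs:orderless-just-as-good}, we may assume the orderless insulator $\gc A=(A,\KK,F,R)$ satisfies $F=R$, so it already fulfills the analogues of the ordered axioms \ref{itm:rootedness}, \ref{itm:outside} and \ref{itm:adjacency}. The only genuinely new content is the row-consistency axiom \ref{itm:consistent-rows} and the order axiom \ref{itm:ordered}; the refined coloring and the orderability witnesses will handle these, respectively. Since $\gc A$ insulates $W$ via some bijection $W\to I$ whose image lies in the bottom cells of $A$, the same bijection will witness that $\gc B$ insulates $W$.

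\textbf{Construction.} Write $|\KK|\le k$ and $G':=G\oplus_\KK F$. Refine $\KK$ by the row structure of $A$: for each $X\in\KK$ let $X_r := X\cap A[*,r]$ for $r\in\{1,\dots,h-1\}$ and $X_h := (X\cap A[*,h])\cup(X\setminus A[*,*])$, absorbing the exterior of $A$ into the top-row class. Call the resulting partition $\KK'$; it has at most $k\cdot h$ parts. Lift $F$ and $R$ to $F',R'\subseteq(\KK')^2$ by declaring $(X_r,Y_s)\in F'$ iff $(X,Y)\in F$, and similarly for $R'$. By construction $G\oplus_{\KK'}F' = G\oplus_\KK F = G'$. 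Set $\gc B:=(A,\KK',F',R')$ with $A$ re-tagged as ordered.

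\textbf{Verification of the axioms.} Axiom \ref{itm:consistent-rows} is built into $\KK'$: two vertices in different rows of $A$ lie in distinct $X_r,Y_s$ with $r\ne s$, hence in distinct parts. For \ref{itm:rootedness}, \ref{itm:outside}, \ref{itm:adjacency}, \cref{obs:orderless-just-as-good} tells us these hold for $\gc A$ phrased in terms of $\KK$ and $R$; since $\KK'$ refines $\KK$ and $R'$ is the pull-back of $R$, we have $(\KK'(u),\KK'(v))\in R'$ iff $(\KK(u),\KK(v))\in R$, so the axioms transfer verbatim to $\gc B$. For \ref{itm:ordered}, take the orderability witnesses $\{b_i\in A[i,*]\}_{i\in I}$, $\{c_i\in V(G)\}_{i\in I}$, $\sim\in\{\le,\ge\}$ of $\gc A$. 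Choose $H:=G'$, which is a $\KK$-flip and hence a $(k\cdot h)$-flip, and put $r:=h-1<h$. For $v=a_j\in A[j,1]$, set $b(v):=b_j$; by \ref{itm:orderless}, $b_j\in A[j,*]=N^{G'}_{h-1}[a_j]=N^H_r[v]$, and the balls $N^H_r[a_j]=A[j,*]$ for distinct $j$ are pairwise disjoint because distinct columns of a grid are disjoint. The required equivalence
\[
    G\models E(c_i,b(a_j))\iff i\sim j
\]
is precisely the orderability condition of $\gc A$.

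\textbf{Obstacle and algorithm.} The main subtlety is checking that the refinement of the coloring does not destroy the adjacency guarantees of $\gc A$; this is immediate once one observes that $F'$ and $R'$ are the pull-backs of $F$ and $R$ along the refinement $\KK'\to\KK$, so every edge condition stated in terms of $(\KK,F,R)$ translates to the same condition in terms of $(\KK',F',R')$. A boundary case is $h=1$: then $r=0$ and $b(a_j):=a_j$, trivially satisfying \ref{itm:ordered}. Computationally, constructing $\KK'$ costs one pass through $V(G)$ plus lookups into $A$, so $\gc B$ is produced in time $O_{k,h}(|V(G)|)$ once the orderability witnesses are available; in the intended use, these are output alongside the insulator by \cref{thm:orderless-grid-growing}.
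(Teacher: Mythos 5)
Your proposal is correct and follows essentially the same approach as the paper's proof: re-tag the grid as ordered, refine the coloring by row to get cost $k\cdot h$ and satisfy \ref{itm:consistent-rows}, invoke \cref{obs:orderless-just-as-good} to carry over \ref{itm:rootedness}--\ref{itm:adjacency}, and use \ref{itm:orderless} together with the orderability witnesses to supply \ref{itm:ordered}. The only (harmless) difference is that you make explicit how exterior vertices are colored by absorbing them into the top-row class, whereas the paper leaves the treatment of $V(G)\setminus A$ under the row coloring unspecified; your choice is valid since $X_h\cap\mathrm{int}(A)=\emptyset$ makes \ref{itm:outside} for that class vacuous.
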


\begin{proof}
    Let $\gc A = (A, \KK, F, F)$ be as in the statement.
    Let $B$ be the grid obtained by changing the tag of the grid $A$ from \emph{orderless} to \emph{ordered}.
    Towards ensuring \ref{itm:consistent-rows},
    let $\KK_\star$ be the size $k\cdot h$ coloring obtained by taking the common refinement of the $k$-coloring $\KK$ and an $h$-coloring that assigns vertices from different rows in $A$ different colors. 
    Let $F_\star$ be the corresponding refinement of $F$ such that $G \oplus_\KK F = G \oplus_{\KK_\star} F_\star$.
    We check that $\gc B := (B, \KK_\star, F_\star, F_\star)$ is the desired ordered insulator.
    The insulator property \ref{itm:consistent-rows} holds by construction.
    By \cref{obs:orderless-just-as-good}, \ref{itm:rootedness}, \ref{itm:outside}, and \ref{itm:adjacency} carry over from~$\gc A$. 
    Finally, \ref{itm:orderless} ensures
    for the \(k\)-flip \(H := G \oplus_\KK F\) and $v \in A[i,1]$
    that \(N_{h-1}^{H}[v] = A[i,*]\).
    As $\gc A$ moreover is orderable (cf.\ \cref{def:orderable}), property \ref{itm:ordered} follows.
    The bound on the running time is trivial.
\end{proof}

As we know how to grow orderless insulators,
turn orderless, orderable insulators into ordered ones, and
grow ordered insulators,
we can now grow arbitrarily high insulators. 
This yields a proof of \cref{prop:prepatternImpliesInsulationNoAlg}, which we restate below with an added algorithmic conclusion. 

\begin{proposition}\label{prop:prepatternImpliesInsulation}
    Every prepattern-free graph class $\CC$ has the insulation property.
    
    \effective{Moreover, there is an algorithm that, given a radius $r$, a graph $G\in \CC$ and a set $W$, computes the subset $W_\star \subseteq W$ and a witnessing insulator in time $O_{\CC,r}(|V(G)|^2)$.}
\end{proposition}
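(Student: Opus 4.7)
The plan is to prove the insulation property by induction on the radius~$r$, using the insulator growing lemmas at each step. The base case $r=1$ is immediate from Lemma~\ref{lem:insulate-base-case}: any $W$ is $(1,1)$-insulated and we may take $W_\star := W$.

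For the inductive step, I assume the property for radius $r$ with parameters $N_r$ and cost $k_r = \const(\CC,r)$. Given a sufficiently large $W \subseteq V(G)$, the inductive hypothesis yields a subset $W' \subseteq W$ insulated by some $\gc A$ of cost $k_r$ and height $r$, indexed by a sequence $J$ with $|J| = |W'|$. Using prepattern-freeness for the pair $(k_r, r)$, I pick $t = \const(\CC,r)$ such that no graph in $\CC$ admits a prepattern of order $t$ on any insulator of cost $\leq k_r$ and height $\leq r$. I then apply Lemma~\ref{thm:orderless-grid-growing} if $\gc A$ is orderless, or Lemma~\ref{lem:order-growing} if it is ordered, with this parameter $t$. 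In both cases the prepattern outcome is ruled out by the choice of $t$. If the orderable outcome occurs in the orderless case, Lemma~\ref{lem:orderless-to-ordered-insulator} first converts the subinsulator into an ordered one of cost $k_r \cdot r$ at the same height, after which a fresh application of Lemma~\ref{lem:order-growing} (with a new $t' = \const(\CC,r)$ obtained from prepattern-freeness) yields a row-extension. In every other case the row-extension is obtained directly. The resulting insulator has height $r+1$, cost $\const(\CC, r+1)$, and insulates a subset $W_\star \subseteq W'$ of size $U_{\CC,r+1}(|W|)$. Choosing $N_{r+1}$ large enough (depending on $N_r$ and the bounds above) ensures $|W_\star| \ge m$ and completes the induction.

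The main subtlety is bookkeeping the cost across iterations: each step increases the cost from $k$ to some function of $k$ and $t$, but $t$ itself is determined by the current cost via prepattern-freeness, so the cost at height $r+1$ still depends only on $\CC$ and $r+1$. This is exactly why prepattern-freeness, quantified over all $k$ and $r$, is the right hypothesis; the iteration would not close otherwise. For the algorithmic conclusion, each iteration performs a constant number of calls to Lemmas~\ref{thm:orderless-grid-growing}, \ref{lem:order-growing}, and \ref{lem:orderless-to-ordered-insulator}, each taking time $O_{\CC,r}(|V(G)|^2)$; with only $r-1$ iterations in total, the overall running time is $O_{\CC,r}(|V(G)|^2)$, as claimed.
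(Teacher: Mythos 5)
Your proposal is correct and follows essentially the same inductive approach as the paper: base case from Lemma~\ref{lem:insulate-base-case}, then grow the insulator one row at a time via Lemmas~\ref{thm:orderless-grid-growing} and~\ref{lem:order-growing}, converting orderable orderless insulators to ordered ones via Lemma~\ref{lem:orderless-to-ordered-insulator}. The only cosmetic difference is that the paper chooses a single parameter $t = \const(\CC,r)$ upfront large enough to rule out prepatterns on insulators of cost up to $k_r\cdot r$ (covering both the original insulator and the ordered one obtained after conversion), whereas you invoke prepattern-freeness twice with two parameters $t$ and $t'$; both variants close the induction because prepattern-freeness is quantified over all costs and heights, exactly as you observe.
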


\begin{proof}
    Fix a prepattern-free class $\CC$
    and $r\ge 1$.
    We prove that for every $G\in \CC$ and $W\subset V(G)$ 
    there is a subset $W_r\subset W$  of size $U_{\CC,r}(|W|)$ that is $(r,k_r)$-insulated in $G$, for some $k_r\le \const(\CC,r)$.
This statement is proved by induction on $r\ge 1$.
The base case of $r=1$ follows immediately from Lemma~\ref{lem:insulate-base-case}.

In the inductive step, assume the statement holds for some $r\ge 1$;
we prove it for $r+1$.
Let $k_r\le \const(\CC,r)$ be the value obtained by inductive assumption.
As $\CC$ is prepattern-free, there is some number $t=\const(\CC,r)$ such that no graph $G\in\CC$ contains a pattern of order $t$ on insulators of cost at most $k_r \cdot r$ and height at most $r$ in $G$.

Let $G\in\CC$ and $W\subset V(G)$.
By the inductive assumption, there is 
a subset $W_r\subset W$ of size $U_{\CC,r}(|W|)$ 
and an insulator $\gc A_r$ 
of height $r$ and cost $\const(\CC,r)$ which 
insulates $W_r$. 
Let $J$ denote the indexing sequence of $\gc A_r$.
We prove that there is a subset $W_{r+1}\subset W_r$ of size $U_{\CC,r}(|W_r|)\ge U_{\CC,r}(|W|)$ and an insulator $\gc A_{r+1}$ of height $r+1$ and cost $\const(\CC,r)$ which insulates $W_{r+1}$.
We consider two cases, depending on whether $\gc A_r$ is orderless or ordered.

Assume first that $\gc A_r$ is orderless. 
We apply \cref{thm:orderless-grid-growing} to $\gc A_r$. This yields a sequence $I \subseteq J$ of length $U_t(|J|)$ such that one of the following three cases applies.
        \begin{enumerate}
            \item $G$ contains a prepattern of order $t$ on $\gc A_r\vert_I$.
            \item $G$ contains a row-extension $\gc A_r\vert_I$ with cost $\const(k_r,t)\le \const(\CC,r)$.
            \item $\gc A_r\vert_I$ is orderable.
        \end{enumerate}

        We set $W' := A[*,1]$ where $A$ is the grid of $\gc A_r\vert_I$.
        By the definition of an orderless subgrid we have $W' \subseteq W_r \subseteq W$ and 
        \[
            |W'|=|\tail(I)|=|I|-1\ge U_t(|J|).    
        \]
        As $\gc A_r$ is of height $r$ and cost $k_r$, the same holds for $\gc A_r\vert_I$.
        By our choice of $t$, the first case cannot apply.
        In the second case, the row extension of $\gc A_r\vert_I$ witnesses that $W'$ is $(r+1, \const(\CC,r))$-insulated, and we conclude by setting $W_{r+1} := W'$.
        It remains to handle the third case.
        We apply \cref{lem:orderless-to-ordered-insulator} to the orderable insulator $\gc A_r \vert_I$, yielding an ordered insulator $\gc B$ of cost $k_r \cdot r$ and height~$r$ that also insulates $W'$.
        Up to renaming, we can assume that $\gc B$ is indexed by $\tail(I)$, the same sequence that also indexes $\gc A_r \vert_I$.
        We apply \cref{lem:order-growing} to $\gc B$ with $k := k_r \cdot r$ and $t$.
        This yields a sequence $K \subseteq \tail(I)$ with $|K|=U_t(|J|)$ such that one of the following two cases applies.

        \begin{enumerate}
            \item $G$ contains a prepattern of order $t$ on $\gc B\vert_K$.
            \item $G$ contains a row-extension of $\gc B\vert_K$ with cost $\const(k,t)\le \const(\CC,r)$.
        \end{enumerate}
        By  our choice of $t$, the first case cannot apply.
        In the second case, by definition of an ordered subgrid, there exists a set $W_{r+1} \subseteq W' \subseteq W$ which is insulated by a row extension of $\gc B\vert_K$ and has size at least
        \[
            |W_{r+1}| = |\tail(K)| = |K| - 1 \geq U_t(|J|).
        \]
        $W_{r+1}$ is the desired $(r+1, k)$-insulated set. 
        This concludes the case where $\gc A_r$ is orderless.
        If $\gc A_r$ is ordered we can directly apply \cref{lem:order-growing}, which just improves the bounds.
        This completes the inductive proof.

        The induction can be easily turned into an algorithm.
        The trivial insulator from \cref{lem:insulate-base-case} can be computed in time $O(|V(G)|)$.
        The running time of the inductive step follows from the running times of \cref{thm:orderless-grid-growing}, \cref{lem:orderless-to-ordered-insulator}, and \cref{lem:order-growing}.
\end{proof}

\section{Flip-Breakability}
\label{sec:flip-breakability}

In this section we show the following implications: 
\[
    \text{insulation property}
    \quad
    \Rightarrow
    \quad
    \text{flip-breakable}
    \quad 
    \Rightarrow
    \quad
    \text{monadically dependent}
\]
For convenience, we restate the definition of flip-breakability.

\defFlipBreakableOne*

In short, $\CC$ is flip-breakable if for every $r\in\N$, graph $G\in\CC$ and $W\subset V(G)$,
there exist subsets $A,B\subset W$ with $|A|,|B|\ge U_{\CC,r}(|W|)$ and a $\const(\CC,r)$-flip $H$ of $G$ with $\dist_H(A,B)>r$.

\subsection{Insulation Property Implies Flip-Breakability}
In this section we prove the following.
\begin{proposition}\label{prop:ge-implies-fb}
    Every graph class $\CC$ with the insulation property is flip-breakable.

\effective{Moreover, if $\CC$ is also prepattern-free, then there is an algorithm that, given a radius $r$, graph $G$, and set $W$, computes in time $O_{\CC, r}(|V(G)|^2)$ subsets $A,B \subseteq W$, partition $\KK_\star$, and relation $F_\star \subseteq \KK_\star^2$ witnessing flip-breakability. 
(This means \(|A|,|B| \ge U_{\CC,r}(|W|)\), \(|\KK_\star| \le \const(\CC,r)\), and \(\dist_{H}(A,B) > r\) in the flip \(H\) defined by \(\KK_\star\) and \(F_\star\).)
}
\end{proposition}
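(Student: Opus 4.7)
Fix $r\in\N$ and suppose $\CC$ has the insulation property. Given $G\in\CC$ and $W\subseteq V(G)$, the plan is to apply the insulation property with radius parameter $h := r+1$ to extract a subset $W_\star\subseteq W$ of size $\ge U_{\CC,r}(|W|)$ insulated by an insulator $\gc A = (A,\KK,F,R)$ of height $h$ and cost $k\le\const(\CC,r)$. Let $I$ be the indexing sequence of $\gc A$ and $f\from W_\star\to I$ the insulating bijection; split $I$ into two halves $I_A,I_B$, each of size $\ge |I|/2$, and set $A := f^{-1}(I_A)$ and $B := f^{-1}(I_B)$, so that $|A|,|B| \ge U_{\CC,r}(|W|)$. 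The goal is then to exhibit a $\const(\CC,r)$-flip $H$ of $G$ with $\dist_H(A,B) > r$.

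In the orderless case, I would take $H := G\oplus_\KK F$. Property \ref{itm:orderless} asserts that each column $A[i,*]$ equals the closed $(h{-}1)$-ball in $H$ around its singleton bottom vertex $a_i$. Since the columns of $\gc A$ are pairwise disjoint and $h = r+1$, any two distinct bottom-row vertices lie at distance $\ge h > r$ in $H$, giving $\dist_H(A,B) > r$. In the ordered case, property \ref{itm:ordered} provides a $k$-flip $H$ of $G$ in which the bottom-row vertices of $\gc A$ have pairwise disjoint $r_0$-balls for some $r_0 < h$. By tracking the inductive construction of \cref{prop:prepatternImpliesInsulation}---which initializes $r_0 = h - 1$ via the orderless-to-ordered conversion in \cref{lem:orderless-to-ordered-insulator} and preserves this guarantee across the row extensions of \cref{lem:order-growing}---one can arrange $r_0 \ge r$, possibly by applying insulation with a slightly larger radius parameter to absorb any loss during the row extensions; this gives $\dist_H(A,B) > r$.

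\paragraph{Main obstacle.}
The nontrivial step is verifying that the effective radius $r_0$ in \ref{itm:ordered} of the constructed ordered insulator stays at least $r$ throughout the iterated row extensions. Should direct tracking fall short, an alternative construction of $H$ refines $\KK$ into a constant number of ``super-column'' classes---partitioning the insulator's columns into $r+2$ contiguous groups (with the boundary columns $i_1,i_n$ treated separately) and further refining outside vertices by their adjacency type to $\KK\cap\mathrm{int}(A)$ (at most $2^k$ types, by \ref{itm:outside})---and defines the flip to cancel all adjacencies between non-adjacent super-columns and all interior-to-outside adjacencies. Properties \ref{itm:adj-different-rows}--\ref{itm:adj-right} together with \ref{itm:consistent-rows} ensure that, for each pair of refined classes spanning non-adjacent super-columns, the value of the $G$-adjacency depends only on the $\KK$-colors of its endpoints (via $R$ for close rows and via $F$ otherwise), so the cancellation is well-defined on each refined class pair. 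In the resulting flip, any $H$-path from $A$ to $B$ must move between super-columns by at most one index per step, hence has length $\ge r+1 > r$.
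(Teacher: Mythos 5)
Your primary approach for the ordered case has a genuine gap. Property \ref{itm:ordered} asserts the existence of \emph{some} bound $r_0 < h$ and a $k$-flip in which the $r_0$-balls around bottom-row vertices are pairwise disjoint, but there is no mechanism that makes $r_0$ grow as the insulator's height grows. Tracing the construction: the bound $r_0$ is set to $(\text{height at conversion}) - 1$ when \cref{lem:orderless-to-ordered-insulator} converts an orderable insulator to an ordered one, and \cref{lem:order-growing} then leaves $r_0$ \emph{unchanged} across all subsequent row extensions (``property \ref{itm:ordered} only concerns the first row\dots so its truth carries over''). You have no control over when the orderless$\to$ordered conversion happens --- it is decided by the graph --- so $r_0$ can be as small as $1$ even after growing the insulator to height $r+1$ or $2r+1$. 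Requesting a larger initial radius does not fix this, because the conversion can still fire at an early, fixed height. So the claim ``one can arrange $r_0 \ge r$'' is not justified, and without it your ordered-case argument does not close.

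Your ``alternative construction'' is in the right spirit and is in fact close to what the paper does, but it is underspecified on the one point where the argument is delicate. The paper (\cref{lem:insulated-to-fb}) does not split into orderless vs.\ ordered at all: it invokes \cref{obs:orderless-just-as-good} so that properties \ref{itm:outside} and \ref{itm:adjacency} apply uniformly, requests an insulator of height $2r+1$ (not $r+1$), coarsens the columns into $3\cdot 2r + 2$ groups with $W_1$ and $W_2$ separated by $2r+1$ of them, and refines $\KK$ by (cell membership, original color, adjacency type to the interior) before defining $F_\star$ cell-pair by cell-pair via \ref{itm:adj-left}/\ref{itm:adj-right} and \ref{itm:outside}. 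The extra height and the symmetric padding of $2r$ columns on both flanks exist precisely so that the $2r$-neighborhood of $W_1$ never leaves $\int(B)$ before the column bound kicks in; recall that once a path reaches the top row (or the outer columns), \ref{itm:adjacency} no longer constrains its next step, and your sketch with height $r+1$ and $r+2$ groups does not address this row-escape issue. It is plausible that a careful length-$r$ path analysis could make height $r+1$ work, but your proposal does not carry it out.

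One thing you do that is genuinely different and correct: in the orderless case, the flip $G\oplus_\KK F$ alone suffices, because \ref{itm:orderless} makes each column the closed $(h{-}1)$-ball around its bottom singleton in that flip, so any two bottom vertices are at distance $\ge 2h-1 > r$ once $h = r+1$. The paper does not exploit this shortcut because it handles both cases with a single construction; your observation is a legitimate simplification, but it only covers the orderless case, and the ordered case is where the work is.
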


Let $W$ be a set of vertices in a graph $G$.
We call $W$ \emph{$(r,k)$-flip-breakable}, if there exist two disjoint sets $A,B \subseteq W$, each of size at least $\frac{1}{3}|W|$, and a $k$-flip $H$ of $G$, such that
\[
    N^H_r(A) \cap N^H_r(B) = \varnothing.
\]
\cref{prop:ge-implies-fb} will be implied by the following.

\begin{lemma}\label{lem:insulated-to-fb}
    Let $W$ be a $(2r+1,k)$-insulated set of at least $18r$ vertices in a graph $G$.
    Then $W$ is $(r,2^{48r^2k})$-flip-breakable in \(G\).

    \effective{Moreover, there is an algorithm that, given a radius $r$, graph $G$, set $W$, and an insulator witnessing that $W$ is $(2r+1,k)$-insulated, computes in time $O_{r,k}(|V(G)|)$ the subsets $A,B \subseteq W$, partition $\KK_\star$, and relation $F_\star \subseteq \KK_\star^2$ that witness the $(r,2^{48r^2k})$-flip-breakability of $W$.
    }
\end{lemma}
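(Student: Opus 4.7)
The plan is to handle orderless insulators directly and, for ordered insulators, to build a refined coloring $\KK_\star$ of bounded size together with a flip $F_\star\subseteq\KK_\star^2$ so rigid that short $A$-to-$B$ paths are forced to traverse many columns. Throughout, I write $h=2r+1$ and $n=|W|\ge 18r$, and let $G'=G\oplus_\KK F$ be the flip from the insulator's definition. If the witnessing insulator $\gc A$ is orderless, property \ref{itm:orderless} says each column is the ball of radius $h-1=2r$ in $G'$ around a singleton of the bottom row, and by cell-disjointness these balls are pairwise disjoint, so the bottom-row vertices have pairwise $G'$-distance strictly greater than $4r$. An arbitrary balanced partition then gives $A,B\subseteq W$ with $|A|,|B|\ge|W|/3$ such that $N^{G'}_r(A)\cap N^{G'}_r(B)=\varnothing$ already in the $k$-flip $G'$; so I may assume $\gc A$ is ordered.

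For ordered insulators I identify the column sequence with $(1,\ldots,n)$ and cut it into $K=\Theta(r)$ consecutive blocks of roughly equal width (say $K=15r$; this fits because $n\ge 18r$). I choose $A$ (respectively $B$) to consist of the $W$-vertices sitting in the first $K/3$ (respectively last $K/3$) blocks after dropping $\Theta(r)$ blocks of buffer at each end for boundary safety; this keeps $|A|,|B|\ge|W|/3$ and leaves a middle-block gap larger than $2r$. I then refine $\KK$ to $\KK_\star$ by tagging every vertex in $\gc A$ with its row, its block, and its position (interior, first column, last column, or top row), and every vertex $v\notin\gc A$ with its base color together with its \ref{itm:outside}-pattern $\{X\in\KK:v\sim X\cap\mathrm{int}(\gc A)\}$. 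A direct count gives $|\KK_\star|\le\const(k,r)\le 2^{48r^2k}$.

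Next I define $F_\star$ as the lift of $F$ to $\KK_\star$, augmented by flipping (a) every pair $(X_{r',b,\mathrm{int}},Y_{s,b',\mathrm{int}})$ with $|r'-s|\le 1$ and $|b-b'|\ge 2$ that is $G'$-adjacent, (b) every interior-to-outside pair that is $G'$-adjacent, and (c) analogous pairs joining far-block interior classes to the first- and last-column classes. By \ref{itm:adjacency} and \ref{itm:outside} each such color pair is $G'$-homogeneous, so the flips are clean. In the resulting $H=G\oplus_{\KK_\star}F_\star$, every surviving interior-to-interior edge lies between close or adjacent blocks, so each step of an $H$-path moves the block index by at most $1$; the middle block gap then forces every interior-only $A$-to-$B$ path to exceed length $2r$. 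A detour through an outside vertex must enter and leave via a boundary cell, but a path of length at most $2r$ from row $1$ to row $1$ never rises above row $r+1<h$ (so the top row is unreachable), and reaching the first or last column costs at least the buffer size on each side, again exceeding the budget. Hence $\dist_H(A,B)>2r$, i.e.\ $N^H_r(A)\cap N^H_r(B)=\varnothing$, proving $(r,2^{48r^2k})$-flip-breakability; computing $A,B,\KK_\star,F_\star$ takes a constant (in $r,k$) number of sweeps over the vertex set, giving the claimed $O_{r,k}(|V(G)|)$ running time.

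The main obstacle, I expect, is verifying $G'$-homogeneity of the flipped color pairs in~(a). The $R$-controlled adjacencies from \ref{itm:adj-left} and \ref{itm:adj-right} are \emph{directional}: whether two vertices in close rows and far columns are $G'$-adjacent depends on which of them lies to the left and on the sign of the row offset, yielding asymmetric behaviour across the ``up-left/down-right'' and ``up-right/down-left'' diagonals. A naive row-only refinement would lump pairs of both diagonal directions into a single $\KK_\star$-class and make clean flipping impossible: removing adjacencies on one diagonal would create spurious edges on the other. The block refinement is what pins the diagonal direction of every flipped pair and so restores homogeneity, which in turn is what forces the exponential size bound $2^{48r^2k}$.
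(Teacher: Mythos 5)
Your proof follows essentially the same strategy as the paper's: coarsen the column sequence into $O(r)$ blocks, refine $\KK$ into a bounded partition $\KK_\star$ that remembers row, block, and boundary status together with adjacency to interior color classes, choose a flip $F_\star$ that removes all ``long-range'' interior edges (using the insulator's homogeneity guarantees), and conclude that each $H$-step from an interior vertex changes row and block index by at most one. The paper's one simplification is that it does \emph{not} split into orderless and ordered cases: by \cref{obs:orderless-just-as-good} every orderless insulator with $F=R$ already satisfies \ref{itm:rootedness}, \ref{itm:outside}, \ref{itm:adjacency}, and the lemma's proof only invokes the latter two, so one uniform argument suffices. Your separate orderless shortcut is correct (disjoint radius-$2r$ balls give pairwise $G'$-distance $>4r$), it's just unnecessary. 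The other visible differences are bookkeeping: the paper coarsens into exactly $6r+2$ columns (the two middle blocks $M_1,M_2$ carry $W_1,W_2$, with $2r$ flanking columns on each side) rather than $15r$ blocks with ad-hoc buffers, and it defines $F_\star$ by explicit rules keyed to $R$ rather than by ``flip pairs that are $G'$-adjacent,'' which makes the homogeneity of each flipped pair self-evident instead of requiring the side argument you flag in your last paragraph. Your identification of that last point -- that the left/right asymmetry of the $R$-controlled adjacencies is exactly what the block coordinate must resolve -- is accurate and is the same mechanism that the paper's rules (F.1)/(F.2) hard-code by distinguishing $j<j'$ from $j>j'$.
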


\begin{proof}
    Let $\gc A =(A,\KK,F,R)$ be the insulator of cost \(k\) and height $2r + 1$ that insulates $W$.
    Since we will only be using the insulator properties \ref{itm:outside} and \ref{itm:adjacency} and by \cref{obs:orderless-just-as-good}, we do not have to distinguish whether $\gc A$ is orderless or ordered.
    Up to renaming, we can assume that $A$ is indexed by a sequence $I = (1,\ldots,n)$ for some $n\in\N$.
    Let $l_1, l_2, r_1,r_2 \in I$ be the indices such that $(l_1,l_1+1,\ldots, l_2)$ and $(r_1,r_1 + 1, \ldots,r_2)$ are the sequences containing the first and last $2r$ elements of $I$ respectively.
    Since $\gc A$ insulates $W$, we can choose indices $m_1 > \ell_2$ and $m_2 < r_1$ such that $(m_1, m_1 +1, \ldots, m_2)$ contains $2r$ elements and there are two disjoint sets $W_1 := W \cap M_1 \cap A[*,1]$ and $W_2 := W \cap M_2 \cap A[*,1]$, where
    \[
        M_1 := \bigcup_{l_2 < i < m_1} A[i,*]
        \quad 
        \text{and}
        \quad
        M_2 := \bigcup_{m_2 < i < r_1} A[i,*],
    \]
    such that $W_1$ and $W_2$ each contain at least $\frac{1}{2}(|W| - 6r) \geq \frac{1}{3}|W|$ vertices.
    The sets $W_1$ and $W_2$ will play the role of the sets $A$ and $B$ in the flip-breakability statement.
    We define a new grid $B$ of height $h$ indexed by $J := (1, \ldots, 3\cdot 2r + 2)$ on the same vertex set as $A$.
    The rows of $B$ are the same as the rows of $A$.
    The columns of $B$ are in order
    \[
        \underbrace{A[l_1,*],\ \ldots,\ A[l_2,*]}_{2r \text{ columns}},\ 
        M_1
        ,\ 
        \underbrace{A[m_1,*],\ \ldots,\ A[m_2,*]}_{2r \text{ columns}}
        ,\ 
        M_2
        ,\ 
        \underbrace{A[r_1,*],\ \ldots,\ A[r_2,*]}_{2r \text{ columns}}.
    \]
    See \cref{fig:coarsening} for a visualization.
    We observe that $B$ coarsens the columns of $A$ in the following sense.
    \begin{observation}\label{obs:gefb-column-coarsening}
        Let $u \in B[j,t]$ and $v \in B[j',t']$ such that $j < j'$ for some $j,j' \in J$ and $t,t' \in [h]$.
        The also $u \in A[i,t]$ and $v \in A[i',t']$ for some $i < i' \in I$.
    \end{observation}
    \noindent Additionally, the construction ensures that
    \begin{itemize}
        \item $B$ has $(3\cdot 2r + 2)\cdot (2r+1) \leq 24r^2$ cells,
        \item $W_1 \subseteq B[2r + 1, 1]$ and $W_2 \subseteq B[4r + 2,1]$, and
        \item $\int(A) = \int(B)$.
    \end{itemize}

    \begin{figure}[htbp]
        \centering
        \includegraphics[scale=0.9]{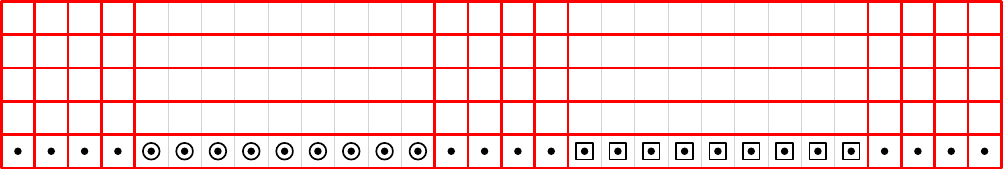}
        \caption{An example of a coarsening for radius $r = 2$. The coarsening $B$ (in bold red) overlays the grid $A$ (in gray).
        The sequence $W$ is located in the bottom row. The two subsequences $W_1$ and $W_2$ are marked with circles and squares, respectively.
        }
        \label{fig:coarsening}
    \end{figure}

    We build $\KK_\star$ as a refinement of $\KK$ by encoding into the color of every vertex $v \in V(G)$ for every color $X \in \KK$ and for every cell $B[i,j]$ of $B$ the information
    \begin{enumerate}[leftmargin= 3em, label={(C.$\arabic*$)}]
        \item \label{itm:gefb-col-x} whether $v \in X$,
        \item \label{itm:gefb-col-cell} whether $v \in B[i,j]$,
        \item \label{itm:gefb-col-outside} whether $v$ is adjacent in \(G\) to a vertex from $X \cap \int(B)$.
    \end{enumerate}

    $\KK_\star$ has at most $2^{k \cdot 24r^2 \cdot 2}$ colors.
    By \ref{itm:gefb-col-x}, we can define for every $X \in \KK_\star$ a color $\KK(X) \in \KK$ such that for all $v \in X$ we have $v\in\KK(X)$.
    We define $F_\star \subseteq \KK^2$ via the following four rules.

    \begin{enumerate}[leftmargin= 3em, label={(F.$\arabic*$)}]
        \item \label{itm:gefb-flip-hor-left}
        If $X \subseteq B[j,t]$ and $Y \subseteq B[j',t']$ and $j < j'$ and $t' \in \{t,t-1\}$, then
        \[
            (X,Y) \in F_\star 
            \Leftrightarrow 
            \big(\KK(X), \KK(Y)\big) \in R
            .
        \]
        \item \label{itm:gefb-flip-hor-right}
        If $X \subseteq B[j,t]$ and $Y \subseteq B[j',t']$ and $j > j'$ and $t' \in \{t,t+1\}$, then
        \[
            (X,Y) \in F_\star 
            \Leftrightarrow 
            \big(\KK(Y), \KK(X)\big) \in R
            .
        \]
        \item \label{itm:gefb-flip-outside}
        If $X \not \subseteq B$ and $Y \subseteq \int(B)$, or vice versa, then
        \[
            (X,Y) \in F_\star 
            \Leftrightarrow 
            \text{there is an edge between $X$ and $Y$ in $G$}
            .
        \]
        \item \label{itm:gefb-flip-else}
        Otherwise, $(X,Y) \in F_\star \Leftrightarrow \big(\KK(X), \KK(Y)\big) \in F$.
    \end{enumerate}
    By construction, $F_\star$ is symmetric and therefore describes a valid flip.
    Let $G' := G \oplus_\KK F$ and $G_\star := G \oplus_{\KK_\star} F_\star$.
    
    \begin{claim}\label{clm:gefb-far-rows}
        Let $u \in \int(B)$ and $v \in B$ be vertices from rows that are not close in $B$.
        Then \(u\) and \(v\) are non-adjacent in $G_\star$.
    \end{claim}
    
    \begin{claimproof}
        Let $u$ and $v$ be as in the claim.
        Then case \ref{itm:gefb-flip-else} applies and $u$ and $v$ are adjacent in $G_\star$ if an only if they are adjacent in $G'$.
        By construction of $B$, we have also $u \in \int(A)$ and $v \in A$ and $u$ and $v$ are in rows that are not close in $A$.
        It follows by property \ref{itm:adj-different-rows} of $\gc A$ that $u$ and $v$ are non-adjacent in both $G'$ and $G_\star$.
    \end{claimproof}

    \begin{claim}\label{clm:gefb-far-cols}
        Let $u \in \int(B)$ and $v \in B$ be vertices from columns that are not close in $B$.
        Then \(u\) and \(v\) are non-adjacent in $G_\star$.
    \end{claim}
    
    \begin{claimproof}
        Let $u \in B[j,t]$ and $v \in B[j',t']$ be as in the claim for some $j,j' \in J$ and $t, t'\in[h]$.
        If $u$ and $v$ are in rows that are not close, then we are done by \cref{clm:gefb-far-rows}.
        We can therefore assume $t' \in \{t-1,t,t+1\}$.

        Assume first that $j < j'$. Since $B[j,*]$ and $B[j',*]$ are not close, we even have $j+1 < j'$.
        If $t' = t + 1$,
        then case \ref{itm:gefb-flip-else} applies and $u$ and $v$ are adjacent in $G_\star$ if an only if they are adjacent in~$G'$.
        By \cref{obs:gefb-column-coarsening}, the adjacency between $u$ and $v$ in $G'$ can be determined using the insulator property \ref{itm:adj-bot-left}
        of $\gc A$: $u$ and $v$ are non-adjacent as desired.
        If $t' \in \{t,t-1\}$, then by \ref{itm:gefb-col-cell}, case \ref{itm:gefb-flip-hor-left} applies and the following are equivalent.
        Let $X := \KK_\star(u)$ and $Y := \KK_\star(v)$.
        \begin{enumerate}
            \item The adjacency between $u$ and $v$ got flipped when going from $G$ to $G_\star$.
            \item $\big(\KK(X), \KK(Y)\big) \in R$. 
            \hfill (by \ref{itm:gefb-flip-hor-left})
            \item $u$ and $v$ are adjacent in $G$.
            \hfill (by \cref{obs:gefb-column-coarsening} and \ref{itm:adj-left})
        \end{enumerate}
        The equivalence between the first and the last item establishes that $u$ and $v$ are non-adjacent in~$G_\star$, as desired.

        The proof for $j > j'$ works symmetrically.
        If $t' = t - 1$, then case \ref{itm:gefb-flip-else} applies, and we argue using \ref{itm:adj-bot-left}. If $t' \in \{t,t+1\}$, then case \ref{itm:gefb-flip-hor-right} applies, and we argue using \ref{itm:adj-right}.
    \end{claimproof}

    \begin{claim}\label{clm:befg-outside}
        Let $u \in \int(B)$ and $v$ be a vertex not in $B$. Then $u$ and $v$ are non-adjacent in $G_\star$.
    \end{claim}
    
    \begin{claimproof}
        Let $X := \KK_\star(u)$ and $Y := \KK_\star(v)$ be the colors of $u$ and $v$ in $\KK_\star$.
        By \ref{itm:gefb-col-cell}, we have $Y \cap A = \varnothing$ and $X \subseteq \int(A)$.
        It follows from property \ref{itm:outside} of $\gc A$ that every vertex from $Y$ is homogeneous to $X$ in $G$.
        Moreover, by \ref{itm:gefb-col-outside}, either every or no vertex in $Y$ has a neighbor in $X$ in $G$.
        It follows that the connection between $X$ and $Y$ is homogeneous in $G$.
        Also, by \ref{itm:gefb-col-outside}, case \ref{itm:gefb-flip-outside} applies, and the following are equivalent.

        \begin{enumerate}
            \item The adjacency between $u$ and $v$ got flipped when going from $G$ to $G_\star$. 
            \item There is an edge between $X$ and $Y$ in $G$.
            \hfill (by \ref{itm:gefb-flip-outside})
            \item There is an edge between $u$ and $v$ in $G$.
            \hfill (by homogeneity of $X \ni u$ and $Y \ni v$ in $G$)
        \end{enumerate}
        The equivalence between the first and the last item establishes that $u$ and $v$ are non-adjacent in~$G_\star$, as desired.
    \end{claimproof}

    Combining \cref{clm:gefb-far-rows}, \cref{clm:gefb-far-cols}, and \cref{clm:befg-outside} yields the following crucial observation.
    
    \begin{observation}
        In $G_\star$, if vertices $u \in \int(B)$ and $v \in V(G)$ are adjacent, then they are in cells that are close in $B$.
    \end{observation}

    A straightforward induction now yields that the $2r$-neighborhood of every vertex $u \in W_1 \subseteq B[2r+1,1]$ in $G_\star$ 
    satisfies $N^{G_\star}_{2r}(u) \subseteq B[{\le} 4r + 1,*]$.
    In particular, it contains no vertex from $W_2 \subseteq B[4r + 2,1]$. 
    We therefore have
    $N^{G_\star}_r(W_1) \cap N^{G_\star}_r(W_2) = \varnothing$
    as desired.
    This finishes the proof of the flip-breakability of $W$.
    To bound the running time, note that
    $W_1$, $W_2$, $\KK_\star$, and $F_\star$ can all be computed in time $O_{r,k}(|V(G)|)$.
\end{proof}

We can now prove \cref{prop:ge-implies-fb}.

\begin{proof}[Proof of \cref{prop:ge-implies-fb}]
    The non-algorithmic part of the statement immediately follows from \cref{lem:insulated-to-fb}.
    For the algorithmic part, assume that $\CC$ is also prepattern-free.
    Given any set $W$, we can compute an insulated subset $W' \subseteq W$ and a witnessing insulator $\gc A$ using \cref{prop:prepatternImpliesInsulation}.
    Applying the algorithm given by \cref{lem:insulated-to-fb} to $W'$ and $\gc A$ yields the desired result.
\end{proof}

\subsection{Flip-Breakability Implies Monadic Dependence}\label{sec:breakableImpliesNIP}
We prove the second main implication of the section.
\begin{proposition}\label{prop:fb-implies-mnip}
    Every flip-breakable class of graphs is monadically dependent.
\end{proposition}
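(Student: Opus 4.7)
My plan is to prove the contrapositive: if $\CC$ is monadically independent, then $\CC$ is not flip-breakable. Unfolding the definition, there is a formula $\phi(x,y)$ over an expansion of graphs by finitely many unary predicates $\bar U$ such that for every $n$, some $G\in\CC$ has an expansion $G^+$ with distinct vertices $a_1,\dots,a_n,b_1,\dots,b_n$ for which $\phi$ defines a half-graph of order $n$, i.e.\ $G^+\models \phi(a_i,b_j)\Leftrightarrow i\le j$. (Half-graphs suffice to witness monadic independence, since the class of all half-graphs already transduces the class of all graphs; by incorporating a unary predicate that marks the $a_i$'s we may additionally regard $\phi$ as symmetric in its two arguments.) The key tool is Gaifman's locality theorem: there is a radius $r$, depending only on the quantifier rank $q$ of $\phi$ and thus not on $n$, such that $\phi(x,y)$ is equivalent to a Boolean combination of $r$-local formulas around $x$, $r$-local formulas around $y$, and basic local sentences.

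Assuming flip-breakability of $\CC$, I would apply it at radius $2r$ to the set $W:=\{a_1,\dots,a_n,b_1,\dots,b_n\}$ with $n\ge N_{2r}(m)$ for a sufficiently large $m$ to be chosen, obtaining subsets $A,B\subseteq W$ of size at least $m$ and a $k$-flip $H=G\oplus_\KK F$ with $\dist_H(A,B)>2r$, where $k=k_{2r}$. A pigeonhole step then lets me refine $A$ and $B$ so that, up to swapping their roles, they contain at least $m/2$ vertices from $\{a_i\}$ and $\{b_j\}$ respectively, preserving a half-graph of order $\ge m/2$ between them. Using the fact that flips are reversible by first-order logic --- precisely, the formula $\phi_{\KK,F}$ from \Cref{sec:prelims} --- I translate $\phi$ into a formula $\phi^\star(x,y)$ of the same quantifier rank over the signature of $H$ enriched by $\bar U$ and by $k$ fresh unary predicates naming the parts of $\KK$, satisfying $H^+\models\phi^\star(u,v)\Leftrightarrow G^+\models\phi(u,v)$ for all $u,v$.

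The contradiction then falls out of locality. Since $\phi^\star$ still has quantifier rank at most $q$, Gaifman's theorem applies with the same radius $r$. For any $a\in A$ and $b\in B$ the $r$-neighborhoods of $a$ and $b$ in $H$ are disjoint, so $\phi^\star(a,b)$ is determined by the $r$-local types of $a$ and $b$ in $H^+$ together with basic local sentences that depend only on $H^+$. The number of possible $r$-local types of quantifier rank $\le q$ over a signature with boundedly many unary predicates is bounded by a constant $T=T(r,q,k,|\bar U|)$. Hence the bipartite adjacency defined by $\phi^\star$ on $A\times B$ admits at most $T$ distinct ``columns'' $b\mapsto \phi^\star(\cdot,b)$, contradicting the half-graph of order $m/2$ once $m>2T$. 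I expect the main obstacle to be the pigeonhole/symmetry step: since flip-breakability offers no control over whether $A$ and $B$ land on the $a$-side or the $b$-side of the bipartition, a careful case analysis (or the symmetrization trick alluded to above, in which a unary predicate forces $\phi$ to be a symmetric half-graph relation on $W$) is needed to ensure that a nontrivial half-graph survives in $A\times B$ in every outcome of flip-breakability.
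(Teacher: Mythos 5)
Your approach (contrapositive, Gaifman locality applied after transferring $\phi$ across the flip, pigeonhole on local $r$-types) is in the same spirit as the paper's proof, but there is a genuine gap exactly where you suspect one: the ``pigeonhole/symmetry step'' does not go through, and neither a case analysis nor the symmetrization trick fixes it. Flip-breakability gives you large $A,B\subseteq W$ far apart in a $k$-flip, but it gives you no control over where they sit relative to the half-graph bipartition: both $A$ and $B$ could land almost entirely among the $a_i$'s. In that case the half-graph relation between $A$ and $B$ is trivial (false everywhere, even after symmetrizing $\phi$ with a unary predicate), so your ``too many distinct columns'' contradiction evaporates. You cannot simply rerun flip-breakability on $\{a_1,\dots,a_n\}$ alone either: to mimic the paper's argument you would then need, for same-color pairs $a_{i_1},a_{i_2}\in A$ and $a_{i'_1},a_{i'_2}\in B$, a single vertex $b_j$ separating both pairs simultaneously, i.e.\ $j\in[i_1,i_2)\cap[i'_1,i'_2)$. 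A half-graph realizes only initial-segment neighborhoods, so if the two intervals are disjoint (e.g.\ $A$'s indices all precede $B$'s), no such $b_j$ exists. The pattern you extract from monadic independence is therefore too weak for the argument.

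The paper avoids this by working with the Baldwin--Shelah characterization of monadic dependence via \emph{shattering} rather than via half-graphs: some formula $\phi$ monadically shatters arbitrarily large sets $W$. Flip-breakability applied to $W$ yields $A,B\subseteq W$ far apart; pick same-color pairs $a_1,a_2\in A$, $b_1,b_2\in B$, and shattering hands you a single vertex $v=a_{R}$ with $R=\{a_1,b_1\}$ satisfying $\phi(v,a_1)\wedge\neg\phi(v,a_2)\wedge\phi(v,b_1)\wedge\neg\phi(v,b_2)$. Gaifman locality then forces $v$ into $N^H_r(A)\cap N^H_r(B)=\varnothing$, a contradiction. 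Shattering realizes \emph{every} subset of $W$, so the required separating vertex always exists regardless of where $A$ and $B$ land inside $W$; half-graphs only realize intervals, which is why your version stalls. To repair your proof you should replace the half-graph witness by the shattering characterization (Fact before Lemma \ref{lem:fb-shatter}), at which point your locality argument becomes essentially the paper's Lemma \ref{lem:fb-shatter}.
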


Let $\phi(x,y)$ be a formula over the signature of colored graphs, $G^+$ be a colored graph, and $W$ be a set of vertices in $G^+$.
We say that $\phi$ \emph{shatters} $W$ in $G^+$, if there exists vertices
$(a_{R})_{R \subseteq W}$ such that for all $b \in P$ and $R \subseteq W$,
\[
    G^+ \models \phi(a_{R},b) \Leftrightarrow b \in R.
\]
Let $G$ be an (uncolored) graph, and $W$ be a set of vertices in $G$.
We say that $\phi$ \emph{monadically shatters} $W$ in $G$, if there exists a coloring $G^+$ of $G$ 
in which $\phi$ shatters $W$.

\begin{fact}[{\cite{baldwin1985second}}]
    A class of graphs $\CC$ is monadically dependent if and only if for every formula $\phi(x,y)$ over the signature of colored graphs, there exists a bound $m$ such that $\phi$ monadically shatters no set of size $m$ in any graph of $\CC$. 
\end{fact}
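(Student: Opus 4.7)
The plan is to proceed by contrapositive: assuming some formula $\phi(x,y)$ monadically shatters arbitrarily large sets in $\CC$, I derive a violation of flip-breakability via a double pigeonhole argument. Fix such a $\phi$, using a finite color palette $\Sigma$, and let $q$ denote its quantifier rank and $\ell := |\Sigma|$.

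First I would set up the locality machinery. For any $k$-flip $H = G \oplus_\KK F$, the preliminaries supply a formula $\tilde\phi$ of the same quantifier rank over the colored graph $H^{++}$ obtained from $H$ by adding the colors $\Sigma$ inherited from $G^+$ together with $k$ extra unary predicates naming the classes of $\KK$, such that $G^+ \models \phi(a,b) \Leftrightarrow H^{++} \models \tilde\phi(a,b)$. Gaifman's locality theorem applied to $\tilde\phi$ produces a radius $r' = r'(q)$ and a bound $T = T(q,k,\ell)$ on the number of $r'$-local types in any $(\Sigma \cup \{K_1,\dots,K_k\})$-colored graph, with the property that for every $H^{++}$ there is a fixed function $F_{H^{++}}$ satisfying $\tilde\phi(a,b) = F_{H^{++}}(\tau(a),\tau(b))$ whenever $\dist_H(a,b) > 2r'+1$, where $\tau(v)$ denotes the $r'$-local type of $v$ in $H^{++}$. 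This reduction uses that disjoint $r'$-balls in $H^{++}$ support no edges between each other, so any $r'$-local formula with parameters $x,y$ decomposes into local formulas in $x$ alone and $y$ alone, modulo basic local sentences that are fixed once $H^{++}$ is.

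Next I instantiate flip-breakability at radius $r := 4r'+2$, obtaining constants $k := k_r$ and $N_r$. With $T := T(q,k,\ell)$, set $m_0 := 2 + \lceil \log_2 T \rceil$. By our contrapositive assumption, some coloring $G^+$ of a graph $G \in \CC$ has a set $W \subseteq V(G)$ with $|W| \ge N_r(m_0)$ shattered by $\phi$, witnessed by $(a_R)_{R \subseteq W}$ with $\phi(a_R,b) \Leftrightarrow b \in R$ for all $b \in W$. Flip-breakability then provides $A,B \subseteq W$ with $|A|,|B| \ge m_0$ and a $k$-flip $H$ of $G$ with $\dist_H(A,B) > 4r'+2$. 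The triangle inequality forces, for every $R$, at least one of $\dist_H(a_R,A) > 2r'+1$ or $\dist_H(a_R,B) > 2r'+1$; after swapping $A$ and $B$ if needed, the index set $\tilde W := \{R : \dist_H(a_R,A) > 2r'+1\}$ has size at least $2^{|W|}/2$.

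The contradiction now drops out of two pigeonholes. For $R \in \tilde W$ and $v \in A$ locality yields $v \in R \Leftrightarrow F_{H^{++}}(\tau(a_R),\tau(v))$, so the trace $R \cap A$ is determined by $\tau(a_R)$ alone. Pigeonholing over the at most $T$ possible types produces a subset $S \subseteq \tilde W$ of size at least $2^{|W|}/(2T)$ whose elements all share a single type $\tau_*$, and hence a common trace $R \cap A = A_0$; since such $R$ differ only outside $A$, we get $|S| \le 2^{|W|-|A|}$. Combining $2^{|W|}/(2T) \le 2^{|W|-|A|}$ yields $|A| \le 1 + \log_2 T < m_0$, contradicting $|A| \ge m_0$. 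The main obstacle I expect is the bookkeeping in the locality step: one must verify that $T$ depends only on $\phi$ and on the flip-breakability constant $k_r$, not on the adversarially chosen coloring $G^+$ beyond the fixed palette $\Sigma$. Once that is in place, the combinatorial core is a brief type-collapse observation which compresses $2^{|W|}$ shattering witnesses into $T$ behavior classes on $A$, an inequality that $|A| > \log_2 T$ makes untenable.
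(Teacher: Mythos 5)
You have proven the wrong statement. The Fact is a purely model-theoretic characterization of monadic dependence, cited from Baldwin--Shelah and not proven in the paper. What your argument actually establishes is (a variant of) \cref{lem:fb-shatter}, namely: \emph{if $W$ is $(r_\phi,k)$-flip-breakable, then $\phi$ cannot monadically shatter $W$ once $|W|$ exceeds a bound $m_{\phi,k}$}. That lemma is a downstream consequence in the paper, used together with the Fact to derive \cref{prop:fb-implies-mnip}.

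The gap is not merely that you misidentified the target. Your proof opens by instantiating flip-breakability of $\CC$ at radius $4r'+2$, but flip-breakability is not a hypothesis of the Fact --- the Fact quantifies over arbitrary graph classes. And one cannot repair this by invoking \cref{thm:main-circle} to get flip-breakability from monadic dependence, because the implication ``flip-breakable $\Rightarrow$ monadically dependent'' in that theorem is precisely \cref{prop:fb-implies-mnip}, which takes the Fact as a black box. Using the Fact's consequences to prove the Fact is circular. What a genuine proof of the Fact requires is quite different in flavor: the forward direction (monadic dependence $\Rightarrow$ bounded shattering) is the Baldwin--Shelah content and needs a transduction argument showing that a formula monadically shattering arbitrarily large sets lets one encode arbitrary bipartite graphs by choosing the coloring appropriately; the converse (unbounded shattering $\Rightarrow$ monadic independence) is the easy direction via the same encoding. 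Neither direction touches flips or Gaifman locality. Your Gaifman/type-collapse argument is a reasonable (and slightly different) route to \cref{lem:fb-shatter} --- the paper's version pigeonholes on colors within $A$ and $B$ to exhibit a single vertex $v$ whose $\psi$-behavior is inhomogeneous on both sides, whereas you pigeonhole on local types of the shattering witnesses $a_R$ and then count traces; both work --- but it is not a proof of the Fact.
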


\cref{prop:fb-implies-mnip} will be implied by the following.

\begin{lemma}\label{lem:fb-shatter}
    Let \(\phi(x,y)\) be a formula and \(k \in \N\).
    There exist \(r_\phi, m_{\phi,k} \in \N\), where \(r_\phi\) depends only on \(\phi\),
    %For every formula $\phi(x,y)$ there exists $r_\phi$, such that for every integer $k$ there exists $m_{\phi,k} \in \N$
    such that no graph $G$ contains a set of at least $m_{\phi,k}$ vertices $W$ such that
    \begin{itemize}
        \item $\phi$ monadically shatters $W$ in $G$, and
        \item $W$ is $(r_\phi,k)$-flip-breakable in $G$.
    \end{itemize}
\end{lemma}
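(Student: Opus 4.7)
The plan is to combine Gaifman's locality theorem with the quantifier-free reversibility of flips, followed by a two-sided counting argument that separately uses the halves $A$ and $B$ of the breaking.

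First, I would apply Gaifman's normal form to $\phi(x,y)$ to obtain a locality radius $r_{\mathrm{loc}}$, depending only on the quantifier rank of $\phi$, together with a finite list of $r_{\mathrm{loc}}$-local single-variable formulas $\psi_1(z),\ldots,\psi_s(z)$ and some basic local sentences, such that in every colored graph, for every pair of vertices $a, b$ with $\dist(a,b) > 2 r_{\mathrm{loc}}$, the truth of $\phi(a,b)$ is a fixed Boolean combination of the truth values of the $\psi_i$ at $a$, the $\psi_i$ at $b$, and the sentences. Define the local signature $\sigma(v) := (\psi_1(v),\ldots,\psi_s(v))$, and set $r_\phi := 2 r_{\mathrm{loc}}$: this number depends only on $\phi$, while the number $s$ of local formulas (and hence the range of $\sigma$) may additionally depend on the signature.

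Next, assume toward contradiction that $W$ is monadically shattered by $\phi$ via a coloring $G^+$ and is $(r_\phi, k)$-flip-breakable, witnessed by a flip $H = G \oplus_\KK F$ with $|\KK| \le k$ and disjoint sets $A, B \subseteq W$ with $|A|, |B| \ge |W|/3$ and $N^H_{r_\phi}(A) \cap N^H_{r_\phi}(B) = \varnothing$. Let $H^{++}$ be the graph $H$ expanded by the colors of $G^+$ and by one unary predicate per part of $\KK$. Using the quantifier-free formula $\phi_{\KK, F}$ from the preliminaries to replace every edge atom $E(x, y)$ inside $\phi$, I would rewrite $\phi$ into a formula $\phi'(x, y)$ of the same quantifier rank with the property that $G^+ \models \phi(u, v) \iff H^{++} \models \phi'(u, v)$. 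Then $\phi'$ shatters $W$ in $H^{++}$, its Gaifman radius is still $r_{\mathrm{loc}}$, and the number of attainable values of $\sigma$ is bounded by some $N = N(\phi, k)$, with the dependence on $k$ entering only through the added unary predicates.

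The counting step is the heart of the proof. The flip-breakability hypothesis ensures that every vertex $v$ of $G$ is either $r_\phi$-far from $A$ (\emph{Type~I}) or $r_\phi$-far from $B$ (\emph{Type~II}) in $H$. For a shattering witness $a_R$ of Type~I and any $a \in A$, we have $\dist_H(a_R, a) > 2 r_{\mathrm{loc}}$, so Gaifman determines $\phi'(a_R, a)$ from $\sigma(a_R)$ and $\sigma(a)$ alone; varying $a$ over the fixed set $A$, the restriction of the $\phi'$-type of $a_R$ to $A$ -- which by shattering equals $R \cap A$ -- is a function only of $\sigma(a_R)$. Thus at most $N$ distinct values of $R \cap A$ can arise among Type~I witnesses, and symmetrically at most $N$ distinct values of $R \cap B$ among Type~II witnesses. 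Classifying all $R \subseteq A \cup B$ by the type of $a_R$ yields
\[
    2^{|A|+|B|} \;\le\; N \cdot 2^{|B|} \;+\; N \cdot 2^{|A|} \;\le\; 2N \cdot 2^{\max(|A|, |B|)},
\]
which forces $\min(|A|, |B|) \le 1 + \log_2 N$. Choosing $m_{\phi, k} := 3(\log_2 N(\phi, k) + 2)$ makes $|A|, |B| \ge |W|/3$ exceed this bound, giving the desired contradiction.

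The main obstacle I anticipate is the careful bookkeeping: verifying that the rewriting $\phi \mapsto \phi'$ preserves quantifier rank (so that $r_\phi$ genuinely depends only on $\phi$) and that the shattering coloring $G^+$ can be combined with the flip partition $\KK$ on $V(H)$ without collision. Both rely critically on the fact that $\phi_{\KK, F}$ is quantifier-free, so that all of the dependence on $k$ is absorbed into the single count $N$, and thus into $m_{\phi,k}$, rather than into the locality radius $r_\phi$.
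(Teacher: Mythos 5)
Your proof is correct, and it takes a genuinely different route from the paper's. Both arguments start the same way: rewrite $\phi$ over the flip $H$ via the quantifier-free formula $\phi_{\KK,F}$ (preserving quantifier rank, so the Gaifman radius $r_\phi$ depends only on $\phi$), and invoke Gaifman locality so that for far-apart pairs the truth of the rewritten formula is determined by local types. From there the paper takes the direct route: it chooses $m_{\phi,k}$ to exceed three times the number $t$ of Gaifman colors, pigeonholes $A$ and $B$ to find same-colored pairs $a_1,a_2\in A$ and $b_1,b_2\in B$, and then picks a \emph{single} shattering witness separating $a_1$ from $a_2$ and $b_1$ from $b_2$; by locality that witness must lie close to both $A$ and $B$, contradicting the breaking. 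You instead dichotomize \emph{all} shattering witnesses as Type~I or Type~II and run a global counting argument, getting $2^{|A|+|B|}\le N(2^{|A|}+2^{|B|})$ and hence $\min(|A|,|B|)\le 1+\log_2 N$. This buys a quantitatively sharper $m_{\phi,k}$ (logarithmic rather than linear in the number of local types), which is not needed here but is a nice observation; the paper's argument is shorter and avoids the counting. One caveat that applies equally to your argument and the paper's: the dichotomy "every vertex is $r_\phi$-far from $A$ or $r_\phi$-far from $B$" is only immediate if $N^H_r(A)$ for a set $A$ is understood as the union of \emph{closed} balls (so $A\subseteq N^H_r(A)$); under the open convention a vertex lying in $A\cup B$ could in principle escape both types, though this costs at most $|W|$ exceptional witnesses and is easily absorbed.
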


In order to prove \cref{lem:fb-shatter}, we use the following statement, which is an immediate consequence of Gaifman's locality theorem~\cite{gaifman82}.
For an introduction of the locality theorem see for example~\cite[Sec. 4.1]{grohe2008logic}.

\begin{corollary}[of {\cite[Main Theorem]{gaifman82}}]\label{lem:gaifman-coloring}
  Let $\phi(x,y)$ be a formula. Then there are numbers $r,t\in \NN$, where $r$ depends only on the quantifier-rank of $\phi$ and $t$ depends only on the signature and quantifier-rank of $\phi$,
  such that every (colored) graph $G$ can be vertex-colored using $t$ colors
  in such a way that for any two vertices $u,v\in V(G)$ with distance greater than $r$ in $G$,
  $G\models\phi(u,v)$ depends only on the colors of $u$ and $v$.
  We call $r$ the \emph{Gaifman radius} of $\phi$.
\end{corollary}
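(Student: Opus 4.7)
The plan is to deduce the corollary directly from Gaifman's locality theorem applied to the formula $\phi(x,y)$. Gaifman's theorem yields an equivalent formulation of $\phi(x,y)$ as a Boolean combination of (a) basic local sentences (which have no free variables and therefore take fixed truth values in $G$, contributing only constants) and (b) relativized local formulas of the form $\psi^{B_r(x,y)}(x,y)$, each evaluated only on the induced substructure of the $r$-ball around $\{x,y\}$, for some $r$ depending only on the quantifier rank of $\phi$. Only the local-formula part is relevant for distinguishing the truth value of $\phi(u,v)$ for different pairs $u,v$.

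Next, I would observe that whenever $\dist_G(u,v) > 2r$, the set $B_r^G(\{u,v\})$ splits as the disjoint union $B_r^G(u) \sqcup B_r^G(v)$, with no edges crossing between the two halves. A standard Feferman-Vaught-style composition lemma then shows that under this disjointness, each local formula $\psi^{B_r(x,y)}(u,v)$ is equivalent to a Boolean combination of one-sided local formulas $\psi_i^{B_r(x)}(u)$ (depending only on the ball around $u$) and $\psi_j^{B_r(y)}(v)$ (depending only on the ball around $v$), of quantifier rank bounded in terms of that of $\phi$.

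Finally, I would define the coloring: color each vertex $w \in V(G)$ by the tuple of truth values of every local formula of the form $\psi^{B_r(x)}(x)$ of quantifier rank at most that of $\phi$, over the fixed signature. Up to logical equivalence, only finitely many such formulas exist, say $t$ many, with $t$ depending only on the signature and the quantifier rank of $\phi$. For any pair $u,v$ with $\dist_G(u,v) > 2r$, the truth value of $\phi(u,v)$ in $G$ is then determined entirely by the colors of $u$ and $v$, as required; setting the Gaifman radius to $2r$ completes the proof.

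The only step requiring genuine care is the disjoint-union decomposition of local formulas in the second paragraph, but this is a routine instance of Feferman-Vaught composition, so no substantial obstacle is expected beyond carefully unpacking the syntactic form of local formulas and verifying that quantifier-rank bounds are preserved.
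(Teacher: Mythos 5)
Your proposal is correct and is the standard derivation of this corollary from Gaifman's theorem; the paper itself gives no proof and simply refers the reader to the textbook treatment, so there is nothing to contrast. The only place where your write-up is slightly loose is the phrase ``of quantifier rank at most that of $\phi$'' in the last paragraph: the one-sided local formulas produced by the Feferman--Vaught decomposition of Gaifman's $r$-local formulas have quantifier rank bounded by a fixed function of the quantifier rank of $\phi$, not literally by the quantifier rank of $\phi$ itself, and you should color vertices by their $r$-local $q'$-types for that larger bound $q'$. Since $q'$ is still determined by the signature and quantifier rank of $\phi$, the number of colors $t$ depends only on those quantities, exactly as the statement requires, so this is a cosmetic fix rather than a gap.
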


\begin{proof}[Proof of \cref{lem:fb-shatter}]
    We set $r_\phi$ to be the Gaifman radius of $\phi$.
    Let $s$ be the number of colors used by $\phi$.
    As stated in \cref{lem:gaifman-coloring}, let $t$ be the number of colors needed to determine the truth value of formulas in
    the signature of $(s\cdot k)$-colored graphs that have the same quantifier-rank as $\phi(x,y)$.
    Let $m_{\phi,k} := 3(t+1)$.

    Assume now towards a contradiction the existence of an $(r_\phi,k)$-flip-breakable in set \(W\) in \(G\) of size \(m_{\phi,k}\) such that 
    $\phi$ monadically shatters $W$ in $G$.
    Then there exists an $s$-coloring $G^+$ of $G$ in which $\phi$ shatters $W$.
    We apply flip-breakability which yields a $k$-flip $H$ of $G$ together with two disjoint sets $A,B \subseteq W$ each of size at least $t+1$, such that $N^H_r(A) \cap N^H_r(B) = \varnothing$.
    By using $k$ colors to encode the flip, we can rewrite $\phi$ to a formula $\psi$ with the same quantifier-rank as $\phi$, such that there exists a $(s \cdot k)$-coloring $H^+$ of $H$ where for all $u,v \in V(G)$,
    \[
        G^+ \models \phi(u,v) \Leftrightarrow H^+ \models \psi(u,v).
    \]
    In particular, $\psi$ shatters $W$ in $H^+$.
    Since $\psi$ has the same quantifier-rank as $\phi$ and is a formula over the signature of $s\cdot k$-colored graphs, by \cref{lem:gaifman-coloring} there exists a coloring 
    of $H^+$ with $t$ colors such that the truth of $\psi(u,v)$ in $H^+$ only depends on the colors of $u$ and $v$ for all vertices $u$ and $v$ with distance greater than $r$ in $H^+$.
    Recall that $A$ and $B$ each have size $t+1$.
    By the pigeonhole principle, there exist two distinct vertices $a_1,a_2 \in A$ that are assigned the same color and two distinct vertices $b_1,b_2 \in B$ that are assigned the same color. 
    Since $\psi$ shatters $W$ in $H^+$, there exists a vertex $v \in V(G)$ such that
    \[
        H^+ \models \psi(v,a_1) \wedge \neg \psi(v,a_2) \wedge \psi(v,b_1) \wedge \neg \psi(v,b_2).
    \]
    By \cref{lem:gaifman-coloring}, $v$ must be contained in $N^H_r(A) \cap N^H_r(B)$, 
    as the truth of \(\psi\) is inhomogeneous among both \(v\) and $\{a_1,a_2\}$ and among \(v\) and $\{b_1,b_2\}$.
    This is a contradiction to $N^H_r(A) \cap N^H_r(B)~=~\varnothing$.
\end{proof}

Combining the results of \cref{sec:insulation-property} and \cref{sec:flip-breakability}
yields the desired chain of implications.
\[
    \text{prepattern-free}
    \spacedRightarrow
    \text{insulation-property}
    \spacedRightarrow
    \text{flip-breakable}
    \spacedRightarrow
    \text{mon.\ dependent}
\]
In the remaining two sections of \cref{part1}, we provide the deferred proofs of the two insulator growing lemmas (Lemmas \ref{thm:orderless-grid-growing} and \ref{lem:order-growing}).

\section{Sample Sets}

We work towards proving 
Lemmas \ref{thm:orderless-grid-growing} and \ref{lem:order-growing}, which grow the height of an insulator.
In this section we show that in prepattern-free classes, given an insulator $\gc A$, we can extract a subinsulator $\gc B$ and a small \emph{sample set} of vertices which can be used to approximately represent the connections of all the vertices in the graph towards~$\gc B$.
We give some notation to make this statement precise.

\begin{definition}
    Given a graph $G$, a vertex $v \in V(G)$, and a set $A \subseteq V(G)$, we define the \emph{atomic type} of $v$ over $A$ as
    \[
        \atp(v/A) := \{(R,a) : G\models R(v,a), R \in \{E,=\}, a \in A\}.
    \]
\end{definition}

\begin{observation}\label{obs:monotone-disagreement}
    Let $G$ be a graph, $u,v \in V(G)$, and $A \subseteq B \subseteq V(G)$. Then
    \[
        \atp(u/A) \neq \atp(v/A)
        \quad
        \Rightarrow
        \quad
        \atp(u/B) \neq \atp(v/B).
    \]
\end{observation}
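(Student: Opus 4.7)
The plan is to prove this contrapositive-style observation by a direct unpacking of the definition of $\atp(\cdot/\cdot)$. The statement is essentially that if two vertices already disagree on their atomic types over a small set, enlarging the set preserves the disagreement, which is intuitively obvious since enlarging $A$ to $B$ only adds more potential distinguishing pairs $(R, a)$, never removes them.

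Concretely, I would argue as follows. Suppose $\atp(u/A) \neq \atp(v/A)$. Then by definition of the atomic type as a set of pairs, there exists some $(R, a)$ with $R \in \{E, =\}$ and $a \in A$ that lies in exactly one of the two sets. Without loss of generality assume $(R, a) \in \atp(u/A) \setminus \atp(v/A)$, which by the definition of the atomic type means $G \models R(u, a)$ while $G \not\models R(v, a)$. Since $A \subseteq B$, we have $a \in B$, so the same pair $(R, a)$ is a candidate element of both $\atp(u/B)$ and $\atp(v/B)$. Applying the definition once more in the larger set, $(R, a) \in \atp(u/B)$ and $(R, a) \notin \atp(v/B)$, so the two atomic types over $B$ differ, as required.

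There is essentially no obstacle here; the only thing to be careful about is covering both cases $R = E$ and $R = {=}$ uniformly (which the argument above does since it treats $R$ as an arbitrary element of $\{E, =\}$) and noting that the symmetric case, where the distinguishing pair lies in $\atp(v/A) \setminus \atp(u/A)$, is handled by swapping the roles of $u$ and $v$. The proof is a two-line observation and should be presented as such.
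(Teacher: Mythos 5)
Your proof is correct, and since the paper states this as an observation without giving a proof, your argument supplies exactly the natural one: take a pair $(R,a)$ witnessing the disagreement over $A$, note $a \in B$, and observe that membership of $(R,a)$ in $\atp(w/B)$ for $w \in \{u,v\}$ is determined by the same condition $G \models R(w,a)$ as for $\atp(w/A)$, so the disagreement persists.
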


\begin{restatable}{definition}{defSampled}\label{def:sampled}
    Let $G$ be a graph containing an insulator $\gc A$ with grid $A$ indexed by $I$.
    Let $v,s_<,s_>$ be vertices from $G$, $i\in I$, and \(m \in \N\).
    We say \emph{$v$ is $(m,i,s_<,s_>)$-sampled on $\gc A$} if
    \[
        \atp(v/ A[{<} i,*]) = \atp(s_</ A[{<} i,*])
        \quad 
        \text{and}
        \quad
        \atp(v/ A[{\ge} i + m,*]) = \atp(s_>/ A[{\ge} i + m,*]).
    \]
    We call $m$ the \emph{margin}, \(i\) the \emph{exceptional index}, $s_<$ the \emph{left-sample}, and $s_>$ the \emph{right-sample}.
\end{restatable}

\begin{restatable}{definition}{defSamples}\label{def:samples}
    Fix $p\in \N$. Let $G$ be a graph containing an insulator $\gc A$ indexed by $I$ and let $S \subseteq V(G)$.
    We say \emph{$S$ samples $G$} on $\gc A$ with margin $m$ 
    if there exists functions 
    $\ex : V(G) \rightarrow I$ and $s_<,s_> : V(G) \rightarrow S$
    such that every $v \in V(G)$ 
    is $\big(m,\ex(v),s_<(v),s_>(v)\big)$-sampled on $\gc A$.
\end{restatable}

We are now ready to state the main result of this section.

\begin{restatable}{lemma}{lemSampling}\label{lem:sampling}
    Fix \(t \in \N\).
    For every graph \(G\) and insulator  \(\gc A\) indexed by \(J\) in $G$, there is a subsequence \(I \subseteq J\) of size $U_{t}(|J|)$ such that either
    \begin{itemize}
        \item $G$ contains a prepattern of order $t$ on $\gc A\vert_{I}$, or
        \item there is a set $S\subseteq V(G) \setminus \gc A\vert_I$ of size $\const(t)$ that samples $G$ on $\gc A\vert_I$ with margin $2$.
    \end{itemize}
    \effective{Moreover, there is an algorithm that, given $G$ and $\gc A$, computes $I$ and one of the two outcomes (a prepattern or a sampling set $S$) in time $O_{t}(|V(G)|^2)$.
    }
\end{restatable}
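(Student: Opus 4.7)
Write $L_v(i) := \atp(v/A[{<}i,*])$ and $R_v(i) := \atp(v/A[{\ge}i,*])$ for a vertex $v$ and column index $i$; sampling $v$ on $\gc A\vert_I$ at exceptional index $i \in \tail(I)$ with margin $2$ means choosing $s_<,s_> \in S$ so that $L_v(i)=L_{s_<}(i)$ and $R_v(i+2)=R_{s_>}(i+2)$, all interpreted over the columns of $\gc A\vert_I$. My plan is an iterative ``greedy sample selection'' argument, powered by Ramsey's theorem, that either terminates with a small $S$ or accumulates enough structured failures to extract a prepattern.

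I would maintain a pair $(S,I)$ initialized as $(\emptyset,J)$. In each iteration, I apply \cref{lem:reramsey} to a coloring of $\tail(I)$ that records, for each $i \in \tail(I)$ and each pair $(s_<,s_>)\in S^2$, an abstract discretization of the type of $s_<$ on $A[{<}i,*]$ and of $s_>$ on $A[{\ge}i+2,*]$; this homogenization shrinks $I$ by only a $U_t$-factor. Then I check whether every $v \in V(G)$ is now sampled on $\gc A\vert_I$ by some pair in $S^2$; if yes, stop; if not, pick an offending vertex and add it to $S$. Because each iteration shrinks $|I|$ by only a $U_t$-factor, after $\const(t)$ iterations we still have $|I| \ge U_t(|J|)$.

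If the loop terminates within $\const(t)$ iterations, the resulting $S$ (trimmed to vertices outside $\gc A\vert_I$, which is possible after further shrinking $I$ to drop the columns carrying the few samples inside $\gc A$) is the desired sample set and $I$ satisfies the second alternative of the lemma. Otherwise the loop produces $t$ witnesses $c_1,\dots,c_t$ on a highly homogenized $I$. A second, bipartite Ramsey refinement via \cref{gridramsey} splits $I$ into two index subsequences $I', J'$ of length $t$, one aligning the ``first column of left-type disagreement'' of each $c_\ell$ with the samples in $S$, and the other aligning the ``first column of right-type disagreement''. From this I can read off a bi-prepattern of order $t$ in which the witness vertices $c_{i,j}$ are selected from the $c_\ell$'s: the required quantifier-free formulas can be taken of the form $\alpha(x;y,s) := E(x,y) \wedge \lnot E(x,s)$, where $s \in S$ is a reference sample that agrees with $c_{i,j}$'s atomic type on columns strictly before the exceptional index but disagrees at it; this forces the ``min'' condition of \cref{def:bi-pattern}.

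The \textbf{main obstacle} is this final extraction step: pinpointing the ``first disagreement column'' via a \emph{quantifier-free} formula. The trick is to push the quantification into the choice of the parameter $s$, reducing the minimum-index condition to an atomic adjacency test relative to $s$. A related subtlety is why the margin must be $2$ rather than $1$: the two free columns absorb a ``transition window'' in which $v$'s atomic type shifts from matching $s_<$'s left-profile to matching $s_>$'s right-profile, and this buffer is needed both for the Ramsey homogenization and for peeling out clean prepattern witnesses. The effective bound $O_t(|V(G)|^2)$ then follows since each of the $\const(t)$ iterations requires $O_t(|V(G)|^2)$ work to compute the relevant colorings and invoke \cref{lem:reramsey}.
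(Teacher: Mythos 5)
Your high-level iterative structure — accumulate a sample set $S$, Ramsey-homogenize, terminate or add a vertex — mirrors the paper's outer loop, but there are two genuine gaps in the substance.

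First, and most fundamentally, "pick an offending vertex and add it to $S$" does not make measurable progress. You need the new witness to have a \emph{different atomic type from every existing $s \in S$ over every column of $\gc A\vert_I$}; otherwise nothing prevents the loop from adding near-duplicates forever, and the eventual pattern extraction has nothing to bite on. Establishing this is the content of \cref{lem:new-sample-vtx2} in the paper, and it is nontrivial: one analyzes the \emph{alternation points} of an arbitrary vertex on the insulator, shows (via Ramsey on \eqref{eq:swbit}-type colorings) that if some vertex has two far-apart alternation points then a bi-prepattern of order $t$ can be built on $\gc A\vert_I$, and only in the complementary "all alternations are local" regime can one either conclude $S$ already samples $G$ or exhibit a fresh vertex $\varnothing$-connected to many columns (hence type-distinct from all of $S$). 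Your single Ramsey homogenization per iteration does not encode any of this alternation analysis, and your proposal has no branch that discovers a bi-prepattern \emph{inside} an iteration, which is precisely where bi-prepatterns come from in the paper.

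Second, the terminal case of your loop is the wrong pattern. You claim to read off a \emph{bi-}prepattern of order $t$ from the $t$ witnesses $c_1,\dots,c_t$, but a bi-prepattern of order $t$ requires $t^2$ pairwise-distinct vertices $c_{i,j}$, one for each $(i,j)\in I\times J$, and there is no reason the $t$ accumulated witnesses carry an $I\times J$ product structure; the bipartite Ramsey step you invoke refines index sequences, it does not manufacture quadratically many witness vertices. What the paper extracts here is a \emph{mono-}prepattern, and that is the structure the material actually supports: after $k=\const(t)$ iterations one has $k$ vertices in $S$ whose neighborhoods in \emph{every} column are pairwise distinct, i.e. they have no twins in the semi-induced bipartite graph to any column. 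The paper then invokes the Ding--Oporowski--Oxley--Vertigan theorem (\cref{thm:matching}) $3t$ times between $S_k$ and $3t$ columns, pigeonholes, and obtains a size-$t$ subset $S_\star$ and $t$ columns forming consistently a matching, co-matching, or half-graph — that is exactly a mono-prepattern with $c_j \in S_\star$ and $b_{i,j}\in A[i,*]$ supplied by the columns. Your proposal has no analogue of this step.

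(A smaller issue: your candidate $\alpha(x;y,s):=E(x,y)\wedge\lnot E(x,s)$ only captures one sign of disagreement; the formula actually needed is "$\atp(x/\{y\})\neq\atp(s/\{y\})$", so that any disagreement — adjacency or equality, in either direction — is detected.)
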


We will build the set $S$ iteratively by extracting single sampling vertices one by one.

\subsection{Extracting Single Sample Vertices}

Before we show how to extract a new sample vertex, we state some auxiliary lemmas about subinsulators and sampling sets.

\begin{lemma}\label{clm:disagree-subseq}
    Let $\gc A$ be an insulator indexed by $J$ in a graph $G$.
    Let $I$ be a subsequence of $J$, $u,v \in V(G)$, and $i \in \tail(I)$.
    Let $A$ and $B$ be the grids of $\gc A$ and $\gc A\vert_I$ respectively.
    Then
    \[
        \atp(u/A[i,*]) \neq \atp(v/A[i,*])
        \quad
        \Rightarrow
        \quad
        \atp(u/B[i,*]) \neq \atp(v/B[i,*]).
    \]
\end{lemma}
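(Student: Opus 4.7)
The proof plan is immediate from two already-stated facts: the monotonicity observation for subgrids (\cref{obs:mon-and-cov}) and the monotonicity of atomic-type disagreement (\cref{obs:monotone-disagreement}).

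First, I would unpack what $B[i,*]$ means. By \Cref{def:subgrids}, for $i \in \tail(I)$ and every row $r \in [h]$, the cell $B[i,r] = A\vert_I[i,r]$ equals $A[i,r]$ in the orderless case and $\bigcup\{A[m,r] : m \in I,\ \tpred_I(i) < m \le i\}$ in the ordered case. In both cases $i$ itself satisfies $i \in I$ and $\tpred_I(i) < i \le i$, so $A[i,r] \subseteq B[i,r]$. Taking the union over all rows $r \in [h]$ yields
\[
A[i,*] \ \subseteq\ B[i,*].
\]
This is exactly the content of the first inclusion in \cref{obs:mon-and-cov}, applied row by row.

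Second, I would apply \cref{obs:monotone-disagreement} with the sets $A[i,*] \subseteq B[i,*]$ and the vertices $u,v$. That observation states that if two vertices have distinct atomic types over a subset, then they still have distinct atomic types over any superset. Combining the two steps gives the desired implication
\[
\atp(u/A[i,*]) \neq \atp(v/A[i,*]) \ \Longrightarrow\ \atp(u/B[i,*]) \neq \atp(v/B[i,*]).
\]

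There is no real obstacle here: the lemma is a bookkeeping statement that transfers a disagreement witnessed inside a single column of the original grid to a disagreement inside the corresponding (possibly larger, in the ordered case) column of the subgrid. The only thing to be careful about is that the claim is stated for $i \in \tail(I)$, which is precisely the indexing sequence of $\gc A\vert_I$, so $B[i,*]$ is well-defined and the inclusion $A[i,*] \subseteq B[i,*]$ holds by construction of the subgrid.
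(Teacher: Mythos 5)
Your proof is correct and is precisely the same argument the paper uses — the paper's proof consists solely of the line ``Follows from \cref{obs:monotone-disagreement} and \cref{obs:mon-and-cov}.'' You have simply unpacked those two citations, deriving $A[i,*]\subseteq B[i,*]$ from the first inclusion in \cref{obs:mon-and-cov} and then applying \cref{obs:monotone-disagreement}.
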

\begin{claimproof}
    Follows from \cref{obs:monotone-disagreement} and \cref{obs:mon-and-cov}.
\end{claimproof}

\begin{lemma}\label{lem:drop-elem}
    Let $\gc A$ be a grid indexed by a sequence $J$ of length at least four in a graph $G$ and let $v \in V(G)$. 
    There exists a subsequence $I \subseteq J$ of length $|I| \geq \frac{1}{2}|J|$ such that $v \notin \gc A\vert_I$.

    \effective{Moreover, there is an algorithm that, given $G$ and $\gc A$, computes $I$ in time $O(|V(G)|)$.}
\end{lemma}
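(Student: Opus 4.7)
The plan is short because the statement is essentially a counting argument combined with a small case analysis on whether the grid is orderless or ordered. The key structural fact I will exploit is that the cells $A[i,r]$ of a grid are pairwise disjoint by \cref{def:grids}, so the vertex $v$ belongs to at most one column $A[m_0,*]$ of $A$. If $v$ lies in no cell at all, then $v \notin A\vert_I$ for every subsequence $I \subseteq J$, and I may simply output $I = J$. So I may assume $v \in A[m_0,*]$ for a uniquely determined $m_0 \in J$.

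Let $p$ denote the position of $m_0$ within the sequence $J$, and consider the two candidate subsequences obtained by splitting $J$ at $m_0$: the prefix $I_{<}$ consisting of the first $p-1$ elements of $J$, and the suffix $I_{\geq}$ consisting of the elements at positions $p, p+1, \ldots, |J|$. Their lengths add up to $|J|$, so at least one of them has length at least $\lceil |J|/2\rceil$, which is at least $2$ thanks to the hypothesis $|J|\geq 4$ (this lower bound ensures that the chosen sequence is admissible as an index of a subgrid). I will output whichever of $I_{<}$, $I_{\geq}$ is longer.

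It remains to verify $v \notin A\vert_I$ for the chosen $I$. In the \emph{orderless} case, $A\vert_I[i,*] = A[i,*]$ for $i \in \tail(I)$, so the obligation reduces to $m_0 \notin \tail(I)$: for $I_{<}$ this is immediate since $m_0 \notin I_{<}$, and for $I_{\geq}$ it holds because $m_0$ is the first element of $I_{\geq}$ and hence not in its tail. In the \emph{ordered} case, $A\vert_I[i,*]$ is the union of the original columns at indices in the half-open range $(\tpred_I(i), i]$; for $I_{<}$ the index $m_0$ lies strictly past the last element of $I_{<}$ and no such range reaches it, while for $I_{\geq}$ we have $\tpred_{I_{\geq}}(i) \geq m_0$ for every $i \in \tail(I_{\geq})$, so $m_0$ is again excluded from every range. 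Either way, no cell of $A\vert_I$ contains $v$.

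For the algorithmic part, locating the unique column of $A$ containing $v$ takes time $O(|V(G)|)$ since the cells of $A$ are pairwise disjoint subsets of $V(G)$; comparing the two candidate lengths and returning the longer subsequence is then immediate. I do not anticipate any real obstacle here — the lemma is a bookkeeping statement about how subgrids relate to the original grid. The only subtlety worth flagging is in the ordered case, where one must remember that the subgrid $A\vert_I$ is indexed by $\tail(I)$ and that its column at $i$ corresponds to the range $(\tpred_I(i), i]$; this is exactly what makes placing $m_0$ at the first position of $I_{\geq}$ sufficient to evict it from every column of $A\vert_{I_{\geq}}$.
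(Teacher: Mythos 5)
Your proof is correct and follows essentially the same approach as the paper: split $J$ at the unique column containing $v$, take the longer of the prefix or suffix, and observe that the subgrid's columns avoid the original column of $v$. The paper phrases the final verification as an appeal to the coverability observation (\cref{obs:mon-and-cov}) rather than spelling out the orderless/ordered cases, but the content is the same.
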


\begin{proof}
    Up to renaming, assume $J = (1,\ldots, n)$.
    If $v \notin \gc A\vert_I$, we can just set $I := J$.
    Otherwise, let $i \in J$ be such that $v \in A[i,*]$ where $A$ is the grid of $\gc A$.
    We choose \(I\) as the larger of the two sequences \((1,\dots,i-1)\) and \((i,\dots,n)\).
    Since \(I\) has length at least two, this defines a subinsulator.
    By \cref{obs:mon-and-cov}, \(v \not \in A \vert_I\).
    The bound on the running time is obvious.
\end{proof}

\begin{lemma}\label{lem:subgrid-sample}
    Fix $m\in \N$. Let $G$ be a graph containing an insulator $\gc A$ indexed by $J$ and $S \subseteq V(G)$ be a set that samples $G$ on $\gc A$ with margin $m$. 
    For every $I \subseteq J$, $S$ also samples $G$ on $\gc A\vert_I$ with margin~$m$.
\end{lemma}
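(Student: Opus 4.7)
The plan is to lift the sampling of $G$ on $\gc A$ to a sampling of $G$ on $\gc A\vert_I$, keeping the same margin $m$ and reusing the sample set $S$. Let $A$ and $B$ denote the underlying grids of $\gc A$ and $\gc A\vert_I$, respectively. By \Cref{obs:mon-and-cov}, each column $B[j, *]$ for $j \in \tail(I)$ is contained in $\bigcup\{A[k, *] : k \in I,\ \tpred_I(j) < k \le j\}$, so the columns of $B$ arise by grouping consecutive columns of $A$ while preserving order. In particular, $B[<i', *] \subseteq A[<i', *]$ for every $i' \in \tail(I)$.

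For each vertex $v \in V(G)$ let $i := \ex(v)$ and define $\ex'(v) := i' \in \tail(I)$ to be any index such that the $m$ consecutive columns $B[\{i',\ldots,i'{+}m{-}1\},*]$ (counted as positions in $\tail(I)$) absorb all $B$-columns that intersect the original exceptional block $A[\{i,\ldots,i{+}m{-}1\}, *]$. Such an $i'$ always exists: in both the orderless and ordered cases each column of $A$ is contained in at most one column of $B$, so at most $m$ columns of $B$ meet the $m$-wide exceptional block of $A$, and these columns form a contiguous range in $\tail(I)$ that can be padded to a window of width exactly $m$; if fewer than $m$ such columns exist, the window is allowed to extend past the end of $\tail(I)$, rendering the corresponding side of the sampling condition vacuous. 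By construction, every $B$-column outside the chosen $m$-window lies either entirely inside $A[<i, *]$ or entirely inside $A[\ge i{+}m, *]$, yielding
\[
    B[<i', *] \subseteq A[<i, *] \quad \text{and} \quad B[\ge i'{+}m, *] \subseteq A[\ge i{+}m, *].
\]
Setting $s'_<(v) := s_<(v)$ and $s'_>(v) := s_>(v)$, the atomic-type agreements on $A[<i,*]$ and $A[\ge i{+}m, *]$ restrict by \Cref{obs:monotone-disagreement} to agreements on $B[<i',*]$ and $B[\ge i'{+}m,*]$, establishing that $v$ is $(m, \ex'(v), s'_<(v), s'_>(v))$-sampled on $\gc A\vert_I$.

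The main subtlety will be verifying, in the ordered case, that the $m$-window chosen inside $\tail(I)$ really does absorb every $B$-column meeting the original exceptional block of $\gc A$, so that no $A$-column from $[i, i{+}m{-}1]$ leaks into the safe regions on which $s_<(v)$ or $s_>(v)$ must match $v$. A straightforward pigeonhole argument on consecutive coarsened intervals guarantees this is always possible; the orderless case is immediate, since each column of $B$ equals the corresponding column of $A$, reducing the containments to plain subsequence inclusions.
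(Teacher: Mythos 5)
Your proof is correct and takes the same underlying approach as the paper's: use the coarsening structure of subgrid columns to establish containments of $B$-columns into ranges of $A$-columns, and then transfer the atomic-type agreements via \Cref{obs:monotone-disagreement}. The paper's own proof is remarkably terse -- it observes $B[{<}i,*]\subseteq A[{<}i,*]$ and $B[{>}i,*]\subseteq A[{>}i,*]$ for $i \in I$ and simply declares that the lemma ``follows from \Cref{def:samples}'', leaving the choice of the new exceptional index $\ex'(v)$ and the margin accounting entirely implicit. Your proposal makes this hidden step explicit: you argue that at most $m$ columns of $B$ can meet the $m$-wide exceptional block of $A$ (because each $A$-column lies in at most one $B$-column), that these form a contiguous range of positions in $\tail(I)$, and that an $m$-wide window in positions can be chosen to absorb all of them, with everything to its left landing in $A[{<}i,*]$ and everything to its right in $A[{\ge}i{+}m,*]$. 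This is precisely the right way to justify the ``follows from the definition'' claim, and it is a genuine service to the reader.

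Two small things to tighten. First, the step ``By construction, every $B$-column outside the chosen $m$-window lies either entirely inside $A[<i,*]$ or entirely inside $A[\ge i{+}m,*]$, yielding $B[<i',*]\subseteq A[<i,*]$ and $B[\ge i'{+}m,*]\subseteq A[\ge i{+}m,*]$'' silently uses the ordering: you still need that the $B$-columns strictly before the window are the ones that are left-safe and those strictly after are right-safe, which follows because $B$-columns are consecutive intervals of $A$-columns and the window covers all intersecting ones, but should be said. Second, when the exceptional block of $\gc A$ meets no $B$-column at all (e.g.\ it lies entirely to one side of the range covered by the subgrid), your phrasing ``any index such that the $m$ consecutive columns \ldots\ absorb all $B$-columns that intersect'' imposes no constraint on $i'$, and a careless choice (say the leftmost position when all $B$-columns are actually left-safe) would mismatch $s'_>(v)$; a clean fix is to take $\ex'(v)$ to be the smallest element of $\tail(I)$ that is $\ge \ex(v)$, or the last element of $\tail(I)$ if none exists, which also subsumes the generic case. (As an aside, the ``$m\in I$'' you copy from \Cref{obs:mon-and-cov} is a typo in the paper for ``$m\in J$''; with the literal ``$m\in I$'' reading, each ordered subgrid column would equal a single column of $A$, collapsing the ordered case to the orderless one and contradicting the paper's own computation of $\int(B)$ in the proof of \Cref{lem:subinsulator}.)
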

\begin{proof}
    Let $B:={A}\vert_{I}$ be the grid of $\gc A\vert_I$.
    As in the first item in the proof of \Cref{lem:subinsulator}, we observe for every \(i \in I\)
    \[
         B[{<}i,*] \subseteq A[{<}i,*] \quad \text{and} \quad B[{>}i,*] \subseteq A[{>}i,*].
    \]
    Hence, by contrapositive of \cref{obs:monotone-disagreement}, for all vertices \(u,s_<,s_>\),
    \[
        \atp(v/A[{<}i,*]) = \atp(s_</A[{<}i,*])
        \quad
        \Rightarrow
        \quad
        \atp(v/B[{<}i,*]) = \atp(s_</B[{<}i,*]),
    \]
    \[
        \atp(v/A[{>}i,*]) = \atp(s_>/A[{>}i,*])
        \quad
        \Rightarrow
        \quad
        \atp(v/B[{>}i,*]) = \atp(s_>/B[{>}i,*]).
    \]
    The lemma now follows from \Cref{def:samples}.
\end{proof}

We will now show how to extract a new sample vertex.
We start with a given set of sample vertices $S$. In the absence of large prepatterns, we either find a subinsulator on which $S$ samples $G$, or find a new vertex $v$ by which we will later use to extend the sample set.

\begin{lemma}\label{lem:new-sample-vtx2}
    Fix \(k,t \in \N\).
    For every graph \(G\) and insulator \(\mathcal{A}\) indexed by \(J\), and every vertex set $S$ of size at most $k$,
    there is a subsequence \(I \subseteq J\) of size $U_{k,t}(|J|)$
    such that either
    \begin{itemize}
        \item $G$ contains a bi-prepattern of order $t$ on $\gc A\vert_{I}$, or
        \item $S$ samples $G$ on $\gc A\vert_I$ with margin $2$, or
        \item there is a vertex $v \notin \gc A\vert_I$, such that for all $s \in S$ and every column $C$ in $\gc A\vert_{I}$
        \[
            \atp(v/C) \neq \atp(s/C).   
        \]
    \end{itemize}    
    \effective{Moreover, there is an algorithm that, given $G$, $\gc A$, and $S$, computes the sequence $I$ and one of the three outcomes (a bi-prepattern, the conclusion that $S$ samples $G$, or a vertex $v$) in time $O_{k,t}(|V(G)|^2)$.
    }
\end{lemma}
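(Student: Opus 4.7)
My plan is a Ramsey-theoretic argument followed by a case analysis on the ``sampling status'' of vertices of $G$. The bookkeeping is as follows: for every $v \in V(G)$ and $s \in S$, I record the column-wise disagreement set $D_{v,s} := \{j \in J : \atp(v/A[j,*]) \neq \atp(s/A[j,*])\}$, and note that $v$ being $(2,i,s_<,s_>)$-sampled on $\gc A$ is equivalent to $\min D_{v,s_<} \geq i$ and $\max D_{v,s_>} \leq i+1$. Hence checking whether $S$ samples $G$ on a subgrid $\gc A\vert_I$ reduces to a combinatorial condition on how the sets $D_{v,s}$ intersect $\tail(I)$ in the orderless case, and analogously (with unions of columns, as in \cref{obs:mon-and-cov}) in the ordered case.

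First I would apply \cref{lem:reramsey} to a suitable coloring of tuples of columns in $J$. For a tuple $(j_1, \dots, j_\ell) \in J^\ell$ of bounded length $\ell = \const(k,t)$, I color by the list of all realizable ``agreement signatures'' $\bigl(\, [\exists v \in V(G):\ \atp(v/A[j_r,*]) = \atp(s_r/A[j_r,*]) \text{ for each }r]\bigr)_{(s_1,\dots,s_\ell) \in S^\ell}$. Since the number of colors is $\const(k,\ell)$, Ramsey yields a subsequence $J_1 \subseteq J$ of length $U_{k,t}(|J|)$ on which these signatures are homogeneous: informally, whether a given agreement/disagreement pattern is realized by some vertex depends only on the order type of the selected columns in $J_1$.

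With $J_1$ in hand I would test directly whether $S$ samples $G$ on $\gc A\vert_{J_1}$, setting $I := J_1$ and returning the second outcome if so. Otherwise, pick a non-sampled witness $v^*$. If $v^*$ disagrees with every $s \in S$ on every column of $\gc A\vert_{J_1}$, then by \cref{lem:drop-elem} I refine to $I \subseteq J_1$ with $|I| \geq |J_1|/2$ and $v^* \notin \gc A\vert_I$, yielding the third outcome. Otherwise $v^*$ is not sampled yet agrees with some sample on some column of $J_1$, forcing its disagreement sets $\{D_{v^*,s}\}_{s\in S}$ to be spread out in a very specific manner. Using the quantifier-free formula $\alpha(x;y,s) := E(x,y) \oplus E(x,s)$ (for which $\alpha(A[j,*];c,s) \neq \emptyset$ iff $c$ and $s$ disagree on $A[j,*]$) together with its negation, and exploiting the homogeneity from Step~2 to translate ``$v^*$ exists with this pattern'' into ``many vertices exist with shifted patterns'', I would extract sequences $I',J' \subseteq J_1$ of size $t$ each and vertices $\{c_{i,j}\}_{(i,j)\in I'\times J'}$ forming a bi-prepattern of order $t$, with parameters $s_1,s_2 \in S$.

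The main obstacle is the final bi-prepattern construction. A bi-prepattern requires $t^2$ vertices whose first-(dis)agreement positions fill an entire $t \times t$ product of sequences, so the single witness $v^*$ does not suffice; the homogeneity of $J_1$ must be used to populate this product uniformly. Concretely, I would first use Ramsey-homogeneity to conclude that for each candidate first-position $i \in I'$ with respect to the formula and sample $(\alpha, s_1)$, a supply of vertices realizing that first-position exists, and then perform a secondary Ramsey reduction on each such supply to align the first-positions with respect to $(\alpha, s_2)$ along $J'$. Coordinating this double layer of Ramsey while keeping the formulas quantifier-free and the parameters fixed, and certifying that the resulting pattern lives inside $\gc A\vert_I$ for a single large subsequence $I$, is the technical heart of the proof.
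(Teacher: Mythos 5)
Your Ramsey set-up and the use of agreement signatures are in the right spirit, and the role of $\alpha(x;y,s) := E(x,y) \oplus E(x,s)$ as a quantifier-free ``disagreement detector'' is exactly what the paper uses. But the trichotomy you propose after the Ramsey step is wrong, and this is not a presentational gap — it would make the proof fail.

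Your three cases are: (a) $S$ samples $G$ on $\gc A\vert_{J_1}$; (b) a non-sampled witness $v^*$ disagrees with every $s\in S$ on every column of $J_1$ (gives outcome~3 after \cref{lem:drop-elem}); (c) otherwise, $v^*$ is non-sampled but agrees somewhere, and you claim this ``forces $D_{v^*,s}$ to be spread out'' so as to give a bi-prepattern. Case (c) is false. Take $v^*$ that is $\{s\}$-connected to columns $1,\dots,50$ of $\gc A\vert_{J_1}$ and $\varnothing$-connected to columns $51,\dots,100$. This $v^*$ is non-sampled with margin 2 (no index $i$ and samples $s_<,s_>$ work, since it disagrees with all of $S$ to the right of the switch) and it agrees with $s$ on half the columns, so it lands in your case (c). But it has a \emph{single} alternation point, and no amount of Ramsey homogeneity turns a one-alternation vertex into a family $\{c_{i,j}\}$ satisfying the bi-prepattern conditions: each $c_{i,j}$ must have its first $\alpha_1$-switch at position $i$ \emph{and} its first $\alpha_2$-switch at position $j$, which requires two well-separated alternations in one vertex. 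Homogeneity only lets you slide a fixed alternation pattern along the column sequence; it does not manufacture a second alternation.

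What you are missing is the intermediate case: a vertex that is $\varnothing$-connected to a long interval of columns (but not to all of $J_1$) should be routed to outcome~3, not outcome~1. Restricting to the $\varnothing$-connected interval (and dropping $v^*$ itself via \cref{lem:drop-elem,clm:disagree-subseq}) gives a new-sample-vertex candidate. So the correct split is: either some vertex has two alternation points at distance $>2$ (this and only this gives the bi-prepattern — and the bi-prepattern construction crucially uses both alternations, one populating the $I$-side switches, the other the $J$-side switches); or all alternation points of every vertex are within distance~2, in which case a non-sampled vertex must be $\varnothing$-connected to a long run of consecutive columns, giving outcome~3; or neither obstruction occurs and $S$ samples with margin~2. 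Your case (b) is strictly narrower than the middle branch here (it only catches $\varnothing$-connected-to-\emph{everything} vertices), and your case (c) absorbs vertices that cannot produce a bi-prepattern. You should redo the case analysis around alternation points rather than around the existence of a single non-sampled witness. Separately, you do not discuss running time; to get $O_{k,t}(|V(G)|^2)$ you will need a preprocessing step shrinking $|J|$ (e.g.\ to $|V(G)|^{1/\const(t)}$) before coloring tuples, since there are too many $\ell$-tuples otherwise.
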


\begin{proof}
    We first show how to construct a sequence $I$ with the desired properties.
    Afterwards we analyze the running time. 
    Getting the desired bounds will then require a small preprocessing that reduces the size of $J$.
    The proof is split into multiple paragraphs.

    \paragraph*{Notation.}
    For vertices $v,s \in V(G)$ and a set $U \subseteq V(G)$, we say \emph{$v$ is $s$-connected to $U$} if 
    \[
        \atp(v/U) = \atp(s/U).
    \]
    We generalize this to sets $S_\star \subseteq S$ and say \emph{$v$ is $S_\star$-connected to $U$} if
    \[
        \{ s \in S \mid \text{$v$ is $s$-connected to $U$} \} = S_\star.
    \]

    \paragraph*{Ramsey.}
    We start by defining some coloring to which we will apply Ramsey's theorem.
    Let $A$ be the grid of $\gc A$ and $M_A$ be the function from \cref{lem:subgrid-ramseyness}.
    For \(t' \in \{6,4t\}\) and \(S_1,\dots,S_{t'-1} \subseteq S\), we label all \(t'\)-tuples \(i_1 < \dots < i_{t'} \in J\) with a color 
    indicating whether
    \begin{equation}\label{eq:swbit}
        \exists v \bigwedge_{l=1,\dots,t'-1} \text{ $v$ is $S_l$-connected to $M_A(i_l,i_{l+1})$}.
    \end{equation}
    Applying Ramsey's Theorem (\cref{lem:ramsey}) to this coloring gives us a subsequence \(I' \subseteq J\) such that for all 
    \(t' \in \{8,6t\}\) and \(S_1,\dots,S_{t'-1} \subseteq S\),
    the above equation (\ref{eq:swbit}) either holds for all or no \(t'\)-tuples \(i_1 < \dots < i_{t'} \in I'\).
    Note that the number of colors is bounded by $\const(k,t)$, which guarantees \(|I'| \geq U_{k,t}(|J|)\).
    We can assume without loss of generality that \(I := \tail(I')\) has length at least \(24t\).
    In order to simplify notation, we assume up to renaming that $I' = (0,\ldots, n)$ and $I = (1,\ldots,n)$.
    Let $B := A\vert_{I'}$ be the grid of $\gc B := \gc A\vert_{I'}$.
    By \cref{lem:subgrid-ramseyness}, we observe the following.

    \begin{observation}\label{obs:determine-columns}
        For all $i \in I$ we have $B[i,*] = M_A(i-1,i)$.
    \end{observation}

    We say a subsequence of a sequence \(K\) is \emph{\(1\)-spaced} if it contains no consecutive elements from~\(K\).
    
    \begin{claim}\label{claim:shift}
        Let \(s \in S\) and \(\bar w \in \{0,1\}^{2t}\).
        If there is a $1$-spaced subsequence \(i_1 < \dots < i_{2t}\) of $I$ such that
        \[
            \exists v \bigwedge_{l=1,\dots,2t} 
            \text{$v$ is $s$-connected to $B[i_l,*]$ if and only if $w_{l}=1$},
        \]
        then the above holds for all such $1$-spaced subsequences of the same length.
    \end{claim}
    \begin{claimproof}
        By \cref{obs:determine-columns}, the statement of the claim holds for a $1$-spaced subsequence \(i_1 < \dots < i_{2t}\)
        if and only if 
        equation \eqref{eq:swbit} holds for the corresponding \(4t\)-tuple 
        \[
            i_1 - 1 < i_1 < 
            i_2 - 1 < i_2 < 
            \dots < 
            i_{2t} - 1 < i_{2t}
            \in I'
        \] 
        of distinct elements
        and some \(S_1,\dots,S_{4t-1} \subseteq S\) with \(s \in S_{2l-1} \Leftrightarrow w_l = 1\).
        Note that we use $1$-spacedness to guarantee that all elements in the \(4t\)-tuple are distinct.

        Assume there is a $1$-spaced subsequence satisfying the claimed statement,
        and let \(S_1,\dots,S_{4t-1} \subseteq S\) be the sets witnessing 
        the truth of equation (\ref{eq:swbit}) for the corresponding \(4t\)-tuple.
        By Ramsey's theorem, equation (\ref{eq:swbit}) with \(S_1,\dots,S_{4t-1} \subseteq S\) holds for all \(4t\)-tuples,
        and thus, the claimed statement holds for all $1$-spaced subsequences.
    \end{claimproof}

    \begin{claim}\label{claim:shift2}
        If there is \(i,j \in I\) with \(i+2 < j\) and sets $P_i, Q_i, P_j, Q_j \subseteq S$ with $P_i \neq Q_i$ and $P_j \neq Q_j$ such that
        \begin{align*}
        \exists v~  
        &\text{ $v$ is $P_i$-connected to $B[i,*]$}\\
        \wedge &\text{ $v$ is $Q_i$-connected to $B[i+1,*]$}\\
        \wedge &\text{ $v$ is $P_j$-connected to $B[j,*]$}\\
        \wedge &\text{ $v$ is $Q_j$-connected to $B[j+1,*]$}
        \end{align*}
        then the above holds for all \(i,j \in I\) with \(i+2 < j\).
    \end{claim}
    \begin{claimproof}
        We again use \cref{obs:determine-columns}.
        Thus, the statement of the claim holds for a given \(i,j \in I\)
        if and only if 
        equation (\ref{eq:swbit})
        holds for
        the corresponding \(6\)-tuple 
        \[
            i-1 < i < i+1
            <
            j-1 < j < j+1
            \in I'
        \]
        of distinct elements
        and some \(S_1,\dots,S_{5} \subseteq S\) with \(S_1 = P_i\) and \(S_2 = Q_i\) and $S_4 =P_j$ and $S_5 =Q_j$.
        The rest follows as in \Cref{claim:shift}.
    \end{claimproof}
    
    \paragraph{Constructing a Prepattern.}
    We say an index \(i \in I\) is an \emph{alternation point} of a vertex \(v\) on \(\mathcal{B}\) if
    $v$ is $P$-connected to $B[i,*]$ and $Q$-connected to $B[i+1,*]$ for distinct sets $P \neq Q \subseteq S$.

    \begin{claim}\label{claim:alternationpoints}
        One of the following two conditions holds.
        \begin{enumerate}
            \item For every vertex $v\in V(G)$ with alternation points \(i,j \in I\), we have \(|i-j| \le 2\).
            \item $G$ contains a bi-prepattern of order $t$ on $\gc B$.
        \end{enumerate}

    \end{claim}

    \begin{claimproof}
        Assume the first condition fails and let \(v \in V(G)\) be a vertex with alternation points \(i,j \in I\) such that \(i+2<j\).
        By \Cref{claim:shift2},
        we can assume without loss of generality that \(v\)
        has alternation points \(6t\) and \(19t\).
        In the following, we find two $1$-spaced subsequences of $I$
        \begin{gather*}
            I_\star =
            \underbrace{i_1 < \dots < i_{t}}_{\subseteq (1,\ldots,4t)}
            \;\;<\;\;
            \underbrace{i_{\star\vphantom{t}}}_{\mathclap{\in \{6t,6t + 1\}}} 
            \;\;<\;\; 
            \underbrace{i_{t+1} < \dots < i_{2t}}_{\subseteq (9t,11t)},
            \\
            J_\star = \underbrace{j_1 < \dots < j_{t}}_{\subseteq (13t,\ldots,17t)}
            \;\;<\;\;
            \underbrace{j_{\star\vphantom{t}}}_{\mathclap{\in \{19t,19t + 1\}}} 
            \;\;<\;\;
            \underbrace{j_{t+1} < \dots < j_{2t}}_{\subseteq (22t,\ldots,24t)}         
        \end{gather*}
        and vertices $s_1,s_2 \in S$ satisfying the following property:
        Either
        \begin{itemize}
            \item $i_\star$ is the first index among $I_\star$, such that $v$ is $s_1$-connected to $B[i_\star,*]$, or
            \item $i_\star$ is the first index among $I_\star$, such that $v$ is \emph{not} $s_1$-connected to $B[i_\star,*]$,
        \end{itemize}
        and moreover, either
        \begin{itemize}
            \item $j_\star$ is the first index among $J_\star$, such that $v$ is $s_2$-connected to $B[j_\star,*]$, or
            \item $j_\star$ is the first index among $J_\star$, such that $v$ is \emph{not} $s_2$-connected to $B[j_\star,*]$.
        \end{itemize}
        We start by chosing $s_1 \in S$ to be an arbitrary vertex witnessing the alternation point \(6t\) of $v$, that is,
        \[
            \text{$v$ is $s_1$-connected to $B[6t,*]$}
            \quad
            \Leftrightarrow
            \quad
            \text{$v$ is not $s_2$-connected to $B[6t + 1,*]$}.
        \]
        By a simple majority argument, we can choose $i_1,\ldots,i_t$ to be a $1$-spaced subsequence of $(1,\ldots,4t)$ such that $v$ is $s_1$-connected either to none or to all of $B[i_1,*],\ldots,B[i_t,*]$.
        In the first case (respectively last case) we set $i_\star$ to be the index from $\{6t,6t+1\}$ such that $v$ is $s_1$-connected to $B[i_\star,*]$ (respectively \emph{not} $s_1$-connected to $B[i_\star,*]$).
        We can now choose $i_{t+1},\ldots,i_{2t}$ to be an arbitrary $1$-spaced subsequence of $(9t, 11t)$.
        This concludes the construction of $I_\star$.
        We further choose $s_2 \in S$ to be an arbitrary vertex witnessing the alternation point \(19t\) of $v$ and construct $J_\star$ analogously as a $1$-spaced subsequence of $(13t,\ldots,24t)$.
        
        For every $i \in [t]$ let $I_i$ be a subsequence of $I_\star$ of length $t$ such that $i_\star$ is the $i$th element of~$I_i$. 
        Such a sequence exists, since $i_\star$ has both $t$ successors and $t$ predecessors in $I_\star$. 
        Similarly, for every $j \in[t]$ let $J_j$ be a subsequence of $J_\star$ of length $t$ such that $j_\star$ is the $j$th element of $J_j$. 
        For every $i,j \in [t]$, by concatenating $I_i$ and $J_j$, we obtain a $1$-spaced subsequence $I_{i,j} = (p_1,\ldots,p_t,q_1,\ldots,q_t)$ of $I$ of length $2t$ such that 
        \begin{itemize}
            \item the $s_1$-connection from $v$ to $B[p_1,*], \ldots, B[p_{i-1},*]$ is homogeneous but switches at $B[p_i,*]$,
            \item the $s_2$-connection from $v$ to $B[q_1,*], \ldots, B[q_{j-1},*]$ is homogeneous but switches at $B[q_j,*]$.
        \end{itemize}
        By \cref{claim:shift} and witnessed by $v$ and all the $I_{i,j}$,
        we can fix an arbitrary $1$-spaced subsequence $(p_1,\ldots,p_t,q_1,\ldots,q_t)$ of $I$ of length $2t$ and  there exist vertices $c_{i,j}$ for all $i,j \in [t]$ such that either
        \begin{itemize}
            \item $\forall i,j\in[t]$: $c_{i,j}$ is not $s_1$-connected to $B[p_1,*], \ldots, B[p_{i-1},*]$ but $s_1$-connected to $B[p_i,*]$, or
            \item $\forall i,j\in[t]$: $c_{i,j}$ is $s_1$-connected to $B[p_1,*], \ldots, B[p_{i-1},*]$ but not $s_1$-connected to $B[p_i,*]$,
        \end{itemize}
        and similarly either
        \begin{itemize}
            \item $\forall i,j\in[t]$: $c_{i,j}$ is not $s_2$-connected to $B[q_1,*], \ldots, B[q_{j-1},*]$ but $s_2$-connected to $B[q_j,*]$, or
            \item $\forall i,j\in[t]$: $c_{i,j}$ is $s_2$-connected to $B[q_1,*], \ldots, B[q_{j-1},*]$ but not $s_2$-connected to $B[q_j,*]$.
        \end{itemize}

        Let \(\alpha_1(y;x,s_1)\)   be the quantifier-free formula checking whether $\atp(x/\{y\}) \neq \atp(s_1/\{y\})$. 
        Whenever $v$ is not $s_1$-connected to a column $B[i,*]$, then this is witnessed by an element $u \in B[i,*]$ such that 
        $G \models \alpha_1(u;v,s_1)$.
        If $v$ is $s$-connected to $B[i,*]$, then no such element exists in $B[i,*]$.
        Among $p_1,\ldots,p_t$, we have that either
        \begin{itemize}
            \item $\forall i,j\in[t]$: $p_i$ is the first index such that $\alpha_1(B[p_i,*];c_{i,j},s_1)$ is \emph{empty}
            
            (this happens in the case where the $c_{i,j}$ are $s_1$-connected to $B[{p_i},*]$), or

            \item $\forall i,j\in[t]$: $p_i$ is the first index such that $\alpha_1(B[p_i,*];c_{i,j},s_1)$ is \emph{non-empty}
            
            (this happens in the case where the $c_{i,j}$ are \emph{not} $s_1$-connected to $B[{p_i},*]$).
        \end{itemize}
        Similarly, there is a quantifier-free formula $\alpha_2(y,x,s_2)$ checking whether $\atp(x/\{y\}) \neq \atp(s_2/\{y\})$.
        Among $q_1,\ldots,q_t$, we have that either
        \begin{itemize}
            \item $\forall i,j\in[t]$: $q_j$ is the first index such that $\alpha_2(B[q_j,*];c_{i,j},s_2)$ is \emph{empty}, or

            \item $\forall i,j\in[t]$: $q_j$ is the first index such that $\alpha_2(B[q_j,*];c_{i,j},s_2)$ is \emph{non-empty}.
        \end{itemize}
        This proves that $G$ contains a bi-prepattern of order $t$ on $\gc B$ witnessed by the sequences $(p_1,\ldots,p_t)$, $(q_1,\ldots,q_t)$, the $c_{i,j}$ vertices, the parameters $s_1$, $s_2$, and the formulas $\alpha_1$ and $\alpha_2$.
    \end{claimproof}
    \paragraph*{Extracting a Sample Vertex.}
    If the second condition of \cref{claim:alternationpoints} holds, then 
    $G$ contains a bi-prepattern of order $t$ on $\gc A\vert_{I'}$, so $I'$ can play the role of the sequence $I$ in the statement of the lemma, and we are done. We therefore assume from now on that the first condition of \cref{claim:alternationpoints} holds.

    \begin{claim}\label{clm:nothings-are-close}
        One of the following two conditions holds.
        \begin{enumerate}
            \item For every vertex $u \in V(G)$, that is $\varnothing$-connected to columns $B[i,*]$ and $B[j,*]$ for $i,j \in I$, we have $|i - j| \leq 1$.
            In particular each vertex is $\varnothing$-connected to at most two columns in $B$.
            \item There is a sequence $K$ of length at least $\frac{1}{6}|I|$ and a vertex $v \notin \gc A\vert_K$, such that $v$ is $\varnothing$-connected to every column of $\gc A\vert_K$.
        \end{enumerate}
    \end{claim}
    \begin{claimproof}

        Assume the first condition fails. We have a vertex $u$ that is $\varnothing$-connected to columns $B[i,*]$ and $B[j,*]$ for $i + 1 < j \in I$.
        In order to show the second condition, 
        we will first find a vertex $v$ together with a sequence $K'\subseteq I$ of length at least $\frac{1}{3}|I|$, such that $v$ is $\varnothing$-connected to every column $B[p,*]$ with $p \in K$.
        As $(i,j)$ is a $1$-spaced subsequence of $I$,
        \cref{claim:shift} yields a vertex $v$ which is $\varnothing$-connected
        to $B[m, *]$ and $B[m + 2, *]$ where $m := \lfloor \frac{1}{2} |I| \rfloor$.
        Assume $v$ has at least one alternation point on $\gc B$ and its earliest alternation point $q$ satisfies $m \leq q$.
        Then $u$ has the same connection type to every $B[p,*]$ with $1 \leq p \leq m$.
        As $v$ is $\varnothing$-connected to $B[m, *]$, we can set $K' := (1, \ldots, m)$.
        Otherwise, either $v$ has no alternation points on $\gc B$, or the latest alternation point $q$ of $v$ satisfies $q \leq  m + 1$, by \cref{claim:alternationpoints}.
        Then $u$ has the same connection type to every $B[p,*]$ with $m + 2 \leq p \leq n$.
        As $v$ is $\varnothing$-connected to $B[m+2, *]$, we can set $K' := (m+2,\ldots,n)$.
        This finishes the construction of $v$ and~$K'$.
        We use \cref{lem:drop-elem} to obtain a sequence $K \subseteq K'$ of length $|K| \geq \frac{1}{2} |K'| \geq \frac{1}{6}|I|$ with $v \notin \gc A\vert_K$.
        By \cref{clm:disagree-subseq}, $v$ is $\varnothing$-connected to every column of $\gc A\vert_{K}$.
    \end{claimproof}

    \paragraph*{Establishing the Sampling Property.}
    If the second condition of \cref{clm:nothings-are-close}  holds, then there exists a sequence 
    $K$ that can play the role of $I$ in the statement of the lemma, and we are done. We therefore assume from now on that the first condition of \cref{clm:nothings-are-close}  holds.

    \begin{claim}\label{clm:establishing-sampling}
        $S$ samples \(G\) on $\gc A\vert_K$ with margin \(2\) for $K := (2,\ldots,n-2) \subseteq I$.
    \end{claim}
    \begin{claimproof}
        We have to choose for every vertex $v \in V(G)$ an exceptional index $i \in \tail(K) = (3\ldots,n-2)$ and two vertices $s_<, s_> \in S$ such that $v$ is $s_<$-connected to $A\vert_K[{<}i,*]$ and $s_>$-connected to $A\vert_K[{>}i + 1,*]$.
        By \cref{obs:transitivity}, we have $A\vert_K = B\vert_K$.
        Thus, by \cref{obs:mon-and-cov}, we have
        \[
            A\vert_K[{<}i,*] \subseteq B[\{3,\ldots, i-1\},*]
            \quad
            \text{and}
            \quad
            A\vert_K[{>} i + 1,*] \subseteq B[\{i + 2,\ldots, n-2\},*].
        \]
        Take any vertex $v$. By \cref{claim:alternationpoints}, 
        there exist successive indices $i_1 \in I$ and $i_2 := i_1 + 1$, and sets $S_1,S_2 \subseteq S$, such that $v$ is $S_1$-connected to all columns $B[{<}i_1,*]$ and $S_2$-connected to all columns $B[{>}i_2,*]$.
        
        \medskip\noindent
        Assume $S_1 \neq \varnothing \neq S_2$.
        Then we can arbitrarily choose $s_< \in S_1$ and $s_> \in S_2$ and set
        \[
            i :=
            \begin{cases}
                3& \text{if } i_1 < 3,\\
                n-2& \text{if } i_1 > n-2,\\
                i_1& \text{otherwise.}
            \end{cases}
        \]
        Now $v$ is $s_<$-connected to $B[\{3,\ldots, i-1\},*]$ and $s_>$-connected to $B[\{i + 2,\ldots, n-2\},*]$,
        proving the claim.
        
        \medskip\noindent
        Assume $S_1 = \varnothing \neq S_2$.
        If $i_1 > 3$ then $v$ is $\varnothing$-connected to the first three columns of $B$, contradicting \cref{clm:nothings-are-close}.
        So we have $i_1 \leq 3$. We set $i := 3$ and choose an arbitrary $s_> \in S_2$. 
        As desired, $v$ is $s_>$-connected to $B[\{i + 2,\ldots, n-2\},*]$.
        As $B[\{3,\ldots, i-1\},*]$ is empty any vertex from $S$ can take the role of~$s_<$.

        \medskip\noindent
        Assume $S_1 \neq \varnothing = S_2$.
        The proof is symmetric to the previous case.
        
        \medskip\noindent
        Assume $S_1 = \varnothing = S_2 $.
        Since $|I|>8$, we either find left of $i_1$ or right of $i_2$ at least three columns of $B$ to which $v$ is $\varnothing$-connected.
        This is a contradiction to \cref{clm:nothings-are-close}.
    \end{claimproof}
    We have successfully established the sampling property which proves that $K$ ($I$ in the statement) has the desired properties.

    \paragraph*{Running Time.}
    In the previous paragraphs, we have proven the existence of a sequence $I$ with the desired properties.
    Let us redefine $n := |V(G)|$.
    To show that $I$ can be constructed in time $O_{k,t}(n^2)$, we first consider the following preprocessing routine.
    Let $t_\star := \max(6,4t)$ and choose a subsequence \(J_0 \subseteq J\) of size \(\lfloor |J|^{1/t_\star} \rfloor\).
    Note that \(|J_0| \le n^{1/t_\star}\).
    By applying the construction to \(\gc A\vert_{J_0}\) and \(J_0\) instead of \(\gc A\) and \(J\),
    we obtain a subsequence \(I \subseteq J_0 \subseteq J\) with the desired properties that still has size \(|I| \ge U_{k,t}(|J_0|) \ge U_{k,t}(|J|)\).
    By this argument, and as we can build \(J_0\) and $\gc A\vert_{J_0}$ in time $O_{k,t}(n)$, 
    we can from now on assume without loss of generality that $|J| \leq n^{1/t_\star}$.

    Towards computing the coloring needed for the Ramsey application, we first compute for each $v \in V(G)$, $S_\star \subseteq S$, and $i \in J$, whether $v$ is $S_\star$-connected to the column $A[i,*]$ of $\gc A$.
    As the columns of \(A\) are disjoint and \(|S| \le k\),
    this takes a total time of $O(2^k \cdot n^2)$.
    Moreover, for each vertex $v \in V(G)$, subset $S_\star \subseteq S$ and pair $i < i' \in J$ we calculate whether $v$ is $S_\star$-connected to $M_A(i,i')$.
    By \cref{lem:subgrid-ramseyness},
    if $\gc A$ is orderless this amounts to checking whether $v$ is $S_\star$-connected to $A[i',*]$, which we have already computed in the previous step.
    If $\gc A$ is ordered, we instead check whether $v$ is $S_\star$-connected to each of $A[m,*]$ for $i < m \leq i'$.
    As $|J| \leq \sqrt{n}$ and with the data from the previous step we can do this check in time $O(\sqrt{n})$ for a single vertex $v$, set \(S_\star\) and pair $i,i'$.
    Since $|J| \leq n^{1/4}$, there are at most $\sqrt{n}$ pairs $i,i'$ that need to be checked.
    It follows that we can compute the desired data for all vertices $v$, sets $S_\star$, and pairs $i,i'$ in total time $O(2^k \cdot n \cdot \sqrt{n} \cdot \sqrt{n}) = O(2^k \cdot n^2)$. 
    We recall the construction of the coloring for the Ramsey application:
    For \(t' \in \{6,4t\}\) and \(S_1,\dots,S_{t'-1} \subseteq S\), the \(t'\)-tuples \(i_1 < \dots < i_{t'} \in J\) are labeled with a color 
    indicating whether
    \[
        \exists v \bigwedge_{l=1,\dots,t'-1} \text{ $v$ is $S_l$-connected to $M_A(i_l,i_{l+1})$}.
    \]
    Using our precomputed information, for a single $t'$-tuple, we can compute its colors in time $O_{k,t}(n)$ by iterating over all vertices $v \in V(G)$.    
    As $|J| \leq n^{1/t_\star}$, there are at most $n$ many $t'$-tuples from $J$, so we can compute the coloring in time $O_{k,t}(n^2)$.
    Due to the size bounds on $J$, applying Ramsey's Theorem (\cref{lem:reramsey}) to the coloring runs in time $O_{k,t}(n)$.
    This yields the sequence $I$.

    Having constructed $I$, we obtain the insulator $\gc B = \gc A\vert_{I'}$.
    By \cref{obs:determine-columns},
    our precomputed information can also be used to check whether a vertex $v \in V(G)$ is $S_\star$-connected to a column $B[i,*]$ of $\gc B$ for some $S_\star \subseteq S$ and $i \in I$. 
    Let us now show how to compute one of the three outcomes: a bi-prepattern, a new sample vertex $v$, or a large subsequence of $I$ on which $S$ samples $G$.

    \begin{itemize}
        \item We can check for every vertex, whether it contains two alternation points with distance bigger than $2$ on $\gc B$ in time $O_{k}(n \cdot |I|) \leq O_{k}(n^2)$.
        If this such a vertex exists, the proof of \cref{claim:alternationpoints} yields that there is a bi-prepattern on $\gc B$ of size $t$ on every $1$-spaced subsequence of length $2t$ of columns of $\gc B$. 
        We can choose any such sequence and search for the witnessing vertices in time $O_{k,t}(n)$.
    
        \item If the previous case does not apply, we can again search in time $O_{k}(n \cdot |I|) \leq O_{k}(n^2)$ for a new sample vertex $v$ and a corresponding subsequence of $I$ (cf. \cref{clm:nothings-are-close}).
        
        \item If none of the two previous cases apply, we immediately find the sampled subsequence by dropping the first and two last elements of $I$ (cf. \cref{clm:establishing-sampling}).
    \end{itemize}
    We have shown that each step of the construction can be carried out in time $O_{k,t}(n^2)$.
    This concludes the proof of \cref{lem:new-sample-vtx2}.
\end{proof}

\subsection{Extracting Small Sample Sets}

We use the following Ramsey-type result for set systems due to Ding, Oporowski, Oxley, and Vertigan~\cite[Cor. 2.4]{ding-unavoidable} (see also~\cite[Thm. 2]{gravier2004}).
We say a bipartite graph with sides $a_1,\ldots, a_\ell$ and $b_1,\ldots, b_\ell$ forms
\begin{itemize}
    \item a \emph{matching} of order $\ell$ if $a_i$ and $b_j$ are adjacent if and only if $i=j$ for all $i,j\in [\ell]$, 
    \item a \emph{co-matching} of order $\ell$ if $a_i$ and $b_j$ are adjacent if and only if $i\neq j$ for all $i,j\in [\ell]$, 
    \item a \emph{half-graph} of order $\ell$ if $a_i$ and $b_j$ are adjacent if and only if $i\leq j$ for all $i,j\in [\ell]$. 
\end{itemize}

We call two distinct vertices $u$ and $v$ \emph{twins} in a graph $G$ they have the same neighborhood, that is,  
$N(u) \setminus \{v\} =N(v) \setminus \{u\}$.

\begin{fact}[{\cite[Corollary 2.4]{ding-unavoidable}}]\label{thm:matching}
    There exists a function $Q:\NN\rightarrow\NN$ such that for every $\ell\in\NN$ and for every bipartite graph $G = (L,R,E)$, where $L$ has size at least $Q(\ell)$ and contains no twins, contains a matching, co-matching, or half-graph of order $\ell$ as an induced subgraph.
    
    \effective{Moreover, there is an algorithm that, given $G$, computes such an induced subgraph in time $O(|V(G)|^2).$}
\end{fact}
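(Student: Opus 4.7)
The plan is a Mirsky-type dichotomy on a natural partial order over $L$, combined with a Ramsey argument to handle the antichain case.

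Define on $L$ the relation $u \preceq v$ iff $N_G(u) \subseteq N_G(v)$. The twin-free assumption forces distinct $L$-vertices to have distinct neighborhoods, so $\preceq$ is antisymmetric: $u \prec v$ actually means $N(u) \subsetneq N(v)$. Applying Mirsky's theorem, if $|L| \geq (\ell+1)\cdot Q'(\ell)$ for a Ramsey threshold $Q'(\ell)$ to be chosen below, then either $\preceq$ contains a chain of length $\ell+1$, or an antichain of size $Q'(\ell)$. In the chain case, given $v_0 \prec v_1 \prec \cdots \prec v_\ell$, pick $b_i \in N(v_i) \setminus N(v_{i-1})$ for $i \in [\ell]$; each such set is nonempty by strict inclusion. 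For $i \leq j$, $b_i \in N(v_i) \subseteq N(v_j)$, so $v_j \sim b_i$; for $i > j$, $b_i \notin N(v_{i-1}) \supseteq N(v_j)$, so $v_j \not\sim b_i$. Hence $\{v_1,\dots,v_\ell\} \cup \{b_1,\dots,b_\ell\}$ induces a half-graph of order $\ell$.

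The antichain case is the main obstacle. Let $A = (v_1,\dots,v_m)$ with $m := Q'(\ell)$. For each pair $i<j$, incomparability furnishes a witness $w_{i,j} \in N(v_i) \setminus N(v_j)$. Apply Ramsey's theorem (\cref{lem:reramsey}) to the coloring of ordered triples $i<j<k$ by the bit-vector
\[
    \bigl(\mathbf{1}[w_{i,j}\sim v_k],\; \mathbf{1}[w_{i,k}\sim v_j],\; \mathbf{1}[w_{j,k}\sim v_i]\bigr),
\]
obtaining a homogeneous subsequence $A' \subseteq A$ of size $U_\ell(m)$. Taking $m$ large enough ensures $|A'| \geq 2\ell$; writing $A' = (u_1,\dots,u_{2\ell})$, set $a_i := u_{2i-1}$ and $b_i := w_{u_{2i-1},\, u_{2i}}$. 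A finite case analysis on the eight homogeneous triple-types then determines the induced bipartite graph on $\{a_i\} \cup \{b_i\}$: in each case it is either an induced matching, an induced co-matching, or (in degenerate subcases) contains an induced half-graph of order $\ell$ via a suitable re-pairing. Completing this case analysis uniformly across all eight types, so that every homogeneous type certifies one of the three target structures, is the principal combinatorial effort.

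For the algorithmic claim, representing each $L$-vertex's neighborhood as a bit-vector makes a single poset comparison cost $O(|V(G)|)$, and a longest chain/antichain can be found by dynamic programming in $O(|V(G)|^2)$ total time; selecting all witnesses $w_{i,j}$ also costs $O(|V(G)|^2)$. The nominal $O(|V(G)|^3)$ cost of the triple-Ramsey step can be reduced to $O(|V(G)|^2)$ by the pre-shrinking trick employed in the proof of \cref{lem:new-sample-vtx2}: first pass to a subsequence of $A$ of size $|A|^{1/3}$, making triple-enumeration quadratic, then apply the effective Ramsey algorithm from \cref{lem:reramsey}.
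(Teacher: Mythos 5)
Your proposal takes a genuinely different route from the paper, and its combinatorial core contains a real gap. The paper does not reprove the combinatorial statement at all: it cites it from \cite{ding-unavoidable} and proves only the algorithmic ``Moreover'' part, by shrinking $G$ to a subgraph $G'$ of size $O(\log|L|)$ that still has no twins on $L'$ (choose a small $L'\subseteq L$ and, for each pair of $L'$, one discriminator in $R$), then applying the cited existence theorem to $G'$ and brute-forcing. You instead try to reprove the full statement via Mirsky's theorem plus Ramsey. Your chain case is correct and produces a half-graph.

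The antichain case, however, is incomplete in a way that is more than ``the principal combinatorial effort.'' With the pairing $a_i:=u_{2i-1}$, $b_i:=w_{u_{2i-1},u_{2i}}$, the adjacency of $a_j$ to $b_i$ for $j\ne i$ is governed only by the first and third bits of the homogeneous triple-type, say $\alpha$ (used when $j>i$) and $\gamma$ (used when $j<i$). The types $(\alpha,\gamma)\in\{(0,0),(1,0),(0,1)\}$ yield a matching or one of the two half-graph orientations, but $(\alpha,\gamma)=(1,1)$ yields the \emph{complete} bipartite graph on $\{a_i\}\cup\{b_i\}$, which is none of the three target patterns. Repairing this requires a structurally different extraction keyed to the middle bit $\beta$: if $\beta=1$, take consecutive-pair witnesses $b_j:=w_{u_j,u_{j+1}}$ with $a_j:=u_{j+1}$ to obtain a co-matching; if $\beta=0$, take fixed-reference witnesses $b_j:=w_{u_j,u_{2\ell}}$ with $a_j:=u_j$ to obtain a half-graph. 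Your phrase ``via a suitable re-pairing'' hides both this case split and the fact that the output is sometimes a co-matching rather than a half-graph; it also leaves distinctness of the $b_i$ unverified (it does hold, but only by an argument specific to each sub-case). Finally, the claimed $O(|V(G)|^2)$ bound for the chain/antichain step is not justified: a longest-chain DP over $L$ already needs $\Theta(|L|^2)$ poset comparisons at $\Theta(|V(G)|)$ each, which is cubic when $|L|=\Theta(|V(G)|)$; that step too needs the pre-shrinking trick you invoke only for the Ramsey stage. Given all this, the paper's citation-plus-shrink strategy is substantially lighter and is what the proof actually needs here.
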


While the construction of \cite{ding-unavoidable} is algorithmic, no running time is stated for \cref{thm:matching}. 
To be self-contained, we instead deduce an algorithm a posteriori.

\begin{proof}[Proof of the running time of \cref{thm:matching}]
    Let $Q$ be the function given by the non-algorithmic part of \cref{thm:matching}.
    To prove an algorithmic version of the statement we weaken the bounds
    by demanding $L$ to have size
    at least $f^{-1}(Q(\ell))$ instead, where $f(x)= \lfloor\sqrt{\log(x)/2}\rfloor$.

    We first compute an induced subgraph $G'$ of $G$ on sets $L' \subseteq L$ and $R' \subseteq R$.
    To this end, choose $L'$ as an arbitrary subset of \(L\) of size $\lfloor\sqrt{\log(|L|)/2}\rfloor$.
    Choose $R' \subseteq R$ of size at most $|L'|^2 \leq \log(|L|)/2$ by picking for each pair of distinct vertices $u,v \in L'$
    a vertex from the symmetric difference of the neighborhoods of $u$ and $v$ in $R$.
    Since $L$ contains no twins in $G$, such a vertex always exists.
    $G'$ has size at most $\log(|L|) $ and can be constructed in time $O(|V(G)|^2)$. 

    %We choose $Q'$ to be the inverse function of $f(x)= \lfloor\sqrt{\log(x)/2}\rfloor$.
    By our choice of \(f\), we observe that $L'$ has size at least $Q(\ell)$.
    By construction, $L'$ contains no twins in $G'$. 
    By the non-algorithmic version of \cref{thm:matching}, we know that $G'$ contains a matching, co-matching, or half-graph of order $\ell$.
    Since $G'$ has at most $\log(|L|)$ vertices, we can perform a brute force search in time $O(|V(G)|^2)$.
    As $G'$ is an induced subgraph of $G$, the computed solution also applies to $G$.
\end{proof}

We can now prove \cref{lem:sampling}, which we restate for convenience.

\lemSampling*

\begin{proof}
    We will inductively compute subsequences $I_0, I_1, \ldots$ of $J$ and subsets $S_0, S_1, \ldots$ of $V(G)$ using the following procedure.
    For the base case we set $I_0 := J$ and $S_0 := \varnothing$.
    In the inductive step we are given $I_i$ and $S_i$ and apply \cref{lem:new-sample-vtx2} on $t$, $A\vert_{I_{i}}$, and $S_i$.
    This yields the subsequence $I_{i+1}$ of size $U_{|S_i|,t}(|I_i|)$ and the insulator $(\gc A\vert_{I_{i}})\vert_{I_{i+1}} = \gc A\vert_{I_{i+1}}$
    such that either
    \begin{enumerate}[leftmargin= 3em, label={(C.$\arabic*$)}]
        \item \label{itm:pattern-case} $G$ contains a prepattern of order $t$ on $\gc A\vert_{I_{i+1}}$, or
        \item \label{itm:sample-case} $S_i$ samples \(G\) on $\gc A\vert_{I_{i+1}}$ with margin \(2\), or
        \item \label{itm:vertex-case} there is a vertex $s_{i+1} \notin \gc A\vert_{I_{i+1}}$, such that for all $s \in S_i$ and every column $C$ in $\gc A\vert_{I_{i+1}}$
        \[
            \atp(s_{i+1}/C) \neq \atp(s/C).   
        \]
    \end{enumerate}
    In the first two cases, we stop the construction and set $I := I_{i+1}$ and $S := S_i$.
    In the third case, we continue the construction with $S_{i+1} := S_i \cup \{s_{i+1}\}$.

    \begin{claim}\label{clm:big-ivl-construction}
        For every $i$ we have
        \begin{enumerate}[leftmargin= 3em, label={\textup{(P.$\arabic*$)}}] 
            \item\label{itm:big-ivl-atp-diff} all vertices of $S_i$ have a pairwise different atomic type over every column of $\gc A\vert_{I_i}$,
            \item\label{itm:big-ivl-s-disjoint} no $s \in S_i$ is contained in $\gc A\vert_{I_i}$,
            \item\label{itm:big-ivl-size-s} $|S_i| = i$, and
            \item\label{itm:big-ivl-size-i} $I_i$ has size $U_{i,t}(J)$.
        \end{enumerate}
    \end{claim}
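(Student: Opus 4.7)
The plan is a routine induction on $i$, propagating all four properties in parallel.

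\emph{Base case $i=0$.} We have $S_0=\varnothing$ and $I_0=J$, so \ref{itm:big-ivl-atp-diff}, \ref{itm:big-ivl-s-disjoint}, and \ref{itm:big-ivl-size-s} are vacuously true, and \ref{itm:big-ivl-size-i} holds since $|I_0|=|J|=U_{0,t}(|J|)$ (taking the identity in the $U$-notation).

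\emph{Inductive step.} Assume the four properties for $i$. We only have to analyze the step when the construction does not halt, that is, when case \ref{itm:vertex-case} of \cref{lem:new-sample-vtx2} (applied to $\gc A\vert_{I_i}$ and $S_i$) fires, producing the sequence $I_{i+1}$ and vertex $s_{i+1}$ with $S_{i+1}=S_i\cup\set{s_{i+1}}$. I will verify each property in turn.

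For \ref{itm:big-ivl-size-i}, \cref{lem:new-sample-vtx2} yields $|I_{i+1}|\ge U_{|S_i|,t}(|I_i|)=U_{i,t}(|I_i|)$, and composing with the inductive bound $|I_i|\ge U_{i,t}(|J|)$ gives an unbounded function of $|J|$ depending only on $i+1$ and $t$, which is exactly what $U_{i+1,t}(|J|)$ abbreviates. For \ref{itm:big-ivl-size-s}, the output of case \ref{itm:vertex-case} guarantees that $s_{i+1}$ has a different atomic type on every column of $\gc A\vert_{I_{i+1}}$ from every $s\in S_i$ (in particular it is distinct from every $s\in S_i$), so $|S_{i+1}|=|S_i|+1=i+1$.

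For \ref{itm:big-ivl-s-disjoint}, $s_{i+1}\notin \gc A\vert_{I_{i+1}}$ is directly part of the conclusion of \ref{itm:vertex-case}. For the old vertices $s\in S_i$, I would use that $A\vert_{I_{i+1}}[*,*]\subseteq A\vert_{I_i}[*,*]$ (easily checked from \cref{obs:mon-and-cov} together with the transitivity \cref{obs:transitivity}): combined with the inductive hypothesis $s\notin A\vert_{I_i}$ this gives $s\notin A\vert_{I_{i+1}}$.

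Finally, for \ref{itm:big-ivl-atp-diff}, I split into pairs. Pairs $\set{s,s'}\subseteq S_i$ are handled by the inductive hypothesis applied column-wise, together with \cref{clm:disagree-subseq} applied to the insulator $\gc A\vert_{I_i}$ and its subsequence $I_{i+1}$ (using \cref{obs:transitivity} to identify the resulting subinsulator with $\gc A\vert_{I_{i+1}}$). Pairs $\set{s_{i+1},s}$ with $s\in S_i$ are handled directly by the conclusion of case \ref{itm:vertex-case} of \cref{lem:new-sample-vtx2}, which asserts exactly that $s_{i+1}$ has a distinct atomic type from each $s\in S_i$ on every column of $\gc A\vert_{I_{i+1}}$.

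The only mildly delicate point is the bookkeeping around subinsulators: one has to be careful that the sequence produced by \cref{lem:new-sample-vtx2} applied to $\gc A\vert_{I_i}$ is indeed a subsequence of $\tail(I_i)$, so that \cref{obs:transitivity} applies and the notation $\gc A\vert_{I_{i+1}}$ is unambiguous. Once this is settled, all four properties propagate mechanically, finishing the induction.
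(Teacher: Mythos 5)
Your proof is correct and follows essentially the same inductive route as the paper: induction on $i$, using \cref{clm:disagree-subseq} together with \cref{obs:transitivity} to propagate \ref{itm:big-ivl-atp-diff} through the passage from $\gc A\vert_{I_i}$ to $\gc A\vert_{I_{i+1}}$, the explicit conclusion of case \ref{itm:vertex-case} for the new vertex $s_{i+1}$, deriving \ref{itm:big-ivl-size-s} from the distinctness of types, and composing unbounded functions for \ref{itm:big-ivl-size-i}. Your treatment of \ref{itm:big-ivl-s-disjoint} for the old sample vertices -- via $A\vert_{I_{i+1}}[*,*]\subseteq A\vert_{I_i}[*,*]$ from \cref{obs:mon-and-cov} -- is a slightly more explicit articulation of a step the paper folds into its citation of \cref{clm:disagree-subseq}, but it is the same observation.
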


    \begin{claimproof}
    We prove the properties by induction on $i$.
    The base cases hold trivially.
    The properties \ref{itm:big-ivl-atp-diff} and \ref{itm:big-ivl-s-disjoint} hold on $S_i$ and $\gc A\vert_{I_i}$ by induction, on $S_i$ and $\gc A\vert_{I_{i+1}}$ by \cref{clm:disagree-subseq},
    and finally on $S_{i+1}$ and $\gc A\vert_{I_{i+1}}$ by the choice of $s_{i+1}$ in \ref{itm:vertex-case}.
    By \ref{itm:big-ivl-atp-diff}, all elements of $S_i$ are distinct. As we only add one element per turn, this proves \ref{itm:big-ivl-size-s}. 
    It follows that $I_{i+1}$ has size $U_{|S_i|,t}(|I_i|) = U_{i,t}(|I_i|)$ and by induction 
    $I_{i+1}$ has size $U_{i,t}(J)$, which proves \ref{itm:big-ivl-size-i}.
    \end{claimproof}

    Let $k := Q^{3t}(t)$, where $Q$ is the function given by \cref{thm:matching}.
    We have $k = \const(t)$.

    \begin{claim}\label{clm:find-pattern}
    If the construction runs for $k$ steps, then $G$ contains a prepattern of order $t$ on $\gc A\vert_{I_{k}}$.   
    \end{claim}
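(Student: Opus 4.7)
The plan is as follows. Since the construction ran for $k = Q^{3t}(t)$ steps, it must have always taken case \ref{itm:vertex-case}, so by \cref{clm:big-ivl-construction} we have sample vertices $s_1, \ldots, s_k$, all lying outside $\gc{A}\vert_{I_k}$, with pairwise distinct atomic types on every column of $\gc{A}\vert_{I_k}$. I will build a mono-prepattern of order $t$ by iterating \cref{thm:matching} over $3t$ columns and finishing with pigeonhole on the three pattern types.

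Concretely, pick $3t$ columns $C_1, \ldots, C_{3t}$ of $\gc{A}\vert_{I_k}$, available because $|I_k|$ is unboundedly large in $|J|$. Set $S^{(0)} := S_k$; for $i = 1, \ldots, 3t$, apply \cref{thm:matching} to the bipartite graph between $S^{(i-1)}$ and $C_i$. The sample vertices in $S^{(i-1)}$ are pairwise non-twins in this bipartite graph, since they have pairwise distinct atomic types over $C_i$ and none of them belong to $C_i$. We thus obtain a pattern type $\pi_i \in \{\text{matching}, \text{co-matching}, \text{half-graph}\}$ and subsets $S^{(i)} \subseteq S^{(i-1)}$, $B_i \subseteq C_i$ of size $Q^{3t-i}(t)$ witnessing the pattern. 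The key bookkeeping is that shrinking the sample set in later rounds preserves $\pi_i$ on each earlier column, provided we also restrict $B_i$ accordingly: for matching or co-matching, drop the column vertices paired with the discarded sample vertices; for half-graph, retain the induced ordering on both sides.

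After $3t$ rounds, $|S^{(3t)}| = t$. By pigeonhole on the three pattern types, at least $t$ of the columns $C_i$ share a common pattern $\pi$; let $I \subseteq \tail(I_k)$ be the corresponding sequence of column indices. Enumerate the surviving sample vertices as $c_1, \ldots, c_t$ (indexed by $J := [t]$) and, for each chosen column $i \in I$, enumerate the corresponding surviving column vertices as $b_{i,1}, \ldots, b_{i,t}$. Setting $\sim$ to $=$, $\neq$, or $\leq$ according to $\pi$ then yields $E(c_j, b_{i,j'}) \Leftrightarrow j \sim j'$ for all $i \in I$ and $j, j' \in J$, which is the required mono-prepattern.

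The main obstacle is the half-graph case: \cref{thm:matching} produces, per column, its own ordering on the sample vertices it extracts, while the mono-prepattern demands a single ordering of $J$ that works simultaneously for all columns in $I$. This can be resolved by a short Ramsey-style alignment step on the final sample set (for each pair of sample vertices, color it by the tuple recording its relative order in each half-graph column, then pass to a monochromatic subsequence of size $t$), or equivalently by absorbing a fixed ordering into the iterated \cref{thm:matching} applications. This adjustment incurs only a constant blow-up in the required initial sample size, which is already comfortably absorbed by the exponent $3t$ in $k = Q^{3t}(t)$.
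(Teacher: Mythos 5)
Your proposal follows the paper's proof very closely: use \cref{clm:big-ivl-construction} to see that the $k$ vertices in $S_k$ lie outside the grid and have pairwise distinct atomic types over every column, note that this means no twins in the semi-induced bipartite graph to any column, iterate \cref{thm:matching} over $3t$ columns with a shrinking sample set, and finish by pigeonhole on the three pattern types. This is exactly the paper's argument, modulo the alignment discussion at the end.

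Your observation about the half-graph case is a genuinely pertinent point that the paper's proof passes over silently. The ordering on $S_\star$ that makes the semi-induced bipartite graph to $R_i$ a half-graph is uniquely determined (up to reversal) by the degrees, and nothing in the iterated applications of \cref{thm:matching} forces these orderings to agree across columns. Since the mono-prepattern requires a \emph{single} enumeration $c_1,\dots,c_t$ and a single symbol $\sim$, and since the per-column freedom in choosing $b_{i,1},\dots,b_{i,t}$ can only absorb a permutation of $[t]$ when that permutation is monotone, inconsistent orderings really do obstruct the construction. So an alignment step of the kind you describe (or a more careful bookkeeping during the iteration) is indeed needed.

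Two details in your fix are imprecise, though. First, after the Ramsey step on pairs (coloring $\{s,s'\}$ by the vector of order-bits over the half-graph columns), the resulting monochromatic set only guarantees that each column's ordering restricted to $S_\star$ is either the reference ordering \emph{or its reverse}, with the choice varying by column. You therefore still need a further pigeonhole over $\{\le,\ge\}$ — which is exactly why the mono-prepattern definition admits both symbols — and this interacts with the earlier pigeonhole over pattern types (cleanest is to do the Ramsey alignment first and then pigeonhole over the four types matching / co-matching / half-graph-aligned / half-graph-reversed, starting from $4t$ rather than $3t$ columns). Second, the bound adjustment is not "absorbed by the exponent $3t$": if you want the post-Ramsey set to have size $t$, the size fed into the $3t$-fold iteration must grow from $t$ to a Ramsey number in $t$, i.e.\ $k$ becomes something like $Q^{4t}\bigl(R(t)\bigr)$. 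This is harmless — $k$ is still $\const(t)$, which is all \cref{lem:sampling} requires — but it changes the inner argument of $Q^{3t}(\cdot)$, not merely the exponent.
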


    \begin{claimproof}
        By \cref{clm:big-ivl-construction}, the set $S_k$ consists of $k$ vertices which have pairwise different atomic types with respect to every column of the grid $A$ of $\gc A\vert_{I_{k}}$.
        Since no vertex of $S_k$ is contained in $A$, we know that these vertices all have the same type with regard to the equality relation and must therefore have a pairwise different type with regard to the edge relation, that is, in the graph $G$, the vertices of $S_k$ have pairwise different neighborhoods in every column of $A$.
        Therefore, in the semi-induced bipartite graph between any subset $L \subseteq S_k$ and any column $R := A[i,*]$ of $A$, there are no twins in $L$ and the preconditions of \cref{thm:matching} are met.
        We iterate \cref{thm:matching} a total number of $3t$ times between $S_k$ and $3t$ columns of $A$, which we can choose arbitrarily. Finally, we apply the pigeonhole principle.
        This yields a size $t$ subset $S_\star \subseteq S_k$ and columns $C_1, \ldots, C_t$  of $A$ containing subsets $R_1, \ldots, R_t$ such that the semi-induced bipartite graph between $S_\star$ and~$R_i$ 
        \begin{itemize}
            \item is a matching for all $i \in [t]$, or
            \item is a co-matching for all $i \in [t]$, or
            \item is a half-graph for all $i \in [t]$.
        \end{itemize}
        This witnesses a mono-prepattern of order $t$ on $\gc A\vert_{I_k}$.
    \end{claimproof}

    We can now finish the proof.
    By \cref{clm:find-pattern}, if the construction runs for $k$ steps, we set $I := I_k$ and $G$ contains a prepattern of order $t$ on $A\vert_I$.
    By \cref{clm:big-ivl-construction}, $I_k$ has size $U_{k,t}(|J|)$. Since $k = \const(t)$, this is equivalent to $U_{t}(|J|)$, as desired.

    Otherwise, the construction terminates with $I := I_i$ and $S := S_i$ for some $i \leq k$,
    as either \ref{itm:pattern-case} or \ref{itm:sample-case} holds.
    By the same reasoning as before, we have $|I| \geq U_{t}(|J|)$. By \cref{clm:big-ivl-construction}, we have $|S| \leq k$ and $S \subseteq V(G) \setminus \gc A\vert_I$.
    In case \ref{itm:pattern-case}, we have a prepattern of order $t$ on $\gc A\vert_I$ and in case \ref{itm:sample-case}
    $S$ samples \(G\) on $\gc A\vert_{I}$ with margin \(2\).

    As \(k=\const(t)\), the running time of $O_{t}(|V(G)|^2)$ for the construction follows easily from the running times of \cref{lem:new-sample-vtx2} and \cref{thm:matching}.
\end{proof}

\subsection{Sample Sets for Orderless Insulators}

For orderless insulators we want to strengthen \cref{lem:sampling} by 
improving the guarantees given by the sampling set.
For convenience, we restate the definition of a sampling set.

\defSampled*
\defSamples*

We say \emph{$S$ symmetrically samples} $G$ on $\gc A$, if we can choose $s_< = s_>$ in the above definition.
For orderless insulators, we want to decrease the sampling margin to $1$ and make the sampling symmetric.

\begin{lemma}\label{lem:orderless-sample-margin}
    Let $G$ be a graph containing an orderless insulator $\gc A$ indexed by $J$, 
    and let $S \subseteq V(G)$ be a set that samples $G$ on $\gc A$ with margin $2$.
    Let $I \subseteq J$ be obtained by removing every second element of $J$. $S$ samples $G$ on $\gc A\vert_I$ with margin $1$.
\end{lemma}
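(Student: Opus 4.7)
The plan is to transform, for each vertex $v \in V(G)$, its given margin-$2$ sampling data on $\gc A$ into margin-$1$ sampling data on $\gc A\vert_I$, keeping the same sample vertices $s_<(v), s_>(v)$ and only adjusting the exceptional index. Up to renaming, I would first assume $J = (1, \dots, n)$, so that ``$i+m$'' refers to ordinary numerical succession and the indices of the subsequence $I$ inherit this ordering. The key combinatorial property of ``removing every second element'' is that for any two consecutive natural numbers $j, j+1 \in J$, exactly one lies in $I$; equivalently, consecutive elements of $\tail(I)$ differ numerically by exactly $2$ (ignoring the boundary element $j = n$).

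Given $v$ with margin-$2$ data $(j, s_<, s_>)$ on $\gc A$, I set $s_<' := s_<$ and $s_>' := s_>$, and choose $\ex'(v) \in \tail(I)$ as follows:
\begin{itemize}
    \item If the unique $k \in I \cap \{j, j+1\}$ lies in $\tail(I)$, set $\ex'(v) := k$.
    \item If this $k$ is the first element of $I$ (hence not in $\tail(I)$), set $\ex'(v) := \tsucc_I(k)$.
    \item In the boundary case $j = n$ with $n \notin I$, set $\ex'(v)$ to be the last element of $\tail(I)$.
\end{itemize}

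The verification is a direct computation using the orderless subgrid identity $A\vert_I[i, r] = A[i, r]$. In the main case $\ex'(v) = k$, any $j'' \in \tail(I)$ with $j'' < k$ satisfies $j'' \leq k-2 \leq j-1 < j$, so $A\vert_I[<k, *] \subseteq A[<j, *]$; similarly any $j'' \in \tail(I)$ with $j'' \geq k+1$ satisfies $j'' \geq k+2 \geq j+2$, so $A\vert_I[\geq k+1, *] \subseteq A[\geq j+2, *]$. The margin-$2$ hypothesis on $\gc A$ then transfers the required agreement of atomic types of $v$ and $s_<'$ (respectively $v$ and $s_>'$) to these smaller sets, yielding the margin-$1$ sampling property on $\gc A\vert_I$. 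The two exceptional cases are handled by observing that the corresponding region is empty (the left region in the second case, the right region in the third case).

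The only genuine subtlety will be the boundary bookkeeping--keeping straight which candidate for $\ex'(v)$ lies in $\tail(I)$ versus only in $I$, and what happens when $j$ is near the right end of $J$--but the shift by at most one position in $I$ is always enough to absorb both original exceptional columns $\{j, j+1\}$ into a single exceptional column in $\gc A\vert_I$.
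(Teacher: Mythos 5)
Your proof is correct and takes essentially the same approach as the paper, whose proof is a one-line sketch observing that the two possibly-exceptional columns $j$ and $j+1$ are consecutive in $J$, so at most one survives after removing every second element. You have simply made explicit the bookkeeping that the paper leaves to the reader: the choice of $\ex'(v)$ so that it lands in $\tail(I)$ rather than merely $I$, the use of the orderless subgrid identity $A\vert_I[i,r]=A[i,r]$, the monotonicity of atomic-type agreement under restriction to subsets, and the boundary cases at the two ends of $J$.
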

\begin{proof}
    The exceptional indices of each vertex are successive. By only keeping every second column, we reduce the number of exceptional indices to at most $1$, which corresponds to margin~$1$.
\end{proof}

\begin{lemma}\label{lem:orderless-sampling}
    Fix \(t \in \N\).
    For every graph \(G\) and orderless insulator \(\gc A\) indexed by \(J\) in $G$, we can compute a subsequence \(I \subseteq J\) of size $U_{t}(|J|)$ and a set $S\subseteq V(G) \setminus \gc A\vert_I$ of size $\const(t)$ such that either
    \begin{itemize}
        \item $G$ contains a prepattern of order $t$ on $\gc A\vert_{I}$, or
        \item $\gc A\vert_{I}$ is orderable, or
        \item $S$ symmetrically samples $G$ on $\gc A\vert_{I}$ with margin $1$.
    \end{itemize}
    \effective{Moreover, there is an algorithm that, given $G$ and $\gc A$, computes the sequence $I$ and one of the three outcomes (a prepattern, a witnesses for $\gc A\vert_I$ being orderable, or a set $S$) in time $O_t(|V(G)|^2)$.
    } 
\end{lemma}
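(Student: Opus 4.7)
The plan is to first apply \cref{lem:sampling} to $\gc A$ with a suitable parameter $t' = \const(t)$ to obtain a subsequence $I_0 \subseteq J$ of size $U_t(|J|)$, yielding either a prepattern of order $t$ on $\gc A\vert_{I_0}$ (in which case we are done) or a sampling set $S$ of size $\const(t)$ that samples $G$ on $\gc A\vert_{I_0}$ with margin $2$. Then \cref{lem:orderless-sample-margin} passes to a subsequence $I_1 \subseteq I_0$ of size $|I_0|/2 = U_t(|J|)$ by discarding every other element, so that $S$ samples $G$ on $\gc A\vert_{I_1}$ with margin~$1$.

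For each $v \in V(G)$, call $v$ \emph{good} if it admits a sampling triple $(\ex(v), s, s)$ with $s_<(v) = s_>(v) = s$ on $\gc A\vert_{I_1}$, and \emph{bad} otherwise; for bad $v$, fix any valid sampling with $s_<(v) \ne s_>(v)$. Let $E := \{\ex(v) : v \text{ bad}\} \cap \tail(I_1)$. If $|E| \le C$ for a threshold $C = \const(t)$ fixed below, then by the pigeonhole principle one of the $|E|+1$ gaps in $\tail(I_1) \setminus E$ has size at least $|\tail(I_1)|/(C+1) = U_t(|J|)$. Let $I \subseteq I_1$ consist of this gap together with one immediately preceding element of $I_1$, so that $\tail(I)$ coincides with the gap. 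Then $\tail(I)$ lies entirely on one side of $\ex(v)$ for every bad $v$, so $v$ is symmetrically samplable on $\gc A\vert_I$ using either $s_<(v)$ or $s_>(v)$ alone, while good vertices remain symmetrically samplable by subgrid monotonicity (\cref{lem:subgrid-sample}). Hence $S$ symmetrically samples $G$ on $\gc A\vert_I$ with margin~$1$.

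Otherwise $|E| > C$, and we extract orderability. By pigeonhole on pairs $(s_<(v), s_>(v)) \in S^2$, fix a pair $(s_1, s_2)$ with $s_1 \ne s_2$ such that the corresponding set $E' \subseteq E$ of exceptional indices of bad $v$'s with this pair is still $U_t(|J|)$-large. Let $D := \{j \in \tail(I_1) : \atp(s_1/A[j,*]) \ne \atp(s_2/A[j,*])\}$. The badness of any $v$ with pair $(s_1, s_2)$ and $\ex(v) = e$ forces both $D \cap [\min \tail(I_1), e-1] \ne \varnothing$ and $D \cap [e+1, \max \tail(I_1)] \ne \varnothing$, so in particular $E' \subseteq (\min D, \max D)$. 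Pick a witness $b_j \in A[j,*]$ for each $j \in D$, and apply a further pigeonhole to make its polarity uniform, say $s_1$ is adjacent to $b_j$ and $s_2$ is not. For every bad $v_e$ with pair $(s_1, s_2)$, this yields the cut identity: $v_e$ is adjacent to $b_j$ if and only if $j < e$, for $j \in D \setminus \{e\}$. An Erd\H{o}s--Szekeres-style pigeonhole on the order type of $D \cup E'$ then produces an alternating subsequence $d_1 < e_1 < d_2 < e_2 < \cdots < d_n < e_n$ with $d_l \in D$, $e_l \in E'$, and $n \ge t$. Set $I := (d_0, d_1, \ldots, d_n)$ with $d_0$ a preceding element of $I_1$, take $b_{d_l}$ as the chosen witness in column $d_l$, and $c_{d_l} := v_{e_l}$. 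A direct verification using the cut identity shows that $G \models E(c_{d_l}, b_{d_m}) \Leftrightarrow m \le l$, witnessing orderability of $\gc A\vert_I$ with $\sim$ equal to $\ge$.

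The main obstacle is the final extraction of the alternating subsequence of size $2t$ in Case B: if the initial pigeonhole on pairs yields a pair $(s_1, s_2)$ whose disagreement set $D$ is too small to support such an alternation with $E'$, a finer Ramsey argument must either combine distinct pairs or reroute the argument by enlarging the ``bad'' threshold and returning to Case A. The threshold $C = \const(t)$ must be tuned so that at least one of these strategies is always feasible. The algorithmic bound of $O_t(|V(G)|^2)$ then follows from the corresponding bound in \cref{lem:sampling} together with linear post-processing and Ramsey applications with $\const(t)$ colors.
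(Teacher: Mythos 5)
Your high-level plan matches the paper's for the first two reductions (apply \cref{lem:sampling}, then pass to margin~$1$ via \cref{lem:orderless-sample-margin}), and your Case~A --- thresholding on the number of bad exceptional indices $|E|$ and passing to a gap of $\tail(I_1)\setminus E$ --- is a sound, if slightly different, way to recover symmetric sampling. Your ``cut identity'' in Case~B is also correct. But there is a genuine gap in Case~B that you yourself flag, and it is not a minor tuning issue that can be fixed by enlarging $C$ or combining pairs.

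The problem is that $|E'|$ being large gives no lower bound on $|D| = \{j : \atp(s_1/A[j,*])\neq\atp(s_2/A[j,*])\}$. The only thing badness of each $v_e$ guarantees is that $D$ has at least one element on each side of $e$, so $|D|\ge 2$ --- and that is consistent with $D$ consisting of exactly the two extreme columns while $E'$ fills the whole range between them. In that scenario every $v_e$ is adjacent to $b_{d_1}$ and non-adjacent to $b_{d_2}$, independently of $e$: the adjacency pattern carries no information about $e$, so no alternating sequence $d_1<e_1<\dots<d_t<e_t$ exists and no orderability can be extracted. Enlarging the threshold $C$ on $|E|$ does not help, because $D$ can stay size $2$ no matter how large $E$ is, and ``returning to Case A'' requires a small \emph{disagreement} set $D$, not a small \emph{exceptional} set $E$ --- the two thresholds are independent.

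The missing ingredient is a Ramsey homogenization that makes $D$ all-or-nothing. The paper first colors each index of $\tail(I_1)$ by the containment relations $N(s_1)\cap B[i,*]\subseteq N(s_2)\cap B[i,*]$ over all $s_1,s_2\in S$ (\cref{fact:subset-ramsey}) and restricts to the largest monochromatic subsequence. After that step, any two distinct surviving samples disagree on \emph{every} column (\cref{fact:neq-ramsey}), so your $D$ becomes all of $\tail(I_2)$ and the interleaving trivially succeeds. A further 4-tuple Ramsey (\cref{fact:alternation-ramsey}) then lets a \emph{single} bad vertex ``shift'' to any exceptional index, which is why the paper's proof needs only one bad vertex and avoids any threshold on $|E|$ at all (\cref{clm:distance-order-c}). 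Without at least the first homogenization, your Erd\H{o}s--Szekeres step is not available, and the proof as written is incomplete.
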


\begin{proof}
    We first apply \cref{lem:sampling} to $\gc A$ and $J$, which yields a sequence $I_0$ of length $U_{t}(J)$ and a set $S \subseteq V(G) \setminus \gc A\vert_{I_0}$ of size $\const(t)$ such that either 
    \begin{itemize}
        \item $G$ contains a prepattern of order $t$ on $\gc A\vert_{I_0}$, or
        \item $S$ samples $G$ on $\gc A\vert_{I_0}$ with margin $2$.
    \end{itemize}
    In the first case we are done by setting $I := I_0$, so assume the second case.
    \cref{lem:orderless-sample-margin} yields a sequence $I_1 \subseteq I_0$ of length $U_t(|I_0|)=U_t(|J|)$ such that $S$ samples $G$ on $\gc A\vert_{I_1}$ with margin $1$.
    Let $B$ be the grid of $\gc B := \gc A\vert_{I_1}$.
    %In order to again apply Ramsey's theorem,
    We color every element $i\in \tail(I_1)$ by a color that encodes for all $s_1,s_2 \in S$ the information whether
    \begin{equation}\label{eq:subset-ramsey}
        N(s_1) \cap B[i,*] \subseteq N(s_2) \cap B[i,*].
    \end{equation}
    This requires $|S|^2 = \const(t)$ many colors.
    Inducing \(\tail(I_1)\) on the largest color class
    yields a monochromatic subsequence $I_2 \subseteq \tail(I_1)$ of length $U_t(|I_1|)=U_t(|I_0|)=U_t(|J|)$, where we can interpret monochromaticity as follows.
    
    \begin{fact}\label{fact:subset-ramsey}
        For every $s_1,s_2 \in S$,
        if \(N(s_1) \cap B[i,*] \subseteq N(s_2) \cap B[i,*]\) for one $i \in I_2$, then it holds for every $i \in I_2$.
        In particular, if two vertices $s_1,s_2 \in S$ have the same neighborhood in one column $B[i,*]$ for some $i \in I_2$ then they have the same neighborhood in every column $B[i,*]$ with $i\in I_2$.
    \end{fact}

    By definition, $\gc A \vert_{I_2}$ consists of the columns $\{  B_{i,*} \mid i \in \tail(I_2)\}$.
    By \cref{lem:subgrid-sample}, $S$ also samples $G$ on $\gc A \vert_{I_2}$ with margin $1$.
    Whenever we have two vertices in $S$ that have the same neighborhood in every column of $\gc A \vert_{I_2}$ we can remove one of them from $S$ while still preserving that $S$ samples $G$ on $\gc A \vert_{I_2}$ with margin $1$.
    Thus by \cref{fact:subset-ramsey},
    for distinct vertices $s_1,s_2 \in S$ and for every $i \in I_2$ we have $N(s_1) \cap B[i,*] \neq
    N(s_2) \cap B[i,*]$.
    Together with \cref{fact:subset-ramsey} we obtain the following.
    
    \begin{fact}\label{fact:neq-ramsey}
        For distinct vertices $s_1,s_2 \in S$, either
    \[
        \forall i \in I_2 :
        \bigl(N(s_1) \cap B[i,*]\bigr) 
        \not\subseteq
        \bigl(N(s_2) \cap B[i,*]\bigr)
        \quad
        \text{or}
        \quad
        \forall i \in I_2 : 
        \bigl(N(s_1) \cap B[i,*] \bigr)
        \not\supseteq
        \bigl(N(s_2) \cap B[i,*]\bigr).
    \]
    \end{fact}

    Remember that a vertex $v \in V(G)$ is \emph{$s$-connected} to a set $U \subseteq V(G)$ if \(\atp(s/U) = \atp(v/U)\).
    In the next step we color every $4$-element subsequence $\bar \iota  = (\iota_{1}, \ldots, \iota_{4} )\subseteq I_2$.
    In order to apply Ramsey's theorem,
    we encode for every $4$-tuple $\bar s = s_1 \ldots s_4 \in S^4$ the information whether
    \begin{equation}\label{eq:alternation-ramsey}
        \exists v \bigwedge_{i \in [4]} 
        \text{$v$ is $s_i$-connected to $B[\iota_i,*]$}.
    \end{equation}
    Ramsey's theorem yields a monochromatic subsequence $I_3 \subseteq I_2$ of length $U_t(|J|)$, where we can interpret monochromaticity as follows.

    \begin{fact}\label{fact:alternation-ramsey}
        For every $\bar s \in S^4$, whenever there exists a $4$-element subsequence $\bar \iota$ of $I_3$ which satisfies~\eqref{eq:alternation-ramsey}, then every $4$-element subsequence $\bar \iota$ of $I_3$ satisfies \eqref{eq:alternation-ramsey}.
    \end{fact}

    By \cref{lem:subgrid-sample}, $S$ still samples \(G\) on $\gc A\vert_{I_3}$ with margin $1$.
    Therefore, we can choose for every vertex $v\in V(G)$ two samples $s_<(v), s_>(v) \in S$ and an exceptional index $\ex(v)$ such that 
    \begin{itemize}
        \item $v$ is $s_<(v)$-connected to all $B[i,*]$ with $i \in I_3$ and $i < \ex(v)$, and
        \item $v$ is $s_>(v)$-connected to all $B[i,*]$ with $i \in I_3$ and $i > \ex(v)$.
    \end{itemize}
    We remove the first and last two elements of $I_3$ to obtain $I_4$.
    If for every vertex $v$ with $\mathrm{ex}(v) \in I_4$ we have $s_<(v) = s_>(v)$,
    then $S$ symmetrically samples $G$ on $\gc A\vert_{I_4}$, and we can complete the proof by setting $I := I_4$.
    Assume therefore, there is a vertex $v_\star$ with $\mathrm{ex}(v_\star) \in I_4$ and $s_<(v_\star) \neq s_>(v_\star)$. Let $s_1 := s_<(v_\star)$ and $s_2 := s_>(v_\star)$.

    \begin{claim}\label{clm:distance-order-c}
        For every $i\in I_4$ there exists a vertex $c_i$ such that
        \begin{itemize}
            \item $c_i$ is $s_1$-connected to all $B[j,*]$ with $j \in I_3$ and $j < i$, and
            \item $c_i$ is $s_2$-connected to all $B[i,*]$ with $j \in I_3$ and $j > i$.
        \end{itemize} 
    \end{claim}

    \begin{claimproof}
        Let $\iota_1,\iota_2$ and $\iota_3,\iota_4$ be the two immediate predecessors and successors of $\ex(v_\star)$ in $I_3$. Those are distinct indices, and they exist, since $\ex(v_\star) \in I_4$ and $I_4$ was obtained from $I_3$ by removing the first and last two elements.
        Therefore, $\bar \iota := (\iota_1,\iota_2, \iota_3, \iota_4)$ is a $4$-element subsequence of $I_3$.
        Additionally, $v_\star$ is $s_1$-connected to $B[\iota_1,*], B[\iota_2,*]$ and $s_2$-connected to $B[\iota_3,*], B[\iota_4,*]$

        Pick \(i \in I_4\), and let $\iota'_1,\iota'_2$ and $\iota'_3,\iota'_4$ be the two immediate predecessors and successors of $i$ in $I_3$.
        It follows by \cref{fact:alternation-ramsey} that there is a vertex $c_i$ that is $s_1$-connected to $B[\iota'_1,*], B[\iota'_2,*]$ and $s_2$-connected to $B[\iota'_3,*], B[\iota'_4,*]$.
        Since $S$ samples \(G\) on $\gc A\vert_{I_3}$ with margin $1$, we have that $\ex(c_i) \in \{\iota'_2,i,\iota'_4\}$.
        Since $c_i$ is $s_1$-connected to $B[\iota'_1,*]$, it is $s_1$-connected to all $B[j,*]$ with $j \in I_3$ and $j \leq \iota'_1$. A symmetric statement holds for $s_2$ and $j \ge \iota'_4$.
        This proves that $c_i$ has the desired properties.
    \end{claimproof}

    Consider \cref{fact:neq-ramsey} for \(s_1\) and \(s_2\).
    We can assume 
    \( \forall i \in I_2 : \bigl(N(s_1) \cap B[i,*]\bigr) \not\supseteq \bigl(N(s_2) \cap B[i,*]\bigr) \),
    as the alternative case will follow by a symmetric argument.
    It follows that for every $i \in I_2$ there exists a vertex $b_i \in B[i,*]$ such that $b_i \in N(s_2) \setminus N(s_1)$.
    Now by \cref{clm:distance-order-c}, we have that for all $i,j \in I_4$ 
    \begin{itemize}
        \item $c_i$ is non-adjacent to $b_j$ if $j < i$, and
        \item $c_i$ is adjacent to $b_j$ if $j > i$.
    \end{itemize} 
    By a simple majority argument we find a subsequence $I_5 \subseteq I_4$ of length at least $\frac{1}{2} |I_4|  = U_t(|J|)$ such that either 
    \begin{itemize}
        \item $c_i$ and $b_j$ are adjacent if and only if $j \geq i$ for all $i,j \in I_5$, or
        \item $c_i$ and $b_j$ are adjacent if and only if $i > j$ for all $i,j \in I_5$.
    \end{itemize}
    By possibly dropping the first element from $I_5$ and shifting the indices of the $c_i$ by one, we can always assume the first case applies.
    Now $(b_i)_{i\in I_5}$ and $(c_i)_{i\in I_5}$ witness that $\gc A\vert_{I_5}$ is orderable as desired, so we can set $I:=I_5$.

    \paragraph*{Running Time.}
    Let $n := |V(G)|$.
    Using the same preprocessing as in the run time analysis of \cref{lem:new-sample-vtx2}, we can assume that $|J| \leq n^{1/4}$.
    We first apply \cref{lem:sampling}, which runs in time $O_t(n^2)$.
    Coloring of the elements of $\tail(I_1)$ and building \(I_2\) can be done in time $O_t(n)$.
    We then color the $4$-tuples of the resulting sequence.
    Similarly to the proof of \cref{lem:new-sample-vtx2}, this can be done in time $O_t(n^2)$.
    As we have ensured $|J| \leq n^{1/4}$, applying Ramsey to this coloring takes time $O_t(n)$.
    For the resulting sequence we can search for the $c_i$-vertices of \cref{clm:distance-order-c}.
    We do this by testing for every vertex $v \in V(G)$ the role of which of the $c_i$ it can take.
    For a single vertex this can be done in time $O_t(n)$ by checking its connections to each column of the insulator.
    An exhaustive search over all vertices in $G$ therefore takes time $O_t(n^2)$.
    If we find a suitable candidate for every $i \in I_4$, we can compute $I_5$ in time $O_t(n)$, which yields the desired orderable insulator.
    If we cannot find suitable candidates for all $c_i$, we can conclude that $S$ has the desired sampling property.
    The overall running time is $O_t(n^2)$ as desired.
\end{proof}

\section{Extending Insulators}
\label{sec:growing}

We are now ready to prove the insulator growing lemmas.
We first prove the orderless and then the ordered case.

\subsection{Extending Orderless Insulators}

\lemOrderlessGrowing*

\begin{proof}
    Let $\gc A = (A,\KK,F,F)$.
    Apply \cref{lem:orderless-sampling} to $\gc A$,
    which yields a subsequence \(I \subseteq J\) of size $U_{t}(|J|)$ and a set $S\subseteq V(G) \setminus \gc A\vert_I$ of size $\const(t)$ such that either
    \begin{itemize}
        \item $G$ contains a prepattern of order $t$ on $\gc A\vert_{I}$, or
        \item $\gc A\vert_{I}$ is orderable, or
        \item $S$ symmetrically samples $G$ on $\gc A\vert_{I}$ with margin $1$.
    \end{itemize}
    In the first two cases we are done, so we assume the last case:
    there exist functions $s : V(G) \rightarrow S$ and $\ex : V(G) \rightarrow \tail(I)$ such that every $v \in V(G)$ is $(1,\ex(v),s(v),s(v))$-sampled on $\gc A\vert_I$.
    Let $A$ and $B := A\vert_I$ be the grids of $\gc A$ and $\gc B := \gc A\vert_I$.
    Let $I_\star := \tail(I)$ be the sequence indexing $B$ and let $h$ be the height of $A$ and $B$. 
    
    \paragraph{Defining the Grid.}
    We build a row-extension $C$ of $B$. 
    By definition of a row-extension, we have $C[i,j] := B[i,j]$ for all $i \in I_\star$ and $j \in [h]$.
    It remains to define the row~$C[*,h+1]$.
    For every $i \in I_\star$, we define $C[i,h+1]$ to contain every vertex $v$ that
    \begin{itemize}
        \item is not contained in $B$, and
        \item disagrees with its sample in the cell below, that is,
        $
            \atp(v/C[i,h]) \neq \atp(s(v)/C[i,h]).   
        $
    \end{itemize}
    As every vertex $v$ is sampled with margin $1$ in $\gc B$, $v$ can disagree with $s(v)$ in at most one column of $B$, so no vertex gets assigned into multiple columns.
    Furthermore, we only assign vertices to $C[*,h+1]$ which were not in $B$, so the cells of $C$ are pairwise disjoint and $C$ is a valid grid.
    Thus, \(C\) is a row-extension of \(B\).

    \begin{claim}\label{clm:bg-exception}
        For every $i \in I_\star$ and $v \in C[i,*]$, we have $\ex(v) = i$.
    \end{claim}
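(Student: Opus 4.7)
My plan is to argue by contradiction: I would fix $i \in I_\star$ and $v \in C[i,*]$, assume $\ex(v) \neq i$, and produce a witness in $B[i,*]$ on which $v$ and $s(v)$ disagree. Unfolding the definition of being $(1, \ex(v), s(v), s(v))$-sampled on $\gc B$ (\cref{def:sampled}), the column $B[i,*]$ lies either in $B[{<}\ex(v),*]$ or in $B[{\ge}\ex(v)+1,*]$ whenever $i \neq \ex(v)$, so in either subcase $\atp(v/B[i,*]) = \atp(s(v)/B[i,*])$ must hold. The whole proof then boils down to contradicting this equality, and I would split into two cases depending on the row of $C$ in which $v$ sits.

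In the first case, $v \in C[i,j]$ for some $j \le h$; by the construction of $C$ as a row-extension this means $v \in B[i,j] \subseteq B[i,*]$. On the other hand, $s(v) \in S$ and \cref{lem:orderless-sampling} guarantees that $S \subseteq V(G) \setminus \gc A\vert_I$, so $s(v) \notin B[i,*]$ and, in particular, $s(v) \neq v$. Hence the equality literal recording ``$= v$'' belongs to $\atp(v/B[i,*])$ but not to $\atp(s(v)/B[i,*])$, contradicting the standing assumption.

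In the remaining case $v \in C[i,h+1]$, and the very rule used to populate this cell states directly that $\atp(v/C[i,h]) \neq \atp(s(v)/C[i,h])$. Since $C[i,h] = B[i,h] \subseteq B[i,*]$, monotonicity of disagreement (\cref{obs:monotone-disagreement}) immediately lifts this to $\atp(v/B[i,*]) \neq \atp(s(v)/B[i,*])$, the desired contradiction. I do not anticipate any real obstacle here: the claim is essentially a sanity check that the recipe defining the new row $C[*,h+1]$ is consistent with the sampling data, and both cases reduce to pointing at an element of $B[i,*]$ that distinguishes $v$ from its sample $s(v)$.
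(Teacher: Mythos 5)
Your proof is correct and takes essentially the same approach as the paper's: the paper proves the contrapositive directly ($\atp(v/B[i,*]) \neq \atp(s(v)/B[i,*]) \Rightarrow \ex(v)=i$, then establishes the premise), while you argue by contradiction, but in both cases the two-way case split ($v \in C[i,{\le}h]$ versus $v \in C[i,h+1]$) and the key observations (the equality literal distinguishing $v$ from $s(v)$, and the defining rule for $C[i,h+1]$ together with \cref{obs:monotone-disagreement}) are identical.
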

    \begin{claimproof}
        As $v$ is $(1,\ex(v),s(v),s(v))$-sampled on $\gc B$, we have
        \[
            \atp(v/B[i,*]) \neq \atp(s(v)/B[i,*]) \Rightarrow \ex(v) = i.
        \]
        If $v \in C[i,h+1]$ then the premise is satisfied by construction.
        If $v \in C[i,\leq h] = B[i,*]$ then, since $s(v) \notin B[i,*]$, the premise is again satisfied as we have 
        \[
            (=,v) \in \atp(v/B[i,*]) \setminus \atp(s(v)/B[i,*]).\qedhere
        \]
    \end{claimproof}

    The rest of the proof will be devoted to constructing $\KK_\star$ and $F_\star$
    such that the row-extension $\gc C := (C,\KK_\star,F_\star,F_\star)$ is an insulator of cost $\const(k,t)$ in \(G\).

    \paragraph{Defining the Insulator.}
    We build $\KK_\star$ as a refinement of $\KK$ by encoding into the color of every vertex $v \in V(G)$ for every color $X \in \KK$ and sample vertex $s \in S$ the information
    \begin{enumerate}[leftmargin= 3em, label={(C.$\arabic*$)}]
        \item\label{itm:bg-col-x} whether $v \in X$,
        \item\label{itm:bg-col-c} whether $v \in C$,
        \item\label{itm:bg-col-c-h} whether $v \in C[*,h]$,
        \item\label{itm:bg-col-c-h1} whether $v \in C[*,h+1]$,
        \item\label{itm:bg-col-ns} whether $v \in N(s)$,
        \item\label{itm:bg-col-s} whether $s(v) = s$.
    \end{enumerate}
    As $|\KK| = k$ and $|S|=\const(t)$, we have $|\KK_\star| = \const(k,t)$.
    By \ref{itm:bg-col-s}, \ref{itm:bg-col-ns}, and \ref{itm:bg-col-x}, we can assign to every color $X \in \KK_\star$ 
    \begin{itemize}
        \item a sample vertex $s(X) \in S$, such that $s(v) = s(X)$ for all $v \in X$,
        \item sample neighbors $S(X)\subset S$, such that $N(v) \cap S = S(X)$ for all $v \in X$, and
        \item a color $\KK(X) \subseteq \KK$, such that $\KK(v) = \KK(X)$ for all $v \in X$.
    \end{itemize}
    In order to show that $\gc C$ is an insulator in \(G\), 
    it remains to define the symmetric relation $F_\star \subseteq \KK_\star^2$ such that property \ref{itm:orderless} is satisfied.
    We define \(F_\star\) via the following four cases.
    Let $X,Y \in \KK_\star$.
    \begin{enumerate}[leftmargin= 3em, label={(F.$\arabic*$)}]
        \item \label{itm:case-top-top}
        If $X \subseteq C[*,h]$ and $Y \subseteq C[*,h]$, then 
        \((X,Y) \in F_\star 
            \Leftrightarrow 
            \big(
            s(Y) \in S(X) 
            \vee
            s(X) \in S(Y) 
            \big) 
            .\)

        \item \label{itm:case-topper-top}
        If $X \not \subseteq C[*,{\leq} h]$ and $Y \subseteq C[*,h]$, then 
        \((X,Y) \in F_\star 
            \Leftrightarrow 
            s(X) \in S(Y) 
            .\)

        \item \label{itm:case-top-topper}
        If $X \subseteq C[*,h]$ and $Y \not \subseteq C[*,{\leq} h]$, then 
        \((X,Y) \in F_\star 
            \Leftrightarrow 
            s(Y) \in S(X) 
            .\)

        \item \label{itm:case-else}
        Otherwise, $(X,Y) \in F_\star \Leftrightarrow \big(\KK(X), \KK(Y)\big) \in F$.
    \end{enumerate}
    By construction, $F_\star$ is symmetric and therefore describes a valid flip.
    Let $G' := G \oplus_{\KK} F$ and $G_\star := G \oplus_{\KK_\star} F_\star$.

    \paragraph*{Proving Properties of Insulator.}
    We have to show \ref{itm:orderless}:
    for all $i \in I_\star$ there exists $a_i \in V(G)$ such that for all $r \in [h+1]$
    \begin{equation}\label{eq:bg-main-condition}
        C[i,1] = N^{G_{\star}}_{0}[a_i] = \{a_i\}  \quad \text{and} \quad C[i,{\leq} r] = N^{G_{\star}}_{r-1}[a_i].
    \end{equation}
    We first show that our flip conserves this property for $r \in [h]$, and handle $r = h +1$ later.

    \begin{claim} \label{clm:bg-preserve-h-balls}
        For all $i \in I_\star$ and $r \in [h]$, we have 
        \[
            C[i,{\leq} r] = B[i,{\leq} r] = N^{G'}_{r-1}[a_i] = N^{G_\star}_{r-1}[a_i] .
        \]
    \end{claim}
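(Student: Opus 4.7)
The claim is a chain of three equalities, $C[i,{\leq}r] = B[i,{\leq}r]$, $B[i,{\leq}r] = N^{G'}_{r-1}[a_i]$, and $N^{G'}_{r-1}[a_i] = N^{G_\star}_{r-1}[a_i]$, and I would prove them in that order. The first is immediate: by definition of a row-extension, $C[i,j] = B[i,j]$ for every $j \in [h]$, so the two sets coincide. For the second, note that by Lemma~\ref{lem:subinsulator} the subinsulator $\gc B = \gc A\vert_I = (B,\KK,F,F)$ is itself a valid orderless insulator, using the same flip $G' = G\oplus_\KK F$. Applying property~\ref{itm:orderless} to $\gc B$ then yields vertices $a_i$ with $B[i,1] = \{a_i\}$ and $B[i,{\leq}r] = N^{G'}_{r-1}[a_i]$ for all $r \in [h]$; these $a_i$ are exactly the ones we would declare to be the $a_i$ of the row-extended insulator $\gc C$.

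The third equality is the substantive one and asserts that passing from the flip $F$ to the refined flip $F_\star$ leaves the radius-$(r{-}1)$ balls around $a_i$ unchanged for $r \le h$. The plan is to prove the following \emph{locality lemma}: for every $u \in B[i,{<}h]$ and every $v \in V(G)$, we have $uv \in E(G') \Leftrightarrow uv \in E(G_\star)$. To see this, set $X := \KK_\star(u)$. By the coloring rules~\ref{itm:bg-col-c}, \ref{itm:bg-col-c-h}, and~\ref{itm:bg-col-c-h1}, membership in $C$, in $C[*,h]$, and in $C[*,h{+}1]$ is constant on each color class of $\KK_\star$; since $u$ lies in $C\setminus(C[*,h]\cup C[*,h{+}1]) = C[*,{<}h]$, we conclude $X \subseteq C[*,{<}h]$. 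Consequently none of the preconditions of cases~\ref{itm:case-top-top}, \ref{itm:case-topper-top}, or~\ref{itm:case-top-topper} can hold on the pair $(X,Y)$ for any $Y$: the first and third require $X \subseteq C[*,h]$, and the second requires $X \not\subseteq C[*,{\leq}h]$. Hence $F_\star$ treats the edge $uv$ via the default case~\ref{itm:case-else}, which agrees with $F$ because $\KK_\star$ refines~$\KK$.

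A straightforward induction on $j \in \{0,1,\dots,r{-}1\}$ then gives $N^{G'}_j[a_i] = N^{G_\star}_j[a_i]$: the base $j=0$ is trivial, and in the inductive step with $j \le r{-}2 \le h{-}2$ the already-established second equality yields $N^{G'}_j[a_i] = B[i,{\leq}j{+}1] \subseteq B[i,{<}h]$, so every vertex in the ball falls under the locality lemma and has identical neighbors in $G'$ and $G_\star$. The principal obstacle is the bookkeeping in the locality lemma: one has to verify that the coloring rules~\ref{itm:bg-col-c}--\ref{itm:bg-col-c-h1} are precisely engineered to pin down $X$ inside $C[*,{<}h]$, so that the three non-default cases in the definition of $F_\star$ are all vacuous for edges incident to vertices below the top row; once that is in place, the inductive unrolling of balls is routine.
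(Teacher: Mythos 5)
Your proof is correct and takes essentially the same route as the paper: the first two equalities are handled exactly the same way, and for the third you extract the key observation (vertices in $C[*,{<}h]$ have identical neighborhoods in $G'$ and $G_\star$ because their colors force case~\ref{itm:case-else}) as a standalone locality lemma, whereas the paper inlines that argument inside the chain of equivalences in the induction. The only difference is organizational; the underlying argument and the use of~\ref{itm:bg-col-c}--\ref{itm:bg-col-c-h1} to pin the color class inside $C[*,{<}h]$ and force the default flip case are identical.
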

        
    \begin{claimproof}
        The first two equalities follow by construction and property \ref{itm:orderless} of $\gc B$.
        It remains to prove $N^{G'}_{r-1}[a_i] = N^{G_\star}_{r-1}[a_i]$.
        We prove the claim by induction on $r$. The base case is trivial.
        For the inductive step, assume the property holds for $r \in [h-1]$ and we want to show it for $r+1$.
        We show that for every vertex $u$ we have $u \in N^{G_\star}_{r}[a_i]$ if and only if $u \in N^{G'}_{r}[a_i]$.
        We can assume $u \notin N^{G_\star}_{r-1}[a_i] = N^{G'}_{r-1}[a_i]$, as we would be done by induction otherwise.
        With these prerequisites, the following are equivalent.
        \begin{enumerate}
            \item $u \in N^{G_\star}_{r}[a_i]$ 
            \item $u$ has a neighbor in $N^{G_{\star}}_{r-1}[a_i]$ in $G_\star$ 
            \hfill (as $u \notin N^{G_\star}_{r-1}[a_i]$)
            \item $u$ has a neighbor in $C[i, {\leq} r]$ in $G_\star$ 
            \hfill (by induction)
        \end{enumerate}
        Let $v$ be a vertex in $C[i, {\leq} r] \subseteq C[i, {<} h]$.
        By \ref{itm:bg-col-c}, \ref{itm:bg-col-c-h}, and \ref{itm:bg-col-c-h1}, we have $\KK_\star(v) \subseteq C[i, {<} h]$ and case \ref{itm:case-else} applies: $v$ has the same neighborhood in $G_\star$ as in $G'$. 
        Hence, the following are equivalent to the above.

        \begin{enumerate}[start=4]
            \item $u$ has a neighbor in $C[i, {\leq} r]$ in $G'$ 
            \item $u$ has a neighbor in $N^{G'}_{r-1}[a_i]$ in $G'$ 
            \hfill (by induction)
            \item $u \in N^{G'}_{r}[a_i]$ 
            \hfill (as $u \notin N^{G'}_{r-1}[a_i]$)
        \end{enumerate}        
    \end{claimproof}

    Having proved \cref{clm:bg-preserve-h-balls}, in order to establish \ref{itm:orderless}, it remains to prove
    \[
        C[i,h+1] = N^{G_\star}_{h}[a_i] \setminus N^{G_\star}_{h - 1}[a_i].
    \]
    We show the equivalence of the two sets by proving containment in both directions separately.

    \begin{claim}\label{clm:bg-roots}
        For all $i \in I_\star$ we have $C[i,h+1] \subseteq N^{G_\star}_{h}[a_i] \setminus N^{G_\star}_{h - 1}[a_i]$.
    \end{claim}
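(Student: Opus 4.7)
}
Fix $i \in I_\star$ and $v \in C[i,h+1]$. I need to establish two facts: (a) $v \notin N^{G_\star}_{h-1}[a_i]$, and (b) $v \in N^{G_\star}_{h}[a_i]$. The first is easy: by \cref{clm:bg-preserve-h-balls}, $N^{G_\star}_{h-1}[a_i] = C[i,{\leq}h] = B[i,*] \subseteq B$, and by the definition of $C[i,h+1]$ we have $v \notin B$. So the real work is in (b), and the natural strategy is to exhibit a neighbor of $v$ inside $C[i,h]$ in the flipped graph $G_\star$.

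By definition of $C[i,h+1]$, we have $\atp(v/C[i,h]) \neq \atp(s(v)/C[i,h])$ in $G$. Since $s(v) \in S \subseteq V(G) \setminus \gc A\vert_I$ and $v \notin B$, neither $v$ nor $s(v)$ belongs to $C[i,h] \subseteq B$, so equality contributes nothing to the disagreement. Therefore there is some $u \in C[i,h]$ on which the edge relations differ, i.e.\ $G \models E(v,u) \oplus E(s(v),u)$. This $u$ is the candidate neighbor witnessing $v \in N^{G_\star}_h[a_i]$.

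The key remaining step is to check that the flip $F_\star$ actually turns this disagreement into an edge $\{v,u\}$ in $G_\star$. Here I use the coloring properties: by \ref{itm:bg-col-c-h1}, the color $X := \KK_\star(v)$ satisfies $X \subseteq C[*,h+1]$, so $X \not\subseteq C[*,{\leq}h]$; and by \ref{itm:bg-col-c-h}, the color $Y := \KK_\star(u)$ satisfies $Y \subseteq C[*,h]$. Hence case \ref{itm:case-topper-top} of the definition of $F_\star$ applies, giving
\[
(X,Y) \in F_\star \Longleftrightarrow s(X) \in S(Y).
\]
Using \ref{itm:bg-col-s} we have $s(X) = s(v)$, and using \ref{itm:bg-col-ns} we have $s(v) \in S(Y)$ iff $s(v)$ is adjacent to $u$ in $G$. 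Combining with the flip rule $\{v,u\} \in E(G_\star) \Leftrightarrow \{v,u\} \in E(G) \oplus (X,Y) \in F_\star$ and the disagreement $E(v,u) \oplus E(s(v),u)$ established above, we conclude $\{v,u\} \in E(G_\star)$.

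Thus $v$ is adjacent in $G_\star$ to some $u \in C[i,h] \subseteq N^{G_\star}_{h-1}[a_i]$, which yields $v \in N^{G_\star}_{h}[a_i]$, and combined with (a) finishes the claim. The only subtle point — and the main conceptual content of the argument — is the observation that the ``sample-based'' flip rule \ref{itm:case-topper-top} has been engineered precisely so that local disagreement of a vertex with its sample on the top row of $B$ is converted into an edge in $G_\star$; everything else is bookkeeping against \cref{clm:bg-preserve-h-balls} and the coloring \ref{itm:bg-col-x}--\ref{itm:bg-col-s}.
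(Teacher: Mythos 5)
Your proof is correct and follows essentially the same route as the paper's: use Claim~\ref{clm:bg-preserve-h-balls} for the exclusion from $N^{G_\star}_{h-1}[a_i]$, extract a witness $u\in C[i,h]$ from the sample disagreement (noting equality plays no role since neither $v$ nor $s(v)$ lies in $B$), then observe that the coloring places $\KK_\star(v)$ in $C[*,h+1]$ and $\KK_\star(u)$ in $C[*,h]$ so that case~\ref{itm:case-topper-top} applies and converts the disagreement into an edge of $G_\star$. The only cosmetic difference is a swap of variable names ($u$ and $v$) relative to the paper, and your remark that only $X\subseteq C[*,h+1]$ (not the specific column) is needed to trigger the flip case is in fact slightly cleaner than the paper's phrasing.
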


    \begin{claimproof}
        Let $u\in C[i,h+1]$. As $u \notin C[i,\leq h]$, we have by \Cref{clm:bg-preserve-h-balls} that $u \notin N^{G_\star}_{h - 1}[a_i]$. 
        It remains to show $u \in N^{G_\star}_{h}[a_i]$.
        By construction, we have
        \[
            \atp(u/C[i,h]) \neq \atp(s(u)/C[i,h]).
        \]
        As neither $u$ nor $s(u)$ is contained in $B$, the difference in their atomic type must be witnessed by a vertex $v\in C[i,h]$ in the symmetric difference of their neighborhoods.
        We want to argue that $u$ and $v$ are adjacent in $G_\star$.
        Let $X := \KK_\star(u)$ and $Y := \KK_\star(v)$.
        By \ref{itm:bg-col-c-h1} and \ref{itm:bg-col-c-h},
        we have $X \subseteq C[i,h+1]$ and $Y \subseteq C[i,h]$.
        Hence, case \ref{itm:case-topper-top} from the construction of $F_\star$ applies and the following are equivalent.
        \begin{enumerate}
            \item The adjacency between $u$ and $v$ got flipped when going from $G$ to $G_\star$.
            \item $s(X) \in S(Y)$.
            \hfill (by \ref{itm:case-topper-top})
            \item $s(u) \in N(v) \cap S$.
            \hfill (by definition)
            \item $s(u)$ is a neighbor of $v$ in $G$.
            \hfill (by definition)
            \item $u$ is a non-neighbor of $v$ in $G$.
            \hfill ($v$ is in the sym. diff. of $N(s(u))$ and $N(u)$)
        \end{enumerate}
        The equivalence between the first and the last item establishes that $u$ and $v$ are adjacent in $G_\star$.
        By \cref{clm:bg-preserve-h-balls}, we have $v \in N^{G_\star}_{h-1}[a_i]$, so $u \in N^{G_\star}_{h}[a_i]$ and the claim is proven.
    \end{claimproof}

    \begin{claim}\label{clm:bg-disjoint}
        For all $i \in I_\star$ we have $C[i,h+1] \supseteq N^{G_\star}_{h}[a_i] \setminus N^{G_\star}_{h - 1}[a_i]$.
    \end{claim}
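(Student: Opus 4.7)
}
The plan is to take an arbitrary $u \in N^{G_\star}_h[a_i] \setminus N^{G_\star}_{h-1}[a_i]$ and verify the two defining conditions for membership in $C[i,h+1]$: namely that $u \notin B$ and that $\atp(u/C[i,h]) \neq \atp(s(u)/C[i,h])$. The exceptional-index uniqueness coming from margin-$1$ sampling then guarantees that $u$ is placed in column $i$ (as opposed to some other column), matching what we want. By \cref{clm:bg-preserve-h-balls}, the hypothesis $u \notin N^{G_\star}_{h-1}[a_i]$ is equivalent to $u \notin C[i,\leq h]$, and $u$ must have a $G_\star$-neighbor $v$ at distance exactly $h-1$ from $a_i$, i.e., $v \in C[i,\leq h]\setminus C[i,\leq h-1] = C[i,h]$.

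The main obstacle is ruling out the possibility that $u$ lies in some other column of $B$, that is, $u \in B[j,*]$ for some $j \neq i$. I expect this to be the most delicate step, since everything else follows by symmetry with the proof of \cref{clm:bg-roots}. The plan is to split on the row of $u$:
\begin{itemize}
    \item If $u \in B[j,r]$ with $r < h$, then $\KK_\star(u) \subseteq C[*,<h]$, and for any $v' \in C[i,\leq h]$ none of the cases \ref{itm:case-top-top}--\ref{itm:case-top-topper} trigger, so the default case \ref{itm:case-else} applies and $G_\star$-adjacency between $u$ and $v'$ coincides with $G'$-adjacency. But property \ref{itm:orderless} of $\gc B$ forces all $G'$-neighbors of $u$ to lie in $B[j,\leq h]$, which is disjoint from $B[i,*]$. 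Hence $u$ has no $G_\star$-neighbor in $C[i,\leq h]$, contradicting the choice of $v$.
    \item If $u \in B[j,h]$, then $\KK_\star(u) \subseteq C[*,h]$, and edges from $u$ to $v' \in C[i,<h]$ again fall under case \ref{itm:case-else} and are excluded as before. Edges to $v \in C[i,h]$ trigger case \ref{itm:case-top-top}, where the flip condition reads ``$s(u) \in N(v)$ or $s(v) \in N(u)$ in $G$''. Since $u \in B[j,h]$ with $s(u) \notin B$ forces $\ex(u) = j$, and symmetrically $\ex(v) = i$, the margin-$1$ sampling property gives $\atp(u/B[i,*]) = \atp(s(u)/B[i,*])$ and $\atp(v/B[j,*]) = \atp(s(v)/B[j,*])$. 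A case split on $uv \in E(G)$ then shows that the flip condition exactly cancels the $G$-edge, so $u \not\sim_{G_\star} v$ either. Again, $u$ cannot reach $a_i$ in $h$ steps in $G_\star$, contradicting the choice of $u$.
\end{itemize}
Combining the two subcases with the impossibility $u \in B[i,*]$ (which would contradict $u \notin C[i,\leq h]$) yields $u \notin B$.

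Once $u \notin B$ is established, the rest is short. Since $\KK_\star(u)$ is then contained either in $C[*,h+1]$ or entirely outside $C$, and $\KK_\star(v) \subseteq C[*,h]$, the pair $(u,v)$ falls under case \ref{itm:case-topper-top}, so $u \sim_{G_\star} v$ precisely when $u$ and $s(u)$ disagree on adjacency to $v$ in $G$. Since we assumed $u \sim_{G_\star} v$ and $s(u) \notin B$ implies $s(u) \neq v$ (so the $=$-literal cannot account for a difference), we conclude $\atp(u/\{v\}) \neq \atp(s(u)/\{v\})$, and therefore $\atp(u/C[i,h]) \neq \atp(s(u)/C[i,h])$. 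The margin-$1$ sampling property of $S$ on $\gc B$ guarantees that $i$ is the unique column where $u$ can disagree with $s(u)$, so the construction of $C$ places $u$ into $C[i,h+1]$, which is exactly what we need.
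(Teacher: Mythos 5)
Your proposal is correct and follows essentially the same approach as the paper's proof: take $u$ at distance exactly $h$ from $a_i$ in $G_\star$, obtain a witness $v \in C[i,h]$, and then argue by cases on the location of $u$ using the insulator and flip-rule structure. The only stylistic difference is in the final step: the paper rules out $u \notin C$ and $u \in C[j,h+1]$ for $j \neq i$ by contradiction (showing $u$ and $s(u)$ \emph{agree} on $C[i,h]$, hence $u \not\sim_{G_\star} v$), while you, having established $u \notin B$, derive directly that $u$ and $s(u)$ \emph{disagree} on $C[i,h]$ from the assumed adjacency $u \sim_{G_\star} v$, and then invoke the construction of $C$ and margin-$1$ sampling to place $u$ in $C[i,h+1]$. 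The two arguments are the same computation framed constructively versus by contradiction, and your case analysis for $u \in B[j,r]$ with $r < h$ (arguing all $G'$-neighbors of $u$ lie in $B[j,\leq h]$ and the flip is trivial there) is logically equivalent to the paper's variant (arguing $v$ would land in $C[j,\leq h]$, violating disjointness).
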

    \begin{claimproof}
        Let $u$ be a vertex in $N^{G_\star}_{h}[a_i] \setminus N^{G_\star}_{h - 1}[a_i]$.
        By \cref{clm:bg-preserve-h-balls}, this is witnessed by a vertex 
        \[
            v \in N^{G_\star}_{h - 1}[a_i] = N^{G'}_{h - 1}[a_i] = C[i,h]    
        \]
        that is adjacent to $u$ in $G_\star$.
        We prove that $u \in C[i,h+1]$ by ruling out all other possibilities.
        \begin{itemize}
            \item
            Assume that $u \in C[j,\leq h - 1]$ for some $j \in I_\star$.
            By \cref{clm:bg-preserve-h-balls}, we have
            \[
                u \in N^{G_\star}_{h - 2}[a_j] = N^{G'}_{h - 2}[a_j].
            \]
            As we assumed that $u \notin N^{G_\star}_{h - 1}[a_i]$, we have that
            $j \neq i$.
            Additionally, case \ref{itm:case-else} applies and $u$ has the same neighborhood in $G'$ and $G_\star$. Hence, $u$ is adjacent to $v$ also in $G'$.
            Now $v \in N^{G'}_{h - 1}(a_j) = C[j,{\leq} h]$, but still $v \in C[i,h]$. This is a contradiction to $C[i,h]$ and $C[j,{\leq} h]$ being disjoint.

            \item
            Assume that $u \notin C$ or $u \in C[j,h + 1]$ for some $j \neq i \in I_\star$.
            We first show
            \begin{equation}\label{eq:bg-topper-top}
                \atp(u/C[i,h]) = \atp(s(u)/C[i,h]).
            \end{equation}
            If $u\notin C$, we deduce \eqref{eq:bg-topper-top} from the construction of $C$.
            If $u \in C[j,h + 1]$, we apply \cref{clm:bg-exception} which yields $j = \ex(u) \neq i$. We then deduce \eqref{eq:bg-topper-top} by the sampling property.
            Let $X := \KK_\star(u)$ and $Y := \KK_\star(v)$.
            By our choice of $u$ and $v$, we know their adjacency in $G_\star$ was determined by case \ref{itm:case-topper-top} and the following are equivalent.

            \begin{enumerate}
                \item The adjacency between $u$ and $v$ got flipped when going from $G$ to $G_\star$.
                \item $s(X) \in S(Y)$. 
                \hfill (by \ref{itm:case-topper-top})
                \item $s(u) \in N(v) \cap S$
                \hfill (by definition)
                \item $v$ is adjacent to $s(u)$ in $G$.
                \hfill (by definition)
                \item $v$ is adjacent to $u$ in $G$.
                \hfill (by \eqref{eq:bg-topper-top})
            \end{enumerate}
            The equivalence between the first and the last item establishes that $u$ and $v$ are non-adjacent in $G_\star$, a contradiction.

            \item
            Finally, we assume that $u \in C[j,h]$ for some $j\in I_\star$.
        As we know that $u \notin N^{G_\star}_{h - 1}[a_i]$, \cref{clm:bg-preserve-h-balls} yields $i \neq j$.
        Then \cref{clm:bg-exception} applied to $u \in C[j,h]$ and $v \in C[i,h]$  yields 
        \[
            j = \ex(u) \neq i = \ex(v),    
        \]
        which together with the sampling property gives
        \begin{equation}\label{eq:bg-top-top}
            \atp(u/C[i,h]) = \atp(s(u)/C[i,h]) \quad \text{and} \quad \atp(v/C[j,h]) = \atp(s(v)/C[j,h]).
        \end{equation}
        Let $X := \KK_\star(u)$ and $Y := \KK_\star(v)$.
        By our choice of $u$ and $v$, we know their adjacency in $G_\star$ was determined by case \ref{itm:case-top-top} and the following are equivalent.
        \begin{enumerate}
            \item The adjacency between $u$ and $v$ got flipped when going from $G$ to $G_\star$.
            \item $s(Y) \in S(X)$ or $s(X) \in S(Y)$. 
            \hfill (by \ref{itm:case-top-top})
            \item $s(v) \in N(u) \cap S$ or $s(u) \in N(v) \cap S$. 
            \hfill (by definition)
            \item $u$ is a neighbor of $s(v)$ in $G$ or $v$ is a neighbor of $s(u)$ in $G$.
            \hfill (by definition)
            \item $u$ is a neighbor of $v$ in $G$ or $v$ is a neighbor of $u$ in $G$.
            \hfill (by \eqref{eq:bg-top-top})
            \item $u$ is a neighbor of $v$ in $G$.
        \end{enumerate}
        The equivalence between the first and the last item establishes that $u$ and $v$ are non-adjacent in $G_\star$, a contradiction.

        \end{itemize}
        Having exhausted all other possibilities, we conclude that $u \in C[i,h+1]$, which proves the claim.
    \end{claimproof}
        
    The combination of \cref{clm:bg-preserve-h-balls}, \cref{clm:bg-roots}, and \cref{clm:bg-disjoint} proves 
    property \ref{itm:orderless}.
    Hence, $\gc C := (C,\KK_\star,F_\star,F_\star)$ is an insulator of cost $\const(k,t)$ in \(G\).
    This proves that $\CC$ is the desired row extension. 
    It remains to analyze the running time.

    \paragraph*{Running Time.}
    Let $n := |V(G)|$.
    The application of \cref{lem:orderless-sampling} runs in time $O_t(n^2)$.
    In the case where a sample set $S$ is returned, we can calculate the
    witnessing function $s$ in time $O_t(n^2)$ by comparing each vertex $v \in
    V(G)$ with each of the $\const(t)$ many vertices from $S$ over every column
    of~$\gc A\vert_I$.
    With the function $s$ at hand, we can also build the row-extension $C$ of $B$ in time $O_t(n^2)$.
    The construction of $\KK_\star$ and $F_\star$ runs in time $O_{k,t}(n)$.
    This yields an overall running time of $O_{k,t}(n^2)$.
\end{proof}

\subsection{Extending Ordered Insulators}

\lemOrderedGrowing*

\begin{proof}
    We first apply \cref{lem:sampling} to $\gc A$, which yields a subsequence \(I \subseteq J\) of size $U_{t}(|J|)$ and a set $S\subseteq V(G) \setminus \gc A\vert_I$ of size $\const(t)$ such that either
    \begin{itemize}
        \item $G$ contains a prepattern of order $t$ on $\gc A\vert_{I}$, or
        \item $S$ samples $G$ on $\gc A\vert_I$ with margin $2$.
    \end{itemize}
    In the first case we are done, so assume the second case.
    By possibly taking a subsequence and applying \cref{lem:subgrid-sample}, we can assume the following. 
    \begin{property}\label{prop:drop-borders}
        $I$ does not contain the first and last two elements of $J$.
    \end{property}
    Let $B$ be the grid of $\gc B := \gc A\vert_I$ and let $I_\star := \tail(I)$ be the sequence indexing $B$.
    As $S$ samples $G$ on $\gc B$ with margin $2$, there exist functions
    $\ex : V(G) \rightarrow I_\star$ and $s_<, s_> : V(G) \rightarrow S$, such that every $v \in V(G)$ is $(2,\ex(v),s_<(v),s_>(v))$-sampled in $\gc B$.
    We can assume $\ex$ is chosen maximal in the following sense: for every vertex $v \in V(G)$ and index $i \in I_\star$, if $\atp(v/B[{\leq} i,*]) = \atp(s_<(v)/B[{\leq} i,*])$ then $\ex(v) \geq i$.

    \paragraph{Defining the Grid.}
    We will build a row-extension $C$ of $B$. 
    By definition of a row-extension we have $C[i,r] := B[i,r]$ for all $i \in I_\star$ and $r \in [h]$.
    It remains to define the row~\mbox{$C[*,h+1]$}.
    For every $i \in I_\star$, we set $C[i,h+1]$ to contain every vertex $v$ such that
    \begin{itemize}
        \item $v$ is not contained in $B$, and
        \item $\atp(v/B[i,h]) \neq \atp(s_<(v)/B[i,h])$ and $i$ is the minimal index in $I_\star$ with this property.
    \end{itemize}
    It is easy to see that cells of $C$ are pairwise disjoint and $C$ is a valid grid.
    Furthermore, we have the following property.

    \begin{claim}\label{clm:og-exception}
        For every $i \in I_\star$ and $v \in C[i,*]$, we have $\ex(v) \in \{ i-1,i \}$.
    \end{claim}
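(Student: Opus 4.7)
The plan is to prove separately the upper bound $\ex(v)\le i$ and the lower bound $\ex(v)\ge i-1$, by cases on whether $v\in B[i,*]$ or $v\in C[i,h+1]$, using the sampling condition (\Cref{def:sampled}), the minimality of $i$ built into the construction of $C[i,h+1]$, and the maximality of $\ex$ noted above.

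The case $v\in B[i,*]$ is direct. Since the sampling set $S$ produced by \cref{lem:sampling} is disjoint from $\gc A\vert_I$, both $s_<(v)$ and $s_>(v)$ lie outside $B$, so the equality literal $(=,v)$ belongs to $\atp(v/B[i,*])$ but to neither $\atp(s_<(v)/B[i,*])$ nor $\atp(s_>(v)/B[i,*])$. Hence $v$ disagrees with both $s_<(v)$ and $s_>(v)$ on column $i$, and the margin-$2$ sampling condition then forces $\ex(v)\le i$ from the $s_<$ agreement on $B[{<}\ex(v),*]$, and $\ex(v)\ge i-1$ from the $s_>$ agreement on $B[{\ge}\ex(v)+2,*]$.

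In the case $v\in C[i,h+1]$, one has $v\notin B$, and the upper bound $\ex(v)\le i$ follows in the same spirit from the cell-level disagreement $\atp(v/B[i,h])\ne\atp(s_<(v)/B[i,h])$ built into the construction of $C[i,h+1]$. For the lower bound, I would argue by contradiction: assuming $\ex(v)\le i-2$, I combine three ingredients. First, the $s_>$-sampling gives $\atp(v/B[i,h])=\atp(s_>(v)/B[i,h])$ and thus a disagreement between $s_<(v)$ and $s_>(v)$ in the cell $B[i,h]$. Second, the maximality of $\ex$ applied at index $i-1$ locates a disagreement of $v$ with $s_<(v)$ somewhere in $B[{\le}i-1,*]$, which by sampling on $B[{<}\ex(v),*]$ must lie in column $\ex(v)$ or $\ex(v)+1$. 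Third, the row-$h$ minimality defining $C[i,h+1]$ places that disagreement in some row $r<h$. The contradiction is then to be extracted by transporting this row-$r$ discrepancy back to row $h$ at some column $<i$, which would violate that minimality.

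The main obstacle is precisely this row-transfer step in Case~2: the selection criterion for $C[i,h+1]$ inspects only row $h$, whereas the sampling condition controls whole columns. I expect the bridge to be provided by the ordered-insulator properties \ref{itm:outside} and \ref{itm:consistent-rows}: external vertices are homogeneous to each color class intersected with $\int(B)$, and distinct rows of $B$ use disjoint color classes. Consequently a row-$r$ disagreement between $v$ and $s_<(v)$ is in fact a single-color disagreement, uniform across the non-boundary columns of row $r$. Combining this uniformity with the $s_>$-agreement on $B[{\ge}i,*]$ yields a matching row-$r$ disagreement between $s_<(v)$ and $s_>(v)$ at columns $\ge i$, which can then be pushed through the column structure of the insulator to force a row-$h$ disagreement at some column $<i$, delivering the contradiction that closes the lower bound.
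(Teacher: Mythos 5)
Your handling of the case $v\in C[i,{\le}h]=B[i,*]$ matches the paper's argument: the equality literal $(=,v)$ is in $\atp(v/B[i,*])$ but in neither $\atp(s_<(v)/B[i,*])$ nor $\atp(s_>(v)/B[i,*])$, so the margin-$2$ agreement on $B[{<}\ex(v),*]$ forces $\ex(v)\le i$ and the agreement on $B[{\ge}\ex(v)+2,*]$ forces $\ex(v)\ge i-1$. The upper bound $\ex(v)\le i$ in the case $v\in C[i,h+1]$ is also exactly the paper's argument.

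Your suspicion about the lower bound in the case $v\in C[i,h+1]$ is well founded. The paper's proof here asserts ``As we have chosen $i$ minimal, we have $\atp(v/B[{\leq} i-1,*]) = \atp(s_<(v)/B[{\leq} i-1,*])$,'' but the minimality built into the definition of $C[i,h+1]$ compares $v$ and $s_<(v)$ only on the cells $B[j,h]$ for $j<i$, whereas the maximality assumption on $\ex$ needs agreement over the full columns $B[j,*]$. Sampling gives full-column agreement only for $j<\ex(v)$; the columns between $\ex(v)$ and $i-1$ in rows $1,\ldots,h-1$ are not controlled by the stated ingredients, so the step is a genuine leap. Your proposed repair via \ref{itm:outside} and \ref{itm:consistent-rows} does not obviously close it: \ref{itm:outside} gives homogeneity of an external vertex to each color class intersected with $\int(B)$, but $\int(B)$ excludes the top row $B[*,h]$, so a row-$r$ disagreement with $r<h$ cannot be transferred to row $h$ via that property, and since a single row may carry several colors, a color-class disagreement at column $\ex(v)$ need not reappear in other columns of the same row. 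The ``push through the column structure'' step remains unsubstantiated.

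The saving grace is that this gap is harmless for the paper. Both invocations of \cref{clm:og-exception} downstream (in the proofs of \ref{itm:adj-left} and \ref{itm:adj-right}) use only the upper bound: they start from $u\in C[i,*]$ and a column $i'$ more than one step away, combine $\ex(u)\le i$ with $i+1<i'$ to get $\ex(u)+1<i'$, and then apply the $s_>$-side of the sampling. The lower bound $\ex(v)\ge i-1$ is never used. So the clean fix is simply to weaken the claim to $\ex(v)\le i$ in the $C[i,h+1]$ case, which is exactly what your (and the paper's) solid arguments establish, and everything else goes through unchanged.
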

    \begin{claimproof}
        If $v \in C[i,\leq h] = B[i,*]$ then, since $B[i,*]$ contains $v$ but neither $s_<(v)$ nor $s_>(v)$,
        the atomic type of $v$ differs from the atomic types of both $s_<(v)$ and $s_>(v)$ over $B[i,*]$. By the sampling property with margin $2$, we have that $i = \ex(v)$ or $i = \ex(v) + 1$.

        If $v \in C[i,h+1]$, then by construction the atomic type of $v$ differs from the atomic type of $s_<(v)$ over $B[i,*]$.
        This yields $\ex(v) \leq i$.
        As we have chosen $i$ minimal, we have
        $\atp(v/B[{\leq}  i-1,*]) = \atp(s_<(v)/B[{\leq} i-1,*])$.
        As we have chosen $\ex$ maximal, we have $\ex(v) \geq i-1$.
    \end{claimproof}

    The rest of the proof will be devoted to constructing $\KK_\star$, $F_\star$, and $R_\star$ such that the row-extension $\gc C := (C,\KK_\star,F_\star,R_\star)$ is an insulator of cost $\const(k,t)$ in \(G\).

    \paragraph*{Defining the Insulator.}
    We build $\KK_\star$ as a refinement of $\KK$ by encoding into the color of every vertex $v \in V(G)$ for every color $X \in \KK$ and sample vertex $s \in S$ the information
    \begin{enumerate}[leftmargin= 3em, label={(C.$\arabic*$)}]
        \item \label{itm:og-col-x} whether $v \in X$,
        \item \label{itm:og-col-c} whether $v \in C$,
        \item \label{itm:og-col-c-h} whether $v \in C[*,h]$,
        \item \label{itm:og-col-c-h1} whether $v \in C[*,h+1]$,
        \item \label{itm:og-col-s} whether $v \in N(s)$,
        \item \label{itm:og-col-intb} whether $v \in \mathrm{int}(B)$,
        \item \label{itm:og-col-left-sample} whether $s_<(v) = s$,
        \item \label{itm:og-col-right-sample}whether $s_>(v) = s$,
        \item \label{itm:og-col-outside} whether $v$ has a neighbor in $X \cap \mathrm{int}(B)$.
        % \item whether $v$ is adjacent to a vertex from $X \cap \mathrm{int}(A)$,
    \end{enumerate}
    %Let $k_\star$ be the number of colors needed to encode the above information.
    As $\KK$ has size $k$ and $S$ has size $\const(t)$, we have $|\KK_\star| = \const(k,t)$.
    By \ref{itm:og-col-left-sample}, \ref{itm:og-col-right-sample}, \ref{itm:og-col-s}, and \ref{itm:og-col-x}, we can define for every color $X \in \KK_\star$
    \begin{itemize}
        \item a left sample $s_<(X) \in S$, such that for all $v \in X$ we have $s_<(v) = s_<(X)$,
        \item a right sample $s_>(X) \in S$, such that for all $v \in X$ we have $s_>(v) = s_>(X)$,
        \item sample neighbors $S(X) \subseteq S$, such that for all $v \in X$ we have $N(v) \cap S = S(X)$, and
        \item a color $\KK(X) \in \KK$, such that for all $v \in X$ we have $\KK(v) = \KK(X)$.
    \end{itemize}

    \medskip\noindent
    We define \(F_\star \subseteq \KK_\star^2\) via the following four cases.
    Let $X,Y \in \KK_\star$.
    \begin{enumerate}[leftmargin= 3em, label={(F.$\arabic*$)}]
        \item \label{itm:og-case-topper-top}
        If $X \subseteq C[*,h + 1]$ and $Y \subseteq C[*,h]$, then
        $
            (X,Y) \in F_\star 
            \Leftrightarrow 
            s_<(X) \in S(Y) 
            .
        $

        \item \label{itm:og-case-top-topper}
        If $X \subseteq C[*,h]$ and $Y \subseteq C[*,h+1]$, then
        $
            (X,Y) \in F_\star 
            \Leftrightarrow 
            s_<(Y) \in S(X) 
            .
        $

        \item \label{itm:og-case-topper-bot}
        If $X \subseteq C[*,h + 1]$ and $Y \subseteq C[*,{<}h]$, or vice-versa, then
        \[
            (X,Y) \in F_\star 
            \Leftrightarrow 
            \text{there is an edge between $X$ and $Y$ in $G$}
            .
        \]
        \item \label{itm:og-case-else}
        Otherwise, $(X,Y) \in F_\star \Leftrightarrow \big(\KK(X), \KK(Y)\big) \in F$.
    \end{enumerate}
    In order for $F_\star$ to define a valid flip, $F_\star$ has to be symmetric.
    This is satisfied by our definition. The cases \ref{itm:og-case-topper-top} and \ref{itm:og-case-top-topper} are dual and for \ref{itm:og-case-topper-bot} and \ref{itm:og-case-else} the symmetry follows from the symmetry of their conditions and the symmetry of the edge relation and $F$.

    \medskip\noindent
    We define \(R_\star \subseteq \KK_\star^2\) via the following three cases.

    \begin{enumerate}[leftmargin= 3em, label={(R.$\arabic*$)}]
        \item \label{itm:og-r-case-topper-bot}
        If $X \subseteq C[*,h + 1]$ and $Y \subseteq C[*,{\leq} h]$, or
        \item \label{itm:og-r-case-top-top}
        if $X \subseteq C[*,h]$ and $Y \subseteq C[*,h]$, then
        \[
            (X,Y) \in R_\star 
            \Leftrightarrow 
            s_>(X) \in S(Y)
            .
        \]

        \item \label{itm:og-r-case-else}
        Otherwise, $(X,Y) \in R_\star \Leftrightarrow \big(\KK(X), \KK(Y)\big) \in R$.
    \end{enumerate}

    \paragraph*{Proving Properties of the Insulator.}
    Let $G' := G \oplus_\KK F$ and $G_\star := G \oplus_{\KK_\star} F_\star$.
    We prove the various insulator properties.

    \begin{claim}[\ref{itm:consistent-rows}]
        Every two vertices in different rows of $C$ have different colors in~$\KK_\star$.
    \end{claim}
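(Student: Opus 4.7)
The plan is to proceed by a short case analysis, exploiting the fact that $C$ is a row-extension of $B$ together with the specific bits encoded into the refinement $\KK_\star$. Concretely, rows $1,\ldots,h$ of $C$ coincide with the rows of $B = A\vert_I$, while row $h+1$ is entirely made of vertices lying outside $B$. These two regimes are separated by different properties of $\KK_\star$.

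First, I would take $u \in C[*,r_1]$ and $v \in C[*,r_2]$ with $r_1 \neq r_2$, and handle the subcase where both $r_1,r_2 \leq h$. Here $u \in B[*,r_1]$ and $v \in B[*,r_2]$ are in distinct rows of $B$. By \cref{lem:subinsulator}, $\gc B = \gc A\vert_I$ is itself an insulator and therefore still satisfies \ref{itm:consistent-rows}, which it inherits from $\gc A$. Hence $u$ and $v$ have different colors in $\KK$. Since property \ref{itm:og-col-x} of the refinement ensures that $\KK_\star$ refines $\KK$, they also have different colors in $\KK_\star$.

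Next, I would handle the remaining subcase where one index, say $r_1$, equals $h+1$, while $r_2 \leq h$. Then $u \in C[*,h+1]$, whereas $v \in C[*,r_2] = B[*,r_2]$, and since the cells of $C$ were verified during the construction to be pairwise disjoint, we have $v \notin C[*,h+1]$. Now the tailored bit \ref{itm:og-col-c-h1}, which records membership in $C[*,h+1]$ inside every $\KK_\star$-color, immediately separates the colors of $u$ and $v$.

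I do not anticipate any real obstacle here: both subcases reduce directly to facts already guaranteed, either \ref{itm:consistent-rows} transported from $\gc B$ via \cref{lem:subinsulator}, or the bit \ref{itm:og-col-c-h1} that was baked into the definition of $\KK_\star$ precisely to distinguish the new top row from everything below.
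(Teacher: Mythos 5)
Your proof is correct and follows essentially the same two-case argument as the paper: for rows $\le h$ you appeal to property \ref{itm:consistent-rows} of the subinsulator $\gc B$ plus the fact that $\KK_\star$ refines $\KK$, and when exactly one vertex lies in row $h+1$ you separate via the bit \ref{itm:og-col-c-h1}. You are only marginally more explicit in citing \cref{lem:subinsulator} and \ref{itm:og-col-x}, but the reasoning is identical.
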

    \begin{claimproof}
        If none of the two vertices is in $C[*,h+1]$ then the property holds as $\KK_\star$ is a refinement of $\KK$, which satisfied this property in $B$.
        Otherwise, exactly one of them is contained in $C[*,h+1]$ and we can distinguish them using \ref{itm:og-col-c-h1}.
    \end{claimproof}

    \begin{claim}[\ref{itm:rootedness}]
        Every vertex $u \in C[i,r]$ with $r>1$ has a neighbor in the cell $C[i,r-1]$ in~$G_\star$.
    \end{claim}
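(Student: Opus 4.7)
The plan is to split on whether $r \le h$ or $r = h+1$, since the row $C[*,h+1]$ is new and must be handled separately from the rows inherited from $B$.

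For $r \le h$, observe that $C[i,r] = B[i,r]$ and $C[i,r-1] = B[i,r-1]$ by construction. Since $\gc B$ is an ordered insulator (inherited as $\gc A\vert_I$ via \cref{lem:subinsulator}), property \ref{itm:rootedness} for $\gc B$ yields a neighbor $v \in B[i,r-1]$ of $u$ in $G' = G \oplus_\KK F$. It remains to transfer this edge to $G_\star = G \oplus_{\KK_\star} F_\star$. For this, note that $X := \KK_\star(u) \subseteq C[*,r] \subseteq C[*,{\le}h]$ and $Y := \KK_\star(v) \subseteq C[*,r-1] \subseteq C[*,{\le}h]$ by \ref{itm:og-col-c-h} and \ref{itm:og-col-c-h1}; hence none of the cases \ref{itm:og-case-topper-top}, \ref{itm:og-case-top-topper}, \ref{itm:og-case-topper-bot} applies and we are in case \ref{itm:og-case-else}, where $(X,Y) \in F_\star \Leftrightarrow (\KK(X),\KK(Y)) \in F$. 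Thus $u$ and $v$ have the same adjacency in $G'$ and $G_\star$, so $v$ is still a neighbor of $u$ in $G_\star$, as required.

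For $r = h+1$, by definition of $C[i,h+1]$ we have $u \notin B$ and $\atp(u/B[i,h]) \neq \atp(s_<(u)/B[i,h])$. Since neither $u$ nor the sample vertex $s_<(u) \in S$ lies in $B[i,h]$ (recall $S \cap \gc B = \varnothing$), the equality predicate contributes nothing to the disagreement, so the disagreement is witnessed by some vertex $v \in B[i,h] = C[i,h]$ lying in the symmetric difference of $N(u)$ and $N(s_<(u))$. I claim $v$ is adjacent to $u$ in $G_\star$; this is the only slightly delicate step.

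To verify the claim, set $X := \KK_\star(u)$ and $Y := \KK_\star(v)$; by \ref{itm:og-col-c-h1} and \ref{itm:og-col-c-h} we have $X \subseteq C[*,h+1]$ and $Y \subseteq C[*,h]$, so case \ref{itm:og-case-topper-top} of the definition of $F_\star$ applies. By the defining properties of the coloring (\ref{itm:og-col-left-sample}, \ref{itm:og-col-s}), the adjacency between $u$ and $v$ is flipped when passing from $G$ to $G_\star$ if and only if $s_<(u) \in N(v)$. If $v \in N(s_<(u)) \setminus N(u)$, then $u$ and $v$ are non-adjacent in $G$ but the adjacency is flipped, so they are adjacent in $G_\star$. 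If $v \in N(u) \setminus N(s_<(u))$, then $u$ and $v$ are adjacent in $G$ and the adjacency is not flipped, so they remain adjacent in $G_\star$. In both cases, $v \in C[i,h] = C[i,r-1]$ is the desired neighbor of $u$ in $G_\star$, completing the proof. The only step that requires any work is the case analysis in this last paragraph, and the coloring \ref{itm:og-col-left-sample}--\ref{itm:og-col-s} was designed precisely so that the flip definition \ref{itm:og-case-topper-top} mirrors the left-sample relation used to define $C[*,h+1]$, making the two cases of the symmetric difference collapse to the same conclusion.
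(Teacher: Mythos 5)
Your proof is correct and follows essentially the same route as the paper: split on $r \le h$ versus $r = h+1$, use \ref{itm:rootedness} of $\gc B$ plus case \ref{itm:og-case-else} for the inherited rows, and for the top row use the symmetric-difference witness $v \in C[i,h]$ together with case \ref{itm:og-case-topper-top} of $F_\star$ to verify adjacency in $G_\star$. The only cosmetic difference is that the paper phrases the final step as a chain of equivalences terminating in "$u$ is non-adjacent to $v$ in $G$ iff the adjacency got flipped," whereas you present the same content as an explicit two-case analysis of which side of the symmetric difference $v$ lies on; these are interchangeable.
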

    
    \begin{claimproof}
        If $r \leq h$, then by \ref{itm:rootedness} of $\gc B$, there is a vertex $v \in C[i,r-1]$ that is adjacent to $u$ in $G'$.
        By \ref{itm:og-case-else}, $u$ and $v$ are also adjacent in $G_\star$.

        It remains to check the case where $u \in C[i,h+1]$.
        By construction there exists a vertex $v \in C[i,h]$ in the symmetric difference of $N(u)$ and $N(s_<(u))$.
        By \ref{itm:og-col-c-h1} and \ref{itm:og-col-c-h}, we have $\KK_\star(u) \subseteq C[*,h+1]$ and $\KK_\star(v) \subseteq C[*,h]$. 
        Case \ref{itm:og-case-topper-top} applies, and the following are equivalent.
        \begin{enumerate}
            \item The adjacency between $u$ and $v$ got flipped from $G$ to $G_\star$.
            \item $s_<(\KK(u)) \in S(\KK(v))$.
            \hfill{(by \ref{itm:og-case-topper-top})}
            \item $s_<(u) \in N(v) \cap S$.
            \hfill{(by definition)}
            \item $s_<(u)$ is adjacent to $v$ in $G$.
            \hfill{(by definition)}
            \item $u$ is non-adjacent to $v$ in $G$.
            \hfill ($v$ is in the sym. diff. of $N(s_<(u))$ and $N(u)$)
        \end{enumerate}
        The equivalence between the first and the last item establishes that $u$ and $v$ are adjacent in~$G_\star$.
    \end{claimproof}

    \begin{claim}[\ref{itm:outside}]
        For every $v \notin C$ and $X \in \KK_\star$, $v$ is homogeneous to $X \cap \mathrm{int}(C)$ in $G$.
    \end{claim}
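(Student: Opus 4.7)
The proof will hinge on two observations made before the case analysis, plus a natural split over the row where $X \cap \mathrm{int}(C)$ lives.

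First, I would extract from the hypothesis $v \notin C$ two things: (a) $v \notin B$ (since $B \subseteq C$), so that the old property \ref{itm:outside} of $\gc B$ is available for $v$ and every color of $\KK$; and (b) $v$ was \emph{not} placed into the new top row $C[*,h+1]$, so by the defining rule of that row no disagreement $\atp(v/B[i,h]) \neq \atp(s_<(v)/B[i,h])$ can occur for any $i \in I_\star$. This yields the key identity $\atp(v/B[*,h]) = \atp(s_<(v)/B[*,h])$, which is exactly what lets me transfer questions about adjacencies into the top row of $B$ back onto the sample vertex.

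Second, by the property \ref{itm:consistent-rows} just established for $\gc C$, any nonempty $X \cap \mathrm{int}(C)$ is contained in a single row $r \in [h]$ of $C$ (rows $h+1$ and the extreme columns are excluded by the definition of $\mathrm{int}(C)$); the empty case is trivial. I would then split on $r$. If $r \leq h-1$, then $X \cap \mathrm{int}(C) \subseteq B[\{2,\dots,n-1\},\{1,\dots,h-1\}] = \mathrm{int}(B)$, so since $\KK_\star$ refines $\KK$ the desired homogeneity follows directly from \ref{itm:outside} applied to $\gc B$ with the coarser color $\KK(X) \in \KK$. If $r = h$, then the first observation gives $v \sim_G u \Leftrightarrow s_<(v) \sim_G u$ for every $u \in X \cap \mathrm{int}(C) \subseteq B[*,h]$; I would then use that $s_<(v) = s_<(X)$ is determined by $X$ (via \ref{itm:og-col-left-sample}), and that the adjacency of any $u \in X$ to each fixed $s \in S$ is determined by $X$ (via \ref{itm:og-col-s}), so the right-hand side is constant as $u$ ranges over $X \cap \mathrm{int}(C)$, yielding homogeneity.

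The step I expect to require the most care is not any single calculation, but keeping the two ``insulations from outside'' correctly coordinated: $\mathrm{int}(C)$ strictly enlarges $\mathrm{int}(B)$ by the top row $B[*,h]$, and the old \ref{itm:outside} of $\gc B$ says nothing about that extra row. The new coloring ingredients \ref{itm:og-col-left-sample}, \ref{itm:og-col-s} together with the defining condition of $C[*,h+1]$ are designed precisely to plug this gap, and the argument reduces to checking that they do.
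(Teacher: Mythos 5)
Your proposal is correct and follows the same route as the paper: deduce $v \notin B$, deduce $\atp(v/B[*,h]) = \atp(s_<(v)/B[*,h])$ from the defining rule of $C[*,h+1]$, split on whether $X \cap \int(C)$ lands in $\int(B)$ (use \ref{itm:outside} of $\gc B$) or in the top row $B[*,h]$ (transfer to $s_<(v)$ and invoke \ref{itm:og-col-s}). One small slip: the parenthetical ``$s_<(v) = s_<(X)$ is determined by $X$ (via \ref{itm:og-col-left-sample})'' is off, since $X$ is the color class of the $u$'s, not of $v$, so $s_<(X)$ has nothing to do with $s_<(v)$; but you don't need it --- once $v$ is fixed, $s_<(v)$ is a fixed element of $S$, and \ref{itm:og-col-s} alone gives that its adjacency to $u$ is constant over $u \in X$, which is exactly how the paper closes the argument.
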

    
    \begin{claimproof}
        Let $X_C := X \cap \int(C)$.
        As $C$ is an extension of $B$, we also have $v \notin B$. 
        By construction of $C$ and \ref{itm:og-col-intb} and \ref{itm:og-col-c-h}, we have either $X_C \subseteq \int(B)$ or $X_C \subseteq C[*,h] = B[*,h]$.
        In the first case we conclude by \ref{itm:outside} of $\gc B$.
        In the second case, since $v$ did not get sorted into $C$, we know by construction
        \[
            \atp(v/C[*,h]) = \atp(s_<(v)/C[*,h]).
        \]
        By \ref{itm:og-col-s}, $s_<(v)$ is homogeneous to $X$, and so is $v$. 
    \end{claimproof}
    
    In the following claims we prove \ref{itm:adjacency}.
    For this purpose let $u \in C[i,r]$ for some $i \in I_\star$ and $r < h + 1$, and let $v\in C$.
    Up to renaming, we additionally assume $I_\star = (1,\ldots,n)$.

    \begin{claim}[\ref{itm:adj-different-rows}]
        If $u \in \int(C)$ and $u$ and $v$ are in rows that are not close, then they are non-adjacent in $G_\star$.
    \end{claim}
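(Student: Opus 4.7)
\textbf{Proof plan for \ref{itm:adj-different-rows}.} The plan is to split into cases according to which of the four cases \ref{itm:og-case-topper-top}--\ref{itm:og-case-else} in the definition of $F_\star$ governs the adjacency between $u$ and $v$. Write $X := \KK_\star(u)$, $Y := \KK_\star(v)$, and recall that $r \leq h$, that $u \in \int(C)$ forces $u$'s column index $i$ to be strictly between the first and last indices of $I_\star$, and that the row indices satisfy $|r-t| > 1$. The three regimes to handle are: (A) $r < h$ and $t \leq h$; (B) $r = h$ and $t \leq h-2$; (C) $r < h$ and $t = h+1$. In regimes (A) and (B) both colors lie in $C[*,\leq h]$, so by \ref{itm:og-col-c-h} and \ref{itm:og-col-c-h1} none of \ref{itm:og-case-topper-top}, \ref{itm:og-case-top-topper}, \ref{itm:og-case-topper-bot} applies, leaving only the ``else'' rule \ref{itm:og-case-else}; in particular the adjacency of $u$ and $v$ in $G_\star$ agrees with their adjacency in $G'$. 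Regime (C) is governed by the vice-versa form of \ref{itm:og-case-topper-bot}.

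In regime (A), the fact that $r<h$ together with $u \in \int(C)$ (giving $i \notin \{1,n\}$) yields $u \in \int(B)$, so property \ref{itm:adj-different-rows} of the insulator $\gc B$ applies directly and shows $u,v$ are non-adjacent in $G'$. In regime (B), $u$ is in the top row of $B$ and therefore $u \notin \int(B)$; here I would climb one level up and use $\gc A$ instead. Property \ref{prop:drop-borders} ensures that $I$ avoids the two extreme indices at each end of $J$, so for the ordered case every column of $B$ lies strictly inside the column range of $A$ (and trivially so in the orderless case). Since $t \leq h-2 < h$, the vertex $v$ is therefore not in the top row and not in the first or last column of $A$, i.e.\ $v \in \int(A)$. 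Then \ref{itm:adj-different-rows} of $\gc A$, applied with $v$ playing the role of ``$u$'' (legitimate by symmetry of the conclusion), gives that $u,v$ are non-adjacent in $G'$.

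The most delicate step is regime (C), where $v$ belongs to the newly-added row $C[*,h+1]$ and the flip $F_\star$ is nontrivial. Here $u \in \int(B)$ as in regime (A), so $X \subseteq \KK(X) \cap \int(B)$ by \ref{itm:og-col-intb} and refinement. Since $v \notin B$, property \ref{itm:outside} of $\gc B$ forces $v$ to be homogeneous to $\KK(X) \cap \int(B)$ in $G$. Moreover, \ref{itm:og-col-outside} guarantees that every $y \in Y$ has the same answer as $v$ to ``does it have a neighbor in $\KK(X) \cap \int(B)$?'', and each such $y$ is again homogeneous to this set by \ref{itm:outside}. The plan is then a dichotomy: if every $y \in Y$ is adjacent to every vertex of $\KK(X) \cap \int(B)$, in particular $uv \in E(G)$, and there is an edge between $X$ and $Y$, so the flip \ref{itm:og-case-topper-bot} fires and $uv \notin E(G_\star)$; if no $y \in Y$ has any neighbor in $\KK(X) \cap \int(B)$, then $uv \notin E(G)$ and there is no edge between $X$ and $Y$, so the flip does not fire and $uv \notin E(G_\star)$. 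Either way we obtain the required non-adjacency.

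The main obstacle is the asymmetry in regime (B): $u$ sits in the top row of $B$ and cannot be used as the ``interior'' witness for $\gc B$, so the argument must migrate one level down to $\gc A$ and lean on \ref{prop:drop-borders} to guarantee that the coarser subgrid lives in interior columns of $\gc A$. This is the only place where Property \ref{prop:drop-borders} is crucial to the proof, and the rest is a disciplined case check driven by \ref{itm:og-col-c-h}, \ref{itm:og-col-c-h1}, \ref{itm:og-col-intb}, and \ref{itm:og-col-outside}.
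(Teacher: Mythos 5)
Correct, and essentially the same as the paper's argument. You split cases by $u$'s row while the paper splits by $v$'s row, so when $r<h$ and $v\in C[*,{<}h]$ you invoke $\gc B$ (via $u\in\int(B)$) where the paper invokes $\gc A$ (via $v\in\int(A)$ and \ref{prop:drop-borders}), but the underlying mechanism --- case \ref{itm:og-case-else} reducing to \ref{itm:adj-different-rows} of $\gc A$ or $\gc B$, and case \ref{itm:og-case-topper-bot} paired with \ref{itm:outside} and \ref{itm:og-col-outside} to establish the homogeneous $X$--$Y$ connection --- is identical.
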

    \begin{claimproof}
        Assume first $v \in C[*, h+1]$. Then $u \in C[*,{<}h]$, as $u$ and $v$ are in rows that are not close.
        By construction of $C$ this yields $u \in \int(B)$
        and $v \notin B$.
        Using the properties of our coloring, we conclude the following.
        Let $X := \KK_\star(u)$ and $Y := \KK_\star(v)$.
        \begin{itemize}
            \item $X \subseteq \KK(X) \cap \int(B)$.
            \hfill (by \ref{itm:og-col-x} and \ref{itm:og-col-intb})
            \item No vertex of $Y$ is contained in $B$. 
            \hfill (by \ref{itm:og-col-c})
            \item Every or no vertex in $Y$ has a neighbor in $\KK(X) \cap \int(B)$.
            \hfill (by \ref{itm:og-col-outside})
        \end{itemize}
        Combining the above with \ref{itm:outside} of $\gc B$, we know that the connection between $X$ and $Y$ is homogeneous in $G$.
        Additionally, $X\subseteq C[*,{<}h]$ and $Y \subseteq C[*,h+1]$ so 
        case \ref{itm:og-case-topper-bot} applies and the following are equivalent.

        \begin{enumerate}
            \item The adjacency between $u$ and $v$ got flipped from $G$ to $G_\star$.
            \item There is an edge between $X$ and $Y$ in $G$. \hfill (by \ref{itm:og-case-topper-bot})
            \item There is an edge between $u$ and $v$ in $G$. \hfill (as $X \ni u$ and $Y\ni v$ are homogeneous)
        \end{enumerate}
        The equivalence between the first and the last item establishes that $u$ and $v$ are non-adjacent in~$G_\star$.

        If $v \notin C[*, h+1]$, then since also $u \notin C[*, h+1]$, case \ref{itm:og-case-else} applies and $u$ and $v$ have the same adjacency in $G_\star$ as in $G'$.
        Note that in this case, by construction and \cref{obs:mon-and-cov}, $u$ and $v$ 
        are both contained in both grids $A$ and $B$ and in both grids, they are in rows that are not close.
        If $v \in C[*, h]$, then again $u\in \int(B)$ and $u$ and $v$ non-adjacent in $G'$ by \ref{itm:adj-different-rows} of $\gc B$.
        Otherwise, $v \in C[*,{<}h]$
        and by \cref{prop:drop-borders} and \cref{obs:mon-and-cov} we have $v\in \int(A)$.
        Now $u$ and $v$ non-adjacent in $G'$ by \ref{itm:adj-different-rows} of $\gc A$.

    \end{claimproof}

    \begin{claim}[\ref{itm:adj-bot-left}]
        If $v \in C[{<}i,r-1] \cup C[{>}i,r+1]$, then $u$ and $v$ are non-adjacent in $G_\star$.
    \end{claim}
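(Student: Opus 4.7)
The plan is to split the argument along the two cases of the hypothesis and, within them, on whether $v$ lies in the newly added top row $C[*,h+1]$. Set $X := \KK_\star(u)$ and $Y := \KK_\star(v)$; recall that the colors \ref{itm:og-col-c-h} and \ref{itm:og-col-c-h1} guarantee that $X$ and $Y$ are each wholly contained in a single row (with the row $C[*,h+1]$ singled out), so exactly one of the four flip cases \ref{itm:og-case-topper-top}--\ref{itm:og-case-else} applies.

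First I would dispatch the situations in which neither $u$ nor $v$ lies in $C[*,h+1]$. This covers the entire case $v \in C[{<}i,r-1]$ (since $r \le h$ forces $r-1 \le h-1$), and also $v \in C[{>}i,r+1]$ when $r+1 \le h$. In both subcases $u \in B[i,r]$ and $v \in B$, so flip case \ref{itm:og-case-else} applies and the $G_\star$-adjacency of $u$ and $v$ agrees with their $G'$-adjacency. Non-adjacency is then inherited from property \ref{itm:adj-bot-left} of the insulator $\gc B$.

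The only remaining case is $v \in C[{>}i, r+1]$ with $r = h$; write $v \in C[j,h+1]$ for some $j > i$. Then $X \subseteq C[*,h]$ and $Y \subseteq C[*,h+1]$, so flip case \ref{itm:og-case-top-topper} applies and gives $(X,Y) \in F_\star \Leftrightarrow s_<(v) \in N(u)$. Hence $u$ and $v$ are adjacent in $G_\star$ iff the predicates \textquotedblleft$u$ adjacent to $v$ in $G$\textquotedblright{} and \textquotedblleft$u$ adjacent to $s_<(v)$ in $G$\textquotedblright{} disagree. The key observation is that by the very definition of the row $C[*,h+1]$, the index $j$ is the \emph{minimal} index of $I_\star$ with $\atp(v/B[j,h]) \neq \atp(s_<(v)/B[j,h])$. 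Since $i < j$, this yields $\atp(v/B[i,h]) = \atp(s_<(v)/B[i,h])$, and in particular $u \in N(v) \Leftrightarrow u \in N(s_<(v))$ in $G$. The two predicates therefore agree, the flip has no effect, and $u,v$ are non-adjacent in $G_\star$.

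I do not expect a substantial obstacle; the argument parallels the case analyses in \cref{clm:bg-preserve-h-balls}--\cref{clm:bg-disjoint}. The only subtle point is that one must exploit the \emph{minimality} of $j$ from the construction of $C[*,h+1]$ rather than merely the margin-$2$ sampling property: if $\ex(v) = j-1$ could equal $i$, the sampling statement $\atp(v/B[{<}\ex(v),*]) = \atp(s_<(v)/B[{<}\ex(v),*])$ would not cover $B[i,h]$, whereas the definition of $C[*,h+1]$ does.
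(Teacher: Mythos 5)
Your proof is correct and follows essentially the same route as the paper: flip case \ref{itm:og-case-else} plus inheritance from property \ref{itm:adj-bot-left} of $\gc B$ whenever $v \notin C[*,h+1]$, and flip case \ref{itm:og-case-top-topper} together with the minimality of $j$ in the construction of $C[*,h+1]$ in the single remaining case $r=h$, $v\in C[{>}i,h+1]$ (and your closing remark that margin-$2$ sampling would not suffice here is a correct and worthwhile observation). The one detail you gloss over is that when $r=h$ and $v\in C[{<}i,h-1]$, property \ref{itm:adj-bot-left} of $\gc B$ cannot be invoked with $u$ as the distinguished vertex since $u$ sits in row $h$ and the property is stated for rows $<h$; one must swap the roles of $u$ and $v$, which the paper notes explicitly.
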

    \begin{claimproof}
        If at least one of $u$ and $v$ is contained in $C[*,{<}h] = B[*,{<}h]$, then case \ref{itm:og-case-else} applies and $u$ and $v$ have the same adjacency in $G_\star$ as in $G'$.
        Possibly exchanging the roles of $u$ and $v$, we can apply \ref{itm:adj-bot-left} of $\gc B$ to deduce that $u$ and $v$ are non-adjacent in $G'$.

        In the remaining case we have $u\in C[i,h] = B[i,h]$ and $v \in C[{>}i,h+1]$ and
        by construction of~$C$:
        \[
            \atp(v/B[i,h]) = \atp(s_<(v)/B[i,h]).
        \]
        Also, $\KK_\star(u) \subseteq C[*,h]$ and $\KK_\star(v) \subseteq C[*,h+1]$ by \ref{itm:og-col-c-h} and \ref{itm:og-col-c-h1}.
        Hence, case \ref{itm:og-case-top-topper} applies, and the following are equivalent.

        \begin{enumerate}
            \item The adjacency between $u$ and $v$ got flipped from $G$ to $G_\star$.
            \item $s_<(\KK_\star(v)) \in S(\KK_\star(u))$. 
            \hfill (by \ref{itm:og-case-top-topper})
            \item $s_<(v) \in N(u) \cap S$. 
            \hfill (by definition)
            \item $u$ and $s_<(v)$ are adjacent in $G$.
            \hfill (by definition)
            \item $u$ and $v$ are adjacent in $G$.
            \hfill (by $\atp(v/B[i,h]) = \atp(s_<(v)/B[i,h])$)
        \end{enumerate}
        The equivalence of the first and the last item establishes that $u$ and $v$ are non-adjacent in~$G_\star$.

    \end{claimproof}

    \begin{claim}[\ref{itm:adj-left}]
        If $v \in C[{>}i+1, \{r,r-1\}]$, then $G \models E(u,v) \Leftrightarrow (\KK_\star(u),\KK_\star(v))\in R_\star$.
    \end{claim}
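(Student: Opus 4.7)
The proof will split naturally along the value of $r$ and the row of $v$, with the bulk of the work concentrated in the case where both endpoints sit in the top row $h$ of $B$. The underlying principle is: when we stay strictly below row $h$, the claim inherits from $\gc B$ via the ``otherwise'' case of $R_\star$; when we interact with row $h$, the sampling guarantee of $S$ takes over.

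\emph{Case 1: $r < h$.} Then $u \in C[i,r] = B[i,r]$ and $v \in C[{>}i+1,\{r,r-1\}] = B[{>}i+1,\{r,r-1\}]$, both in rows strictly below $h$. By \ref{itm:consistent-rows} (already established for $\gc C$), $\KK_\star(u), \KK_\star(v) \subseteq C[*,{<}h]$, so neither \ref{itm:og-r-case-topper-bot} nor \ref{itm:og-r-case-top-top} fires, and case \ref{itm:og-r-case-else} gives $(\KK_\star(u),\KK_\star(v)) \in R_\star \Leftrightarrow (\KK(u),\KK(v)) \in R$. Now \cref{lem:subinsulator} ensures $\gc B$ satisfies \ref{itm:adj-left}, and applying it (the hypothesis $r < h$ is exactly what we need for $\gc B$'s version) yields $G \models E(u,v) \Leftrightarrow (\KK(u),\KK(v)) \in R$. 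Combine.

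\emph{Case 2: $r = h$ and $v \in C[{>}i+1,h-1]$.} Now $u$ sits in the top row of $B$, so \ref{itm:adj-left} of $\gc B$ does not apply directly. Instead, swap roles: set $j$ so that $v \in B[j,h-1]$ with $j \geq i+2$. Then $u \in B[i,h]$ with $i \leq j-2 < j-1$, so $u \in B[{<}j-1,\{h-1,h\}]$. Applying \ref{itm:adj-right} of $\gc B$ with base vertex $v$ yields $G \models E(v,u) \Leftrightarrow (\KK(u),\KK(v)) \in R$, which by symmetry of $E$ is what we want; case \ref{itm:og-r-case-else} of $R_\star$ still applies since $\KK_\star(v) \subseteq C[*,h-1]$.

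\emph{Case 3 (the main obstacle): $r = h$ and $v \in C[{>}i+1,h]$.} Both endpoints live in the top row of $B$, so case \ref{itm:og-r-case-top-top} of $R_\star$ fires, giving $(\KK_\star(u),\KK_\star(v)) \in R_\star \Leftrightarrow s_>(\KK_\star(u)) \in S(\KK_\star(v)) \Leftrightarrow s_>(u) \in N(v)$. To match this with $G \models E(u,v)$, invoke the sampling property: since $u \in C[i,h] \subseteq C[i,*]$, \cref{clm:og-exception} gives $\mathrm{ex}(u) \in \{i-1,i\}$, hence $\mathrm{ex}(u)+2 \leq i+2$. Writing $v \in B[j,h]$ with $j \geq i+2 \geq \mathrm{ex}(u)+2$, we have $v \in B[{\geq} \mathrm{ex}(u)+2,*]$, so by the definition of $(2,\mathrm{ex}(u),s_<(u),s_>(u))$-sampling,
\[
\atp(u / \{v\}) = \atp(s_>(u) / \{v\}).
\]
In particular $G \models E(u,v) \Leftrightarrow G \models E(s_>(u),v) \Leftrightarrow s_>(u) \in N(v)$, closing this case and thus the claim. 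The symmetric property \ref{itm:adj-right} of $\gc C$ will then follow by an entirely analogous argument using $s_<$ in place of $s_>$ and the left-sampling guarantee.
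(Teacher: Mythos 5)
Your proof is correct and takes essentially the same approach as the paper's: split on whether either vertex lies strictly below row $h$ (reducing to \ref{itm:adj-left} or \ref{itm:adj-right} of $\gc B$ via case \ref{itm:og-r-case-else}), and otherwise invoke \ref{itm:og-r-case-top-top} and the margin-$2$ sampling together with \cref{clm:og-exception}. The only substantive remark is about citations: in Case 1 you invoke \ref{itm:consistent-rows} to conclude $\KK_\star(u),\KK_\star(v)\subseteq C[*,{<}h]$, but that property alone is about distinguishing rows within $C$ and says nothing about vertices outside $C$; what actually guarantees that color classes are confined to the right strata are the coloring items \ref{itm:og-col-c}, \ref{itm:og-col-c-h}, \ref{itm:og-col-c-h1}. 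Also, your closing remark that \ref{itm:adj-right} follows by an ``entirely analogous'' argument is a bit optimistic: unlike here, in \ref{itm:adj-right} the vertex $v$ can sit in row $h+1$ (since there $v\in C[{<}i-1,\{r,r+1\}]$ with $r=h$ possible), which is a case with no counterpart in $\gc B$ and requires \ref{itm:og-r-case-topper-bot} rather than a straightforward mirror of the argument.
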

    
    \begin{claimproof}
        If either $u \in C[*,<h]$ or $v \in C[*,<h]$, then case \ref{itm:og-r-case-else} applies, 
        we have 
        \[
            (\KK_\star(u),\KK_\star(v)) \in R_\star 
            \Leftrightarrow
            (\KK(u),\KK(v)) \in R,
        \]
        and it remains to establish 
        \[
            G \models E(u,v)
            \Leftrightarrow
            (\KK(u),\KK(v)) \in R.
        \]
        If $u \in C[*,{<}h]$, we argue using \ref{itm:adj-left} of $\gc B$.
        Otherwise, we have $v \in C[*,{<}h]$ and argue using \ref{itm:adj-right} of $\gc B$, where we exchange the roles of $u$ and $v$.

        We can now assume $u,v \in C[*,h]$.
        By \ref{itm:og-col-c-h}, also $\KK_\star(u), \KK_\star(v) \subseteq C[*,h]$, and case \ref{itm:og-r-case-top-top} applies.
        By assumption, we have $v \in C[i',h]$ for $i + 1 <i'$.
        By \cref{clm:og-exception}, we have that $\ex(u) + 1 < i'$, and the following are equivalent.
        \begin{enumerate}
            \item $u$ and $v$ are adjacent in $G$.
            \item $s_>(u)$ and $v$ are adjacent in $G$.
            \hfill (by the sampling property)
            \item $s_>(u) \in N(v) \cap S$.
            \hfill (by definition)
            \item $s_>(\KK_\star(u)) \in S(\KK_\star(v))$.
            \hfill (by definition)
            \item $(\KK_\star(u),\KK_\star(v)) \in R_\star$.
            \hfill (by \ref{itm:og-r-case-top-top})
        \end{enumerate}
    \end{claimproof}

    \begin{claim}[\ref{itm:adj-right}]
        If $v \in C[{<}i-1, \{r,r+1\}]$, then $G \models E(u,v) \Leftrightarrow (\KK_\star(v),\KK_\star(u))\in R_\star$.
    \end{claim}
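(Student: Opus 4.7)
The plan is to mirror the analysis of the previous claim \ref{itm:adj-left}, with the roles of $u$ and $v$ swapped and the right-sample $s_>(v)$ of $v$ (rather than of $u$) playing the leading role. The only structural novelty is that $v$ may now lie one row \emph{above} $u$ (when $r=h$, we get $v\in C[{<}i-1,h+1]$), which was impossible in the previous claim; this case requires invoking the topper-bot flip rule \ref{itm:og-r-case-topper-bot} in place of \ref{itm:og-r-case-top-top}. I would split the proof into three cases.

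In the first case, $u\in C[*,{<}h]$ or $v\in C[*,{<}h]$. A short row-count shows that in both subcases both $u$ and $v$ belong to $B$ (either $r<h$ forces $v\in B$, or $v$'s row being ${<}h$ forces $r<h$ and so $u\in B$). Then $X:=\KK_\star(v)$ is not contained in $C[*,h+1]$ and $Y:=\KK_\star(u)$ is not contained in $C[*,h]$ simultaneously with $X\subseteq C[*,h]$, so case \ref{itm:og-r-case-else} applies, giving $(X,Y)\in R_\star\Leftrightarrow(\KK(v),\KK(u))\in R$, and the claim reduces to \ref{itm:adj-right} of $\gc B$ applied to $u$ and~$v$.

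In the second case, $u,v\in C[*,h]=B[*,h]$, so by \ref{itm:og-col-c-h} both $X$ and $Y$ lie inside $C[*,h]$ and case \ref{itm:og-r-case-top-top} applies, yielding $(X,Y)\in R_\star\Leftrightarrow s_>(v)\in S(\KK_\star(u))\Leftrightarrow s_>(v)\in N(u)$. Writing $v\in C[j,h]$ with $j<i-1$, \cref{clm:og-exception} gives $\ex(v)\in\{j-1,j\}$, hence $\ex(v)+2\le i$, so $u\in B[i,*]\subseteq B[{\ge}\ex(v)+2,*]$. The sampling property then forces $G\models E(u,v)\Leftrightarrow G\models E(u,s_>(v))$, which matches the rewrite of $R_\star$. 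The third case, $u\in C[i,h]$ and $v\in C[{<}i-1,h+1]$, runs identically: by \ref{itm:og-col-c-h1} and \ref{itm:og-col-c-h} we land in case \ref{itm:og-r-case-topper-bot} with the same rewrite $(X,Y)\in R_\star\Leftrightarrow s_>(v)\in N(u)$, and the same application of \cref{clm:og-exception} followed by the sampling property closes the equivalence.

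No step is really difficult: the whole claim is a mechanical symmetrization of \ref{itm:adj-left}. The only point that requires a moment's thought is checking that the three flip rules \ref{itm:og-r-case-else}, \ref{itm:og-r-case-top-top}, \ref{itm:og-r-case-topper-bot} exhaust all positions of the colored pair $(\KK_\star(v),\KK_\star(u))$ that can occur here; once this is verified, the sampling-with-margin-$2$ bound $\ex(v)+2\le i$ coming from \cref{clm:og-exception} delivers the equivalence in each case uniformly.
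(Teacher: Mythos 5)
Your proof is correct. You prove the claim directly by a three-way case split (at least one of $u,v$ in $C[*,{<}h]$; both in $C[*,h]$; the genuinely new case $u\in C[i,h]$, $v\in C[{<}i-1,h+1]$), identifying in each case which rule of $R_\star$ governs the pair $(\KK_\star(v),\KK_\star(u))$ and then reducing to \ref{itm:adj-right} of $\gc B$ or to the sampling property via \cref{clm:og-exception}. The paper takes a slightly more economical route: it dispatches your first two cases in one stroke by exchanging $u$ and $v$ and invoking the already-established \ref{itm:adj-left} (using that $E$ is symmetric and that after the swap the relevant argument order for $R_\star$ is exactly reversed), and then proves only the third case directly --- which is word-for-word your third case. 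Your version is a mechanical re-derivation of the analysis already done inside \ref{itm:adj-left}, so it is longer but makes the symmetry with \ref{itm:adj-left} explicit at the level of individual cases; the paper's version is shorter but requires the reader to trust that the swap maps the hypotheses of \ref{itm:adj-right} onto those of \ref{itm:adj-left} with the $R_\star$ arguments in the correct order. Both are valid, and the bounds and lemma invocations match. One small wording note: the phrase ``$X$ is not contained in $C[*,h+1]$ and $Y$ is not contained in $C[*,h]$ simultaneously with $X\subseteq C[*,h]$'' in your first case is garbled; what you want to say is simply that $X\not\subseteq C[*,h+1]$ and $Y\not\subseteq C[*,h]$, which rules out both \ref{itm:og-r-case-topper-bot} and \ref{itm:og-r-case-top-top} and lands you in \ref{itm:og-r-case-else}.
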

    \begin{claimproof}
        If either 
        \begin{itemize}
            \item one of $u$ and $v$ is contained in $C[*,{<}h]$, or
            \item both $u$ and $v$ are contained in $C[*,h]$,
        \end{itemize}
        then we can exchange $u$ and $v$ and the property follows from the already established property \ref{itm:adj-left}.
        It remains to prove the case where $u \in C[i,h]$ and $v \in C[{<}i-1, h+1]$.
        We have $\KK_\star(u) \subseteq C[*,h]$ and $\KK_\star(v) \subseteq C[*,h+1]$, and case \ref{itm:og-r-case-topper-bot} applies for $X = \KK_\star(v)$ and $Y = \KK_\star(v)$.
        By assumption, we have $v \in C[i',h + 1]$ for $i' + 1 < i$.
        By \cref{clm:og-exception}, we have that $\ex(v) + 1 < i$, and the following are equivalent.
        \begin{enumerate}
            \item $u$ and $v$ are adjacent in $G$.
            \item $u$ and $s_>(v)$ are adjacent in $G$.
            \hfill (by the sampling property)
            \item $s_>(v) \in N(u) \cap S$.
            \hfill (by definition)
            \item $s_>(\KK_\star(v)) \in S(\KK_\star(u))$.
            \hfill (by definition)
            \item $(\KK_\star(v),\KK_\star(u))\in R_\star$.
            \hfill (by \ref{itm:og-r-case-topper-bot})
        \end{enumerate}
    \end{claimproof}
    This proves property \ref{itm:adjacency}.
    Finally, property \ref{itm:ordered} only concerns the first row $C[*,1] = B[*,1]$, so its truth carries over from~$\gc B$.
    Having proven all properties, it follows that $\gc C$ is an insulator.
    Its cost is \(|\KK_\star|=\const(k,t)\).
    This proves that $\CC$ is the desired row extension. 
    It remains to analyze the running time.

    \paragraph*{Running Time.}
    Let $n:=|V(G)|$.
    \cref{lem:sampling} runs in time $O_t(n^2)$.
    Similarly as in the proof of \cref{thm:orderless-grid-growing},
    we can build the row-extension $C$ of $B$ in time $O_t(n^2)$.
    The construction of $\KK_\star$, $F_\star$, and $R_\star$ runs in time $O_{k,t}(n)$.
    This yields an overall running time of $O_{k,t}(n^2)$.
\end{proof}

Having proven the insulator growing lemmas, this concludes \cref{part1}.

\newpage
\part{Non-Structure}\label{part:nonstructure}
\label{part2}
In  \Cref{part1}, we have shown that for any graph class the following implications hold:
\[
    \text{prepattern-free}
    \spacedRightarrow
    \text{insulation-property}
    \spacedRightarrow
    \text{flip-breakable}
    \spacedRightarrow
    \text{mon.\ dependent}
\]
In \Cref{part2}, we close the circle of implications by showing:
\[
    \text{not prepattern-free}
    \spacedRightarrow
    \text{large flipped crossings/comparability grids}
    \spacedRightarrow
    \text{mon.\ independent}
\]

\newcommand{\trans}{{\mathsf{T}}}
\newcommand{\mesh}{M}
\DeclareRobustCommand{\grid}[1]{
    \begin{tikzpicture}[baseline=-0.87mm]
        \filldraw (0,0) circle (0.2mm);
        \foreach \i in {#1} {
            \draw (0,0) -- (90-45*\i:1mm);
            \draw (0,0) -- (270-45*\i:1mm);
        }
        \foreach \i in {0,1} {%invisible cross, so that all figures have the same width and height
            \path (0,0) -- (90*\i:1mm);
            \path (0,0) -- (180+90*\i:1mm);
        }
    \end{tikzpicture}
}
%scaled up version
\newcommand{\lgrid}[1]{
    \hspace{-.5cm}
    \scalebox{2}{
        \grid{#1}
    }
    \hspace{-.5cm}
}
%grayed out version
\DeclareRobustCommand{\ggrid}[1]{
    \begin{tikzpicture}[baseline=-0.87mm]
        \filldraw[color=gray!40] (0,0) circle (0.2mm);
        \foreach \i in {#1} {
            \draw[color=gray!40] (0,0) -- (90-45*\i:1mm);
            \draw[color=gray!40] (0,0) -- (270-45*\i:1mm);
        }
        \foreach \i in {0,1} {%invisible cross, so that all figures have the same width and height
            \path (0,0) -- (90*\i:1mm);
            \path (0,0) -- (180+90*\i:1mm);
        }
    \end{tikzpicture}
}
\newcommand{\lggrid}[1]{
    \hspace{-.5cm}
    \scalebox{2}{
        \ggrid{#1}
    }
    \hspace{-.5cm}
}

\newcommand{\plusplus}{%
  \mathbin{{+}\mspace{-8mu}{+}}%
}

\newcommand{\cl}{\mathbf{C}}
\newcommand{\hg}{\mathbf{H}}
\newcommand{\st}{\mathbf{S}}

The main goal of \Cref{part2} is to prove the following.

\begin{restatable}{proposition}{thmPatterns}
\label{prop:patterns-main}
    Let $G$ be a graph containing a prepattern of order $n$ on an insulator  of height $h$ and cost $k$.
    Then $G$ contains as an induced subgraph either
    \begin{itemize}
        \item a flipped star $r$-crossing of order $m$, or
        \item a flipped clique $r$-crossing of order $m$, or
        \item a flipped half-graph $r$-crossing of order $m$, or
        \item the comparability grid of order $m$,
    \end{itemize}
    for some $m\ge U_{h,k}(n)$ and $1 \le r < 8h$.
\end{restatable}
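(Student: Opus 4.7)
The plan is to case-split on whether the prepattern is a \emph{bi}- or \emph{mono}-prepattern and, in each case, to use the rigidity of the insulator $\gc A$ together with iterated Ramsey arguments to extract a regularized induced substructure whose flip pattern can be read off directly from the insulator's relations $F$ and $R$. Throughout, the columns $A[i,*]$ serve as ``subdivision tracks'': by property \ref{itm:rootedness}, any distinguished vertex in column $i$ can be walked downward through the column to a canonical root $a_i \in A[i,1]$ along a path of length at most $h-1$ lying entirely inside the column. Since columns are pairwise disjoint, paths built this way in different columns are automatically vertex-disjoint.

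\textbf{Mono-prepattern case.} Given $b_{i,j}\in A[i,*]$, $c_j \in V(G)$, and $E(c_j,b_{i,j'}) \Leftrightarrow j \sim j'$, I first apply Ramsey to the row-heights of the $b_{i,j}$ to assume that they all lie in a common row, so that the downward paths $b_{i,j}\text{--}\cdots\text{--}a_i$ share a single length $r' \le h-1$. Using the $a_i$ (for $i\in I$) and the $c_j$ (for $j\in J$) as the two families of roots, the subdivided edges are the concatenated paths $c_j\text{--}b_{i,j}\text{--}\cdots\text{--}a_i$ of length $r := r'+1$. The symbol $\sim$ then dictates the crossing type: $\sim\in\{=,\neq\}$ yields a star or clique $r$-crossing; $\sim\in\{\le,<,\ge,>\}$ yields a half-graph $r$-crossing. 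The comparability grid arises only when an ordered insulator's property \ref{itm:ordered} combines with a half-graph pattern on both dimensions, upgrading the half-graph to a full two-dimensional grid of order.

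\textbf{Bi-prepattern case.} Since $\alpha_1(x;y,s_1)$ and $\alpha_2(x;y,s_2)$ are quantifier-free, each reduces to a boolean combination of literals in the edge and equality relations between $x,y,s_1,s_2$; a bounded-size Ramsey on which literals are realized by each $c_{i,j}$ reduces this to a single adjacency/equality test. This produces, for each pair $(i,j)\in I \times J$, canonical vertices $b_{i,j}\in A[i,*]$ and $b'_{i,j}\in A[j,*]$ through which the central connector $c_{i,j}$ witnesses the pairing. Walking $b_{i,j}$ and $b'_{i,j}$ down to their roots $a_i$ and $a_j$ in the bottom row — after a further Ramsey step equalizing row-heights — yields a symmetric double crossing whose type is read off from the atomic content of $\alpha_1,\alpha_2$.

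\textbf{Main obstacle.} The hardest step is controlling adjacencies \emph{between} different paths. The key tool is property \ref{itm:adjacency}: subsampling the column sequence so that any two chosen columns are at distance at least $2$ in the original column order lets the clauses \ref{itm:adj-left}--\ref{itm:adj-right} force every inter-column adjacency inside $\gc A$ to be determined solely by the $\le k$ colors of $\KK$ and the relation $R$, while \ref{itm:adj-different-rows} forces vertices in non-close rows to be non-adjacent in the flipped graph $G \oplus_\KK F$. A final Ramsey application on the finitely many $(r+2)$-layer color patterns of the extracted paths homogenizes all inter-path adjacencies into a single flip of the natural $(r+2)$-layer partition of the crossing, yielding precisely a \emph{flipped} $r$-crossing. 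The connectors $c_j$ and $c_{i,j}$ live outside $\gc A$, so their interactions with column interiors are controlled by \ref{itm:outside} (homogeneous per color) and, after an additional Ramsey on connector colors, merge into the same global flip. The slack $r<8h$ comfortably absorbs all constant-factor overheads: the in-column path of length $\le h-1$, the extra edge to each connector, and the symmetric duplication needed in the bi-prepattern case.
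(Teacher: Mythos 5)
Your construction fails to ensure vertex-disjointness of the subdivision paths, which is the central requirement of a crossing. In the mono-prepattern case you propose to use the $a_i$ and $c_j$ as the two root families and to form the internal path of $\pi_{i,j}$ by walking $b_{i,j}$ down column $A[i,*]$ to $a_i$ via \ref{itm:rootedness}. But for a fixed $i$ and two distinct $j, j'$, the downward walks from $b_{i,j}$ and from $b_{i,j'}$ live in the same column and there is nothing forcing them apart; indeed in the orderless case $A[i,1]$ is a singleton, and cells $A[i,2], A[i,3], \dots$ need not contain $\ge n$ vertices, so the walks are typically forced to collide well before reaching $a_i$. A star/clique/half-graph $r$-crossing requires all $n^2$ internal paths $\pi_{i,j}$ to be pairwise vertex-disjoint, so this construction cannot be repaired by Ramsey alone: the disjointness issue is structural, not a regularity issue. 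The same problem afflicts your bi-prepattern sketch, where the two ``canonical witnesses'' $b_{i,j}$ and $b'_{i,j}$ are again walked down columns.

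The paper avoids this by routing through a very different intermediate abstraction. It first turns a prepattern into a \emph{transformer} (\cref{prop:transformer}): a sequence of \emph{injective} maps (meshes) $\mesh_1,\dots,\mesh_t\from I\times J\to V(G)$ with regularity between consecutive layers, the first vertical, the last horizontal. Injectivity gives each pair $(i,j)$ its own vertex in every layer, so disjointness across $(i,j)$ comes for free. After minimizing (\cref{lem:to-minimal}, \cref{lem:minimaltransformer}) and converting to a \emph{converter} (\cref{lem:to-converter}), the final step (\cref{lem:crossings}) builds the $n^2$ subdivision paths by traversing the tower $\mesh_1(i,j),\dots,\mesh_h(i,j),\dots,\mesh_1(i',j)$ while assigning each pair $(i,i')$ a \emph{distinct} column index $j$ via an injection $f\from[m]^2\to[n]$, which is exactly what buys the needed disjointness. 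Your account misses this indexing trick entirely.

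Two further discrepancies: the comparability grid does not arise from property \ref{itm:ordered} of an ordered insulator as you suggest, but from a single mesh that is both horizontal and vertical yet not reducible to a degenerate pattern, i.e.\ a \emph{generalized grid}, handled in \cref{lem:generalized-grids}; and the determination of star vs.\ clique vs.\ half-graph is read off the $P$-mesh type of the extremal layers of the converter (\cref{lem:horizontal-vertical-cases}, \cref{def:capped}), not directly from the symbol $\sim$ of the prepattern as you propose. Finally, the bound $r<8h$ is obtained in the paper from a careful chain: transformer of length $\le 4h-1$, converter of length $\le 4h-1$, crossing of radius $\le 2(4h-1)+1$; your claim that the slack ``comfortably absorbs'' the overheads is not substantiated.
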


This is achieved in \Cref{sec:transformers} and \Cref{sec:conv-and-cross}. Additionally, in \Cref{sec:to-independence} we prove that classes 
which exhibit large patterns as listed in \cref{prop:patterns-main} are monadically independent.
Finally, in \Cref{sec:wrapup}, we wrap up the proof of the main results of the paper.

\paragraph{Convention:}
We often write that some property or function \(P(\bar x)\) \emph{``is the same for all tuples \(\bar x\) from a given domain.''}
This means that for all \(\bar x,\bar y\) from a specified domain,  \(P(\bar x)=P(\bar y)\). 

\section{Transformers}\label{sec:transformers}
As a first step, in Section~\ref{sec:transformers}, starting from large prepatterns, we will extract well-structured, but rather abstract objects called \emph{transformers}.
After that, in Section~\ref{sec:conv-and-cross}, transformers will be analyzed at a low level, and crossings will be extracted from them.

\subsection{Meshes and Transformers}\label{sec:meshesandtransformers}

\begin{definition}
    A \emph{mesh} in a graph $G$
    is an injective function $\mesh\from I\times J\to V(G)$,
    where  $I$ and $J$ are two non-empty indexing sequences of the same length.
    We denote $V(\mesh):=\setof{\mesh(i,j)}{i\in I,j\in J}$.
    For a mesh $\mesh$ as above, by $\mesh^\trans$ denote the mesh $\mesh^\trans \from J\times I\to V(G)$ such that $\mesh^\trans(i,j)=\mesh(j,i)$ for all $i\in I,j\in J$.
    A mesh $\mesh$ has \emph{order} $m$ if $|I|=|J|=m$.

    For $I'\subset I$ and $J'\subset J$,
    by $\mesh|_{I'\times J'}$ we denote the mesh
    obtained by restricting $\mesh$ to $I'\times J'$.
    We call $\mesh|_{I'\times J'}$ a \emph{submesh} of $\mesh$.
\end{definition}

\begin{definition}
    Let $\mesh\from I\times J\to V(G)$ be a mesh in a (possibly colored) graph $G$.
    Then $\mesh$ is \emph{vertical} in $G$ if \(|I|=|J|\le 3\), or if there is a function $a\from I\to V(G)$
    such that
    \begin{itemize}
        \item     $\atp_G(\mesh(i,j),a(i'))$ depends only on $\otp(i,i')$ for all $i,i'\in I$ and $j\in J$, and 

        \item $\atp_G(\mesh(i,j),a(i'))$ is not the same for all $i,i'\in I$ and $j\in J$.
    \end{itemize} 
    We say that $\mesh$ is \emph{horizontal} in $G$ if $\mesh^{\trans}$ is vertical in $G$.
Note that a mesh can be both horizontal and vertical.
\end{definition}

\begin{definition}
    Let $\mesh,\mesh'\from I\times J\to V(G)$ be meshes in a (possibly colored) graph $G$. We say that in $G$ the pair $(\mesh,\mesh')$ is
    \begin{itemize}
        \item \emph{regular} if 
        $\atp_G(\mesh(i,j),\mesh'(i',j'))$ 
        depends only on $\otp(i,i')$ and $\otp(j,j')$ \\for all $i,i'\in I$ and $j,j'\in J$,

        \item \emph{homogeneous}
        if $\atp_G(\mesh(i,j),\mesh'(i',j'))$ is the same \\for all $i,i'\in I$ and $j,j'\in J$,

        \item \emph{conducting} if either $|I| = |J| \leq 3$ or $(\mesh,\mesh')$ is regular but not homogeneous in $G$.

    \end{itemize}
\end{definition}

\begin{definition}
    A \emph{conductor} of order $n$ and length $h$ is a sequence 
    $\mesh_1,\ldots,\mesh_h\from I\times J\to V(G)$ of meshes of order $n$,
    such that each pair $(\mesh_s,\mesh_{s+1})$ is conducting for $s=1,\ldots,h-1$.
\end{definition}

We now define the central notion of \Cref{sec:transformers}.
\begin{definition}
    A \emph{transformer} of order $n$ and length $h$ is a conductor 
    $\mesh_1,\ldots,\mesh_h$ of order $n$ and length $h$,
    such that $\mesh_1$ is vertical, and $\mesh_h$ is horizontal.
\end{definition}

The main result of \Cref{sec:transformers} is the following.

\begin{restatable}{proposition}{PropTransformer}\label{prop:transformer}
    Let $G$ be a graph containing a prepattern of order \(n\) on an insulator of height $h$ and cost $k$.
    Then $G$ contains a transformer of order $U_{h,k}(n)$ and length at most $4h-1$.
\end{restatable}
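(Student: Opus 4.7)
The plan is to build the transformer row by row from the insulator $\gc A$, treating bi-prepatterns and mono-prepatterns in parallel. First I would extract, from the prepattern, witness vertices living inside $\gc A$ that carry the column-discriminating information. For a bi-prepattern, for every $(i,j) \in I \times J$ the quantifier-free formulas $\alpha_1, \alpha_2$ together with the minimality condition yield witnesses $w^1_{i,j} \in A[i,*]$ and $w^2_{i,j} \in A[j,*]$, whose rows in $\gc A$ are given by functions $r^1, r^2 : I \times J \to [h]$. For a mono-prepattern, the given vertices $b_{i,j} \in A[i,*]$ already provide such row assignments. In both cases I would apply the bipartite Ramsey theorem (\Cref{gridramsey}) to these row-valued functions and to the atomic types of all the relevant tuples, so as to pass to subsequences $I' \subseteq I$ and $J' \subseteq J$ of size $U_{h,k}(n)$ on which the witness rows are constant values in $[h]$ and all atomic types are order-invariant. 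A further Ramsey/pigeonhole step would arrange that $I'$ and $J'$ lie in disjoint index ranges of the insulator's indexing sequence, which will make later meshes injective.

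Next, for each $s \in [h]$, I would assemble two meshes $M^{(I)}_s, M^{(J)}_s \from I' \times J' \to V(G)$ whose $(i,j)$-entries lie respectively in $A[i,s]$ and $A[j,s]$. Nonemptiness of the cells is guaranteed by \ref{itm:rootedness} (in the ordered case) or by \ref{itm:orderless} together with the column structure (in the orderless case), and injectivity follows from the disjointness of insulator columns combined with the index separation above. I would then chain these into a single sequence
\[
    M^{(I)}_h,\; M^{(I)}_{h-1},\; \dots,\; M^{(I)}_1,\; M^{\mathrm{bridge}},\; M^{(J)}_1,\; M^{(J)}_2,\; \dots,\; M^{(J)}_h,
\]
of length at most $2h+1$, with $M^{\mathrm{bridge}}$ built from the prepattern vertices $c_{i,j}$ (or $c_j$ in the mono case). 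One or two further "shift" meshes on each side may be needed to convert regularity into the vertical/horizontal endpoint conditions, keeping the total length bounded by $4h-1$.

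Conductance of consecutive pairs $(M_s, M_{s+1})$ on the same side follows from the insulator property \ref{itm:adjacency}: within close rows of a column the adjacencies are determined homogeneously by $\KK$ (giving regularity), while \ref{itm:rootedness} forces a non-homogeneous downward edge so that the pair is not entirely homogeneous. The bridge across the $I/J$ boundary is conducting because the vertices $c_{i,j}$ realize the prepattern's defining formulas, which by construction are not homogeneous across the extracted blocks. That $M^{(I)}_h$ is vertical (and $M^{(J)}_h$ horizontal) follows from the insulator's ordering data: in the orderless case, the root vertices $a_i \in A[i,1]$ of \ref{itm:orderless} serve as the discriminating function $a \from I' \to V(G)$, while in the ordered case, the vertices $c_i$ and $b(v)$ of \ref{itm:ordered} play the analogous role; the vertical/horizontal witness then propagates upward through the conducting chain.

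The main obstacle will be ensuring that the central bridge step from the $I$-side to the $J$-side remains conducting after the Ramsey regularizations, rather than collapsing into homogeneity. The prepattern's formulas $\alpha_1, \alpha_2$ (or the relation $\sim$ in the mono case) were crafted precisely to force a non-homogeneous interaction between $c_{i,j}$ and the pair of columns $A[i,*]$, $A[j,*]$; preserving this through the row-by-row reductions requires that at each Ramsey step we simultaneously regularize the atomic types of $c_{i,j}$ with respect to every row, not only its own. This inflates the Ramsey bounds but keeps the final order $U_{h,k}(n)$. A secondary delicate point is that, when $\gc A$ is ordered, the columns indexed by $I$ and $J$ may mutually overlap, so the disjointness arrangement from the first step is essential to guarantee that $M^{(I)}_s$ and $M^{(J)}_s$ have disjoint images and that the whole chain is a well-defined sequence of meshes.
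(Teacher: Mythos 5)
The overall "descend on one side, cross, ascend on the other" shape agrees with the paper (which glues two conductors from \cref{lem:down-down}/\cref{lem:down-down-negative} at a shared mesh), but the proposal has several concrete defects that a complete proof would need to repair.

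\textbf{Verticality at the wrong end of the chain.} You claim that $M^{(I)}_h$, the mesh sitting in the \emph{top} row of the insulator, is vertical, witnessed by the roots $a_i\in A[i,1]$ of \ref{itm:orderless}. This cannot work. In the orderless case, $A[i,{\le}r]=N^{G'}_{r-1}[a_i]$, so for $h\ge 3$ a vertex of $A[i,h]$ is at $G'$-distance $h-1\ge 2$ from $a_i$ and is non-adjacent in $G'$ to every $a_{i'}$; after the Ramsey regularization you propose, $\atp_G(M^{(I)}_h(i,j),a_{i'})$ is the \emph{same} for all $i,i',j$, which violates the second half of the verticality condition. The paper obtains the vertical endpoint at the \emph{bottom} row via \cref{lem:ramsey-row1}: orderless row-$1$ cells are singletons (so the bottom mesh is degenerate of order $\le 3$ and vertical by fiat), while the ordered case uses the auxiliary $b(\cdot),c_i$ structure of \ref{itm:ordered} and short paths in the flip. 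This is not a detail you can push to the top row.

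\textbf{The bottom-row mesh cannot be an injective mesh.} Relatedly, your plan requires $M^{(I)}_1\from I'\times J'\to V(G)$ with $M^{(I)}_1(i,j)\in A[i,1]$ for all $(i,j)$. In the orderless case $A[i,1]=\{a_i\}$ is a singleton, so this map sends all pairs with the same $i$ to $a_i$ and is not injective; it is not a mesh at all. The paper avoids this by treating the arrival at row $1$ as a terminal case rather than as just another row of a uniform chain.

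\textbf{The witness extraction depends on the sign of $\sim$.} You say the prepattern's minimality condition yields a witness $w^1_{i,j}\in A[i,*]$. This holds only when $\sim_1$ is $\neq$, i.e.\ when $i$ is the first column in which $c_{i,j}$ \emph{has} an $\alpha_1$-neighbor. When $\sim_1$ is $=$, $i$ is the first column in which $c_{i,j}$ has \emph{no} $\alpha_1$-neighbor; there is no witness in $A[i,*]$, only in all earlier columns, and the descent argument changes substantially. The paper splits this into \cref{lem:down-down} (for $\neq$) and the five-case \cref{lem:down-down-negative} (for $=$), the latter of which may itself bounce off to a mono-prepattern or a vertical submesh rather than producing a descending chain. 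Your proposal presupposes the $\neq$ behaviour uniformly.

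\textbf{Length accounting.} A chain of length $2h+1$ plus "one or two shift meshes" does not clearly reach $4h-1$, and more importantly does not account for why the extra slack is actually needed: in the ordered case, \cref{lem:ramsey-row1} appends up to $h$ further meshes after reaching row $1$, so each side of the conductor can genuinely cost $\le 2h$; gluing two such conductors at a shared bridge mesh gives $2h+2h-1=4h-1$, which is where the paper's bound comes from.

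In short, the descend-and-glue idea is right, but the endpoint verticality, the row-$1$ degeneracy, the $\sim={=}$ case, and the source of the extra length all need to be handled the way the paper does, and the proposal as written would not assemble into a correct proof.
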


We start with some simple observations.

\begin{observation}\label{obs:submesh}
    Let $\mesh$ be a mesh in a graph $G$.
    If $\mesh$ is horizontal/vertical in $G$ then this also holds for every submesh of $\mesh$.
\end{observation}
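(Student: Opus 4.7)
My plan is to prove this by a straightforward inheritance argument, exploiting that both the ``depends only on order type'' and the ``not constant'' clauses in the definition of vertical are robust under restriction. By symmetry, it suffices to prove the vertical case: since taking the transpose commutes with restricting to subindices, namely $(\mesh^\trans)|_{J'\times I'}=(\mesh|_{I'\times J'})^\trans$, the horizontal statement follows by applying the vertical case to $\mesh^\trans$.

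So let $\mesh\from I\times J\to V(G)$ be vertical and let $\mesh|_{I'\times J'}$ be a submesh, so $|I'|=|J'|$. I would split into two cases. First, if $|I'|=|J'|\le 3$, then $\mesh|_{I'\times J'}$ is vertical by the first clause of the definition, with nothing to check. Otherwise $|I'|\ge 4$, and then also $|I|\ge 4$, so verticality of $\mesh$ must be witnessed by some function $a\from I\to V(G)$. I propose to use the restriction $a':=a|_{I'}\from I'\to V(G)$ as the witness for $\mesh|_{I'\times J'}$. The first defining condition --- that $\atp_G(\mesh(i,j),a'(i'))$ depends only on $\otp(i,i')$ for all $i,i'\in I'$ and $j\in J'$ --- is immediate from the corresponding property of $\mesh$, since we only quantify over a subdomain.

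The only mildly delicate point is the non-constancy clause, and this is where I would focus the argument. Since the atomic type $\atp_G(\mesh(i,j),a(i'))$ depends only on $\otp(i,i')\in\{<,=,>\}$, the function realizes at most three distinct atomic types on $I\times I$, one per order type. Non-constancy of $\mesh$ means at least two of these three atomic types are distinct. Because $|I'|\ge 2$, all three order types are realized by pairs in $I'\times I'$ (taking $i=i'$ for $=$, and any two distinct elements for $<$ and $>$); since $|J'|\ge 1$, there is a valid $j\in J'$ to plug in. Hence every atomic type realized on $I\times I$ is already realized on $I'\times I'\times J'$, so non-constancy transfers to the submesh. This completes the proposal; I do not anticipate any real obstacle, as the statement is essentially a restriction-stability observation.
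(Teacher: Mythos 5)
Correct. The paper states this as an unproved observation, and your argument is the natural unpacking of the definitions: reduce horizontal to vertical via transpose (noting transposition commutes with restriction), dispatch the $|I'|\le 3$ base case immediately, restrict the witness $a$ in the $|I'|\ge 4$ case, and — the one step that needs a sentence of justification — observe that since all three order types in $\{<,=,>\}$ are realized on $I'\times I'$ with $|I'|\ge 2$ and there is some $j\in J'$ available, the submesh realizes exactly the same set of atomic types as $\mesh$, so non-constancy is inherited. There is no gap.
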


\begin{observation}\label{obs:ramsey-subconductors}
    Let $\mesh_1,\mesh_2 \from I \times J \to V(G)$ be meshes in a graph $G$ such that the pair $(\mesh_1,\mesh_2)$ is conducting in $G$.
    For all $I' \subseteq I$ and $J' \subseteq J$ with $|I'| = |J'|$ the pair $(\mesh_1|_{I'\times J'},\mesh_2|_{I'\times J'})$ is also conducting in $G$.
\end{observation}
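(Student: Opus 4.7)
The plan is to unpack the definition of \emph{conducting} and verify both clauses pass to restrictions. Conducting allows two ways out: either the index sets are small ($\le 3$), or the pair is regular but not homogeneous. So the proof naturally splits into two cases depending on $|I'|=|J'|$.

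\textbf{Case 1:} $|I'|=|J'|\le 3$. Here the restricted pair $(\mesh_1|_{I'\times J'},\mesh_2|_{I'\times J'})$ is conducting by the first clause of the definition, with nothing to verify.

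\textbf{Case 2:} $|I'|=|J'|\ge 4$. Then in particular $|I|\ge |I'|>3$ and $|J|\ge |J'|>3$, so the hypothesis that $(\mesh_1,\mesh_2)$ is conducting must be witnessed by the \emph{second} clause: the pair is regular and not homogeneous. I need to transfer both properties to the restriction. Regularity is immediate: if $\atp_G(\mesh_1(i,j),\mesh_2(i',j'))$ depends only on $(\otp(i,i'),\otp(j,j'))$ for $i,i'\in I$ and $j,j'\in J$, then the same dependence holds on the smaller domain $I'\times J'$, so $(\mesh_1|_{I'\times J'},\mesh_2|_{I'\times J'})$ is regular.

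The only genuine step is preserving non-homogeneity. By non-homogeneity of $(\mesh_1,\mesh_2)$, there exist indices yielding two distinct atomic types $\tau_1\neq\tau_2$; by regularity these are determined by two pairs of order types $(\sigma_1,\tau_1'),(\sigma_2,\tau_2')\in\{<,=,>\}^2$. The key observation is that since $|I'|\ge 2$, picking any two distinct $a<b\in I'$ realizes all three order types: $\otp(a,a)={=}$, $\otp(a,b)={<}$, $\otp(b,a)={>}$; likewise for $J'$. Hence every element of $\{<,=,>\}^2$ is realized by some $(i,i',j,j')\in (I')^2\times (J')^2$. In particular, both pairs $(\sigma_1,\tau_1')$ and $(\sigma_2,\tau_2')$ are realized inside $I'\times J'$, so by regularity both atomic types $\tau_1,\tau_2$ occur in the restricted pair, witnessing non-homogeneity.

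I expect no real obstacle; the statement is close to purely formal, and the only tiny subtlety is noticing that once $|I'|=|J'|\ge 4$ one automatically has $|I|,|J|>3$, so the original pair is forced into the ``regular but not homogeneous'' branch of the definition, allowing the order-type transfer argument above.
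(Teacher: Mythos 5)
Your proof is correct and follows the natural route the paper leaves implicit by calling this an Observation: the small-order case is automatic, and otherwise $|I|=|J|>3$ forces the original pair into the regular-but-not-homogeneous clause, after which regularity restricts trivially and non-homogeneity is preserved because any $I'$, $J'$ of size at least two already realize all order types in $\{<,=,>\}$. No gap.
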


\begin{lemma}\label{lem:ramsey-forget-colors-conducting}
    Let $G$ be a graph and $G^+$ be a coloring of $G$. 
    If a pair of meshes $(\mesh,\mesh')$ is conducting in $G^+$ then it is also conducting in $G$.
\end{lemma}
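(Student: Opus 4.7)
The plan is to split on the two disjuncts in the definition of ``conducting.'' If $|I|=|J|\le 3$, then $(\mesh,\mesh')$ is automatically conducting in $G$ by definition, with nothing to prove. So the substantive case is when $(\mesh,\mesh')$ is regular but not homogeneous in $G^+$, and we must transfer both regularity and non-homogeneity from $G^+$ down to the reduct $G$.

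Regularity in $G$ is essentially free: every literal in the language of $G$ is also a literal in the language of $G^+$, so $\atp_G(u,v)$ is determined by $\atp_{G^+}(u,v)$. Hence if $\atp_{G^+}(\mesh(i,j),\mesh'(i',j'))$ depends only on $\otp(i,i')$ and $\otp(j,j')$, then so does $\atp_G(\mesh(i,j),\mesh'(i',j'))$.

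The main point is non-homogeneity, and here the plan is to exploit what regularity in $G^+$ forces about the colors. Fix arbitrary $(i_0,j_0),(k_0,l_0)\in I\times J$ and consider the ``diagonal'' pairs $(\mesh(i_0,j_0),\mesh'(i_0,j_0))$ and $(\mesh(k_0,l_0),\mesh'(k_0,l_0))$. Both have $\otp$-pattern $({=},{=})$, so by regularity in $G^+$ they have the same atomic type in $G^+$; in particular $\mesh(i_0,j_0)$ and $\mesh(k_0,l_0)$ carry the same unary predicates. Thus every vertex in the image of $\mesh$ has the same color in $G^+$, and symmetrically for $\mesh'$. Consequently, for any two pairs of vertices drawn from these images, their $G^+$-atomic types can differ only in the $=$ and $E$ literals, and these literals already live in the signature of $G$. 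So any witness to non-homogeneity in $G^+$ is also a witness in $G$, completing the argument.

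The only potentially delicate step is the color-constancy observation above; I expect it to be the ``heart'' of the proof, but it is immediate once one writes down the $\otp$-pattern $({=},{=})$ and invokes regularity.
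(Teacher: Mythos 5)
Your proof is correct and takes essentially the same approach as the paper: both reduce the substantive work to showing that regularity in $G^+$ forces every vertex in $V(\mesh)$ (and separately in $V(\mesh')$) to carry the same colors, by comparing diagonal pairs with order type $({=},{=})$, and both then conclude that $\atp_{G^+}$ on mesh pairs is determined by $\atp_G$ plus a fixed color pattern. The paper phrases the non-homogeneity transfer as a contrapositive (homogeneous in $G$ implies homogeneous in $G^+$) while you argue directly that a non-homogeneity witness in $G^+$ descends to one in $G$, but these are the same argument.
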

\begin{proof}
    Obviously the pair $(\mesh,\mesh')$ is also regular in $G$.
    Via contrapositive, it remains to assume that
    $(\mesh,\mesh')$ is homogeneous in \(G\),
    and show that it also is homogeneous in \(G^+\).
    To this end, observe that the atomic type $\atp_{G^+}(\mesh(i,j),\mesh'(i',j'))$
    depends only on $\atp_{G}(\mesh(i,j),\mesh'(i',j'))$
    as well as the colors of \(\mesh(i,j)\) and \(\mesh'(i',j')\).
    The former has the desired properties by homogeneity in \(G\),
    and the latter by regularity in \(G^+\).
\end{proof}

\begin{lemma}\label{lem:ramsey-forget-colors-vertical}
    Let $G$ be a graph and $G^+$ be a coloring of $G$. If  a  mesh $\mesh$ is vertical in~$G^+$ then it is also vertical in $G$.
\end{lemma}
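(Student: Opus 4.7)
The plan is to prove the statement directly, without appealing to Ramsey, by showing that the very same witnessing function $a\from I\to V(G)$ that certifies verticality of $\mesh$ in $G^+$ already certifies verticality of $\mesh$ in $G$. The trivial range $|I|=|J|\le 3$ is handled by the definition, so I focus on $|I|=|J|\ge 4$ and fix such a witness $a$.

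The first clause of verticality — that $\atp_G(\mesh(i,j),a(i'))$ depends only on $\otp(i,i')$ — comes for free. Indeed, $\atp_G(u,v)$ is a reduct of $\atp_{G^+}(u,v)$ (forgetting colors discards only unary-predicate atoms), so any dependence of $\atp_{G^+}$ on $\otp(i,i')$ automatically descends to $\atp_G$.

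The substantive step is the non-constancy clause. The key move I would make is to specialize the $G^+$-verticality hypothesis to the diagonal case $i=i'$: this forces $\atp_{G^+}(\mesh(i,j),a(i))$ to be the \emph{same} atomic type for all $i\in I$ and $j\in J$. Reading off the unary parts of this constant atomic type shows that every vertex of $\mesh(I\times J)$ carries one common color in $G^+$, and every vertex of $a(I)$ carries one common color. Consequently, on the set $\mesh(I\times J)\cup a(I)$ the atomic type $\atp_{G^+}(\cdot,\cdot)$ is determined by $\atp_G(\cdot,\cdot)$ together with this fixed colour data, so any two pairs with distinct $G^+$-atomic types must already have distinct $G$-atomic types. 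The assumed non-constancy of $\atp_{G^+}$ therefore transfers to $\atp_G$, and $a$ witnesses verticality of $\mesh$ in $G$.

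I do not foresee a genuine obstacle: the argument is the direct analogue of the reduction used in Lemma~\ref{lem:ramsey-forget-colors-conducting} for conducting pairs, and the only small trick is noticing that the diagonal case $i=i'$ in the definition of verticality is already enough to pin down the colors, so no appeal to regularity is needed.
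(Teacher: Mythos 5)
Your proof is correct and follows the same route as the paper: the paper likewise reuses the $G^+$-witness $a$, observes that the first clause of verticality descends to $G$ because the uncolored atomic type is a reduct of the colored one, and reduces the non-constancy clause to the observation that $G^+$-verticality forces a constant color on $\mesh(I\times J)$ and a constant color on $a(I)$, so the $G^+$-type of a pair is a function of its $G$-type. The only thing you make more explicit is the diagonal specialization $i=i'$ used to extract those constant colors, which the paper leaves implicit by merely pointing to the argument of Lemma~\ref{lem:ramsey-forget-colors-conducting}.
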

\begin{proof}
    Let $a(\cdot)$ be the function witnessing that $\mesh$ is vertical in $G^+$.
    We argue as in the proof of \cref{lem:ramsey-forget-colors-conducting}.
    Here, instead of regularity, we use verticality to argue that all vertices in the range of $a$ (respectively all vertices in the range of $\mesh$) have the same atomic type in $G^+$.
\end{proof}

\subsection{From Prepatterns to Transformers}\label{sec:pre-to-transformers}

In this section, we prove \cref{prop:transformer},
 extracting transformers from prepatterns.
Let us first give an overview of the proof in the case where $G$ contains a bi-prepattern.
The mono-prepattern case later falls out as a subcase.
The bi-prepattern in $G$ is witnessed by an insulator $\gc A$ whose columns are indexed by a sequence $K$ containing two subsequences $I$ and~$J$. Every pair of columns $(i,j) \in I \times J$ is ``matched up'' by an element $c_{i,j}$ through quantifier-free formulas (cf.\ \cref{def:bi-pattern}).

\cref{fig:transformer-in-insulator} is a schematic depiction of how the transformer will embed into the insulator $\gc A$. 
The transformer will be assembled from two conductors $C = \mesh_0, \ldots, \mesh_t$ and $C' = \mesh'_0, \ldots, \mesh'_t$.
The columns of the meshes of $C$ and $C'$ are contained in the columns of the insulator $\gc A$ indexed by $I$ and $J$, respectively.
The structure of $\gc A$ imposes that $\mesh_t$ and $\mesh'_t$ are both vertical.
The meshes $\mesh_0$ and $\mesh'_0$ are chosen from the vertices $c_{i,j}$ such that $\mesh_0 = {\mesh'_0}^\trans$.
It follows that we can glue the conductor $C$ to the transposed meshes from ${C'}$ yielding the desired transformer
\[
    \mesh_t,\mesh_{t-1},\ldots,\mesh_1,\mesh_0 = \mesh_0'^\trans, \mesh_1'^\trans,\ldots, \mesh_{t-1}'^\trans,\mesh_t'^\trans,
\]
where $\mesh_t$ is vertical and $\mesh_t'^\trans$ is horizontal.

The construction of $C$ and $C'$ is implemented by \cref{lem:down-down} or \cref{lem:down-down-negative} ($\clubsuit_1$ or $\clubsuit_2$ in the picture) depending on the choice of $\sim_1$ and $\sim_2$ in the definition of a bi-prepattern.
\cref{lem:down-down} iteratively extends the conductor by ``stepping down'' the insulator.
A single step is performed using \cref{lem:one-step-down} ($\blacklozenge$), which creates meshes \emph{embedded into} descending rows of the insulator (cf.\ \cref{def:ramsey-row-embedding}).
The structure of the insulator is used to establish conductivity between successive meshes/rows.
This process continues until either a vertical mesh is produced or we reach the bottom row of the insulator.
If we reach the bottom row, we use \cref{lem:ramsey-row1} ($\spadesuit$) to further extend the conductor to reach a vertical mesh.
Here we use the special structure of the bottom row: In the unordered case, the cells how the bottom row contain only a single vertex each, which implies that the mesh in the bottom row is already vertical. 
In the ordered case the mesh in the bottom row is 
connected through short parts to a vertical mesh and we can bridge those short paths by a conductor.
This finishes the sketch of \cref{lem:down-down}~($\clubsuit_1$).
\cref{lem:down-down-negative}~($\clubsuit_2$) is a rather technical case distinction which reduces the construction of the conductor to some previously established subcase.

\begin{figure}[htbp]
    \centering
    \includegraphics{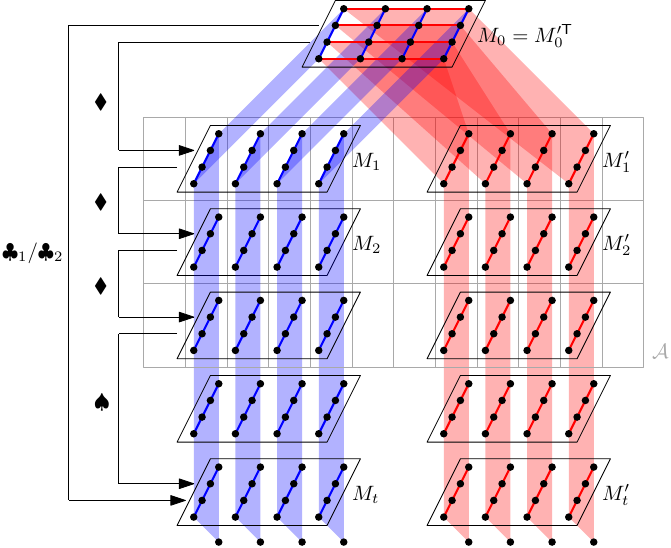}
    \caption{Constructing a transformer from a bi-prepattern.}
    \label{fig:transformer-in-insulator}
\end{figure}

This concludes the overview of the proof of \cref{prop:transformer}. We now give a detailed proof.

\begin{definition}\label{def:ramsey-row-embedding}
    Let $G$ be a graph containing an insulator $\gc A$ of height $h$ with grid $A$ and indexed by a sequence \(K\).
    Let $\mesh\from I\times J\to V(G)$ be a mesh of order \(n\) in $G$ with $I \subseteq K$.
    Let $r \in [h]$.
    We say that $\mesh$ \emph{embeds into row $r$ of $\gc A$} if for all $i \in I$ and $j\in J$.
    \[
        \mesh(i,j) \in A[i,r].
    \]
\end{definition}

\begin{lemma}[$\blacklozenge$ in \cref{fig:transformer-in-insulator}]\label{lem:one-step-down}
    Let $G$ be a graph containing an insulator $\gc A$ of height $h$ with grid $A$ and indexed by a sequence \(K\).
    Let $\mesh\from I\times J\to V(G)$ be a mesh of order \(n\) in $G$ with $I \subseteq K$.
    Let $\alpha(x,y)$ be a quantifier-free formula in a $k$-coloring $G^+$ of $G$, 
    and let $r\in [h]$ be such that for all $i\in I,j\in J$
    \begin{align}\label{eq:first-nonempty}
        i=\min\setof{i'\in I}{\exists v\in A[i',{\le} r]\ :\ G^+\models \alpha(v,\mesh(i,j))}.    
    \end{align}
    Then there are meshes $\mesh_1,\mesh_2$ of order $U_{k,h}(n)$
    such that
    \begin{itemize}
        \item $\mesh_1$ is a submesh of $\mesh$, and
        \item the pair $(\mesh_1,\mesh_2)$ is conducting in $G$, and
        \item $\mesh_2$ is vertical in $G$ or
        $\mesh_2$ embeds into some row $r' \leq r$ of $\gc A$.
    \end{itemize}

\end{lemma}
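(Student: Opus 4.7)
For each $(i,j)\in I\times J$, the hypothesis provides at least one witness $v\in A[i,\le r]$ with $G^+\models\alpha(v,\mesh(i,j))$; fix such a witness $v_{i,j}$ and let $r_{i,j}\in[r]$ denote its row. Since $r_{i,j}$ takes at most $h$ values, a first bipartite Ramsey reduction via \cref{gridramsey} yields subsequences $I_0\subseteq I$, $J_0\subseteq J$ of size $U_{h}(n)$ on which $r_{i,j}=r'$ is constant for some $r'\le r$; in particular $v_{i,j}\in A[i,r']$ throughout.

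\textbf{Injectivity dichotomy.} The cells of $\gc A$ are pairwise disjoint, so witnesses in distinct columns are automatically distinct. A second application of \cref{gridramsey} to the equality relation $v_{i,j}=v_{i,j'}$ (which, being symmetric in $j,j'$, stabilizes to a single two-valued dichotomy on a further subgrid) then splits the situation into two regimes. \emph{Case A}: within every column all the $v_{i,j}$ are pairwise distinct, so $(i,j)\mapsto v_{i,j}$ is globally injective and defines a mesh $\mesh_2$ embedding into row $r'$ of $\gc A$. \emph{Case B}: within every column all the $v_{i,j}$ coincide, giving a function $a\from I_0\to V(G)$ with $v_{i,j}=a(i)$; here I set $\mesh_2:=\mesh_1$ and will use $a$ to witness verticality.

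\textbf{Regularity and conducting.} A final stratum of Ramsey reductions stabilizes the relevant atomic $G^+$-types. In Case A, apply \cref{gridramsey} with $\ell_1=\ell_2=2$ to the coloring $((i,j),(i',j'))\mapsto\atp_{G^+}(\mesh(i,j),v_{i',j'})$ to obtain submeshes on which $(\mesh_1,\mesh_2)$ is regular in $G^+$. In Case B, apply \cref{gridramsey} with $\ell_1=2,\ell_2=1$ to $(i,i',j)\mapsto\atp_{G^+}(\mesh(i,j),a(i'))$ so that this type depends only on $\otp(i,i')$, and separately stabilize $\atp_{G^+}(\mesh(i,j),\mesh(i',j'))$ so that $(\mesh_1,\mesh_1)$ is regular in $G^+$. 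Non-homogeneity now follows from the $\alpha$-discrepancy forced by the minimality hypothesis: for $i'<i$ in the reduced sequence we have $v_{i',j'}\in A[i',\le r]$, hence $\neg\alpha(v_{i',j'},\mesh(i,j))$, whereas $\alpha(v_{i,j},\mesh(i,j))$ holds, and analogously for $a(i')$ in Case B; in Case B non-homogeneity of $(\mesh_1,\mesh_1)$ is moreover automatic from the $=/\neq$ dichotomy between the diagonal and the rest of the injective mesh. Invoking \cref{lem:ramsey-forget-colors-conducting} and \cref{lem:ramsey-forget-colors-vertical} transports conducting and verticality from $G^+$ down to $G$. A bounded composition of Ramsey functions, whose parameters are controlled by $h$ (the row count) and by $k$ (the number of atomic 2-types in $G^+$), gives the claimed order $U_{k,h}(n)$.

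\textbf{Main obstacle.} The substantive point is the injectivity dichotomy: if the witnesses $v_{i,j}$ fail to be pairwise distinct within some column, they must, after Ramsey, be \emph{identically} equal there, so the failure of $\mesh_2$ to define an injective function is exactly compensated by the fact that the shared per-column witness $a(i)$ turns $\mesh$ itself into a vertical mesh. Arranging the Ramsey reductions so that injectivity, row-constancy, type-regularity, and column-verticality are all achieved simultaneously on a single large subgrid is the bookkeeping heart of the argument; everything else is a routine application of \cref{gridramsey} together with the passage from $G^+$ to $G$.
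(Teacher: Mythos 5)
Your proposal is correct and follows essentially the same approach as the paper: pick $\alpha$-witnesses $v_{i,j}\in A[i,\le r]$, apply Ramsey to stabilize types and row, and exploit the disjointness of the insulator's columns to get the dichotomy ``witness map is injective and row-embedded'' versus ``one witness per column, witnessing verticality of $\mesh_1$.'' The only difference is organizational: the paper performs a single bipartite-Ramsey application on the atomic type of the 4-tuple $(\mesh(i,j),\mesh(i',j'),\mesh'(i,j),\mesh'(i',j'))$ together with the row index, from which row-constancy, the injectivity dichotomy, and regularity all drop out at once, whereas you perform the same stabilizations as a bounded sequence of Ramsey passes -- a bookkeeping variation, not a different argument.
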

\begin{proof}
Note that by \cref{lem:ramsey-forget-colors-conducting} and \cref{lem:ramsey-forget-colors-vertical}, it does not matter whether we show conductivity and verticality in $G$ or $G^+$.
For every $i\in I$ and $j\in J$, pick a vertex $\mesh'(i,j)\in A[i,\le r]$ 
such that $G^+ \models \alpha(\mesh'(i,j),\mesh(i,j))$.
Note that \(M\) is not necessarily a mesh, as it may not be injective.

\begin{claim}\label{clm:ramsey-ramsey-application}
    There are sequences $I'\subset I$ and $J'\subset J$ 
    with $|I'|=|J'|\ge U_{k,h}(|I|) = U_{k,h}(n)$, and
    a row number $r'\in [r]$
    such that
    \begin{enumerate}[leftmargin= 3em, label={\textup{(R.$\arabic*$)}}]
        \item \label{itm:ramsey-tf1-injective-1}
        $\atp_{G^+}(\mesh(i,j),\mesh(i',j'),\mesh'(i,j),\mesh'(i',j'))$
         depends 
        only on $\otp(i,i')$ and $\otp(j,j')$\\ 
        for $i,i'\in I'$ and $j,j'\in J'$, and
         
        \item\label{itm:ramsey-row-embed} $\mesh'(i,j)\in A[i,r']$ for all $i\in I'$.
    \end{enumerate}
\end{claim}
The proof of the claim is a straightforward application of Bipartite Ramsey (\Cref{gridramsey}).
Readers experienced in Ramsey theory are invited to skip it.
However, due to the importance of Ramsey type arguments for this section, we include the details of an exemplary application here.
\begin{claimproof}[Proof of \cref{clm:ramsey-ramsey-application}]    
Up to renaming we can assume $I = J = [n]$.
Let $\Pi$ be the set of possible atomic types of four tuples in $k$-colored graphs. 
There exists a constant $k^* \leq \const(k,h)$ such that there is a bijection $b \from \Pi \times [r] \to [k^*]$.
We now define the coloring $c \from [n]^2 \times [n]^2 \to [k^*]$ as 
\[
    c\bigl((i,i'),(j,j')\bigr) := b\Bigl(\atp_{G^+}\bigl(\mesh(i,j),\mesh(i',j'),\mesh'(i,j),\mesh'(i',j')\bigr), \ r'\Bigr)
\]
where $r'$ is the unique row such that $\mesh'(i,j)\in A[i,r']$.
Applying Bipartite Ramsey (\Cref{gridramsey}) to the defined coloring yields sequences $I'\subset I$ and $J'\subset J$ 
with $|I'|=|J'|\ge U_{k,h}(|I|) = U_{k,h}(n)$
such that $c\bigl((i,i'),(j,j')\bigr)$ depends only on $\otp(i,i')$ and $\otp(j,j')$
for $i,i'\in I'$ and $j,j'\in J'$.
By the construction of the coloring, this proves \ref{itm:ramsey-tf1-injective-1}.
To see that the row containing $\mesh'(i,j)$ is the same for all $i\in I$ and $j\in J$, notice that 
\[
    c\bigl((i,i),(j,j)\bigr) = c\bigl((i',i'),(j',j')\bigr) \quad
    \text{for all $i,i'\in I'$ and $j,j'\in J'$.}
\]
Therefore, $\mesh(i,j)$ and $\mesh(i',j')$ are in the same row,
which proves \ref{itm:ramsey-row-embed}.
\end{claimproof}

Let $I'$, $J'$, and $r'$ be as from the previous claim. We set $\mesh_1 := \mesh|_{I'\times J'}$ and $\mesh_2 := \mesh'|_{I'\times J'}$.

\begin{claim}
    Either $\mesh_1$ is vertical, or
    $\mesh_2$ is a mesh (that is, $\mesh_2$ is injective).
\end{claim} 
\begin{claimproof}
    Assume $|I'| > 3$, as otherwise $\mesh_1$ is vertical by definition.
    Assume that $\mesh_2$ is not injective.
    Then there exist distinct pairs $(i,j), (i',j') \in I' \times J'$ such that $\mesh_2(i,j) = \mesh_2(i',j')$.
    Since $\mesh_2(i,j)$ (equivalently $\mesh_2(i',j')$) is from the $i$th (equivalently $i'$th) column of~$A$, we have that $i = i'$. Hence, $j \neq j'$.
    
    By \ref{itm:ramsey-tf1-injective-1} we have $a(i) := \mesh_2(i,j) = \mesh_2(i,j')$ for all $i\in I'$ and $j,j' \in J'$.
    Now also by \ref{itm:ramsey-tf1-injective-1},
    $\atp_{G^+}(\mesh_1(i,j),a(i'))$ depends only on $\otp(i,i')$ for all $i,i'\in I$ and~$j\in J'$.
    Finally, by \eqref{eq:first-nonempty} and $|I'|>3$, $\atp_{G^+}(\mesh_1(i,j),a(i'))$ is not the same
    for all $i,i'\in I$ and~$j\in J'$.
    In summary: $a(\cdot)$ witnesses that $\mesh_1$ is vertical.
\end{claimproof}
Assume now the mesh $\mesh_1$ is vertical.
By \ref{itm:ramsey-tf1-injective-1}, the pair $(\mesh_1,\mesh_1)$ is regular.
As witnessed by the equality type, the pair is not homogeneous and therefore conducting.
We can return $(\mesh_1,\mesh_1)$.

Otherwise, $\mesh_2$ is a mesh and
embeds into the row $r'$ of $\gc A$ by \ref{itm:ramsey-row-embed}.
Furthermore, the pair $(\mesh_1,\mesh_2)$ is conducting: regularity follows from \ref{itm:ramsey-tf1-injective-1} and non-homogeneity from \eqref{eq:first-nonempty}.
\end{proof}

\begin{lemma}[$\spadesuit$ in \cref{fig:transformer-in-insulator}]\label{lem:ramsey-row1}
    Let $G$ be a graph containing an insulator
    $\gc A$ of cost $k$ and height $h$ and let $\mesh$ be a mesh of order $n$ that embeds into row $1$ of $\gc A$.
    There exists a conductor
    $\mesh_1,\ldots,\mesh_t$ for \(t \le h\) of order $U_{k,h}(n)$ such that $\mesh_1$ is a submesh of $\mesh$ and $\mesh_t$ is vertical.
\end{lemma}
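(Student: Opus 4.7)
My plan is to build the conductor by stepping along shortest paths in the flip $H$ provided by property \ref{itm:ordered}. If $\gc A$ is orderless, then by \ref{itm:orderless} every cell $A[i,1]$ is a singleton, so an injective $\mesh$ embedding into row~$1$ forces $|J|=1$, whence $n=1$, $\mesh$ is trivially vertical (as $|I|=|J|\le 3$), and the length-one conductor $\mesh_1 := \mesh$ suffices. In the ordered case, I invoke \ref{itm:ordered} to fix a radius $r < h$, a $k$-flip $H$ of $G$, vertices $b(v) \in N_r^H[v]$ for $v \in A[*,1]$ whose closed $r$-neighborhoods in $H$ are pairwise disjoint, together with external vertices $\{c_i\}$ and a symbol $\sim \in \{\le, \ge\}$ satisfying $G \models E(c_{i'}, b(v)) \iff i' \sim i$ for $v \in A[i,1]$.

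For each $(i,j) \in I \times J$, fix a shortest $H$-path $p^{ij}_0 = \mesh(i,j), p^{ij}_1, \ldots, p^{ij}_{s_{ij}} = b(\mesh(i,j))$ of length $s_{ij} \le r$; by disjointness of the $r$-neighborhoods, paths corresponding to distinct $(i,j)$ are vertex-disjoint, and on each path all vertices are distinct. Applying bipartite Ramsey (\cref{gridramsey}) to $I \times J$, I extract subsequences $I' \subseteq I$ and $J' \subseteq J$ of size $U_{k,h}(n)$ on which (i) the path length $s_{ij}$ is a common constant $s \le r$, (ii) for each $l \in \{0, \ldots, s\}$ the color $X_l \in \KK$ of $p^{ij}_l$ in the flip-partition is independent of $(i,j)$, and (iii) all atomic types $\atp_G(p^{ij}_l, p^{i'j'}_{l'})$ depend only on $(l, l', \otp(i,i'), \otp(j,j'))$. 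Set $\mesh_l(i,j) := p^{ij}_{l-1}$ for $l = 1, \ldots, s+1$; these are well-defined injective meshes of order $U_{k,h}(n)$, the first is a submesh of $\mesh$, and the candidate conductor has length $t := s+1 \le r+1 \le h$.

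It remains to check that $\mesh_1, \ldots, \mesh_{s+1}$ is indeed a conductor with vertical last mesh. Verticality of $\mesh_{s+1}(i,j) = b(\mesh(i,j))$ is witnessed by $a(i') := c_{i'}$, since $\atp_G(\mesh_{s+1}(i,j), c_{i'})$ depends only on whether $i' \sim i$, hence on $\otp(i, i')$, and is non-constant as $\sim \in \{\le, \ge\}$. The main obstacle is establishing non-homogeneity of each consecutive pair $(\mesh_l, \mesh_{l+1})$ in $G$ (regularity being immediate from the Ramsey step). In $H$, on-diagonal pairs $(p^{ij}_{l-1}, p^{ij}_l)$ are consecutive on a common path and hence adjacent, while off-diagonal pairs $(p^{ij}_{l-1}, p^{i'j'}_l)$ with $(i,j) \ne (i',j')$ satisfy $\dist_H(p^{ij}_{l-1}, p^{i'j'}_l) \ge (2r+1) - (l-1) - l = 2r - 2l + 2 \ge 2$ by the triangle inequality and the disjoint-ball property (using $l \le s \le r$), so they are non-adjacent. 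Since $H$ is a $\KK$-flip of $G$ and the endpoints of every pair carry the uniform colors $X_{l-1}, X_l$, the $G$-adjacency on-diagonal equals the complement of the $G$-adjacency off-diagonal regardless of which flip bit applies; this diagonal pattern is non-homogeneous in $G$, yielding conductivity.
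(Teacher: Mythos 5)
Your proposal follows the same overall route as the paper: the orderless case trivializes (singleton cells force $n=1$, a length-one conductor suffices), and the ordered case invokes \ref{itm:ordered} to obtain the flip $H$, the disjoint $r$-balls, the shortest paths of length $\le r < h$, the $b(\cdot)$ and $c_i$ vertices, and the order symbol $\sim$, after which Ramsey cleans the paths into a conductor whose last mesh is vertical. A few cosmetic differences: you reason directly with atomic types in $G$ (controlling flip-colors separately and arguing about the flip bit), whereas the paper moves to a $k$-colored $G^+$ in which $H$-adjacency is quantifier-free definable and then invokes \cref{lem:ramsey-forget-colors-conducting,lem:ramsey-forget-colors-vertical} to transfer back; and you explicitly make the path lengths $s_{ij}$ constant via Ramsey, a point the paper glosses over. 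Your explicit distance estimate $\dist_H(p^{ij}_{l-1},p^{i'j'}_l)\ge 2r-2l+2\ge 2$ is a more careful justification of the off-diagonal non-adjacency than the paper's ``paths through disjoint balls.''

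There is, however, a genuine gap in your verticality argument. Your Ramsey step~(iii) controls only atomic types between the path vertices, $\atp_G(p^{ij}_l,p^{i'j'}_{l'})$; the $c_i$ vertices from \ref{itm:ordered} are not included in the tuples being homogenized. You then assert that $\atp_G(\mesh_{s+1}(i,j),c_{i'})$ ``depends only on whether $i'\sim i$.'' The $E$-literal indeed depends only on $i'\sim i$ by \ref{itm:ordered}, but an atomic type also contains the equality literal, and whether $b(\mesh(i,j))=c_{i'}$ is neither controlled by your Ramsey application nor ruled out by \ref{itm:ordered} (which permits the $b$- and $c$-vertices to coincide). If, say, $b(\mesh(i_1,j_1))=c_{i_1'}$ while $b(\mesh(i_2,j_2))\ne c_{i_2'}$ for tuples with the same $\otp(i,i')$, verticality fails. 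The paper closes this by homogenizing the atomic type of the enlarged tuple $(\pi(i,j),c(i),\pi(i',j'),c(i'))$, so that equalities involving the $c$-vertices are also uniform; your Ramsey application needs to be augmented the same way.
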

\begin{proof}
    Let $I,J$ be the sequences indexing the mesh $\mesh$.
    Let $A$ be the grid of $\gc A$.
    Assume first that $\gc A$ is orderless.
    Then by the insulator property~\ref{itm:orderless}, each cell in row $1$ of $\gc A$ contains only a single vertex. As $\mesh$ embeds into this row, it has order $1$. Then $\mesh$ is vertical and the conductor consisting only of $\mesh$ satisfies the conditions of the lemma. 

    Assume now that $\gc A$ is ordered.
    By \ref{itm:ordered}, there exists a $k$-flip $H$ of $G$ and some radius $r<h$ such that the radius-$r$ balls around the vertices in $A[*,1]$ are pairwise disjoint.
    Moreover, there are vertices   
        $\{b(v) \in N_r^{H}[v] : v\in A[*,1]\}$ and 
        $\{c_i \in V(G): i\in I\}$
        and a symbol ${\sim} \in \{{\leq},{\geq}\}$ such that for all $i,j \in I$ and $v \in A[j,1]$
        \begin{center}
            $G \models E(c_i, b(v))$
            if and only if
            $i \sim j$. 
        \end{center}
        Let $G^+$ be the $k$-coloring of $G$ in which the edge relation of the flipped graph $H$ can be expressed by a quantifier-free formula.
        For each $i \in I$ and $j \in J$ let $\pi(i,j)$ be the tuple of vertices forming a shortest path from $\mesh(i,j)$ to $b(\mesh(i,j))$ in $H$.
        By \ref{itm:ordered}, these paths have equal length for all \(i,j\) and consist of at most $h$ vertices.
        By Bipartite Ramsey (\Cref{gridramsey}),
        there exist sequences $I' \subseteq I$ and $J' \subseteq I$ of length at least $U_{k,h}(n)$ such that 

        \begin{center}
            $\atp_{G^+}(\pi(i,j),c(i),\pi(i',j'),c(i'))$
            depends 
            only on $\otp(i,i')$ and $\otp(j,j')$

            for all $i,i'\in I'$ and $j,j'\in J'$.
        \end{center}
        For distinct pairs $(i,j)$ and $(i',j')$ in $I' \times J'$, $\pi(i,j)$ and $\pi(i',j')$ have no vertex in common, as they stem from two disjoint balls in $H$.
        Therefore, each of the functions $\mesh_1, \ldots, \mesh_h \from I' \times J' \to V(G)$, where $\mesh_t(i,j)$ is defined as the $t$th component of $\pi(i,j)$, is injective and forms a mesh.
        By construction, $\mesh_1$ is a submesh of $\mesh$.
        By Ramsey and \ref{itm:ordered}, $\mesh_h$ is vertical.
        We next show that $\mesh_1, \ldots, \mesh_h$ is a conductor in $G^+$.
        By Ramsey, any pair of successive meshes $(\mesh_t,\mesh_{t+1})$ in the sequence is regular in $G^+$. It remains to show that $(\mesh_t,\mesh_{t+1})$ is not homogeneous in $G^+$.
        Consider two distinct pairs $(i,j), (i',j') \in I' \times J'$.
        As $\pi(i,j)$ and $\pi(i',j')$ are constructed via paths through disjoint balls in $H$, we have that 
        $\mesh_t(i,j)$ is adjacent to $\mesh_{t+1}(i,j)$ and non-adjacent to $\mesh_{t+1}(i',j')$ in $H$.
        It follows that $(\mesh_t,\mesh_{t+1})$ is not homogeneous in~$G^+$.
        Therefore, $(\mesh_t,\mesh_{t+1})$ is conducting in~$G^+$.
        Hence, $\mesh_1, \ldots, \mesh_h$ is a conductor in $G^+$, and by \cref{lem:ramsey-forget-colors-conducting} and \cref{lem:ramsey-forget-colors-vertical} also in~$G$.
\end{proof}

%\janin{in the following lemma and in \Cref{lem:down-down-negative}: do we need \(A[i',\le r]\) or is \(A[i',*]\) fine?}
\begin{lemma}[$\clubsuit_1$ in \cref{fig:transformer-in-insulator}]\label{lem:down-down}
    Let $G$ be a graph containing an insulator $\gc A$ of cost $k$ and height $h$ with grid $A$ and indexed by a sequence \(K\).
    Let $\mesh\from I\times J\to V(G)$ be a mesh of order \(n\) in $G$ with $I \subseteq K$.
    Let $\alpha(x,y)$ be a quantifier-free formula in a $k$-coloring $G^+$ of $G$ such that for all $i\in I,j\in J$

    $$i=\min\setof{i'\in I}{\exists v\in A[i',*]\ :\ G^+ \models \alpha(v,\mesh(i,j))}.$$
Then there is a conductor $\mesh_1,\ldots,\mesh_t$ of order $U_{k,h}(n)$ and length at most $2h$ in $G$ such that $\mesh_1$ is a submesh of $\mesh$ and $\mesh_t$ is vertical in $G$.
\end{lemma}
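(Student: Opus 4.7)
The plan is to iteratively apply \cref{lem:one-step-down} ($\blacklozenge$) to walk down one row of the insulator $\gc A$ at a time, and then cap the resulting conductor off with a single invocation of \cref{lem:ramsey-row1} ($\spadesuit$) as soon as the bottom row is reached (or earlier, if a vertical mesh is produced). As a preparation, I would first shrink $I$ to exclude the first and last elements of $K$. This shrinking preserves the minimality hypothesis of the lemma --- for any $i \in I$ already present, the minimum witness-column over $I$ equals $i$, and after passing to $I' \subseteq I$ still containing $i$, the minimum over $I'$ is still $i$ --- and it guarantees that every column index encountered during the construction lies in an interior column of $\gc A$, which is what we need to invoke \ref{itm:adj-different-rows}.

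For the first descent, apply \cref{lem:one-step-down} to $\mesh$ with the given formula $\alpha$ and row bound $r = h$, obtaining a conducting pair $(\mesh', \mesh_2)$ where $\mesh_2$ is either vertical (we are done) or embeds in some row $r_1 \le h$. Assuming by induction that we have reached a mesh $\mesh_s$ embedding in row $r_s \ge 2$, I would then apply \cref{lem:one-step-down} again with the formula $\alpha'(x,y)$ expressing adjacency in the flipped graph $G' = G \oplus_\KK F$ (this is quantifier-free in a bounded refinement of $G^+$ that records the $\KK$-classes), together with row bound $r_s - 1$. The minimality hypothesis is verified as follows: the existence of a witness in column $i$ follows from \ref{itm:rootedness}, since $\mesh_s(i,j) \in A[i,r_s]$ has a $G'$-neighbor in $A[i, r_s - 1]$. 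For minimality, let $i'' \in I$ with $i'' < i$. Any candidate witness $v \in A[i'', r_s - 1]$ is non-adjacent to $\mesh_s(i,j)$ in $G'$ by property \ref{itm:adj-bot-left}, applied in the ``swapped'' form with $u := v$ (whose row $r_s - 1 < h$) and noting that $\mesh_s(i,j) \in A[{>}i'', r_s] = A[{>}i'', (r_s-1)+1]$. Any candidate $v \in A[i'', \le r_s - 2]$ lies in $\mathrm{int}(A)$ (by the preliminary trimming and since its row is at most $h-2$), so \ref{itm:adj-different-rows} forces $v$ to be non-adjacent to $\mesh_s(i,j)$, whose row differs from $v$'s by at least two.

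Each iteration strictly decreases $r_s$, so the descent terminates in at most $h$ steps, yielding a partial conductor of length at most $h+1$ that ends either in a vertical mesh (which we return) or in a mesh embedded in row $1$. In the latter case, \cref{lem:ramsey-row1} extends the conductor by at most $h$ more meshes, ending in a vertical one; gluing this to the preceding segment by restricting all earlier meshes to the common sub-indexing (with conductivity preserved by \cref{obs:ramsey-subconductors}) yields a total conductor length of at most $2h$. The order stays at $U_{k,h}(n)$ because only a bounded number (in terms of $h$ and $k$) of Ramsey-style reductions are applied. The main subtle point is verifying minimality in the edge case $r_s = h$, where \ref{itm:adj-bot-left} does not directly constrain $\mesh_s(i,j)$ itself (since its row equals $h$); the ``swapped'' application of \ref{itm:adj-bot-left} sidesteps this neatly, making the uniform inductive argument go through.
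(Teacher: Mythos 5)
Your proof is correct and follows essentially the same structural plan as the paper's: apply \cref{lem:one-step-down} once with the given formula $\alpha$ and row bound $h$, then iterate with the $G'$-adjacency formula and descending row bounds, and finally cap with \cref{lem:ramsey-row1} once a mesh embedded in row $1$ is reached. Where you add genuine value is in the verification of the minimality hypothesis of \cref{lem:one-step-down} for the inductive step: the paper's write-up cites only \ref{itm:rootedness} (existence of a witness in column $i$) and \ref{itm:adj-bot-left} (no witness in $A[i_0,r_s-1]$ for $i_0<i$), but the hypothesis of \cref{lem:one-step-down} requires the absence of witnesses in \emph{all} of $A[i_0,\le r_s-1]$, i.e.\ including rows strictly below $r_s-1$. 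Your preliminary trimming of $I$ so that every remaining column index is interior to $K$, combined with \ref{itm:adj-different-rows} applied to the would-be witness $v$ (which the trimming guarantees lies in $\int(A)$), supplies exactly this missing piece. You also correctly identify and neatly handle the edge case $r_s=h$, where $\mesh_s(i,j)$ is in the top row and so does not satisfy the preamble of \ref{itm:adjacency}, via the ``swapped'' form of \ref{itm:adj-bot-left}. These details are worth spelling out. One small omission: when you shrink $I$ you should shrink $J$ by the same amount so that the restriction of $\mesh$ still has equal index-sequence lengths and is therefore a mesh in the sense of the paper; this drops at most two more columns and is absorbed in the $U_{k,h}(n)$ bound.
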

\begin{proof}
    Note that by \cref{lem:ramsey-forget-colors-conducting} and \cref{lem:ramsey-forget-colors-vertical}, it does not matter whether we show conductivity and verticality in $G$ or $G^+$.
    Denote $\mesh_0:=\mesh$ and $r_0 := h + 1$.
    We inductively construct a sequence of meshes $\mesh_1, \mesh_2,\ldots,\mesh_t$ 
        where for each $s=1, 2,\ldots,t$, the mesh $\mesh_s$ satisfies the following conditions:
        \begin{itemize}
            \item $\mesh_s$ has order $U_{s,k,h}(n)$,
            
            \item there is a row $r_s\in [r_{s-1} - 1]$
            such that 
            $\mesh_s$ embeds into row $r_s$ of $\gc A$,

            \item there is a submesh $\mesh_{s-1}'$ of $\mesh_{s-1}$ such that the pair $(\mesh_{s-1}', \mesh_s)$ is conducting.
        \end{itemize}
        To construct $\mesh_1$, we apply \cref{lem:one-step-down} to the mesh $\mesh_0$ and the formula $\alpha(x,y)$ to obtain
        a conducting pair of meshes $(\mesh_0',\mesh_1)$ of order $U_{k,h}(n)$
        where $\mesh_0'$ is a submesh of $\mesh_0$.
        If $\mesh_1$ is vertical then $\mesh_0', \mesh_1$ is the desired conductor, and we conclude the proof of the lemma.
        Otherwise, $\mesh_1$ embeds into some row $r_1 \in [h]$ of $\gc A$ and satisfies the induction hypothesis.

    Suppose the sequence $\mesh_1,\ldots,\mesh_s$ has been constructed for some $s\ge 1$.
    Below we either extend the sequence by one, or terminate the process.

    Assume first $r_s > 1$.
    As the sequence $r_1,r_2,\ldots,r_s$ is strictly decreasing we have $s < h$.
    Let~$\beta(x,y)$ be the formula expressing adjacency in the flip $G'$ associated to~$\gc A$. As $G'$ is a $k$-flip of $G$, $\beta$ is expressible in a $k$-coloring of $G$.
    Let $I_s$ and $J_s$ be the sequences indexing $\mesh_s$.
    For every $i_0 < i \in I_s$ and $j \in J_s$, we have that $\mesh_s(i,j)$ has a $\beta$-neighbor in $A[i,r_s-1]$ but no $\beta$-neighbor in $A[i_0,r_s-1]$.
    This is by the insulator property \ref{itm:orderless} if $\gc A$ is orderless and by \ref{itm:rootedness} and \ref{itm:adj-bot-left} if $\gc A$ is ordered.
    Hence, we can apply \cref{lem:one-step-down} to
    the mesh $\mesh_s$, the row number $r_{s}-1$, and 
    the formula $\beta(x,y)$.
    We obtain
    a conducting pair of meshes $(\mesh_s',\mesh_{s+1})$ of order $U_{s,k,h}(n)$
    where $\mesh_s'$ is a submesh of $\mesh_s$.
    If $\mesh_{s+1}$ is vertical then we conclude the proof of the lemma returning the conductor
    \[
        \mesh_0|_{I'\times J'},
        \mesh_1|_{I'\times J'},
        \ldots,
        \mesh_{s}|_{I'\times J'},
        \mesh_{s+1}
    \]
    of length at most $h + 1$, where $I' \subseteq I$ and $J' \subseteq J$ are the indexing sequences of $\mesh_{s+1}$.
    Otherwise, $\mesh_{s+1}$ embeds into some row $r_{s+1} \in [r_s - 1]$ of $\gc A$ and satisfies the induction hypothesis. We continue the process. 

    Assume now $r_s = 1$. Note that $s \leq h$.
    Then $\mesh_s$ embeds into the first row of $\gc A$, and we can apply \cref{lem:ramsey-row1}.
    This yields a conductor $\mesh_1^\star,\ldots,\mesh_{t^\star}^\star$ of length $t^\star \leq h$
    indexed by sequences $I' \subseteq I$ and $J' \subseteq J$ of length $U_{s,k,h}(n)$ such that $\mesh_1^\star$ is a submesh of $\mesh_s$, and $\mesh_{t^\star}^\star$ is vertical.
    Now we conclude the proof of the lemma returning the following conductor of length at most $2h$:
    \[
        \mesh_0|_{I'\times J'},
        \ldots,
        \mesh_{s-1}|_{I'\times J'},
        \mesh^\star_{1},
        \ldots,
        \mesh^\star_{t^\star}.
        \qedhere
    \]
\end{proof}

\begin{lemma}\label{lem:mono-pattern-to-transformer}
    Assume $G$ contains a mono-prepattern of order $n$ on an insulator $\gc A$ of cost $k$ and height $h$.
    Then $G$ contains a transformer of order $U_{k,h}(n)$ and length at most $2h$.
\end{lemma}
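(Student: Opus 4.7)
The plan is to exhibit a transformer by treating the $b_{i,j}$ vertices as a mesh that is already horizontal, pushing it downward through the insulator with Lemma~\ref{lem:down-down} to reach a vertical mesh, and then reversing the resulting conductor.

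First I would define $M\colon I\times J \to V(G)$ by $M(i,j) := b_{i,j}$. Since $b_{i,j}\in A[i,*]$ and distinct columns of the insulator are disjoint, $M$ is injective across columns; within a single column, $b_{i,j}=b_{i,j'}$ for $j\neq j'$ would force $j''\sim j \iff j''\sim j'$ for all $j''\in J$, contradicting the fact that $\sim\in\{=,\neq,\le,<,\ge,>\}$ distinguishes distinct indices. So $M$ is a mesh of order $n$. Next, I apply Bipartite Ramsey (\Cref{gridramsey}) to the coloring $(i;j,j')\mapsto \atp_G(b_{i,j},c_{j'})$ whose codomain has only a constant number of values, obtaining subsequences $I'\subseteq I$, $J'\subseteq J$ of size $U(n)$ such that $\atp_G(b_{i,j},c_{j'})$ depends only on $\otp(j,j')$ for all $i\in I'$, $j,j'\in J'$.

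I then verify that the submesh $M':=M|_{I'\times J'}$ is horizontal, that is, that $(M')^\trans$ is vertical with witness $a(j):=c_j$. By the previous Ramsey reduction, $\atp_G((M')^\trans(j,i),a(j'))=\atp_G(b_{i,j},c_{j'})$ depends only on $\otp(j,j')$. Non-uniformity holds because $E(c_{j'},b_{i,j})\iff j'\sim j$ takes different truth values across the three order types $\otp(j,j')\in\{<,=,>\}$ for every admissible $\sim$. Now I apply \Cref{lem:down-down} to the mesh $M'$ with the quantifier-free formula $\alpha(x,y):=(x=y)$ (living in any coloring of $G$, in particular the insulator's $k$-coloring). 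The precondition is immediate: $\exists v\in A[i',*]\colon v=M'(i,j)$ holds exactly when $b_{i,j}\in A[i',*]$, which happens only at $i'=i$. This yields a conductor $\mesh_1,\ldots,\mesh_t$ of order $U_{k,h}(n)$ and length $t\le 2h$ in $G$, where $\mesh_1$ is a submesh of $M'$ and $\mesh_t$ is vertical.

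Finally, since horizontality is inherited by submeshes (\Cref{obs:submesh}), $\mesh_1$ is horizontal. Reversing the conductor to $\mesh_t,\mesh_{t-1},\ldots,\mesh_1$ preserves regularity and non-homogeneity of each consecutive pair (atomic types of ordered pairs of graph vertices are symmetric in their arguments), so the reversed sequence is again a conductor; its first mesh $\mesh_t$ is vertical and its last mesh $\mesh_1$ is horizontal, giving the desired transformer of order $U_{k,h}(n)$ and length at most $2h$. The main subtlety is the preliminary Ramsey step needed to control the equality part of $\atp(b_{i,j},c_{j'})$, since a priori a $c_{j'}$ might coincide with some $b_{i,j}$ inside the insulator for some exceptional $i$; the Ramsey reduction regularizes these equality types across $I'$ and is what makes the witness $a(j)=c_j$ actually certify horizontality on $M'$.
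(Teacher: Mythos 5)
Your proof is correct and takes essentially the same route as the paper: form the mesh $M(i,j)=b_{i,j}$, regularize with bipartite Ramsey so that $\atp_G(b_{i,j},c_{j'})$ depends only on $\otp(j,j')$ (making $M|_{I'\times J'}$ horizontal with witness $a(j)=c_j$), invoke \cref{lem:down-down} with $\alpha(x,y):=(x=y)$ to obtain a conductor ending in a vertical mesh, and reverse. The only difference is that you spell out a few checks the paper leaves implicit (injectivity of $M$, that conducting pairs remain conducting under swapping, and why the Ramsey step is needed to tame possible coincidences $c_{j'}=b_{i,j}$), all of which are correct.
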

\begin{proof}
    Let $(c_j:j\in J)$ and $(b_{i,j} :i\in I,j\in J)$ 
    form a mono-prepattern of order $n=|I|=|J|$ on the insulator $\gc A$.
    Define the function $\mesh\from I\times J\to V(G)$ where $\mesh(i,j)=b_{i,j}$.
    By definition of a mono-prepattern, all the $b_{i,j}$ are distinct, so $\mesh$ is a mesh.
    By Bipartite Ramsey (\Cref{gridramsey}) and \Cref{def:mono-pattern},
    there exist sequences $I' \subseteq I$ and $J' \subseteq I$ of length at least $U(n)$ such that 
    $\atp_G(\mesh(i,j),c_{j'})$ depends only on $\otp(j,j')$ for all $i \in I'$ and $j,j'\in J'$.
    Since $\atp_G(\mesh(i,j),c_{j'})$ is not the same for all $i\in I'$ and $j,j'\in J'$, the submesh $\mesh|_{I' \times J'}$ is horizontal.

    We apply \cref{lem:down-down} to the mesh $\mesh|_{I' \times J'}$ and formula $\alpha(x,y) :=(x=y)$. 
    This yields a conductor
    $\mesh_1,\ldots,\mesh_t$ of length at most $2h$ and order $U_{h,k}(n)$ such that $\mesh_1$ is a submesh of $\mesh|_{I'\times J'}$, and $\mesh_t$ is vertical.
    As $\mesh|_{I'\times J'}$  is horizontal, the same holds for $\mesh_1$, and thus $\mesh_t,\ldots,\mesh_1$ is the desired transformer.
\end{proof}

\Cref{lem:down-down} is complemented by the following more technical \Cref{lem:down-down-negative}.
Together, these two lemmas accommodate the two possible choices for the symbols $\sim_1,\sim_2\in\set{=,\neq}$ in the definition of a bi-prepattern (\Cref{def:bi-pattern}).

\begin{lemma}[$\clubsuit_2$ in \cref{fig:transformer-in-insulator}]\label{lem:down-down-negative}
    Let $\gc A$ be an insulator of cost $k$ and height $h$ indexed by a sequence \(K\), let \(I,J\) be indexing sequences with \(I \subseteq K\), 
    let $\mesh\from I\times J\to V(G)$ be a mesh of order \(n\), and let $\alpha$ be a quantifier-free formula in a $k$-coloring $G^+$ of $G$, 
    such that for all $i\in I,j\in J$
    \[
        i=\min\setof{i'\in I}{\neg\exists v\in A[i',*]\ :\  G^+\models \alpha(v,\mesh(i,j))}.  
    \]
    Then there is a conductor $\mesh_1,\ldots,\mesh_t$ of order $U_{k,h}(n)$ and length at most $2h$ such that 
    either
    \begin{itemize}
        \item $\mesh_1$ is a submesh of $\mesh$ and $\mesh_t$ is vertical in \(G\), or
        \item $\mesh_1,\ldots,\mesh_t$ is a transformer in \(G\).
    \end{itemize}
    
\end{lemma}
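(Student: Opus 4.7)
My strategy is to reduce the negative min hypothesis to the positive version handled by \Cref{lem:down-down} by shifting witnesses one step to the left along $I$. Set $I^* := \tail(I)$; for each $(i,j) \in I^* \times J$, the predecessor $\tpred_I(i) < i$ lies in $I$, so the min hypothesis supplies some $w(i,j) \in A[\tpred_I(i),*]$ with $G^+ \models \alpha(w(i,j),\mesh(i,j))$; fix such a witness. Apply Bipartite Ramsey (\Cref{gridramsey}) to the coloring of pairs $(i,j),(i',j') \in I^* \times J$ by the atomic $4$-type $\atp_{G^+}(\mesh(i,j), w(i,j), \mesh(i',j'), w(i',j'))$ together with the row of $\gc A$ containing $w(i,j)$, and run the Ramsey step over the set of consecutive pairs $\{(i, \tsucc_I(i))\}$ rather than over single indices. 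The resulting subsequences $I_0 \subseteq I^*$ and $J_0 \subseteq J$ of size $U_{k,h}(n)$ satisfy $w(i,j) \in A[\tpred_I(i), r]$ for a common row $r \in [h]$, have regular $4$-types, and contain at least one pair $i_1 < i_2$ consecutive in $I$ (i.e.\ $i_2 = \tsucc_I(i_1)$). Reindex the witnesses by column: set $I'' := \{\tpred_I(i) : i \in I_0\} \subseteq K$ and define $\tilde w : I'' \times J_0 \to V(G)$ by $\tilde w(\tpred_I(i),j) := w(i,j)$.

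If $\tilde w$ is non-injective, then, mirroring the collapse argument of \Cref{lem:one-step-down}, the function $a : I_0 \to V(G)$ given by $a(i) := w(i, j^\star)$ for a suitable fixed $j^\star \in J_0$ witnesses that a further submesh of $\mesh$ is vertical: Ramsey-regularity yields order-type dependence of $\atp_{G^+}(\mesh(i,j), a(i'))$, and non-constancy follows by comparing the tautological $G^+ \models \alpha(w(i,j), \mesh(i,j))$ with the negative $\neg \alpha(v, \mesh(\tpred_I(i'), j))$ for $v \in A[\tpred_I(i'), *]$ forced by the min hypothesis at $\tpred_I(i')$. The length-one conductor consisting of this vertical submesh of $\mesh$ then satisfies the first alternative.

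Otherwise $\tilde w$ is a mesh embedding into row $r$ of $\gc A$, indexed by $I'' \subseteq K$. Insulator properties \ref{itm:rootedness} and \ref{itm:adj-bot-left} (or \ref{itm:orderless}) verify the positive min hypothesis for $\tilde w$ with the flip formula $\beta(x,y)$ expressing adjacency in $G' = G \oplus_\KK F$ at row $r-1$: each $\tilde w(i'',j) \in A[i'', r]$ has a $\beta$-neighbor in $A[i'', r-1]$ but none in $A[i^\dagger, r-1]$ for $i^\dagger < i''$ in $I''$. The iterated positive stepping-down from the proof of \Cref{lem:down-down}, together with \Cref{lem:ramsey-row1} at the bottom row, applied to $\tilde w$ then yields a conductor $\tilde w = \mesh^*_1, \ldots, \mesh^*_{t^*}$ of length at most $2h-1$ with $\mesh^*_{t^*}$ vertical. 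Passing to the final Ramsey-restricted indexing $I_0 \times J^\star_0$, set $\mesh_A := \mesh|_{I_0 \times J^\star_0}$ and $\mesh_B(i,j) := w(i,j)$ (both indexed by $I_0 \times J^\star_0$). The pair $(\mesh_A, \mesh_B)$ is regular by Ramsey; non-homogeneity comes from the preserved consecutive pair $i_1 < i_2 = \tsucc_I(i_1) \in I_0$: $\mesh_B(i_2, j) \in A[i_1, *]$ and $\mesh_A(i_1, j') \in A[i_1, *]$, so the min hypothesis at $(i_1, j')$ forces $\neg \alpha(\mesh_B(i_2, j), \mesh_A(i_1, j'))$, contradicting the tautological $\alpha(\mesh_B(i,j), \mesh_A(i,j))$ that any homogeneous type would impose. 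Thus $(\mesh_A, \mesh_B)$ is conducting, and prepending $\mesh_A$ yields the desired conductor $\mesh_A, \tilde w = \mesh^*_1, \ldots, \mesh^*_{t^*}$ of length at most $2h$ with $\mesh_1 = \mesh_A$ a submesh of $\mesh$ and the last mesh vertical, which is the first alternative.

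The second alternative in the statement is a safety net for small-order degenerate cases (e.g.\ $|I| \le 3$), where meshes are vertical or horizontal by default, allowing one to output a trivial transformer directly. The main obstacle is the initial Ramsey arrangement, which must simultaneously deliver regularity of atomic $4$-types and preserve a consecutive-in-$I$ pair inside $I_0$; this is handled by running Ramsey on consecutive pairs of $I$, which costs only a constant-factor loss in the asymptotic size $U_{k,h}(n)$ of $I_0$.
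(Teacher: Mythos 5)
Your plan is genuinely different from the paper's argument, and unfortunately it has a real gap. The paper does not shift witnesses to a predecessor column. Instead it picks a full three-parameter family $F(i^*,i,j)\in A[i^*,*]$ for \emph{all} $i^*<i$, Ramsey-regularizes the resulting types, and then performs a five-way case analysis. Its Case 5.1 (where you and the paper would agree) keeps the mesh $\mesh$ unchanged and reduces to \cref{lem:down-down} by flipping the \emph{formula} to $\beta(x,y)=x\in P\wedge\neg\alpha(x,y)$; it does not move to a witness mesh in the insulator and step down from there. The remaining Cases 3, 4, 5.2 exit immediately to a mono-prepattern and hence, via \cref{lem:mono-pattern-to-transformer}, to a transformer. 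Those cases are not degenerate: they occur whenever the $\alpha$-relation between $F(i^*,i',j')$ and $\mesh(i,j)$ genuinely varies in $i'$ or collapses to a matching, which has nothing to do with $|I|\le 3$. So your claim that the transformer alternative is merely ``a safety net for small-order degenerate cases'' misreads the structure of the lemma.

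The concrete gap in your reduction is the conducting pair $(\mesh_A,\mesh_B)$. The negative min hypothesis only forces $\neg\alpha(v,\mesh(i,j))$ for $v\in A[i,*]$; it says nothing about $v$ in columns $A[i_0,*]$ with $i_0>i$. Since $\mesh_B(i',j')=w(i',j')\in A[\tpred_I(i'),*]$, your non-homogeneity witness needs $\tpred_I(i')=i$, i.e.\ a pair consecutive \emph{in $I$}, to lie inside $I_0$. After Bipartite Ramsey on the $4$-types this is not guaranteed, and your proposed fix of ``running Ramsey on consecutive pairs'' doesn't resolve it: if you Ramsey over $I^*$-pairs of the form $(i,\tsucc_I(i))$ while still demanding the $4$-type regularity of $(\mesh,w)$-pairs, you need the $w$-column to be the predecessor \emph{within the Ramsey'd sequence}, not within $I$ — but that redefines $w$ after the fact and breaks the very regularity you just established. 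The only clean way out is to carry along witnesses $F(i^*,i,j)$ for \emph{all} $i^*<i$ and choose the column after Ramsey, which is exactly the paper's three-parameter setup and which in turn forces the Case 3/4/5.2 branches where a mono-prepattern (and hence a transformer) is the natural output rather than an obstacle. Your second-case argument also glosses over verifying the truncated-column min hypothesis $A[i',\le r-1]$ that \cref{lem:one-step-down} requires, which needs \ref{itm:adj-different-rows} and hence an interiority hypothesis that your $\tilde w$ may not satisfy when $r=h$.
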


\begin{proof}
    For all $i^*,i\in I$ with $i^*<i$ and $j\in J$,
    fix a vertex $F(i^*,i,j)\in A[i^*,*]$ such that
    \[
        G^+ \models \alpha(F(i^*,i,j), \mesh(i,j))
    \]
    By Bipartite Ramsey (\Cref{gridramsey}), there are sets $I'\subset I$ and $J'\subset J$ with $|I'|=|J'|\ge U(|I|)$,
    such that for all $i^*,i,i'\in I'$ with $i^*<i,i'$, and for all $j,j'\in J'$, 
    \begin{equation}\label{eq:ramsey-negative}
        \atp_{G^+}(F(i^*,i',j'), \mesh(i,j))
        \quad
        \text{depends only on $\otp(i,i')$ and $\otp(j,j')$}.
        \tag{$*$}
    \end{equation}
    In particular this holds for the fact whether $G^+ \models \alpha(F(i^*,i',j'), \mesh(i,j))$.
    For an exemplary application of Bipartite Ramsey that illustrates how to obtain \eqref{eq:ramsey-negative}, see the proof of \cref{clm:ramsey-ramsey-application}.
    Denote 
    \begin{multicols}{2}
    \begin{itemize}
        \item $i_{\min}:=\min(I')$,
        \item $i_{\max}:=\max(I')$,
        \item $i'_{\max}:=\max(I'\setminus i_{\max})$,
        \item $j_{\min}:=\min(J')$,
        \item $j_{\max}:=\max(J')$,
        \item $j'_{\max}:=\max(J'\setminus j_{\max})$.
    \end{itemize}
    \end{multicols}
    For convenience, we redefine \(I := I' \setminus \{i_{\min},i_{\max},i'_{\max}\}\),
    and \(J := J' \setminus \{i_{\min},i_{\max},i'_{\max}\}\).
    We do so to ensure that all elements in \(I\) and \(J\) have the same order type
    with respect to the previously chosen extremal elements.
    
    \begin{figure}[htbp]
        \centering
        \includegraphics{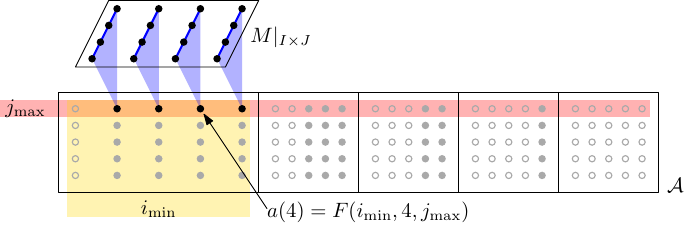}
        \caption{A depiction of \emph{Case 1}. On top: the desired vertical submesh of $\mesh$. On the bottom: the insulator $\gc A$, whose columns contain the $F(i^*,i,j)$ vertices (depicted as \textcolor{darkgray}{$\bullet$}).
        Here, the outermost columns correspond to \(i^*\)-coordinates of \(F(i^*,i,j)\), and
        within each outermost column, the inner columns and rows correspond to
        \(i\)- and \(j\)-coordinates, respectively.
        The $F(i^*,i,j)$ vertices are only defined for indices $i^* < i$, 
        thus going rightwards the columns are filled up with placeholders (depicted as \textcolor{darkgray}{$\circ$}).}
        \label{fig:ramsey-neg-case1}
    \end{figure}

    \paragraph{Case 1:}
    Assume 
    $\atp_{G^+}(F(i^*,i',j'), \mesh(i,j))$ is not the same for all 
    $i^*,i,i' \in I$ and $j,j'\in J$ with $i^*<i,i'$ and $j<j'$ (here $j<j'$ is the crucial assumption).
    In this case, we work towards proving that $M$ contains a large vertical submesh.
    The situation is depicted in \cref{fig:ramsey-neg-case1}. 
    We set $a({i'}):=F(i_{\min},i',j_{\max})$ for all $i'\in I$.
    Let us now argue that $\atp_{G^+}(a(i'),\mesh(i,j))$ is not the same for all $i,i' \in I$ and $j \in J$.

    By assumption, 
    there exist indices 
    \begin{itemize}
        \item $i_{1}^*,i_1,i_1' \in I$ and $j_1,j_1' \in J$ with $i_{1}^* < i_1, i_1'$ and $j_1 < j_1'$, and

        \item $i_{2}^*,i_2,i_2' \in I$ and $j_2,j_2' \in J$ with $i_{2}^* < i_2, i_2'$ and $j_2 < j_2'$
    \end{itemize}
    such that
    \[
        \atp_{G^+}(F(i_1^*,i'_1,j'_1), \mesh(i_1,j_1))
        \neq
        \atp_{G^+}(F(i_2^*,i'_2,j'_2), \mesh(i_2,j_2)).
    \]
    As we removed the extremal elements from \(I\) and \(J\), we have
    \begin{itemize}
        \item $\otp(i_{1}^*,i_1,i_1')= \otp(i_{\min},i_1,i_1')$ and $\otp(j_1,j_1') = \otp(j_1,j_{\max})$, as well as
        \item $\otp(i_{2}^*,i_2,i_2')= \otp(i_{\min},i_2,i_2')$ and $\otp(j_2,j_2') = \otp(j_2,j_{\max})$.
    \end{itemize}
    Now applying \eqref{eq:ramsey-negative}, we get
    \[
        \atp_{G^+}(\underbrace{F(i_{\min},i'_1,j_{\max}}_{a(i'_1)}), \mesh(i_1,j_1))
        \neq
        \atp_{G^+}(\underbrace{F(i_{\min},i'_2,j_{\max}}_{a(i'_2)}), \mesh(i_2,j_2)).
    \]
    Thus, $\atp_{G^+}(a(i'),\mesh(i,j))$ is not the same for all $i,i' \in I$ and $j \in J$.
    Additionally, as $j < j_{\max}$ for all $j \in J$, we have that $\atp_{G^+}(a(i'), \mesh(i,j))$ only depends on $\otp(i,i')$ and no longer on $j$, for all $i,i' \in I$ and $j \in J$.
    It follows that $\mesh_{I \times J}$ is vertical and forms the desired conductor (in this case of length one).

    \paragraph{Case 2:}
    Assume 
    $\atp_{G^+}(F(i^*,i',j'), \mesh(i,j))$ is not the same for all 
    $i^*,i,i'\in I$ and $j,j'\in J$ with $i^*<i,i'$ and $j>j'$  (here $j>j'$ is the crucial assumption).
    We proceed as in the previous case, but with $j_{\min}$ instead of \(j_{\max}\).

    \begin{figure}[htbp]
        \centering
        \includegraphics{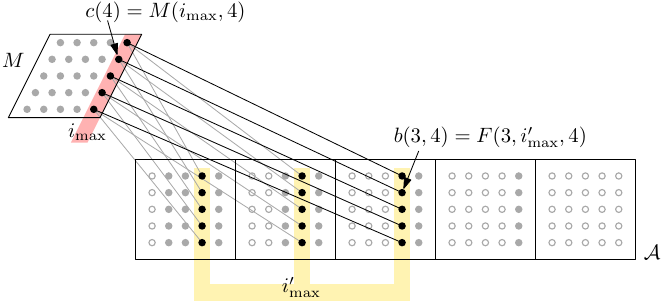}
        \caption{A depiction of the mono-prepattern we discover in \emph{Case 3}. In this example, we have that $G\models E(b(i^*,j'),c(j)) \Leftrightarrow j = j'$.}
        \label{fig:ramsey-neg-case3}
    \end{figure}

    \paragraph{Case 3:}
    Assume 
    $\atp_{G^+}(F(i^*,i',j'), \mesh(i,j))$ is not the same for all
    $i^*,i,i' \in I$ and $j,j'\in J$ with $i^* < i' < i$ (here $i'<i$ is the crucial assumption).
    Now let 
    \begin{center}
        $c(j) := \mesh(i_{\max},j)\quad$ and $\quad b(i^*,j') := F(i^*,i_{\max}',j') \in A[i^*,*]\quad$ for all $i^* \in I$ and $j \in J$.    
    \end{center}
    Our goal is to show that the ranges of $b(\cdot,\cdot)$ and $c(\cdot)$ form a mono-prepattern on $\gc A$.
    The situation is depicted in \cref{fig:ramsey-neg-case3}.
    As in \emph{Case 1}, we argue via \eqref{eq:ramsey-negative} that for all $i^* \in I$ and $j,j'\in J$ 
    \begin{enumerate}[leftmargin= 3em, label={(A.$\arabic*$)}]
        \item\label{itm:ramsey-atp1} $\atp_{G^+}(b(i^*,j'),c(j))$ depends only on $\otp(j,j')$, and
        \item\label{itm:ramsey-atp2} $\atp_{G^+}(b(i^*,j'),c(j))$ is not the same.
    \end{enumerate}
    We claim that for all $i^* \in I$ and $j,j'\in J$ then also
    \begin{enumerate}[leftmargin= 3em, label={(E.$\arabic*$)}]
        \item\label{itm:ramsey-adj1} $G \models E(b(i^*,j'),c(j))$ depends only on $\otp(j,j')$, and
        \item\label{itm:ramsey-adj2} $G \models E(b(i^*,j'),c(j))$ is not the same.
    \end{enumerate}
    By definition of an atomic type, \ref{itm:ramsey-atp1} implies \ref{itm:ramsey-adj1}.
    To show \ref{itm:ramsey-adj2} we argue similarly to the proof of \cref{lem:ramsey-forget-colors-conducting}:
    We know that there are indices 
    $i^*_1,i^*_2 \in I$ and $j_1,j'_1,j_2,j'_2 \in J$ and atomic types $\tau_1$ and $\tau_2$ such that 
    \[
        \tau_1 = \atp_{G^+}(\underbrace{b(i^*_1,j'_1),c(j_1)}_{ \bar u}) \neq 
        \atp_{G^+}(\underbrace{b(i^*_2,j'_2),c(j_2)}_{ \bar v}) = \tau_2.
    \]
    By \eqref{eq:ramsey-negative}, all $b(\cdot,\cdot)$ elements have the same atomic type in $G^+$ and all $c(\cdot)$ elements have the same atomic type in $G^+$.
    This means that the difference in the atomic types of $\bar v$ and $\bar v'$ must be caused by either a difference in their equality or adjacency type.
    If the difference is witnessed in the equality type, then by symmetry we can assume that 
    \[
        b(i^*_1,j'_1) = c(j_1)
        \quad
        \text{and}
        \quad
        b(i^*_2,j'_2) \neq c(j_2).
    \]
    It follows that $b(i^*,j'_1) = c(j_1)$ for all $i^* \in I$, since all of these pairs have the same order type $\otp(j'_1,j_1)$.
    This is a contradiction to the columns of $\gc A$ being disjoint.
    Therefore, the difference in the types of $\bar u$ and $\bar v$ must be witnessed by their adjacency type and we have
    \[
        G\models E(b(i^*_1,j'_1), c(j_1))
        \quad
        \text{if and only if}
        \quad
        G \not \models E(b(i^*_2,j'_2), c(j_2)),
    \]
    which proves \ref{itm:ramsey-adj2}.

    Having proven \ref{itm:ramsey-adj1} and \ref{itm:ramsey-adj2} it is easily verified that the ranges of $b(\cdot,\cdot)$ and $c(\cdot)$ form a mono-prepattern on $\gc A$.
    Applying \cref{lem:mono-pattern-to-transformer} to this mono-prepattern yields the desired transformer.

    \paragraph{Case 4:}
    Assume 
    $\atp_{G^+}(F(i^*,i',j'), \mesh(i,j))$ is not the same for all
    $i^*,i,i' \in I$ and $j,j'\in J$ with $i^* < i < i'$ (here $i<i'$ is the crucial assumption).
    We argue as in the previous case, exchanging the role of $i_{\max}$ and $i_{\max}'$.
    This means we define
    \begin{center}
        $c(j) := \mesh(i_{\max}',j)\quad$ and $\quad b(i^*,j') := F(i^*,i_{\max},j') \in A[i^*,*]\quad$ for all $i^* \in I$ and $j \in J$.
    \end{center}
    As in \emph{Case 3}, the ranges of $b(\cdot,\cdot)$ and $c(\cdot)$ form a mono-prepattern on $\gc A$ and
    we conclude by \cref{lem:mono-pattern-to-transformer}.

    \paragraph{Case 5:}
    If none of the previous cases hold, then 
    $\atp_{G^+}(F(i^*,i',j'), \mesh(i,j))$ and in particular $G^+ \models \alpha(F(i^*,i',j'), \mesh(i,j))$
    is the same for all 
    $i^*,i,i' \in I$ and $j,j'\in J$ with $(i,j)\neq (i',j')$
    and $i^*<i,i'$.
    By \eqref{eq:ramsey-negative}, this also holds for the extremal elements that are in \(I'\) and \(J'\), but not in \(I\) and \(J\).
    Let hence $\gamma\in \set{\mathit{true},\mathit{false}}$ 
    be such that 
    \[
    G^+ \models \alpha(F(i^*,i',j'), \mesh(i,j))\iff \gamma
    \]
    for all 
    $i^*,i,i' \in I'$ and $j,j'\in J'$ with $(i,j)\neq (i',j')$
    and $i^*<i,i'$.

    \begin{figure}[htbp]
        \centering
        \includegraphics{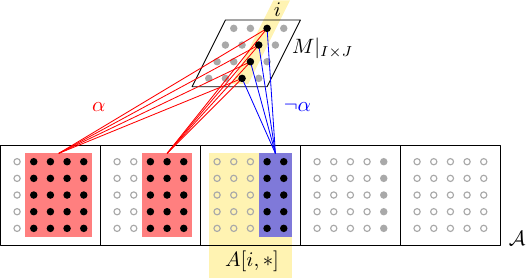}
        \caption{Illustration for \emph{Case 5.1}. 
        Depicted is the insulator $\gc A$, whose columns contain the $F(i^*,i,j)$ vertices.
        The $F(i^*,i,j)$ vertices are only defined for indices satisfying $i^* < i$, so going rightwards, the columns are filled up with placeholders (depicted as \textcolor{darkgray}{$\circ$}).
        The vertices in the $i$th column of the submesh $M\vert_{I\times J}$ are $\alpha$-connected to all the $F(i^*,i,j)$ vertices in $A[{<}i,*]$ (marked in red), but $\neg\alpha$-connected to all the $F(i^*,i,j)$ vertices in $A[i,*]$ (marked in blue).
        }
        \label{fig:ramse-neg-case-5-1}
    \end{figure}

    \paragraph{Case 5.1:} Assume $\gamma=\mathit{true}$.
    At the beginning of the proof, we chose \(F\) such that
    $G^+ \models \alpha(F(i^*,i,j), \mesh(i,j))$.
    Therefore,
    \begin{equation}\label{eq:ramsey-alpha-all-left}
        G^+ \models \alpha(F(i^*,i',j'), \mesh(i,j))\quad
        \text{ for all } 
        i^*,i,i'\in I'
        \text{ with }
        i^*<i,i'
        \text{ and }
        j,j'\in J'.
    \end{equation}
    Our goal is to reduce to \cref{lem:down-down} (\(\clubsuit_1\)).
    The situation is depicted in \cref{fig:ramse-neg-case-5-1}.
    Let \(P = \{ F(i^*,i,j) \mid i^*, i \in I', j \in J' \text{ with } i^* < i \}\) and let $G^{\plusplus}$ be the coloring of $G^+$ where the vertices of $P$ are marked with an additional fresh color predicate.
    (By the definition of colored graphs given in the preliminaries,
    we are technically required to give every vertex \emph{exactly} one color,
    and it would of course be trivial, though more cumbersome, to take this into account.)
    Define the quantifier-free formula $\beta(x,y) := x \in P \wedge \neg \alpha(x,y)$ in the signature of $G^{\plusplus}$.
    Let us now verify that 
    \begin{equation}\label{eq:ramsey-beta-first}
        i=\min\setof{i'\in I}{\exists v\in A[i',*]\ :\ G^{\plusplus} \models \beta(v,\mesh(i,j))}
    \end{equation}
    for all $i\in I,j\in J$.
    As all elements in \(I\) are smaller than \(i_{\max}\),
     for every $i\in I$, we have that the column $A[i,*]$ contains at least one element, say, $F(i,i_{\max},j_{\max}) \in P$.
    By the assumption of the lemma, $\mesh(i,j)$ is not $\alpha$-connected to any element in $A[i,*]$ and therefore $\beta$-connected to $F(i,i_{\max},j_{\max})$.
    By \eqref{eq:ramsey-alpha-all-left} and definition of $P$, $\mesh(i,j)$ is $\alpha$-connected (and therefore not $\beta$-connected) to all elements from $P$ in columns to the left of $I$, as desired.
    Having verified \eqref{eq:ramsey-beta-first}, we conclude by \cref{lem:down-down}.

    \paragraph{Case 5.2:} Assume $\gamma=\mathit{false}$.
    For all 
    $i,i',i^* \in I'$ with $i^*<i,i'$ and $j,j'\in J'$,
    as $G^+ \models \alpha(F(i^*,i,j), \mesh(i,j))$,   
    it follows that
    \[
        G^+ \models \alpha(F(i^*,i',j'), \mesh(i,j))
        \quad
        \text{ if and only if }
        \quad
        (i,j)=(i',j').
    \]
    Let $c(j) := \mesh(i_{\max},j)$ and $b(i^*,j') := F(i^*,i_{\max},j')$ for all $i^* \in I$ and $j,j' \in J$. 
    Then by \eqref{eq:ramsey-negative}, and as 
    $G^+ \models \alpha(b(i^*,j'),c(j))$ if and only if $j = j'$, we have that
    \begin{itemize}
        \item $\atp_{G^+}(b(i^*,j'),c(j))$ depends only on $\otp(j,j')$, and
        \item $\atp_{G^+}(b(i^*,j'),c(j))$ is not the same
    \end{itemize}
    for all $i^* \in I$ and $j,j' \in J$.
    As in \emph{Case 3}, the ranges of $b(\cdot,\cdot)$ and $c(\cdot)$ form a mono-prepattern on $\gc A$ and
    we conclude by \cref{lem:mono-pattern-to-transformer}.

    \medskip\noindent
    Having exhausted all cases, this proves the lemma.
\end{proof}

We can now prove Proposition~\ref{prop:transformer}, which we restate for convenience.

\PropTransformer*
\begin{proof}
    If the prepattern is a mono-prepattern, we conclude by Lemma~\ref{lem:mono-pattern-to-transformer}.
    Therefore, suppose that the prepattern is a bi-prepattern.
    Let the sequences \(I,J\), vertices $(c_{i,j}:i\in I,j\in J)$,
    vertices $s_1,s_2$, quantifier-free formulas $\alpha_1(x;y,z_1),\alpha_2(x;y,z_1)$,
    and symbols ${\sim_1},{\sim_2}\in\set{=,\neq}$
    be as in the definition of a bi-prepattern (see \Cref{def:bi-pattern}).

    Let $G^+$ be the expansion of $G$ by 
    four unary predicates, representing the sets $\set{s_1},\set{s_2},N_G(s_1)$, and $N_G(s_2)$.
    Then there are quantifier-free formulas $\beta_1(x,y)$ and $\beta_2(x,y)$ 
    such that for all $u,v\in V(G)$ and $i\in\set{1,2}$, we have \(G^+\models \alpha_i(u,v,s_i)\iff \beta_i(u,v)\).

    Define the mesh $\mesh\from I\times J\to V(G)$
    with $\mesh(i,j)=c_{i,j}$ for $i\in I,j\in J$.
    Then $\mesh$ with the formula $\beta_1$ satisfies
    the assumptions of \cref{lem:down-down} if 
    $\sim_1$ is~$\neq$, 
    and the assumptions of \cref{lem:down-down-negative} if $\sim_1$ is~$=$.
    We apply the appropriate lemma.
    In case of \cref{lem:down-down-negative}, this might yield a transformer and we are done.
    Otherwise, we obtain a conductor $\mesh_1,\ldots,\mesh_t\from I'\times J'\to V(G)$
    with \(|I'|=|J'| = U_{h,k}(n)\), $t\le 2h$, 
    where 
    $\mesh_t$ is vertical, and $\mesh_1=\mesh|_{I'\times J'}$.

    Denote $\mesh':=\mesh_1^\trans\from J'\times I'\to V(G)$.
    Then $\mesh'$ with the formula $\beta_2$ 
    satisfy the assumptions of Lemma~\ref{lem:down-down} 
    (with the roles of $I'$ and $J'$ swapped) if 
    $\sim_2$ is $\neq$, 
    and the assumptions of Lemma~\ref{lem:down-down-negative} if $\sim_2$ is $=$.
    We again apply the appropriate lemma.
    Either this yields a transformer and we are done, or
    we obtain a conductor $\mesh_1',\ldots,\mesh_u'\from J''\times I''\to V(G)$
    with \(|J''|=|I''| = U_{h,k}(|I'|)\), $u\le 2h$, 
    where $\mesh_u'$ is vertical and $\mesh_1'=\mesh'|_{J''\times I''}$.
    Notice that
    \[
        \mesh'_u|_{J'' \times I''},
        \dots,
        \mesh'_{1}|_{J'' \times I''}
        =
        \mesh^\trans_{1}|_{J'' \times I''}, 
        \dots, 
        \mesh^\trans_t|_{J'' \times I''}
    \]
    is a conductor of order \(U_{h,k}(n)\) and length $u+t-1\le 4h-1$.
    Observe that \(\mesh'_u\) and \(\mesh_{t}\) are both vertical, and that \(\mesh^\trans_{t}\) is horizontal.
    Thus, the sequence forms a transformer.
\end{proof}

\subsection{Regular and Minimal Transformers}

In this section, we normalize the transformers derived in \Cref{sec:pre-to-transformers}.

\begin{definition}\label{def:minimaltransformer}
    A transformer $T= (\mesh_1,\ldots,\mesh_h)$ in a graph $G$   is \emph{regular}
    if for all $s,t\in [h]$, the pair $(\mesh_s,\mesh_t)$ is a regular pair of meshes (in particular also for \(s=t\)).
    We say that $T$ is \emph{minimal} if it is regular
    and for all $s,t\in [h]$ the following conditions hold:
    \begin{itemize}
        \item $\mesh_s$ is vertical if and only if $s=1$,
        \item $\mesh_s$ is horizontal if and only if $s=h$,
        \item $\mesh_s\neq\mesh_t$ if $s\neq t$ (that is, no two meshes are identical),
        \item     the pair $(\mesh_s,\mesh_t)$ is conducting if $|s-t|=1$,
        \item     the pair $(\mesh_s,\mesh_t)$ is homogeneous if $|s-t|>1$.
    \end{itemize} 
\end{definition}    

In particular, in a minimal transformer $\mesh_1,\ldots,\mesh_h$,
we have that either $h=1$ and $\mesh_1=\mesh_h$ is both horizontal and vertical, 
or $h>1$ and $\mesh_1$ is vertical and not horizontal,
$\mesh_h$ is horizontal and not vertical, and $\mesh_2,\ldots,\mesh_{h-1}$ are neither horizontal nor vertical.

\begin{lemma}\label{lem:regularize}
    If $G$ contains a transformer of length $h$ and order $n$ then $G$ contains a regular transformer of length $h$ and order $U_h(n)$.
\end{lemma}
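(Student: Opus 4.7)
The plan is to apply Bipartite Ramsey (\cref{gridramsey}) once, to all pairs of meshes simultaneously. Let the given transformer be $\mesh_1,\dots,\mesh_h\from I\times J\to V(G)$ with $|I|=|J|=n$. Define a coloring
\[
    c\from I^2\times J^2 \to [K]
\]
where the color of $((i_1,i_2),(j_1,j_2))$ encodes the entire tuple
\[
    \bigl(\atp_G(\mesh_s(i_1,j_1),\mesh_t(i_2,j_2))\bigr)_{s,t\in[h]}.
\]
Since the signature is finite (just equality and the edge relation), there are only boundedly many atomic types of pairs of vertices, so the number of colors is bounded by $\const(h)$.

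Applying \cref{gridramsey} with $\ell_1=\ell_2=2$ produces subsequences $I'\subseteq I$ and $J'\subseteq J$ of size $U_h(n)$ such that $c\bigl((i_1,i_2),(j_1,j_2)\bigr)$ depends only on $\otp(i_1,i_2)$ and $\otp(j_1,j_2)$ for all $(i_1,i_2)\in (I')^2$ and $(j_1,j_2)\in (J')^2$. Reading off each component, this means that for every fixed pair $s,t\in[h]$, the atomic type $\atp_G(\mesh_s(i_1,j_1),\mesh_t(i_2,j_2))$ depends only on $\otp(i_1,i_2)$ and $\otp(j_1,j_2)$. Thus, setting $\mesh_s':=\mesh_s|_{I'\times J'}$ for each $s\in[h]$, every pair $(\mesh_s',\mesh_t')$ is regular in $G$.

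It remains to verify that $\mesh_1',\dots,\mesh_h'$ is still a transformer of length $h$ and order $U_h(n)$. The order is clear from the size bound $|I'|=|J'|=U_h(n)$. Verticality of $\mesh_1'$ and horizontality of $\mesh_h'$ follow from \cref{obs:submesh} applied to $\mesh_1$ and $\mesh_h$. Conductivity of each successive pair $(\mesh_s',\mesh_{s+1}')$ follows from \cref{obs:ramsey-subconductors} applied to $(\mesh_s,\mesh_{s+1})$. Hence $\mesh_1',\dots,\mesh_h'$ is a regular transformer of length $h$ and order $U_h(n)$, as required. No genuine obstacle arises here: the only nontrivial point is packaging the $h^2$ regularity conditions into a single Ramsey application, which is why a color set of size $\const(h)$ suffices.
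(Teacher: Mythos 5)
Your proposal is correct and takes essentially the same approach as the paper: a single application of Bipartite Ramsey with $\ell_1=\ell_2=2$ to a coloring with $\const(h)$ colors capturing all pairwise atomic types. The only cosmetic difference is that the paper colors with the single atomic type of the full $2h$-tuple $(\pi_{i,j},\pi_{i',j'})$, whereas you encode the $h^2$ pairwise atomic types as a tuple; the two carry the same information for establishing regularity.
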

\begin{proof}
Let $\mesh_1,\ldots,\mesh_h$ be a transformer of length $h$ in $G$.
We can assume \(I,J=[n]\).
    For all $i,j\in[n]$, let $\pi_{i,j}\in V(\mesh)^h$
    be the $h$-tuple 
    $$\pi_{i,j}:=(\mesh_1(i,j),\ldots,\mesh_h(i,j)).$$
    By Bipartite Ramsey (\Cref{gridramsey}), there are sets $I',J'\subset [n]$ 
    with $|I'|=|J'|\ge U_h(n)$
    such that $\atp(\pi_{i,j},\pi_{i',j'})$
    depends only on $\otp(i,i')$ and $\otp(j,j')$,
    for all $i,i'\in I'$ and $j,j'\in J'$.
    It follows that for all $s,t\in [h]$,
    the meshes $\mesh_s|_{I'\times J'}$
    and $\mesh_t|_{I'\times J'}$ form a regular mesh pair.
    Thus, the sequence $\mesh_1|_{I'\times J'},\ldots,\mesh_h|_{I'\times J'}$ is a regular transformer.
\end{proof}

\begin{lemma}\label{lem:to-minimal}
    If $G$ contains a transformer of length $h$ and order $n$
    then $G$ contains a minimal transformer of length at most $h$ and order $U_h(n)$.
\end{lemma}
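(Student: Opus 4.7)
The plan is to apply \cref{lem:regularize} and then iteratively prune the resulting regular transformer by passing to subsequences; since a subsequence of a regular transformer is immediately regular (the pairwise regularity condition is preserved), I only need to check the conductor property at each step. First I apply \cref{lem:regularize} to the given transformer to obtain a regular transformer $T = (\mesh_1, \ldots, \mesh_h)$ of length $h$ and order $n' := U_h(n)$. All subsequent steps pass to subsequences of $T$, leaving the mesh order unchanged.

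If some $\mesh_s$ in $T$ is both vertical and horizontal (which, in particular, always holds when $n' \le 3$), then the singleton sequence $(\mesh_s)$ is already a minimal transformer and we are done. Otherwise, let $s^* := \max\{s : \mesh_s \text{ is vertical}\}$ and $t^* := \min\{t \geq s^* : \mesh_t \text{ is horizontal}\}$; since $\mesh_1$ is vertical, $\mesh_h$ is horizontal, and no mesh is both, we have $1 \le s^* < t^* \le h$. Replace $T$ by the restriction $(\mesh_{s^*}, \ldots, \mesh_{t^*})$: this is a regular transformer in which, by the choices of $s^*$ and $t^*$, only the first mesh is vertical and only the last is horizontal, while all intermediate meshes are neither.

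Next I iteratively apply the following shortening step: as long as there exist indices $s < t$ in the current transformer with $t - s > 1$ such that $(\mesh_s, \mesh_t)$ is conducting, I delete the intermediate meshes $\mesh_{s+1}, \ldots, \mesh_{t-1}$. The result remains a transformer (the new consecutive pair $(\mesh_s, \mesh_t)$ is conducting by assumption, while the other consecutive pairs are inherited), regularity is preserved by subsequence, and the extremal positions are unchanged, so only the first mesh remains vertical and only the last remains horizontal. Since the length strictly decreases, the process terminates with some transformer $(\mesh'_1, \ldots, \mesh'_l)$ with $l \leq h$.

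It remains to verify minimality of $(\mesh'_1, \ldots, \mesh'_l)$. Non-consecutive pairs are regular but not conducting (by termination of the shortening), hence homogeneous; consecutive pairs are conducting by the transformer property; and the vertical/horizontal conditions hold by construction. The only subtle point — and the mild main obstacle — is ruling out identical meshes $\mesh'_i = \mesh'_j$ with $i < j$: if $j - i > 1$ then the pair has a non-constant equality type, hence is regular but not homogeneous and therefore conducting, contradicting termination; if $j = i+1$ and both indices are interior (so $l \ge 3$ and $1 < i < i+1 < l$), then the pair $(\mesh'_{i-1}, \mesh'_{i+1})$ coincides with the consecutive pair $(\mesh'_{i-1}, \mesh'_i)$ and is thus conducting at distance $2$, again contradicting termination; finally, the boundary cases $i=1$ or $j=l$ would force $\mesh'_1$ (respectively $\mesh'_l$) to be both vertical and horizontal, contradicting the assumption of Step~2. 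Hence no duplicates survive, and $(\mesh'_1, \ldots, \mesh'_l)$ is a minimal transformer of length at most $h$ and order $U_h(n)$, as required.
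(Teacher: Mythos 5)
Your approach is essentially the paper's: apply \cref{lem:regularize}, then extract an induced path in the auxiliary graph whose vertices are the meshes and whose edges are the conducting pairs. The paper does this in one step by taking a \emph{shortest} path from a vertical mesh to a horizontal one; because the vertices of that graph are the meshes themselves (as functions), duplicates are automatically identified, distinctness is free, and shortest-path gives induced-path, hence homogeneity of non-consecutive pairs. Your iterative shortening reaches the same object but relegates the duplicate check to a separate post-hoc case analysis, and there the boundary case is stated incorrectly: if $\mesh'_1 = \mesh'_2$ with $l \geq 3$, this does \emph{not} force $\mesh'_1$ to be both vertical and horizontal (that implication only holds when $l=2$). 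The correct contradiction for $l\geq 3$ is that $\mesh'_2 = \mesh'_1$ would then be vertical, whereas by your Step~2 construction $\mesh'_2$ is interior and not vertical; equivalently, $(\mesh'_2,\mesh'_3)=(\mesh'_1,\mesh'_3)$ is conducting at distance $2$, contradicting termination of the shortening. The slip is cosmetic and the conclusion survives, but the paper's shortest-path formulation sidesteps the case split entirely.
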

\begin{proof}
    By Lemma~\ref{lem:regularize}, there is 
    a regular transformer $\mesh_1,\ldots,\mesh_h$ of length $h$ and order $U_h(n)$ in $G$.
Consider the graph $\cal G$ whose vertices are the meshes $\mesh_1,\ldots,\mesh_h$,
and edges are pairs $\mesh_i\mesh_j$ such that the pair $(\mesh_i,\mesh_j)$ is conducting. 
Clearly, $\cal G$ contains a path of length $h$ which starts in a vertical mesh and ends in a horizontal mesh.
Let $\pi:=\mesh_{i_1},\ldots,\mesh_{i_p}$ be a shortest path in $\cal G$
which starts in a vertical mesh and ends in a horizontal mesh.
Then the path $\pi$ is an induced path of length at most $h$ in $\cal G$,
which means that a pair $\mesh_{i_s},\mesh_{i_t}$, for distinct $s,t\in [p]$ 
is conducting if and only if $|s-t|=1$.
As  every pair $\mesh_{i_s},\mesh_{i_t}$ is regular, 
it follows that $\mesh_{i_1},\ldots,\mesh_{i_p}$ is a minimal transformer of length $p\le h$ and order $U_h(n)$.
\end{proof}

\section{Converters and Crossings}\label{sec:conv-and-cross}
Our next goal is to analyze the structure of minimal transformers in graphs.
We will arrive at a notion of a converter, which is similar to a crossing. 
Finally, from converters, we will obtain crossings.

\subsection{Regular Pairs of Meshes}
We study the structure of regular pairs of meshes in graphs.
We introduce the following notions.

\begin{definition}
    Let $\mesh,\mesh'\from I\times J\to V(G)$ be two meshes in a graph $G$. We say that the pair $(\mesh,\mesh')$ is
    \begin{itemize}
        \item \emph{disjoint} if $V(\mesh)\cap V(\mesh')=\emptyset$,
        \item \emph{matched} if 
        for all $i,i'\in I$ and $j,j'\in J$,  $$G\models E(\mesh(i,j),\mesh'(i',j'))
        \quad\text{if and only if}\quad 
       (i,j)=(i',j').$$

       \item \emph{co-matched} if the pair $(\mesh,\mesh')$ is matched in the complement graph $\bar G$,
        \item \emph{non-adjacent} if  $V(\mesh)$ and $V(\mesh')$ are non-adjacent in \(G\),
        \item \emph{fully adjacent} if $uv\in E(G)$ for all $u\in V(\mesh)$ and $v\in V(\mesh')$.
    \end{itemize}
\end{definition}

We prove some preliminary observations regarding regular pairs of meshes.

\begin{lemma}\label{lem:identical-meshes}
    Let $\mesh,\mesh'\from I\times J\to V(G)$ be a regular pair of meshes of order $n>2$ in a graph $G$.
    Then $\mesh$ and $\mesh'$ are either identical, or disjoint.    
\end{lemma}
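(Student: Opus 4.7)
The plan is to argue by contradiction, using only the combinatorial definition of regularity together with injectivity of meshes and the fact that $n > 2$ lets us find three distinct indices in each of $I$ and $J$.

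Suppose the pair is not disjoint. Then there exist $(i_0, j_0), (i_0', j_0') \in I \times J$ with $\mesh(i_0, j_0) = \mesh'(i_0', j_0')$. Let $\tau_1 = \otp(i_0, i_0') \in \{<,=,>\}$ and $\tau_2 = \otp(j_0, j_0') \in \{<,=,>\}$. Since the equality relation is part of the atomic type, regularity says: for \emph{all} $i,i' \in I$ and $j,j' \in J$ with $\otp(i,i') = \tau_1$ and $\otp(j,j') = \tau_2$, we still have $\mesh(i,j) = \mesh'(i',j')$.

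I split on $\tau_1$. If $\tau_1 = {<}$, pick any $i_1 < i_2 < i_3$ in $I$ (possible since $|I| = n > 2$) and any $j, j' \in J$ with $\otp(j,j') = \tau_2$ (possible in all three cases: for $\tau_2 = {=}$ take $j=j'$; for $\tau_2 \in \{{<},{>}\}$ use $n > 2$). Then both $(i_1, i_2)$ and $(i_1, i_3)$ realize $\tau_1$, so
\[
    \mesh'(i_2, j') = \mesh(i_1, j) = \mesh'(i_3, j'),
\]
contradicting injectivity of $\mesh'$. The case $\tau_1 = {>}$ is symmetric. If $\tau_1 = {=}$ but $\tau_2 \neq {=}$, the same trick applied to three indices in $J$ (fixing any $i \in I$) again violates injectivity of $\mesh'$.

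The only remaining case is $\tau_1 = \tau_2 = {=}$, i.e.\ $(i_0, j_0) = (i_0', j_0')$. By regularity, for every $(i,j) \in I \times J$ the pair $\bigl((i,j),(i,j)\bigr)$ realizes the same pair of order types, so $\mesh(i,j) = \mesh'(i,j)$, i.e.\ the meshes are identical. The main (and only) subtlety is verifying that suitable witness indices can always be chosen; this is exactly where the hypothesis $n > 2$ is used.
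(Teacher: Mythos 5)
Your proof is correct, and it takes essentially the same approach as the paper: derive a contradiction to injectivity of $\mesh'$ by picking three comparable indices and propagating the equality via regularity, then handle the remaining case $(\tau_1,\tau_2)=({=},{=})$ by observing that regularity forces $\mesh(i,j)=\mesh'(i,j)$ for all $(i,j)$. The only difference is cosmetic: the paper collapses your explicit case split into a short ``up to reversing the order of $I$ and exchanging $I$ and $J$'' WLOG, whereas you spell out the three non-identity cases by hand.
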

\begin{proof}
    We show that if $\mesh(i,j)=\mesh'(i',j')$ for some $i,i'\in I$ and $j,j'\in J$,
    then $(i,j)=(i',j')$.
    By regularity of the pair $(\mesh,\mesh')$, this implies that $\mesh(i,j)=\mesh'(i,j)$ for all $i,j\in [n]$, so $\mesh$ and $\mesh'$ are identical.
    So suppose that $\mesh(i,j)=\mesh'(i',j')$ for some $i,i'\in I$ and $j,j'\in J$
    with $(i,j)\neq(i',j')$.
    Up to reversing the order of $I$ and up to exchanging the role of $I$ and $J$, we can assume that $i<i'$. Pick $i_1<i_2<i_3\in I$.
    Then by regularity we have that $\mesh(i_1,j)=\mesh'(i_2,j')$ 
    and $\mesh(i_1,j)=\mesh'(i_3,j')$.
    Thus, $\mesh'(i_2,j')=\mesh'(i_3,j')$,
    contradicting injectivity of $\mesh'$.
\end{proof}

\begin{lemma}\label{lem:vertical}
    Let $\mesh,\mesh'\from I\times J\to V(G)$ be a regular pair of meshes of order $n$ in a graph $G$. 
    Suppose that $G \models E(\mesh(i,j),\mesh'(i',j'))$ is not the same for all $i,i'\in I$, $j,j'\in J$ with $j< j'$,
    or is not the same for all $i,i'\in I$, $j,j'\in J$ with $j> j'$.
    Then there exist subsequences $I' \subseteq I$ and $J' \subseteq J$ of order $U(n)$ such that the submeshes $\mesh|_{I'\times J'}$ and $\mesh'|_{I' \times J'}$ are both vertical.
\end{lemma}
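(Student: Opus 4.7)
The plan is to witness both verticalities using ``cross-columns'' of the other mesh: for $\mesh$, the witness will be a function of the form $a(i) := \mesh'(i,j_0)$, and for $\mesh'$ a function of the form $a'(i) := \mesh(i,j_1)$, where $j_0,j_1 \in J$ are chosen as the two extreme elements of $J$. Restricting the $J$-domain to $J' := J \setminus \{j_0,j_1\}$ and the $I$-domain to any $I' \subseteq I$ of size $|J'| = n-2$ then yields submeshes of order $U(n)$.

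First I would assume, without loss of generality, that the adjacency varies among the tuples with $j<j'$ (the case $j>j'$ is treated identically by swapping the roles of $j_0$ and $j_1$ below). Set $j_0 := \max J$ and $j_1 := \min J$. For every $j \in J'$ one has $\otp(j,j_0) = {<}$, so regularity of the pair $(\mesh,\mesh')$ forces
\[
    \atp_G\bigl(\mesh(i,j),\mesh'(i',j_0)\bigr) \text{ to depend only on } \otp(i,i').
\]
Specialising the hypothesis to $j' = j_0$, the predicate $E(\mesh(i,j),\mesh'(i',j_0))$ takes at least two distinct values as $\otp(i,i')$ ranges over $\{<,=,>\}$, hence the above atomic type is non-constant. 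Therefore $a(i) := \mesh'(i,j_0)$ witnesses that $\mesh|_{I' \times J'}$ is vertical in the sense of \Cref{sec:meshesandtransformers}.

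For the second mesh I would use the dual choice $a'(i) := \mesh(i,j_1)$. Since $\otp(j_1,j) = {<}$ for every $j \in J'$, regularity of the pair $(\mesh,\mesh')$ gives that
\[
    \atp_G\bigl(\mesh(i',j_1),\mesh'(i,j)\bigr) = \atp_G\bigl(\mesh'(i,j),a'(i')\bigr)
\]
depends only on $\otp(i',i)$, equivalently only on $\otp(i,i')$; and the hypothesis applied to the pair $(j_1,j)$ with $j_1 < j$ shows that this atomic type is non-constant. This establishes verticality of $\mesh'|_{I' \times J'}$, completing the case $j<j'$. The complementary case where the variation occurs among pairs with $j>j'$ is handled by flipping the roles, i.e.\ taking $j_0 := \min J$ and $j_1 := \max J$, so that the fixed inequalities $\otp(j,j_0) = {>}$ and $\otp(j_1,j) = {>}$ now play the role previously played by~${<}$.

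I do not foresee a real obstacle: the whole proof is essentially bookkeeping with order types, relying only on (i)~the fact that regularity pins down the atomic type from $\otp(i,i')$ and $\otp(j,j')$, and (ii)~the fact that in a graph the atomic type $\atp_G(\cdot,\cdot)$ is symmetric in its two arguments (so that a witness ``in the other column of the partner mesh'' still yields a legitimate vertical witness). The bound $|I'| = |J'| = n-2$ is unbounded in $n$, so qualifies as $U(n)$ under the paper's asymptotic convention.
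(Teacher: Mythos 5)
Your proof is correct and takes essentially the same route as the paper's: drop the extremes $\min(J)$ and $\max(J)$, use a "cross-column" $\mesh'(\cdot,\max J)$ as the vertical witness for $\mesh$, and handle $\mesh'$ symmetrically with $\mesh(\cdot,\min J)$. The paper is a bit terser (it just says the $\mesh'$ case follows "by symmetry"), whereas you spell that symmetry out via the symmetry of $\atp_G$ in its two arguments, but the argument is the same.
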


\begin{proof}
    Let $j_{\min}:= \min(J)$, $j_{\max}:=\max(J)$, $J':=J-\set{j_{\min},j_{\max}}$, $I':= I -\set{\min(I), \max(I)}$.
    We show the argument for $\mesh$, while the case for $\mesh'$ follows by symmetry.
Suppose the first case holds, 
that is, $G \models E(\mesh(i,j),\mesh'(i',j'))$ is not the same for all $i,i'\in I$, $j,j'\in J$ with $j< j'$.
The other case proceeds by the same argument, exchanging the roles of $j_{\max}$ and $j_{\min}$.

Let $a(i')=\mesh'(i',j_{\max})$ for $i'\in I'$.
Then by regularity, 
$\atp(\mesh(i,j),a(i'))$ depends only on $\otp(i,i')$, for $i,i'\in I'$ and $j\in J'$.
Furthermore, $G \models E(\mesh(i,j),a(i'))$ is not the same for all $i,i'\in I'$ and $j\in J'$, by the assumption and by regularity. 
Hence, $\mesh|_{I'\times J'}$ is vertical.
\end{proof}

\begin{lemma}\label{lem:two-non-trivial}
    Let $\mesh,\mesh'\from I\times J\to V(G)$ be a regular pair of meshes of order $n$ in a graph $G$. Suppose that $G \models E(\mesh(i,j),\mesh'(i',j'))$ is not the same for all $i,i'\in I,j,j'\in J$ with 
    $(i,j)\neq (i',j')$. 
    Then there exist subsequences $I' \subseteq I$ and $J' \subseteq J$ of order $U(n)$ such that the submeshes $\mesh|_{I'\times J'}$ and $\mesh'|_{I' \times J'}$ are either both vertical, or both horizontal.
\end{lemma}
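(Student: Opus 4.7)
The plan is to reduce to \Cref{lem:vertical} and to its ``horizontal'' counterpart obtained by exchanging the roles of $I$ and $J$ (that is, by applying \Cref{lem:vertical} to the transposed meshes $\mesh^\trans, \mesh'^\trans \from J \times I \to V(G)$, since by definition a mesh $\mesh$ is horizontal iff $\mesh^\trans$ is vertical). By regularity of the pair $(\mesh,\mesh')$, the truth value of $G\models E(\mesh(i,j),\mesh'(i',j'))$ depends only on the pair of order types $(\otp(i,i'),\otp(j,j'))\in\set{<,=,>}^2$. Excluding the diagonal case $(=,=)$ (which corresponds to $(i,j)=(i',j')$), this leaves eight adjacency values which I will denote by $c_{\sigma,\tau}$ for $(\sigma,\tau)\in\set{<,=,>}^2\setminus\set{(=,=)}$.

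The key step is to show that the hypothesis forces at least one of the following four ``slices'' to be non-constant: (a) the slice $j<j'$, taking the three values $\set{c_{<,<},c_{=,<},c_{>,<}}$; (b) the slice $j>j'$, taking $\set{c_{<,>},c_{=,>},c_{>,>}}$; (c) the slice $i<i'$, taking $\set{c_{<,<},c_{<,=},c_{<,>}}$; (d) the slice $i>i'$, taking $\set{c_{>,<},c_{>,=},c_{>,>}}$. Indeed, if each of these four slices were constant, then chaining through the overlaps (for instance, $c_{<,<}$ is common to (a) and (c), forcing $c_{=,<}=c_{>,<}=c_{<,<}=c_{<,=}=c_{<,>}$, and $c_{<,>}$ is common to (c) and (b), propagating the equality further to $c_{=,>},c_{>,>}$, and finally $c_{>,>}$ being common to (b) and (d) forces agreement of $c_{>,=}$ too) would force all eight $c_{\sigma,\tau}$ to be equal, contradicting the hypothesis.

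Once a non-constant slice has been identified, the conclusion is immediate: in case (a) or (b), applying \Cref{lem:vertical} directly yields subsequences $I'\subseteq I$ and $J'\subseteq J$ of size $U(n)$ on which both $\mesh|_{I'\times J'}$ and $\mesh'|_{I'\times J'}$ are vertical; in case (c) or (d), applying \Cref{lem:vertical} to $(\mesh^\trans,\mesh'^\trans)$ (whose ``second coordinates'' are indices from $I$, now playing the role of the $J$-indices) yields subsequences on which the transposed meshes are both vertical, i.e.\ on which $\mesh|_{I'\times J'}$ and $\mesh'|_{I'\times J'}$ are both horizontal. The only genuine bookkeeping issue is checking that the overlap argument in the key step really forces all eight values to coincide, but this is a short finite verification, and the rest reduces cleanly to the already-proved \Cref{lem:vertical}.
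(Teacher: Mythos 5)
Your proof is correct and follows essentially the same route as the paper's: identify that one of the four ``slices'' ($j<j'$, $j>j'$, $i<i'$, $i>i'$) must be non-constant, then reduce to \Cref{lem:vertical} (directly in the first two cases, via transposition in the last two). The paper asserts this case split without detail; you additionally spell out the chaining-through-overlaps argument showing that constancy of all four slices would force all eight off-diagonal values $c_{\sigma,\tau}$ to coincide, which is a correct and useful elaboration of the step the paper leaves implicit.
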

\begin{proof}
    It follows from the assumption 
    that one of the following cases holds:
\begin{itemize}
    \item $G \models E(\mesh(i,j),\mesh'(i',j'))$ is not the same for all $i,i'\in I$, $j,j'\in J$ with $j<j'$,
    \item $G \models E(\mesh(i,j),\mesh'(i',j'))$ is not the same for all $i,i'\in I$, $j,j'\in J$ with $j>j'$,    
    \item $G \models E(\mesh(i,j),\mesh'(i',j'))$ is not the same for all $i,i'\in I$, $j,j'\in J$ with $i<i'$,
    \item $G \models E(\mesh(i,j),\mesh'(i',j'))$ is not the same for all $i,i'\in I$, $j,j'\in J$ with $i>i'$.
\end{itemize}
In the first two cases we conclude by \cref{lem:vertical}. In the last two cases we conclude by applying the same lemma to $\mesh^\trans$ and $\mesh'^\trans$.
\end{proof}

\begin{lemma}\label{lem:regular-pairs}
    Let $\mesh,\mesh'\from I\times J\to V(G)$ be a conducting pair of disjoint meshes of order $n$ in~$G$.
    Then there exist subsequences $I' \subseteq I$ and $J' \subseteq J$ of order $U(n)$ such that $\mesh|_{I'\times J'}$ and $\mesh'|_{I' \times J'}$ are 
    either
    \begin{enumerate*}
        \item both vertical, 
        \item both horizontal, 
        \item matched, or
        \item co-matched.
    \end{enumerate*}
\end{lemma}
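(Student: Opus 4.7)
By the disjointness assumption, for all $i,i'\in I$ and $j,j'\in J$ the atomic type $\atp_G(\mesh(i,j),\mesh'(i',j'))$ is fully determined by whether the edge relation holds. By regularity, this in turn depends only on $\otp(i,i')$ and $\otp(j,j')$. Thus there is a function
$$f \from \{<,=,>\}^2 \to \{0,1\}$$
such that $G \models E(\mesh(i,j),\mesh'(i',j'))$ iff $f(\otp(i,i'),\otp(j,j'))=1$. For $n\le 3$, the conclusion holds vacuously by taking $I',J'$ trivial, so we assume $n$ is large enough that conducting forces $f$ to be non-constant.

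The plan is to split on where the non-constancy of $f$ is witnessed. Call a pair $(\tau,\sigma)\in\{<,=,>\}^2$ \emph{off-diagonal} if $(\tau,\sigma)\neq(=,=)$. There are two cases.

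\emph{Case A: $f$ restricted to off-diagonal pairs is non-constant.} Then there exist $i,i'\in I$ and $j,j'\in J$ with $(i,j)\neq(i',j')$ witnessing the non-constancy, i.e.\ such that $E(\mesh(i,j),\mesh'(i',j'))$ is not the same for all off-diagonal choices. The hypothesis of \cref{lem:two-non-trivial} is thus satisfied, so we obtain subsequences $I'\subseteq I$, $J'\subseteq J$ of order $U(n)$ such that $\mesh|_{I'\times J'}$ and $\mesh'|_{I'\times J'}$ are either both vertical or both horizontal.

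\emph{Case B: $f$ is constant on off-diagonal pairs, with value $c\in\{0,1\}$.} Since $f$ is not constant overall, $f(=,=) = 1-c$. If $c=0$, then for all $i,i'\in I$ and $j,j'\in J$, $G \models E(\mesh(i,j),\mesh'(i',j'))$ iff $(i,j)=(i',j')$, so (taking $I':=I$ and $J':=J$) the pair $(\mesh,\mesh')$ is matched. If $c=1$, then symmetrically the pair is co-matched (i.e.\ matched in $\bar G$). In either subcase we conclude with $I'=I$, $J'=J$.

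The case analysis is exhaustive, which completes the proof. The only substantive work is Case A, and that is fully outsourced to \cref{lem:two-non-trivial}; the remainder is a direct unpacking of the definitions of regularity, conductivity, matched, and co-matched.
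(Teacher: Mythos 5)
Your proof is correct and follows essentially the same route as the paper's: observe that disjointness forces the equality type to always be $\neq$ so that non-homogeneity must be witnessed by the edge relation, then split on whether the edge relation is constant over pairs $(i,j)\neq(i',j')$, handing the non-constant case to \cref{lem:two-non-trivial} and reading off matched/co-matched in the constant case. The only (cosmetic) addition on your side is the explicit dispatch of the $n\le 3$ case, which the paper glosses over; otherwise the two arguments are the same.
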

\begin{proof}
    Since $\mesh$ and $\mesh'$ are conducting, the pair is regular but not homogeneous.
    For all $i,i'\in I$ and $j,j'\in J$ we have that
    \begin{enumerate}[label={($\arabic*$)}]
        \item \label{itm:ramsey-regpair1} $G \models E(\mesh(i,j),\mesh'(i',j'))$ depends only on 
        $\otp(i,i')$ and $\otp(j,j')$, \hfill(by regularity)
        \item \label{itm:ramsey-regpair2} $\atp_G(\mesh(i,j),\mesh'(i',j'))$ is not always the same, \hfill(by non-homogeneity)
        \item \label{itm:ramsey-regpair3} the equality type of $\mesh(i,j)$ and $\mesh'(i',j')$ is always $(\neq)$, and \hfill(by disjointness)
        \item \label{itm:ramsey-regpair4} the adjacency between $\mesh(i,j)$ and $\mesh'(i',j')$ is not always the same. \hfill(by \ref{itm:ramsey-regpair2} and \ref{itm:ramsey-regpair3})
    \end{enumerate}

    Assume $G \models E(\mesh(i,j),\mesh'(i',j'))$ is the same for all $i,i'\in I$, $j,j'\in J$ with $(i,j)\neq (i',j')$.
    Up to replacing $G$ with $\bar G$,
     by \ref{itm:ramsey-regpair4} we have that for all $i,i'\in I$, $j,j'\in J$,
    $$G\models E(\mesh(i,j),\mesh'(i',j'))\quad\text{if and only if }(i,j)=(i',j').$$
    Thus, $\mesh$ and $\mesh'$ are matched or co-matched.
    
    Otherwise, $G \models E(\mesh(i,j),\mesh'(i',j'))$ is not the same for all $i,i'\in I$, $j,j'\in J$ with $(i,j)\neq (i',j')$, and we conclude that $\mesh$ and $\mesh'$ are both vertical or both horizontal by \cref{lem:two-non-trivial}.
\end{proof}

\begin{lemma}\label{lem:minimaltransformer}
    Let $G$ contain a minimal transformer of order $n$ and length $h$. Then 
    $G$ contains a minimal transformer $\mesh_1,\ldots,\mesh_{h'}$ of order $U_h(n)$ and length $h'\le h$ such that the following conditions are satisfied for $s,t\in [h']$:
        \begin{enumerate}
            \item\label{min:disj} If $s \neq t$, then $\mesh_s$ and $\mesh_t$ are disjoint.
            \item\label{min:homo} If $|s-t|>1$ then $\mesh_s$ and $\mesh_t$ are either non-adjacent or fully adjacent.
            \item\label{min:cons} If $|s - t| = 1$ then $\mesh_s$ and $\mesh_t$ are either matched or co-matched.
        \end{enumerate}
\end{lemma}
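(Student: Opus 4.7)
The plan is to iteratively apply \cref{lem:regular-pairs} to the consecutive pairs of the transformer, re-minimizing via \cref{lem:to-minimal} whenever the restriction introduces a new interior vertical or horizontal mesh. The outer procedure is driven by induction on the current length.

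I first dispose of \eqref{min:disj} and \eqref{min:homo} as automatic consequences of minimality. Assuming the order throughout exceeds $3$ (otherwise the statement is vacuous), the meshes are pairwise distinct and every pair is regular, so \cref{lem:identical-meshes} forces pairwise disjointness, giving \eqref{min:disj}. For $|s-t|>1$ minimality yields homogeneity of $(\mesh_s,\mesh_t)$; combined with the constant equality type $(\neq)$ from disjointness, the adjacency $G\models E(\mesh_s(i,j),\mesh_t(i',j'))$ must then be constant across all $(i,i',j,j')$, so the pair is either non-adjacent or fully adjacent, establishing \eqref{min:homo}.

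The work concentrates on \eqref{min:cons}. I iterate the following step on the current minimal transformer of length $h''\le h$. For each consecutive pair $(\mesh_s,\mesh_{s+1})$, which is conducting and disjoint, \cref{lem:regular-pairs} yields subsequences $I'\subseteq I,\,J'\subseteq J$ of size $U(|I|)$ such that the restricted pair becomes matched, co-matched, both vertical, or both horizontal. I apply the restriction to every mesh of the transformer. The key technical observation is that this preserves regularity of every pair and also preserves non-homogeneity of every originally non-homogeneous regular pair: non-homogeneity for a regular pair means two distinct order types in $\{<,=,>\}^2$ yield different adjacencies, and all such order types remain realized once $|I'|,|J'|\ge 2$. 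Hence every previously conducting pair is still conducting, and the restricted sequence is again a transformer of length $h''$. In the matched or co-matched case I proceed to the next pair. In the ``both vertical'' case $\mesh_{s+1}$ restricted becomes vertical, and since $s+1\ge 2$ this contradicts the minimality clause that only $\mesh_1$ may be vertical; the ``both horizontal'' case is symmetric. I then invoke \cref{lem:to-minimal} to re-minimize, and restart the procedure on the resulting shorter transformer.

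The essential point is that each re-minimization strictly decreases the length. After restriction, the conducting graph on $\mesh_1,\ldots,\mesh_{h''}$ is contained in the consecutive path (non-consecutive pairs were homogeneous originally and remain so). In the ``both vertical'' case this path still connects the newly vertical $\mesh_{s+1}$ to the horizontal $\mesh_{h''}$ via a conducting subpath of $h''-s<h''$ meshes; in the ``both horizontal'' case it connects $\mesh_1$ to the newly horizontal $\mesh_s$ via $s<h''$ meshes. The shortest-path construction in \cref{lem:to-minimal} therefore outputs a minimal transformer of strictly smaller length. Since lengths are positive integers, the outer loop terminates after at most $h$ iterations and uses at most $h^2$ Ramsey restrictions overall, yielding final order $U_h(n)$. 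The main technical obstacle is guaranteeing that a single restriction does not destroy the conductivity of \emph{other} consecutive pairs; this is precisely what the order-type argument above ensures.
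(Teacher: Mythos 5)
Your proof is correct, but it takes a more roundabout route than the paper's. Both of you dispose of conditions~\ref{min:disj} and~\ref{min:homo} in the same way, by appealing to \cref{lem:identical-meshes} and to homogeneity of non-consecutive pairs (with the small order-$\le 3$ corner handled by the trivial order-$1$, length-$1$ transformer). The difference is in how you establish condition~\ref{min:cons}. The paper applies \cref{lem:regular-pairs} \emph{once} to each of the $h-1$ consecutive pairs of the given minimal transformer, restricting all meshes after each application; since (co-)matchedness, verticality, horizontality, regularity, homogeneity, and conductivity are all preserved under taking further submeshes (the last is \cref{obs:ramsey-subconductors}, which you re-derive as your ``order-type argument''), all these properties accumulate. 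The paper then simply extracts the \emph{shortest contiguous subsequence} $\mesh_s,\dots,\mesh_t$ with $\mesh_s$ vertical and $\mesh_t$ horizontal; minimality of $t-s$ automatically rules out the ``both vertical'' and ``both horizontal'' outcomes on consecutive pairs, so every consecutive pair in the extracted subsequence is (co-)matched, and the output is a minimal transformer. Your version instead interleaves restriction with re-minimization: whenever a restriction produces a ``both vertical'' or ``both horizontal'' pair, you re-run \cref{lem:to-minimal} (which internally does the same shortest-path extraction) on a strictly shorter transformer and restart. Your termination argument (length strictly decreases, $\le h$ outer iterations, $\le h^2$ Ramsey applications) and your observation that conductivity of the \emph{other} consecutive pairs is not destroyed by a single restriction are both correct and are exactly what makes the loop sound; but the paper achieves the same conclusion in a single pass of $h-1$ Ramsey applications followed by one subsequence extraction. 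Neither approach is wrong; the paper's is simply shorter because it defers the ``repair'' to a single step at the end rather than repairing eagerly after each restriction.
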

\begin{proof}
If \(n \le 3\), as the minimal transformer we take any transformer of order 1 and length 1, so assume \(n > 3\).
The first two properties hold in any minimal transformer $T$ of order $n>3$.
Indeed,  as all meshes of $T$ are pairwise distinct and regular,
by Lemma~\ref{lem:identical-meshes} they are pairwise disjoint.
Also, any pair of non-consecutive meshes is regular and not conducting, hence (as $n>3$) homogeneous.

We argue that we can find a minimal transformer satisfying additionally the last property. Let 
    $T=(\mesh_1,\ldots,\mesh_{h})$
    be a minimal transformer of order 
    $n$.

Applying Lemma~\ref{lem:regular-pairs} to every pair 
of consecutive meshes in $T$ 
(and each time reducing the order of the transformer to $U_h(n)$)
we may assume that for every pair $(\mesh_{s},\mesh_{s+1})$ of consecutive meshes in $T$,
the pair is either matched, or co-matched, or both meshes are vertical, or both are horizontal. Let $\mesh_s,\ldots,\mesh_{t}$ be a 
 subsequence of $\mesh_1,\ldots,\mesh_{h}$ of shortest length
such that $\mesh_s$ is vertical and $\mesh_t$ is horizontal.
It follows that every two consecutive meshes in $T'$ are matched or co-matched,
and that $T'=(\mesh_s,\ldots,\mesh_t)$ is a minimal transformer. Thus, the last property in the statement is satisfied.
\end{proof}

\subsection{Regular Meshes}

We now analyze the structure of single  meshes, depending on whether they are horizontal and/or vertical. 
We introduce some notation.

\begin{definition}
    A single mesh $\mesh \from I \times J \to V(G)$ is \emph{regular} in a graph $G$, if the pair $(\mesh,\mesh)$ is.
    That is,
    $\atp_G(\mesh(i,j),\mesh'(i',j'))$ 
    depends only on $\otp(i,i')$ and $\otp(j,j')$ for all $i,i'\in I$ and $j,j'\in J$.
\end{definition}
Note that every mesh in a regular/minimal transformer is regular.

\begin{definition}
    A \emph{mesh pattern}
    is a subset $P$ of the four lines in the following diagram:
    $$
    \lgrid{0,1,2,3}
    $$
    All  $2^4$ possible  mesh patterns are depicted below (including the empty pattern $\grid{}$).
    \begin{align*}%\label{eq:grid}
        \lgrid{0,1,2,3}\quad\lgrid{0,1,2}\quad\lgrid{0,1,3}\quad\lgrid{0,2,3}\quad\lgrid{1,2,3}\quad\lgrid{0,1}\quad\lgrid{0,2}\quad\lgrid{0,3}\quad
        \lgrid{1,2}\quad\lgrid{1,3}\quad\lgrid{2,3}\quad\lgrid{0}\quad\lgrid{1}\quad\lgrid{2}\quad\lgrid{3}\quad\lgrid{}
    \end{align*}
    Let $P$ be a mesh pattern.
    A regular mesh $\mesh\from I\times J\to V(G)$ with $|I|,|J|\ge 2$ is a \emph{$P$-mesh} in a graph $G$ if for all $(i,j),(i',j')\in I\times J$ with $i\le i'$, the vertices $\mesh(i,j)$ and $\mesh(i',j')$ are adjacent in $G$ if and only if one of the following conditions holds:
    \begin{itemize}
        \item $i=i'$ and $j\neq j'$ and $\grid{0}\in P$,
        \item $i<i'$ and $j<j'$ and $\grid{1}\in P$,
        \item $i<i'$ and $j=j'$ and $\grid{2}\in P$, 
        \item $i<i'$ and $j>j'$ and $\grid{3}\in P$.
    \end{itemize}
\end{definition}
    
    For example, $\mesh$ is a $\grid{}$-mesh if and only if $V(\mesh)$ induces an independent set, 
    and if $\mesh$ is a $\grid{0,2}\,$-mesh, then 
    $V(\mesh)$ induces a rook graph in $G$,
    and if $\mesh$ is a $\grid{0,1,2}\,$-mesh, then $V(\mesh)$ induces a comparability grid in $G$.
    (Recall that the \emph{comparability grid} of order \(n\) consists of vertices
\(\{ a_{i,j} \mid i,j \in [n] \}\) and edges between vertices \(a_{i,j}\) and \(a_{i',j'}\) if and only if either $i=i'$, or $j=j'$,
or $i<i'\Leftrightarrow j<j'$.)

    \begin{definition}
    A \emph{generalized grid} in a graph \(G\) is a regular mesh $\mesh\from I\times J\to V(G)$ 
    satisfying the following conditions:
        \begin{itemize}
            \item $G \models E(\mesh(i,j),\mesh(i',j'))$ does not depend only on $\otp(i,i')$, for $i,i'\in I$ and $j,j'\in J$ with $(i,j)\neq (i',j')$, and 
            \item $G \models E(\mesh(i,j),\mesh(i',j'))$ does not depend only on $\otp(j,j')$, for $i,i'\in I$ and $j,j'\in J$ with $(i,j)\neq (i',j')$.
        \end{itemize}
    \end{definition}
    Observe that a $P$-mesh $\mesh$ is a generalized grid if and only if $P$ is \emph{not}
    among $\set{\grid{},\grid{0},\grid{2}}$ 
    or their complements $\set{\grid{0,1,2,3},\grid{1,2,3},\grid{0,1,3}}$.

\begin{lemma}\label{lem:non-trivial-mesh}
    Let $\mesh\from I\times J\to V(G)$ be a regular mesh in a graph $G$.
    Then  $\mesh$ is vertical, or is horizontal, or is a $\grid{}$-mesh in $G$ or in $\bar G$.
\end{lemma}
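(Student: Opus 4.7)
Since $\mesh$ is regular, the atomic type of a pair $(\mesh(i,j),\mesh(i',j'))$ depends only on $\otp(i,i')$ and $\otp(j,j')$. Restricting attention to the edge relation, this identifies $\mesh$ (when $|I|=|J|\ge 2$) as a $P$-mesh for some $P\subseteq\{\grid{0},\grid{1},\grid{2},\grid{3}\}$, while the smaller case $|I|=|J|\le 3$ makes $\mesh$ vertical by the explicit clause in the definition. The two extremal patterns are immediate: $P=\varnothing$ makes $V(\mesh)$ an independent set in $G$, so $\mesh$ is a $\grid{}$-mesh in $G$, and $P=\{\grid{0},\grid{1},\grid{2},\grid{3}\}$ makes $V(\mesh)$ a clique in $G$, so $\mesh$ is a $\grid{}$-mesh in $\bar G$.

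For each of the remaining $14$ patterns I would exhibit a verticality or horizontality witness. Two natural symmetries cut the work down substantially: transposition $\mesh\leftrightarrow\mesh^\trans$ exchanges vertical with horizontal, and complementation $G\leftrightarrow\bar G$ flips adjacency; together these act on the patterns and collapse the analysis to a small number of orbit representatives. For a representative pattern I would produce the witness $a\from I\to V(G)$ defined by $a(i')=\mesh(i',j^\ast)$ for a suitably chosen index $j^\ast\in J$, and verify via regularity that $\atp_G(\mesh(i,j),a(i'))$ depends only on $\otp(i,i')$ and is non-constant; symmetric cases are handled by the mirror witness $b(j')=\mesh(i^\ast,j')$ for horizontality of $\mesh^\trans$.

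The main technical subtlety will be the ``diagonal equality'' at $j=j^\ast$, where $\mesh(i,j^\ast)=a(i)$ introduces an equality literal that threatens to distinguish $j=j^\ast$ from $j\neq j^\ast$ in the atomic type and thereby break $j$-independence. The resolution is that every nontrivial pattern $P$ intrinsically privileges one of the two axes in the sense that, after possibly transposing and/or complementing, the adjacency can be described purely in terms of $\otp(i,i')$ or purely in terms of $\otp(j,j')$; choosing the witness axis to match this privileged direction causes the equality case to collapse into the desired $\otp(i,i')$-dependence. A uniform verification across the handful of orbit representatives then closes the lemma.
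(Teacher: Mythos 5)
The paper's proof is a two\nobreakdash-liner: if the adjacency between distinct mesh vertices is constant, then $V(\mesh)$ is independent or complete and so $\mesh$ is a $\grid{}$\nobreakdash-mesh in $G$ or $\bar G$; otherwise it invokes \cref{lem:two-non-trivial} with $\mesh = \mesh'$, which in turn reduces to \cref{lem:vertical}. There the witness is exactly the one you propose, $a(i') = \mesh'(i', j_{\max})$, but \emph{before} verifying the verticality conditions the index set $J$ is shrunk to $J' = J \setminus \{j_{\min}, j_{\max}\}$ (and $I$ correspondingly), so that $j = j_{\max}$ is never evaluated. In other words, the paper does not face the diagonal equality head\nobreakdash-on: it steps around it by deleting the witness column, trading a constant loss in mesh order for cleanliness — a loss that is harmless in all of the paper's Ramsey\nobreakdash-style applications. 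Your plan differs in trying to keep the full index set, and that is where the gaps arise.

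There are two independent problems with your resolution step. First, the claim that every nontrivial pattern is axis\nobreakdash-aligned after transposition/complementation is false. For adjacency to be determined by $\otp(i,i')$ alone, the three cells $\grid{1},\grid{2},\grid{3}$ must all lie in $P$ or all lie outside it; otherwise, at a fixed $\otp(i,i') = (<)$, adjacency still depends on $\otp(j,j')$. Working this out, the only patterns describable in a single axis, even allowing the dihedral symmetries, are $\grid{}$, $\grid{0}$, $\grid{2}$, $\grid{1,2,3}$, $\grid{0,1,3}$, and $\grid{0,1,2,3}$ — precisely the patterns the paper excludes from its notion of a generalized grid. The remaining ten patterns, including $\grid{0,2}$ (the rook graph), $\grid{1}$, $\grid{0,1}$, $\grid{1,3}$, and $\grid{0,1,2}$ (the comparability grid), are generalized grids and genuinely depend on both $\otp(i,i')$ and $\otp(j,j')$. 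Your orbit analysis therefore has nothing to say about 10 of the 16 patterns.

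Second, even for the axis\nobreakdash-aligned patterns the ``collapse'' does not happen. Take $P = \{\grid{0}\}$, which is $\otp(i,i')$\nobreakdash-describable, and $a(i') = \mesh(i', j^*)$ with $j^* \in J$. At $i = i'$, the atomic type $\atp_G(\mesh(i,j), a(i'))$ is the equality type when $j = j^*$ and the ``distinct and adjacent'' type when $j \neq j^*$. These differ, so the atomic type depends on $j$, and the verticality condition fails on the full $J$. No choice of $j^* \in J$ avoids this, and no amount of axis\nobreakdash-alignment of the \emph{adjacency} literal helps, because it is the \emph{equality} literal that breaks $j$\nobreakdash-independence. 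In fact the paper's \cref{lem:vertical2} proves that the witness of a vertical mesh of order $> 3$ must have range disjoint from $V(\mesh)$; for a witness of the form $\mesh(i', j^*)$ this forces $j^* \notin J$, i.e.\ forces you to delete that column — which is exactly what \cref{lem:vertical} does. The fix you need is not a better choice of privileged axis but the paper's shrink\nobreakdash-and\nobreakdash-verify move; once you adopt it, the case analysis over the 16 patterns becomes unnecessary and the paper's ``constant vs.\ non\nobreakdash-constant adjacency'' dichotomy is all that is left.
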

\begin{proof}
Suppose that 
  $G \models E(\mesh(i,j),\mesh(i',j'))$ is the same for all $i,i'\in I$, $j,j'\in J$ with $(i,j)\neq (i',j')$.
  Then $V(\mesh)$ forms an independent set or a clique in $G$.
Otherwise, the statement follows by Lemma~\ref{lem:two-non-trivial}, applied to $\mesh=\mesh'$.
\end{proof}

\begin{lemma}\label{lem:same-for-i<i'}
    Let $\mesh\from I\times J\to V(G)$ be a regular mesh in a graph $G$,
    and assume $G \models E(\mesh(i,j),\mesh(i',j'))$ is the same for all $i,i'\in I$ and $j,j'\in J$ with $i<i'$.
    Then either in $G$ or in $\bar G$, $\mesh$ is a $\grid{0}$-mesh or a $\grid{}$-mesh.
\end{lemma}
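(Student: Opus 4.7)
The plan is to argue by a simple case distinction on the constant value of the adjacency for pairs with $i<i'$. By regularity of $\mesh$, the truth value of $G\models E(\mesh(i,j),\mesh(i',j'))$ depends only on $\otp(i,i')$ and $\otp(j,j')$; the assumption strengthens this by stating that for $i<i'$ this truth value does not depend on $\otp(j,j')$ at all, so it is a single bit $b\in\{\text{true},\text{false}\}$. After possibly replacing $G$ with $\bar G$, I may assume $b=\text{false}$, i.e.\ every pair $\mesh(i,j),\mesh(i',j')$ with $i<i'$ is non-adjacent. By symmetry of the edge relation, this also covers the case $i>i'$, so the only potential edges inside $V(\mesh)$ are those with $i=i'$ and $j\neq j'$.

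It remains to observe that the adjacency pattern within a single column is already a single bit. Indeed, for $i=i'$ and $j\neq j'$, regularity says that $\atp_G(\mesh(i,j),\mesh(i,j'))$ is determined by $\otp(i,i)=(=)$ and $\otp(j,j')\in\{<,>\}$. But $E$ is symmetric, so swapping $(j,j')$ with $(j',j)$—which flips $\otp(j,j')$ between $<$ and $>$—does not change whether an edge is present. Hence the within-column adjacency is the same for $j<j'$ and $j>j'$, governed by a single bit $c\in\{\text{true},\text{false}\}$.

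Matching this against the definition of a $P$-mesh, the pattern $P$ consists exactly of $\grid{0}$ if $c=\text{true}$ and is empty if $c=\text{false}$. Thus $\mesh$ is a $\grid{0}$-mesh in the first subcase and a $\grid{}$-mesh in the second; in the original case $b=\text{true}$ the same conclusion holds in $\bar G$. There is no real obstacle here—the statement essentially unpacks the definitions—so the proof is a few lines of case analysis relying only on regularity and on the symmetry of the edge relation, both of which are already available.
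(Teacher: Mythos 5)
Your proof is correct and follows essentially the same route as the paper's: replace $G$ by $\bar G$ if needed to assume non-adjacency for $i<i'$, use symmetry of the edge relation to extend this to $i\neq i'$, and then branch on the within-column adjacency to obtain a $\grid{0}$-mesh or a $\grid{}$-mesh. Your extra observation that regularity plus symmetry of $E$ forces the within-column adjacency to be a single bit (independent of $\otp(j,j')$) is a point the paper leaves implicit, but it is the same argument.
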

\begin{proof}
 Replacing $G$ with $\bar G$ if needed,
 we may assume that 
 $G\models \neg E(\mesh(i,j),\mesh(i',j'))$ for all $i,i'\in I$ and $j,j'\in J$ with $i<i'$.
 By symmetry of the edge relation, we have that $G\models\neg E(\mesh(i,j),\mesh(i',j'))$ holds for all $i,i'\in I$ and $j,j'\in J$ with $i\neq i'$.
 If $G\models E(\mesh(i,j),\mesh(i,j'))$ for some $i\in I$ and  distinct $j,j'\in J$, then $\mesh$ is a $\grid{0}$-mesh.
 Otherwise, $\mesh$ is a $\grid{}$-mesh.
\end{proof}

\begin{lemma}\label{lem:not-horizontal}
    Let $\mesh$ be a regular mesh in a graph $G$
    which is not horizontal. 
    Then either in $G$ or in $\bar G$, $\mesh$ is a $\grid{0}$-mesh or a $\grid{}$-mesh.
\end{lemma}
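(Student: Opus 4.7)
The plan is to reduce the statement to Lemma~\ref{lem:same-for-i<i'}: it suffices to show that the adjacency $E(\mesh(i,j),\mesh(i',j'))$ takes a single value as $(i,i',j,j')$ ranges over all tuples with $i < i'$, since then Lemma~\ref{lem:same-for-i<i'} immediately yields that either in $G$ or in $\bar G$, $\mesh$ is a $\grid{0}$-mesh or a $\grid{}$-mesh, which is the desired conclusion.

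Arguing by contrapositive, I would suppose that this adjacency is not constant over such tuples. By the regularity of $\mesh$, the truth value of $E(\mesh(i,j),\mesh(i',j'))$ depends only on $\otp(i,i')$ and $\otp(j,j')$, and so, after fixing $\otp(i,i')={<}$, it must vary with $\otp(j,j')$. My aim is then to produce a witness $a\from J\to V(G)$ showing that $\mesh$ is horizontal, contradicting the hypothesis. Concretely, set $i_{\max} := \max(I)$ and define $a(j) := \mesh(i_{\max}, j)$. For every $i \in I$ with $i < i_{\max}$ and every $j,j' \in J$, regularity implies that $\atp_G(\mesh(i,j), a(j')) = \atp_G(\mesh(i,j), \mesh(i_{\max}, j'))$ depends only on $\otp(i, i_{\max})$ and $\otp(j,j')$; since $\otp(i, i_{\max}) = {<}$ is fixed and the standing assumption supplies variation in $\otp(j,j')$, this atomic type depends on $\otp(j,j')$ alone and is non-constant. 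Restricting to equal-sized submesh indices by dropping $i_{\max}$ from $I$ and any single index from $J$, this exhibits a large submesh of $\mesh$ as horizontal.

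Finally, I would lift horizontality of this submesh to $\mesh$ itself by the same kind of reasoning used (implicitly) in the proof of Lemma~\ref{lem:non-trivial-mesh}; equivalently, one may invoke the transpose of Lemma~\ref{lem:vertical} applied to the pair $(\mesh,\mesh)$, whose hypothesis is exactly the derived variation over $\otp(j,j')$ for $i<i'$ and whose conclusion supplies horizontal submeshes. This contradicts the hypothesis that $\mesh$ is not horizontal, so the hypothesis of Lemma~\ref{lem:same-for-i<i'} holds, and one concludes. The only real subtlety is precisely the submesh-to-mesh lifting for the horizontality conclusion, which is the main (and minor) obstacle; everything else follows directly from regularity together with a single application of Lemma~\ref{lem:same-for-i<i'}.
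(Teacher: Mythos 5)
Your proposal is correct and follows essentially the same route as the paper's one-line proof. The paper argues: since $\mesh^\trans$ is not vertical, apply the contrapositive of Lemma~\ref{lem:vertical} to the pair $(\mesh^\trans,\mesh^\trans)$ to deduce that $E(\mesh(i,j),\mesh(i',j'))$ is constant over tuples with $i<i'$, and conclude via Lemma~\ref{lem:same-for-i<i'}. Your write-up unpacks exactly this: the explicit witness $a(j)=\mesh(i_{\max},j)$ that you construct is precisely the choice made inside the proof of Lemma~\ref{lem:vertical} (after transposing roles), and your final step is literally the paper's step.

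The submesh-to-mesh subtlety you flag at the end is real, but note that it is already baked into the paper's own proof: Lemma~\ref{lem:vertical} concludes only that a \emph{submesh} is vertical, so its contrapositive strictly yields that variation over $i<i'$ forces a large \emph{submesh} of $\mesh$ to be horizontal, whereas the hypothesis only gives that $\mesh$ itself is not horizontal. You notice this gap where the paper silently elides it by speaking of ``the contrapositive of Lemma~\ref{lem:vertical}.'' In other words, you have not introduced a new issue — you have merely made visible a boundary-trimming convention that the paper applies implicitly throughout this section (the witness $a(j)=\mesh(i_{\max},j)$ does not satisfy the verticality condition at $i=i_{\max}$, so one must discard the extremal index, exactly as in the proof of Lemma~\ref{lem:vertical} where $I'=I-\set{\min(I),\max(I)}$ is used). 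Since both proofs rely on the same reduction and the same choice of witness, yours is a faithful reconstruction of the paper's argument.
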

\begin{proof}
    As \(\mesh^\trans\) is not vertical,
    we can apply the contrapositive of Lemma~\ref{lem:vertical} to $\mesh^\trans$ and \(\mesh^\trans\). 
    We conclude that 
 $G \models E(\mesh(i,j),\mesh(i',j'))$ is the same for all $i,i'\in I$ and $j,j'\in J$ with $i<i'$. The conclusion follows from Lemma~\ref{lem:same-for-i<i'}.
\end{proof}

\begin{definition}\label{def:capped}
    A mesh $\mesh\from I\times J\to V(G)$ is \emph{capped} if there is a function
    $a\from I'\to V(G)$, where $I'=I-\set{\min(I),\max(I)}$, such that one of the following conditions holds in $G$ or in $\bar G$:
    \begin{description}
        \item[($=$)]
        for all $i,i'\in I'$ and $j\in J$, $\mesh(i,j)$ is adjacent to $a(i')$ if and only if $i=i'$, or
        \item[($\le$)]
        for all $i,i'\in I'$ and $j\in J$, $\mesh(i,j)$ is adjacent to $a(i')$ if and only if $i\le i'$.
    \end{description}
    More precisely, a capped mesh $\mesh$ is $\alpha$-capped,
    for $\alpha\in\set{=,\le}$, if the above condition \(\alpha\) holds.
\end{definition}

\begin{lemma}\label{lem:vertical2}
    Let $\mesh\from I\times J\to V(G)$ be a vertical mesh in a graph $G$ with $|J|>1$.
    Then $\mesh$ is capped.
\end{lemma}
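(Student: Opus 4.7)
The plan starts by disposing of the trivial case $|I|\le 2$, where $I'=\varnothing$ and the empty function $a\from I'\to V(G)$ is vacuously a cap function. I then focus on the main case, in which verticality of $\mesh$ is witnessed by a function $a\from I\to V(G)$ as in the second clause of the definition of verticality; this is automatic whenever $|I|>3$, and the remaining tiny corner case $|I|=|J|=3$ with only trivial verticality would need to be handled ad hoc and is not the interesting content of the lemma.

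The first step is to argue that $a(i')\notin V(\mesh)$ for every $i'\in I'$. Indeed, suppose $a(i')=\mesh(i_0,j_0)$ for some $(i_0,j_0)\in I\times J$. Since $|J|>1$, I may pick $j\in J$ with $j\neq j_0$, and by the ``atomic type depends only on $\otp(i,i')$'' clause of verticality I obtain $\atp_G(\mesh(i_0,j),a(i'))=\atp_G(\mesh(i_0,j_0),a(i'))$; but the right-hand side has equality type $=$, so $\mesh(i_0,j)=a(i')=\mesh(i_0,j_0)$, contradicting injectivity of $\mesh$. Thus the equality type of the pair $(\mesh(i,j),a(i'))$ is always $\neq$, so its atomic type collapses to a single bit of adjacency, and I may define $\gamma\from\set{<,=,>}\to\set{0,1}$ by $\gamma(\otp(i,i'))=1$ iff $\mesh(i,j)\sim a(i')$ in $G$. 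The non-homogeneity clause of verticality forces $\gamma$ to be non-constant, leaving exactly six possible values.

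To conclude, I perform a case analysis on $\gamma$. Four of the six values, namely $(0,1,0)$, $(1,0,1)$, $(1,1,0)$, and $(0,0,1)$, correspond respectively to conditions ($=$) in $G$, ($=$) in $\bar G$, ($\le$) in $G$, and ($\le$) in $\bar G$, so the restriction $a\vert_{I'}$ is already a cap function. The two remaining values $(1,0,0)$ (adjacent iff $i<i'$) and $(0,1,1)$ (adjacent iff $i\ge i'$) do not directly appear among the capped conditions, and this is where the main subtlety sits: they have to be repaired by a one-step shift. Setting $a_{\mathrm{cap}}(i'):=a(\tsucc_I(i'))$ for $i'\in I'$ (well-defined since $i'\neq\max(I)$), the elementary equivalences $i<\tsucc_I(i')\iff i\le i'$ and $i\ge\tsucc_I(i')\iff i>i'$ for $i\in I'$ show that $\gamma=(1,0,0)$ becomes ``adjacent iff $i\le i'$'', i.e.\ condition ($\le$) in $G$, while $\gamma=(0,1,1)$ becomes ``adjacent iff $i>i'$'', i.e.\ condition ($\le$) in $\bar G$.

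The hard part is really only this last one-step shift: the main obstacle is noticing that the asymmetric presence of both ($=$) and ($\le$) in the definition of capped (rather than, say, only strict inequalities) is precisely what makes room for the $\tsucc_I$-shift to land inside the four allowed patterns, so that all six non-constant values of $\gamma$ are covered.
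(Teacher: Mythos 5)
Your proposal is correct and takes essentially the same route as the paper's proof: establish that $a$ and $\mesh$ have disjoint ranges using injectivity of $\mesh$ and $|J|>1$, reduce the atomic type to a non-constant sign pattern on $\{<,=,>\}$, and handle the one awkward pattern (``adjacent iff strictly left'') by shifting $a$ along $\tsucc_I$. The only cosmetic difference is that the paper first normalizes $p_>=0$ by replacing $G$ with $\bar G$, collapsing the case analysis from six patterns to three, whereas you enumerate all six; you are also slightly more explicit about the degenerate cases ($|I|\le 2$ vacuous, $|I|=|J|=3$ where the verticality witness $a$ need not exist), which the paper's proof silently glosses over as well, so this does not put you at a disadvantage relative to the paper.
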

\begin{proof}
    Since $\mesh$ is vertical, there is some $a\from I\to V(G)$ 
    such that, for $J'=J-\set{\min(J),\max(J)}$,
    \begin{itemize}
        \item     $\atp(\mesh(i,j),a(i'))$, depends only on $\otp(i,i')$, for all $i,i'\in I$ and $j\in J'$, and 
        \item $\atp(\mesh(i,j),a(i'))$ is not the same for all $i,i'\in I$ and $j\in J'$.
    \end{itemize} 

First observe that the ranges of the functions $a$ and $\mesh$ are disjoint. Assume otherwise, that is, that $a(i')=\mesh(i,j)$ for some $i,i'\in I$ and $j\in J$.
Pick $j'\in J$ distinct from $j$, which exists since we assume that $|J|>1$.
Then we have that $a(i')=\mesh(i,j')$, by the first defining condition of the function $a$. This contradicts the fact that $\mesh$ is an injective function.

Let $p_<,p_=,p_>\in\set{0,1}$ 
be such that for each $R\in\set{<,=,>}$,
\[
    G\models E(\mesh(i,j),a(i'))\quad \iff\quad  p_R=1\quad\textit{for all $j\in J'$, $i,i'\in I$ with $i\,R\,i'$}.
\]

By the assumption on $a$, 
the values $p_<,p_=,p_>$ are not all equal.
Replacing $G$ with $\bar G$ if needed, we can assume that 
$p_>=0$. Thus, one of three cases occurs:
\begin{enumerate}
    \item $(p_<,p_=,p_>)=(0,1,0)$,
    \item $(p_<,p_=,p_>)=(1,1,0)$,
    \item $(p_<,p_=,p_>)=(1,0,0)$.
\end{enumerate}

Let $I'= I-\set{\min(I),\max(I)}$ and let $a|_{I'}$ be the restriction of $a$ to the domain $I'$.
In the first case, $(\mesh,a|_{I'})$ is a $=$-capped mesh.
In the second case, $(\mesh,a|_{I'})$ is a $\le$-capped mesh.
Suppose the third case occurs,
and let $b\from I'\to V(G)$ be such that $b(i)=a(i_+)$ where 
$i_+$ is the successor of $i$ in~$I$.
Then $(\mesh,b)$ is a $\le$-capped mesh.
\end{proof}

\begin{lemma}\label{lem:horizontal-vertical-cases}
    Let $\mesh\from I\times J\to V(G)$ be a regular mesh of order $n>1$ in a graph $G$. Then the following hold.
    \begin{enumerate}
        \item If $\mesh$ is not horizontal and not vertical, then $\mesh$ forms a $\grid{}$-mesh in $G$ or in $\bar G$.
        \item If $\mesh$ is vertical and not horizontal, then one of two cases occurs in $G$ or in $\bar G$: 
        \begin{enumerate}[label=(\alph*)]
            \item $\mesh$ is a $\grid{0}$-mesh, or 
            \item  $\mesh$
            is a $\grid{}$-mesh and capped.
        \end{enumerate}
        
         \item If $\mesh$ is both horizontal and vertical, then one of four cases occurs in $G$ or in $\bar G$:
         \begin{enumerate}[label=(\alph*)]
         \item\label{it:gen} $\mesh$ is a generalized grid,
         \item\label{it:0-alpha} $\mesh$ is a $\grid{0}$-mesh and $\mesh^\trans$ is capped,
          
          \item   $\mesh$ is a $\grid{2}\,$-mesh and $\mesh$ is capped, or
          \item\label{it:0-alpha-beta} $\mesh$ is a $\grid{}$-mesh and both \(\mesh\) and \(\mesh^\trans\) are capped.
         \end{enumerate}
    \end{enumerate}
\end{lemma}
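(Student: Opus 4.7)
The proof is a straightforward case analysis that combines three earlier ingredients: Lemma~\ref{lem:non-trivial-mesh}, Lemma~\ref{lem:not-horizontal}, and Lemma~\ref{lem:vertical2}. A central input is the observation recorded just after the definition of generalized grids, namely that a regular mesh is either a generalized grid, or else a $\grid{}$-, $\grid{0}$-, or $\grid{2}$-mesh in $G$ or in $\bar G$. This follows by enumerating the $16$ mesh patterns and checking that the patterns which fail to be generalized grids are exactly the six patterns $\grid{},\grid{0},\grid{2}$ and their respective complements $\grid{0,1,2,3},\grid{1,2,3},\grid{0,1,3}$, which form three pairs related by passing to $\bar G$.

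Case~(1) is immediate from Lemma~\ref{lem:non-trivial-mesh}: a regular mesh that is neither vertical nor horizontal must be a $\grid{}$-mesh in $G$ or in $\bar G$. Case~(2) follows from Lemma~\ref{lem:not-horizontal}, which gives that $\mesh$ is either a $\grid{0}$-mesh or a $\grid{}$-mesh in $G$ or in $\bar G$; the first outcome is subcase~(a), and in the second we apply Lemma~\ref{lem:vertical2} to $\mesh$, using that $|J|=n>1$, to conclude that $\mesh$ is capped, giving subcase~(b).

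For case~(3), the pattern-classification observation yields four subcases. The generalized-grid subcase is directly~(a). If $\mesh$ is a $\grid{0}$-mesh in $G$ or in $\bar G$, then horizontality of $\mesh$ makes $\mesh^\trans$ vertical, and applying Lemma~\ref{lem:vertical2} to $\mesh^\trans$ (whose second index set $I$ has size $n>1$) yields that $\mesh^\trans$ is capped, giving~(b). Symmetrically, if $\mesh$ is a $\grid{2}$-mesh, then verticality of $\mesh$ together with Lemma~\ref{lem:vertical2} yields that $\mesh$ is capped, giving~(c). Finally, if $\mesh$ is a $\grid{}$-mesh in $G$ or in $\bar G$, then $\mesh$ is vertical and, by horizontality, $\mesh^\trans$ is also vertical, so Lemma~\ref{lem:vertical2} applied to each of $\mesh$ and $\mesh^\trans$ yields~(d).

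The only point requiring any real care is the pattern-classification observation itself, whose justification reduces to the explicit enumeration of the $16$ mesh patterns against the two conditions defining a generalized grid. Once that enumeration is in place, every subcase collapses to a single application of a previously established lemma, possibly preceded by a transposition of $\mesh$, and there are no further technical obstacles.
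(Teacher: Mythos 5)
Your proof is correct and follows essentially the same route as the paper: case~(1) by Lemma~\ref{lem:non-trivial-mesh}, case~(2) by Lemma~\ref{lem:not-horizontal} plus Lemma~\ref{lem:vertical2}, and case~(3) by classifying the mesh pattern and then applying Lemma~\ref{lem:vertical2} to $\mesh$ and/or $\mesh^\trans$ as needed. The only cosmetic difference is that in case~(3) you invoke the observation following the definition of generalized grids directly, whereas the paper derives the same pattern classification on the fly via Lemma~\ref{lem:same-for-i<i'} after first establishing cappedness of both $\mesh$ and $\mesh^\trans$; both organizations are equivalent.
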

\begin{proof}
    The first item is by Lemma~\ref{lem:non-trivial-mesh}.

\medskip
    We prove the second item. By Lemma~\ref{lem:not-horizontal}, either in $G$ or in $\bar G$, $\mesh$ is a $\grid{0}$-mesh or a \(\grid{}\)-mesh.
    In the first case we are done.
    In the second case,  Lemma~\ref{lem:vertical2}
yields the conclusion.

\medskip
Finally, we prove the third item. 
Assume that $\mesh$ is both vertical and horizontal.
Then, by Lemma~\ref{lem:vertical2}, both $\mesh$ and $\mesh^\trans$ are capped.
Suppose $\mesh$ is not a generalized grid, as otherwise condition \ref{it:gen} holds, and we are done.
Then either $G \models E(\mesh(i,j),\mesh(i',j'))$ depends only on $\otp(i,i')$, or it depends only on $\otp(j,j')$,
for all distinct $(i,j), (i',j')\in I\times J$.
Suppose it depends only on $\otp(i,i')$,
while the other case follows by replacing $\mesh$ with $\mesh^\trans$.
By Lemma~\ref{lem:same-for-i<i'} we have that either in $G$ or in $\bar G$,
$\mesh$ is a $\grid{0}$-mesh or a $\grid{}$-mesh.
If $\mesh$ is a $\grid{0}$-mesh, condition \ref{it:0-alpha} holds.
If $\mesh$ is a $\grid{}$-mesh, condition \ref{it:0-alpha-beta} holds.

\medskip
This concludes the lemma.
\end{proof}

\begin{lemma}\label{lem:generalized-grids}
    Let $G$ be a graph and $\mesh$ be a regular mesh of order $n$ in $G$ which 
    is a generalized grid. 
    Then \(\mesh\) is a $\grid{0,2}\,$-mesh or $\grid{1,3}$-mesh, 
    or \(G\) contains a comparability grid of order $\lfloor\sqrt{n}\rfloor$ as induced subgraph.
\end{lemma}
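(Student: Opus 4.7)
The plan is to proceed by case analysis on the mesh pattern $P$ for which $\mesh$ is a $P$-mesh (well-defined because $\mesh$ is regular). Enumeration shows that the generalized-grid patterns are exactly the $10$ subsets $P \subseteq \{\grid{0},\grid{1},\grid{2},\grid{3}\}$ other than $\{\varnothing, \grid{0}, \grid{2}, \grid{1,2,3}, \grid{0,1,3}, \grid{0,1,2,3}\}$, since those excluded ones are precisely the patterns whose adjacency is insensitive to $\otp(i,i')$ or to $\otp(j,j')$. Grouping the remaining patterns by $|P \cap \{\grid{1},\grid{3}\}|$, the patterns with this intersection of size $0$ or $2$ turn out to be exactly $\grid{0,2}$ and $\grid{1,3}$, which are already allowed by the conclusion; so the nontrivial case is when $|P \cap \{\grid{1},\grid{3}\}| = 1$.

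Reversing the order of the indexing sequence $J$ transforms $\mesh$ into a mesh $\mesh'$ on the same vertex set whose pattern is obtained from $P$ by swapping $\grid{1}$ and $\grid{3}$, while leaving $G[V(\mesh)]$ unchanged. Thus, possibly after reversing $J$, I may assume $\grid{1} \in P$ and $\grid{3} \notin P$. After renaming, take $I = J = [n]$ and set $k := \lfloor\sqrt{n}\rfloor$. I will then exhibit a comparability grid of order $k$ as an induced subgraph by defining
\[
    a_{pq} := \mesh\bigl(pk + q + 1,\ qk + p + 1\bigr)\qquad\text{for } p,q \in \{0,\dots,k-1\}.
\]
Both coordinates lie in $[k^2] \subseteq [n]$, and distinct pairs $(p,q)$ yield distinct indices.

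The final step is to verify that $\{a_{pq}\}_{p,q}$ induces a comparability grid in $G$. For $(p,q) \neq (p',q')$, the two coordinate differences are $\Delta_1 = (p-p')k + (q-q')$ and $\Delta_2 = (q-q')k + (p-p')$; since $|p-p'|, |q-q'| \le k-1$, both $\Delta_1$ and $\Delta_2$ are nonzero, so neither the $\grid{0}$- nor the $\grid{2}$-condition of the $P$-mesh is ever triggered, and by $\grid{1}\in P$, $\grid{3}\notin P$ the adjacency $a_{pq} \sim_G a_{p'q'}$ holds iff $\Delta_1$ and $\Delta_2$ have the same sign. A short case split (on whether $p=p'$, $q=q'$, or the nonzero differences $p-p', q-q'$ agree or disagree in sign) that exploits the dominance of the $k$-term over the $\pm(k-1)$-term in each $\Delta$ then shows that $\Delta_1, \Delta_2$ share a sign precisely when $p = p'$, $q = q'$, or $(p-p')$ and $(q-q')$ have the same sign, i.e., exactly the comparability-grid adjacency on indices $(p,q)$. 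The main obstacle is essentially bookkeeping: the pairing $(pk+q,\ qk+p)$ must be specifically engineered so that the same-row and same-column edges of the comparability grid get realized through $\grid{1}$-edges of $\mesh$, with the $k$-term dominance ensuring the opposite-sign case produces a clean non-edge.
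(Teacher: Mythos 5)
Your proof is correct and takes essentially the same route as the paper's: both hinge on the diagonal reindexing trick, embedding a $\lfloor\sqrt n\rfloor^2$-by-$\lfloor\sqrt n\rfloor^2$ submesh via the map $(p,q) \mapsto (pk+q+1,\ qk+p+1)$ (the paper writes this as $(ij, ji)$ in lexicographic pair notation) and then checking that $k$-term dominance forces the resulting adjacency to match the comparability grid. The only cosmetic difference is that the paper first reduces by both the transpose and order-reversal symmetries to $P\in\{\grid{1},\grid{0,1},\grid{0,1,2}\}$ and dispatches $\grid{0,1,2}$ separately as already being a comparability grid, whereas you reduce only by reversing $J$ and handle all four patterns with $\grid{1}\in P$, $\grid{3}\notin P$ uniformly—your observation that $\Delta_1,\Delta_2\neq 0$ always, so the $\grid{0}$- and $\grid{2}$-clauses never fire, is a small but clean streamlining that absorbs those extra cases at no cost.
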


\begin{proof}
Let $\mesh$ be a regular mesh of order $m$ in $G$,
and let $P$ be the pattern of $\mesh$.
Note that if $P$ is among $\set{\grid{},\grid{0},\grid{2}}$ or their complements $\set{\grid{0,1,2,3},\grid{1,2,3},\grid{0,1,3}}$, then $\mesh$ is not a generalized grid.
So assume that $P$ is not among those patterns.
We can exclude $P\in \set{\grid{0,2},\grid{1,3}}$,
so assume that $P$ is among the remaining patterns,
that is, $P\in\set{\grid{1},\grid{3},\grid{0,1},\grid{1,2},\grid{0,3},\grid{2,3},\grid{0,1,2},\grid{0,2,3}}$.

Up to symmetries that swap the two coordinates and invert their orders, it is enough to consider the cases  $P\in\set{\grid{1},\grid{0,1},\grid{0,1,2}}$.
If $P=\grid{0,1,2}$, then $V(\mesh)$ induces a comparability grid of order $m$ in $G$.
It remains to  show that if $G$ has a $P$-mesh of order $m=n^2$, for some $P\in\set{\grid{1},\grid{0,1}}$,
then $G$ contains a comparability grid of order $n$.

\newcommand{\lex}{_{\mathrm{lex}}}
Let $I=[n]\times [n]$,
and let $\le\lex$ denote the lexicographic order on $[n]\times [n]$. 
To declutter notation, below we write $ij$ for a pair $(i,j)\in I$.

Assume $G$ has a $P$-mesh of order $n^2$.
By reindexing $([n^2],\le)$ as $(I,\le\lex)$,
we can view it as a \(P\)-mesh
$\mesh\from I\times I\to V(G)$ in \(G\).
Then for all $i_1,i_2,i_1',i_2' \in I$ and $j_1,j_2,j_1',j_2' \in I$ with $(i_1 i_2,j_1 j_2) \neq (i_1' i_2',j_1' j_2')$ we have that
\begin{itemize}
    \item if $P=\grid{1}$, then $\mesh(i_1 i_2,j_1 j_2)$ and $\mesh(i_1' i_2',j_1' j_2')$
    are adjacent if and only if
    
    \begin{equation}\label{eq:ramsey-pdiag}
        \underbrace{
        (i_1 i_2 <\lex  i_1' i_2' 
        \text{ and }
         j_1 j_2  <\lex j_1' j_2') 
        }_\text{\tiny edges from $\mesh(i_1 i_2,j_1 j_2)$ to the \color{blue}{top right}}   
        \text{ or }
        \underbrace{(i_1' i_2' <\lex i_1 i_2
        \text{ and }
        j_1' j_2' <\lex j_1 j_2),}
        _\text{\tiny edges from $\mesh(i_1 i_2,j_1 j_2)$ to the \color{red}{bottom left}}  
    \end{equation}

    \item if $P=\grid{0,1}$, then $\mesh(i_1 i_2,j_1 j_2)$ and $\mesh(i_1' i_2',j_1' j_2')$
    are adjacent if and only if
    \[
        \underbrace{(i_1 i_2 = i_1' i_2')}
        _{\mathclap{\text{\tiny edges in the \color{magenta}{{same column}}}}}
        \text{ or }
        \underbrace{
        (i_1 i_2 <\lex  i_1' i_2' 
        \text{ and }
         j_1 j_2  <\lex j_1' j_2') 
        }_\text{\tiny  edges from $\mesh(i_1 i_2,j_1 j_2)$ to the \color{blue}{top right}}   
        \text{ or }
        \underbrace{(i_1' i_2' <\lex i_1 i_2
        \text{ and }
        j_1' j_2' <\lex j_1 j_2).}
        _\text{\tiny edges from $\mesh(i_1 i_2,j_1 j_2)$ to the \color{red}{bottom left}}  
    \]
\end{itemize}
The adjacencies are depicted in \cref{fig:comp-grid2}.
Note that the cases  $P=\grid{1}$ and $P=\grid{0,1}$ differ only if $(i_1 i_2 = i_1' i_2')$.

\begin{figure}[htbp]
    \centering
    \includegraphics{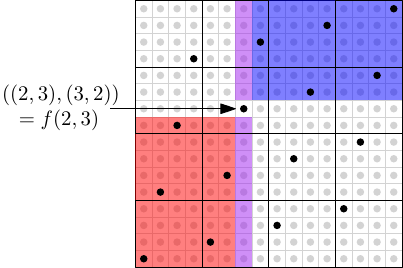}
    \caption{A $P$-mesh indexed by $([4]^2, <\lex) \times ([4]^2, <\lex)$.
    The neighbors of $((2,3),(3,2))$ are (red $\cup$ blue) if $P=\grid{1}$ and (red $\cup$ blue $\cup$ purple) if $P=\grid{0,1}$.
    In solid black: the vertices in the range of $f$ which form the comparability grid of order $4$.
    }
    \label{fig:comp-grid2}
\end{figure}

Consider the function $f\from[n]\times[n]\to I\times I$,
such that $$f(i,j)=(ij,ji)\qquad\text{for $i,j\in [n]$}.$$
The range of this function is depicted in \Cref{fig:comp-grid2}.
We verify that $\mesh':=\mesh\circ f\from [n]\times [n]\to V(G)$ 
is a $\grid{0,1,2}\,$-mesh.
Let $(i,j),(i',j')\in [n]\times[n]$ be distinct,
with $i\le i'$.
We need to show that 
$\mesh'(i,j)=\mesh(ij,ji)$ and $\mesh'(i',j')=\mesh(i'j',j'i')$ are adjacent in $G$ if and only if $i=i'$ or $j\leq j'$.
Note that $(ij)\neq (i'j')$, as $(i,j)\neq (i',j')$ are distinct, so the distinction between $P=\grid{1}$ and $P=\grid{0,1}$ is irrelevant. 
We can therefore assume $P=\grid{1}$, and we will argue using \eqref{eq:ramsey-pdiag} for 
\begin{center}
    $i_1 i_2 := ij$, $i_1' i_2' := i'j'$, $j_1 j_2 := ji$, and $j_1' j_2' := j'i'$.    
\end{center}
Assume first that $i=i'$.
In this case we want to show that $\mesh'(i,j)$ and $\mesh'(i',j')$ are adjacent.
As $ij$ and $i'j'$ are distinct, we either have $j<j'$ or $j' < j$.
If $j<j'$
then $ij <\lex i'j'$ and $ji <\lex j'i'$, and we conclude using the first disjunct of \eqref{eq:ramsey-pdiag}.
If $j' < j$ 
then $i'j' <\lex ij$ and $j'i' <\lex ji$, and we conclude using the second disjunct of \eqref{eq:ramsey-pdiag}.

Assume now that $i \neq i'$, so $i < i'$.
In this case we want to show that $\mesh'(i,j)$ and $\mesh'(i',j')$ are adjacent if and only if $j \leq j'$.
As $ij <\lex i'j'$, by \eqref{eq:ramsey-pdiag} we have that $\mesh'(i,j)$ and $\mesh'(i',j')$ are adjacent if and only if $ji <\lex j'i'$, which is the case if and only if $j \leq j'$, as desired.

Having exhausted all cases, we conclude that $\mesh'$ is a $\grid{0,1,2}\,$-mesh of order $n$,
and therefore, $G$ contains an induced comparability grid of order $n$.
\end{proof}

\subsection{Converters}\label{sec:conf}

The following notion is aimed at providing a low-level description of minimal transformers.

\begin{definition}Fix $h,n\ge 1$ and a graph $G$.
    A \emph{converter} of length $h$ and order $n$ in $G$ consists of
    \begin{itemize}
        \item  meshes $\mesh_1,\ldots,\mesh_h\from [n]\times [n]\to V(G)$ in $G$,
        \item two functions $a,b\from [n]\to V(G)$; we denote the range of these functions by \(V(a),V(b)\), 
    \end{itemize}     
     such that the following conditions hold:
    \begin{enumerate}[leftmargin= 4em, label={(G.$\arabic*$)}]
        \item\label{itm:Gone} the sets $V(\mesh_1),\ldots,V(\mesh_h)$ are pairwise disjoint,
        \item for $s,t\in \set{1,\ldots,h}$, if $|s-t|=1$, then
        $\mesh_s$ is matched or co-matched with $\mesh_{t}$, 
        
        \item for $s,t\in\set{1,\ldots,h}$, if $|s-t|>1$, then 
         $V(\mesh_s)$ is fully adjacent or non-adjacent to $V(\mesh_{t})$,
         \item for $s\in \set{2,\ldots,h-1}$, $V(\mesh_s)$ is an independent set or a clique,
        \item\label{conf:ends} Let $(\mesh,f)\in\set{(\mesh_1,a),(\mesh_h^\trans,b)}$.
         Then one of the following three conditions holds in $G$ or in $\bar G$:
        \begin{description}
            \item[(C)] $\mesh$ is a $\grid{0}$-mesh, or $h=1$ and $\mesh$ is a $\grid{0,2}\,$-mesh.
             Moreover,
             $\mesh(i,j)$ is adjacent to $f(i')$  if and only if $i=i'$, for all $i,i',j\in[n]$,  
            \item[(S)] $\mesh$ is a $\grid{}$-mesh, or $h=1$ and $\mesh$ is a $\grid{2}\,$-mesh. Moreover,
            $\mesh(i,j)$ is adjacent to $f(i')$  if and only if $i=i'$, for all $i,i',j\in[n]$,  
            \item[(H)] $\mesh$ is a $\grid{}$-mesh, or $h=1$ and $\mesh$ is a $\grid{2}\,$-mesh. Moreover,  $\mesh(i,j)$ is adjacent to $f(i')$  if and only if $i'\le i$, for all $i,i',j\in[n]$.
        \end{description}          
    \end{enumerate}
    Say that $(\mesh,f)$ as above 
    has \emph{kind} $\cl,\st$, or $\hg$ respectively, 
    if it satisfies the corresponding condition above
    (which stand for \emph{clique}, \emph{star}, and \emph{half-graph}, respectively).
    A converter has \emph{kind} $(\alpha,\beta)$, where $\alpha,\beta\in\set{\cl,\st,\hg}$, if $(\mesh_1,a)$ has kind $\alpha$ and $(\mesh_h^\trans,b)$ has kind~$\beta$.

    A converter is \emph{proper} if the following hold:
    \begin{itemize}
        \item the sets $V(a),V(\mesh_1),\ldots,V(\mesh_h),V(b)$ are pairwise disjoint,
        \item the sets $V(a)$ and $V(\mesh_{2}) \cup \dots \cup V(\mesh_h)$ are homogeneously connected, 
        \item the sets $V(b)$ and $V(\mesh_{1}) \cup \dots \cup V(\mesh_{h-1})$ are homogeneously connected, 
        \item the sets $V(a)$ and $V(b)$ are homogeneously connected, 
        \item each of the sets $V(a)$ and $V(b)$ induces an independent set or a clique. 
    \end{itemize}
\end{definition}

\begin{lemma}\label{lem:proper}
    Let $G$ contain a converter of length  $h$ and order $n$. Then, for some number $m\ge U_h(n)$,
    either $G$ contains a proper converter of length at most $h$ and order $m$, or $m=1$.
\end{lemma}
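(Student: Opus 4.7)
The plan is to refine the converter's common indexing set $[n]$ to a subsequence $K \subseteq [n]$ of size $m := U_h(n)$ on which the restricted tuple $(M_1|_{K \times K}, \ldots, M_h|_{K \times K}, a|_K, b|_K)$ forms a proper converter of length $h$. If $U_h(n) = 1$ (which happens only for small $n$), we fall into the trivial case $m = 1$.

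First I would apply Bipartite Ramsey (\cref{gridramsey}) iteratively to $[n]$ to produce $K$ of size $U_h(n)$ on which the restricted converter satisfies, in addition to preserving G.1--G.5 (immediate from \cref{obs:submesh}), the following uniformities: for each $s \in \{2, \ldots, h\}$, the adjacency between $a(i)$ and $\mesh_s(i', j')$ is the same for all $i, i', j' \in K$; symmetrically for $b$ and $\mesh_s$ with $s \in \{1, \ldots, h-1\}$; the adjacency between $a(i)$ and $b(j)$ is the same for all $i, j \in K$; and each of $V(a|_K), V(b|_K)$ induces a clique or an independent set. Collapsing Ramsey's order-type dependence into true constancy for the $(a, \mesh_s)$-pairs leverages the rigid internal structure of the middle meshes (each a clique or an independent set by G.4) and a dichotomy in the spirit of the non-conducting-implies-homogeneous argument from the proof of \cref{lem:minimaltransformer}.

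Next I would verify that, once $|K| \geq 2$, the required pairwise disjointness of the sets $V(a|_K), V(\mesh_1|_{K \times K}), \ldots, V(\mesh_h|_{K \times K}), V(b|_K)$ holds. Disjointness among the meshes is G.1. For $V(a) \cap V(\mesh_s) = \varnothing$ with $s \geq 2$: any equality $a(i_0) = \mesh_s(i_1, j_1)$ would, by the just-established homogeneity, force $a(i) = \mesh_s(i', j')$ for all $i, i', j' \in K$, making both functions constant and contradicting injectivity of $\mesh_s$. For $V(a) \cap V(\mesh_1) = \varnothing$ I would, after an additional Ramsey-based refinement that colors indices $i$ by whether $a(i)$ lies in $V(\mesh_1|_{K \times K})$ and by the position it occupies, case-analyse on the kind (C, S, H) of $(\mesh_1, a)$: in each case a hypothetical equality $a(i_0) = \mesh_1(i_1, j_1)$ combined with the prescribed adjacency rule and the $\grid{0}$-, $\grid{}$-, or $\grid{2}$-pattern of $\mesh_1$ yields an inconsistency once $|K| \geq 3$ (for instance, in kind C the equality forces $a(i_0)$ to be simultaneously adjacent and non-adjacent to $\mesh_1(i_0, j)$ through the column and kind-C prescriptions across distinct columns). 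The symmetric analysis handles $V(b)$, and $V(a) \cap V(b) = \varnothing$ follows because kind C, S, or H of $(\mesh_1, a)$ forces $a$ to be non-constant on $K$, so homogeneity of the $V(a)$--$V(b)$ connection rules out any single equality $a(i_0) = b(j_0)$ (which would propagate to all pairs and force $a$ constant).

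The main obstacle is the first step: turning Ramsey's order-type-dependent regularity into genuine constancy for the previously unspecified $V(a)$--$V(\mesh_s)$ and $V(b)$--$V(\mesh_s)$ connections. This collapse is not automatic in general, and its correctness relies on the converter's rigid internal structure---clique/independent-set patterns for middle meshes and the fixed kinds C/S/H governing the end pairs $(\mesh_1, a)$ and $(\mesh_h^\trans, b)$---together with careful iteration of Ramsey so that the cost of each refinement is absorbed into $U_h(n)$.
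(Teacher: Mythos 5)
There is a genuine gap, and it sits exactly at the step you yourself call ``the main obstacle.'' After Bipartite Ramsey you have only order-type dependence for the connections that the converter axioms leave unspecified --- $V(a)$ against $V(\mesh_s)$ for $s \ge 2$, symmetrically for $V(b)$, and $V(a)$ against $V(b)$ --- and you propose to upgrade this to true constancy by leveraging the converter's rigid internal structure. That upgrade is not available: nothing in \ref{itm:Gone}--\ref{conf:ends} constrains these connections, and a half-graph pattern such as ``$a(i)$ adjacent to $\mesh_s(i',j')$ iff $i<i'$'' is entirely compatible with order-type regularity, with each middle mesh being a clique or independent set, and with the kind C, S, or H of the end pairs. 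The dichotomy you invoke says that such a connection is either homogeneous \emph{or} makes $\mesh_s$ vertical via $a$ (resp.\ horizontal via $b$); it does not rule out the second branch, and you give no reason why it should be excluded.

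The paper resolves this by exploiting the phrase ``length \emph{at most} $h$'' in the statement, which your approach never uses. After a single application of Bipartite Ramsey to the tuples $\pi_{i,j}=(a(i),\mesh_1(i,j),\ldots,\mesh_h(i,j),b(j))$ over sides $I,J\subset[n]$, disjointness and the clique/independent-set structure of $V(a')$ and $V(b')$ follow much as you sketch (the paper's disjointness argument is simpler: any equality $a(i)=\mesh_s(i',j)$ propagates under the Ramsey homogeneity to contradict injectivity of $\mesh_s$, unless $m=1$). If some $V(a')$--$V(\mesh'_s)$ connection is not homogeneous, one does not try to fix it; instead $\mesh'_s$ is then vertical via $a'$, so the suffix $\mesh'_s,\ldots,\mesh'_h$ together with $a',b'$ is again a converter of strictly smaller length, and symmetrically for $b'$. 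Iterating this shortening terminates in a proper converter of length at most $h$. Without this mechanism your proof cannot close.
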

\begin{proof}
Let $\mesh_1,\ldots,\mesh_h\from [n]\times[n]\to V(G)$ and $a,b\from [n]\to V(G)$ form a converter.
For every pair $i,j\in [n]$ let $\pi_{i,j}$
denote the sequence $(a(i),\mesh_1(i,j),\ldots,\mesh_h(i,j),b(j))$. 
Apply Bipartite Ramsey (\Cref{gridramsey}) to get sets
 $I\subset [n]$ and $J\subset [n]$ of size $m=U_h(n)$,
 so that 
$\atp(\pi_{i,j}\pi_{i',j'})$ only depends on $\otp(i,i')$ and $\otp(j,j')$ for $i\in I$ and $j\in J$.
Observe that if $a(i)=\mesh_s(i',j)$ for some $i,i'\in I$, $j\in J$, and $s\in[h]$, then also $a(i)=\mesh_s(i',j')$ for some other $j'$, which implies $\mesh_s(i',j)=\mesh_s(i',j')$ and contradicts injectivity of $\mesh_s$, unless $|J|=m=1$.
Similarly, if $a(i)=b(j)$ for some $i\in I$ and $j\in J$, then we conclude that $a(i)=a(i')$ for all $i,i'\in I$, which implies that $|I|=m=1$, since 
the conditions $\cl,\st,\hg$ imply that $a$ is injective.
A similar argument can be made for \(b\).

The tuple $\mesh_1':=\mesh_1|_{I\times J},\ldots,\mesh_h':=\mesh_h|_{I\times J},a':=a|_I,b':=b|_J$, with $I$ and $J$ both reindexed to $[m]$,
forms a converter of order $m$.
As discussed above and by \ref{itm:Gone}, we observe the sets $V(a'),V(b'),V(\mesh'_1),\ldots,V(\mesh'_h)$ to be pairwise disjoint.
Further note that $V(a')$ and $V(b')$ each induce an independent set or a clique by construction.
Lastly, to have a \emph{proper} converter,
we have to ensure that $V(\mesh_t')$ and $V(b')$ are homogeneous for all $1\le t<h$,
and that $V(\mesh_t')$ and $V(a')$ are homogeneous for all $1<t\le h$. 

We only sketch this last argument.
Suppose for example that $V(b')$ and $V(\mesh_t')$ 
are not homogeneous, for some $1\le t<h$.
Then $\mesh_t'$ is either horizontal, or is vertical, as witnessed by $b'$. We can therefore obtain a converter of the same order and smaller length.
Thus, by replacing the converter $a',\mesh_1',\ldots,\mesh_h',b'$ by a shorter one if needed, we arrive at a proper converter of order~$m$.
\end{proof}

\begin{lemma}\label{lem:to-converter}
    Let $G$ contain a transformer of length $h$ and order $n$. Then there is a number $m\ge U_h(n)$ such that,
    $G$ contains a converter of length at most $h$ and order $m$,
    or contains a comparability grid of order $m$ as an induced subgraph.
\end{lemma}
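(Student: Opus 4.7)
The plan is to normalize the transformer step by step until its data matches the converter definition, extracting caps either from the verticality/horizontality witnesses or, when necessary, from a column/row of the mesh itself.

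First I would normalize the given transformer of order $n$ and length $h$ by applying \cref{lem:to-minimal} to get a minimal transformer of length $h'\le h$ and order $U_h(n)$, and then \cref{lem:minimaltransformer} to obtain one that additionally satisfies: the meshes $\mesh_1,\ldots,\mesh_{h'}$ are pairwise disjoint, every non-consecutive pair is fully adjacent or non-adjacent, and every consecutive pair is matched or co-matched. By minimality, each intermediate mesh $\mesh_s$ with $1<s<h'$ is neither horizontal nor vertical, so \cref{lem:horizontal-vertical-cases}(1) forces $V(\mesh_s)$ to induce a clique or an independent set in $G$. Conditions \ref{itm:Gone}--(G.4) of the converter are then immediate.

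Next I handle the two endpoints independently. Consider $\mesh_1$; the argument for $\mesh_{h'}^\trans$ is symmetric. If $h'>1$, then $\mesh_1$ is vertical but not horizontal and \cref{lem:horizontal-vertical-cases}(2) provides two possibilities in $G$ or $\bar G$. If $\mesh_1$ is a $\grid{0}$-mesh, I set $a(i):=\mesh_1(i,n)$ and restrict $\mesh_1$ to $[n]\times[n-1]$; the $\grid{0}$-mesh structure gives an $=$-cap matching condition (C) with kind $\cl$. If instead $\mesh_1$ is a $\grid{}$-mesh and capped, then by \cref{def:capped} the cap is either $=$-type or $\le$-type, and the pair $(\mesh_1,a)$ directly witnesses condition (S) or (H), i.e.\ kind $\st$ or $\hg$.

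The case $h'=1$ needs extra care, since $\mesh_1$ is simultaneously horizontal and vertical and I must supply caps on both sides. I invoke \cref{lem:horizontal-vertical-cases}(3): in subcase (a), $\mesh_1$ is a generalized grid and \cref{lem:generalized-grids} either yields an induced comparability grid of order $\lfloor\sqrt{n}\rfloor=U_h(n)$, in which case we are already done, or exhibits $\mesh_1$ as a $\grid{0,2}$-mesh (possibly after complementing $G$, since a $\grid{1,3}$-mesh in $G$ is a $\grid{0,2}$-mesh in $\bar G$); I then extract both caps internally as $a(i):=\mesh_1(i,n)$ and $b(j):=\mesh_1(n,j)$ and restrict to $[n-1]\times[n-1]$, fitting the $h=1$ clause of condition (C) at both ends. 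Subcases (b)--(d) provide a $\grid{0}$-, $\grid{2}$-, or $\grid{}$-mesh already equipped with caps on one or both sides, each of which matches one of the allowed $h=1$ combinations of (C), (S), (H).

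The main obstacle is bookkeeping at the endpoints: whenever the cap $a$ or $b$ is sliced out of the mesh itself, I must simultaneously shrink the index domain so that the restricted mesh is disjoint from the cap, the equivalence ``$\mesh(i,j)$ adjacent to $f(i')$ iff $i=i'$'' holds without a degenerate equality case, and the restrictions at the two ends are compatible. Each such restriction reduces the order by at most one, so the final converter has length at most $h$ and order still at least $U_h(n)$, completing the proof.
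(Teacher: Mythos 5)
Your proposal is correct and takes essentially the same route as the paper: normalize via Lemmas~\ref{lem:to-minimal} and~\ref{lem:minimaltransformer}, dispatch the intermediate meshes via Lemma~\ref{lem:horizontal-vertical-cases}(1), and handle each endpoint via Lemma~\ref{lem:horizontal-vertical-cases}(2) or (3) together with Lemma~\ref{lem:generalized-grids}. One small bookkeeping slip to fix: restricting $\mesh_1$ to $[n]\times[n-1]$ does not yield a valid mesh (the two index sequences of a mesh must have equal length), and the cap from Definition~\ref{def:capped} is only defined on $I'=I-\set{\min(I),\max(I)}$; as in the paper, you should uniformly restrict all meshes to $I'\times J'$, which is square and compatible with both caps simultaneously.
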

\begin{proof}
    By Lemma~\ref{lem:to-minimal} and Lemma~\ref{lem:minimaltransformer}, $G$ contains a minimal transformer $\mesh_1,\ldots,\mesh_{h'}\from I\times J\to V(G)$ of order $U_h(n)$ and length $h'\le h$,
such that:
\begin{itemize}
    \item    the sets $V(\mesh_1),\ldots,V(\mesh_h')$ are pairwise disjoint,
    \item for $s,t\in [h']$ with $|s-t|=1$, $\mesh_s$ and $\mesh_t$ are matched or co-matched,
    \item for  $s,t\in [h']$ with $|s-t|>1$, $\mesh_s$ and $\mesh_t$ are fully adjacent or fully non-adjacent.
\end{itemize}

    Let $I'=I-\set{\min(I),\max(I)}$ and $J'=J-\set{\min(J),\max(J)}$.

\medskip
Suppose first that $h'=1$.
Then $\mesh_1$ is both vertical and horizontal.
By Lemma~\ref{lem:horizontal-vertical-cases},
    one of the following holds for $\mesh:=\mesh_1$ in either $G$ or $\bar G$:
    \begin{enumerate}[label=(\alph*)]
        \item\label{it:gen'} $\mesh$ is a generalized grid,
        \item\label{it:0-alpha'} $\mesh$ is a $\grid{0}$-mesh and $\mesh^\trans$ is $\alpha$-capped for some $\alpha\in\set{=,\le}$, 
         
         \item \label{it:0-beta'}  $\mesh$ is a $\grid{2}\,$-mesh and $\mesh$ is $\alpha$-capped for some $\alpha\in\set{=,\le}$,  or 
         \item\label{it:0-alpha-beta'} $\mesh$ is a $\grid{}$-mesh and 
        there are some $\alpha_1,\alpha_2\in \set{=,\le}$ such that $\mesh$ is $\alpha_1$-capped and $\mesh^\trans$ is $\alpha_2$-capped.
        \end{enumerate}
    In case \ref{it:gen'}, \(\mesh\) is by definition a generalized grid in both \(\bar G\) and \(G\). 
    By Lemma~\ref{lem:generalized-grids}, \(\mesh\) is a \(P\)-mesh with $P\in \set{\grid{0,2},\grid{1,3}}$ in \(G\),
    or \(G\) contains or a comparability grid of order $U_h(n)$ as an induced graph.
    If \(\mesh\) is a \(P\)-mesh, then $\mesh|_{I'\times J'}$ together with $a,b$ has kind $(\cl,\cl)$,
    where $a\from I'\to V(G)$ is defined by $a(i)=\mesh(i,\min(J))$ for $i\in I'$, and
    where $b\from J'\to V(G)$ is defined by $b(j)=\mesh(\min(I),j)$ for $j\in J'$.
    In either case, the statement holds.

    Suppose we are in case \ref{it:0-alpha'}.
     Let $b\from J'\to V(G)$
    witness that $\mesh^\trans$ is $\alpha$-capped.
    Let $a\from I'\to V(G)$ 
    be defined by $a(i)=\mesh(i,\min(J))$.
    Then $\mesh|_{I'\times J'}$, $a$ and $b$ form a converter of kind $(\cl,\st)$
    if $\alpha$~is~$=$, and a converter of kind $(\cl,\hg)$ if $\alpha$ is $\le$.
    The case \ref{it:0-beta'} is symmetric.

    Finally, suppose we are in case \ref{it:0-alpha-beta'}. 
    Let $a\from I'\to V(G)$
    witness that $\mesh$ is $\alpha_1$-capped, 
    and $b\from J'\to V(G)$
    witness that $\mesh^\trans$ is $\alpha_2$-capped.
    Then $\mesh|_{I'\times J'}$, $a$, and $b$ 
    form a converter of kind $(\tau_1,\tau_2)$, where 
    $\tau_i=\st$ if $\alpha_i$ is $=$,
    and $\tau_i=\hg$ if $\alpha_i$ is $\le$, for $i=1,2$.

\medskip
This concludes the case of length $h'=1$.
Suppose now that $h'>1$.
Then, by the minimality assumption (\Cref{def:minimaltransformer}),
\begin{itemize}
    \item $\mesh_1$ is vertical and not horizontal,
    \item $\mesh_{h'}$ is horizontal and not vertical, and 
    \item $\mesh_2,\ldots,\mesh_{h'-1}$ are neither vertical nor horizontal.
\end{itemize}

By Lemma~\ref{lem:horizontal-vertical-cases}, 
we conclude that each of $\mesh_2,\ldots,\mesh_{h'-1}$ is a $\grid{}$-mesh in either $G$ or $\bar G$.
By Lemma~\ref{lem:horizontal-vertical-cases} applied to $\mesh_1$, one of two cases holds in either \(G\) or \(\bar G\):
\begin{enumerate}[label=(\alph*)]
    \item $\mesh_1$ is a $\grid{0}$-mesh, or 
    \item $\mesh_1$ is a $\grid{}$-mesh and is $\alpha$-capped for some $\alpha\in\set{=,\le}$.
\end{enumerate}
In the first case, $\mesh_1|_{I'\times J'}$ together with $a$ has kind $\cl$,
where $a\from I'\to V(G)$ is defined by $a(i)=\mesh(i,\min(J))$ for $i\in I'$.
In the second case, let $a\from I'\to V(G)$
witness that $\mesh_1$ is $\alpha$-capped.
Then 
$\mesh_1|_{I'\times J'}$ together with $a$ has kind $\st$ if 
$\alpha$ is $=$, and has kind  $\hg$ if $\alpha$ is $\le$.

Similarly, by Lemma~\ref{lem:horizontal-vertical-cases} applied to $\mesh_{h'}^\trans$, we conclude that
 $\mesh_{h'}^\trans|_{J'\times I'}$ together with some $b\from J'\to V(G)$ has kind $\cl,\st$ or $\hg$.
We thus conclude that $\mesh_1,\ldots,\mesh_{h'}$ induced on $I'\times J'$, together with $a\from I'\to V(G)$ and $b\from J'\to V(G)$,
form a converter of length $h'$.
\end{proof}

\subsection{Crossings}\label{sec:symmetric-patterns}
The last step is to go from converters to crossings, whose definition we recall for convenience.
For \(r \ge 1\), the \emph{star \(r\)-crossing} of order \(n\) is the \(r\)-subdivision of \(K_{n,n}\).
More precisely, it consists of \emph{roots} \(a_1,\dots,a_n\) and \(b_1,\dots,b_n\)
together with \(r\)-vertex paths \(\{ \pi_{i,j} \mid i,j \in [n] \}\) that are pairwise vertex-disjoint (see \Cref{fig:pattern}).
We denote the two endpoints of a path \(\pi_{i,j}\) by \(\Start(\pi_{i,j})\) and \(\End(\pi_{i,j})\).
We require that roots appear on no path, that each root \(a_i\) is adjacent to \(\{ \Start(\pi_{i,j}) \mid j \in [n] \}\),
and that each root \(b_j\) is adjacent to \(\{ \End(\pi_{i,j}) \mid i \in [n] \}\).
The \emph{clique \(r\)-crossing} of order $n$ is the graph obtained from the star \(r\)-crossing of order $n$
by turning the neighborhood of each root into a clique.
Moreover, we define the \emph{half-graph \(r\)-crossing} of order $n$ similarly to the star \(r\)-crossing of order $n$,
where each root \(a_i\) is instead adjacent to \(\{ \Start(\pi_{i',j}) \mid i',j \in [n], i \le i' \}\),
and each root \(b_j\) is instead adjacent to \(\{ \End(\pi_{i,j'}) \mid i,j' \in [n], j \le j' \}\).
Each of the three $r$-crossings contains no edges other than the ones described.

\begin{figure}[h]
    \begin{center}
    \includegraphics[width=\textwidth]{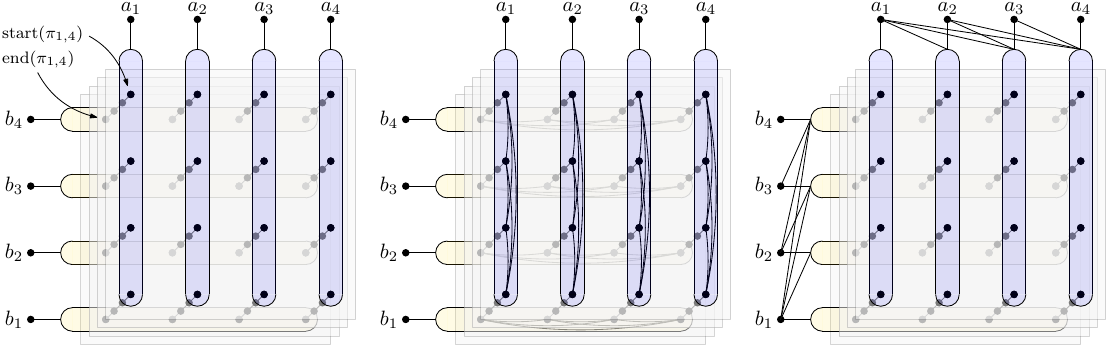}%
    \end{center}%
    \vspace{-0.5cm}
    \caption{
    (\emph{i}) star 4-crossing of order 4. 
    (\emph{ii}) clique 4-crossing of order 4. 
    (\emph{iii}) half-graph 4-crossing of order 4. 
    In (\emph{i}), (\emph{ii}), (\emph{iii}), the roots are adjacent to all vertices in their respective colorful strip.
    }\label{fig:crossings}
\end{figure}

We partition the vertex sets of the \(r\)-crossings into \emph{layers}:
The 0th layer consists of the vertices \(\{a_1,\dots,a_n\}\).
The \(l\)th layer, for \(l \in [r]\), consists of the \(l\)th vertices of the paths \(\{ \pi_{i,j} \mid i,j \in [n] \} \)
(that is, the 1st and \(r\)th layer, respectively, are
\(\{ \Start(\pi_{i,j}) \mid i,j \in [n] \}\) and
\(\{ \End(\pi_{i,j}) \mid i,j \in [n] \}\)).
Finally, the \((r+1)\)th layer consists of the vertices \(\{b_1,\dots,b_n\}\). 
A \emph{flipped} star/clique/half-graph \(r\)-crossing
is a graph obtained from a 
star/clique/half-graph \(r\)-crossing
by performing a flip where the parts of the specifying partition are the layers of the $r$-crossing.

We observe that a flipped $r$-crossing is the same as a proper converter of kind $(\alpha,\alpha)$ for some $\alpha\in\set{\cl,\st,\hg}$.
So the goal is to show that 
from a converter of kind $(\alpha,\beta)$ we can extract a converter 
with $\alpha=\beta$.
This is achieved in the next lemma. 

\begin{lemma}\label{lem:crossings}
    Let $G$ be a graph
    containing proper converter $C$ of length $h$ and order $n$.
    Then $G$ contains as an induced subgraph either
    \begin{itemize}
        \item a flipped star $r$-crossing, or
        \item a flipped clique $r$-crossing, or
        \item a flipped half-graph $r$-crossing,
    \end{itemize}
    of order $\lfloor\sqrt n\rfloor$, for some $1 \le r\le 2h+1$.
\end{lemma}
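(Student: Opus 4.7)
The first observation is that a proper converter of length $r$ and order $m$ whose two endpoint kinds coincide, say kind $(\alpha,\alpha)$ for $\alpha\in\{\cl,\st,\hg\}$, is essentially a flipped clique/star/half-graph $r$-crossing of order $m$: the functions $a,b$ become the two root sets, the meshes $\mesh_1,\ldots,\mesh_r$ become the $r$ layers of subdivision vertices, the matched or co-matched condition on each consecutive pair of meshes encodes the subdivision paths $\pi_{i,j}$, the homogeneity between non-consecutive layers (together with the clique/independent-set property of internal meshes) reflects the flips between non-consecutive layers, and the $\grid{0}$-vs-$\grid{}$ distinction at the endpoints distinguishes clique crossings from star/half-graph crossings. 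Hence if the given converter $C$ has symmetric kind $(\alpha,\alpha)$, then $C$ itself is a flipped $\alpha$-crossing of order $n \ge \lfloor\sqrt{n}\rfloor$ and length $h \le 2h+1$, and the conclusion follows immediately.

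The main case is when $C$ has asymmetric kind $(\alpha,\beta)$ with $\alpha \ne \beta$. The plan here is to symmetrize $C$ by gluing it to a reflected copy of itself. Split the row-index set $[n]$ into halves $L,R$ of size $\lfloor n/2\rfloor$, yielding two sub-converters $C_L,C_R$ of kind $(\alpha,\beta)$ with disjoint $a$-layers $a|_L, a|_R$ but sharing the same $b$-layer. The reflection $C_R^{-1}$, obtained by reversing the mesh sequence and transposing each mesh, has kind $(\beta,\alpha)$. Concatenating $C_L$ with $C_R^{-1}$ at their common $\beta$-end produces a chain whose two outer endpoints (namely $a|_L$ and $a|_R$) both have kind $\alpha$, with exactly $2h+1$ intermediate layers. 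This matches the desired length bound $r \le 2h+1$ and produces the required symmetric endpoint kind.

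The main technical obstacle, and where the square-root loss in order comes from, is verifying that the glued chain is genuinely a proper converter. Three issues arise. First, the two meshes meeting at the new junction, $\mesh_h|_{L\times[n]}$ and $\mesh_h|_{R\times[n]}^\trans$, must be matched or co-matched in $G$; this is not automatic, and requires a Bipartite Ramsey application on the column indices, analogous to the argument that extracts matched/co-matched pairs from a regular conducting pair. Second, every new non-consecutive pair of layers (one layer from $C_L$, one from $C_R^{-1}$) must be homogeneously connected in $G$; again Bipartite Ramsey over the column indices handles this. Third, the central shared $b$-layer is a priori only a row of $n$ vertices and not a full mesh of order $m$; to have it serve as a middle layer of a length-$(2h+1)$ converter, one reindexes $[n]$ as $[m]^2$ with $m = \lfloor\sqrt n\rfloor$, so that $b$ becomes an injective function $[m]\times[m]\to V(G)$, i.e., a mesh of order $m$, and all subsequent Ramsey refinements are carried out on sub-indices of $[m]^2$. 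After this cleanup one obtains a proper converter of symmetric kind $(\alpha,\alpha)$ (or $(\beta,\beta)$, depending on which end the reflection is glued to and on whether the resulting junction is matched or co-matched), order $\lfloor\sqrt n\rfloor$, and length at most $2h+1$. By the opening observation this is a flipped $\alpha$- or $\beta$-crossing of the required form, concluding the proof.
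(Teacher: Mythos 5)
Your proposal follows essentially the same strategy as the paper: in the symmetric case, observe that a proper converter of kind $(\alpha,\alpha)$ \emph{is} a flipped $\alpha$-crossing, and in the asymmetric case, ``double'' the converter through the $\cl$- or $\st$-end, using an injection $[m]\times[m]\to[n]$ on the column indices to get vertex-disjoint turn-around paths (which is exactly the source of the square-root loss). The two framings differ only in presentation: you construct a new length-$(2h{+}1)$ proper converter by gluing $C_L$ to the reflected $C_R^{-1}$, whereas the paper never forms the intermediate converter and directly assembles the paths $\pi_{i,i'}$ of the crossing from the $a$-side roots $a(1),\dots,a(m)$ and $a(m{+}1),\dots,a(2m)$.

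The one substantive inaccuracy is your claim that the junction being matched/co-matched and the cross-half homogeneity each ``require a Bipartite Ramsey application.'' They do not, and if they did, the resulting order would drop well below the stated $\lfloor\sqrt n\rfloor$. Both properties fall out automatically once you (a) split the $a$-rows into \emph{disjoint} halves $L,R$ and (b) reindex columns via an injection $f:[m]\times[m]\to[n]$. Disjointness of $L$ and $R$ kills every matched/co-matched cross-adjacency between non-consecutive layers coming from the two halves (a matching requires equal row indices, which never happens across $L$ and $R$); the (H)-case is handled by the total order on $L\cup R$. Injectivity of $f$ turns the junction adjacency — which is governed by the $\grid{0}$-condition of $\mesh_h^\trans$ for $\beta=\cl$, or by the matching condition of $b$ for $\beta=\st$ — into a genuine matched pair on $[m]^2\times[m]^2$, and similarly converts $V(\mesh'_h)$ from a rook-like $\grid{0}$-mesh into an independent set. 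So the ``third'' issue you list (reindexing) is not merely cosmetic bookkeeping for the middle layer; it is the move that discharges your first two concerns as well, with no Ramsey needed. With that correction, your glued object is already a proper converter and the opening observation closes the argument exactly as you say.
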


\begin{proof}
    Suppose the meshes $\mesh_1,\ldots,\mesh_h\from [n]\times[n]\to V(G)$ and functions $a,b\from [n]\to V(G)$ form a proper converter of kind $(\alpha,\beta)$ and order $n$ in $G$. 
    If \((\alpha , \beta)\neq (\hg,\hg)\),
    then either $\alpha\in\set{\cl,\st}$ or $\beta\in\set{\cl,\st}$.
    By replacing $\mesh_1,\ldots,\mesh_h$ with the converter $\mesh_h^\trans,\ldots,\mesh_1^\trans$ of kind $(\beta,\alpha)$ if needed,
    we may assume that either 
    $\beta\in\set{\cl,\st}$,
    or $\alpha=\beta=\hg$.

    Let $\cal P=\set{V(a),V(\mesh_1),\ldots,V(\mesh_h),V(b)}$.
    By taking an induced subgraph if needed,
    we may assume that $V(G)=\bigcup \cal P$. 
    It follows from the definition of a proper converter that there is a  unique $\cal P$-flip $G'$ of $G$ 
    with the following properties:
    \begin{itemize}
        \item the meshes $\mesh_s$ and $\mesh_t$ are non-adjacent if $|t-s|>1$ and are matched if $|t-s|=1$,
        \item for $1< s\le h$, $V(\mesh_s)$ is non-adjacent to $V(a)$,
        \item for $1\le s<h$, $V(\mesh_s)$ is non-adjacent to $V(b)$,
        \item $V(a)$ and $V(b)$ are non-adjacent,
    \end{itemize}

\begin{figure}[h]
    \begin{center}
    \includegraphics[scale=1]{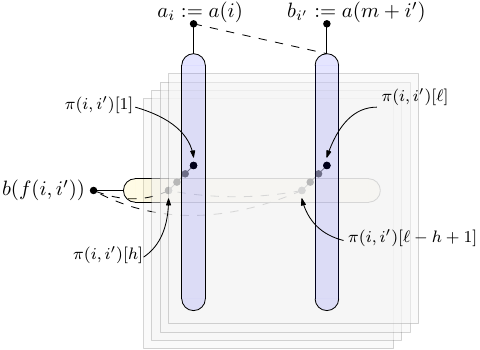}%
    \end{center}%
    \vspace{-0.5cm}
    \caption{Construction of an \(\ell\)-crossing from a proper converter.
        Edges whose presence depends on the values of \(\alpha\) and \(\beta\) are dashed.
    }\label{fig:patternlaststep}
\end{figure}

In the case $\alpha=\beta=\hg$ 
we construct a flipped half-graph $h$-crossing of order $n$ in $G$.
For this, we choose the roots \(a_1 := a(1),\dots,a_n := a(n)\) on one side, and \(b_1 := b(1),\dots,b_n := b(n)\) on the other side,
and connect them via the paths \(\pi_{i,j}\) 
with $\pi_{i,j}[s]=\mesh_1(i,j)$ for $s=1,\ldots,h$.
It follows that $G$ contains a flipped half-graph $h$-crossing of order \(n\) as an induced subgraph.

\medskip
Consider now the case where $\beta\in\set{\cl,\st}$.
We observe that 
\begin{itemize}
        \item 
        for all $i,j,j'\in[n]$,  $\mesh_h(i,j)$ is adjacent to $b(j')$ if and only if \(j=j'\).
\end{itemize}
Set $\ell:=2h$ if $\beta=\cl$ and $\ell:=2h+1$ if $\beta=\st$.
We construct an \(\ell\)-crossing.
For all $i,i',j\in [n]$ with $i\neq i'$ consider the unique
\(\ell\)-vertex path 
$\sigma_{i,i',j}$ in $G'$
from \(\mesh_1(i,j)\) to \(\mesh_1(i',j)\)
with the following properties:
\begin{itemize}
    \item $\sigma_{i,i',j}$ has length $\ell$, 
    \item $\sigma_{i,i',j}[d]=\mesh_d(i,j)$ for $d=1,\ldots,h$,
    \item $\sigma_{i,i',j}[\ell+1-d]=\mesh_d(i',j)$ for $d=1,\ldots,h$,
    \item if $\beta=\st$ then $\sigma_{i,i',j}[h+1]=b(j)$.
\end{itemize}
Note that  $\sigma_{i,j,j'}$ is an induced path in $G'$.
Let $m = \lfloor\sqrt{n}\rfloor$ and pick any injective function  $f\from [m]\times [m]\to [n]$.
For $i,i'\in[m]$, let 
$\pi_{i,i'}:=\sigma_{i,i'+m,f(i,i')}$.
Observe that there are no edges in \(G\) between any pair of paths \(\pi_{i,i'}\), \(i,i' \in [m]\).
We construct a flipped star/clique/half-graph $\ell$-crossing in $G$, corresponding to the cases $\alpha=\st,\cl,\hg$, respectively.
For this, we choose the roots \(a_1 := a(1),\dots,a_m := a(m)\) on one side, and \(b_1 := a(m+1),\dots,b_m := a(2m)\) on the other side
and connect them via the paths \(\pi_{i,i'}\) for \(i,i' \in [m]\).
Note that if \(\alpha=\hg\), then \(\{a_i \mid i \in [m]\}\) is fully connected to \(\{\End(\pi_{i,i'}) \mid i,i' \in [m]\}\).
The construction is illustrated in \Cref{fig:patternlaststep}.
It follows that $G$ contains a flipped $\ell$-crossing of order \(m\) as an induced subgraph.
\end{proof}

% Proof of \Cref{prop:patterns-main}
\subsection{From Prepatterns to
Flipped Crossings and Comparability Grids}
We finally prove the main non-structure result.

\thmPatterns*

\begin{proof}By Proposition~\ref{prop:transformer},
    $G$ contains a transformer of order $U_{k,h}(n)$ and length at most $4h-1$. By Lemma~\ref{lem:to-converter},
 $G$ contains a comparability grid of order  $U_{k,h}(n)$ as an induced subgraph, 
 or a converter of length at most $4h-1$ and order $U_{k,h}(n)$. In the latter case, we can make the converter proper 
 by Lemma~\ref{lem:proper}. By Lemma~\ref{lem:crossings}, we obtain the required flipped $r$-crossing of order $U_{k,h}(n)$
 for some $r\le 2 \cdot (4h-1)+1 = 8h-1$.
\end{proof}

As a corollary, we obtain the following.

\begin{proposition}\label{prop:prepatterns-to-patterns}
    Let $\CC$ be a graph class that is not prepattern-free.
    Then there exists $r\in \N$ such that for every $k\in \N$, $\CC$ contains as an induced subgraph either
    \begin{itemize}
        \item a flipped star $r$-crossing of order $k$, or
        \item a flipped clique $r$-crossing of order $k$, or
        \item a flipped half-graph $r$-crossing of order $k$, or
        \item a comparability grid of order $k$.
    \end{itemize}
\end{proposition}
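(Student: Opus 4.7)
The plan is to unfold the definition of prepattern-freeness, invoke the main non-structure result \Cref{prop:patterns-main} for each order, and then collapse the finitely many possible outputs by a pigeonhole argument.

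First, I would unpack what it means for $\CC$ to \emph{not} be prepattern-free. By negating \Cref{def:prepatternfree}, there exist fixed constants $k_0, h_0 \in \N$ such that for every $t \in \N$ there is some graph $G_t \in \CC$ containing a prepattern of order $t$ on an insulator of cost at most $k_0$ and height at most $h_0$. The constants $k_0, h_0$ are fixed once and for all; only the order $t$ of the prepattern grows.

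Next, for each $t$, I would apply \Cref{prop:patterns-main} to $G_t$ together with its insulator. This yields an integer $r_t$ with $1 \le r_t < 8 h_0$ (or no $r_t$ at all, in the comparability grid case) such that $G_t$ contains, as an induced subgraph, either a flipped star/clique/half-graph $r_t$-crossing or a comparability grid, in each case of order at least $m_t := U_{h_0,k_0}(t)$. Because $h_0$ and $k_0$ are fixed, the function $t \mapsto U_{h_0,k_0}(t)$ is unbounded, so $m_t \to \infty$ as $t \to \infty$.

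Finally, I would apply pigeonhole to the finite set of outcomes
\[
    \{\text{comparability grid}\}\;\cup\;\bigl\{(\tau,r)\;:\;\tau\in\{\text{star},\text{clique},\text{half-graph}\},\ 1\le r<8h_0\bigr\},
\]
whose cardinality is at most $1 + 3(8h_0-1)$, hence finite and depending only on $h_0$. Since there are infinitely many values of $t$ but only finitely many outcome labels, some single outcome is hit for $t$ in an unbounded subset $T \subseteq \N$. If the repeated outcome is the comparability grid, then since $m_t \to \infty$ along $T$, the class $\CC$ contains comparability grids of every order, and the conclusion holds for any $r$ (for instance $r:=1$). Otherwise the repeated outcome is a fixed triple $(\tau, r^\star)$ with $1 \le r^\star < 8h_0$, and along $T$ the class $\CC$ contains flipped $\tau$ $r^\star$-crossings of every order; then $r := r^\star$ is the required radius. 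Either way, the required $r\in \N$ exists and the disjunction in the statement is witnessed for every $k \in \N$. No step here requires any genuinely new argument beyond \Cref{prop:patterns-main} and pigeonhole, so I do not foresee a real obstacle.
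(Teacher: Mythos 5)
Your proof is correct and reconstructs the argument the paper leaves implicit (it presents \Cref{prop:prepatterns-to-patterns} as an immediate corollary of \Cref{prop:patterns-main} without spelling out the pigeonhole step). One point you pass over silently: to go from ``patterns of unboundedly many orders $m_t$ along $T$'' to ``patterns of every order $k$'' you need that a flipped star/clique/half-graph $r$-crossing of order $m$ contains one of order $k\le m$ as an induced subgraph with the same layer flip (restrict to the first $k$ roots on each side and their $k^2$ connecting paths), and likewise that a comparability grid of order $m$ contains one of order $k$ (restrict to the $[k]\times[k]$ subgrid); this downward closure in the order is trivial but is the step that actually closes the last paragraph, so it is worth stating.
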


\subsection{From
Flipped Crossings and Comparability Grids to Independence}
\label{sec:to-independence}

The next result states that the patterns are a witness to monadic independence.
\begin{proposition}\label{lem:transduceAll}
    Let \(\mathcal{C}\) be a graph class and $r\geq 1$, such that for every $k\in \N$,
    \(\mathcal{C}\) contains as an induced subgraph
    \begin{itemize}
        \item a flipped star $r$-crossing of order $k$, or
        \item a flipped clique $r$-crossing of order $k$, or
        \item a flipped half-graph $r$-crossing of order $k$, or
        \item a comparability grid of order \(k\). 
    \end{itemize}
    Then $\CC$ is monadically independent.
\end{proposition}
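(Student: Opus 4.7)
The plan is to exhibit, for each of the four pattern families, a transduction from $\CC$ to the class of all graphs, which by definition establishes monadic independence of $\CC$. By the pigeonhole principle applied to the at most $2^{(r+2)^2}$ possible flip relations on the $r+2$ layers of an $r$-crossing, we may assume that $\CC$ contains arbitrarily large instances of a \emph{single} pattern as induced subgraphs: either the comparability grid, or an $F$-flipped star/clique/half-graph $r$-crossing for some fixed flip $F$.

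First, I would reduce the comparability grid case to the flipped half-graph $1$-crossing case. As observed in the discussion of radius-$r$ encodings, the comparability grid of order $n+1$ contains, as an induced subgraph, a flipped half-graph $1$-crossing of order $n$, obtained from the unflipped half-graph $1$-crossing by adding arbitrary edges within each of its three layers. Since induced subgraphs may be taken at the end of a transduction, this case reduces to the flipped crossing case below.

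Next, I would use the coloring step of the transduction to reverse the flip $F$. Color each vertex of the $F$-flipped $r$-crossing with one of $r+2$ colors marking its layer. Then the quantifier-free formula $\phi_F(x,y)$ computing $E(x,y)$ XORed with the indicator of $(\mathrm{layer}(x),\mathrm{layer}(y)) \in F$ recovers the adjacency of the original unflipped $r$-crossing on the same vertex set. This reduces the task to transducing all graphs from unflipped star/clique/half-graph $r$-crossings.

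Finally, I would encode arbitrary bipartite graphs into unflipped $r$-crossings by coloring the subdivision paths. Given a bipartite graph $H$ on vertex sets $[n] \sqcup [n]$ with edge set $E_H$, take an unflipped $r$-crossing of order $n$ and color every vertex on path $\pi_{i,j}$ ``active'' if $\{i,j\} \in E_H$ and ``inactive'' otherwise, while retaining the layer coloring from the previous step and marking the two sides of roots. A first-order formula $\phi(x,y)$ asserting ``$x$ is an $a$-root, $y$ is a $b$-root, and there exists a layer-respecting path of length $r+1$ from $x$ to $y$ through only active vertices'' captures precisely the edges of $H$; taking the induced subgraph on the $2n$ roots yields $H$. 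Composing with the standard fact that the class of all bipartite graphs transduces the class of all graphs (via the incidence encoding) completes the argument. The main obstacle is in this last step for the clique and half-graph variants: the extra edges within root-neighborhoods (clique case) and the many ``off-diagonal'' root-to-path adjacencies (half-graph case) could, in principle, produce shortcut paths yielding spurious $\phi$-edges, but the layer coloring forces the quantified path to traverse layers $0 \to 1 \to \cdots \to r+1$ in order, which rules out all such shortcuts uniformly across the three variants.
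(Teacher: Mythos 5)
Your overall plan (pigeonhole to a fixed pattern, layer-color to undo the flip, then mark subdivision paths active and detect them by a layer-respecting path formula) matches the paper's sketch in its first three steps, and the flip-reversal step is correct. The gap is in your final claim that ``the layer coloring forces the quantified path to traverse layers $0 \to 1 \to \cdots \to r+1$ in order, which rules out all such shortcuts uniformly across the three variants.'' This is true for the star case and the clique case — the extra clique edges lie \emph{within} layers $1$ and $r$ and so are excluded — but it fails for the half-graph case. There the spurious adjacencies are the cross-layer edges $a_i\,p_{i',j,1}$ with $i' > i$ (layer $0$ to layer $1$) and $p_{i,j,r}\,b_{j'}$ with $j' < j$ (layer $r$ to layer $r+1$), and these \emph{are} layer-respecting. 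Concretely, with your coloring the formula $\phi(a_i,b_{j''})$ holds iff some path $\pi_{i',j}$ with $i' \ge i$ and $j \ge j''$ is active, i.e.\ iff $\exists\, i' \ge i,\ j \ge j'' : \{i',j\}\in E_H$ — which is the up-closure of $E_H$, not $E_H$ itself. Since you route the comparability-grid case through the half-graph $1$-crossing, that case inherits the same defect.

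The paper handles this by inserting an extra transduction step you are missing: it first \emph{recovers the linear order on the roots} of a half-graph crossing via the definable relation $a \le a' \iff N(a) \subseteq N(a')$, then deletes all edges from $a$ to $\bigcup_{a' > a} N(a')$, which converts the half-graph crossing into a star crossing. Only then does it apply the active-path encoding. Your argument would be correct if you added this order-recovery-and-pruning step before the bipartite encoding in the half-graph and comparability-grid cases.
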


\begin{proof}[Proof sketch]
    We will be brief, as a more rigorous proof of a stronger statement is later given in \cref{prop:patternsImplyNotNIP}, where we show that the hereditary closure of $\CC$ \emph{interprets} the class of all graphs.
    For this sketch, we use the fact that transductions are transitive: if $\CC$ transduces $\mathcal{D}$ and $\mathcal{D}$ transduces $\mathcal{E}$, then already $\mathcal{C}$ transduces $\mathcal{E}$.
    We transduce the class of all graphs from $\CC$ by concatenating multiple simpler transductions,
    each of which depends only on \(r\) (and not on \(\mathcal{C}\) or~\(k\)).

    As there is a transduction which produces from $\CC$ all its induced subgraphs, we can assume that $\CC$ is hereditary.
    Additionally, there is a fixed transduction that turns flipped $r$-crossings into their non-flipped versions (or more generally: a transduction that maps $\CC$ to all of its $(r+2)$-flips).
    Now by the pigeonhole principle, we can assume that $\CC$ is either the class of all (non-flipped) star/clique/half-graph $r$-crossings or the class of all comparability grids.
    
    We start with describing a transduction
    which, when given a star \(r\)-crossing with roots \(A\) and \(B\),
    can create an arbitrary bipartite graph \((A,B, E)\).
    The transduction colors the roots in \(A\) and  \(B\) with colors \(C_A\) and \(C_B\) respectively, 
    and the vertices on the paths between \(a \in A\) and \(b \in B\) 
    with color \(C_+\) if \(\{a,b\} \in E\) and color \(C_-\) otherwise.
    It is then trivial to connect \(a \in A\) with \(b \in B\)
    by a first-order formula checking if there is a path of color \(C_+\) between them.

    Let us further argue that there is a transduction that takes as input a clique or half-graph \(r\)-crossing
    with root sets \(A\) and \(B\), and creates a star \(r\)-crossing with the same roots.
    For clique \(r\)-crossings this is easy to see, as it suffices to turn the neighborhood of each root into a star.
    For half-graph \(r\)-crossings, we first recover the linear order of the roots.
    Focusing on the side \(A\) first, we observe for \(a,a' \in A\) that \(a \le a'\) if and only if \(N(a) \subseteq N(a')\).
    The latter condition is expressible in first-order logic, therefore a transduction can remove all edges from a vertex \(a \in A\) to \(\bigcup_{a' > a} N(a')\).
    By also proceeding similarly for the roots in \(B\), this turns a half-graph \(r\)-crossing into a star \(r\)-crossing.

    The case of comparability grids proceeds similarly as the case of half-graph $1$-crossings above. Namely, 
    for a bipartite graph $G=(A,B,E)$ with $A=\set{a_1,\ldots,a_k}$ 
    and $B=\set{b_1,\ldots,b_\ell}$,
we consider a comparability grid with vertex set $\set{0,\ldots,k}\times\set{0,\ldots,\ell}$,
    and identify $B$ with the vertices $[k]\times\set{0}$,
    and $A$ with the vertices  $\set{0}\times [\ell]$,
    and then proceed as in the case of half-graph $1$-crossings
    (see also the proof of Lemma~\ref{lem:fw-comp-grid} where this construction is formalized).

    In summary, by chaining transductions, we can take a flipped \(r\)-crossing (or comparability grid),
    undo the flips, turn it into a star \(r\)-crossing,
    and then into any bipartite graph on the same root sets.
    From the class of bipartite graphs one can trivially transduce the class of all graphs.
    Hence, \(\mathcal{C}\) is monadically independent.
\end{proof}

We formulate a lemma which follows from the construction 
presented in the proof sketch above, and will be used  in the proofs of \cref{thm:quant} and \cref{thm:small}.

We recall the notion of radius-$r$ encodings of a bipartite graph $G$.
Fix an integer $r\ge 1$.
Let $G=(A,B,E)$ be a bipartite graph with $|A|=|B|=n$ for some $n$,
and let $A=\set{a_1,\ldots,a_n}$ and $B=\set{b_1,\ldots,b_n}$.
Consider a graph $H_0$ which is either a star $r$-crossing, or a clique $r$-crossing, or a half-graph $r$-crossing
with roots $a_1,\ldots,a_n$ and $b_1,\ldots,b_n$.
Recall that $V(H_0)$ can be partitioned into $r+2$ \emph{layers}, and there are $n^2$ distinguished $r$-vertex \emph{paths} $\pi_{i,j}$.
Let $H$ be a graph  obtained from $H_0$ by:
\begin{enumerate}
    \item adding arbitrary edges within each layer of $H_0$,
    \item removing all vertices of the paths $\pi_{i,j}$ for $i,j\in[n]$ such that $\set{a_i,b_j}\notin E(G)$,
    \item flipping pairs of layers arbitrarily.
\end{enumerate}
We call $H$ a \emph{radius-$r$ encoding of $G$}.

\begin{lemma}\label{lem:encode}
    For every fixed $r$ there is a number $k$ and a formula $\phi(x,y)$ 
    in the signature of $k$-colored graphs, with the following property.
    For every bipartite graph $G$
    and radius-$r$ encoding $H$ of $G$,
    there is a $k$-coloring $H^+$ of $H$
such that the graph 
$\phi(H^+)$ with vertices 
$V(H)$ 
and edges 
$$\setof{\set{u,v}}{u,v\in V(H), H^+\models \phi(u,v)}$$
contains the $1$-subdivision of $G$ as an induced subgraph.
\end{lemma}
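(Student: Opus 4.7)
The plan is to construct, for each radius-$r$ encoding $H$ of a bipartite graph $G=(A,B,E)$, a coloring $H^+$ using a bounded (depending only on $r$) number of colors, together with a fixed first-order formula $\phi_r(x,y)$ whose induced subgraph on a suitable witness set $V'\subseteq V(H)$ equals the $1$-subdivision of $G$. The coloring records on each vertex $v$: (i)~its layer index $\ell(v)\in\{0,\ldots,r+1\}$; (ii)~for each other layer $\ell'$, whether the flip was applied to the pair $(\ell(v),\ell')$ -- this refines the layer coloring into at most $(r+2)\cdot 2^{r+2}$ colors and allows a quantifier-free formula $\mathrm{edge}_0(x,y)$ to recover the adjacency of the underlying unflipped graph $H_0$; (iii)~a ``type'' color recording whether $H_0$ is a star, clique, or half-graph $r$-crossing; (iv)~a distinguished color $C_p$ placed on $\Start(\pi_{i,j})$ for every path surviving step~2 of the encoding, i.e.\ for every $\{a_i,b_j\}\in E$; and (v)~a color $C_{\mathrm{iso}}$ placed on every root of $H$ that is isolated in $G$. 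The witness set is $V':=A\cup B\cup\{p_{i,j}:\{a_i,b_j\}\in E\}$, where $p_{i,j}:=\Start(\pi_{i,j})$.

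The formula $\phi_r(x,y)$ branches on the type color. In every branch, $\phi_r$ is set to \emph{false} between two layer-$0$ roots, two layer-$(r+1)$ roots, between layer-$0$ and layer-$(r+1)$ directly, between two $C_p$-vertices, and whenever either endpoint carries $C_{\mathrm{iso}}$; these exclusions kill the spurious within-layer edges added by step~1 of the encoding and the within-layer clique edges in the clique type. The only edges on $V'$ left to define are $\{a_i,p_{i,j}\}$ and $\{b_j,p_{i,j}\}$ for each $\{a_i,b_j\}\in E$. The ``lower'' edge is produced uniformly by a formula asserting the existence of a chain $z_2,\ldots,z_r$ with $z_\ell$ in layer $\ell$ and consecutive $\mathrm{edge}_0$-adjacent, starting at $p_{i,j}$ and ending at a layer-$r$ vertex $z_r$ related to $b_j$ as specified by the type. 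Since for $1\le\ell<r$ the $H_0$-edges between layers $\ell$ and $\ell+1$ are exactly the unique path edges $\pi_{i,j}[\ell]$--$\pi_{i,j}[\ell+1]$, the chain is forced through $\pi_{i,j}$ and lands on $z_r=\End(\pi_{i,j})$.

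For the star and clique types this already suffices: $\End(\pi_{i,j})$ is $H_0$-adjacent to $b_{j'}$ iff $j'=j$, and $a_i$ is $H_0$-adjacent to $p_{i',j'}$ iff $(i,j)=(i',j')$, so $\mathrm{edge}_0$ directly identifies the desired edges on $V'$. The half-graph type needs one extra idea, since $a_i$ is $H_0$-adjacent to every $p_{i',j'}$ with $i\le i'$. Define a preorder $a\preceq a'$ on layer $0$ by $N_{H_0}(a)\cap\text{layer }1\subseteq N_{H_0}(a')\cap\text{layer }1$; restricted to non-$C_{\mathrm{iso}}$ roots this is a total order (distinct non-isolated roots have strictly nested layer-$1$ neighborhoods) that reverses the index order, so $a_i$ is the unique $\preceq$-minimum non-isolated layer-$0$ vertex $H_0$-adjacent to $p_{i,j}$. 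Defining $\phi_r(a_i,p_{i,j})$ as ``$p_{i,j}\in C_p$, $\mathrm{edge}_0(a_i,p_{i,j})$, and every non-$C_{\mathrm{iso}}$ layer-$0$ vertex $a'$ with $\mathrm{edge}_0(a',p_{i,j})$ satisfies $a_i\preceq a'$'' singles out exactly the correct edge; the symmetric preorder on layer $r+1$ filters the endpoint of the chain for the lower edge. The main technical obstacle is the half-graph case, specifically verifying totality of the preorder on non-isolated roots and the correctness of the chain-plus-preorder-filter composition at both ends; both reduce to the strict-nesting observation on layer-$1$ neighborhoods above.
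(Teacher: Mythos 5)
Your proposal is correct and takes essentially the same route as the paper's own (brief) proof sketch: the paper reduces to the transduction constructed in the proof of \cref{lem:transduceAll}, and you simply spell out that construction — layer colors to recover the unflipped adjacency, a type color to branch on star/clique/half-graph, the chain through the inner layers to reach $\End(\pi_{i,j})$, and the nested-neighborhood preorder with an explicit marker for roots isolated in $G$ (the paper highlights exactly this last point as the one nontrivial detail). Your version is just more explicit, in particular in identifying the subdivision vertex as $\Start(\pi_{i,j})$ and in verifying strict nesting for non-isolated roots, but it is the same argument.
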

\begin{proof}[Proof sketch]
    We modify slightly the construction described in the proof of \cref{lem:transduceAll}. 
    There, a transduction was described which, given a star/clique/half-graph $r$-crossing $H_0$ with roots $A$ and $B$, or a comparability grid, can output an arbitrary bipartite graph $G=(A,B,E)$.
    Essentially, we observe that the vertices marked $C_-$ in the proof of \cref{lem:transduceAll} can be removed from $H_0$  without breaking the construction (with one detail in the case of half-graph $r$-crossings, commented below).
    Note that, differing from $r$-crossings, radius-$r$ encodings 
    may contain additional arbitrary edges between vertices in the same layer.
    However, by coloring each of the at most $r+2$ layers in a different color palette (only increasing the number of colors by a factor of $r+2$), we can ignore these arbitrary edges.

    To prove the lemma, we therefore proceed the same way as in that construction,
    but instead of representing $G$ in the appropriate $r$-crossing with roots $A$ and $B$,
    we represent it in a radius-$r$ encoding $H$ of $G$.
    
    % which corresponds to removing 
    %  removing all vertices on paths connecting roots $a\in A$ and $b\in B$, such that $\set{a,b}\notin E$
    % (that is, removing all vertices that are colored $C_-$ in the construction from \cref{lem:transduceAll}).

    In the case of $r$-half-graph crossings,  we additionally mark  (doubling the number of colors) the vertices of $A$ and $B$ that are 
    isolated in $G$. The key observation is that for two non-isolated roots $a,a'\in A$ 
    we still have that $a\le a'$ if and only if $N(a)\subset N(A')$,
    and similarly for non-isolated roots in $B$.
    This is enough to recover the edges of $G$ using a fixed first-order formula, just as 
    in \cref{lem:transduceAll}.

    Technically, this way we produce three formulas $\phi_\st^r(x,y),\phi_\cl^r(x,y),\phi_\hg^r(x,y)$,
    rather than a single formula $\phi(x,y)$, corresponding to the cases where the host crossing is a star $r$-crossing, a clique $r$-crossing, or a half-graph $r$-crossing.
    To obtain a single formula $\phi(x,y)$,
    we can triple the number of colors used in the construction to give each of the three cases its own color palette.
    The formula $\phi(x,y)$ can then be written as a boolean combination of the formulas $\phi_\st^r(x,y),\phi_\cl^r(x,y),\phi_\hg^r(x,y)$, which tests which color palette was used.
\end{proof}

\section{Wrapping Up}\label{sec:wrapup}
We are now ready to prove the main results of this paper,
\Cref{thm:mainflipbreakable,thm:mainforbiddenpatterns}.
They are implied by the following.

\begin{theorem}\label{thm:main-circle}
    Let \(\mathcal{C}\) be a graph class.
    Then the following are equivalent.
    \begin{enumerate}[label=(\roman*)]
        \item\label{itm:circle1} \(\mathcal{C}\) is monadically dependent;
        
        \item\label{itm:circle2} 
        For every \(r \geq 1\) there exists \(k \in \N\)
        such that \(\mathcal{C}\) excludes as induced subgraphs
        \begin{itemize}
            \item all flipped star $r$-crossings of order $k$, and
            \item all flipped clique $r$-crossings of order $k$, and
            \item all flipped half-graph $r$-crossings of order $k$, and
            \item the comparability grid of order \(k\);
        \end{itemize}
        
        \item\label{itm:circle3} $\CC$ is prepattern-free;

        \item\label{itm:circle4} \(\mathcal{C}\) has the insulation property;
        
        \item\label{itm:circle5} \(\mathcal{C}\) is flip-breakable.
    \end{enumerate}
\end{theorem}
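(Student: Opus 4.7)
The plan is to close a single cycle of implications
\[
\text{(iii)} \;\Longrightarrow\; \text{(iv)} \;\Longrightarrow\; \text{(v)} \;\Longrightarrow\; \text{(i)} \;\Longrightarrow\; \text{(ii)} \;\Longrightarrow\; \text{(iii)}
\]
using the propositions already proved in Parts~\ref{part:structure} and~\ref{part:nonstructure}. Each implication is a one-line invocation, so the role of this proof is purely to assemble the pieces. The theorem is essentially a summary statement, and no new combinatorial argument is needed.

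For the structure side, I will cite \Cref{prop:prepatternImpliesInsulation} for (iii)$\Rightarrow$(iv), \Cref{prop:ge-implies-fb} for (iv)$\Rightarrow$(v), and \Cref{prop:fb-implies-mnip} for (v)$\Rightarrow$(i). This gives prepattern-freeness $\Rightarrow$ insulation property $\Rightarrow$ flip-breakability $\Rightarrow$ monadic dependence, exactly as advertised in the introduction to \cref{part:structure}.

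For the non-structure side, I will close the cycle with two contrapositive steps. The implication (i)$\Rightarrow$(ii) is the contrapositive of \Cref{lem:transduceAll}: if $\CC$ is monadically dependent, then it cannot be the case that there is a fixed $r$ for which $\CC$ contains, at every order $k$, one of the four listed induced subgraphs; hence for every $r$ some bound $k(r)$ must exist. The final implication (ii)$\Rightarrow$(iii) is the contrapositive of \Cref{prop:prepatterns-to-patterns}: if $\CC$ were not prepattern-free, then there would exist an $r$ witnessing that $\CC$ contains flipped star/clique/half-graph $r$-crossings or comparability grids of arbitrary order as induced subgraphs, contradicting (ii).

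I do not expect any obstacle, since the substantive work, and in particular the passage from a prepattern of large order on an insulator to the concrete flipped crossings and comparability grids (\cref{prop:patterns-main}), has already been carried out. The only small matter of care is to keep the quantifier order straight in (ii): (ii) is a ``for every $r$, there exists $k$'' statement, which matches exactly the contrapositive forms of \Cref{lem:transduceAll} and \Cref{prop:prepatterns-to-patterns}.
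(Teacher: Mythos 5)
Your proposal is correct and takes exactly the same approach as the paper: you assemble the identical five-step cycle (iii)$\Rightarrow$(iv)$\Rightarrow$(v)$\Rightarrow$(i)$\Rightarrow$(ii)$\Rightarrow$(iii), citing \Cref{prop:prepatternImpliesInsulation}, \Cref{prop:ge-implies-fb}, \Cref{prop:fb-implies-mnip}, \Cref{lem:transduceAll}, and \Cref{prop:prepatterns-to-patterns} in turn, with the latter two used by contrapositive just as the paper does. The only cosmetic difference is that the paper writes the cycle starting from (i) rather than (iii).
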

    
\begin{proof}
    We have already proven all the necessary implications:
    \begin{itemize}
        \item $\ref{itm:circle1} \Rightarrow \ref{itm:circle2}$: \cref{lem:transduceAll}
        \item $\ref{itm:circle2} \Rightarrow \ref{itm:circle3}$: \cref{prop:prepatterns-to-patterns}
        \item $\ref{itm:circle3} \Rightarrow \ref{itm:circle4}$: \cref{prop:prepatternImpliesInsulation}
        \item $\ref{itm:circle4} \Rightarrow \ref{itm:circle5}$: \cref{prop:ge-implies-fb}
        \item $\ref{itm:circle5} \Rightarrow \ref{itm:circle1}$: \cref{prop:fb-implies-mnip}\qedhere
    \end{itemize}
\end{proof}

We obtain the following algorithmic version of flip-breakability by combining \cref{thm:main-circle} and \cref{prop:ge-implies-fb}.

\begin{theorem}
\label{thm:alg-flip-breakability}
For every monadically dependent class $\CC$ and radius \(r \in \N\), there exists an unbounded function
\(f_r : \N \to \N\), a constant \(k_r \in \N\), and an algorithm that, given a graph \(G \in \CC\) and \(W \subseteq V(G)\), computes in time $O_{\CC,r}(|V(G)|^2)$ two subsets $A,B\subset W$ with \(|A|,|B| \ge f_r(|W|)\)
and a \(k_r\)-flip \(H\) of \(G\) such that:
\[
    \dist_{H}(A,B) > r.
\]
\end{theorem}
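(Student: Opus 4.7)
The plan is to package together the effective components already developed throughout \Cref{part1}; the theorem is essentially a corollary of \Cref{thm:main-circle} combined with the algorithmic part of \Cref{prop:ge-implies-fb}. Given a monadically dependent class $\CC$ and radius $r$, the implication \ref{itm:circle1} $\Rightarrow$ \ref{itm:circle3} of \Cref{thm:main-circle} supplies the hypothesis that $\CC$ is prepattern-free, which is precisely what the algorithmic conclusion of \Cref{prop:ge-implies-fb} requires in order to compute the flip-breakability witnesses.

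Unfolding this, the algorithm has two stages. First, it calls the routine from \Cref{prop:prepatternImpliesInsulation} on radius $2r+1$ to compute, in time $O_{\CC,r}(|V(G)|^2)$, a subset $W' \subseteq W$ of size at least $U_{\CC,r}(|W|)$ together with an insulator of height $2r+1$ and cost $k = \const(\CC,r)$ that insulates $W'$. Second, it invokes the algorithm of \Cref{lem:insulated-to-fb} on this insulator, in time $O_{r,k}(|V(G)|)$, to produce subsets $A, B \subseteq W'$ of size at least $\tfrac{1}{3}|W'|$ and a $2^{48 r^2 k}$-flip $H$ of $G$ with $\dist_H(A,B) > r$.

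Setting $k_r := 2^{48 r^2 \const(\CC,r)}$ and $f_r(n) := \tfrac{1}{3} U_{\CC,r}(n)$ yields the constant and unbounded function promised by the statement, and both depend only on $\CC$ and $r$ as required. There is no significant obstacle left to surmount here: the heavy lifting has already been done in establishing the effective versions of \Cref{prop:prepatternImpliesInsulation}, \Cref{lem:insulated-to-fb}, and the equivalences of \Cref{thm:main-circle}. Consequently, the theorem follows by straightforward composition of these algorithms, with the overall running time dominated by the $O_{\CC,r}(|V(G)|^2)$ cost of constructing the insulator.
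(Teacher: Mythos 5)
Your proposal is correct and takes essentially the same approach as the paper, which states that \Cref{thm:alg-flip-breakability} follows by combining \Cref{thm:main-circle} with \Cref{prop:ge-implies-fb}. Your explicit unfolding — use \ref{itm:circle1}~$\Rightarrow$~\ref{itm:circle3} to obtain prepattern-freeness, then chain \Cref{prop:prepatternImpliesInsulation} at radius $2r+1$ with \Cref{lem:insulated-to-fb} — is precisely the content of the algorithmic part of \Cref{prop:ge-implies-fb}, and the running-time and size bookkeeping is handled as in the paper.
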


\thmQuant*

\begin{proof}
    The implication \ref{it:quant-eps}$\rightarrow$\ref{it:quant-eps-1/2} is immediate.
    We prove the implications \ref{it:quant-mdep}$\rightarrow$\ref{it:quant-eps} and \ref{it:quant-eps-1/2}$\rightarrow$\ref{it:quant-mdep}.

    \ref{it:quant-mdep}$\rightarrow$\ref{it:quant-eps}.
    Fix $\eps>0$, $r\ge 1$ and a graph class $\CC$. Let $\cal B_r$ denote the class of bipartite graphs $G$ such that  some radius-$r$ encoding of $G$ belongs to $\CC$,
    and let $\cal B_r^{(1)}$ denote the class of $1$-subdivisions of the bipartite graphs in $\cal B_r$. 
    By Lemma~\ref{lem:encode}, the class $\CC$ transduces the class $\cal B_r^{(1)}$.
    As $\CC$ is monadically dependent, $\cal B_r^{(1)}$ is as well. As $\cal B_r^{(1)}$ is \emph{weakly sparse}\footnote{A graph class $\CC$ is \emph{weakly sparse} if there is a bound $t$ such that $\CC$ excludes the biclique $K_{t,t}$ of order $t$ as a subgraph. In our case $\CC$ consists of subdivided graphs and excludes $K_{2,2}$ as a subgraph.} and monadically dependent, it is nowhere dense 
    (this follows from \cite{dvovrak2018induced}, see \cite{nevsetvril2021rankwidth}). As the class $\cal B_r^{(1)}$ of $1$-subdivisions of graphs from $\cal B_r$ is nowhere dense, 
    it follows that $\cal B_r$ is nowhere dense
    (indeed, if $\cal B_r$ contains some $k$-subdivided clique $K_n$ as a subgraph, for some $k,n\in\N $, then $\cal B_r^{(1)}$ contains the $(2k+1)$-subdivided clique $K_n$ as a subgraph).
    Since $\cal B_r$ is nowhere dense, it follows from \cite{dvorak-thesis}  (see also \cite{nevsetvril2011nowhere}) that 
    $|E(G)|\le O_{\cal B_r,\eps}(|V(G)|^{1+\eps})\le O_{\cal C,r,\eps}(|V(G)|^{1+\eps})$. The conclusion follows.

    \ref{it:quant-eps-1/2}$\rightarrow$\ref{it:quant-mdep}.
    We proceed by contrapositive. Let $\CC$ be a hereditary, monadically independent graph class. By \Cref{thm:main-circle},
    there is some $r\ge 1$ such that for every $n$,
    $\CC$ contains some flipped $r$-crossing of order $n$,
    or the comparability grid of order $n+1$.
As mentioned earlier, those contain as induced subgraphs radius-$r$ encodings ($r=1$ in the case of comparability grids) of the complete bipartite graph $K_{n,n}$ of order $n$,
which has $|E(K_{n,n})|=n^2$ and $|V(K_{n,n})|=2n$,
and therefore $|E(K_{n,n})|\ge |V(K_{n,n})|^2/4$.
As $n$ is arbitrarily large, this proves the negation of condition \ref{it:quant-eps-1/2}, and finishes the proof of the implication.
\end{proof}

\newpage
\part{Lower Bounds}
In this part we present several algorithmic and combinatorial lower bounds for hereditary, monadically independent graph classes.

\section{Hardness of Model Checking}\label{sec:hardness}

This section is devoted to proving the following theorem.

\thmHardnessMain*

We show this by reducing from the first-order model checking problem on the class of all graphs, which is $\mathrm{AW}[*]$-complete \cite{Downey1996ThePC}. 
Our main tool is that of an \emph{interpretation}.

\begin{definition}\label{def:interpretation}
    Let $\delta(x)$ and $\phi(x,y)$ be formulas, where $\phi$ is symmetric and irreflexive.
    The \emph{interpretation} $I_{\delta,\phi}$ is defined as the operation that maps a given input graph $G$ to the output graph $I_{\delta,\phi}(G):=H$ where $H$ has vertex set $V(H) := \{v \in V(G) : G \models \delta(v)\}$ and edge set $\{(u,v) \in V(H)^2: G \models \phi(u,v) \}.$ 
    A class of graphs $\CC$ \emph{efficiently interprets} a class $\DD$ if there exists 
    a polynomial-time computable 
    function $f\from \DD\to\CC$ such that 
     $I(f(H)) = H$ for all $H\in \DD$.
\end{definition}
Note that since the graph $f(H)$ is computed in polynomial time given $H$, in particular,
  $|V(f(H))|\le p(|V(H)|)$, for some fixed polynomial $p\from\N\to\N$.

The following property of interpretations is well known and follows by a simple formula rewriting procedure (see, e.g., \cite[Thm. 4.3.1]{hodges-shorter}).

\begin{lemma}
    For every interpretation $I$ and formula $\psi(\bar x)$, we can compute a formula $\psi_\star(\bar x)$, such that for all graphs $G$ and $H$ satisfying $G = I(H)$ and every tuple $\bar a \in V(G)^{|\bar x|}$,
    \[
        G \models \psi(\bar a) \quad\Leftrightarrow \quad H \models \psi_\star(\bar a).
    \]
\end{lemma}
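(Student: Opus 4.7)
The plan is to define the translated formula $\psi_\star$ by structural recursion on $\psi$, where quantifiers are relativized to $\delta$ and the edge atom is replaced by $\phi$. I would then verify the equivalence by induction on $\psi$. The translation itself is computable in linear time in $|\psi|$ (aside from copying $\delta$ and $\phi$), which establishes the ``we can compute'' part of the statement.

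Concretely, fix $I = I_{\delta,\phi}$ and define the translation by the following rules: $(x_i = x_j)_\star := (x_i = x_j)$; $E(x_i,x_j)_\star := \phi(x_i,x_j)$; $(\neg \chi)_\star := \neg \chi_\star$; $(\chi_1 \wedge \chi_2)_\star := \chi_{1,\star} \wedge \chi_{2,\star}$ (and dually for $\vee$); $(\exists y.\,\chi)_\star := \exists y.\,\delta(y) \wedge \chi_\star$; $(\forall y.\,\chi)_\star := \forall y.\,(\delta(y) \rightarrow \chi_\star)$. To avoid variable capture, one first $\alpha$-renames the bound variables of $\psi$ so as not to clash with the free variables of $\delta$ or $\phi$; this is a routine preprocessing step that does not affect semantics.

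For correctness, I would prove by structural induction on $\psi(\bar x)$ that for every $\bar a \in V(G)^{|\bar x|}$ we have $G \models \psi(\bar a)$ iff $H \models \psi_\star(\bar a)$. The atomic cases are immediate: equality coincides in $G$ and $H$ since $V(G) \subseteq V(H)$, and edges in $G = I(H)$ are by definition exactly the pairs satisfying $\phi$ in $H$. Boolean cases follow trivially from the induction hypothesis. The only place where the relativization matters is the quantifier case: $G \models \exists y.\,\chi(\bar a, y)$ holds exactly when some $b \in V(G)$ witnesses $\chi$, and since $V(G) = \{b \in V(H) : H \models \delta(b)\}$, applying the inductive hypothesis to $\chi$ and $(\bar a, b)$ yields the equivalence with $H \models \exists y.\,\delta(y) \wedge \chi_\star(\bar a, y)$; the universal case is dual. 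No step presents a genuine obstacle --- this is the standard substitution property of interpretations --- and the only pitfalls are bookkeeping ones such as name-clash avoidance.
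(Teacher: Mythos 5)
Your proposal is correct and is exactly the standard relativization-plus-substitution argument that the paper itself invokes by reference (it gives no proof, citing Hodges' Theorem 4.3.1 instead). Nothing to flag.
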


\begin{corollary}[Transitivity]\label{lem:hardness-transitivity}
    Let $\CC, \DD, \EE$ be classes of graphs such that $\CC$ efficiently interprets $\DD$ and $\DD$ efficiently interprets $\EE$.
    Then also $\CC$ efficiently interprets $\EE$.
\end{corollary}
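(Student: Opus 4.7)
The plan is to construct the interpretation from $\CC$ to $\EE$ by composing the two given interpretations, using the formula-rewriting lemma stated just before the corollary to pull the second interpretation's formulas back through the first.

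Concretely, let $I_1 = I_{\delta_1,\phi_1}$ together with $f_1 \from \DD \to \CC$ witness that $\CC$ efficiently interprets $\DD$, and let $I_2 = I_{\delta_2,\phi_2}$ together with $f_2 \from \EE \to \DD$ witness that $\DD$ efficiently interprets $\EE$. First I would apply the rewriting lemma to the interpretation $I_1$: the formulas $\delta_2(x)$ and $\phi_2(x,y)$, which live in the signature used to describe structures in $\DD$, get rewritten to formulas $\delta_2^\star(x)$ and $\phi_2^\star(x,y)$ in the signature of $\CC$, such that for every $D \in \DD$, $C \in \CC$ with $D = I_1(C)$ and every tuple $\bar a \in V(D)^{|\bar x|}$ one has $D \models \delta_2(\bar a) \Leftrightarrow C \models \delta_2^\star(\bar a)$ and similarly for $\phi_2$. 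I would then define the composed interpretation $I_3 := I_{\delta_3,\phi_3}$ where $\delta_3(x) := \delta_1(x) \wedge \delta_2^\star(x)$ and $\phi_3(x,y) := \delta_1(x) \wedge \delta_1(y) \wedge \phi_2^\star(x,y)$, symmetrized and made irreflexive if $\phi_2^\star$ is not already so by construction.

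The next step is to verify $I_3(f_1(f_2(E))) = E$ for every $E \in \EE$. Write $D := f_2(E)$ and $C := f_1(D)$, so by assumption $I_2(D) = E$ and $I_1(C) = D$. For the vertex set, a vertex $v \in V(C)$ satisfies $\delta_3$ iff $C \models \delta_1(v)$ (equivalently $v \in V(D)$) and $C \models \delta_2^\star(v)$; by the rewriting guarantee applied to $v \in V(D)$, the latter is equivalent to $D \models \delta_2(v)$, i.e.\ $v \in V(E)$. The edge set is checked analogously, using that the $\delta_1$-conjuncts in $\phi_3$ guarantee both endpoints lie in $V(D)$ so that the rewriting lemma applies to $\phi_2^\star$. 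Finally, setting $f := f_1 \circ f_2$, the composition is polynomial-time computable because $f_2(E)$ has size polynomial in $|V(E)|$, and $f_1$ applied to it then runs in time polynomial in that size, hence polynomial in $|V(E)|$.

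The only real subtlety — and the one place I would double-check carefully — is the handling of vertices outside $V(D)$ when evaluating $\delta_2^\star$ and $\phi_2^\star$ on $C$, since the rewriting lemma only guarantees correctness of $\psi_\star$ on tuples from $V(I(H))^{|\bar x|}$. The explicit $\delta_1$-conjuncts in $\delta_3$ and $\phi_3$ are precisely what rules out such tuples, so no real obstacle arises; everything else is formula bookkeeping.
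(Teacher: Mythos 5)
Your proof is correct and is precisely the standard argument the paper has in mind when it labels the statement a corollary of the rewriting lemma and omits the details; composing the two interpretations via rewritten formulas $\delta_2^\star,\phi_2^\star$ guarded by $\delta_1$-conjuncts, together with the observation that the composition $f_1\circ f_2$ is polynomial because $|V(f_2(E))|$ is polynomially bounded in $|V(E)|$, is exactly what is intended. Your remark about the scope of the rewriting lemma is also right: the guarantee only applies to tuples from $V(I_1(C))$, and the $\delta_1$-conjuncts (or, more directly, the fact that the interpretation's edge formula is by definition only evaluated on $V(I_3(C))^2$) confine evaluation to that domain, so no gap arises there either.
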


\begin{corollary}[Reduction]\label{lem:hardness-reductions}
    Let $\CC$ be a class of graphs that efficiently interprets the class of all graphs. Then the first-order model checking problem is $\mathrm{AW[*]}$-hard on $\CC$.
\end{corollary}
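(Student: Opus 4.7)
The plan is to construct an \textsf{fpt}-reduction from the first-order model checking problem on the class of all (finite) graphs, which is well known to be $\mathrm{AW}[*]$-complete, to the first-order model checking problem on $\CC$. Throughout, let $I = I_{\delta,\phi}$ and $f : \mathrm{Graphs} \to \CC$ be the interpretation and the polynomial-time computable function from \cref{def:interpretation} witnessing that $\CC$ efficiently interprets the class of all graphs, so that $I(f(H)) = H$ for every graph $H$.

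First, given an instance $(H,\psi)$ of the source problem, I will compute $G := f(H) \in \CC$ in polynomial time in $|V(H)|$; by assumption there is a fixed polynomial $p$ with $|V(G)| \le p(|V(H)|)$. Independently of $H$, I will apply the translation lemma (the one stated right after \cref{def:interpretation}) to rewrite $\psi$ into a formula $\psi_\star$ such that $H \models \psi \iff G \models \psi_\star$. Since the translation depends only on $I$ and $\psi$, the formula $\psi_\star$ can be computed from $\psi$ by a computable function $g$, yielding $|\psi_\star| \le g(|\psi|)$. The output instance is $(G, \psi_\star)$.

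This is a valid \textsf{fpt}-reduction in the standard sense: the running time is bounded by $g(|\psi|) + \mathrm{poly}(|V(H)|)$, the parameter of the produced instance is $|\psi_\star| \le g(|\psi|)$, which depends only on the original parameter $|\psi|$, and correctness $H \models \psi \iff G \models \psi_\star$ follows directly from the translation lemma. An \textsf{fpt}-algorithm for first-order model checking on $\CC$ would therefore yield one for first-order model checking on the class of all graphs, contradicting $\mathrm{FPT} \neq \mathrm{AW}[*]$. Hence model checking is $\mathrm{AW}[*]$-hard on $\CC$.

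There is no real obstacle here; the only subtle point worth flagging is making sure that the rewritten formula $\psi_\star$ correctly handles the universe restriction induced by $\delta(x)$ (relativizing quantifiers to $\delta$) and that $\phi$ is treated as the edge relation of the interpreted graph, but both are precisely what the translation lemma supplies. Hence the proof is essentially a one-line invocation of \cref{def:interpretation}, the translation lemma, and the known $\mathrm{AW}[*]$-completeness of first-order model checking on general graphs.
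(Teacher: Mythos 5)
Your proof is correct and is exactly the intended argument; the paper leaves this corollary without an explicit proof precisely because it follows in one step from \cref{def:interpretation} and the preceding translation lemma, via the standard fpt-reduction you describe. One small slip in the last paragraph: AW$[*]$-hardness of a problem is by definition established by exhibiting an fpt-reduction from some AW$[*]$-hard problem, which you have done; the sentence ``contradicting $\mathrm{FPT}\neq\mathrm{AW}[*]$. Hence model checking is $\mathrm{AW}[*]$-hard'' is a non-sequitur, since the conditional statement it names (no fpt algorithm unless $\mathrm{FPT}=\mathrm{AW}[*]$) is a \emph{consequence} of hardness rather than a proof of it --- the conclusion should simply be drawn directly from the existence of the fpt-reduction.
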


We spend the rest of this section proving the following proposition.

\begin{restatable}{proposition}{propPatternsImplyNotNIP}\label{prop:patternsImplyNotNIP}
    Let \(\mathcal{C}\) be a hereditary graph class and $r\geq 1$, such that for all $k\in \N$,
    \(\mathcal{C}\) contains
    \begin{itemize}
        \item a comparability grid of order \(k\), or 
        \item a flipped star $r$-crossing of order $k$, or
        \item a flipped clique $r$-crossing of order $k$, or
        \item a flipped half-graph $r$-crossing of order $k$.
    \end{itemize}
    Then $\CC$ efficiently interprets the class of all graphs.
\end{restatable}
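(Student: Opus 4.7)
First, by the pigeonhole principle, applied to the (finite) catalogue of pattern types specified by $r$, I may assume that $\CC$ contains arbitrarily large patterns of one single fixed type: either comparability grids, or, for some fixed crossing type (star, clique, or half-graph) and some fixed flip $\sigma$ on the $r+2$ layers (there being at most $2^{(r+2)^2}$ such flips), arbitrarily large instances of the corresponding flipped $r$-crossing. Since the class of $1$-subdivisions of all (bipartite) graphs efficiently interprets the class of all graphs via a fixed formula, and since efficient interpretations compose (\cref{lem:hardness-transitivity}), it suffices to exhibit, in each of these finitely many cases, a polynomial-time computable map $f\from \DD\to\CC$ (with $\DD$ the class of bipartite graphs, or of $1$-subdivisions of all graphs) together with a fixed interpretation $I_{\delta,\phi}$ such that $I_{\delta,\phi}(f(H))=H$.

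Second, I would dispatch the non-flipped cases (comparability grids, and unflipped star/clique/half-graph $r$-crossings) more or less directly. In each of these patterns the layers and roots are readily first-order definable: for the crossings, roots are distinguished by their high degree and the internal layers $L_1,\dots,L_r$ are stratified by distance to the root sets; for comparability grids one uses the standard formula that recovers the two orthogonal orders from neighborhoods. Given an arbitrary bipartite graph $G=(A,B,E)$ (or, by a bipartite-double-cover reduction, an arbitrary graph), I would set $f(G)$ to be the induced subgraph of a pattern of $\CC$ of order polynomial in $|V(G)|$ obtained by deleting exactly the paths $\pi_{i,j}$ whose indices do not correspond to an edge of $G$; by hereditariness $f(G)\in\CC$. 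A fixed formula $\delta(x)$ picking out the roots and a fixed formula $\phi(x,y)$ asserting the existence of a suitable connecting path then recovers the $1$-subdivision of $G$.

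The main obstacle is the genuinely flipped cases, since the flip $\sigma$ can turn the degree- and distance-based identification of layers on its head. To overcome this, I would use \emph{sets of twins to mark the layers}, as hinted by the technical overview. Concretely, starting from a pattern of $\CC$ of order polynomial in $|V(G)|$, I would design the induced subgraph $f(G)$ so that each of the $r+2$ layers of the encoded radius-$r$ crossing carries a first-order definable twin signature — for instance, prescribed numbers of pairwise true (or near-) twins of different cardinalities per layer, using the many ``spare'' paths available in a large pattern to populate these twin classes. Since for every fixed $c$ the properties ``$u,v$ are true twins'' and ``$u$ has at least $c$ true twins'' are first-order definable, this pins down layer membership by a fixed formula without any coloring. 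With layer membership first-order definable, the fixed flip $\sigma$ can be inverted by a fixed boolean combination of formulas that XORs the edge relation on each pair of layers flipped by $\sigma$, reducing the situation to the non-flipped case handled previously; combined with \cref{lem:encode} applied without the coloring step, this yields the required interpretation. The delicate technical point will be arranging the twin-marker scheme so that it (i) actually sits as an induced subgraph of a sufficiently large pattern of $\CC$ for the specific flip $\sigma$, and (ii) is not destroyed by the path-deletions that encode the non-edges of $G$; verifying both is a bookkeeping argument tied to the fixed combinatorics of $\sigma$ and of the crossing type, but is the place where the real work lies.
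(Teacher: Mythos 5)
Your proposal takes essentially the same route as the paper: pigeonhole to fix a pattern type and a flip, use sets of twins to first-order define the flip structure, invert the flip by XOR-ing by color pair, then interpret bipartite graphs from the resulting unflipped crossing, and finally pass from bipartite graphs to all graphs. You have also correctly located where the real work is.

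Two points in your sketch need repair, though, and the first is a genuine gap. You propose ``using the many `spare' paths available in a large pattern to populate these twin classes.'' As written this fails: vertices of a spare path $\pi_{i,j}$ are never twins of one another (consecutive ones are adjacent, non-consecutive ones have distinct path neighbors), and even a single surviving vertex $p_{i,j,t}$ of a spare path is still adjacent to its path neighbors and possibly to a surviving root, so it is not a twin of the corresponding vertex from another spare path unless you have also deleted all of those neighbors. The point is that the markers must be \emph{isolated} in the chosen induced subgraph; then, after the layer-wise flip, isolated vertices of the same layer-color become twins because they receive identical flipped adjacencies. So the bookkeeping is not merely ``not destroying'' the markers, it is an embedding argument: inside a sufficiently large crossing you must find, for each color class of the layer-coloring, a set of vertices of the right size all lying in that color class and pairwise non-adjacent and non-adjacent to everything else you keep. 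The paper does exactly this via a $\prep$ operation that embeds $(i{+}1)$ isolated vertices of color $i$ by pushing their indices past the range used for the encoding of $G$ (and, for the internal layers, staggering the pairs $(i,j)$ so no two chosen path vertices share a row or column), and then proves that in the flipped induced subgraph the twin classes are precisely these marker sets together with singletons. Relatedly, you should mark the \emph{color classes} of the minimal layer-coloring, not the layers themselves: layers that receive the same color are genuinely indistinguishable (they are flipped identically against everything), and minimality of the coloring is what guarantees that two distinct colors are separated by some third color class, which is exactly what is needed to prove the marker sets are twin classes and that a first-order formula recovers each vertex's color.

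The second, smaller point: you propose to finish by ``applying \cref{lem:encode} without the coloring step,'' but that lemma is intrinsically a statement about transductions with colors and does not come with a coloring-free variant; the unflipped-crossing-to-bipartite-graph step needs its own interpretation (using degree and distance to define roots, and a fixed-length-path formula for the edge relation), and one must first drop to twin-free and isolated-vertex-free bipartite graphs so that the marker twin classes stay distinguishable. None of this changes the overall strategy, which is the same as the paper's.
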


Using \Cref{thm:mainforbiddenpatterns}, this is equivalent to the \cref{thm:hardness-interprets}, which we restate for convenience.

\hardnessInterprets*

By \cref{lem:hardness-reductions}, this then immediately implies \cref{thm:hardness-main},
stating $\mathrm{AW}[*]$-hardness of the model checking problem on every hereditary, monadically independent graph class.
Moreover, \Cref{thm:hardness-interprets} implies 
that a hereditary graph class is dependent if and only if it is monadically dependent.
This equivalence was previously proven by Braunfeld and Laskowski for the more general setting
of hereditary classes of relational structures~\cite{braunfeld2022existential}.

Note that 
it is rather straightforward to prove 
that, under the assumptions of \Cref{prop:patternsImplyNotNIP},
some class $\CC^+$ of \emph{colored} graphs 
from $\CC$ efficiently interprets the class of all graphs.  Indeed, using colors we can mark the 
$r+2$ layers of (an induced subgraph of) a flipped $r$-crossing representing the input graph, and then use the colors to ``undo'' the flip, thus obtaining 
an unflipped $r$-crossing,
with which we can proceed similarly as in the sketched proof of \cref{lem:transduceAll}.
This, however, would yield a significantly weaker statement than \Cref{thm:hardness-main}:
 that the first-order model checking problem
 on \emph{colored} graphs from $\CC$
 is AW[$*$]-hard, for every hereditary,  monadically independent class $\CC$.

\subsection{Crossings and Comparability Grids}
Since we explicitly refer to the individual vertices of our \(r\)-crossing patterns in this section,
let us restate their definition in greater detail and explicitly name their vertex sets.

\begin{definition}[\(r\)-crossings]\label{def:rcrossings}
For every radius $r \geq 1$ we define the \emph{star, clique, and half-graph \(r\)-crossing} of order $n$ as the graph whose vertex set 
\[
    \{a_i : i \in [n]\} \cup \{b_i : i \in [n]\} \cup \{p_{i,j,t} : i,j \in [n], t\in [r]\}    
\]
is partitioned into $l := r + 2$ \emph{layers} $\LL := \{ L_0, \ldots, L_{r+1} \}$ with
\begin{itemize}
    \item $L_0 := \{a_i : i \in [n]\}$,
    \item $L_t := \{p_{i,j,t} : i,j \in [n]\}$ for all $t \in [r]$,
    \item $L_{r+1} := \{b_j : j \in [n]\}$,
\end{itemize}
and whose edges are defined as follows.
The vertices $(a_i, p_{i,j,1}, \ldots, p_{i,j,r}, b_j)$ form a path for all $i,j \in [n]$.
Each of the three types enforces separate additional edges.
\begin{itemize}
    \item The star $r$-crossing contains no additional edges.
    \item For the clique $r$-crossing,
    \begin{itemize}
        \item $p_{i,j,1}$ and $p_{i,j',1}$ are adjacent for all $j \neq j' \in [n]$, and
        \item $p_{i,j,r}$ and $p_{i',j,r}$ are adjacent for all $i \neq i' \in [n]$.
    \end{itemize}
    \item For the half-graph \(r\)-crossing,
    \begin{itemize}
        \item $a_i$ is adjacent to $p_{i',j,1}$ for all $i \leq i' \leq n$ and for all $j \in [n]$, and
        \item $b_j$ is adjacent to $p_{i,j',r}$ for all $i \in [n]$ and for all $j \leq j' \leq n$.
    \end{itemize}
\end{itemize}
For every $r \geq 1$, let $\Bs_r$, $\Bc_r$, $\Bo_r$ be the hereditary closure of the class of all star, clique, and half-graph \(r\)-crossings, respectively.
\end{definition}

\medskip \noindent
Lastly, recall that the \emph{comparability grid} of order $n$ is the graph with vertex set $\{ a_{i,j} : i,j \in [n] \}$ where for all $i \leq i'$, $a_{i,j}$ and $a_{i',j'}$ are adjacent if and only if $i=i'$ or $j \leq j'$.
Denote by $\Bg$ the hereditary closure of the class of all comparability grids.

\subsection{Twins}

Two vertices $u$ and $v$ are \emph{twins} in a graph $G$, if \(N_G(u)\setminus \{ u,v \} = N_G(v)\setminus \{ u,v \}\).
This relation is transitive and definable in first-order logic:
\[
    \twins(x,y) := \forall z : (z \neq x \wedge z \neq y) \rightarrow (E(z,x) \leftrightarrow E(z,y)).
\]
The \emph{twin classes} of a graph $G$ are the equivalence classes of the twin-relation of $G$.
For every fixed $k\in \N$, 
the formulas 
\begin{gather*}
    \counttwins_{\geq k}(x) := \exists z_1, \ldots, z_k:
    \bigwedge_{(i,j) \in \binom{k}{2}} z_i \neq z_j \wedge z_i \neq x \wedge \twins(z_i,x),\\
    \counttwins_{= k}(x) := \counttwins_{\geq k}(x) \wedge \neg (\counttwins_{\geq k + 1}(x))
\end{gather*}
express that $x$ has at least or exactly $k$ twins, respectively  
(equivalently, the twin class containing $x$ has at least or exactly $k + 1$ elements).

\subsection{Reversing Flips}
Combining hereditariness and the pigeonhole principle, we observe the following.

\begin{observation}\label{obs:pigeonhole-patterns}
    Let \(\mathcal{C}\) be a hereditary graph class and $r\geq 1$, such that for all $k\in \N$,
    \(\mathcal{C}\) contains
    \begin{itemize}
        \item a comparability grid of order \(k\), or 
        \item a flipped star $r$-crossing of order $k$, or
        \item a flipped clique $r$-crossing of order $k$, or
        \item a flipped half-graph $r$-crossing of order $k$.
    \end{itemize}
    Then $\CC$ contains either $\Bg$
    or there is $\BB \in \{\Bs_r, \Bc_r, \Bo_r\}$, such that $\CC$ contains 
    a \emph{layer-wise} flip of each graph in $\BB$.
\end{observation}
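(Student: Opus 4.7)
The plan is to apply the pigeonhole principle twice on the hypothesis. Since $\N$ is infinite and for each $k \in \N$ at least one of the four alternatives holds, there must be a single alternative that holds for infinitely many $k$. Call this the \emph{chosen alternative}. If the chosen alternative is that a comparability grid of order $k$ lies in $\CC$ for infinitely many $k$, then for every fixed order $m$ I can pick some $k \ge m$ for which $\CC$ contains the comparability grid of order $k$; since the comparability grid of order $m$ sits inside the comparability grid of order $k$ as the induced subgraph on $\{a_{i,j} : i,j \in [m]\}$, hereditariness of $\CC$ yields every comparability grid in $\CC$, hence $\Bg \subseteq \CC$. So assume instead that the chosen alternative involves flipped $r$-crossings of some fixed type $t \in \{\mathrm{star}, \mathrm{clique}, \mathrm{half\text{-}graph}\}$.

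For each such $k$, the specific flipped $t$-$r$-crossing of order $k$ that lies in $\CC$ is obtained by applying \emph{some} flip $F_k$ to the (unflipped) $t$-$r$-crossing of order $k$, where $F_k$ is specified by a symmetric relation on the $r+2$ layers. The number of such relations is at most $2^{(r+2)^2}$, hence finite (as $r$ is fixed). Pigeonhole again yields a single flip $F$ on the layer partition that arises as $F_k$ for infinitely many $k$. Let $\BB \in \{\Bs_r, \Bc_r, \Bo_r\}$ correspond to the type $t$.

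It then remains to verify: for every $G \in \BB$, some layer-wise flip of $G$ is contained in $\CC$. By definition of $\BB$ as a hereditary closure, $G$ is an induced subgraph of the (unflipped) $t$-$r$-crossing $H_{k_0}$ of some order $k_0$. Now a straightforward inspection of \cref{def:rcrossings} shows that $H_{k_0}$ embeds into $H_k$, for every $k \ge k_0$, as a \emph{layer-preserving} induced subgraph: restrict to the roots $a_1,\dots,a_{k_0},b_1,\dots,b_{k_0}$ together with the path vertices $p_{i,j,t}$ with $i,j \le k_0$; the surviving adjacencies in each type (star, clique, half-graph) are exactly those prescribed by $H_{k_0}$. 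Compose this embedding with the inclusion of $G$ into $H_{k_0}$ to obtain a layer-preserving embedding $\phi : V(G) \hookrightarrow V(H_k)$.

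Finally, pick $k \ge k_0$ for which the $F$-flip of $H_k$ lies in $\CC$. Because $\phi$ preserves layers, the induced subgraph of this $F$-flip of $H_k$ on the set $\phi(V(G))$ coincides with the flip of $G$ obtained by applying $F$ to the induced layer partition of $G$; that is, it is a layer-wise flip of $G$. Since $\CC$ is hereditary, this layer-wise flip of $G$ belongs to $\CC$, as required. The main subtlety here is the layer-preservation check in the last paragraph, which is why the \emph{same} flip $F$ extracted by the second pigeonhole step is crucial; without it, the flips on different induced subgraphs would not be compatible.
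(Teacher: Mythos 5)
Your proof is correct, and it follows exactly the strategy the paper signals with the phrase ``combining hereditariness and the pigeonhole principle'': one pigeonhole step to isolate a single alternative that holds for infinitely many orders, hereditariness to pass to $\Bg$ in the comparability-grid case, and a layer-preserving embedding $H_{k_0} \hookrightarrow H_k$ (which you correctly verify survives induced-subgraph restriction for all three crossing types) combined with hereditariness to conclude in the flipped-crossing case.

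One small remark: the second pigeonhole step, extracting a single flip $F$ that recurs for infinitely many $k$, is not actually needed for the observation as stated. The conclusion only asserts that for each $G \in \BB$ \emph{some} layer-wise flip of $G$ lies in $\CC$; it need not be the same flip across different $G$. So you could simply pick, for each $G \in \BB$ embedding into $H_{k_0}$, any $k \ge k_0$ with a flipped crossing of order $k$ in $\CC$, and use whichever flip $F_k$ it carries. The paper in fact postpones the uniform-flip pigeonhole to the proof of \cref{lem:hardness-undo-flips-sco}, where it is actually used. Your stronger version is harmless and even slightly more informative, but it does a bit more work than the statement requires.
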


Recall that $\Bs_r$/$\Bc_r$/$\Bo_r$ contains all the \emph{induced subgraphs} of all star/clique/half-graph $r$-crossings.
A \emph{layer-wise} flip of a graph $G$ in $\BB \in \{\Bs_r, \Bc_r, \Bo_r\}$ 
is an \(\{L_0,\dots,L_{r+1}\}\)-flip of $G$: the flip respects the layered structure of the class $\BB$.
As the vertices of the graphs in $\BB$ are named (cf. \cref{def:rcrossings}), it is clear in which layer each vertex of $G$ is located.

In this subsection we use interpretations to undo the flips and recover the graphs from $\BB$.
However, we only recover graphs without twins and without isolated vertices.
Let $\TT$ be the class of all graphs containing twins.
Let $\II$ be the class of all graphs containing isolated vertices.

\begin{lemma}\label{lem:hardness-undo-flips-sco}
    Fix $r \geq 1$. Let $\BB \in \{\Bs_r, \Bc_r, \Bo_r\}$ 
    and let $\CC$ be a hereditary class containing a layer-wise flip of each graph from $\BB$. 
    \begin{center}
    $\CC$ efficiently interprets $\BB \setminus (\TT \cup \II)$.
    \end{center}
\end{lemma}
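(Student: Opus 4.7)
Given $H\in\BB\setminus(\TT\cup\II)$, the plan is to construct $G=f(H)\in\CC$ together with a fixed interpretation $(\delta,\phi)$ depending only on $r$ that recovers $H$ from $G$. Realize $H$ as an induced subgraph of some large star/clique/half-graph $r$-crossing, and augment it to a graph $H_0^+\in\BB$ by adjoining, for each layer $t\in\{0,\ldots,r+1\}$, a set $M_t$ of $k_t$ pairwise twin vertices located in $L_t$; the integers $k_0,\ldots,k_{r+1}\geq 2$ are fixed, pairwise distinct, and depend only on $r$. Each marker set $M_t$ is realized by selecting $k_t$ vertices of layer $L_t$ from a disjoint "reserved region" of a sufficiently large $r$-crossing containing the embedding of $H$, and omitting all their neighbors from the induced subgraph $H_0^+$, so that the $k_t$ chosen vertices become isolated in $H_0^+$ and hence form a twin cluster. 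Finally, $G=f(H)$ is any layer-wise flip $G=H_0^+\oplus_{\LL}F$ witnessing $H_0^+\in\CC$, which exists by hypothesis; the whole construction runs in polynomial time.

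The key observation is that the twin-relation within a single layer is preserved under layer-wise flips: for $u,v\in L_t$ and any $t'$, the bit $F(L_t,L_{t'})$ uniformly XORs both $N(u)\cap L_{t'}$ and $N(v)\cap L_{t'}$ with the same set, and the intra-layer bit $F(L_t,L_t)$ treats $u$ and $v$ symmetrically. Hence $M_t$ remains a twin cluster of size exactly $k_t$ in $G$, and because its members were isolated in $H_0^+$, a marker $u\in M_t$ is adjacent in $G$ to a vertex $v\in L_{t'}$ if and only if $F(L_t,L_{t'})=1$. In particular, the entire flip $F$ can be read off from the marker-to-marker adjacencies in $G$.

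Using these markers the interpretation is assembled. Set $\delta(x):=\counttwins_{=0}(x)$; this selects exactly $V(H)\subseteq V(G)$, because $H\notin\TT$ makes every $H$-vertex twin-free while every marker has $k_t-1\geq 1$ twins. For each $t$, the formula $\counttwins_{=k_t-1}(x)$ identifies the layer-$t$ markers. For each candidate symmetric flip $F$ on $\{0,\ldots,r+1\}$ (finitely many), the property "$F$ matches the marker-to-marker adjacencies observed in $G$" is first-order expressible, as is the property "for every $t'$ and every $y$ with $\counttwins_{=k_{t'}-1}(y)$, $E(x,y)\leftrightarrow F(L_t,L_{t'})=1$", which detects that an $H$-vertex $x$ lies in $L_t$. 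The edge formula $\phi(x,y)$ is then the finite disjunction, over candidate flips $F$ and layer pairs $(t,t')$, of "$F$ is the correct flip" $\wedge$ "$x$ lies in $L_t$ and $y$ lies in $L_{t'}$" $\wedge$ "$E(x,y)\oplus F(L_t,L_{t'})=1$"; the formulas $(\delta,\phi)$ depend only on $r$, and by construction $I_{\delta,\phi}(G)=H$.

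The main obstacle is checking that the marker construction can be realized in $\BB$ for each $\BB\in\{\Bs_r,\Bc_r,\Bo_r\}$, i.e., that the $k_t$ vertices of $M_t$ are genuinely isolated in the induced subgraph $H_0^+$ and that $M_t$ does not inadvertently acquire additional twins. For middle layers $1<t<r$ this is immediate in all three cases, since middle layers of an $r$-crossing carry no intra-layer edges: we pick $k_t$ vertices from pairwise vertex-disjoint paths and omit their two neighbors in $L_{t-1}$ and $L_{t+1}$. For the extreme layers $L_0,L_1,L_r,L_{r+1}$, the intra-layer cliques in $\Bc_r$ and the half-graph adjacencies in $\Bo_r$ create potential edges among candidate markers, which we avoid by drawing marker vertices from distinct row indices (for clique crossings, so that each intra-layer clique contains at most one marker) and from the extreme of the linear order (for half-graph crossings, so that no half-graph edge among markers is induced). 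This case analysis is routine from \Cref{def:rcrossings}.
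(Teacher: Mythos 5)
Your construction shares the core mechanism of the paper's proof: adjoin small clusters of isolated marker vertices to the $r$-crossing, argue that they become identifiable twin classes in the flipped graph $G$, and use twin-class sizes in first-order formulas to recover layer membership, read off the flip, and undo it. The $\delta$-formula is fine and the use of $\counttwins$ is essentially the paper's idea. However, there is a genuine gap in the claim ``$M_t$ remains a twin cluster of size exactly $k_t$ in $G$.''

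This fails whenever the fixed layer-wise flip $F$ treats two layers $L_t, L_{t'}$ identically, i.e.\ $F(L_t, L_s) = F(L_{t'}, L_s)$ for every $s$ (with the identity flip all $r+2$ layers do). Since each marker $m$ in $M_t$ or $M_{t'}$ is isolated in $H_0^+$, its neighborhood after the flip is exactly $\bigcup\{L_s : F(L_t,L_s)=1\}$, so the markers of $M_t$ and of $M_{t'}$ become mutual twins and collapse into one class of size $k_t + k_{t'}$. Your detector $\counttwins_{=k_t-1}(x)$ then selects nothing, the subformula ``for every $y$ with $\counttwins_{=k_{t'}-1}(y)\dots$'' becomes vacuous for the merged indices, and the disjunction defining $\phi$ can activate contradictory conjuncts. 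Concretely, take the flip in which $L_0$ is flipped against every layer (including itself) and no other pair is flipped: then $M_1,\dots,M_{r+1}$ merge into a single class, your ``$x$ lies in $L_t$'' subformula degenerates to ``$x$ is adjacent to $M_0$'' for every $t$, which is satisfied by every $H$-vertex, and for $x\in L_0$, $y\in L_1$ the $(t,t')=(0,1)$ disjunct tests $E(x,y)\oplus 1 = 1$ while the $(t,t')=(1,2)$ disjunct tests $E(x,y)\oplus 0 = 1$; both of ``$x\in L_0,y\in L_1$'' and ``$x\in L_1,y\in L_2$'' hold, so $\phi(x,y)$ is identically true regardless of $E_H(x,y)$.

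The fix is exactly the first step of the paper's proof that your writeup skips: before planting markers, use pigeonhole to fix a \emph{minimal} coloring $\lc:\{0,\dots,r+1\}\to[k]$ of the layers by flip-behavior, where minimality means no two colors can be merged without changing the realized flip, and then place $(i{+}1)$ isolated markers per color $i\in[k]$ rather than per layer. Minimality guarantees that any two marker sets of distinct colors have some observable adjacency difference, so the marker twin classes stay separate; this is the content of \cref{clm:hardness-twinclasses}. With markers indexed by colors instead of layers, your scheme — identify the marker twin classes by their distinct sizes, read off the flip from marker adjacencies, detect the color (rather than layer) of each $H$-vertex, and XOR away the flip — goes through.
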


\begin{proof}
Denote by $B_n$ the star/clique/half-graph \(r\)-crossing of order $n$.
Since \(\mathcal{C}\) is hereditary
and by the pigeonhole principle,
we can assume that there exist an integer $k \leq r + 2$, a mapping $\lc : \{0,\dots,r+1\} \rightarrow [k]$ (for \textbf{l}ayer \textbf{c}olor), and a symmetric relation $R \subseteq [k]^2$ with the following property.
\begin{itemize}
    \item 
    For every $n \in \N$,
    let $\KK_n$ be the $k$-coloring of $V(B_n)$ in which each layer \(L_0,\dots,L_{r+1}\) of \(B_n\)
    is monochromatically colored by the color \(\lc(0),\dots,\lc(r+1)\), respectively.
    Note that two different layers might be assigned the same color.

    The graph $\flip(B_n) := B_n \oplus_{\KK_n} R$ is contained in $\CC$.
\end{itemize}
Without loss of generality, we can assume $k$ to be minimal in the following sense.
\begin{itemize}
    \item Every color is used: the map $\mathrm{lc}$ is surjective.
    \item No two colors can be merged: for all $i \neq j \in [k]$ there exists $d \in [k]$ such that 
        \[
            (i,d) \in R \Leftrightarrow (j,d) \not\in R.
        \]
\end{itemize}

Having fixed $\lc$,  from now on we assume every $B_n$ to be implicitly $k$-colored,
and that \(\BB\) is the hereditary closure of these \(k\)-colored graphs.
We extend our notion $\flip(G) := G \oplus_{\KK} R$ to all graphs \(G \in \BB\),
where \(\KK\) is the aforementioned \(k\)-coloring associated with \(G\).
Note that $\flip(G) \in \CC$ for all \(G \in \BB\). 
Our notion of layers carries over to all graphs $G$, and $\flip(G)$ in the obvious way.
We use twin classes to uniquely quantify representative vertices of each color class.
The representatives are added as follows.
For any $k$-colored graph \(G\), let $\prep(G)$ be the graph
obtained from \(G\) by adding, for each color $i\in[k]$,
$(i+1)$ many isolated vertices $s_{i,1}, \ldots, s_{i,(i+1)}$ of color $i$ to $G$.

\begin{claim}
    If $G \in \BB$, then the same holds for $\prep(G)$.
\end{claim}

\begin{claimproof}
    By assumption, there exists an embedding $f$ from $G$ to $B_{n}$, for some $n \in \N$.
    Let $c := r \cdot (l+1)$ and $m := n + 2c$.
    We show how to embed $\prep(G)$ into $B_m$.
    First note that the function
    \[
        g(\cdot):=\bigcup_{\mathclap{i,j\in[n], t \in [r] }}\,
        \{ 
        a_i \mapsto a_{i+c},\quad
        b_j \mapsto b_{j+c}, \quad
        p_{i,j,t} \mapsto p_{i+c,j+c,t}
        \}
    \]
    is an embedding from $B_{n}$ to $B_m$.
    It follows that $h(\cdot) := g \circ f$ is an embedding from $G$ to $B_m$.
    Let $h(G) := \{h(v) : v \in V(G)\}$.
    Importantly, every vertex $a_i$, $b_j$, or $p_{i,j,t}$ in $h(G)$ satisfies
    \[
        i,j \in \{c+1, \ldots, c + n\}
        \quad
        \text{and} 
        \quad
        t \in [k].
    \]
    We next choose a set $S(L)$ of $(l+1)$ vertices from each layer $L \in \{L_0,\dots,L_{r+1}\}$ as follows.
    \begin{itemize}
        \item $S(L_0) := \{a_{c+n+i} : i \in [l+1]\}$,
        \item $S(L_t) := \{p_{i'+i,i'+i,t} : i \in [l+1]\}$ for every $t \in [r]$ and $i' := (t-1)\cdot(l+1)$,
        \item $S(L_{r+1}) := \{b_{c+n+j} : j \in [l+1]\}$.
    \end{itemize}
    See \Cref{fig:hardness-undo-flips} for a visualization.

    \begin{figure}[htbp]
        \centering
        \includegraphics[scale = 1]{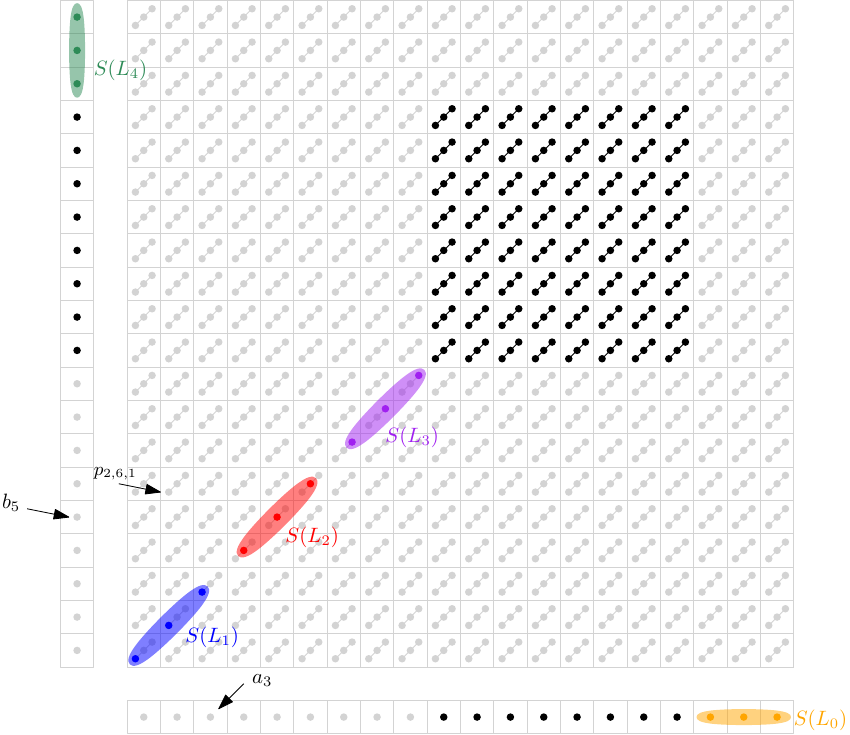}
        \caption{A visualization of how the sets $S(L)$ are embedded into $B_m$ for the case $r=3$.
        To preserve readability in the visualization, each set $S(L)$ has size $3$ instead of $l+1 = 6$.
        The black vertices correspond to the embedding of $B_n$ into $B_m$.
        }
        \label{fig:hardness-undo-flips}
    \end{figure}

    We define $I:= S(L_0) \cup \ldots \cup S(L_{r+1})$.
    By construction, $I$ and $h(G)$ are disjoint.
    Let us now argue that every vertex from $I$ is isolated in the induced subgraph $B'_m := B_m[h(G) \cup I]$.
    \begin{itemize}
        \item Let $a_i \in I$. Then $i > c + n$.
        All the neighbors of $a_i$ in $B_m$ are of the form $p_{i',j,t}$ for some $i' \geq i$.
        All vertices $p_{i',j,t}$ in $B'_m$ satisfy $i' \leq c + n$, so they are non-adjacent to $a_i$.
        \item Let $b_j \in I$. The same reasoning as in the previous case applies.
        \item Let $p_{i,j,t} \in I$. Then $i \le c$ and $j \le c$. By the same reasoning as before, $p_{i,j,t}$ is non-adjacent to all vertices of the form $a_{i'}$ or $b_{j'}$ in $B'_m$.
        Furthermore, any neighbor of $p_{i,j,t}$ of the form $p_{i',j',t'}$ in $B_m$ must satisfy $i' = i$ or $j' = j$. By construction, $B'_m$ contains no vertex $p_{i',j',t'}$ satisfying $i' = i$ or $j' = j$, apart from $p_{i,j,t}$ itself.

    \end{itemize}
    This proves that the set $I$ is indeed isolated in $B'_m$.
    We want to stress that our argument works for each of the three classes $\BB \in \{\Bs_r, \Bc_r, \Bo_r\}$. 
    
    We finally embed $\prep(G)$ into $B'_m$.
    The vertices of $G$ are embedded using $h$.
    The additional vertices $s_{i,1}, \ldots, s_{i,(i+1)}$ that are added to $\prep(G)$ for each color $i\in[k]$ can now be mapped to distinct vertices from a set $S(L)$ to which the layer coloring $\lc$ assigns color $i$.
    As desired, they have color $i$ and are isolated.
\end{claimproof}

\begin{claim}\label{clm:hardness-twinclasses}
    Let $G \in \BB \setminus (\TT \cup \II)$.
    The twin classes of $\flip(\prep(G))$ consist exactly of
    \begin{itemize}
        \item the singleton twin class $\{ v \}$ for every vertex $v \in V(G)$, and
        \item the twin class $T_i := \{s_{i,1}, \ldots, s_{i,i+1}\}$ for every color $i \in [k]$.
    \end{itemize}
\end{claim}

\begin{claimproof}
    The following is easy to see. 
    \begin{equation}\label{eq:hardness-same-color}
        \begin{gathered}
            \text{For all vertices $u,v$ with the same color:}\\
            \text{$u$ and $v$ are twins in $\prep(G)$ if and only if they are twins in $\flip(\prep(G))$.}
        \end{gathered}
    \end{equation}
    Additionally, we argue the following.
    \begin{equation}\label{eq:hardness-diff-color}
            \text{For all vertices $u,v$ with different colors: $u$ and $v$ are not twins in $\flip(\prep(G)).$}
    \end{equation}
    Let $i\neq j$ be the colors of $u$ and $v$. 
    By the assumed minimality of the coloring, there exist a color $d \in [k]$ such that $(i,d) \in R \Leftrightarrow (j,d) \not\in R$.
    There exists at least one vertex $s_d \in \{s_{d,1}, s_{d,2}\}$ that has color $d$ and is non-adjacent and non-equal to both $u$ and $v$ in $\prep(G)$.
    It follows that in $\flip(\prep(G))$ exactly one of $u$ and $v$ will be adjacent to $s_d$. Thus, $u$ and $v$ are no twins in $\flip(\prep(G))$.
    
    Combining \eqref{eq:hardness-same-color} and \eqref{eq:hardness-diff-color}, we have that every two vertices $u$ and $v$ which are no twins in $\prep(G)$ are also no twins in $\flip(\prep(G))$.
    Since $G \notin \TT \cup \II$, in $\prep(G)$ the vertices of $V(G)$ neither have twins among $V(G)$ nor among the isolated vertices added to build $\prep(G)$ from $G$.
    It follows that each vertex from $V(G)$ is contained in a singleton twin class of $\flip(\prep(G))$ as desired.
    Finally, by \eqref{eq:hardness-same-color}, for every color $i \in [k]$ there is a twin class $T_i$ containing the set of isolated vertices $\{s_{i,1}, \ldots, s_{i,i+1}\}$.
    As argued before, $T_i$ contains no vertices from $V(G)$.
    By \eqref{eq:hardness-diff-color},
    $T_i$ is disjoint from $T_j$ for every other color $j \neq i$.
    Then $T_i$ is exactly $\{s_{i,1}, \ldots, s_{i,i+1}\}$, as desired.
\end{claimproof}

\begin{claim}\label{clm:hardness-color}
    For every color $i \in [k]$ there exists a formula $\col_i(x)$ such that
    for every $G \in \BB \setminus (\TT \cup \II)$ and every vertex $v$ in $\prep(G)$ we have
    \[
        \text{$v$ has color $i$}
        \quad
        \Leftrightarrow
        \quad
        \flip(\prep(G))\models \col_i(x).
    \]
\end{claim}

\begin{claimproof}
    We  argue that the following formula does the job.
    \begin{gather*}
        \col_i(x) := \exists z_1, \ldots, z_k: 
        \bigwedge_{j\in k}
        x \neq z_j
        \wedge 
        \counttwins_{=j}(z_j) 
        \wedge 
        \bigl(E(x,z_j) \leftrightarrow (i,j) \in R\bigr)
    \end{gather*}
    The formula quantifies vertices $\bar z = z_1 \ldots z_k$ containing for each color $j \in [k]$ a vertex $z_j$ such that 
    \begin{itemize}
        \item $z_j$ is not equal to $x$,
        \item $z_j$ is from a twin class of size exactly $j+1$,
        \item $z_j$ is adjacent to $x$ if and only if $(i,j) \in R$.
    \end{itemize}
    Let $v$ be a vertex in $\prep(G)$.
    To prove the forwards direction of the claim, assume $v$ has color $i$.
    We can choose a satisfying valuation $\bar w$ of $\bar z$ as follows.
    By \cref{clm:hardness-twinclasses}, for each color $j \in [k]$ the twin class $T_j$ has size exactly $j+1$ and all its vertices are isolated in $\prep(G)$ and have color $j$.
    As $|T_j|\geq 2$, we can pick a vertex $w_j \in T_j$ that is not equal to $v$.
    As $v$ and $w_j$ are non-adjacent in $\prep(G)$ and of color $i$ and $j$ respectively, we have $\flip(\prep(G)) \models E(v,w_j) \Leftrightarrow (i,j) \in R$ as desired.

    For the backwards direction, assume towards contradiction that $v$ has color $i' \neq i$ and there exists a satisfying valuation $\bar w$ of $\bar z$.
    By the assumed minimality of the coloring of $\prep(G)$, there exist a color $d \in [k]$ such that 
    \[
        (i',d) \in R 
        \Leftrightarrow 
        (i,d) \notin R.
    \]
    Again $w_d$ has color $d$ and is non-adjacent to $v$ in $\prep(G)$. 
    By definition of $\flip(\prep(G))$ we have
    \[
        \flip(\prep(G)) \models E(v,w_d) 
        \Leftrightarrow 
        (i',d) \in R.
    \]
    However, for $\bar w$ to be a satisfying valuation of $\bar z$ we must have
    \[
        \flip(\prep(G)) \models E(v,w_d) 
        \Leftrightarrow 
        (i,d) \in R.
    \]
    Combining the three equivalences gives the desired contradiction.
\end{claimproof}

\begin{claim}\label{clm:hardness-rewrite}
    For every formula $\phi(\bar x)$ we can compute a formula $\flip(\phi)(\bar x)$ 
    such that for every graph $G \in \BB \setminus (\TT \cup \II)$ and every tuple $\bar a \in V(G)^{|x|}$,
    \[
        \mathrm{prep}(G) \models \phi(\bar a)
        \Leftrightarrow
        \flip(\mathrm{prep}(G)) \models \flip(\phi)(\bar a).
    \]
\end{claim}

\begin{claimproof}
    Using \cref{clm:hardness-color}, it is easy to see that
    for all graphs $G \in \BB \setminus (\TT \cup \II)$ and vertices $u$ and $v$ in $\prep(G)$,
    \[
        \prep(G) \models E(u,v)
        \quad\Leftrightarrow\quad
        \flip(\prep(G)) \models E(u,v) \text{ XOR } 
        \bigvee_{i,j \in [k]} \col_i(x) \wedge \col_j(y) \wedge (i,j) \in R.
    \]
    For every $\phi(\bar x)$, let $\flip(\phi)(\bar x)$ be the formula obtained by replacing every occurrence of $E(x,y)$ with the formula on the right side of the above equivalence. 
    It now easily follows by structural induction that \(\flip(\phi)(\bar x)\) has the desired properties.
\end{claimproof}

Let $\delta(x) := \flip(\mathrm{hasNeighbor})(x)$ and $\phi(x,y) := \flip(E)(x,y)$, where $\mathrm{hasNeighbor}(x)$ is the formula checking that $x$ is not an isolated vertex.
Using \cref{clm:hardness-rewrite}, we have
\[
    I_{\delta,\phi}\big(\flip(\prep(G))\big) = G
\]
for every graph $G \in \BB \setminus (\TT \cup \II)$.
As $\flip(\prep)(G)$ is contained in $\CC$ and can be computed in polynomial time from $G$, we have that $\CC$ efficiently interprets $\BB \setminus (\TT \cup \II)$.
\end{proof}

\subsection{Encoding Bipartite Graphs}

Having undone the flips, we interpret all bipartite graphs (without isolated vertices) from our intermediate classes $\{\Bs_r, \Bc_r, \Bo_r, \Bg\}$.
Targeting this class of bipartite graphs does not restrict the general case, as the following lemma shows.

\begin{lemma}\label{lem:hardness-bipartite-to-all}
    The class of all bipartite graphs without isolated vertices efficiently interprets the class of all graphs.
\end{lemma}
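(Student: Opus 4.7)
\medskip

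The plan is to use a standard incidence-like construction: given an arbitrary graph $G$, I would build a bipartite graph $H = f(G)$ whose two sides are $V(G)$ on one hand and $E(G) \cup \{m_v^1, m_v^2 : v \in V(G)\}$ on the other, where the $m_v^i$ are fresh private ``marker'' vertices. The edges of $H$ are: $v m_v^1, v m_v^2$ for every $v \in V(G)$, and $u e_{uv}, v e_{uv}$ for every edge $uv \in E(G)$ (where $e_{uv}$ is a fresh vertex representing that edge). This graph is visibly bipartite with the two indicated parts, has size $O(|V(G)| + |E(G)|)$ (so $f$ is polynomial-time computable), and has no isolated vertices: each $v \in V(G)$ is attached to its two markers, each marker $m_v^i$ has degree $1$, and each edge-vertex $e_{uv}$ has degree $2$.

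Next I would define the interpretation $I = I_{\delta,\phi}$ by
\[
\delta(x) \;:=\; \exists y \; \bigl(E(x,y) \wedge \forall z\,(E(y,z) \rightarrow z = x)\bigr),
\qquad
\phi(x,y) \;:=\; x \neq y \wedge \exists z \; \bigl(E(x,z) \wedge E(y,z)\bigr).
\]
The formula $\delta(x)$ picks out exactly the vertices of $H$ that have some neighbor of degree $1$. I would check that in $H$ the only such vertices are those coming from $V(G)$, since each such $v$ has the pendant markers $m_v^i$ as private degree-$1$ witnesses, while markers themselves have only $v$ as a neighbor (and $v$ has degree $\geq 2$), and edge-vertices $e_{uv}$ have their two neighbors $u, v$ each of degree $\geq 2$. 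The formula $\phi(x,y)$ expresses adjacency through a common neighbor in $H$; for $x,y \in V(G)$, the only candidate common neighbor is an edge-vertex $e_{xy}$ (markers are private), which exists iff $xy \in E(G)$.

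Putting these together gives $I(f(G)) = G$ for every graph $G$, so the class of all bipartite graphs without isolated vertices efficiently interprets the class of all graphs. I do not expect a real obstacle here: the construction is standard, the two formulas are fixed, and the verification of $\delta$ and $\phi$ is entirely mechanical once the pendant markers guarantee that the three ``types'' of vertices in $H$ (vertex-vertices, markers, edge-vertices) are first-order distinguishable and that no spurious common neighbors arise between two elements of $V(G)$.
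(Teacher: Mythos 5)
Your construction is correct and is essentially the same as the paper's: both build an incidence-type bipartite graph with pendant gadgets attached to each vertex-representative, identify those representatives by a local degree condition ($\delta$), and recover adjacency via a common neighbor / distance-two test ($\phi$). The paper hangs a $3$-star on each representative and takes $\delta :=$ ``degree $\ge 3$'', while you hang two pendants and take $\delta :=$ ``has a degree-$1$ neighbor''; these are interchangeable details, and your two-marker choice correctly handles isolated vertices of $G$ (one marker alone would let a marker itself satisfy $\delta$).
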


\begin{proof}
    Let $\delta(x)$ be the formula stating that $x$ has degree at least three and $\phi(x,y)$ be the formula stating that $x$ and $y$ are at distance exactly two.
    For every graph $G$, we build the graph $B_G$ as follows.
    For every vertex $v \in V(G)$ we create a star with three leaves and center $c_v$.
    For every edge $(u,v) \in E(G)$, we add a new vertex adjacent to both $c_u$ and $c_v$.
    It is easy to see that $B_G$ is bipartite, without isolated vertices, and $I_{\delta,\phi}(B_G) = G$.
\end{proof}

The following notation will be convenient.
For every bipartite graph $H$ there exists at least one 
\emph{bipartite representation of $H$}, that is, a tuple 
\[
    H' = (U' \subseteq \N,V' \subseteq \N,E(H') \subseteq U' \times V'),
\]
such that there exist
\begin{itemize}
    \item a bipartition of $V(H)$ into two independent sets $U$ and $V$, and
    \item two bijections $f: U \rightarrow U'$ and $g: V \rightarrow V'$,
\end{itemize}
such that for all $u \in U$ and $v \in V$: 
$(f(u),f(v)) \in E(H') \Leftrightarrow (u,v) \in E(H)$.
Note that $U'$ and $V'$ do not have to be disjoint and $E(H')$ is not necessarily symmetric.

\subsubsection*{Encoding Bipartite Graphs in Star and Clique \(\boldsymbol r\)-Crossings}

\begin{lemma}\label{lem:hardness-sc-to-bipartite}
    For every $r \geq 1$ and $\BB \in \{ \Bs_r, \Bc_r \}$,
    \begin{center}
        $\BB \setminus (\TT \cup \II)$ efficiently interprets the class of all bipartite graphs.    
    \end{center}
\end{lemma}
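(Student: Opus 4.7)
My plan is to map each bipartite graph $H = (U, V, E)$, with $U = \{u_1, \ldots, u_n\}$ and $V = \{v_1, \ldots, v_m\}$, to an induced subgraph $G$ of a sufficiently large $r$-crossing $B_N$ (say $N := n + m + 3\max(n, m) + 1$), and to recover $H$ via a fixed interpretation depending only on $r$ and on whether $\BB = \Bs_r$ or $\BB = \Bc_r$. Into $G$ I will put the ``real roots'' $a_1,\dots,a_n$ and $b_1,\dots,b_m$ representing $U$ and $V$, together with the full path $\pi_{i,j}$ for every edge $(u_i, v_j) \in E$. To rule out isolated vertices and to give each real root a robust structural signature, I also attach to every $a_i$ three private \emph{marker paths} $\pi_{i,\, m+3(i-1)+\ell}$ for $\ell \in \{1,2,3\}$, whose far endpoints $b_{m+3(i-1)+\ell}$ become degree-$1$ \emph{dummy} leaves in $G$, and symmetrically three marker paths from fresh dummy $a$-roots to each $b_j$. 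Since the marker indices are chosen outside $[n] \cup [m]$ and are unique to their owner, distinct real roots contribute pairwise vertex-disjoint path systems.

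The interpretation uses $\phi(x,y) := x \neq y \wedge \dist(x,y) = r + 1$ together with $\delta(x)$ asserting ``$x$ has at least three neighbors and its neighbors form an independent set'' in the star case, and ``$x$ has at least two neighbors and its neighbors form a clique'' in the clique case; each is first-order for fixed $r$. A direct degree/clique count will show that $\delta$ selects exactly the real roots: dummy roots are leaves, internal path vertices have degree $2$ with non-adjacent neighbors, and every layer-$1$ or layer-$r$ path vertex fails the independent-set/clique test because its neighborhood contains both its row/column clique-mate (respectively, independent-set mate) and an ``off-layer'' vertex -- the successor on its path for $r \geq 2$, or the opposite root for $r = 1$ -- that is not adjacent to the former. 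For $\phi$, a short distance analysis shows that two real roots $a_i, b_j$ on opposite sides realize distance $r+1$ exactly when $\pi_{i,j}$ lies in $G$, i.e., exactly when $(u_i, v_j) \in E$; any other pair of real roots must either cross two full crossing-paths (distance $2(r+1)$) or, in the clique case, use a single clique-edge at layer $1$ or $r$ (distance $2r+1$, still strictly greater than $r+1$).

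The step I expect to be most delicate is ruling out twins in $G$. The three key facts that should make this routine are: (i) distinct real roots have disjoint, non-empty neighborhoods, since the marker and edge path systems are vertex-disjoint; (ii) distinct dummy roots have distinct single neighbors; and (iii) every layer-$1$ vertex $p_{i,j,1}$ is separated from its row-mates $p_{i,j',1}$ by its private successor $p_{i,j,2}$ (for $r \geq 2$) or by its unique attached opposite root $b_j$ (for $r = 1$), and symmetrically for layer-$r$ vertices; middle-path vertices are trivially separated by their path indices. Because $G$ can be built from $H$ in polynomial time in $|V(H)|$, and since $\delta, \phi$ depend only on $r$ and on the choice of $\BB \in \{\Bs_r, \Bc_r\}$, once these verifications are complete the map $H \mapsto G$ together with $I_{\delta,\phi}$ will witness that $\BB \setminus (\TT \cup \II)$ efficiently interprets the class of all bipartite graphs.
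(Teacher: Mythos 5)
Your proof is correct in substance, but it takes a genuinely different route from the paper's. Both arguments encode $H$ via the crossing-paths $\pi_{i,j}$ for edges of $H$, use $\phi(x,y)=\text{``}\dist(x,y)=r+1\text{''}$ to recover the edge relation (justified by the same layering argument: a length-$(r+1)$ path between a degree-$0$-layer and a degree-$(r+1)$-layer vertex must be layer-monotone, hence a crossing path, and clique edges stay within a layer so they cannot help), and use a degree threshold plus a clique/independence test on the neighborhood as $\delta$. The difference is in the degree-boosting gadgets attached to each real root to guarantee degree at least three and to kill twins and isolated vertices. The paper constructs a freestanding graph $B_H$ in which each real root $c_i$ is augmented with a private $1$-subdivided star (depth-$2$ tree with three leaves), and then separately exhibits an explicit embedding $h$ of $B_H$ into the star or clique $r$-crossing of order $3(n+m)$; the gadget vertices all land in the interior layers (for $r>1$). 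You instead work intrinsically inside a fixed $r$-crossing and take the gadgets to be three full private crossing paths $\pi_{i, m+3(i-1)+\ell}$ whose far endpoints become degree-$1$ dummy roots. This spares you the need to define and verify an embedding, but at the cost of introducing degree-$1$ dummy roots and $O(r)$ extra vertices per gadget, so your $\delta$ must exclude the dummy roots as well; your degree threshold ($\ge 3$ for star, $\ge 2$ for clique) together with the neighborhood structure test handles this. A minor inaccuracy in your exposition: in the star case the ``neighbors form an independent set'' clause is vacuous (the star crossing is triangle-free), and your stated reason that layer-$1$/layer-$r$ vertices fail $\delta$ because of a ``row/column clique-mate'' does not apply there; in the star case they simply fail the degree-$\ge 3$ test. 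This does not affect correctness, since the degree test alone already excludes them. The twin-freeness and isolated-vertex-freeness checks you outline (disjoint gadget/edge path systems for distinct real roots; private path successors/predecessors distinguishing layer-$1$ or layer-$r$ mates; distinct single neighbors for dummy roots) are the right ones, and the construction is evidently polynomial-time, so your map together with $I_{\delta,\phi}$ does establish the lemma.
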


\begin{proof}
    First assume $\BB = \Bs_r$.
    Let $\delta(x)$ be the formula checking whether $x$ has degree at least three, and let $\phi(x,y)$ be the formula checking whether the distance between $x$ and $y$ is exactly $r+1$.
    To prove the lemma, we show that for every bipartite graph $H$, we can construct a graph $B_H \in \BB \setminus (\TT \cup \II)$ such that $I_{\delta,\phi}(B_H) = H$.
    Let 
    \[
        H' = ([n],[m],E(H')\subseteq [n] \times [m])   
    \]
    be a bipartite representation of $H$ for some $n,m \in \N$.
    We build the graph $B_H$ as follows.
    For every $i \in [n]$ we create a $1$-subdivided star with three leaves consisting of: a center $c_i$ and for every $s \in \{ 0,1,2 \}$ a subdivision vertex $c_{i,s,1}$ and a leaf $c_{i,s,2}$ such that $(c_i, c_{i,s,1}, c_{i,s,2})$ form a path.
    We do the same for every $j \in [m]$, giving us vertices $d_j, d_{j,s,1}, d_{j,s,2}$ for every $s \in \{ 0,1,2 \}$.
    Finally, for every edge $(i,j) \in E(H')$ we add vertices $\{ q_{i,j,t} : t \in [r]\}$ and connect $(c_i,q_{i,j,1},\ldots,q_{i,j,r},d_j)$ to form a path of length $r+1$.
    It is easy to see that $I_{\delta,\phi}(B_H) = H$ and that
    $B_H$ contains neither twins nor isolated vertices.
    It remains to show that $B_H$ is an induced subgraph of a star \(r\)-crossing.

    Let $N := 3(n+m)$ and $B_N$ be the star $r$-crossing of order $N$.
    We  give an embedding $h: V(B_H) \rightarrow V(B_N)$ of $B_H$ into $B_N$.
    Let $f(i) := 3i - 2$ and $g(j) := 3n + 3j - 2$. We define $h$ as follows for all $i \in [n]$, $j \in [m]$, $s \in \{ 0,1,2 \}$, and $t \in [r]$.
    \begin{multicols}{2}\raggedcolumns
        \begin{itemize}
            \item $h(c_i) := a_{f(i)}$
            \item $h(c_{i,s,1}) := p_{f(i),f(i)+s,1}$
            \item $h(c_{i,s,2}) := 
            \begin{cases}
                p_{f(i),f(i)+s,2} & \text{if } r > 1\\ 
                b_{f(i)+s}          & \text{if } r = 1 
            \end{cases}$
            
            \item $h(q_{i,j,t}) := p_{f(i),g(j),t}$ 

            \item $h(d_j) := b_{g(j)}$
            \item $h(d_{j,s,1}) := p_{g(j)+s,g(j),r}$
            \item $h(d_{j,s,2}) :=
            \begin{cases}
                p_{g(j)+s,g(j),r-1} & \text{if } r > 1\\ 
                a_{g(j)+s}          & \text{if } r = 1 
            \end{cases}$
        \end{itemize}
    \end{multicols}
    \noindent
    See \Cref{fig:hardness-sc-to-b} for a visualization of the embedding.
    It is easily checked that $h$ is an embedding. This finishes the case $\BB = \Bs_r$.

    \begin{figure}[htbp]
        \centering
        \includegraphics[scale=1]{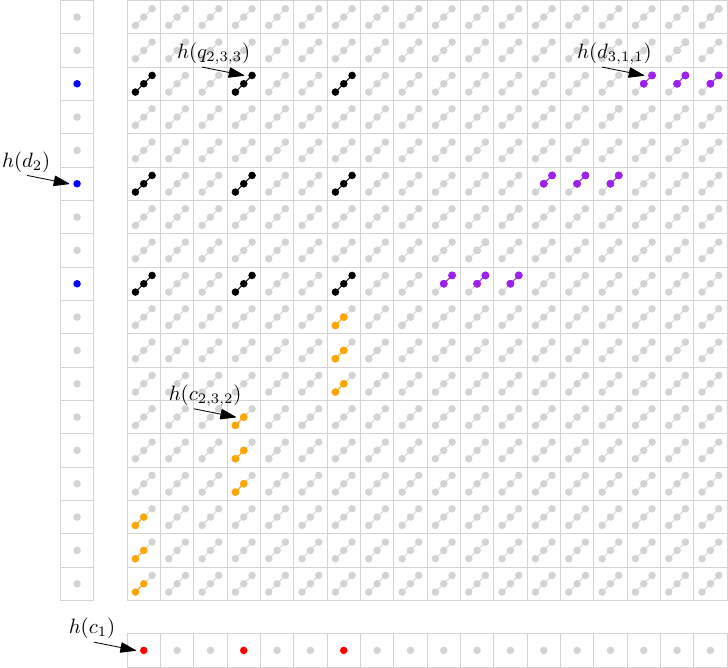}
        \caption{A visualization of how $B_H$ embeds into $B_N$ for the case where $r=3$ and $H$ is the biclique of order $3$.}
        \label{fig:hardness-sc-to-b}
    \end{figure}
    \noindent

    For the case $\BB = \Bc_r$, we take the same edge formula $\phi$ and update the domain formula to state
    \[
        \delta(x) := \text{``$x$ has degree at least three and the neighborhood of $x$ is a clique''}.
    \]
    We build $B_H$ as in the previous case but add additional edges.
    For every $i \in [n]$ we turn the set 
    \[
        \bigcup\bigl\{\{ c_{i,s,1}, c_{i,s,2}, q_{i,j,1} \} : j \in [m], (i,j) \in E(H'), s \in \{ 0,1,2 \}\bigr\}  
    \]
    that contains exactly the neighborhood of $c_i$ into a clique.
    Symmetrically, we do the same for every $j \in [m]$ with the set
    \[
        \bigcup\bigl\{\{ d_{j,s,1}, d_{j,s,2}, q_{i,j,r} \} : i \in [n], (i,j) \in E(H'), s \in \{ 0,1,2 \}\bigr\}  
    \]
    of all neighbors of $d_j$.
    Again $I_{\delta,\phi}(B_H) = H$, $B_H$ contains neither twins nor isolated vertices, and $h$ is an embedding of $B_H$ into the clique \(r\)-crossing of order $3(n+m)$.
\end{proof}

\subsubsection*{Encoding Bipartite Graphs in Half-Graph \(\boldsymbol r\)-Crossings and in Comparability Grids}

\begin{lemma}\label{lem:hardness-o-to-bipartite}
    For every $r \geq 1$ and $\BB \in \{ \Bo_r, \Bg \}$,
    \begin{center}
        $\BB \setminus (\TT \cup \II)$ efficiently interprets the class of all bipartite graphs without isolated vertices.    
    \end{center}
\end{lemma}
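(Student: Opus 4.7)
The structure of the argument parallels that of \cref{lem:hardness-sc-to-bipartite}, but the extra ``half-graph'' adjacencies (respectively, the extra edges of the comparability grid) force us to recover and use a linear order on the roots before we can identify paths. The key observation is the one already exploited in the proof sketch of \cref{lem:transduceAll}: in a half-graph $r$-crossing, the map $i \mapsto a_i$ reverses inclusion of neighborhoods, so $a_i \preceq a_{i'}$ (i.e.\ $i \ge i'$) iff $N(a_i) \subseteq N(a_{i'})$. This defines $\preceq$ by a first-order formula, and symmetrically on the $b$-side. An analogous neighborhood-inclusion order is available on the boundary rows/columns of a comparability grid.

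For $\BB = \Bo_r$: given a bipartite graph $H$ with representation $H' = ([n],[m],E(H'))$, I will build $B_H$ as an induced subgraph of the half-graph $r$-crossing $B_N$ of order $N := 3(n+m)$, using injections $f, g$ (as in the previous lemma) that leave enough unused roots to attach distinguishing gadgets. Concretely, for each $i \in [n]$ include $a_{f(i)}$ together with a subdivided-star gadget glued to $a_{f(i)}$ inside the crossing (using the three neighboring unused roots $a_{f(i)+1}, a_{f(i)+2}, a_{f(i)+3}$ to create a forked attachment); do the same on the $b$-side; and for each $(i,j) \in E(H')$ include the entire path $\pi_{f(i),g(j)}$. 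As in the star case, choose the gadgets so that the roots used by $H$ are the only vertices of degree $\ge 3$ satisfying a fixed first-order pattern, no vertex is isolated, and no two vertices are twins. The interpretation then uses:
\begin{itemize}
    \item $\alpha_A(x), \alpha_B(y)$: first-order formulas identifying the $A$- and $B$-side roots via their gadgets;
    \item $\preceq_A, \preceq_B$: the neighborhood-inclusion preorders restricted to the $A$- and $B$-roots;
    \item $\mathrm{first}(p,x) := \alpha_A(x) \wedge E(p,x) \wedge \neg\exists x' (\alpha_A(x') \wedge x' \succ_A x \wedge E(p,x'))$, which picks out exactly the layer-$1$ vertices $p_{f(i),g(j),1}$ adjacent to $x = a_{f(i)}$;
    \item $\mathrm{last}(q,y)$: analogous on the $b$-side;
    \item $\phi(x,y) := \alpha_A(x) \wedge \alpha_B(y) \wedge \exists p,q\, (\mathrm{first}(p,x) \wedge \mathrm{last}(q,y) \wedge \mathrm{dist}(p,q) = r-1)$, where $\mathrm{dist}(p,q)=r-1$ is first-order expressible for fixed $r$.
\end{itemize}
Setting $\delta(x) := \alpha_A(x) \vee \alpha_B(x)$, the key point is that for $r \ge 2$ the interior layers of each retained path consist of degree-$2$ vertices, so the condition $\mathrm{dist}(p,q)=r-1$ holds for $(p,q)$ iff they are the endpoints of a common retained subdivision path; for $r=1$ the formula degenerates to requiring $p=q$, a common neighbor of $x$ and $y$, and the same argument goes through. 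Thus $I_{\delta,\phi}(B_H) = H$, and $B_H \in \Bo_r \setminus (\TT \cup \II)$ is built in polynomial time.

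For $\BB = \Bg$: I follow the encoding from the proof sketch of \cref{lem:transduceAll}. Given $H' = ([n],[m],E(H'))$, embed the vertices of $H$ into a comparability grid of order $\max(n,m)+O(1)$ by identifying the $A$-roots with a spaced-out subset of the first column $\{0\}\times [*]$ and the $B$-roots with a spaced-out subset of the first row $[*]\times\{0\}$, and, for every $(i,j)\in E(H')$, retaining the interior vertex $a_{i',j'}$ linking them (this is a radius-$1$ encoding of $H$ inside the grid). The comparability order on each boundary is again recovered from neighborhood inclusion in the induced subgraph, and distinguishing gadgets analogous to the half-graph case mark the $A$- and $B$-roots and kill twins and isolated vertices. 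An edge formula $\phi(x,y)$ of the same shape as above (first-column-root, first-row-root, common ``interior'' witness) completes the interpretation.

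\textbf{Main obstacle.} The real technical burden is the construction of the gadgets that simultaneously (a) uniquely mark the roots used by $H$ via a fixed first-order formula, (b) do not create twins among themselves or with path vertices, and (c) do not introduce isolated vertices or destroy the inclusion-based recovery of the order $\preceq_A, \preceq_B$. The spacing in $f, g$ and the three-leafed subdivided-star attachments from \cref{lem:hardness-sc-to-bipartite} need to be adapted so that their presence does not interfere with the half-graph adjacencies; I expect that attaching the gadget \emph{inside} the crossing, via three consecutive unused roots on the same side as the marked root, provides enough degrees of freedom. Once the gadgets are in place, correctness of $\phi$ follows by the same direct check as in \cref{lem:hardness-sc-to-bipartite}.
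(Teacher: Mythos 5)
Your high-level plan for the interpretation (recover the orders on the roots via neighborhood inclusion, define a ``first-edge'' formula $E_0$, and then trace an $(r+1)$-step path whose endpoints are $E_0$-edges) is exactly the one the paper uses. The gap is in the gadget construction, and it is a genuine one.

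You propose to carry over the subdivided-star gadgets from \cref{lem:hardness-sc-to-bipartite} and attach them to each used root ``via three consecutive unused roots on the same side.'' This does not survive the half-graph adjacencies. In the star \(r\)-crossing, a vertex \(p_{f(i),j,1}\) is adjacent to \(a_{f(i)}\) only, so a gadget built from a few such vertices is private to the root \(a_{f(i)}\). In the half-graph \(r\)-crossing, \(p_{f(i),j,1}\) is adjacent to \emph{every} root \(a_{i'}\) with \(i' \le f(i)\); any gadget that involves first-layer path vertices will therefore have adjacencies to arbitrarily many of the roots you are trying to distinguish, so it cannot serve as a local marker, does not kill twins locally, and breaks the inclusion-based recovery of \(\preceq_A\). (Also note that two roots \(a_{f(i)}\), \(a_{f(i)+1}\) are \emph{not} adjacent — any ``fork'' between them must route through path vertices, which is exactly where the trouble is.) You flag this as ``the real technical burden'' and expect spacing to save it, but no amount of spacing removes the global adjacencies that the half-graph pattern forces.

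The paper's construction avoids gadgets of this kind entirely: it takes the bipartite representation over \(\dropbounds{n}\times\dropbounds{m}\), includes the corner path vertices \(p_{n,1,1}\) and \(p_{1,m,r}\), and the single roots \(a_n\) and \(b_m\). Because of the half-graph adjacency, \(p_{n,1,1}\) is adjacent to the entire $A$-side \(A_\star = A \cup \{a_n\}\) and to nothing else in the induced subgraph, while \(a_n\) becomes a degree-1 vertex whose only neighbor is \(p_{n,1,1}\). Thus ``degree \(\ge 2\) and at distance exactly \(2\) from a degree-1 vertex'' already identifies \(A\cup B\), and ``sharing a degree-1 vertex at distance \(2\)'' separates the two sides — no private gadget attached to individual roots is needed, and twin-freeness is argued globally rather than per-gadget. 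This is a qualitatively simpler mechanism, and it exploits the half-graph structure instead of fighting it. Your treatment of the comparability-grid case inherits the same gap, since it relies on the half-graph \(r=1\) construction being in place. To make your argument go through you would need to replace the subdivided-star gadgets with something like the paper's corner-vertex trick; as written, the marking step fails.
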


\begin{proof}
    We first prove the statement for $\BB = \Bo_r$.
    For every $n \in \N$, we define $\dropbounds{n} := [n] \setminus \{1,n\} = \{ 2,\ldots,n-1 \}$.
    Let $H$ be an arbitrary bipartite graph without isolated vertices and let
    \[
        H' = (\dropbounds{n},\dropbounds{m},E(H')\subseteq \dropbounds{n} \times \dropbounds{m})   
    \]
    be a bipartite representation of $H$ for some $n,m \in \N$.
    We define $B_H$ to be the subgraph of the half-graph \(r\)-crossing of order $\max(n,m)$ induced by the vertices
    \begin{itemize}
        \item $A := \{a_i : i \in \dropbounds{n}\}$ corresponding to the left vertices of $H$,
        \item $B := \{b_j : j \in \dropbounds{m}\}$ corresponding to the right vertices of $H$,
        \item $P:= \{p_{i,j,t} : (i,j) \in E(H'), t \in [r]\}$ corresponding to the edges of $H$,
        \item $\{a_{n}, p_{n,1,1}\}$ and $\{b_{m}, p_{1,m,r}\}$ which we use as auxiliary vertices.
    \end{itemize}
    See the left side of \Cref{fig:hardness-h-to-bipartite} for a visualization.

    \begin{figure}[htbp]
        \centering
        \includegraphics[scale = 1]{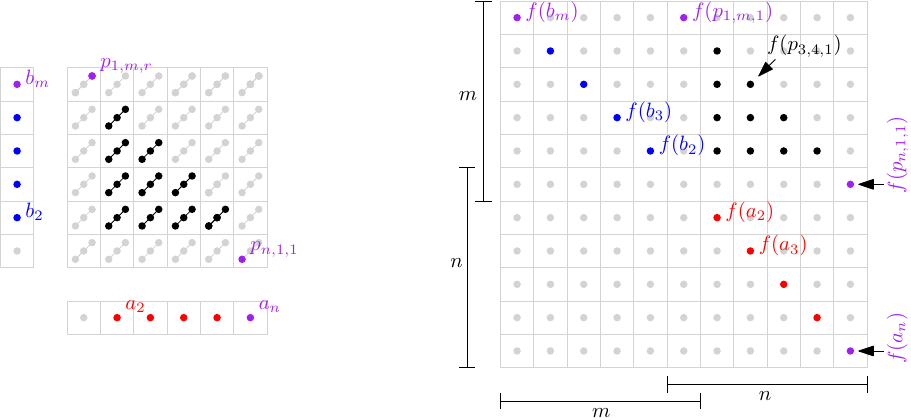}
        \caption{On the left: a visualization of the vertices of $B_H$ where $H$ is the half-graph of order $4$ and $r = 3$.
        The vertices in $A$, $B$, and $P$ are colored red, blue, and black respectively.
        On the right: a visualization of the embedding of $B^\smallblacksquare_H$ into a comparability grid, where again $H$ is the half-graph of order $4$.
        }
        \label{fig:hardness-h-to-bipartite}
    \end{figure}

    Our goal is to interpret $H$ from $B_H$.
    Let 
    \[
        A_\star := A \cup \{a_n\}, \quad
        B_\star := B \cup \{b_m\}, \quad
        P_\star := P \cup \{p_{n,1,1}, p_{1,m,r}\}.
    \]

    \begin{claim}\label{clm:interp-astar}
        $N_{B_H}(p_{n,1,1})= A_\star$
        and
        $N_{B_H}(p_{1,m,r})= B_\star$.
    \end{claim}
    
    \begin{claimproof}
    The claim follows by the definition of the adjacencies in half-graph crossings.
    $A_\star$ is included in the neighborhood of $p_{n,1,1}$ because $i \leq n$ for all $i \in \dropbounds{n} \cup \{n\}$.
    All neighbors of $B_\star$ are of the form $p_{i,j,t}$ for some $j \geq 2$, so no vertex of $B_\star$ is included in the neighborhood of~$p_{n,1,1}$.
    The vertex $p_{n,1,2}$ is not contained in $P_\star$, so no vertex of $P_\star$ is included in the neighborhood of~$p_{n,1,1}$.
    We argue symmetrically to determine the neighborhood of $p_{1,m,r}$.
    \end{claimproof}
    \begin{observation}\label{clm:interp-gateway}
        $N_{B_H}(a_n) = \{ p_{n,1,1} \}$ and 
        $N_{B_H}(b_m) = \{ p_{1,m,r} \}$.
    \end{observation}

    \begin{claim}\label{clm:interp-deg2}
        Except for $a_n$ and $b_m$, all vertices have degree at least two.
    \end{claim}
    
    \begin{claimproof}
        All vertices in $A$ are adjacent to $p_{n,1,1}$ and at least one other $p_{i,j,1}$ vertex since $H$ contains no isolated vertices.
        A symmetric statement holds for $B$ and $p_{1,m,r}$.
        The vertices in $P$ are inner vertices of paths from $A$ to $B$, so they have degree at least two.
        The vertices $p_{n,1,1}$ and $p_{1,m,r}$ have high degree by the previous claim.
    \end{claimproof}

    \begin{observation}\label{obs:interp-no-iso}
        $B_H$ contains no isolated vertices.
    \end{observation}

    \begin{claim}\label{claim:notwins}
        $B_H$ contains no twins.
    \end{claim}
    
    \begin{claimproof}
        The vertices in $A_\star$ can be distinguished from the vertices in $B_\star$ and $P_\star$ by their adjacency to $p_{n,1,1}$.
        Two vertices $a_i$ and $a_{i'}$ from $A_\star$ with $i < i' \leq n$ can be differentiated by a vertex $p_{i,j,1}$ for some $j \in \dropbounds{m}$ which is adjacent to $a_i$ but not to $a_{i'}$.
        This $j$ exist since no vertex in $H$ is isolated.
        It follows that $A_\star$ contains no twins and by a symmetric argument, neither does $B_\star$.

        It remains to distinguish the vertices inside $P_\star$.
        The auxiliary vertices $p_{n,1,1}$ and $p_{1,m,r}$ each have a private neighbor in $a_n$ and $b_m$, so we can focus our attention on the set $P$.
        Let $p_{i,j,t}$ and $p_{i',j',t'}$ be two distinct vertices from $P$. By symmetry, we can assume that either $i<i'$ or $j<j'$ or $t < t'$.
        If $i < i'$ or $t < t'$ then
        the vertex
        \[
            d:= 
            \begin{cases}
                p_{i,j,t-1}& \text{if } t > 1,\\
                a_i& \text{if } t = 1,
            \end{cases}
        \]
        is adjacent to $p_{i,j,t}$ but non-adjacent to $p_{i',j',t'}$.
        If $j<j'$ we argue symmetrically using either $p_{i,j,t+1}$ or $b_j$ if $t = r$.
        We want to stress that the argument works also for the case of $r=1$.
    \end{claimproof}
    We have proven that $B_H \in \Bo_r \setminus (\TT \cup \II)$.
    Let us continue to show that $B_H$ interprets $H$.
    Combining \cref{clm:interp-gateway}, \cref{clm:interp-astar}, \cref{clm:interp-deg2}, we have that the formula
    \[
        \delta(x) := \text{``$x$ has degree at least $2$ and is at distance exactly $2$ from a degree $1$ vertex''}
    \]
    is true exactly on the vertices from $A \cup B$.
    It can therefore act as the domain formula of our interpretation.
    It remains to define the edge relation.
    First notice that the formula
    \[
        \mathrm{sameSide}(x,y) := \exists z: \text{``$z$ has degree $1$ and is at distance exactly $2$ from both $x$ and $y$''}
    \]
    distinguishes $A_\star$ and $B_\star$: for all vertices $u$ and $v$
    \[
        B_H \models \mathrm{sameSide}(u,v)
        \quad
        \Leftrightarrow
        \quad
        \{ u,v \} \in A_\star
        \vee
        \{ u,v \} \in B_\star
        . 
    \]
    We next construct a formula that resolves the half-graphs between $A$ and $P_\star$ and between $B$ and $P_\star$ in the following sense.

    \begin{claim}\label{clm:hardness-e0}
        There exists a formula $E_0(x,y)$ such that for all $i \in \dropbounds{n}$,
        \[
            \{ v \in V(B_H) \mid B_H \models E_0(a_i,v) \}
            =
            \{ p_{i,j,1} : (i,j) \in E(H'), j\in \dropbounds{m} \},
        \]
        and for all $j \in \dropbounds{m}$,
        \[
            \{ v \in V(B_H) \mid B_H \models E_0(b_j,v) \}
            =
            \{ p_{i,j,r} : (i,j) \in E(H'), i\in \dropbounds{n} \}.
        \]
        
    \end{claim}
    
    \begin{claimproof}
        The formula
        \[
            x \prec y := \mathrm{sameSide}(x,y) \wedge \text{``$N(y)$ is a strict subset of $N(x)$''}
        \]
        orders $A_\star$ and $B_\star$ respectively:
        for all $a_i \in A_\star$ and $b_j \in B_\star$ we have
        \begin{gather*}
            \bigl(B_H \models a_i \prec a\bigr)
            \Leftrightarrow
            \bigl(a \in \{ a_{i'} : i < i' \leq n\}\bigr)
            \quad
            \text{and}
            \quad
            \bigl(B_H \models b_j \prec b\bigr)
            \Leftrightarrow
            \bigl(b \in \{ b_{j'} : j < j' \leq m\}\bigr)
            .
        \end{gather*}
        Now the following formula
        \[
            E_0(x,y) := E(x,y) \wedge \neg \bigl(\exists x': x \prec x' \wedge E(x',y)\bigr)
        \]
        has the desired properties.
    \end{claimproof}
    We can finally construct the formula interpreting the edges of $H$
    \[
        \phi(x,y) := \neg \mathrm{sameSide}(x,y) \wedge \exists z_1,\ldots,z_r: E_0(x,z_1) \wedge E_0(y,z_r) \wedge \bigwedge_{t<r}E(z_t,z_{t+1}),
    \]
    which states that $x$ and $y$ are from different sides and connected by a path containing $r+1$ edges and whose first and last edge are $E_0$-edges.
    The formula defines a bipartite graph with sides $A$ and $B$:
    it is symmetric, and we have $B_H \not \models \phi(u,v)$ for all vertices $u$ and $v$ such that $\{ u,v \} \subseteq A$ or $\{ u,v \} \subseteq B$.
    
    \begin{claim}
        For all $i \in \dropbounds{n}$ and $j \in \dropbounds{m}$:
        $
        B_H \models \phi(a_i,b_j)
            \Leftrightarrow
        (i,j) \in E(H')
        $.
    \end{claim}
    
    \begin{claimproof}
        Assume $(i,j) \in E(H')$. Then $p_{i,j,1}, \ldots, p_{i,j,r}$ is a valuation of $z_1,\ldots,z_r$ witnessing that $B_H \models \phi(a_i,b_j)$.
        For the backwards direction assume a satisfying valuation $p_1, \ldots, p_r$ of $z_1,\ldots,z_r$.
        By \cref{clm:hardness-e0} we have $p_1 = p_{i,j',1}$ and $p_r = p_{i',j,r}$ for some $j' \in \dropbounds{m} \setminus \{ j \}$ and $i' \in \dropbounds{n} \setminus \{ i \}$.
        The existence of $p_2,\ldots, p_{r-1}$ implies \(i=i'\) and \(j=j'\).
        Hence, \((i,j) \in E(H')\).
    \end{claimproof}
    It follows that $I_{\delta,\phi}(B_H) = H$.
    Since the definition of $\delta$ and $\phi$ does not depend on $H$, and $H$ was chosen to be an arbitrary bipartite graph without isolated vertices, we have that this interpretation interprets all such $H$ from their corresponding preimage $B_H \in \Bo_r \setminus (\TT \cup \II)$.
    Furthermore, for every $H$ the preimage $B_H$ can be computed in polynomial time, so the interpretation is efficient.
    This finishes the case of $\BB = \Bo_r$.

    \medskip\noindent
    We next show that the interpretation that we constructed for the previous case, where $r =1$ and $\BB = \Bo_1$, 
    also works for the case $\BB = \Bg$.
    In the previous case, we constructed for every bipartite graph without isolated vertices $H$, a graph $B_H \in \Bo_1$ such that $I_{\delta, \phi}(B_H) = H$.
    In this case, since $r=1$, the formula $\phi$ collapses to
    \[
        \phi(x,y) := \neg \mathrm{sameSide}(x,y) \wedge \exists z_1: E_0(x,z_1) \wedge E_0(y,z_1)
    \]
    and the graph $B_H$ consists of vertices
    \[
        \underbrace{\{a_i : i \in \dropbounds{n}\}}_
        {= A} 
        \cup 
        \underbrace{\{b_j : j \in \dropbounds{m}\}}_{=B}
        \cup
        \underbrace{\{p_{i,j,1} : (i,j) \in E(H')\}}_{=P}
        \cup
        \{a_{n}, p_{n,1,1}\}
        \cup
        \{b_{m}, p_{1,m,1}\}
    \]
    where $H'$ is again the bipartite representation of $H$.
    We now build a new graph $B^{\smallblacksquare}_H$ from $B_H$ by adding additional edges as follows.
    We connect each vertex $p_{i,j,1} \in P$ with all the vertices $p_{i',j',1} \in P$ such that $i \leq i'$ and $j \leq j'$ (but not with itself).
    It is easily checked that all the previous claims for $B_H$, still hold true for $B^{\smallblacksquare}_H$:
    \begin{itemize}
        \item We only modified adjacencies inside $P$, 
        so \cref{clm:interp-astar} and \cref{clm:interp-gateway} also hold in $B^{\smallblacksquare}_H$.
        \item The degree of vertices in $P$ only increased, 
        so \cref{clm:interp-deg2} and \cref{obs:interp-no-iso} also hold in $B^{\smallblacksquare}_H$.
        \item As $r=1$, $P$ is pairwise distinguished using only $A$ and $B$, so $B^{\smallblacksquare}_H$ contains no twins (\Cref{claim:notwins}).
        \item The construction of the formulas $\mathrm{sameSide}(x,y)$ and $E_0(x,y)$ only depends on the previous claims and the neighborhoods of $A$ and $B$, so the formulas still work as intended in $B^{\smallblacksquare}_H$.
    \end{itemize}
    It follows that $B^\smallblacksquare_H$ contains neither isolated vertices nor twins and that $I_{\delta,\phi}(B^\smallblacksquare_H) = I_{\delta,\phi}(B_H) = H$.
    It remains to show that $B^\smallblacksquare_H \in \Bg$.
    We do this by showing that $B^\smallblacksquare_H$ is an induced subgraph of
    the comparability grid $G_N$ of order $N := (n+m-1)$ whose vertex set is $\{ a_{i,j} : i,j \in [N] \}$.
    We witness this fact by constructing an embedding $f$ of $B^\smallblacksquare_H$ into $G_N$ as follows:
    \begin{itemize}
        \item $f(a_i) := a_{m+i-1,n-i+1}$ for all $a_i \in A_\star$,
        \item $f(b_j) := a_{m-j+1,n+j-1}$ for all $b_j \in B_\star$,
        \item $f(p_{i,j,1}) := a_{m+i-1,n+j-1}$ for all $p_{i,j,1} \in P_\star$.
    \end{itemize}
    See the right side of \Cref{fig:hardness-h-to-bipartite} for a visualization of the embedding.
    Using this visualization, it is easy to check that $f$ is indeed an embedding of $B^\smallblacksquare_H$ into $G_N$.
    This finishes the case for $\BB = \Bg$ and concludes the proof.
\end{proof}

\subsection{Proof of~\Cref{prop:patternsImplyNotNIP}}

\propPatternsImplyNotNIP*
\begin{proof}
    We first show that $\CC$ efficiently interprets the class of all bipartite graphs without isolated vertices.
    By \cref{obs:pigeonhole-patterns}, we distinguish three cases.
    \begin{itemize}

        \item Either \(\mathcal{C}\) contains a layer-wise flip of each graph from $\BB$, for $\BB \in \{\Bs_r, \Bc_r\}$,
            and we can apply \cref{lem:hardness-undo-flips-sco} and \cref{lem:hardness-sc-to-bipartite},

        \item or \(\mathcal{C}\) contains a layer-wise flip of each graph from $\Bo_r$,
            and we can apply \cref{lem:hardness-undo-flips-sco} and \cref{lem:hardness-o-to-bipartite},

        \item or \(\mathcal{C}\) contains \(\Bg\) and we can apply \cref{lem:hardness-o-to-bipartite}.
    \end{itemize}
    From there, \cref{lem:hardness-bipartite-to-all} brings us to the class of all graphs.
\end{proof}

\section{Almost Bounded Twin-Width}
\label{sec:tww}

\newcommand{\tww}{\mathrm{tww}}

Twin-width is a graph parameter recently introduced in \cite{twwI}.
We quickly recall its definition.
A \emph{contraction sequence} for an $n$-vertex graph $G$ is a sequence of partitions of $V(G)$
\[
    \PP_1 = \{\{ v \} : v \in V(G) \}, \ldots, \PP_n = \{ V(G) \}    
\]
where $\PP_{i+1}$ is obtained from $\PP_i$ by merging two parts. 
The \emph{red-degree} of a part $P \in \PP_i$ is the number of parts $Q \in \PP_i \setminus \{ P \}$, such that $P$ and $Q$ are neither fully adjacent nor fully non-adjacent in $G$.
The \emph{red-degree} of a partition is the largest red-degree among its parts.
The \emph{twin-width}  of a graph $G$, denoted $\tww(G)$, is the smallest integer $d$ such that $G$ has a contraction sequence in which every partition has red-degree at most $d$.
A graph  class $\CC$ has \emph{almost bounded twin-width}
if for every $\varepsilon>0$ we have \[\tww(G)\le O_{\CC,\varepsilon}(n^{\varepsilon})\qquad\text{for every $n$-vertex graph $G\in\CC$.}\]

The goal of this section is to show the following.

\almostBoundedTww*

Note that in Section~\ref{sec:abfw}
we prove that the more general classes of \emph{almost bounded flip-width} are monadically dependent.
This immediately implies Theorem~\ref{thm:almost-bounded-tww}, but we provide a self-contained proof in this section.

We prove the theorem by analyzing the induced subgraphs appearing in classes that are  monadically independent:
flipped star/clique/half-graph $r$-crossings and comparability grids.

\begin{lemma}\label{lem:tww-flipped-crossings}
    Let $G$ be a flipped star/clique/half-graph $r$-crossing of order $n$.
    Then $\tww(G)\geq n^{\frac{1}{r+1}} - 2$.
\end{lemma}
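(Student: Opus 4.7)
The plan is to analyze an arbitrary contraction sequence of $G$ with red-degree at most $d := \tww(G)$ and derive $(d+2)^{r+1} \geq n$, which immediately yields the claimed bound. First I would exploit the common $(r+2)$-layer decomposition $L_0, L_1, \ldots, L_{r+1}$ shared by all three flipped crossing variants. Since the flip respects this decomposition, the adjacency between any two layers is either a globally uniform pattern (fully present or fully absent) or, for consecutive layers in the star/half-graph/clique variant, a pattern that still encodes which path $\pi_{i,j}$ passes through each vertex of the intermediate layers.

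The central step is a BFS-style expansion argument through the layers, performed at one carefully chosen step $\PP_t$ of the contraction sequence. I would pick $t$ to be the latest step at which every root $b_j \in L_{r+1}$ is still in its own singleton part, and consider a part $P_\star \in \PP_t$ containing many (ideally all remaining) roots of $L_0$; such a part exists by pigeonhole since the sequence must ultimately merge the $L_0$-roots. I would then define a sequence of sets $R_0 = \{P_\star\}, R_1, R_2, \dots, R_{r+1} \subseteq \PP_t$ where $R_{i+1}$ consists of parts meeting $L_{i+1}$ that are not "uniformly" adjacent (in the trigraph sense) to every part of $R_i$. The red-degree bound together with the fact that pair-of-layers flips contribute only one uniform adjacency block per layer gives $|R_{i+1}| \le (d+2)\,|R_i|$: a part in $R_i$ has at most $d$ red neighbors, plus itself and at most one "uniformly black" layer-$(i{+}1)$ block. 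Iterating yields $|R_{r+1}| \le (d+2)^{r+1}$. Since the path $\pi_{i,j}$ provides, for every pair $(a_i, b_j)$, a sequence of layer-to-layer transitions that distinguishes $b_j$ non-trivially, all $n$ singleton parts $\{b_j\}$ at step $t$ must lie in $R_{r+1}$, whence $n \le (d+2)^{r+1}$.

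The main obstacle is the bookkeeping around the flip: one must show that, after performing the layer-wise flip, the "extra" fully-adjacent blocks that arise do not inflate the expansion factor beyond $d+2$ per layer. Concretely, for each pair of adjacent layers there is only a single "flipped-to-full" block whose contribution is globally uniform and hence irrelevant for distinguishing among the $b_j$'s; the per-pair path information survives untouched as symmetric-difference structure between different $a_i$-neighborhoods. The three variants (star, clique, half-graph) require only cosmetic modifications to this argument: the additional intra-layer edges in the clique variant (respectively, the ordered pattern in the half-graph variant) are themselves layer-homogeneous and thus do not obstruct the distinguishing role of the paths.
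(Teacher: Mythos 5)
Your high-level instinct matches the paper's — propagate an inhomogeneity through the layers and use the red-degree bound to control its spread — but the bookkeeping is genuinely different, and the central step of your argument does not go through.

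The paper does not build a ``red cone'' outward from a big part. It picks the \emph{first} step at which two roots on the same side end up in one part (WLOG on the $L_0$ side, by bipartite symmetry); call that part $P_0$ and the two merged roots $a_x,a_y$. Since $p_{x,j,1}$ is adjacent to exactly one of $a_x,a_y$, every part touching $\{p_{x,j,1}:j\in[n]\}$ has a red edge to $P_0$, so at most $d+1$ parts intersect this set, and pigeonhole yields a single part $P_1$ containing at least $n/(d+1)$ of them. Crucially, having two vertices $p_{x,j,1},p_{x,j',1}$ inside $P_1$ then \emph{forces} the parts containing $p_{x,j,2}$ and $p_{x,j',2}$ to be red neighbors of $P_1$, so the concentration propagates \emph{forward} (single part, shrinking index set $S_t$), ending with a part $P_{r+1}$ that would have to hold more than one of the still-singleton $b_j$'s.

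Your $R_i$ construction does not force anything, and the claim that every singleton $\{b_j\}$ lands in $R_{r+1}$ is exactly where the argument breaks. For $\{b_j\}\in R_{r+1}$ you need some $Q\in R_r$ to contain both a neighbor and a non-neighbor of $b_j$. But the trigraph can wash this out: consider the partition where $P_\star$ contains all of $L_0$, each $W^{(t)}_j:=\{p_{i,j,t}:i\in[n]\}$ for $t\in[r]$ is a single part, and every $b_j$ is a singleton. Then $\{b_j\}$ is fully adjacent to $W^{(r)}_j$ and non-adjacent to $W^{(r)}_{j'}$ for $j'\neq j$ (for the half-graph variant it is fully adjacent or fully non-adjacent, depending on the sign of $j-j'$), so it is never red to any part meeting $L_r$, and $\{b_j\}\notin R_{r+1}$. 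The chain $n\le|R_{r+1}|\le(d+2)^{r+1}$ then collapses. It is true that this particular partition has $d\ge n$ (because $P_\star$ alone is red to all $n$ parts $W^{(1)}_j$), so the lemma still holds, but your argument as stated does not detect this: you would need a separate case split showing either that $d$ is already large or that the red cone really does reach every $b_j$ — and the latter is precisely what the paper's single-part pigeonhole chain guarantees and your cone does not.

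There is also a smaller issue with the choice of step. Taking $t$ to be the last step with all $b_j$ singletons does not by itself put several $L_0$ roots into one part: the first same-side merge might occur on the $L_{r+1}$ side first, in which case all $a_i$ are still singletons at your $t$. The paper dispatches this up front by invoking the bipartite symmetry of $G$ and assuming the first same-side merge is on $L_0$; your write-up needs the same observation, and ``many (ideally all remaining)'' roots is not a clean enough hypothesis to drive the expansion bound anyway.
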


The following proof closely follows the proof of \cite[Proposition 6.2]{twwII}, where it was proven that the $r$-subdivided clique of order $n$ has twin-width $\Omega_r(n^c)$ for some $c>0$.
We adapt their proof to the bipartite setting (which slightly simplifies the setup), and generalize it to also work with half-graph crossings (which makes the proof slightly more complicated).

\begin{proof}[Proof of \cref{lem:tww-flipped-crossings}]
    We use the notation for \(r\)-crossings presented in \cref{def:rcrossings}.
    Let $d$ be the twin-width of $G$ and 
    fix a witnessing contraction sequence.
    By the bipartite symmetry of $G$, we can assume that there is a step in the sequence, where two vertices $a_x, a_y \in L_0$ with $x < y$ are contracted, but no vertices from $L_{r+1}$ have been contracted so far.
    Let $\PP$ be the partition of $V(G)$ at this step, and $P_0 \in \PP$ be the part containing $a_x$ and $a_y$.
    For every set $S \subseteq [n]$ define the sets $V_t(S) := \{ p_{x,j,t} : j \in S \}$ for $t \in [r]$ and
    $V_{r+1}(S) := \{b_j : j \in S\}$.

    \begin{claim}\label{claim:tww}
        For every $t \in [r+1]$ there is $S_t \subseteq [n]$ with $|S_t| \geq \frac{n}{(d+2)^t}$ 
        and $P_t \in \PP$ with $V_t(S_t) \subseteq P_t$.
    \end{claim}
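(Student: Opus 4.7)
The plan is to prove \cref{claim:tww} by induction on $t$, exploiting the fact that along each subdivision path $(a_x, p_{x,j,1}, \dots, p_{x,j,r}, b_j)$ of the $r$-crossing, the adjacency matrix between consecutive layers is identity-like, and therefore remains identity-like (or its bitwise complement) after any \emph{layer-wise} flip. At each step this will force the vertices of the next layer to be concentrated in only a few parts of $\PP$, via the red-degree bound.

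For the base case $t=1$, consider the set $\{p_{x,j,1} : j \in [n]\}$. In the unflipped $r$-crossing, $a_x$ is adjacent to every $p_{x,j,1}$, whereas $a_y$ is adjacent to none of them (in the star/clique case because $y \neq x$; in the half-graph case because $x < y$ violates $y \le x$). Since the flip acts uniformly between $L_0$ and $L_1$, the vertices $a_x$ and $a_y$ retain opposite adjacencies to each $p_{x,j,1}$ in $G$. Thus for any part $Q \in \PP \setminus \{P_0\}$ containing some $p_{x,j,1}$, the pair $a_x, p_{x,j,1}$ together with $a_y, p_{x,j,1}$ witnesses that $P_0$ and $Q$ are non-homogeneously connected, so $Q$ is a red neighbor of $P_0$. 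Hence the vertices $\{p_{x,j,1} : j \in [n]\}$ lie in at most $d+1$ parts of $\PP$ (the $\le d$ red neighbors of $P_0$ together with possibly $P_0$ itself), and by pigeonhole some part $P_1$ contains at least $n/(d+1) \ge n/(d+2)$ of them; let $S_1$ index those.

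For the inductive step, assume the claim for $t$ and consider $\{p_{x,j,t+1} : j \in S_t\}$, interpreting $p_{x,j,r+1}$ as $b_j$ when $t=r$. Within each single path the original adjacency rule $p_{x,j,t} \sim p_{x,j',t+1} \iff j = j'$ is identity-like and survives the flip between $L_t$ and $L_{t+1}$ up to uniform complementation. For any $j$ with $p_{x,j,t+1} \in Q \neq P_t$, pick any $j' \in S_t \setminus \{j\}$ (possible unless $|S_t| \le 1$, in which case the conclusion is already stronger than needed): then $p_{x,j,t}, p_{x,j',t} \in P_t$ have opposite adjacencies to $p_{x,j,t+1} \in Q$, so $Q$ is a red neighbor of $P_t$. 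Hence $\{p_{x,j,t+1} : j \in S_t\}$ distributes among at most $d+1$ parts of $\PP$, and pigeonhole yields $P_{t+1}$ and $S_{t+1} \subseteq S_t$ with $|S_{t+1}| \ge |S_t|/(d+1) \ge n/(d+2)^{t+1}$, as required.

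The main subtlety to watch is that the identity-like pattern between consecutive layers be genuinely preserved by layer-wise flips, so that the roots' opposite adjacencies in the base case, and the intra-path adjacencies in the inductive step, always certify \textbf{non-homogeneity} rather than being masked by the flip; crucially, because the flip is specified by a single relation on the layer-partition, both $a_x, a_y$ (resp.\ $p_{x,j,t}, p_{x,j',t}$) are affected identically and the witnessing asymmetry survives. Once \cref{claim:tww} is established, the lemma follows by applying it at $t = r+1$: we get $V_{r+1}(S_{r+1}) = \{b_j : j \in S_{r+1}\} \subseteq P_{r+1}$, but since no vertex of $L_{r+1}$ has yet been contracted, $P_{r+1}$ is a singleton; hence $|S_{r+1}| = 1$, forcing $(d+2)^{r+1} \ge n$ and therefore $d \ge n^{1/(r+1)} - 2$.
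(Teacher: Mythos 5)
Your base case matches the paper's and is correct. The inductive step is also essentially the paper's approach, but it contains a genuine gap for the flipped \emph{half-graph} $r$-crossing at the last transition $t=r$.

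You assert that ``the original adjacency rule $p_{x,j,t} \sim p_{x,j',t+1} \iff j = j'$ is identity-like'' and therefore that \emph{any} $j' \in S_t \setminus\{j\}$ gives $p_{x,j,t}$, $p_{x,j',t}$ opposite adjacencies to the vertex in layer $t+1$. This holds for all $t\le r-1$ in every crossing type and for $t=r$ in the star and clique crossings, but it \emph{fails} for the half-graph $r$-crossing when $t=r$. There, $b_j$ is adjacent to $p_{x,j',r}$ if and only if $j \le j'$, so for $j' > j$ both $p_{x,j,r}$ and $p_{x,j',r}$ are adjacent to $b_j$ and your witness pair does not exhibit non-homogeneity. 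A layer-wise flip only complements this uniformly, so the issue persists after flipping. You must instead pick $j' < j$, and this is impossible when $j = \min(S_t)$: the vertex $b_{\min(S_t)}$ can be homogeneous to $P_t$, and its part need not be a red neighbor of $P_t$. The paper handles this by adding that one exceptional part to the collection, giving at most $d+2$ parts and thus $|S_{t+1}| \ge |S_t|/(d+2)$, not $|S_t|/(d+1)$ as you claim. Your final numerical bound $n/(d+2)^{t+1}$ still comes out correct only because you silently relax $d+1$ to $d+2$ at the end -- but the intermediate $d+1$ is not actually justified. You even flag ``the main subtlety to watch'' as exactly this kind of preservation issue, yet conclude it always works out; it does not in this one case, and that is precisely where the ``$+2$'' in the claim comes from.
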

    \begin{claimproof}
        We prove the claim by induction on $t$.
        By definition of a flipped crossing, each vertex in $V_1([n])$ is adjacent exactly one vertex of $a_x$ and $a_y$.
    Consequently, there is a red edge between $P_0$ and every
    part $P \in \PP \setminus \{ P_0 \}$ that intersects $V_1([n])$.
    Hence, at most $d + 1$ parts of $\PP$ intersect $V_1([n])$, so there is a part $P_1$ containing at least $\frac{n}{d+1}$ many vertices from $V_1([n])$, and we can choose $S_1$ accordingly.

    In the inductive step,
    by the structure of a flipped crossing, the vertices from $V_{t+1}(S_t)$ have an inhomogeneous connection to $V_t(S_t)$,
    with the possible exception of the smallest vertex in \(S_{t+1}\) (if \(t=r\) and we consider a flipped half-graph crossing).
    Let~$\mathcal{Q}$ be the set containing $P_t$ and the part of \(\mathcal P\) that contains the smallest vertex in \(S_{t+1}\).
    The aforementioned inhomogeneous connections ensure that
    there is a red edge between $P_t$ and every part $P \in \PP \setminus \mathcal{Q}$ that contains a vertex from $V_{t+1}(S_t)$.
    Then there must be a part $P_{t+1}$ that contains at least 
    $\frac{|V_{t+1}(S_t)|}{d+2} \geq  \frac{n}{(d+2)^{t+1}}$
    vertices from $V_{t+1}(S_t)$. We can choose $S_{t+1}$ accordingly.
    \end{claimproof}
    The above claim yields a part $P_{r+1}$ containing at least $\frac{n}{(d+2)^{r+1}}$ vertices from $L_{r+1}\supseteq V_{r+1}([n])$.
    As no two vertices from $L_{r+1}$ have been contracted yet, $P_{r+1}$ can contain at most one vertex from~$L_{r+1}$. Then we have $d \geq n^{\frac{1}{r+1}} - 2$.
\end{proof}

\begin{lemma}\label{lem:tww-cgrid}
    Let $G$ be a comparability grid of order $n+1$.
    Then $\tww(G)\geq n^{\frac{1}{2}} - 2$.
\end{lemma}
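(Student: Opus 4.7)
My plan is to run the proof of \cref{lem:tww-flipped-crossings} essentially verbatim, interpreting the comparability grid $G$ of order $m := n+1$ as having a ``three-layer'' skeleton analogous to a half-graph $1$-crossing. I designate outer layers $L_0 := \{a_{i,1} : i \in [n]\}$ and $L_2 := \{a_{m,j} : j \in [2,m]\}$, and middle layer $L_1 := \{a_{i,j} : i \in [n],\, j \in [2,m]\}$. From the definition of $G$ one checks directly that each $a_{i,1} \in L_0$ is adjacent to $a_{i',j} \in L_1$ iff $i \le i'$, and each $a_{m,j} \in L_2$ is adjacent to $a_{i',j'} \in L_1$ iff $j' \le j$. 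These are precisely the half-graph adjacencies between consecutive layers of a half-graph $1$-crossing; the additional in-layer edges and $L_0$--$L_2$ edges present in $G$ will not be used in the red-degree argument, which only exploits inhomogeneities between layers.

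To replicate the ``bipartite symmetry'' step, I will verify that $\sigma \colon a_{i,j} \mapsto a_{m+1-j,\,m+1-i}$ is an automorphism of $G$ interchanging $L_0$ and $L_2$. Applying $\sigma$ to the given contraction sequence if necessary, I then pick a step $t^\star$ at which two vertices $a_{x_1,1}, a_{x_2,1} \in L_0$ with $x_1 < x_2$ first end up in a common part $P_0$ of the resulting partition $\PP$, while every part of $\PP$ still contains at most one vertex of $L_2$. With $V_1(S) := \{a_{x_1,j} : j \in S\}$ and $V_2(S) := \{a_{m,j} : j \in S\}$ for $S \subseteq [2,m]$, I then rerun \cref{claim:tww} in this two-layer setting: in the base case, every $a_{x_1,j} \in V_1([2,m])$ is adjacent to $a_{x_1,1}$ but (since $x_1 < x_2$ and $j > 1$ rule out the ``comparable'' case) not to $a_{x_2,1}$, producing a part $P_1 \supseteq V_1(S_1)$ with $|S_1| \ge n/(d{+}1)$; in the inductive step, the half-graph adjacency from $L_2$ into row $x_1$ yields, with the single exception at $\max S_1$, a part $P_2$ with $|P_2 \cap L_2| \ge |S_1|/(d{+}2) \ge n/((d{+}1)(d{+}2))$. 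Combined with the invariant that each part of $\PP$ contains at most one $L_2$-vertex, this forces $(d{+}2)^2 \ge n$ and hence $d \ge n^{1/2}-2$.

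The main obstacle I anticipate is the careful execution of the symmetry argument, in particular the corner case in which the first $L_0$-merge and the first $L_2$-merge occur at the same step of the contraction sequence. In that situation the part $P_0$ may itself carry an $L_2$-vertex, the pigeonholed part $P_2$ may coincide with $P_0$, and the ``at most one $L_2$-vertex per part'' invariant is violated precisely by $P_0$. I plan to circumvent this by either refining the choice of $t^\star$ so that the two first-merge times are strictly separated, or, as a fallback, by absorbing the constant-factor loss into the final estimate, which remains compatible with the stated $n^{1/2}-2$ bound.
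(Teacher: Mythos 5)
Your proposal takes essentially the same approach as the paper: both lay a half-graph $1$-crossing-like three-layer skeleton over the comparability grid and then invoke \cref{claim:tww} with $r=1$. The paper indexes the vertices as $\{0,\dots,n\}^2$ with $L_0 = \{0\}\times[n]$, $L_2 = [n]\times\{0\}$, $V_1(S) = S\times\{x\}$, $V_2(S) = S\times\{0\}$; this is an inessential re-indexing of your layers, and your adjacency checks (half-graph between $L_0$--$L_1$ and $L_1$--$L_2$, exceptional index at $\max S_1$ in your orientation vs.\ $\min$ in theirs) are all correct.

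The corner case you flag is a genuine subtlety, but it is equally present in — and left unaddressed by — the paper's own proof of \cref{lem:tww-flipped-crossings}, which \cref{lem:tww-cgrid} simply inherits. If a single contraction step simultaneously produces the first part $P_0$ with two $L_0$-vertices and two $L_{r+1}$-vertices (each of the two merged parts held one of each), the final pigeonhole only yields $n \le 2(d+2)^{r+1}$. Your fallback of absorbing the factor $2$ gives $\tww(G) \ge (n/2)^{1/2}-2$, which is strictly weaker than the stated $n^{1/2}-2$, so the fallback does not literally recover the stated constant; and your first remedy — strictly separating the two first-merge times — is not straightforward to implement, since the contraction sequence is given. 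You should be aware that you are reproducing a gap already implicit in the paper, not introducing one; and that a fully rigorous version of this argument would either carry the extra constant factor or argue more carefully that $P_{r+1}\ne P_0$ in the coinciding case.
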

\begin{proof}
    We proceed as in the proof of \Cref{lem:tww-flipped-crossings}.
    Given a comparability grid of order \(n+1\) with vertex set \(\{0,\dots,n\}\times \{0,\dots,n\}\),
    we define \(L_0 = \{0\}\times [n]\) and \(L_{2} = [n]\times \{0\}\).
    By symmetry, we consider the first step in a contraction sequence, where two vertices
    $(0,x), (0,y) \in L_0$ with $x < y$ are contracted, but no vertices from \(L_2\) have been contracted so far.
    For every set $S \subseteq [n]$ define the sets $V_1(S) := S \times \{x\}$ and $V_{2}(S) := S \times \{0\}$.
    The remainder of the proof proceeds as in \Cref{claim:tww} and onwards (with \(r=1\)).
\end{proof}

We finally prove \cref{thm:almost-bounded-tww}, which we rephrase as follows.

\begin{proposition}
    For every hereditary, monadically independent graph class $\CC$  there exists a real $\delta > 0$ such that for every $n \in \N$, there is graph $G_n \in \CC$ with $|V(G_n)| \geq n$ and $\tww(G_n) \geq |V(G_n)|^\delta$.
\end{proposition}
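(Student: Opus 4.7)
The plan is to combine the characterization of monadic dependence by forbidden patterns (\cref{thm:main-circle}, specifically the implication \ref{itm:circle1}$\Leftrightarrow$\ref{itm:circle2}) with the twin-width lower bounds for the patterns already established in \cref{lem:tww-flipped-crossings} and \cref{lem:tww-cgrid}. Since $\CC$ is monadically independent, condition \ref{itm:circle2} of \cref{thm:main-circle} must fail for $\CC$. Negating that condition yields a single radius $r \geq 1$ such that for every $k \in \N$, $\CC$ contains (as an induced subgraph) at least one of: the comparability grid of order $k$, or a flipped star/clique/half-graph $r$-crossing of order $k$. By hereditariness, each such pattern is itself a member of $\CC$.

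Next, since there are only four types of patterns but infinitely many values of $k$, the pigeonhole principle yields a single fixed type $\BB \in \{\Bs_r, \Bc_r, \Bo_r, \Bg\}$ for which $\CC$ contains a member $H^{(k)}$ of order $k$ for every $k$ in some infinite set $K \subseteq \N$. By \cref{lem:tww-flipped-crossings} or \cref{lem:tww-cgrid} (depending on which type $\BB$ is), we have
\[
    \tww(H^{(k)}) \geq k^{1/(r+1)} - 2
\]
for every $k \in K$ (the comparability grid case gives the stronger $k^{1/2} - 2$, which still fits the bound $k^{1/(r+1)} - 2$). On the other hand, counting vertices in each of the four pattern types via \cref{def:rcrossings} gives $|V(H^{(k)})| \leq (r+2)k^2$.

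Set $\delta := \tfrac{1}{4(r+1)}$. For all sufficiently large $k \in K$, we have
\[
    k^{1/(r+1)} - 2 \;\geq\; \bigl((r+2)k^2\bigr)^{\delta} \;\geq\; |V(H^{(k)})|^{\delta},
\]
since the left side grows like $k^{1/(r+1)}$ while the right side grows like $k^{2\delta} = k^{1/(2(r+1))}$. For every $n \in \N$, choose $k = k(n) \in K$ large enough that both $|V(H^{(k)})| \geq n$ and the above inequality holds, and set $G_n := H^{(k)}$. Then $G_n \in \CC$, $|V(G_n)| \geq n$, and $\tww(G_n) \geq |V(G_n)|^\delta$, as required.

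There is no substantial obstacle: all the combinatorial content — the forbidden patterns characterization and the twin-width lower bounds on each pattern type — has been established earlier in the paper, so the proof reduces to routine bookkeeping of exponents (choosing $\delta$ small enough that the $k^{1/(r+1)}$ growth of twin-width dominates $|V|^\delta = k^{2\delta}$) and a pigeonhole step to isolate one of the four pattern families.
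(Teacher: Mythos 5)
Your proof is correct and takes essentially the same approach as the paper: invoke the forbidden-patterns characterization (\cref{thm:main-circle}) to obtain a fixed radius $r$ and arbitrarily large patterns in $\CC$, apply the twin-width lower bounds of \cref{lem:tww-flipped-crossings,lem:tww-cgrid}, and do exponent bookkeeping using $|V| = O_r(k^2)$. The intermediate pigeonhole step is an unnecessary detour (the paper simply picks $G_n$ to be whichever pattern of order $n$ lies in $\CC$, since $\delta$ depends only on $r$ and not on the pattern type), but it does no harm.
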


\begin{proof}
    By \cref{thm:main-circle} and hereditariness of $\CC$, there is $r \in \N$ such that for every $n \in \N$, $\CC$ contains either a flipped star/clique/half-graph $r$-crossing of order $n$ or the comparability grid of order $n$.
    We choose $G_n$ to be the flipped $r$-crossing or the comparability grid contained in $\CC$.
    In both cases we have $n \leq |V(G_n)| \leq O_r(n^2)$.
    If $G_n$ is a flipped crossing, we conclude using \cref{lem:tww-flipped-crossings}.
    If $G_n$ is a comparability grid, we conclude using \cref{lem:tww-cgrid}.
\end{proof}

\section{Almost Bounded Flip-Width}
\label{sec:abfw}

The radius-$r$ flip-width of a graph $G$,
denoted $\fw_r(G)$,
is a graph width parameter recently introduced in \cite{flipwidth}.
It is defined in terms of a generalization of the Cops and Robber game to dense graphs.
We refer the reader to \cite{flipwidth} for a definition.
Using this notion,
a hereditary graph  class $\CC$ has \emph{almost bounded flip-width}
if for every fixed radius \(r \ge 1\) and $\varepsilon>0$ we have
\[
   \fw_r(G)\le O_{\CC,\varepsilon}(n^{\varepsilon}) \qquad\text{for every $n$-vertex graph $G\in\CC$.}
\]

In  \cite[Conjecture 10.7]{flipwidth},
the following conjecture is posed:

\conjFW*

In this section, we confirm the forwards implication of Conjecture~\ref{conj:fw}
by proving the following theorem.

\begin{restatable}{theorem}{thmFw}\label{thm:fw}
    Let $\CC$ be a hereditary graph class
    such that for every fixed $r\ge 1$,
    \[\fw_r(G)\le o_{\CC,r}(n^{\frac{1}{2}})\qquad\text{for every $n$-vertex graph $G\in\CC$.}\]
    Then $\CC$ is monadically dependent.
    In particular, every hereditary class of almost bounded flip-width is monadically dependent.
  \end{restatable}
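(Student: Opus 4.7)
The plan is to argue by contrapositive, paralleling the proof of \cref{thm:almost-bounded-tww} in \cref{sec:tww} but with flip-width replacing twin-width. Assume $\CC$ is hereditary and monadically independent. By \cref{thm:main-circle} and the pigeonhole principle, there exist a fixed radius $r_0 \ge 1$ and a fixed pattern type $\tau$ among flipped star/clique/half-graph $r_0$-crossings and comparability grids, such that for every sufficiently large $k$ the class $\CC$ contains, as an induced subgraph, a pattern $G_k$ of type $\tau$ and order $k$. Since $|V(G_k)| \le O_{r_0}(k^2)$, we have $k \ge \Omega_{r_0}(\sqrt{|V(G_k)|})$.

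The core of the argument is then a flip-width lower bound: for some radius $r_1 = r_1(r_0)$ and constant $c = c(r_0) > 0$,
\[
    \fw_{r_1}(G_k) \;\ge\; c \cdot k \;\ge\; \Omega_{r_0}(\sqrt{|V(G_k)|}).
\]
Since $\fw_{r_1}(G_k)$ would then grow at least as fast as $\sqrt{|V(G_k)|}$ along the subfamily $\{G_k\}_k \subseteq \CC$, this directly contradicts the hypothesis that $\fw_r(G) \le o_{\CC,r}(n^{1/2})$ for every fixed $r$. The desired lower bound would be invoked from \cite{flipwidth}, which already establishes it for comparability grids and geometric patterns like interchanges via an analysis of the Cops and Robber game defining flip-width.

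The main obstacle is carrying out (or carefully citing) the flip-width lower bound for each of the four pattern types. The guiding intuition is that against fewer than $c \cdot k$ cops performing bounded-cost flips, the robber plays along one of the $k^2$ paths $\pi_{i,j}$ of the $r_0$-crossing (or, for comparability grids, within a row or column of the grid). Each cop placement together with its associated flip ``blocks'' only $O(1)$ of the $k$ row-indices and $k$ column-indices of the pattern, so with $o(k)$ cops a large subgrid of indices remains unblocked, allowing the robber to escape by traversing a full path in $O(r_0)$ moves. The technical subtlety is that $G_k$ is \emph{already} a layer-flip of an unflipped pattern, and the cops' flips can interact nontrivially with these pre-existing layer-flips. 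Fortunately, a layer-flip is itself a bounded-cost ($(r_0+2)$-part) flip, so its effect can be absorbed into the cops' flip budget at the price of worsening $c$ and $r_1$ by additive terms depending only on $r_0$. For comparability grids, where there is no pre-existing flip, a self-contained argument directly analogous to the proof of \cref{lem:tww-cgrid}, adapted from the contraction-sequence game to the flip-width game, can be used.
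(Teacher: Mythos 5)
Your overall architecture matches the paper's: argue by contrapositive, invoke \cref{thm:main-circle} (with pigeonhole) to extract a fixed family of flipped crossings or comparability grids of unbounded order inside $\CC$, note these have $O_r(k^2)$ vertices so order $k \ge \Omega_r(\sqrt{n})$, prove a flip-width lower bound linear in $k$ for these patterns, and reconcile the pre-existing layer-flip by observing that an $\ell$-flip changes flip-width by at most a factor $\ell$. That last observation is exactly how the paper handles the flip: it proves the lower bound for \emph{unflipped} crossings and grids and then applies $\fw_{r+1}(G) \le \ell\cdot\fw_{r+1}(G')$.

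The gap is in the central technical ingredient — the flip-width lower bound itself. You propose to ``invoke it from \cite{flipwidth},'' but that reference only proves it for $r$-subdivided cliques (Proposition~6.7 there), which corresponds roughly to the star-crossing case. It does not cover clique crossings, half-graph crossings, or comparability grids. The paper has to prove a new lemma (Lemma~\ref{lem:fw-patterns}, via Lemmas~\ref{lem:fw-crossings} and~\ref{lem:fw-comp-grid}) that handles all four families uniformly, and the half-graph case in particular is the delicate one: it requires Lemma~\ref{lem:half-graph-forest}, which shows that in any $k$-flip of a half-graph-like bipartite gadget, all but $k$ vertices on one side retain a large fraction of their intended neighborhood. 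Your stated intuition — ``each cop placement together with its associated flip blocks only $O(1)$ row/column indices'' — misdescribes the game: flip-width is defined via a flipper who announces a \emph{global} $k$-flip of the whole graph each round (there are no cop placements), so a local blocking argument is not available. The correct escape argument is the (bi-)hideout argument of Lemma~\ref{lem:hideouts}: one exhibits a pair of sets $(A,B)$ such that for every $k$-flip, few vertices of $A$ fail to reach many vertices of $B$ within radius $r+1$, and vice versa, which lets the runner shuttle indefinitely between $A$ and $B$. As written, your proposal identifies the right plan but leaves the load-bearing lemma essentially unproven and with a misattributed source.
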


Note that every class of almost bounded twin-width has almost bounded flip-width
by \cite[Thm. 10.21]{flipwidth}.
Therefore, Theorem~\ref{thm:fw}
implies Theorem~\ref{thm:almost-bounded-tww}.

\medskip
To prove \cref{thm:fw}, 
we show the following lemma.

\begin{lemma}\label{lem:fw-patterns}
    Fix $r\ge 1$. Let $\DD$ be either
    \begin{itemize}
        \item the class of star $r$-crossings, or
        \item the class of clique $r$-crossings, or
        \item the class of half-graph $r$-crossings, or
        \item the class of comparability grids.
    \end{itemize}
    Then \(\fw_{r+1}(G)\ge \Omega_{r}({n^{\frac 1 2}})\) for every $n$-vertex graph $G\in \DD$.
  \end{lemma}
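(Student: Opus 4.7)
Plan: The strategy is to adapt the layered pigeonhole argument of \cref{lem:tww-flipped-crossings} and \cref{lem:tww-cgrid} to the cops-and-robber game characterization of flip-width from \cite{flipwidth}. Recall that $\fw_{r+1}(G) \ge k$ exactly when the robber survives indefinitely against $k-1$ cops in the radius-$(r+1)$ flip-width game: each round the cops declare a $k$-flip $H = G \oplus_\KK F$ together with cop positions, and the robber must relocate to a vertex at distance at most $r+1$ in $H$ while avoiding the cops. The goal is to exhibit a winning robber strategy on each of the four families whenever $k \le c_r \sqrt{n}$ for a sufficiently small constant $c_r$.

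The core technical ingredient, established separately for the star, clique, and half-graph $r$-crossings $G$ of order $m$ (so $n = \Theta_r(m^2)$), is a \emph{pollution bound}: for every $k$-flip $H$ of $G$ and every cop vertex $c$, the set $N_{r+1}^H(c) \cap (L_0 \cup L_{r+1})$ of roots within $(r+1)$-distance of $c$ in $H$ has size at most $O_r(m/k)$. To prove this, we iterate the layered pigeonhole of \cref{claim:tww}: a ball of radius $r+1$ in $H$ can cross at most $r+2$ layer boundaries of $G$; since non-adjacent layers of $G$ have no edges, any cross-layer traversal in $H$ must use a flip edge between two parts of $\KK$; and since there are only $k$ parts, the subset of each layer reachable from $c$ shrinks by a multiplicative factor at every layer boundary. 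Summing the pollution of the $k-1$ cops then leaves at least $\Omega_r(m)$ safe roots, and the robber (sitting on some root of $L_0$) can hop along an unpolluted subdivision path $\pi_{i,j}$ — whose interior lies in a single part of $\KK$, hence is unaffected by the flip — to a safe target root in $L_{r+1}$. Taking $k \le c_r m$ gives $\fw_{r+1}(G) \ge \Omega_r(m) = \Omega_r(\sqrt n)$.

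For the comparability grid of order $m$, the observation made in the paragraph preceding \cref{thm:quant} shows that it contains, as an induced subgraph, a radius-$1$ encoding of $K_{m-1,m-1}$ obtained by adding edges within three layers of a half-graph $1$-crossing of order $m-1$; by the monotonicity of $\fw_r$ under induced subgraphs (\cite{flipwidth}) this reduces the grid case to the $r=1$ half-graph case already handled. The main obstacle will be making the pollution bound genuinely linear in $m/k$ rather than the much weaker $m^{1-1/(r+2)}$ that a naive layer-by-layer count produces: the improvement comes from observing that, at each of the $r+2$ layer boundaries in $G$, the flip can only add \emph{homogeneous} connections between parts of $\KK$, so at each step the number of reachable parts grows by a factor of $O(1)$ rather than filling up the entire next layer. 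Managing this bookkeeping carefully — in particular, tracking which part of $\KK$ dominates which layer, as in \cref{claim:tww} — is the main technical work of the proof.
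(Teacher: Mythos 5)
Your core technical ingredient — the "pollution bound" $|N_{r+1}^H(c) \cap (L_0 \cup L_{r+1})| \le O_r(m/k)$ for every vertex $c$ and every $k$-flip $H$ — is false, and the gap is fundamental.

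It fails already with no flipping at all. Take the trivial $k$-flip (any $k$-partition $\KK$ with $F = \emptyset$, so $H = G$) and let $c = a_i$ be a root in $L_0$. Along the subdivision path $\pi_{i,j}$, every $b_j \in L_{r+1}$ is at distance exactly $r+1$ from $a_i$ in $G$, so $|N_{r+1}^G(a_i) \cap L_{r+1}| = m$. The claimed bound $O_r(m/k)$ is violated as soon as $k$ exceeds the implicit constant. Your heuristic that "the subset of each layer reachable from $c$ shrinks by a multiplicative factor at every layer boundary" is backwards: layer boundaries in the unflipped crossing already fan out (each root has degree $m$ into the next layer), and flips only make matters worse, since adding a flip edge between two parts creates a \emph{complete} bipartite graph between them. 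A $2$-flip can therefore make any chosen vertex adjacent to an entire layer at once. Flips never dilute reachability; they can only equal or inflate it. As a result, no matter how carefully you track which parts of $\KK$ dominate which layer, you cannot bound the number of roots a single vertex reaches.

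The paper's argument is the dual of what you attempt and is the statement that is actually true. Rather than bounding how many roots a cop can \emph{reach} (unbounded, as above), Lemma~\ref{lem:fw-crossings} shows that in any $k$-flip $G'$, at most $k$ roots of $L_0$ \emph{fail} to reach more than $k$ roots of $L_{r+1}$ within distance $r+1$ — the bi-hideout condition of \cref{lem:hideouts}. This "most roots are good" statement is what Lemma~\ref{lem:half-graph-forest} (about $k$-flips of the matching/half-graph bipartite structure $G[L_0, L_1]$) together with Lemma~\ref{lem:path-flips} (about $k$-flips of disjoint path systems) delivers: each surviving root $v$ has a set $P(v)$ of size $\ge n/2$ in $L_1$ that it can still reach in one step, and enough of the downstream paths survive that $|N_{r+1}^{G'}[v] \cap L_{r+1}| > k$. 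The robber's strategy is then to always sit on a "good" root, which exists because fewer than $k$ roots on either side are bad. Your path-survival claim ("whose interior lies in a single part of $\KK$, hence is unaffected by the flip") is also unjustified — the adversary chooses $\KK$, and nothing forces a path's interior to be monochromatic; the survival of a linear fraction of paths is exactly what \cref{lem:path-flips} establishes via a careful argument. Your reduction of the comparability grid to the half-graph $1$-crossing matches the paper's treatment, but the main argument needs to be rebuilt around the "most roots retain large reach" statement rather than a per-vertex pollution bound.
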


 Theorem~\ref{thm:fw} follows easily from Lemma~\ref{lem:fw-patterns}, as we now show.
  \begin{proof}[Proof of Theorem~\ref{thm:fw}]
    Let  $\CC$ be a hereditary graph class, such that 
    for every fixed $r \ge 1$, $G\in\CC$, we have $\fw_r(G)= o_{\CC,r}(|G|^{\frac{1}{2}})$.
    We want to prove that $\CC$ is monadically dependent, so suppose otherwise.
    By Theorem~\ref{thm:main-circle},
    there is some $r,\ell\ge 1$ and a  class $\DD$ as in Lemma~\ref{lem:fw-patterns},
    such that $\CC$ contains some $\ell$-flip $G'$ of every graph $G\in\DD$.
Note that if $G'$ is an $\ell$-flip of $G$,
then from the definition of flip-width, it follows easily that $\fw_{r+1}(G)\le \ell\cdot \fw_{r+1}(G')$ 
(see also \cite[Thm. 8.2]{flipwidth} with \(q=0\)).
    Thus, for every $n$-vertex graph $G\in \DD$ which is a $\ell$-flip of a graph $G'\in\CC$  we have by \Cref{lem:fw-patterns} the contradiction
    \[\Omega_{r}({n^{\frac 1 2}})\le \fw_{r+1}(G)\le \ell\cdot \fw_{r+1}(G') \le o_{\CC,r}(n^{\frac 1 2}).\]
    This proves Theorem~\ref{thm:fw}.
  \end{proof}

It remains to prove Lemma~\ref{lem:fw-patterns}.
The proof follows along the lines of the proof of \cite[Proposition 6.7]{flipwidth},
which implies in particular that the $r$-subdivision $K^{(r)}_n$ of $K_n$ satisfies 
$\fw_{r+1}(K^{(r)}_n)\ge \Omega_r(n^{\frac 12})$.

\medskip

Fix $n,m \ge 1$ and a symbol $S\in\set{\le,=}$.
Let $H^{S}_{n,m}$ denote the bipartite 
graph with parts $L=[n]$ and $R=[n]\times[m]$,
and edges $\set{i, (i',j)}$, for $i,i'\in [n]$ and $j\in [m]$, such that 
$i\,S\,i'$ holds.
For $i\in [n]$ denote $\tilde N(i):=[i]\times [m]\subset R$.

\begin{lemma}\label{lem:half-graph-forest}
    Fix $k,n,m \ge 1$, $S\in\set{\le,=}$, and let $H:=H^{S}_{n,m}=(L,R,E)$ be as defined above.
    Let $H'$ be a $k$-flip of $H$.
    Then there is a set $X\subset L$ 
    of size $n-k$
    and an injective function $f\from X\to L$ such that for every $v\in X$,
    \[|N^{H'}(v)\cap \tilde N({f(v)})|\ge \frac m 2.\]
\end{lemma}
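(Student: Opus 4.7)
The plan is to show that the identity $f(v):=v$ succeeds for all but at most one vertex per $\KK$-class of the flip, which automatically yields the required set $X$ of size at least $n-k$ together with an injective $f\from X\to L$. Write the $k$-flip as $H' = H\oplus_\KK F$ with $|\KK|\le k$ and $F\subseteq\KK^2$ symmetric. For each class $X\in\KK$ define $P_X := \bigcup\{Y\in\KK:(X,Y)\in F\}$, and for $i\in[n]$ set $\alpha_i^X := |R_i\cap P_X|/m$, where $R_i:=\{i\}\times[m]$ is the $i$th column of the right-hand side of $H$.

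The first step is a direct computation of $|N^{H'}(v)\cap\tilde N(v)|$ for $v\in L$ of class $X$. Since each $R_i$ is a set of twins in $H$, the flip acts uniformly on $R_i$ relative to $v$: for $u\in R_i$, the edge $uv$ lies in $H'$ iff the original adjacency---determined solely by whether $v\,S\,i$ holds---is toggled according to $(X,\KK(u))\in F$. This yields
\[
    \tfrac{1}{m}\,|N^{H'}(v)\cap R_i| \;=\; (1-\alpha_i^X)\cdot\mathbf{1}[v\,S\,i] \;+\; \alpha_i^X\cdot\mathbf{1}[\neg(v\,S\,i)].
\]
Summing over $i\in\{1,\ldots,v\}$ and noting that $i=v$ is the unique index $i\le v$ for which $v\,S\,i$ holds---in both cases $S\in\{\le,=\}$---gives the clean identity
\[
    \tfrac{1}{m}\,|N^{H'}(v)\cap\tilde N(v)| \;=\; (1-\alpha_v^X) \;+\; \sum_{i<v}\alpha_i^X.
\]
Call $v\in L$ \emph{bad} if this quantity is less than $1/2$, that is, if $\alpha_v^X > \tfrac{1}{2} + \sum_{i<v}\alpha_i^X$.

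The heart of the argument will be the claim that at most one $v\in L$ per $\KK$-class is bad. Indeed, if $v<v'$ were both bad with $\KK(v)=\KK(v')=X$, then badness of $v$ yields $\sum_{i\le v}\alpha_i^X\ge\alpha_v^X>\tfrac{1}{2}$, whence $\sum_{i<v'}\alpha_i^X>\tfrac{1}{2}$; but then badness of $v'$ would force $\alpha_{v'}^X>1$, contradicting $\alpha_{v'}^X\le 1$. Consequently the set $B$ of bad vertices satisfies $|B|\le|\KK|\le k$, and taking $X:=L\setminus B$ together with $f(v):=v$ gives an injective function from a set of size at least $n-k$ into $L$ for which $|N^{H'}(v)\cap\tilde N(f(v))|\ge m/2$ on all of $X$.

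The only delicate point---and therefore the one I would focus on most carefully---is arriving at the unified displayed identity: one must verify that the target $\tilde N(v)=R_1\cup\cdots\cup R_v$ is tailored so that exactly one column ($i=v$) lies on the ``$v\,S\,i$ holds'' side while all others lie on the ``$v\,S\,i$ fails'' side, giving the clean split between the $(1-\alpha_v^X)$ term and the $\sum_{i<v}\alpha_i^X$ term in both the $S=\le$ and $S=\!=$ cases. Once this clean form is in hand, the monotonicity observation that eliminates all but one bad vertex per $\KK$-class is essentially forced.
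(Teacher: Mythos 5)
Your proof is correct and proceeds by a genuinely different argument than the paper's. The paper first reduces to the case that all of $L$ is in a single part $A$ of the flip partition, then splits $L$ into $X_1$ (vertices $v$ with $|\tilde N(v)\cap W|\ge m/2$, where $W$ is the union of parts not flipped against $A$) and $X_2$, and defines $f$ to be the identity on $X_1$ but to map each $v\in X_2$ to its predecessor in $X_2$; it then loses one vertex (the minimum of $X_2$) per part of $\KK$ meeting $L$, and glues the parts together. You instead take $f$ to be the identity throughout, derive the clean density identity $\tfrac1m|N^{H'}(v)\cap\tilde N(v)|=(1-\alpha_v^X)+\sum_{i<v}\alpha_i^X$ (which holds because $v\,S\,i$ singles out exactly $i=v$ among $i\le v$ for both $S\in\{\le,=\}$, and the $\alpha_i^X$'s depend only on the $\KK$-class $X$ of $v$), and observe that the inequality defining ``bad'' can hold for at most one $v$ per class by a monotonicity argument on partial sums. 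Both routes yield the same $n-k$ bound (one lost vertex per $\KK$-class), but yours avoids the case split and the predecessor trick and gives the slightly stronger conclusion that the identity $f(v)=v$ already works outside a set of size at most $k$; the paper's version makes do without an explicit density computation at the cost of a less uniform $f$. The only thing you should make explicit when writing this up is that $\alpha_i^X$ indeed depends only on the class $X=\KK(v)$ and not on $v$ itself, which is what allows the shared sequence $(\alpha_i^X)_i$ to force the uniqueness of a bad vertex within a class.
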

\begin{proof}
    Let $\cal P$ be the partition of $V(H)$ into $k$ parts such that the graph $H'$ is a $\cal P$-flip of $H$.
    First consider the case when all the vertices of $L$ are in one part $A$ of $\cal P$. Let $W$ be the   
  union of the parts $B$ of $\cal P$ that are not flipped with $A$ in the flip that produces $H'$ from $H$.
  Let $X_1$ consist of those vertices $v\in L$ such that $|\tilde N(v)\cap W|\ge \frac{m}2$,
  and let $X_2$ consist of the remaining vertices in $L$.
  
  Every  vertex in $X_1$ is adjacent in $H'$ to at least $\frac{m}2$ vertices in $N^H(v)$ (namely, to $N^H(v)\cap W$), so we can set $f(v)=v$ for all $v\in X_1$.
  On the other hand, if $v'<v\in [n]$ are two distinct vertices in $X_2$, then $N^{H'}(v)\supseteq \tilde N(v')\setminus W$ and 
  $|\tilde N(v')\setminus W|> \frac{m}2$.
  Set $X:=X_1\cup X_2 \setminus \min(X_2)$, and let $f\from X_1\cup X_2 \setminus \min(X_2)\to 
  X_1\cup X_2\setminus \max(X_2)$ be the injective function 
  that maps every vertex in $X_1$ to itself, and every vertex in $X_2-\min(X_2)$ to its predecessor in $X_2$.
Then $|X|\ge |L|-1$, and every $v\in X$
is adjacent in $H'$ to at least $\frac{m}2$ elements of  $\tilde N(f(v))$.

In the general case, partition $L$ as $L=L_1\uplus\ldots\uplus L_s$, for some $s\le k$, following the partition $\cal P$ restricted to $L$. 
For each $1\le i\le s$, let $H_i=H[L_i \cup R]$, and $H_i'=H'[L_i \cup R]$;
then $H_i'$ is a $k$-flip of $H_i$, and they fall into the special case considered above.
Hence, for each $1\le i\le s$ there is a set $X_i\subset L_i$ with $|X_i| \ge|L_i|-1$
and an injection $f_i\from X_i\to L_i$. Set $X=X_1\uplus\ldots\uplus X_s$, and let $f\from X\to L$ be such that $f(x)=f_i(x)$ for $x\in X_i$. Then $X$ and $f$ satisfy the required condition.
\end{proof}

For $r,\ell\ge 1$, let $G_{r,\ell}$ denote the 
disjoint union of 
$\ell$ paths, each of length $r$ (and with $r+1$ vertices). 
In each path, call one of the vertices 
of degree one a \emph{source}, and the other one a \emph{target}.
We can use Lemma~\ref{lem:half-graph-forest} (with $m=1$ and $S$ being $=$)
to prove the following by induction on \(r \ge 1\).

\begin{lemma}\label{lem:path-flips}Fix $k,r,\ell\ge 1$.
  If $G'$ is a $k$-flip of $G_{r,\ell}$, then 
  at least $\ell-rk$ target vertices of $G_{r,\ell}$
are joined by a path of length $r$ in $G'$ with some 
source vertex.
\end{lemma}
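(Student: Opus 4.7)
The plan is to prove the lemma by induction on $r\ge 1$, tracking at each layer how many paths can still be extended backward to a source. Label the vertices of the $j$-th path of $G_{r,\ell}$ as $v_0^j, v_1^j, \ldots, v_r^j$, where $v_0^j$ is the source and $v_r^j$ the target. For $0 \le i \le r$ I will define
\[
    A_i := \{\, j \in [\ell] \,:\, \text{there is a walk of length } i \text{ in } G' \text{ from } v_i^j \text{ to some source } v_0^{j'}\,\},
\]
so that $A_0=[\ell]$ trivially and the goal becomes $|A_r|\ge \ell-rk$. The whole argument reduces to showing $|A_i|\ge |A_{i-1}|-k$ for each $1\le i\le r$; iterating this inequality from $A_0=[\ell]$ yields the lemma.

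For the inductive step, I will look at the bipartite subgraph $H_i$ of $G_{r,\ell}$ on $L_i:=\{v_{i-1}^j : j\in A_{i-1}\}$ and $R_i:=\{v_i^j : j\in A_{i-1}\}$, keeping only edges of $G_{r,\ell}$ that go between $L_i$ and $R_i$. Because the original paths are vertex-disjoint, $H_i$ is precisely the perfect matching $\{\,v_{i-1}^j v_i^j : j\in A_{i-1}\,\}$, i.e.\ a copy of $H^{=}_{|A_{i-1}|,1}$ under the relabeling $v_{i-1}^j\mapsto j$, $v_i^j\mapsto (j,1)$. The $k$-partition defining $G'$ restricts to a partition of $L_i\cup R_i$ with at most $k$ parts, and the bipartite structure of $G'$ on $L_i\cup R_i$ is exactly the resulting $k$-flip of this matching. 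I then invoke Lemma~\ref{lem:half-graph-forest} with $m=1$ and $S$ being ${=}$, applied to $H_i$ with the roles of $L$ and $R$ swapped (which is legitimate because $H^{=}_{n,1}$ is symmetric in its two sides): the conclusion $|N^{H'}(v)\cap \tilde N(f(v))|\ge m/2=1/2$ degenerates in this case to ``$v$ has at least one neighbor on the other side'', so I obtain a subset $X\subseteq R_i$ of size $|A_{i-1}|-k$ such that every $v_i^j\in X$ has some $G'$-neighbor $v_{i-1}^{j'}$ with $j'\in A_{i-1}$.

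Finally, for each $j$ with $v_i^j\in X$, the fact that $j'\in A_{i-1}$ provides a walk of length $i-1$ in $G'$ from $v_{i-1}^{j'}$ to some source; prepending the $G'$-edge $v_i^j v_{i-1}^{j'}$ produces a walk of length $i$ in $G'$ from $v_i^j$ to that source, so $j\in A_i$. This gives $|A_i|\ge |X| \ge |A_{i-1}|-k$, completing the inductive step. The main obstacle to worry about is the bookkeeping in the application of Lemma~\ref{lem:half-graph-forest}: one must verify that restricting the global $k$-flip to the sub-bipartition $L_i\cup R_i$ yields a genuine $k$-flip of the matching $H_i$ (it does, because the flip relation on the restricted partition is inherited), and that swapping the two sides is justified (it is, by the symmetry of $H^{=}_{n,1}$). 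A small subtlety is the distinction between walks and paths: since flip-width arguments only care about reachability in $G'$ within a bounded radius, walks are the natural notion here, and the extension step works cleanly even if the new vertex $v_i^j$ happens to have appeared earlier in the walk.
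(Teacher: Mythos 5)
Your proof is correct and follows the route the paper alludes to: an induction across the layers of $G_{r,\ell}$, invoking Lemma~\ref{lem:half-graph-forest} with $m=1$ and $S$ being $=$ to step from one layer to the next. The restriction of the global $k$-flip to $L_i\cup R_i$ being a $k$-flip of the induced matching, the side-swap justified by the symmetry of $H^{=}_{n,1}$, and the telescoping bound $|A_r|\ge\ell-rk$ are all sound.

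The one thing worth tightening is the closing paragraph on walks versus paths. Rather than arguing pragmatically that walks suffice for distance purposes, you can avoid the mismatch with the lemma's literal statement (which asserts a \emph{path} of length $r$, and which is used in Lemma~\ref{lem:fw-crossings} precisely where the $\pi(i,j)$ really are paths) by strengthening the definition of $A_i$ so that it only records walks that descend strictly through the layers, i.e.\ whose $t$-th vertex lies in layer $i-t$. Since the layers of $G_{r,\ell}$ are pairwise disjoint, such a descending walk is automatically a path of length exactly $i$, and the prepend step with the $G'$-edge $v_i^j v_{i-1}^{j'}$ preserves the descending property. This gives the lemma exactly as stated with no extra effort, whereas in your current formulation a walk of length $r$ a priori only yields a path of length at most $r$.
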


We present a modification of the notion of a hideout, defined in \cite{flipwidth}.

\begin{definition}
  Fix $r,k,d\ge 1$ and a graph $G$.
A pair $(A,B)$ of subsets of $V(G)$
is a \emph{$(r,k,d)$-bi-hideout}
if $|A|,|B|>d$ and for every $k$-flip $G'$ of $G$,
\begin{align*}
  |\set{a\in A:|N^{G'}_r[v]\cap B|\le d}|&\le d,\\
  |\set{b\in B:|N^{G'}_r[v]\cap A|\le d}|&\le d.
\end{align*}
\end{definition}

The following is an adaptation of \cite[Lem. 5.16]{flipwidth}.
\begin{lemma}\label{lem:hideouts}
    Fix $r,k\ge 1$. Suppose that $G$ has a $(r,k,d)$-bi-hideout for some $d\ge 1$. Then ${\fw_r(G)>k}$.
\end{lemma}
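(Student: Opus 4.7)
The plan is to exhibit a winning strategy for the robber in the radius-$r$ flip-width game against $k$ cops, thereby showing $\fw_r(G)>k$. Recall the game: at each round the cops announce a $k$-flip $G'$ together with up to $k$ cop positions $C$; the robber, located at some vertex $v$, must move to a vertex $v' \in N_r^{G'}[v]\setminus C$, and the cops win if no such $v'$ exists.

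I would describe the following robber strategy, which exploits the alternating structure of a bi-hideout. The robber maintains the invariant that their current vertex alternates between $A$ and $B$ (say, in $A$ at odd rounds and in $B$ at even rounds), and moreover that, relative to the flip $G'$ most recently announced by the cops, the vertex is \emph{safe}, meaning $|N_r^{G'}[v]\cap B|>d$ when $v\in A$, and symmetrically when $v\in B$. The bi-hideout property says precisely that for every $k$-flip at most $d$ vertices on each side fail to be safe, so safe vertices are abundant.

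The core step is to show that a safe position in $A$ always admits a move to a safe position in $B$ avoiding the cops (and symmetrically). Starting from a safe $v\in A$ and after the cops announce a new flip $G'$ and a cop set $C$ with $|C|\le k$, consider $S:=N_r^{G'}[v]\cap B$. By safety of $v$ we have $|S|>d$. Of these, at most $d$ are unsafe for $G'$ (by the bi-hideout property applied to $B$) and at most $k$ lie in $C$; so provided the bi-hideout parameter $d$ is large enough relative to $k$ (the standard slack used in hideout arguments, matching the parameters used in the proof of \cref{lem:fw-patterns}), $S$ still contains a safe vertex outside $C$, and the robber moves there. This preserves both invariants. The opening round is handled analogously: the robber picks an initial $v_0\in A$ safe for the first flip and outside the initial cop set, which exists because the number of blocked or unsafe vertices in $A$ is bounded by $d+k$, while $|A|>d$ together with the slack on $d$ leaves room.

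I expect the main obstacle to be the careful bookkeeping of parameters so that at every round the number of safe escape vertices strictly exceeds the combined count of unsafe candidates and cop positions, which is what lets the invariant be maintained forever. This hinges on the bi-hideout condition being \emph{uniform} over all $k$-flips: whichever flip the cops announce next, only $d$ vertices on the relevant side are spoiled, so a safe-move always exists. Once this is in place, the robber plays forever and evades the $k$ cops, yielding $\fw_r(G)>k$.
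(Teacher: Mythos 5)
Your high-level strategy — alternate the robber between $A$ and $B$, maintaining a safety invariant with respect to the most recently announced flip — is exactly the right idea, and it matches the structure of the paper's proof. However, there are two concrete gaps in how you set up the game, and they interact badly with the parameter bookkeeping.

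First, the graph in which the robber moves. In the flipper game underlying flip-width, when the flipper announces a new flip $N$ the runner moves along a path of length at most $r$ in the \emph{previous} flip $P$ (the one in effect when the runner's current position $v$ was chosen), and then $N$ becomes the new reference graph. Your proposal has the robber moving in $N_r^{G'}[v]$ where $G'$ is the \emph{newly} announced flip, and you then claim $|N_r^{G'}[v]\cap B|>d$ ``by safety of $v$.'' But the invariant you maintained when you chose $v$ is about the previous flip $P$; it says $|N_r^{P}[v]\cap B|>d$ and tells you nothing about $N_r^{G'}[v]$, so the bound on $|S|$ does not follow. The whole point of moving in $P$ is that the invariant gives you $>d$ candidates in $N_r^P[v]\cap B$, and the bi-hideout property (applied to $B$ and the new flip $G'$) says at most $d$ of them are unsafe for $G'$ — leaving at least one safe destination.

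Second, the cop positions $C$. The flipper game used for flip-width has no explicit cop set to be avoided; the flipper wins only by isolating the runner within radius $r$ via flips. Once you add a cop set of size $k$ to the game, you need at least $d+k+1$ candidates to beat both the $\le d$ unsafe ones and the $\le k$ cop positions, but the invariant only gives $>d$. You flag this yourself by requiring ``the bi-hideout parameter $d$ large enough relative to $k$,'' yet the lemma is stated for an arbitrary $(r,k,d)$-bi-hideout with $d\ge1$; no such slack is available, and with the correct (flip-only) game you do not need it — $>d$ candidates minus $\le d$ unsafe ones already leaves at least one.
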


\begin{proof}Let $(A,B)$ with $A,B\subset V(G)$ be a $(r,k,d)$-bi-hideout.
  We describe a strategy for the runner
  in the flipper game on $G$ with radius $r$ and width $k$, which allows  to elude the flipper indefinitely.
  The strategy is as follows:
  when the flipper announces a $k$-flip $G'$ of $G$ in round $i$,
  if $i$ is odd then the runner moves to some vertex $v\in A$ 
   such that  $|N_r^{G'}(v)\cap B|> d$,
   and if $i$ is even then the runner moves to some vertex $v\in B$ 
   such that  $|N_r^{G'}(v)\cap A|> d$.
   
  In the first move, pick any $v\in A$ with $|N_r^{G}(v)\cap B|> d$.
  Such a vertex exists by considering the flip $G'=G$, since $|A|>d$.
  
We show it is always possible to make a move as described in the strategy.
 Suppose at some point in the game, 
 say in an odd-numbered round (the other case is symmetric), the current position $v\in A$ of the runner
is such that 
\newcommand{\prv}{P}
\newcommand{\nxt}{N}
\begin{align}\label{eq:inv-ineq}
  |N_r^{\prv}(v)\cap B|&> d,
\end{align}
where $\prv$ is the previous $k$-flip of $G$ announced by the flipper (in the first round, $\prv=G$), and that the flipper now announces the next $k$-flip $\nxt$ of $G$. Since $(A,B)$ is a $(r,k,d)$-bi-hideout, 
the set $X\subset B$ of vertices $w\in B$ 
such that $|N_r^{\nxt}(w)\cap A|\le d$
satisfies $|X|\le d$. 
By~\eqref{eq:inv-ineq},
 $N_r^{\prv}(v)$ contains at least one vertex $v'\in B \setminus X$.
The runner moves from $v\in A$ to $v'\in B$ 
along a path of length at most $r$ in $\prv$.
As $v'\in B\setminus X$, the invariant is maintained.
Therefore, playing according to this strategy, the runner can elude the flipper indefinitely, so $\fw_r(G)>k$.
\end{proof}

For a graph \(G\) and disjoint vertex sets \(U,W \subseteq V(G)\), denote by \(G[U,W]\) the bipartite
graph with sides \(U\) and \(W\) and \(u \in U\), \(w \in W\) adjacent if and only if \(uw \in E(G)\).

\begin{restatable}{lemma}{fwCrossings}\label{lem:fw-crossings}
    Fix $r,k\ge 1$ and $n:=2(r+1)k+1$.
    Let $G$ be the star/clique/half-graph $r$-crossing of order $n$.
    Then $\fw_{r+1}(G)>k$.
\end{restatable}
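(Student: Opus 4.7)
The plan is to apply Lemma~\ref{lem:hideouts} by exhibiting an $(r+1, k, d)$-bi-hideout in $G$. I will take $A = \{a_1, \ldots, a_n\}$, $B = \{b_1, \ldots, b_n\}$, and $d = (r+1)k$; since $n = 2(r+1)k + 1 > d$, the size condition $|A|, |B| > d$ holds. Each of the three $r$-crossings admits an automorphism $\pi$ swapping $A$ with $B$, defined by $\pi(a_i) = b_i$, $\pi(b_j) = a_j$, and $\pi(p_{i,j,t}) = p_{j,i,r+1-t}$: reversing the layer order and transposing path indices preserves all edges of the crossing (including the intra-layer cliques in the clique case and the half-graph adjacencies in the half-graph case). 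Consequently, it suffices to verify, for every $k$-flip $G'$ of $G$, that at most $d$ vertices $a_i \in A$ satisfy $|N_{r+1}^{G'}[a_i] \cap B| \leq d$.

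For a fixed $k$-flip $G'$, I plan to combine two tools. First, the $n^2$ inner paths $P_{i,j} = (p_{i,j,1}, \ldots, p_{i,j,r})$ for $(i,j) \in [n]^2$ are pairwise vertex-disjoint; in the star and half-graph cases the induced subgraph of $G$ on $\bigcup_{i,j} V(P_{i,j})$ equals $G_{r-1,n^2}$, while in the clique case the extra intra-layer cliques in $L_1$ and $L_r$ constitute a $2$-flip of $G_{r-1,n^2}$. Thus the restriction of $G'$ to these vertices is a $k$- or $2k$-flip of $G_{r-1,n^2}$, and Lemma~\ref{lem:path-flips} yields at least $n^2 - O(rk)$ targets $p_{i,j,r}$ each joined in $G'$ by a length-$(r-1)$ path to some source $p_{i',j',1}$, via an uncontrolled source-target pairing $\phi$. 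Second, Lemma~\ref{lem:half-graph-forest} applied to the bipartite subgraph of $G$ between $A$ and $L_1$ (which is $H^=_{n,n}$ in the star and clique cases, and $H^\leq_{n,n}$ in the half-graph case) gives: for all but $k$ indices $i$, the vertex $a_i$ has at least $n^2/2$ flip-neighbors in $L_1$; symmetrically, for all but $k$ indices $j$, $b_j$ has at least $n^2/2$ flip-neighbors in $L_r$. Chaining these, whenever the source and target of some $\phi$-good pair lie in $a_i$'s and $b_j$'s flip-neighborhoods respectively, the composite path $a_i \to p_{i',j',1} \to \cdots \to p_{i'',j'',r} \to b_j$ witnesses $\dist_{G'}(a_i, b_j) \leq r+1$.

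The main obstacle I anticipate is the final pigeonhole step: the $\phi$-good source-target pairs are $\geq n^2 - O(rk)$ in number, but $\phi$ is arbitrary, and I must count the pairs $(i, j)$ for which some $\phi$-good pair $(p_{i',j',1}, p_{i'',j'',r})$ satisfies $p_{i',j',1} \in N^{G'}(a_i) \cap L_1$ and $p_{i'',j'',r} \in N^{G'}(b_j) \cap L_r$. Since both flip-neighborhoods have size at least $n^2/2$ inside layers of size $n^2$, a double-counting over $L_1 \times L_r$ should bound the number of bad $(i,j)$ pairs by $(r+1)k$. Making this rigorous may require reorganizing the argument — for instance, applying Lemma~\ref{lem:half-graph-forest} iteratively across multiple layer transitions to maintain control over the reachable set from $a_i$ in each intermediate layer, rather than only at the two boundary transitions — and in the half-graph case, the $\tilde{N}$-initial-segment structure from Lemma~\ref{lem:half-graph-forest} must be shown to be compatible with the locations of the $\phi$-sources.
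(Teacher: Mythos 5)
Your high-level framework is right and matches the paper's: both proofs exhibit the root sides $A = L_0$, $B = L_{r+1}$ as a bi-hideout, invoke \cref{lem:hideouts}, and feed the layer structure into \cref{lem:half-graph-forest} and \cref{lem:path-flips}. But the way you combine these two lemmas diverges from the paper at exactly the point where you yourself flag trouble, and the gap you anticipate is real and not easily patched by the fixes you sketch. Applying \cref{lem:path-flips} \emph{globally} to all $n^2$ inner paths yields an arbitrary source-to-target pairing $\phi$, and as you note, the two ``$\geq n^2/2$''-sized flip-neighborhoods you get at the boundary transitions can be chosen (along with $\phi$) to have empty intersection: $n^2/2 + n^2/2 - n^2 = 0$. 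There is no pigeonhole rescue with these constants, and ``iterating \cref{lem:half-graph-forest} across layers'' is in effect what \cref{lem:path-flips} already does internally, so it will not recover control of $\phi$.

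The paper sidesteps this by making the whole argument \emph{per root}. It applies \cref{lem:half-graph-forest} once to the bipartite graph $G[A, L_1]$ and its $k$-flip $G'[A,L_1]$, which yields a set $X\subseteq A$ of size $n-k$ and, for each $v\in X$, an index $f(v)$ and a set $P(v)\subseteq N^{G'}(v)\cap L_1$ of size $\geq n/2$ lying inside a single column $\{p_{f(v),w,1}:w\in[n]\}$. Then, for this fixed $v$, the $\geq n/2$ paths $p_{f(v),w,1},\dots,p_{f(v),w,r},b_w$ to $B$ are vertex-disjoint with \emph{distinct} endpoints $b_w$, so applying \cref{lem:path-flips} to just these $\geq n/2$ paths gives $\geq n/2 - rk > k$ distinct $b_w$'s within $G'$-distance $r+1$ of $v$; there is no $\phi$ to control because the relevant targets are already pinned to distinct roots. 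This localization is the missing idea. A second, separate error in your write-up: in the clique case the intra-layer adjacencies in $L_1$ form $n$ pairwise-disjoint cliques (one per fixed first coordinate), and turning an independent set of size $n^2$ into $n$ disjoint $n$-cliques is an $n$-flip, not a $2$-flip, so the bound you would get from \cref{lem:path-flips} in your global argument would be $n^2 - (r-1)\cdot O(nk)$ rather than $n^2 - O(rk)$; by contrast, restricted to the single column $P(v)$, the sources \emph{do} induce a single clique and the $2$-flip observation is valid.
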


\begin{proof}
    We use the notation of Definition~\ref{def:rcrossings} to denote the vertices of an \(r\)-crossing.
    Let $A=L_0\subset$ and $B=L_{r+1}$  be the two sets of roots of the $r$-crossing $G$,
    with $|A|=|B|=n$. 
    Moreover, \(p_{v,w,1}\) refers to the first vertex on the path from \(v \in A\) to  \(w \in B\).
    We show that $(A, B)$
    forms an $(r+1,k,k)$-bi-hideout in $G$. By Lemma~\ref{lem:hideouts}, this implies that $\fw_{r+1}(G)>k$. 
  
    Denote $A':=L_1$ and  $B':=L_r$.
    Note that $A$ and $A'$ are disjoint, and 
     $G[A,A']$ is a 
    bipartite graph isomorphic to 
    $H^\le_{n,n}$ when $G$ is a half-graph $r$-crossing, and isomorphic to $H^{=}_{n,n}$ in the other two cases.
    Similarly, $G[B,B']$ is also isomorphic to either $H^{\le}_{n,n}$ or $H^=_{n,n}$.

    Consider a \(k\)-flip \(G'\) of \(G\).
    Applying Lemma~\ref{lem:half-graph-forest} to the bipartite graph $H:=G[A,A']$ and the \(k\)-flip $H' := G'[A,A']$ of \(H\)
    gives us a set $X \subseteq A$ of size $n-k$
    and an injective function $f\from X\to A$ such that for all \(v \in X\),
    \[
    P(v) \subseteq N^{G'}(v),
    \qquad \textnormal{ where } \qquad
    P(v) \subseteq \setof{p_{f(v),w,1}}{w\in [n]} \textnormal{ with } |P(v)| \ge \frac n 2.
    \]

    Let \(v \in X\). By definition of \(G\), there are at least \(n/2\) 
    disjoint paths of length \(r\) between the aforementioned set \(P(v)\) and \(B\) in \(G\).
    Hence, by Lemma~\ref{lem:path-flips}, at least \(n/2-rk > k\) vertices in $P(v)$ are joined in \(G'\)
    by a path of length \(r\) with some root in $B$.
    As $P(v) \subseteq N^{G'}(v)$, we have
    \(|N^{G'}_{r+1}[v] \cap B| > k\) for all \(v \in X\).
    Since \(|X| \ge n-k\),
    \[|\set{v\in A:|N^{G'}_{r+1}[v]\cap B|\le k}|\le k.\]

    By reversing the roles of \(A\) and \(B\), we further obtain
    \[|\set{v\in B:|N^{G'}_{r+1}[v]\cap A|\le k}|\le k.\]
  It follows that $(A, B)$ forms an $(r+1,k,k)$-bi-hideout in $G$, and thus $\fw_{r+1}(G)>k$ by  Lemma~\ref{lem:hideouts}.
  \end{proof}

\begin{lemma}\label{lem:fw-comp-grid}
    Let $G$ be a comparability grid of order $n$.
    Then $\fw_{2}(G)> (n-2)/8$.
\end{lemma}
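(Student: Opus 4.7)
The plan is to mimic the proof of \cref{lem:fw-crossings} by exhibiting a $(2,k,k)$-bi-hideout in $G$ for $k := \lfloor (n-2)/8 \rfloor$ and then invoking \cref{lem:hideouts}. Let $m := n-1$ and assume $m \ge 8k+1$. Write the vertex set of $G$ as $\{a_{i,j} : 0 \le i,j \le m\}$, set $A := \{a_{i,0} : 1 \le i \le m\}$ and $B := \{a_{0,j} : 1 \le j \le m\}$ for the ``boundary'' column and row (minus the corner), and let $L_1 := \{a_{i,j} : 1 \le i,j \le m\}$. The crucial observation is that the bipartite subgraphs $G[A,L_1]$ and $G[B,L_1]$ are both isomorphic to $H^{\le}_{m,m}$ via the identifications $a_{i,0}\leftrightarrow i$, $a_{i',j}\leftrightarrow(i',j)$ and $a_{0,j}\leftrightarrow j$, $a_{i,j'}\leftrightarrow(j',i)$; explicitly, $a_{i,0}\sim a_{i',j}\iff i\le i'$ and $a_{0,j}\sim a_{i,j'}\iff j\le j'$. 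No edge of $G$ goes between $A$ and $B$.

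Fix a $k$-flip $G'$ of $G$, and apply \cref{lem:half-graph-forest} to each of $G[A,L_1]$ and $G[B,L_1]$ (with the restricted $k$-flip partition). This produces subsets $X\subseteq A$, $Y\subseteq B$ of sizes at least $m-k$, injections $f:X\to A$, $g:Y\to B$, and for each $v\in X$ and $w\in Y$ sets $U_v := N^{G'}(v)\cap\tilde N(f(v))$ and $U_w := N^{G'}(w)\cap \tilde N(g(w))$ of cardinality at least $m/2$. Here $\tilde N(f(v))$ is a horizontal ``strip'' inside $L_1$ (prescribed by $f(v)$ in the $H^{\le}_{m,m}$ identification) and $\tilde N(g(w))$ is a vertical strip, so the witness neighborhoods $U_v$ and $U_w$ live in transverse regions of the grid $L_1$.

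The main step is to show that for all but $k$ vertices $v\in A$ one has $|N^{G'}_2[v]\cap B|>k$, which yields the first clause of the bi-hideout (the second being symmetric by swapping the roles of $A$ and $B$). For each $v\in X$, the set $U_v\subseteq L_1$ consists of $\ge m/2$ length-$1$ paths from $v$ into $L_1$ in $G'$, so it suffices to bound the number of $w\in B$ not connected by a $G'$-edge to $U_v$. This is done by a further analysis in the spirit of \cref{lem:path-flips}, or by a second application of \cref{lem:half-graph-forest} to the bipartite subgraph of $G$ between $U_v$ and $B$ (itself a half-graph-like structure, since $a_{i,j}\sim a_{0,j'}\iff j'\le j$), showing that a $k$-flip cannot simultaneously destroy the $G$-connections from $U_v$ to more than $k$ vertices $w\in B$. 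Tracking constants through the two applications of \cref{lem:half-graph-forest} and a majority-type argument aligning the horizontal and vertical strips yields the bound $k\le(n-2)/8$.

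The main obstacle is precisely the last step: unlike the half-graph $r$-crossing treated in \cref{lem:fw-crossings}, the comparability grid has additional within-layer edges — cliques inside $A\cup\{a_{0,0}\}$ and $B\cup\{a_{0,0}\}$, and a comparability-grid structure inside $L_1$ — which the flipper can potentially exploit to reroute or destroy the $L_1$-to-$B$ connections carrying the length-$2$ paths. One has to argue that no $k$-flip can use these extra edges to hide more than $k$ vertices of $A$ from $B$ at $G'$-distance $2$ simultaneously. The factor of $8$ in $(n-2)/8$ then reflects the combined cost of two halvings used to align the two transverse strip families obtained from the two applications of \cref{lem:half-graph-forest}, on top of the additive loss of $k$ at each application.
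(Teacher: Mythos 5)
Your route — building a bi-hideout directly in \(G\) by mimicking the proof of \cref{lem:fw-crossings} — is genuinely different from the paper's. The paper instead interprets a half-graph \(1\)-crossing \(H\) of order \(n-1\) inside \(G\) via a quantifier-free formula applied to a \(2\)-colored copy of \(G\) (one color marks the interior grid \(\{2,\dots,n\}^2\), the other the boundary row and column), which deletes all the troublesome within-layer edges while preserving the boundary--interior edges. It then applies \cref{lem:fw-crossings} directly to \(H\) and transfers the bound back to \(G\) via \cite[Thm.\ 8.2]{flipwidth} (a quantifier-free interpretation on a \(c\)-colored graph decreases flip-width by at most a factor of \(c\)), paying only a factor of \(2\). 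This sidesteps precisely the complication you struggle with, namely re-running the hideout argument in the presence of the comparability-grid's extra within-layer edges.

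Your proposal has a genuine gap at the step you yourself flag, and I do not see how to close it along the lines you sketch. Two things break. First, \cref{lem:path-flips} does not apply: the \(G\)-subgraph induced on \(U_v \cup B\) is not (a flip of) \(G_{1,\ell}\), a disjoint union of \(\ell\) edges, but a half-graph together with the clique inside \(B\cup\{a_{0,0}\}\) and the comparability-grid edges inside \(L_1\); there is no distinguished family of pairwise disjoint paths from \(U_v\) to \(B\) to feed into that lemma, which is exactly what the crossing structure supplies in \cref{lem:fw-crossings}. Second, a second application of \cref{lem:half-graph-forest} to \(G[B,L_1]\) yields, for most \(w\in B\), a set \(U_w\) of \(\ge m/2\) \(G'\)-neighbors inside the vertical strip \(\tilde N(g(w))\), but this provides no coordination with the sets \(U_v\): each \(U_v\) is only \(m/2\) vertices out of the \(f(v)\cdot m\) vertices in its horizontal strip, and each \(U_w\) is \(m/2\) out of the \(m\cdot g(w)\) vertices in its vertical strip, so \(U_v\) and \(U_w\) are sparse subsets of overlapping boxes with no forced common vertex. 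The ``majority-type argument aligning the horizontal and vertical strip families'' you invoke is not established, and I believe it is the crux, not a bookkeeping detail. Your final sentence attributes the factor \(8\) to constants in this unproven alignment step, whereas in the paper the factor \(8 = 2\cdot 4\) arises transparently as the product of the \(c=2\) colors in the interpretation and the \(2(r+1)=4\) from \cref{lem:fw-crossings} at \(r=1\).
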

  \begin{proof}
    Consider the comparability grid \(G\) with vertex set \([n] \times [n]\),
    and let $G^+$ denote the $2$-colored graph
    obtained by removing \((1,1)\) and coloring the vertices $\set{2,\ldots,n}\times \set{2,\ldots,n}$ with color $C_1$,
    and the remaining vertices with color~$C_2$.
    The quantifier-free formula 
    \[\phi(x,y)=E(x,y)\land \neg (C_2(x)\land C_2(y)),\]
    when interpreted in the colored graph $G^+$,
    defines a graph $H$ which is isomorphic to the half-graph $1$-crossing of order $n-1$.
    By \cref{lem:fw-crossings},
    we have
    \[
      \fw_2(H)> \frac{(n-1)-1}{2\cdot (1+1)}=(n-2)/4.
    \]
    By \cite[Thm. 8.2]{flipwidth} (with \(q=0\)),
    a quantifier-free interpretation, when applied to a \(c\)-colored graph, can result in an at most \(c\)-fold increase of the flip-width.
    Therefore,
    \[\fw_{2}(G)\ge \frac1 2 \cdot \fw_{2}(H).\qedhere\]
  \end{proof}

 Lemma \ref{lem:fw-patterns} now follows easily.
  \begin{proof}[Proof of Lemma~\ref{lem:fw-patterns}]
    Follows from Lemma~\ref{lem:fw-crossings} and Lemma~\ref{lem:fw-comp-grid}, as $r$-crossings/comparability grids have $O_{r}(n^2)$ vertices.
\end{proof}
This completes the proof of Theorem~\ref{thm:fw}.

\section{Small Classes}\label{sec:small}

\begin{definition}
    A hereditary graph class $\CC$ is \emph{small}
    if it contains at most $n!c^n$ distinct labeled $n$-vertex graphs, for some constant $c$.
\end{definition}

Formally, we implicitly assume that the class $\CC$ is closed under isomorphism. By a labeled $n$-vertex graph we mean a graph $G$ with vertex set equal to $\set{1,\ldots,n}$.
Two  labeled $n$-vertex graphs are then considered equal if their edge sets are identical.
Denoting by $\CC_n$ the set of labeled $n$-vertex graphs in a class $\CC$, the class $\CC$ is small if for some constant $c$ we have $|\CC_n|\le n!c^n$ for all $n$.

The following theorem is a consequence of our results.

\thmSmall*

Let us start by giving some context on small classes.
It is known (see e.g. \cite[Sec. 3.6]{twwII}) that the class of bipartite subcubic graphs (that is, graphs with maximum degree at most $3$) is not small.
Thus, in particular, the converse implication in Theorem~\ref{thm:small} fails,
as the class of all subcubic graphs is nowhere dense and therefore 
monadically dependent.

On the other hand, it is known that every class of bounded twin-width is small \cite{twwII}. In the same paper, it has been conjectured that the converse also holds, but this conjecture has been subsequently refuted in \cite{twwVII}. However, small classes of ordered graphs (that is, graphs equipped with a total order on the vertex set) have bounded twin-width \cite{twwIV}. Note that for classes of ordered graphs, bounded twin-width coincides with monadic dependence.

To prove \Cref{thm:small}, we prove the following lemma.
For a formula $\phi(x,y)$ and graph $G$,
by $\phi(G)$ we denote the graph with vertex set $V(G)$ 
and edges $uv$ such that $u,v\in V(G)$ and $G,u,v\models\phi(u,v)\lor \phi(v,u)$.

\begin{lemma}\label{lem:lin-size}
    Let $\CC$ be a hereditary class which is monadically independent,
    and is closed under isomorphism.
    There is a positive integer $c$ and a formula $\psi(x,y)$ in the signature of $c$-colored graphs, 
    such that for every bipartite graph $G$ with $n$ vertices and $m$ edges there is some $c$-coloring \(G^+\) of a graph from \(\CC\)
    with at most $c\cdot (n+m)$ vertices such that $G=\psi(G^+)[V(G)]$.
\end{lemma}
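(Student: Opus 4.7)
The plan is to realise, for each bipartite graph $G$ with $n$ vertices and $m$ edges, a radius-$r$ encoding of $G$ as an induced subgraph of some graph from $\CC$, and then apply \cref{lem:encode}. Since a radius-$r$ encoding of $G$ has at most $n + rm$ vertices, this yields the linear bound $c(n+m)$.

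First I will invoke \cref{prop:prepatterns-to-patterns} on the hereditary, monadically independent class $\CC$ to obtain an integer $r \ge 1$ such that, for every $k \in \N$, the class $\CC$ contains as an induced subgraph one of: a flipped star/clique/half-graph $r$-crossing of order $k$, or the comparability grid of order $k$. Given $G = (A,B,E)$, pick $N = n$ so that $\CC$ contains one of the four patterns of order $N$. Since $\CC$ is closed under isomorphism, I can assume that $V(G)$ is embedded into this pattern as a subset of its roots (respectively, of the outermost row and column of the comparability grid, as in the proof of \cref{lem:hardness-o-to-bipartite}). Then the induced subgraph $H$ on $V(G)$ together with the $r$ path vertices for each edge in $E(G)$ (or one subdivision vertex per edge, for a comparability grid) lies in $\CC$ by hereditariness, satisfies $|V(H)| \le n + rm \le (r{+}1)(n+m)$, and is by construction a radius-$r$ encoding of $G$ in the sense of \cref{lem:encode} (with $r=1$ in the comparability-grid case).

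Next, apply \cref{lem:encode} to $H$ to obtain a $k_r$-coloring $H^+$ (with $k_r$ depending only on $r$) and a fixed formula $\phi(x,y)$ such that the graph $\phi(H^+)$ contains the $1$-subdivision of $G$ as an induced subgraph on a vertex set $V(G) \cup D \subseteq V(H)$, where $D$ consists of one distinguished subdivision vertex per edge of $G$ (such a $D$ is determined by the construction inside the proof of \cref{lem:encode}, essentially a designated vertex on each surviving $r$-path). Augment $H^+$ with two additional colors $C_V$ and $C_D$ marking $V(G)$ and $D$ respectively; call the resulting coloring $G^+$ and let $c := k_r + 2$ (plus a small constant for dispatching, as discussed below). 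Set
\[
    \psi(x,y) \;:=\; C_V(x) \land C_V(y) \land x \ne y \land \exists z\,\bigl( C_D(z) \land \phi(x,z) \land \phi(z,y) \bigr).
\]
For $u,v \in V(G)$ with $u \ne v$, any witness $z$ must lie in $D$, and by the $1$-subdivision structure of $\phi(H^+)[V(G)\cup D]$, such a $z$ exists precisely when $u$ and $v$ share a common subdivision neighbour in the $1$-subdivision, i.e.\ iff $\{u,v\}\in E(G)$. Hence $\psi(G^+)[V(G)] = G$, as required.

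The main subtlety will be handling the four pattern types uniformly: comparability grids yield radius-$1$ encodings, while the three crossing types yield radius-$r$ encodings for the fixed $r$ coming from \cref{prop:prepatterns-to-patterns}, so the formula $\phi$ furnished by \cref{lem:encode} is a priori different in the two regimes. I will absorb this by adding a constant-size "type" colour into the coloring and writing $\phi$ as a boolean combination guarded by that colour (exactly as in the proof sketch of \cref{lem:encode}), at the cost of a larger but still constant $c$. A second, mild point is that the marked set $D$ must be chosen in agreement with the subdivision vertices identified inside the proof of \cref{lem:encode}; accordingly, the colour $C_D$ is assigned at the moment \cref{lem:encode} is applied, rather than post hoc.
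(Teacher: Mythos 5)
Your proposal is correct and follows essentially the same route as the paper's proof: obtain the radius $r$ from the forbidden-pattern characterization, realize a radius-$r$ encoding of $G$ as an induced subgraph of a flipped $r$-crossing or comparability grid in $\CC$, apply \cref{lem:encode} to define the $1$-subdivision of $G$, and then compose with an existential step to recover $G$ itself. The only cosmetic difference is that you add the explicit colour guards $C_V,C_D$ to $\psi$, whereas the paper simply writes $\psi(x,y):=\exists z.\,\phi(x,z)\land\phi(z,y)$ (relying on the colours already baked into $\phi$ via \cref{lem:encode} to rule out spurious witnesses); both are fine.
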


We remark that by tracing the construction from Section~\ref{sec:hardness}, it is possible to derive a stronger statement, where 
$G^+\in \CC$  is uncolored.
However, the statement above is sufficient for our purpose here, and is easier to argue by analyzing the construction sketched in the proof of \cref{lem:transduceAll}.

Note that we may assume that in the statement of Lemma~\ref{lem:lin-size},
$G$ is \emph{identical}, not just isomorphic, to the subgraph 
of $\psi(G^+)$ induced by $V(G)$. 
This follows from the weaker statement that $\psi(G^+)[W]$ is isomorphic to $G$ for some $W\subset V(G^+)$,
as the class $\CC$ is assumed to be closed under isomorphism.
We stress the distinction between identical and isomorphic graphs, to highlight that \cref{lem:lin-size} implies that if $G$ is a \emph{labeled} graph, then there exists a coloring $G^+$ of a \emph{labeled} graph from $\CC$ with the desired properties.

\begin{proof}[Proof sketch]
    By \cref{thm:main-circle},
    there is some $r\ge 1$ such that 
    $\CC$ contains a flipped star/clique/half-graph $r$-crossing or a comparability grid of order $n$, for every $n\ge 1$.

    Recall the notion of a radius-$r$ encoding from \cref{sec:to-independence}. Observe that for every bipartite graph $G$
    with $n$ vertices and $m$ edges, and every radius-$r$ encoding $H$ of $G$, the graph $H$ has at most $2n+r\cdot m\le O_r(n+m)$ vertices.
Moreover, a flipped star/clique/half-graph $r$-crossing or comparability grid of order $n$ contains some radius-$r$ encoding $H$ of $G$ as an induced subgraph.
It follows that $\CC$ contains some $r$-encoding $H$ of every 
bipartite graph $G$, and moreover $|V(H)|\le O_r(|V(G)|+|E(G)|)$.

Let $\phi(x,y)$ be the formula from \cref{lem:encode},
 allowing to define the $1$-subdivision of $G$ 
in some coloring $H^+$ of any radius-$r$ encoding $H$ of $G$.
Then the formula $$\psi(x,y):=\exists z.\phi(x,z)\land\phi(z,y)$$
allows to define $G$ in $H^+$, as required in the statement.
\end{proof}
% \todo{use Lemma~\ref{lem:encode} instead}
% \begin{proof}[Proof sketch]
%     We modify slightly the construction described in the proof of \cref{lem:transduceAll}. 
%     There, a transduction was described which, given a star/clique/half-graph $r$-crossing with roots $A$ and $B$, or a comparability grid, can output an arbitrary bipartite graph $G=(A,B,E)$.

%     We proceed the same way as in that construction,
%     but instead of representing $G$ in the appropriate $r$-crossing with roots $A$ and $B$,
%     we represent it in its induced subgraph, obtained by removing all vertices on paths connecting roots $a\in A$ and $b\in B$, such that $\set{a,b}\notin E$
%     (that is, removing all vertices that are colored $C_-$ in the construction from \cref{lem:transduceAll}).
%     Denoting by $G^+$ the obtained colored graph, 
%     we have that $$|V(G^+)|\le |A|+|B|+r\cdot |E|\le r\cdot (n+m),$$ as required.

%     In the case of $r$-half-graph crossings,  we additionally mark  (doubling the number of colors) the vertices of $A$ and $B$ that are 
%     isolated in $G$. The key observation is that for two non-isolated roots $a,a'\in A$ 
%     we still have that $a\le a'$ if and only if $N(a)\subset N(A')$,
%     and similarly for non-isolated roots in $B$.
%     This is enough to recover the edges of $E$ using a fixed first-order formula, just as 
%     in \cref{lem:transduceAll}.
% \end{proof}

\begin{proof}[Proof of Theorem~\ref{thm:small}]
    Let $\CC$ be a hereditary graph class which is monadically independent. As we want to show that $\CC$ is not small, 
    assume towards a contradiction that $\CC$ is small.
    We show that the class $\DD$ of all bipartite subcubic graphs is small, which is a contradiction (see \cite[Sec. 3.6]{twwII}).
Let $\psi$ and $c$ be as in Lemma~\ref{lem:lin-size}.

By $\DD_n$ we denote the set of all labeled graphs with exactly $n$ vertices in $\DD$,
and let $\CC_{{\le} n}$ denote the set of all labeled graphs with at most $n$ vertices in $\CC$.
Then $|\CC_{{\le}n}|\le n!2^{O(n)}$ for all $n$.

For every labeled bipartite subcubic graph $G\in \DD_n$ with $n$ vertices, let $G^+$ denote a $c$-colored labeled graph as in Lemma~\ref{lem:lin-size},
so that $G=\psi(G^+)[V(G)]$ and $G^+$ has at most $c(n+3n/2)\le 3cn$ vertices.

The function $f\from \DD_{n}\to \CC_{{\le}3cn} \times c^{[3cn]}\times 2^{[3cn]}$ which maps $G\in\DD_{n}$ to 
$(G^+,V(G))$ is injective,
as we can recover \(G = \psi(G^+)[V(G)]\).
We then have
\[|\DD_n|\le |\CC_{{\le}3cn}| \cdot (2c)^{3cn} \le (3cn)!\cdot 2^{O({3cn})}\le n!\cdot 2^{O(n)}.\]
This proves that the class $\DD$ of bipartite subcubic graphs is small, a contradiction.
\end{proof}

\newpage
\part{Restrictions and Extensions}
\section{Variants of Flip-Breakability}\label{sec:variants}
In this section we show how to characterize nowhere denseness, bounded cliquewidth, bounded treewidth, bounded shrubdepth, and bounded treedepth using natural restrictions of flip-breakability.

\subsection{Deletion-Breakability Characterizes Nowhere Denseness}

\begin{definition}
    A class of graphs $\CC$ is $\emph{deletion-breakable}$,
    if for every radius \(r \in \N\) there exists a function
    \(N_r : \N \to \N\) and a constant \(k_r \in \N\) such that for all \(m \in \N\), \(G \in \CC\) and \(W \subseteq V(G)\) with
    \(|W| \ge N_r(m)\) there exist a set $S\subseteq V(G)$ with $|S| \leq k_r$ and subsets $A,B\subset W \setminus S$ with \(|A|,|B| \ge m\) such that
    \[
        \dist_{G-S}(A, B) > r.
    \]    
\end{definition}

A class of graphs $\CC$ is nowhere dense if for every radius $r \in \N$,
there exists a bound $N_r \in \N$ such that no graph
from $\CC$ contains an $r$-subdivided clique of order $N_r$ as a subgraph.

\begin{theorem}\label{thm:nd}
    A class of graphs is nowhere dense if and only if it is deletion-breakable.
\end{theorem}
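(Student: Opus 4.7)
The plan is to establish each implication of the equivalence separately, leveraging for the forward direction the known characterization of nowhere dense classes as \emph{uniformly quasi-wide} \cite{nevsetvril2011nowhere,dawar2010homomorphism}.

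For the forward direction (nowhere dense implies deletion-breakable), I will simply split the set produced by uniform quasi-wideness into two halves. Given a nowhere dense class $\CC$, radius $r$, and integer $m$, I will apply uniform quasi-wideness with parameter $2m$: for every $W \subseteq V(G)$ with $|W|$ at least some threshold $N'_r(2m)$, there is a deletion set $S$ of size at most a constant $k_r$ and a set $A \subseteq W \setminus S$ of size at least $2m$ whose vertices are pairwise at distance greater than $r$ in $G - S$. Partitioning $A$ arbitrarily into two halves $A_1, A_2$ of size at least $m$ each will immediately yield $\dist_{G-S}(A_1, A_2) > r$, witnessing deletion-breakability with $N_r(m) := N'_r(2m)$ and the same constant $k_r$.

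For the backward direction (deletion-breakable implies nowhere dense), I will argue by contrapositive. Assuming $\CC$ is somewhere dense, there will exist $r_0 \ge 1$ such that for every $n$ some graph $G \in \CC$ contains the $r_0$-subdivided clique $K_n^{(r_0)}$ as a subgraph, witnessed by principal vertices $v_1,\dots,v_n$ joined by $\binom{n}{2}$ internally disjoint paths $\pi_{i,j}$, each of length exactly $r_0 + 1$. I will refute deletion-breakability at radius $r := r_0 + 1$ as follows: given any putative bounds $N_r$ and $k_r$, I will set $m := \lceil \sqrt{k_r}\rceil + 1$ (so that $m^2 > k_r$) and then choose $n$ large enough that $n \ge N_r(m)$ and some $G \in \CC$ contains $K_n^{(r_0)}$. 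Taking $W$ to be the set of principals, suppose we are given $S$ with $|S| \le k_r$ and disjoint $A, B \subseteq W \setminus S$ of size at least $m$ each. Because $A$ and $B$ are disjoint sets of principals, the $|A|\cdot|B| \ge m^2 > k_r$ paths $\pi_{i,j}$ with $v_i \in A$ and $v_j \in B$ are pairwise internally disjoint, so $S$ can intersect the interior of at most $k_r$ of them; at least one surviving $\pi_{i,j}$ will then give $\dist_{G - S}(A, B) \le r_0 + 1 = r$, contradicting the requirement $\dist_{G - S}(A, B) > r$.

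Neither direction presents a serious obstacle: the forward direction is immediate from uniform quasi-wideness, and the backward direction is a straightforward pigeonhole argument exploiting the abundance of internally disjoint short paths provided by a subdivided clique. The only subtle point will be the correct choice of refutation radius: a naive attempt to refute deletion-breakability at radius $r_0$ would fail, since in $K_n^{(r_0)}$ distinct principals sit at distance exactly $r_0 + 1 > r_0$, so the property is actually \emph{satisfied} at radius $r_0$ by such a graph. Shifting to $r = r_0 + 1$ matches the path length and makes the counting argument go through.
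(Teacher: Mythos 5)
Your proof is correct and follows essentially the same approach as the paper. The only difference is cosmetic: in the backward direction the paper sets $|A|,|B| = k_r + 1$ and matches $A$ to $B$ one-to-one to obtain $k_r + 1$ pairwise vertex-disjoint subdivision paths, whereas you take $|A|,|B| = m$ with $m^2 > k_r$ and use all $|A|\cdot|B|$ internally disjoint paths; both counting schemes yield the same pigeonhole contradiction.
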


\begin{proof}
    It is easy to see that deletion-breakability generalizes the notion of uniform quasi-wideness discussed in the introduction: In every huge set, we find a large set of vertices pairwise of distance greater than $r$ after removing few vertices. We can partition them into two halves $A$ and $B$ to obtain deletion-breakabilitiy.
    Since nowhere dense classes are uniform quasi-wide, this implies that they are also deletion-breakable.

    For the other direction, assume towards a contradiction that $\CC$ is not nowhere dense but deletion-breakable with bounds $N_r(\cdot)$ and $k_r$ for every $r\in \N$. 
    By definition, there exists a radius $r > 1$ such that $\CC$ contains arbitrarily large $(r-1)$-subdivided cliques as subgraphs.
    Let $G \in \CC$ be a graph containing an $(r-1)$-subdivided clique of size $N_{r}(k_{r}+1)$, whose principal vertices we denote with~$W$.
    By deletion-breakability, $W$ contains two subsets $A$ and $B$, each of size $k_{r}+1$, such that 
    \[
        \dist_{G-S}(A, B) > r
    \]    
    for some vertex set $S$ of size at most $k_{r}$.
    Since $W$ is an $(r-1)$-subdivided clique in $G$, there exist $k_{r} + 1$ disjoint paths of length $r$, that each start in $A$ and end in $B$.
    As $|S| \leq k_{r}$, at most one of those paths must survive in $G-S$, witnessing that $\dist_{G-S}(A, B) \leq r$; a contradiction.
\end{proof}

\subsection{Distance-\texorpdfstring{$\infty$}{Infinity} Flip-Breakability Characterizes Bounded Cliquewidth}

\begin{definition}
    A class of graphs $\CC$ is \emph{distance-$\infty$ flip-breakable},
    if there exists a function
    \(N : \N \to \N\) and a constant \(k \in \N\) such that for all \(m \in \N\), \(G \in \CC\) and \(W \subseteq V(G)\) with
    \(|W| \ge N(m)\) there exist subsets $A,B\subset W$ with \(|A|,|B| \ge m\) 
    and a $k$-flip $H$ of $G$ such that in $H$, no two vertices $a\in A$ and $b\in B$ are in the same connected component.
\end{definition}

The goal of this subsection is to prove the following.

\begin{theorem}\label{thm:cw}
    A class of graphs has bounded cliquewidth if and only if it is distance-$\infty$ flip-breakable.
\end{theorem}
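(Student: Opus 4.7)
The plan is to exploit the classical equivalence between bounded cliquewidth and bounded rankwidth (up to an exponential relationship between the parameters) and phrase the argument in terms of rankwidth.

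For the forward direction, fix $G\in\CC$ of rankwidth at most $k$ and a rank decomposition of $G$: a subcubic tree $T$ whose leaves are $V(G)$, such that every edge $e\in E(T)$ induces a bipartition $V(G)=V_1^e\cup V_2^e$ with $G[V_1^e,V_2^e]$ of GF$(2)$-rank at most $k$. Given $W\subseteq V(G)$ with $|W|\ge 3m$, a standard centroid argument on $T$ weighted by $W$ yields an edge $e$ with $|V_i^e\cap W|\ge m$ for $i\in\{1,2\}$. Rank at most $k$ implies that each side of the cut admits a partition into at most $2^k$ classes such that the adjacency across the cut depends only on the class pair. Combining both sides yields a partition $\KK$ of $V(G)$ into at most $2^{k+1}$ classes together with a flip relation $F\subseteq\KK^2$ that erases all edges of $G[V_1^e,V_2^e]$; in the resulting flip $G\oplus_\KK F$ no edge crosses the cut, so any $A\subseteq V_1^e\cap W$ and $B\subseteq V_2^e\cap W$ of size $m$ witness distance-$\infty$ flip-breakability, with $N(m)=3m$ and constant $k_\infty=2^{k+1}$.

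For the backward direction, we may assume $\CC$ is hereditary, since both properties in question are inherited by induced subgraphs. The strategy is to build a rank decomposition of $G\in\CC$ recursively: apply distance-$\infty$ flip-breakability to $W=V(G)$ with $m=1$ to obtain subsets $A,B\subseteq V(G)$ and a $k_\infty$-flip $H$ of $G$ such that $A$ and $B$ lie in different connected components of $H$; let $V_A$ and $V_B$ be the unions of the components of $H$ containing $A$ and $B$ respectively. This gives a bipartition $V(G)=V_A\cup V_B$ whose bipartite adjacency in $G$ is entirely controlled by the pair $(\KK,F)$ defining the flip and hence has rank at most $k_\infty$. Then recurse on $G[V_A]$ and $G[V_B]$, both of which lie in $\CC$.

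The main obstacle is controlling the rank of the non-root cuts of the resulting decomposition tree: the cut associated to a node at depth $d$ decomposes as a superposition of the rank-$k_\infty$ contributions coming from each of its ancestors, so a naive bound gives a rank growing linearly with the depth rather than remaining bounded. I would address this through a Ramsey-type alignment, passing to a large induced subgraph on which the partitions $\KK$ produced at different recursion levels can be refined to a single, globally bounded partition governing all cuts simultaneously, in the same spirit as the insulation construction of Part~\ref{part:structure}, where local flips are assembled into a coherent global insulator. An alternative, and perhaps cleaner, route is to identify distance-$\infty$ flip-breakability with bounded radius-$\infty$ flip-width in the sense of \cite{flipwidth}, and then match that parameter with cliquewidth through the associated pursuit-evasion game, mirroring the role of the splitter and flipper games in the nowhere dense and monadically stable settings respectively.
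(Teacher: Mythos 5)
Your forward direction matches the paper's approach essentially exactly: pass to rankwidth, locate a balanced edge of the rank decomposition via a centroid argument, and flip away the bounded-rank cut. (Your bound of $2^{k+1}$ colors is in fact sharper than the paper's stated $2^k+2^{2^k}$, since a rank-$k$ cut has at most $2^k$ distinct neighborhoods on \emph{each} side; both are fine as constants.)

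The backward direction is where the gap lies, and you have correctly diagnosed it yourself. The recursive construction you sketch produces a tree whose edges carry cuts of rank that grows additively with depth: an edge deep in the subtree over $V_A$ induces a bipartition of all of $V(G)$, and its cut-rank picks up a $k_\infty$ contribution from \emph{every} ancestor cut. Even under ideal balance the depth is $\Theta(\log n)$, so this argument can only yield rankwidth $O(k_\infty\log n)$ --- it cannot conclude bounded rankwidth. Neither of your proposed repairs closes this hole: the ``Ramsey alignment'' is a metaphor rather than an argument (there is no obvious analogue of the insulation construction for assembling nested cuts into a single bounded-width partition), and identifying distance-$\infty$ flip-breakability with radius-$\infty$ flip-width and then matching that with cliquewidth via a game would itself require proving both equivalences, which is not easier than the original statement. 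The paper instead argues by \emph{contrapositive} and entirely avoids building a decomposition: if rankwidth is unbounded, the class contains a large \emph{well-linked} set $W$ (Oum's theorem), meaning every bipartition $(X,Y)$ of $V(G)$ has cut-rank at least $\min(|X\cap W|,|Y\cap W|)$. Applying distance-$\infty$ flip-breakability to $W$ with target size $k+1$ would give a $k$-flip $H$ and two large subsets of $W$ in different components of $H$; grouping components gives a bipartition with no $H$-edge across it, which by well-linkedness has cut-rank $>k$, contradicting the fact that any $k$-flip of a cut of rank $>k$ must retain at least one crossing edge. You should replace the constructive recursion by this one-shot argument.
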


To prove the theorem, we work with \emph{rankwidth}, a parameter that is functionally equivalent to cliquewidth.
A graph $G$ 
has rankwidth at most $k$ 
if there is a tree $T$ whose leaves 
are the vertices of $G$,
and inner nodes have degree at most $3$,
such that for every edge $e$ of the tree,
the bipartition $A\uplus B$ of the leaves of $T$
into the leaves on either side of $e$,
has \emph{cut-rank} at most $k$.
The cut-rank of a bipartition $A\uplus B$
of the vertex set of a graph $G$, denoted 
$\rk_G(A,B)$, is defined as the rank, over the two-element field, of the $(0,1)$-matrix with rows $A$ and columns $B$, where the entry at row $a\in A$ and column $b\in B$ is $1$ if $ab\in E(G)$ and $0$ otherwise.

\begin{fact}[{\cite[Proposition 6.3]{oum2006}}]\label{fact:rw}
    A class of graphs has bounded cliquewidth if and only if it has bounded rankwidth.
\end{fact}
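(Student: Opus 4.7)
The plan is to establish the equivalence by proving two quantitative bounds, namely $\rw(G) \le \cw(G)$ and $\cw(G) \le 2^{\rw(G)+1}-1$ for every graph $G$. Both inequalities then upgrade at the level of classes to the claimed functional equivalence.

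For the easier direction $\rw(G) \le \cw(G)$, I would start from a $k$-expression $\xi$ witnessing $\cw(G) \le k$ and convert its parse tree $T_\xi$ into a rank-decomposition. Nodes of $T_\xi$ correspond to disjoint-union, relabel, edge-join, and introduce-vertex operations; after suitably binarizing $T_\xi$ (splitting introduce-vertex operations to make the leaves be exactly the vertices of $G$ and forcing internal degree $3$) I obtain a subcubic tree $T$ whose leaves are $V(G)$. For any edge $e$ of $T$ inducing the bipartition $A \uplus B$, the set $A$ is a union of label classes in the $k$-expression restricted to that subtree, so each vertex of $A$ has one of at most $k$ possible neighborhoods in $B$ (its label determines how the later edge-join operations connected it to $B$). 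Hence the cut matrix has at most $k$ distinct rows and the cut-rank over $\mathbb{F}_2$ is at most $k$, giving $\rw(G) \le k$.

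For the harder direction $\cw(G) \le 2^{\rw(G)+1}-1$, I would start from a rank-decomposition $(T, \mathcal{L})$ of width $k$, root $T$ at an arbitrary edge, and process it bottom-up to build a clique-expression. The key structural observation is that for each edge $e$ of $T$ yielding bipartition $A \uplus B$, the cut matrix has $\mathbb{F}_2$-rank at most $k$, so there exist at most $2^k$ distinct neighborhood-traces that a vertex of $A$ can present to $B$. At each internal node $v$ with rooted subtree inducing vertex set $A_v$, I will maintain as an invariant that the $k$-expression constructed so far for $G[A_v]$ is a $(2^{k}+1)$-expression whose labels encode exactly the trace-type of each vertex $a \in A_v$ with respect to the outside $V(G) \setminus A_v$. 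When combining two children with vertex sets $A_1, A_2$ into the parent with vertex set $A_1 \cup A_2$, I first perform all missing edges between $A_1$ and $A_2$ using $\eta_{i,j}$ operations (possible because the adjacency pattern across the $A_1/A_2$ cut is determined by the pair of trace-types, of which there are at most $2^k \cdot 2^k$ combinations), then relabel every vertex according to its new trace-type with respect to the outside of $A_1 \cup A_2$; this relabeling is well-defined because the new trace is a linear function of the old one and of which child the vertex came from. Counting labels carefully (trace-types plus an auxiliary label for newly introduced vertices) yields the bound $2^{k+1}-1$.

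The main obstacle will be the bookkeeping in the second direction: ensuring that the relabeling at each merge step collapses the possibly $2^{2k}$ intermediate combinations of trace-types back to at most $2^{k+1}-1$ labels, and that all required cross-edges between the two children can be added by a bounded sequence of $\eta_{i,j}$ operations before this relabeling destroys the distinctions needed. This is the content of Oum--Seymour's argument and what makes the constant exponential in $k$ rather than linear. Once both inequalities are in hand, boundedness of rankwidth on a class is equivalent to boundedness of cliquewidth, which is precisely the statement.
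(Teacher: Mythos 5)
The paper does not prove this statement; it is stated as a black-box Fact citing Proposition~6.3 of Oum--Seymour \cite{oum2006}. Your sketch faithfully reproduces the Oum--Seymour argument, down to the exact two-sided bound $\rw(G)\le\cw(G)\le 2^{\rw(G)+1}-1$, and you correctly identify the merge-and-relabel bookkeeping in the harder direction as the technical core; since the paper treats the result as a black box, there is nothing internal to it against which to compare beyond noting that your derivation matches the cited source.
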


\begin{lemma}\label{lem:balanced-edge}
    Let $T$ be a rooted subtree of a binary tree and let $W$ be a subset of the leaves of $T$.
    There exists an edge $e \in E(T)$ such that the two subtrees $T_1$ and $T_2$ obtained by removing $e$ from $T$ each contain at least $\frac{1}{4}|W|$ vertices from $W$.
\end{lemma}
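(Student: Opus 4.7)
The plan is to use a standard balanced-separator argument on the rooted tree $T$. First, I would root $T$ at its given root and, for every node $v$, define $w(v)$ to be the number of elements of $W$ appearing as leaves in the subtree rooted at $v$. Then $w(\text{root}) = |W|$, and for any internal node $v$ with children $c_1, \ldots, c_d$ (where $d \leq 2$, since $T$ is a subtree of a binary tree) we have $w(v) = w(c_1) + \cdots + w(c_d)$.

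Next, I would walk down from the root, at each step proceeding to a child of maximum weight, and let $v$ be the first node encountered for which $w(v) \leq \tfrac{3}{4}|W|$. Assuming $|W|$ is nontrivial, $v$ will be a strict descendant of the root (since $w(\text{root}) = |W|$), so $v$ has a parent $u$ in $T$, and I take $e$ to be the edge $\{u,v\}$.

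Removing $e$ splits $T$ into the subtree below $v$ (containing $w(v)$ elements of $W$) and its complement (containing $|W| - w(v)$ elements). The choice $w(v) \leq \tfrac{3}{4}|W|$ immediately gives $|W| - w(v) \geq \tfrac{1}{4}|W|$ on the non-$v$ side. For the $v$-side, the key observation is that $u$ was not selected as the stopping node, so $w(u) > \tfrac{3}{4}|W|$; since $u$ has at most two children and $v$ is the heaviest among them, we get $w(v) \geq w(u)/2 > \tfrac{3}{8}|W| \geq \tfrac{1}{4}|W|$. Hence both subtrees obtained by removing $e$ contain at least $\tfrac{1}{4}|W|$ vertices of~$W$, as required.

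The only mildly delicate point is the corner case in which the walk would attempt to descend past a leaf or past a node with a single child, but these are handled directly: if $u$ has a single child $c$, then $w(c) = w(u) > \tfrac{3}{4}|W|$, contradicting the choice of $v = c$; and if the walk reaches a leaf $\ell$ with $w(\ell) \leq 1$, then the stopping criterion $w(\ell) \leq \tfrac{3}{4}|W|$ is already satisfied by an ancestor for $|W|$ large enough. Thus there is no real obstacle, only routine bookkeeping.
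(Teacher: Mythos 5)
Your proof is correct and uses essentially the same approach as the paper: greedily descend from the root toward the heaviest child, and cut at the edge where a threshold is crossed. The only cosmetic difference is the stopping rule — you take the first node whose subtree weight drops to at most $\tfrac34|W|$, while the paper takes the last node whose subtree weight is still at least $\tfrac14|W|$; these are dual formulations of the same idea and both hinge on the same at-most-two-children bound.
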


\begin{proof}
    For a vertex $v \in V(T)$, denote by $T(v)$ the subtree rooted at $v$.
    Let $(v_1,\ldots,v_m)$ be a root-to-leaf path in $T$ such that for all $1 \leq i < m$, the vertex $v_{i+1}$ is the child of $v_i$ whose subtree contains the most elements from $W$, where ties are broken arbitrarily.
    Let $i \in [m]$ be the largest index such that $T(v_i)$ contains at least $\frac{1}{4}|W|$ vertices from $W$. 
    By construction, $T(v_2)$ contains at least $\frac{1}{2}|W|$ elements from $W$, and therefore $i > 1$, i.e.\ $v_i$ has a parent $v_{i-1}$.
    $T(v_i)$ contains less than $\frac{1}{2}|W|$ vertices from $W$, as both of its at most two children contain less than $\frac{1}{4}|W|$ elements from $W$.
    Therefore, the edge connecting $v_i$ and $v_{i-1}$ is the desired edge.
\end{proof}

\begin{lemma}\label{lem:cw-fw}
    Every class of graphs with bounded rankwidth is distance-$\infty$ flip-breakable.
\end{lemma}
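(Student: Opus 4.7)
The plan is to use the rank decomposition provided by bounded rankwidth (equivalent to bounded cliquewidth by Fact~\ref{fact:rw}) together with Lemma~\ref{lem:balanced-edge} to produce a balanced bipartition of $V(G)$ of low cut-rank, and then translate low cut-rank into a bounded-size flip that disconnects the two sides.

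Fix a class $\CC$ of graphs with bounded rankwidth, say $\rk(G) \le k$ for all $G \in \CC$. Given $G \in \CC$ and $W \subseteq V(G)$ with $|W| \ge 4m$, fix a rank decomposition $T$ of $G$ witnessing $\rk(G) \le k$: a subcubic tree whose leaves are $V(G)$ such that every edge of $T$ induces a bipartition of $V(G)$ of cut-rank at most $k$. Root $T$ arbitrarily and apply Lemma~\ref{lem:balanced-edge} to $T$ with the leaf subset $W$, obtaining an edge $e \in E(T)$ whose removal splits $V(G)$ into $X \uplus Y$ with $|W \cap X|, |W \cap Y| \ge \tfrac{1}{4}|W| \ge m$.

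The next step is to convert the cut-rank bound $\rk_G(X,Y) \le k$ into a partition refinement suitable for flipping. Since the $(0,1)$-bipartite adjacency matrix $M$ between $X$ and $Y$ has rank at most $k$ over $\mathbb{F}_2$, $M$ has at most $2^k$ distinct rows and at most $2^k$ distinct columns. Hence there is a partition $X = X_1 \uplus \cdots \uplus X_p$ with $p \le 2^k$ such that vertices in the same $X_i$ have identical neighborhoods in $Y$, and similarly a partition $Y = Y_1 \uplus \cdots \uplus Y_q$ with $q \le 2^k$. For every pair $(X_i, Y_j)$, $X_i$ is either fully adjacent or fully non-adjacent to $Y_j$ in $G$. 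Let $\KK = \{X_1, \dots, X_p, Y_1, \dots, Y_q\}$, a partition of $V(G)$ into at most $2^{k+1}$ parts, and let $F \subseteq \KK^2$ be the symmetric relation flipping precisely those pairs $(X_i, Y_j)$ that are fully adjacent in $G$. Then $H := G \oplus_\KK F$ contains no edges between $X$ and $Y$, so $X$ and $Y$ lie in different unions of connected components of $H$.

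Setting $A := W \cap X$ and $B := W \cap Y$ finishes the argument: $|A|,|B| \ge m$, $H$ is a $2^{k+1}$-flip of $G$, and no vertex of $A$ lies in the same connected component of $H$ as any vertex of $B$. Thus $\CC$ is distance-$\infty$ flip-breakable with $N(m) := 4m$ and constant $2^{k+1}$.

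I do not expect any serious obstacle here; the only point worth being careful about is that ``cut-rank $\le k$'' refers to rank over $\mathbb{F}_2$ of the bipartite adjacency matrix, which is exactly what is needed to bound the number of distinct neighborhood types on each side by $2^k$ and thereby bound the flip size. Everything else is routine bookkeeping.
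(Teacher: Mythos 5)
Your proof is correct and follows essentially the same route as the paper: Lemma~\ref{lem:balanced-edge} gives the balanced edge, and low cut-rank is used to group vertices by neighborhood type and flip the two sides apart. The only difference is a small quantitative improvement: you use the fact that $\mathrm{rk}_{\mathbb{F}_2}(M)=\mathrm{rk}_{\mathbb{F}_2}(M^\trans)$ to bound both sides by $2^k$ neighborhood types, giving a $2^{k+1}$-flip, whereas the paper bounds only the $X$-side by $2^k$ and then crudely bounds the $Y$-side by $2^{2^k}$, yielding a $(2^k+2^{2^k})$-flip.
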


\begin{proof}
    Fix a number $k$ and let $\CC$ be a class of graphs of rankwidth at most $k$.
    We will show that $\CC$ is distance-$\infty$ flip-breakable for $N(m) := 4m$ using $2^k + 2^{2^k}$ flips.
    For every graph $G\in \CC$ there is 
    a rooted subtree $T$ of a binary tree with leaves $V(G)$,
    such that for every edge $e \in E(T)$,
    the bipartition $X\uplus Y$ of the leaves of $T$
    into the leaves on either side of $e$,
    has cut-rank at most $k$.
    Let $W \subseteq V(G)$ be a set of size $4m$.
    By \cref{lem:balanced-edge}, there exists an edge $e$ such that in the corresponding bipartition $X \uplus Y$ of $V(G)$, both $X$ and $Y$ each contain at least $m$ many vertices of $W$.
    Observe that since $\rk_G(X,Y) \leq k$, $X$ induces at most $2^k$ distinct neighborhoods over $Y$.
    Then there is a $(2^k + 2^{2^k})$-flip $H$ of $G$ in which there are no edges between $X$ and $Y$:
    the corresponding partition of $V(G)$ partitions the vertices of $X$ into $2^k$ parts depending on their neighborhood in $Y$ and partitions the vertices of $Y$ in $2^{2^k}$ parts depending on their neighborhood in $X$.
\end{proof}

A set $W$ of vertices of $G$ is \emph{well-linked}, if for every bipartition $A\uplus B$ of $V(G)$,
the cut-rank of $A\uplus B$ satisfies  $\rk_G(A,B)\ge \min(|A\cap W|,|B\cap W|)$.
We use the following two facts.

\begin{fact}[{\cite[Thm. 5.2]{oum2006}}]\label{fact:well-linked}
Every graph of rankwidth greater than $k$ contains  well-linked set of size $k$.
\end{fact}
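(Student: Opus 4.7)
The plan is to prove the contrapositive: if $G$ contains no well-linked set of size $k$, then $G$ has rankwidth at most $k-1$, i.e.\ $G$ admits a subcubic tree with leaves $V(G)$ in which every edge induces a bipartition of cut-rank strictly less than $k$. The central structural input is that the cut-rank function $\rho(A) := \rk_G(A, V(G)\setminus A)$ is both \emph{symmetric} (trivially, since the adjacency matrix between $A$ and $V(G)\setminus A$ is the transpose of the one between $V(G)\setminus A$ and $A$) and \emph{submodular}: $\rho(A) + \rho(B) \geq \rho(A\cap B) + \rho(A\cup B)$, a property inherited from matrix rank over $\mathrm{GF}(2)$. This puts $\rho$ in the framework of Robertson--Seymour connectivity functions, for which a general duality between branch-width and "tangle-like" dual objects holds; well-linked sets play the role of this dual object.

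First I would prove the standard \emph{uncrossing lemma}: whenever $\rho(A),\rho(B) < k$, at least one of the pairs $\{\rho(A\cap B),\rho(A\cup B)\}$ or $\{\rho(A\setminus B),\rho(B\setminus A)\}$ consists of numbers $< k$. This is a direct consequence of the submodular inequality applied in two symmetric ways. Next, I would use the hypothesis to produce balanced cuts of $V(G)$ relative to any prescribed "piece" $X$: choose any $k$-subset $W\subseteq X$ (or take $W=X$ if $|X| < k$, using a slight strengthening of the argument); since $W$ is not well-linked, there exists a bipartition $(A,V(G)\setminus A)$ with $\rho(A) < \min(|A\cap W|,|(V(G)\setminus A)\cap W|) \le k$, in particular splitting $W$ nontrivially. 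Iteratively uncrossing $A$ with the cuts already placed in the decomposition then yields a bipartition of $X$ into two nonempty sets $X_1 \uplus X_2$ such that $\rho(X_1),\rho(X_2) < k$ \emph{globally in $G$}, not merely inside $X$.

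Finally, I would build the rank-decomposition recursively. Start with the trivial tree on $V(G)$, and for each leaf-piece $X$ with $|X| \geq 2$, apply the previous step to split it into two nonempty pieces; recurse. The process terminates at singletons since each split strictly decreases all piece sizes, the resulting tree is subcubic by construction, and every edge induces a cut of rank $< k$ by the guarantees of Step~2. The main obstacle I expect is Step~2, and specifically the interaction between two requirements: keeping the cut rank below $k$ in the \emph{global} graph (not just inside $X$) while also guaranteeing that both sides of the bipartition of $X$ are nonempty, so that the recursion makes progress. This is precisely where the full strength of submodularity plus the non-existence of a size-$k$ well-linked set is needed, and executing the uncrossing so that it simultaneously respects the already-placed cuts in the partial decomposition is the technical heart of the proof.
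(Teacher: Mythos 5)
The paper gives no proof of this Fact --- it cites Oum directly --- so I will evaluate your sketch on its own merits rather than against an in-paper argument. The overall framework (contrapositive, top-down branch decomposition, submodular uncrossing) is the right Robertson--Seymour-style tradition, but there is a genuine gap in the technical core, and your supporting ``uncrossing lemma'' is not correct as stated.

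The claimed uncrossing lemma --- that $\rho(A),\rho(B)<k$ forces \emph{one} of the pairs $\{\rho(A\cap B),\rho(A\cup B)\}$, $\{\rho(A\setminus B),\rho(B\setminus A)\}$ to consist entirely of values $<k$ --- is not a direct consequence of symmetric submodularity. What submodularity yields (using symmetry once, replacing $B$ by $\bar B$) is
\[
\rho(A\cap B)+\rho(A\cup B)\le\rho(A)+\rho(B),
\qquad
\rho(A\setminus B)+\rho(B\setminus A)\le\rho(A)+\rho(B).
\]
From $\rho(A),\rho(B)<k$ you learn that at most one member of \emph{each} pair is $\ge k$, but nothing rules out the configuration $\rho(A\cap B)\ge k$ and $\rho(A\setminus B)\ge k$ simultaneously (with the complementary corners small); in that case \emph{neither} pair is entirely below $k$. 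No further application of submodularity to these four corner sets closes this case.

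This gap lands exactly in the part of Step~2 you flag as the technical heart. You split $X$ as $X_1=X\cap A$, $X_2=X\setminus A$ and need $\rho(X_1),\rho(X_2)<k$. With $\rho(X)<k$ (your invariant) and $\rho(A)<\min(|A\cap W|,|B\cap W|)\le\lfloor k/2\rfloor$ from the non-well-linkedness of $W$, submodularity only gives $\rho(X\cap A)+\rho(X\cup A)\le\rho(X)+\rho(A)$ and $\rho(X\setminus A)+\rho(A\setminus X)\le\rho(X)+\rho(A)$; neither $\rho(X\cap A)$ nor $\rho(X\setminus A)$ is by itself bounded by $k$. Moreover, ``iteratively uncrossing $A$ with the already-placed cuts'' is itself underspecified: replacing $A$ by $A\cup Y$ or by $A\setminus Y$ for another piece $Y$ preserves the split of $X$, but submodularity only guarantees that \emph{one} of these two choices keeps $\rho<k$, and after several uncrossings you must be able to make the choices consistently while keeping the invariant. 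The fix used in Robertson--Seymour / Geelen--Gerards--Whittle / Oum style duality proofs is to choose the separation $(A,\bar A)$ \emph{extremally} --- minimizing $\rho(A)$ subject to separating the relevant set, possibly with a secondary tie-break --- and to obtain the missing submodular inequality from this minimality. That extremal argument is a genuinely different and more delicate step than the one you sketch, and it is precisely what would need to be supplied to make the proof go through.
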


\begin{fact}[{\cite[Lem. D.2]{flipwidth}}]\label{lem:rank-flip}
    Let $G$ be a graph 
    and $A\uplus B$ a bipartition of $V(G)$ 
    with $\rk_G(A,B)>k$.
    Then for every $k$-flip $H$ of $G$ there is some  edge $ab\in E(H)$ with $a\in A$ and $b\in B$.
\end{fact}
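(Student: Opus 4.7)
The plan is to prove the contrapositive: assume that some $k$-flip $H$ of $G$ has no edge between $A$ and $B$, and deduce that $\rk_G(A,B) \le k$. This is a direct matrix-factorization argument, so no combinatorial machinery is needed beyond carefully unpacking the definition of a flip.

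First I would fix notation. Let $\KK = \{X_1,\ldots,X_s\}$ with $s\le k$ be the partition and $F\subseteq \KK^2$ the symmetric relation witnessing $H = G\oplus_\KK F$. For any $a\in A$ and $b\in B$ (which are distinct because $A$ and $B$ are disjoint), the definition of a flip gives
\[
    \{a,b\}\in E(H)\ \Longleftrightarrow\ \{a,b\}\in E(G)\ \text{XOR}\ (\KK(a),\KK(b))\in F.
\]
Under the assumption that $H$ has no $A$--$B$ edge, this collapses to
\[
    \{a,b\}\in E(G)\ \Longleftrightarrow\ (\KK(a),\KK(b))\in F \qquad\text{for all }a\in A,\ b\in B.
\]
In other words, the $A\times B$ adjacency matrix $M$ of $G$ (over $\mathbb{F}_2$) depends only on the color classes of its row and column indices.

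Next I would exhibit the factorization. Let $P\in\mathbb{F}_2^{A\times s}$ be the color-indicator matrix with $P[a,i]=1$ iff $\KK(a)=X_i$, let $Q\in\mathbb{F}_2^{s\times B}$ be defined analogously by $Q[i,b]=1$ iff $\KK(b)=X_i$, and let $N\in\mathbb{F}_2^{s\times s}$ be given by $N[i,j]=1$ iff $(X_i,X_j)\in F$. Then for every $a\in A$, $b\in B$ we have
\[
    (PNQ)[a,b]\ =\ \sum_{i,j}P[a,i]\,N[i,j]\,Q[j,b]\ =\ N[\KK(a),\KK(b)]\ =\ M[a,b],
\]
so $M=PNQ$. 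Since $N$ has at most $s\le k$ rows, $\rk(N)\le k$, and therefore $\rk_G(A,B)=\rk_{\mathbb{F}_2}(M)\le \rk(N)\le k$.

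This contradicts the hypothesis $\rk_G(A,B)>k$, finishing the proof of the contrapositive. There is no real obstacle here; the only thing to be slightly careful about is that the partition classes $X_i$ may contain vertices from both $A$ and $B$, which is fine because $P$ and $Q$ are defined independently on the row- and column-sides and the factorization $M=PNQ$ is insensitive to this.
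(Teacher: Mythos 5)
Your proof is correct. Note that the paper itself does not prove this statement but cites it as Lemma D.2 of the flip-width paper; your matrix-factorization argument $M = PNQ$ with $N$ an $s\times s$ matrix ($s \le k$) is the standard and essentially the only reasonable way to argue this, and it is the same kind of argument that underlies the cited lemma. An equivalent phrasing, which some readers may find slightly shorter, is to observe directly that under the assumption that $H$ has no $A$--$B$ edge, two rows $M[a,\cdot]$ and $M[a',\cdot]$ of the $A\times B$ adjacency matrix coincide whenever $\KK(a)=\KK(a')$, so $M$ has at most $s\le k$ distinct rows and hence rank at most $k$; this is just the factorization argument without writing down $P$, $N$, $Q$ explicitly. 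Your closing remark about parts $X_i$ possibly meeting both $A$ and $B$ is correct and worth stating, since $P$ and $Q$ are built only from the restriction of $\KK$ to $A$ and to $B$ respectively, and that is all the factorization needs.
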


\begin{lemma}\label{lem:cw-bw}
    Every class of graphs with unbounded rankwidth is not distance-$\infty$ flip-breakable.
\end{lemma}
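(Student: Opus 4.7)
The plan is to prove the contrapositive: if $\CC$ is distance-$\infty$ flip-breakable with witness $(N,k)$, then the rankwidth of graphs in $\CC$ is bounded (by some function of $k$). The two key tools are \cref{fact:well-linked} (unbounded rankwidth yields arbitrarily large well-linked sets) and \cref{lem:rank-flip} (flips of cost at most $k$ cannot destroy all edges across a bipartition of cut-rank exceeding $k$).

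Concretely, suppose for contradiction that $\CC$ has unbounded rankwidth yet is distance-$\infty$ flip-breakable with parameters $N\colon\N\to\N$ and $k\in\N$. Set $m:=k+1$, and using \cref{fact:well-linked} pick a graph $G\in\CC$ containing a well-linked set $W$ of size at least $N(m)$. Apply distance-$\infty$ flip-breakability to $W$: there exist subsets $A,B\subseteq W$ with $|A|,|B|\ge m$ and a $k$-flip $H$ of $G$ such that no vertex of $A$ lies in the same connected component of $H$ as any vertex of $B$.

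Let $X$ be the union of the connected components of $H$ that meet $A$, and let $Y:=V(G)\setminus X$. Then $A\subseteq X$, $B\subseteq Y$, and the bipartition $X\uplus Y$ admits no edges in $H$. By \cref{lem:rank-flip} (contrapositive), this forces $\rk_G(X,Y)\le k$. On the other hand, well-linkedness of $W$ gives
\[
    \rk_G(X,Y)\;\ge\;\min(|X\cap W|,|Y\cap W|)\;\ge\;\min(|A|,|B|)\;\ge\;m\;=\;k+1,
\]
a contradiction. Combined with \cref{lem:cw-fw} and \cref{fact:rw}, this yields \cref{thm:cw}.

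There is essentially no obstacle in this argument: all the heavy lifting has been delegated to the two cited facts, and the step of producing the bipartition $X\uplus Y$ from the disconnection in $H$ is immediate. The only thing to be careful about is the direction of the inequality in \cref{lem:rank-flip} — namely, that a $k$-flip of $G$ preserving an edgeless cut implies the cut has cut-rank at most $k$ in $G$ — which is exactly its contrapositive formulation.
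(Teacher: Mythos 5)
Your proof is correct and takes essentially the same approach as the paper: pick a well-linked set of size $N(k+1)$, apply distance-$\infty$ flip-breakability, combine the resulting disconnecting bipartition with \cref{fact:well-linked} and \cref{lem:rank-flip} to derive a contradiction. The only cosmetic difference is that the paper distributes the components meeting neither $A$ nor $B$ arbitrarily between the two sides, while you place them all on the $Y$ side — both choices work equally well.
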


\begin{proof}
    Let $\CC$ be a class of graphs with unbounded rankwidth.
    Assume towards a contradiction that $\CC$ is distance-$\infty$ flip-breakable with bounds $N(\cdot)$ and $k$.
    By \cref{fact:well-linked}, there exists a graph $G\in \CC$ that contains a well-linked set $W$ of size at least $N(k+1)$.
    By distance-$\infty$ flip-breakability, there exists a $k$-flip $H$ of $G$ and two sets $A, B \subseteq W$ of size $k+1$ each, such that no two vertices $a \in A$ and $b \in B$ are in the same component in $H$.
    We can therefore find a bipartition of $\mathcal{X} \uplus \mathcal{Y}$ of the connected components of $H$, such that $\mathcal{X}$ contains all the components containing a vertex of $A$ and $\mathcal{Y}$ contains all the components containing a vertex of $B$.
    Components containing neither a vertex of $A$ nor of $B$ can be distributed arbitrarily among $\mathcal{X}$ and $\mathcal{Y}$.
    Let $X := \bigcup \mathcal{X}$ and $Y := \bigcup \mathcal{Y}$.
    Then $X \uplus Y$ is a bipartition of $V(H)$, and there is no edge between $X$ and $Y$ in \(H\).
    Since $W$ is well-linked we have that the cut-rank $X \uplus Y$ satisfies
    \[
        \rk_G(X,Y) \geq \min(|X \cap W|, |Y \cap W|) \geq \min(|A|, |B|) = k + 1.    
    \]
    By \cref{lem:rank-flip}, there must be an edge $ab \in E(H)$ with $a\in X$ and $b\in Y$; a contradiction.
\end{proof}

Combining \cref{fact:rw}, \cref{lem:cw-fw}, and \cref{lem:cw-bw} now yields \cref{thm:cw}.

\subsection{Distance-\texorpdfstring{$\infty$}{Infinity} Deletion-Breakability Characterizes Bounded Treewidth}

\begin{definition}
    A class of graphs $\CC$ is \emph{distance-$\infty$ deletion-breakable},
    if there exists a function
    \(N : \N \to \N\) and a constant \(k \in \N\) such that for all \(m \in \N\), \(G \in \CC\) and \(W \subseteq V(G)\) with
    \(|W| \ge N(m)\)
    there exist a set $S\subseteq V(G)$ with $|S| \leq k$ and subsets $A,B\subset W \setminus S$ with \(|A|,|B| \ge m\) such that in
    \(G - S\), no two vertices $a\in A$ and $b\in B$ are in the same connected component.
\end{definition}

The goal of this subsection is to prove the following.

\begin{theorem}\label{thm:tw}
    A class of graphs has bounded treewidth if and only if it is distance-$\infty$ deletion-breakable.
\end{theorem}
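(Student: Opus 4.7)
The plan is to mirror the proof of \cref{thm:cw} in the treewidth setting: tree decompositions replace rank decompositions, bag size replaces cut-rank, and vertex deletions replace flips.

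For the forward direction, fix a graph $G$ with a tree decomposition $(T, \{X_t\}_{t \in V(T)})$ of width at most $k$, which without loss of generality can be taken to be binary (each internal node of $T$ has degree at most $3$). Given a subset $W \subseteq V(G)$ of sufficient size, I would apply the natural variant of \cref{lem:balanced-edge} to $T$ with the weight of a node $t$ set to $|X_t \cap W|$, producing an edge $e$ of $T$ whose removal yields two subtrees $T_1, T_2$ of total weight at least $|W|/4$ each. Taking $S$ to be the bag at one endpoint of $e$, the standard separator property of tree decompositions guarantees $|S| \le k+1$ and that every connected component of $G - S$ is entirely contained in $\bigcup_{t \in V(T_i)} X_t$ for some $i \in \{1,2\}$. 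Letting $A$ and $B$ collect the $W$-vertices outside $S$ on each side yields subsets of $W \setminus S$, each of size at least $|W|/4 - (k+1)$, lying in disjoint unions of connected components of $G - S$; picking $|W| \ge 4(m + k + 1)$ then gives distance-$\infty$ deletion-breakability with separator constant $k+1$.

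For the backward direction I argue by contraposition: unbounded treewidth precludes distance-$\infty$ deletion-breakability. The key ingredient, replacing \cref{fact:well-linked}, is the standard treewidth analogue: for every $\ell$ there exists $f(\ell) \in \N$ such that every graph of treewidth at least $f(\ell)$ contains a \emph{well-linked set} $W$ of size $\ell$, in the sense that for any two disjoint subsets $A, B \subseteq W$ of size $\lfloor \ell / 3 \rfloor$, there exist $\lfloor \ell / 3 \rfloor$ internally vertex-disjoint $A$-$B$ paths in $G$. Such a $W$ can be extracted either from a large grid minor via the Excluded Grid Theorem or, more cleanly, from a bramble of large order via Menger's theorem combined with the Seymour-Thomas duality between treewidth and bramble number. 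Now assume towards a contradiction that $\CC$ is distance-$\infty$ deletion-breakable with constants $N$ and $k$. Set $\ell := 3(k+1) + N(k+1)$ and pick $G \in \CC$ with treewidth at least $f(\ell)$; applying deletion-breakability to the resulting well-linked set $W$ with $m := k+1$ produces a separator $S$ with $|S|\le k$ and disjoint $A, B \subseteq W \setminus S$ of size at least $k+1$ lying in different connected components of $G - S$. After passing to subsets of $A$ and $B$ of size $\lfloor \ell / 3 \rfloor$ and invoking well-linkedness, $G$ contains more than $k$ internally vertex-disjoint $A$-$B$ paths, at least one of which must survive the removal of $S$, contradicting that $A$ and $B$ lie in distinct components of $G - S$.

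The main obstacle I foresee is simply pinning down a clean quantitative version of the statement ``large treewidth implies large well-linked sets'' with the right parameters. The Excluded Grid Theorem yields this with rather poor bounds, but any computable bound suffices for our purely qualitative conclusion, and the bramble-based route via Seymour-Thomas duality gives a much tighter and more transparent argument. Either way the proof stays purely combinatorial and directly parallel to the rankwidth argument of \cref{lem:cw-bw}.
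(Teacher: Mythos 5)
Your forward direction takes the same broad route as the paper (find a balanced bag-separator in a tree decomposition), but the weight function you propose does not work as written. Setting the weight of a node $t$ to $|X_t \cap W|$ over-counts: a vertex $w \in W$ occurs in a whole subtree of bags, so $\sum_t |X_t\cap W|$ can be far larger than $|W|$, and the conclusion "$T_1$ has weight at least $|W|/4$" does not imply that $\bigl|W\cap \bigcup_{t\in T_1}X_t\bigr|\ge |W|/4$. (Concretely, a single $w$ lying in a thousand bags, all on the $T_1$ side, could swamp the weight of $T_1$ while $T_1$ contains only one element of $W$.) You need to charge each $w\in W$ to exactly one node, e.g.\ its topmost bag, or the introduce node in a nice tree decomposition; the paper sidesteps the issue entirely by walking down from the root and tracking $|V(t)\cap W|$ where $V(t)$ is the union of the bags in the subtree rooted at $t$, a quantity that decreases monotonically along the walk.

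Your backward direction is a genuinely different and in principle cleaner route than the paper's. The paper invokes the Grid-Minor Theorem (\cref{fact:gridminor}) and then constructs $k+1$ vertex-disjoint $A$--$B$ paths by hand inside the grid minor (\cref{fig:tw-breakability}); you instead extract a well-linked set via brambles and Seymour--Thomas duality and let Menger's theorem hand you the disjoint paths directly. That abstraction avoids the grid gymnastics and is a reasonable simplification. However, your quantitative bookkeeping is off: you get $A,B\subseteq W\setminus S$ of size at least $k+1$, but then want to "pass to subsets of size $\lfloor\ell/3\rfloor$", and with $\ell=3(k+1)+N(k+1)$ this is typically much larger than $k+1$, so $A$ and $B$ need not contain such subsets. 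Use the standard definition of well-linkedness (for \emph{every} pair of disjoint equal-size subsets of $W$ there are that many vertex-disjoint paths), take $A',B'$ of size exactly $k+1$, obtain $k+1$ vertex-disjoint $A'$--$B'$ paths, and observe that removing $|S|\le k$ vertices leaves one intact, giving the contradiction.
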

We start with the forward direction.
We assume familiarity with treewidth and (nice) tree decompositions. See for example \cite{parameterized_algorithms} for an introduction. 
\begin{lemma}\label{lem:tw-fw}
    Every class of graphs with bounded treewidth is distance-$\infty$ deletion-breakable.
\end{lemma}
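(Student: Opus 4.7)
The plan is to perform a standard balanced-separator argument on tree decompositions. Fix a class $\CC$ of treewidth at most $w$, and set the deletion-breakability constant to $k := w+1$ and $N(m) := 4m + 2(w+1)$. Given $G \in \CC$ and $W \subseteq V(G)$ with $|W| \ge N(m)$, the task is to exhibit a set $S \subseteq V(G)$ with $|S| \le w+1$ together with subsets $A,B \subseteq W \setminus S$ of size at least $m$ each that lie in different connected components of $G - S$.

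First, take a tree decomposition $(T,(V_t)_{t\in V(T)})$ of $G$ of width $w$. Root $T$ arbitrarily and apply the classical balanced-separator walk: at each step descend into the child whose subtree contains more than $|W|/2$ vertices of $W$, and stop when no such child exists. This locates a node $t^*$ such that every connected component of $T - t^*$ contributes at most $|W|/2$ vertices of $W$. Set $S := V_{t^*}$, so $|S| \le w+1$. By the defining property of tree decompositions, $S$ separates $G$ along $T$: letting $U_1,\ldots,U_r$ denote the intersections of $W \setminus S$ with the vertex sets of the components of $T - t^*$, we have $|U_i| \le |W|/2$ for every $i$, the total $\sum_i |U_i|$ is at least $|W| - (w+1)$, and any path in $G - S$ between vertices of distinct $U_i$'s is blocked.

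Finally, run a greedy balancing step: sort the $U_i$ in decreasing order of size and assign each to the currently smaller of two index groups $I$ and $J$. Since no single $U_i$ takes more than half of $|W|$, the resulting imbalance is bounded by $\max_i |U_i| \le |W|/2$, so both unions $A := \bigcup_{i\in I} U_i$ and $B := \bigcup_{j\in J} U_j$ have size at least $\tfrac{1}{2}\bigl(|W| - (w+1) - \max_i |U_i|\bigr) \ge \tfrac{1}{4}|W| - \tfrac{w+1}{2} \ge m$ by the choice of $N(m)$. Since $A$ and $B$ are unions of different $U_i$'s, they lie in disjoint unions of components of $G - S$, yielding the desired separation.

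The entire argument is routine given the tree-decomposition machinery; the only point requiring any care is analysing the greedy partition, and this goes through painlessly because $\max_i |U_i| \le |W|/2$. No Ramsey-type tools or anything class-specific beyond the existence of a width-$w$ decomposition is needed, so both $k$ and $N$ depend only on $w$ as required by the definition of distance-$\infty$ deletion-breakability.
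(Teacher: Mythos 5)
Your proof is correct, and the route is genuinely (if modestly) different from the one in the paper. The paper fixes a \emph{nice} tree decomposition, walks toward the heavier child at each join node, and—since the weight can drop by at most a factor $\tfrac12$ at each binary branch—exits precisely at a node $t$ whose subtree weight falls in the window $\bigl[\tfrac14|W|,\tfrac12|W|\bigr)$. This makes the inside and the outside of that subtree directly serve as $A$ and $B$ with no further work. You instead take an arbitrary tree decomposition, invoke the classical balanced-separator lemma to get a single bag $S=V_{t^*}$ such that every component of $G-S$ carries at most $\tfrac12|W|$ of the weight, and then patch up the balance afterward with a greedy two-bin assignment of the components. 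The two are equivalent in strength—both give $k=w+1$ and $N(m)=O(m+w)$—but each pays for its simplicity in a different place: the paper needs the binary (nice) decomposition to control the step-size of the walk; you accept an arbitrary decomposition but then need the greedy load-balancing fact that the final imbalance is bounded by $\max_i |U_i| \le |W|/2$. That fact is standard (the last item added to the heavier bin witnesses the bound), and with it your estimate $\min(|A|,|B|)\ge \tfrac12\bigl(|W|-(w+1)-\max_i|U_i|\bigr)\ge\tfrac{|W|}{4}-\tfrac{w+1}{2}\ge m$ goes through exactly as you claim. Both versions are fine; yours trades the nice-decomposition normalization for the rebalancing step.
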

\begin{proof}
    Let \(\mathcal{C}\) be a graph class of treewidth at most \(k-1\).
    We show that $\CC$ is distance-$\infty$ deletion-breakable with bounds $N(m) := 4(m+k)$ and $k := k$.

    Consider a graph \(G \in \CC\) of treewidth at most \(k-1\) and a subset \(W\) containing at least \(N(m)= 4(m+k)\) vertices of \(G\).
    We fix a nice tree decomposition of \(G\) of width \(k\), and associate with every bag \(t\) the set \(V(t) \subseteq V(G)\) consisting of all
    vertices that are contained either in \(t\) or in a descendant of \(t\).
    We consider a walk that starts at the root of the nice tree decomposition
    and walks downwards towards the leaves. Whenever the walk reaches a join node, it proceeds
    towards the child \(t\) that maximizes \(|V(t) \cap W|\).
    As each node has at most two children, the cardinality \(|V(t) \cap W|\) of the current node \(t\) of the walk can decrease at most by a factor \(\frac 12\) with each step.
    Hence, as in the proof of \Cref{lem:balanced-edge}, we reach at some point a node \(t\) with 
    \[
        m+k \le \frac 14 |W| \le |V(t) \cap W| < \frac 12 |W| \le 2(m+k).    
    \]
    Let \(S \subseteq V(G)\) be the vertices in this bag \(t\) and choose \(A = (V(t) \cap W)\setminus S\) and \(B = W\setminus V(t)\).
    Note that \(|S| \le k\) and \(|A|,|B| \ge m\).
    By the definition of tree decompositions, the vertices \(S\) act as a separator in the desired sense:
    In \(G - S\), no two vertices \(a \in A\) and \(b \in B\) are in the same connected component.
\end{proof}

The backwards direction will follow easily from the Grid-Minor theorem by Robertson and Seymour.
First some notation.
A graph $H$ is a \emph{minor} of a graph $G$ if there exists a \emph{minor model} $\mu$ of $H$ in $G$.
A minor model is a map $\mu$ that assigns to every vertex $v \in V(H)$ a connected subgraph $\mu(v)$ of $G$ and to every edge $e \in E(H)$ an edge $\mu(e) \in E(G)$ satisfying
\begin{itemize}
    \item for all $u,v \in V(H)$ with $u \neq v$: $V(\mu(u)) \cap V(\mu(v)) = \emptyset$;
    \item for every $(u,v) \in E(H)$: $\mu((u,v)) = (u',v')$ for vertices $u'\in V(\mu(u))$ and $v'\in V(\mu(v))$.
\end{itemize}
Possibly deviating from our notation in previous sections, in this section the \emph{$k$-grid} is the graph on the vertex set $[k] \times [k]$ where two vertices $(i,j)$ and $(i',j')$ are adjacent if and only if $|i - i'| + |j - j'| = 1$.
We can now state the Grid-Minor theorem.
\begin{fact}[{\cite[Thm. 1.5]{robertson1986graph}}]\label{fact:gridminor}
    Let $\CC$ be a class of graphs with unbounded treewidth. 
    Then for every $k \in \N$, $\CC$ contains a graph which contains the $k$-grid as a minor.
\end{fact}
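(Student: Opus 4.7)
The plan is to prove the Grid-Minor theorem via the standard route through tree decomposition duality. The first step is to invoke a dual characterization of treewidth: a graph $G$ has treewidth at least $k$ if and only if it admits a \emph{bramble} of order greater than $k$, that is, a family $\BB$ of pairwise-touching connected subgraphs such that no set $S\subset V(G)$ with $|S|\le k$ meets every member of $\BB$. (Here ``touching'' means sharing a vertex or being joined by an edge.) This duality, due to Seymour and Thomas, converts the abstract parameter ``treewidth'' into a concrete combinatorial witness inside $G$, which is a more tractable starting point. For $\CC$ of unbounded treewidth, I would pick a graph $G\in\CC$ with treewidth at least some large function $f(k)$ and fix a bramble of order $>f(k)$.

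Next, from a bramble of order $>f(k)$ I would extract a \emph{well-linked set} $W$ of size $\ge g(k)$: a vertex set such that for every bipartition $V(G)=X\uplus Y$ with $|X\cap W|,|Y\cap W|\ge |W|/3$, the minimum $X$--$Y$ separator has size at least $|W|/3$. This is standard: pick one vertex from each bramble element, then iteratively remove vertices to repair failures of the well-linkedness condition, using the lower bound on bramble order to control how many vertices are lost. Then, via repeated applications of Menger's theorem on carefully chosen bipartitions of $W$, I would produce a large \emph{linkage}: two disjoint sequences $P_1,\dots,P_n$ and $Q_1,\dots,Q_n$ of vertex-disjoint paths in $G$ such that each $P_i$ meets each $Q_j$.

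The crux is then to turn such a crossing linkage into an actual grid minor. The approach is to iteratively ``clean'' the linkage so that, along each $P_i$, the paths $Q_1,\dots,Q_n$ appear in a consistent order; this is done by a Ramsey-style pigeonhole on the cyclic order in which the $Q_j$'s first meet $P_i$, combined with path-rerouting and taking subsequences. Once the order is aligned globally, the restricted intersection pattern defines a \emph{wall} (a subdivided hexagonal grid) of order $\Omega(n)$ as a topological minor of $G$. Since every wall of order $r$ contracts to a $\lfloor r/2\rfloor$-grid minor, choosing the bounds $f,g$ appropriately yields a $k$-grid minor, as desired.

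The main obstacle is by far the third step: producing a structured linkage whose crossings form a grid pattern. Bare application of Menger's theorem gives many crossings but no control over their cyclic arrangement, and making all the $P_i$'s agree on the order of the $Q_j$'s requires a delicate iterative construction in which each realignment pass shrinks $n$ by a bounded factor while preserving disjointness. The original Robertson--Seymour argument handles this by a deep structural analysis of tangles and an elaborate rerouting scheme, producing only a tower-type bound on $f(k)$; obtaining any reasonable bound (let alone the polynomial bounds of Chekuri--Chuzhoy) is precisely where the difficulty of the theorem lies, and this is the step I would expect to consume essentially all of the technical work.
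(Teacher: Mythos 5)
The paper does not prove this statement at all: it is labelled as a \emph{Fact} and imported verbatim from Robertson and Seymour's Graph Minors V paper via the citation. There is therefore no in-paper argument for you to match against. Your outline --- bramble duality (Seymour--Thomas), extracting a well-linked set, producing a crossing linkage via Menger, and then cleaning the crossings into a wall that contracts to a grid --- is an accurate high-level map of one standard route to the Grid-Minor theorem, and you correctly single out the linkage-cleaning step as the place where essentially all of the work happens. But, as you yourself say, you have \emph{not} carried out that step: you describe it as requiring a ``deep structural analysis of tangles and an elaborate rerouting scheme'' and leave it at that. So what you have written is a roadmap with the hardest leg deferred to the literature, not a self-contained proof. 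That is exactly how the paper uses the statement (as a black box), so nothing more is expected in this context; just be clear with yourself that the sketch, taken alone, does not establish the theorem, and that writing it out in full would be a substantial undertaking comparable to reproducing a significant chunk of the Robertson--Seymour or Chekuri--Chuzhoy argument.
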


\begin{lemma}\label{lem:tw-bw}
    Every class of graphs with unbounded treewidth is not distance-$\infty$ deletion-breakable.
\end{lemma}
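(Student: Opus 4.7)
The plan is to derive a contradiction from the Grid-Minor Theorem (\cref{fact:gridminor}). Suppose towards a contradiction that $\CC$ has unbounded treewidth but is distance-$\infty$ deletion-breakable, witnessed by a function $N(\cdot)$ and a constant $k$. I will set $m := k^2 + 1$ and exhibit a graph $G \in \CC$ and a set $W \subseteq V(G)$ for which no valid separator can exist.

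First, by \cref{fact:gridminor}, fix $G \in \CC$ that contains the $n$-grid as a minor, where $n$ is chosen large enough that $n > k$ and $n^2 \geq N(m+k)$. Fix a minor model $\mu$ of the $n$-grid in $G$, and let $W \subseteq V(G)$ consist of one vertex picked from each of the $n^2$ branch sets $\mu(v)$. Applying distance-$\infty$ deletion-breakability to $W$ yields a set $S$ with $|S| \leq k$ and subsets $A, B \subseteq W \setminus S$ of size at least $m + k$ lying in distinct components of $G - S$.

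The key transfer step pushes this separation down to the grid itself. Let $T \subseteq V(\text{grid})$ be the set of grid vertices whose branch sets meet $S$, so $|T| \leq k$. Discard from $A$ and $B$ the at most $k$ elements each whose branch set happens to lie in $T$; the trimmed sets still have size at least $m$, and their images $A', B'$ in the grid now avoid $T$. I claim $A'$ and $B'$ lie in different components of $\mathrm{grid} - T$: any hypothetical path $v_0, v_1, \ldots, v_\ell$ in $\mathrm{grid} - T$ connecting $A'$ to $B'$ lifts to a walk in $G - S$ by traversing the connected sets $\mu(v_i) \subseteq V(G) \setminus S$ (which are disjoint from $S$ precisely because $v_i \notin T$) and then crossing the designated edges $\mu((v_i, v_{i+1}))$, contradicting the separation of $A$ and $B$.

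The remaining, and main, technical step is an elementary connectivity computation for the grid, which will yield the contradiction. Since $|T| \leq k < n$, at most $k$ rows and at most $k$ columns of the $n$-grid contain a vertex of $T$. Every vertex of $\mathrm{grid} - T$ that is not at the intersection of a ``dirty'' row with a ``dirty'' column lies in either a completely clean row or a completely clean column, and all clean rows and columns together form a single connected component via their pairwise intersections. Hence all but at most $k^2$ vertices of $\mathrm{grid} - T$ lie in one giant component, so no two disjoint subsets of size $m = k^2 + 1$ can inhabit different components. This contradicts $|A'|, |B'| \geq m$ and completes the proof.
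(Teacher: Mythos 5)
Your proof is correct and takes a genuinely different route from the paper's. The paper places $W$ only along the bottom row of the $t$-grid (with $t = N(2k+2)$), obtains $A, B$ of size $2k+2$ each, and then constructs $k+1$ vertex-disjoint paths in the grid between a carefully chosen pair of size-$(k+1)$ halves $A_1$ (or $B_1$) and $B_2$ (or $A_2$); lifting these to $G$ via the minor model gives $k+1$ disjoint paths from $A$ to $B$, at most $k$ of which can be destroyed by deleting $S$, a contradiction. You instead spread $W$ across all $n^2$ branch sets, push the separation down to a separation of the grid itself by a set $T$ of $\leq k$ grid vertices (after a legitimate $\leq k$-element trim of each side to account for branch sets touching $S$), and then invoke the clean observation that removing $\leq k < n$ vertices from the $n$-grid leaves a giant component covering all but at most $k^2$ vertices (the dirty-row-times-dirty-column intersections); since $|A'|, |B'| \geq k^2 + 1$, both must meet the giant component, contradiction. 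Your argument trades the paper's Menger-flavored disjoint-paths counting for a direct grid-connectivity computation, at the cost of needing $|W| \geq N(k^2 + k + 1)$ rather than $N(2k+2)$ and a full $n \times n$ grid rather than just its bottom row; both approaches are correct and comparable in length, and yours is arguably the more self-evidently robust of the two.
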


\begin{proof}
    Assume towards a contradiction that $\CC$ has unbounded treewidth but is distance-$\infty$ deletion-breakable with bounds $N(\cdot)$ and $k$.
    Let $t:=N(2k+2)$.
    By \cref{fact:gridminor}, there is a graph $G \in \CC$ such that there exists a minor model $\mu$ of the $t$-grid in $G$.
    Let $W := \{ v_1,\ldots,v_t \}\subseteq V(G)$ be a set \emph{representing} the bottom row of the $t$-grid: we pick one vertex $v_i$ from the subgraph $\mu((i,1))$ for each $i \in [t]$.
    We apply distance-$\infty$ flip-breakability to the set $W$ in $G$, which yields a set $S \subseteq V(G)$ of size $k$ and disjoint sets $A, B \subseteq W \setminus S$, each of size $2k+2$, such that in $G-S$ there is no path from a vertex in $A$ to a vertex in $B$.
    % Let $A_H := \{ u_i : v_i \in A \}$ and 
    Let $i_A \in [t]$ be an index such that each of the sets 
    \[
        A_1 := \{ v_1,\ldots, v_{i_A}\} \cap A
        \quad
        \text{and}
        \quad
        A_2 := \{ v_{i_A+1},\ldots, v_{t}\} \cap A
    \]
    contains $k+1$ elements of $A$. Pick $i_B$, $B_1$, and $B_2$ symmetrically.
    Assume first $i_A \leq i_B$. Then $i < j$ for all $v_i \in A_1$ and $v_j \in B_2$.
    Let $A_\star := \{(i,1) : v_i \in A_1\}$ be the vertices from the bottom row of the $t$-grid represented by $A_1$ and likewise let $B_\star := \{(j,1) : v_j \in B_1\}$.
    In \cref{fig:tw-breakability} it is easy to see that the vertices from $A_\star$ and $B_\star$ can be matched by $k+1$ disjoint paths in the $t$-grid.

    \begin{figure}[htbp]
        \centering
        \includegraphics[scale = 1]{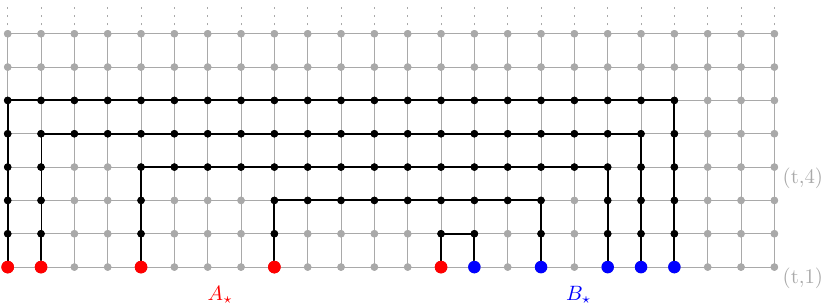}
        \caption{Pairing the vertices of $A_\star$ and $B_\star$ with disjoint paths in the $t$-grid.}
        \label{fig:tw-breakability}
    \end{figure}
    
    By the definition of minor, also $A_1$ and $B_1$ can be matched by $k+1$ disjoint paths in $G$.
    We now reach the desired contradiction, as we assumed that no paths run between $A$ and $B$ in $G-S$, but removing the at most $k$ vertices from $S$ can destroy at most $k$ paths.

    In the case where $i_A > i_B$, we have $i < j$ for all $v_i \in B_1$ and $v_j \in A_2$ and argue symmetrically.
\end{proof}

Combining \cref{lem:tw-fw} and \cref{lem:tw-bw} now yields \cref{thm:tw}.

\subsection{Distance-\texorpdfstring{$\infty$}{Infinity} Flip-Flatness Characterizes Bounded Shrubdepth}

\begin{definition}
    A class of graphs $\CC$ is \emph{distance-$\infty$ flip-flat},
    if there exists a function
    \(N : \N \to \N\) and a constant \(k \in \N\) such that for all \(m \in \N\), \(G \in \CC\) and \(W \subseteq V(G)\) with
    \(|W| \ge N(m)\) there exists a subset $W_\star \subset W$ with \(|W_\star| \ge m\) 
    and a $k$-flip $H$ of $G$ such that in $H$, no two vertices $u,v\in W_\star$ are in the same connected component.
\end{definition}

The goal of this subsection is to prove the following.

\begin{theorem}\label{thm:shrubdepth}
    A class of graphs has bounded shrubdepth if and only if it is distance-$\infty$ flip-flat.
\end{theorem}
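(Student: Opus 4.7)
My plan is to use the equivalent characterization of bounded shrubdepth via \emph{SC-depth} (Ganian--Hlin\v{e}n\'y--Ne\v{s}et\v{r}il--Obdr\v{z}\'alek--Ossona de Mendez--Ramadurai): there exist constants $d,k$ such that every $G\in\CC$ admits a recursive decomposition in which one applies a $k$-flip and then recurses inside each resulting connected component, reaching singletons within $d$ steps. This will let me handle the forward direction by induction and provides the target invariant for the backward direction.

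For the forward direction (bounded shrubdepth $\Rightarrow$ distance-$\infty$ flip-flat), I would induct on the SC-depth $d$. The base $d=0$ is vacuous. For the step, given $W\subseteq V(G)$ with $|W|\ge N_d(m):=m\cdot N_{d-1}(m)$, apply the top-level $k$-flip $H_0$ and inspect the components of $H_0$ that meet $W$. Either at least $m$ such components exist, in which case one representative per component gives the desired $W_\star$ using just a $k$-flip; or some component $C$ contains at least $N_{d-1}(m)$ vertices of $W$. In the second case, induction supplies a $k^{d-1}$-flip $H_1$ of $C$ and a set $W_\star\subseteq W\cap C$ of size $m$ in pairwise distinct components of $H_1$. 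Refining the partition of $H_0$ by that of $H_1$ on $C$ and leaving the remaining classes untouched yields a $k^d$-flip of $G$: edges crossing between components of $H_0$ remain absent since we only modify adjacencies within $C$, while $W_\star$ is split as required inside $C$.

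For the backward direction (distance-$\infty$ flip-flat $\Rightarrow$ bounded shrubdepth), I would proceed by contrapositive, exhibiting a combinatorial obstruction to flip-flatness in classes of unbounded shrubdepth. The prototypical obstruction is a long induced path: one shows that for every $k$, every $k$-flip of $P_n$ has at most $f(k)$ connected components, where $f$ depends only on $k$. The proof analyses the color sequence along the path and observes that vertices of the same color sufficiently far from color-boundaries act as twins in the flipped graph, so they end up in the same component, leaving only boundedly many ``boundary'' components. Taking $W = V(P_n)$ then contradicts distance-$\infty$ flip-flatness for $m > f(k)$. For general classes of unbounded shrubdepth one leverages the known characterization that unbounded shrubdepth entails the existence of a path-like obstruction (either as an induced subgraph after passing to the hereditary closure, or via the equivalence of shrubdepth with rank-depth and the presence of deep rank decompositions), and adapts the same flip-invariance analysis to this more general witness.

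The main obstacle is the backward direction: extending the $P_n$ analysis to arbitrary unbounded-shrubdepth classes. Since classes such as complements of paths have unbounded shrubdepth but bounded longest induced path, the obstruction cannot be an induced path in general; one must instead appeal to a more robust invariant (e.g., bounded rank-depth or the SC-depth characterization directly) and show that its failure produces, in every graph of sufficiently large rank-depth, a set $W$ that is pinned into few components by any $k$-flip. Combining this with the fact (via \cref{thm:cw}) that distance-$\infty$ flip-breakability already forces bounded cliquewidth, one reduces to analyzing the behavior of the flip operation along the rank decomposition of $G$, where the bounded-width structure makes the component count under a $k$-flip controllable by $k$ alone.
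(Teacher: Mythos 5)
Your forward direction is essentially the paper's argument. The paper uses \emph{flipdepth} (a 2-flip-then-decompose-into-components depth parameter, shown to be functionally equivalent to SC-depth and hence shrubdepth), and proves by induction on depth that one can either find many components of the top-level flip meeting $W$ (take representatives), or recurse inside a heavy component. Your refinement step needs a little care --- you should include ``whether $v\in C$'' as part of the refined partition, so that the combined flip cannot reintroduce edges between $C$ and its complement --- but the paper does exactly this, and your phrase ``leaving the remaining classes untouched'' suggests you have this in mind. The numerics ($N_d(m)=m\cdot N_{d-1}(m)$ versus the paper's $|W|^{1/2^k}$) are compatible.

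The backward direction, however, has a genuine gap, and you have essentially flagged it yourself. Your plan for lifting the path analysis to arbitrary unbounded-shrubdepth classes --- ``appeal to a more robust invariant'', ``analyze the flip operation along the rank decomposition'' --- is a research program, not a proof. The paper sidesteps this entirely with two facts you do not invoke: (i)~distance-$\infty$ flip-flatness is \emph{preserved under transductions} (a consequence of the flip-transfer lemma of Toru\'nczyk's flip-width paper, Fact~\ref{fact:transduction-closure} here), and (ii)~every class of unbounded shrubdepth \emph{transduces the class of all paths} (Fact~\ref{fact:shrubtrans}). Combining these with the single lemma that paths are not distance-$\infty$ flip-flat immediately finishes the contrapositive; there is no need to find a path-like structure ``inside'' the graph or to reason about rank decompositions. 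As a minor point, your intuition for why paths are not flip-flat (``same-colored vertices far from boundaries act as twins'') is not quite how the paper's Lemma~\ref{lem:paths-not-infty-flip-flat} works: same-colored path vertices are not twins after a flip (their original path neighborhoods differ). The actual argument isolates the at most $4k$ vertices in small color classes, observes the resulting path-with-holes has $O(k)$ components, and shows every original path edge between big-class vertices survives as a length-$\le 3$ connection after the flip, so the component count cannot increase. Your intuition points in the right direction but would need to be replaced by this more careful connectivity argument.
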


To prove that bounded shrubdepth implies distance-$\infty$ flip-flatness, we work with \emph{flipdepth}, a parameter that is functionally equivalent to shrubdepth.
It is defined as follows.
The single vertex graph $K_1$ has flipdepth $0$.
For $k > 0$, a graph $G$ has flipdepth at most $k$, if it 
is a $2$-flip of a disjoint union of (arbitrarily many) graphs of flipdepth at most $k-1$.

\begin{fact}[{\cite[Thm. 3.6]{shrubdepth}}]
    A class of graphs has bounded shrubdepth if and only if it has bounded flipdepth.
\end{fact}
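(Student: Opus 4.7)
The plan is to prove both directions by induction on depth. For the forward direction (bounded flipdepth implies bounded shrubdepth), I would induct on the flipdepth $k$. The base case $k=0$ gives a single vertex, which trivially has a depth-$0$ tree-model. For the inductive step, if $G$ is a $2$-flip (with partition $\{V_1,V_2\}$ and flip relation $F$) of a disjoint union $\bigsqcup_i G_i$ where each $G_i$ has flipdepth at most $k-1$, the induction hypothesis supplies tree-models $(T_i,\mathrm{col}_i,a_i)$ of depth $k-1$ using a bounded number of colors. I form a new tree $T$ by attaching a fresh root above the roots of the $T_i$. To encode the top-level $2$-flip, I augment each leaf's color from $c$ to the pair $(c,j)$ where $j\in\{1,2\}$ records the part of the partition that leaf lies in; this doubles the color count. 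The adjacency function at depth $0$ (the new root) simply reads $F$ from the part indices, while at depths $\ge 1$ it takes the inherited $G_i$-adjacency XOR'd with $F$ of the part indices. Iterating $k$ times gives a tree-model of depth $k$ with at most $2^k$ times the base colors; this is finite for each fixed $k$, so bounded flipdepth yields bounded shrubdepth.

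For the backward direction (bounded shrubdepth implies bounded flipdepth), I would induct on the tree-model depth $d$, with base case $d=0$ trivial. In the inductive step, let the tree-model $(T,\mathrm{col},a)$ of $G$ have depth $d$ and $m$ colors, and let $T_1,\dots,T_s$ be the subtrees at the root's children. Each $T_i$ induces a graph $G_i$ of shrubdepth $\le d-1$ with $\le m$ colors, so by induction $G_i$ has flipdepth at most some $f(d-1,m)$. The disjoint union $H=\bigsqcup_i G_i$ then has flipdepth at most $f(d-1,m)+1$ (apply the trivial $2$-flip). What remains is to add the inter-$G_i$ edges dictated by $a(\cdot,\cdot,0)$ on top of $H$ using further $2$-flip/disjoint-union layers.

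The main obstacle is precisely this last step: a single $2$-flip partitions the vertex set into only two parts, while at the root we must realize adjacencies indexed by up to $\binom{m+1}{2}$ unordered color pairs, and doing so must not disturb the intra-$G_i$ edges already set. The plan to overcome this is to process color pairs one at a time. For each pair $(c,c')$ with $a(c,c',0)=1$, I would insert a short chain of auxiliary $2$-flip/disjoint-union layers that isolates the vertices of colors $c$ and $c'$ via a partition into ``involved'' and ``uninvolved'' vertices, and uses the flip relation to toggle only the targeted inter-component edges; intra-$G_i$ edges get flipped an even number of times and are restored. This costs $O(m^2)$ additional flipdepth levels per tree level, giving $f(d,m)=O(d\cdot m^2)$, which is bounded whenever $d$ and $m$ are. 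A cleaner alternative I would consider is to route through the intermediate parameter of SC-depth from \cite{shrubdepth} (where a complementation on a subset is a special case of a $2$-flip, so SC-depth trivially dominates flipdepth and flipdepth dominates SC-depth up to constants), and then invoke the known shrubdepth-vs-SC-depth equivalence from the tree-model argument in \cite{shrubdepth} to finish the induction with explicit bounds.
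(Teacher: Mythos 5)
Your ``cleaner alternative'' at the end is in fact exactly what the paper does: it cites \cite[Thm.~3.6]{shrubdepth} for the functional equivalence of shrubdepth with SC-depth, and then observes that SC-depth and flipdepth are functionally equivalent because a set complementation is a special $2$-flip (giving $\mathrm{flipdepth} \le \mathrm{SCdepth}$) and because a single $2$-flip can be decomposed into at most three set complementations (complement $A\cup B$, then $A$, then $B$, as needed), giving the reverse inequality up to a constant factor. So the short route you mention as a side remark is the paper's whole argument.

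Your primary, direct route can be made to work, but as written it has two real gaps. In the forward direction, the induction hypothesis hands you tree-models $(T_i,\mathrm{col}_i,a_i)$ whose adjacency functions $a_i$ may \emph{differ} across the components $G_i$, since each component carries its own sequence of $2$-flips at lower levels; after attaching a common root you cannot use a single signature function $a$ unless the colors also record which $a_i$ applies. You must therefore fold either the per-level flip relations or the bounded ``adjacency-type'' of $G_i$ into the colors, and the color count then grows much faster than the factor of two you state (still finite for each fixed $k$, so the conclusion stands, but the bookkeeping is understated). In the backward direction, your claim that ``intra-$G_i$ edges get flipped an even number of times and are restored'' is incorrect as stated: a global complementation of $C_c\times C_{c'}$ (realized as three set complementations on $C_c\cup C_{c'}$, $C_c$, and $C_{c'}$) flips each intra-component pair with one endpoint of color $c$ and one of color $c'$ exactly once, not an even number of times. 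The correct fix is to \emph{pre-compensate} in the induction: apply the IH not to $G_i$ but to the graph $G_i'$ whose tree-model is that of $G_i$ with the adjacency function XOR'd by $a(\cdot,\cdot,0)$ on the relevant color pairs; after the chain of $O(m^2)$ complementations the intra-component adjacencies are restored and the inter-component ones are set as required. With these repairs your direct argument goes through and yields explicit bounds, whereas the SC-depth detour is shorter given that the shrubdepth/SC-depth equivalence is already available in the literature.
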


More precisely, \cite[Thm. 3.6]{shrubdepth} shows the functional equivalence of shrubdepth and the graph parameter \emph{SC-depth}.
The definition of SC-depth is obtained from the definition of flipdepth by replacing the $2$-flip with a \emph{set complementation}, that is, the operation of complementing all the edges in an arbitrarily subset of the vertices.
$2$-flips generalize set complementations, but any $2$-flip can be simulated by performing at most three set complementations.
Therefore, flipdepth and SC-depth are functionally equivalent.

\begin{lemma}\label{lem:flipdepth}
    For every graph $G$ of flipdepth at most $k$ and every set $W \subseteq V(G)$,
    there exists a subset $W_\star \subseteq W$ with $|W_\star|\geq |W|^{\frac{1}{2^k}}$ 
    and a $4^{k}$-flip $H$ of $G$, 
    such that in $H$, no two vertices from $W_\star$ are in the same connected component.
\end{lemma}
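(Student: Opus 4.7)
The proof will proceed by induction on the flipdepth $k$, mirroring the recursive structure of the definition.

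For the base case $k=0$, the graph $G$ is $K_1$, so $|W|\le 1$. Setting $W_\star:=W$ and $H:=G$ (viewed as a $4^0=1$-flip of itself) trivially satisfies the conclusion: there are not two distinct vertices in $W_\star$ to place in the same component.

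For the inductive step, suppose $G$ has flipdepth at most $k\ge 1$. By definition, $G=G'\oplus_\KK F$ for some $2$-flip, where $G'=\bigsqcup_i G_i$ is a disjoint union of graphs $G_i$ of flipdepth at most $k-1$. Set $W_i:=W\cap V(G_i)$ and let $t$ be the number of indices $i$ with $W_i\neq\emptyset$. The proof splits into two cases depending on how $W$ is distributed among the components of $G'$. If $t\ge |W|^{1/2}$, I would pick one vertex from each nonempty $W_i$, obtaining $W_\star\subseteq W$ with $|W_\star|=t\ge |W|^{1/2}\ge |W|^{1/2^k}$; these lie in pairwise distinct components of $G'$, which is itself a $2$-flip of $G$, so I take $H:=G'$ (a $2\le 4^k$-flip). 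Otherwise $t<|W|^{1/2}$, and a pigeonhole argument gives some component index $i$ with $|W_i|\ge |W|/t>|W|^{1/2}$. I then apply the inductive hypothesis to $G_i$ and $W_i$, yielding $W_\star\subseteq W_i$ of size at least $|W_i|^{1/2^{k-1}}\ge |W|^{1/2^k}$ together with a $4^{k-1}$-flip $H_i$ of $G_i$ in which no two vertices of $W_\star$ share a component.

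The final step is to lift $H_i$ to a suitable flip of $G$ itself. I would extend the partition of $V(G_i)$ underlying $H_i$ to a partition of $V(G')$ by adjoining the single extra part $V(G')\setminus V(G_i)$, and keep the flip relation unchanged, so no flips involve this new part. Because $G'$ has no edges between $V(G_i)$ and the rest, the resulting graph $H'$ agrees with $H_i$ on $V(G_i)$ and with $G'$ elsewhere; in particular the vertices of $W_\star$ remain in pairwise distinct components of $H'$. Composing the $(4^{k-1}+1)$-part flip from $G'$ to $H'$ with the $2$-part flip from $G$ to $G'$ (by taking the common refinement of the two partitions) expresses $H'$ as a flip of $G$ using at most $2(4^{k-1}+1)$ parts.

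The only real obstacle is the arithmetic ensuring that this composed flip has at most $4^k$ parts, since one might worry that lifting repeatedly wastes a multiplicative factor. This is precisely why the target bound is $4^k$ rather than $2^k$: the inequality $2(4^{k-1}+1)\le 4\cdot 4^{k-1}=4^k$ holds exactly because $4^{k-1}\ge 1$ for $k\ge 1$, giving just enough slack for the extra ``background'' part to be absorbed. With the case-analysis threshold $|W|^{1/2}$ balanced so that both branches produce a set of size at least $|W|^{1/2^k}$, the induction closes cleanly.
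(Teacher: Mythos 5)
Your proof is correct and follows essentially the same inductive argument as the paper: the same base case, the same dichotomy (either at least $|W|^{1/2}$ components of the disjoint union contain $W$-vertices, or one component contains at least $|W|^{1/2}$ of them), and the same lifting of the inductive flip of a single component back to a flip of $G$. The only difference is minor bookkeeping: you adjoin a single explicit background part $V(G')\setminus V(G_i)$ to the inductive partition (getting $2(4^{k-1}+1)\le 4^k$ parts), while the paper refines with a separate two-part partition marking the chosen component (bounding by $2\cdot 2\cdot 4^{k-1}=4^k$). Both yield the claimed $4^k$ bound.
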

\begin{proof}
    We prove the lemma by induction on $k$.
    If $k = 0$, we have $G = K_1$ where the statement holds.
    For the inductive step assume $G$ has flipdepth at most $k+1$.
    Then $G$ is $2$-flip of a graph $H$ that is a disjoint union of graphs of flipdepth at most $k$.
    If in $H$ at least $|W|^\frac{1}{2}$ vertices of $W$ are in pairwise different components, we are done.
    Otherwise, there exists a component $C$ of $H$ which contains at least $|W|^\frac{1}{2}$ vertices of $W$.
    By assumption, $H[C]$ has flipdepth at most $k$.
    By induction there is a $4^k$-flip $H_\star$ of $H[C]$ and a set $W_\star \subseteq W$ of size at least $|W_\star| \geq |W|^{\frac{1}{2}\cdot \frac{1}{2^k}}$ such that all vertices from $W_\star$ are in pairwise different components in $H_\star$.
    Refining
    \begin{itemize}
        \item the size $2$ partition of the flip that produced $H$ from $G$,
        \item the size $2$ partition which marks the component $C$ in $H$,
        \item the size $4^k$ partition of the flip that produced $H_\star$ from $H[C]$,
    \end{itemize}
    yields a partition witnessing a $4^{k+1}$-flip of $G$ in which 
    all vertices from $W_\star$ are in pairwise different components as desired.
\end{proof}

\begin{corollary}\label{cor:shrubdepth-implies-ff}
    Every class of bounded shrubdepth is distance-$\infty$ flip-flat.
\end{corollary}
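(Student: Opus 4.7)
The plan is to derive the corollary essentially directly from \Cref{lem:flipdepth} together with the fact, stated immediately before it, that bounded shrubdepth is equivalent to bounded flipdepth as a property of graph classes. So the work is almost entirely bookkeeping: translating the quantitative bound from \Cref{lem:flipdepth} into the witness data required by the definition of distance-$\infty$ flip-flatness.

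First I would fix a class $\CC$ of bounded shrubdepth and invoke the equivalence with flipdepth to obtain a constant $k$ such that every $G \in \CC$ has flipdepth at most $k$. With this $k$ in hand, I would set the constant for flip-flatness to be $k' := 4^k$ and the witnessing function to be $N(m) := m^{2^k}$ (or any function that eventually dominates it, so that $N(m) \ge m^{2^k}$). Then, given any $m \in \N$, $G \in \CC$, and $W \subseteq V(G)$ with $|W| \ge N(m)$, \Cref{lem:flipdepth} applied to $G$ and $W$ produces a subset $W_\star \subseteq W$ with $|W_\star| \ge |W|^{1/2^k} \ge m$ and a $4^k$-flip $H$ of $G$ such that no two vertices of $W_\star$ lie in the same connected component of $H$. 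This is precisely the definition of distance-$\infty$ flip-flatness, with constants $k'$ and $N$ depending only on $\CC$.

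There is no real obstacle here: the substantive combinatorics is already encapsulated in \Cref{lem:flipdepth}, and the equivalence of shrubdepth and flipdepth is cited. The only subtlety worth spelling out in the write-up is the choice of $N(m)$: since \Cref{lem:flipdepth} delivers a subset of size at least $|W|^{1/2^k}$ rather than a fixed fraction of $|W|$, one must take $N(m)$ to grow like $m^{2^k}$ (rather than linearly in $m$) to guarantee $|W_\star| \ge m$. Once this is noted, the verification of the three clauses in the definition of distance-$\infty$ flip-flatness (the bound on the flip size, the bound on $|W_\star|$, and the separation into distinct connected components) is immediate from the three conclusions of \Cref{lem:flipdepth}, finishing the proof.
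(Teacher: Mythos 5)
Your proof is correct and matches the paper's intent exactly: the corollary is stated immediately after \Cref{lem:flipdepth} with no separate argument, precisely because it follows by the bookkeeping you describe, namely converting bounded shrubdepth to bounded flipdepth $k$ via the cited equivalence and then taking $N(m) := m^{2^k}$ and flip bound $4^k$. The observation that $N$ must grow like $m^{2^k}$ rather than linearly is the right (small) point to flag.
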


We will use the following proof strategy for the other direction.

\begin{fact}[{\cite[Thm. 1.1]{shrubtrans}}]\label{fact:shrubtrans}
    Every class of unbounded shrubdepth transduces the class of all paths.
\end{fact}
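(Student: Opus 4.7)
The plan is to prove the Fact by extracting a long induced path from a graph of large shrubdepth, using a Ramsey-cleanup argument applied to the tree-model characterization. Recall that $G$ has shrubdepth at most $d$ iff it admits a rooted tree-model of height $d$: a rooted tree with leaves $V(G)$, labels from a finite set $L$ at the leaves, and a signature mapping each triple (label, label, depth) to $\{0,1\}$ that determines adjacency in $G$ from the labels of the endpoints and the depth of their least common ancestor. Unbounded shrubdepth of $\CC$ therefore provides, for every $d$ and every finite label set $L$, some $G_{d,L} \in \CC$ whose tree-models over $L$ all require height strictly greater than $d$.

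For a fixed large $d$ and small $L$, I would fix $G_{d,L}$ together with an optimal (minimum-height) tree-model $T$, and select a root-to-leaf branch in $T$ of length at least $d$. Along this branch, each internal node carries a bounded amount of combinatorial data: its label, the set of labels appearing in its subtree, and the restriction of the signature relevant to its subtree. Since all of this lies in a fixed finite set, Ramsey's theorem extracts a long homogeneous sub-chain $n_1, \ldots, n_m$ of internal nodes agreeing on all of this data, with $m$ growing unboundedly with $d$. Picking a distinguished leaf $v_i$ from the subtree of each $n_i$ yields a sequence in which, by homogeneity, the adjacency of $v_i$ and $v_j$ depends only on $\min(i,j)$. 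A further Ramsey refinement on this diagonal produces a canonical induced subgraph on $\{v_1,\ldots,v_m\}$ with a monotone adjacency pattern; optimality of $T$ rules out the trivial all-edge and no-edge cases, so the resulting pattern is half-graph-like or already a path. A fixed first-order transduction with a parity or ordering coloring of the $v_i$ then produces an induced copy of $P_{\Omega(m)}$. Varying $d$ yields paths of all lengths, establishing that $\CC$ transduces $\mathcal{P}$.

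The main obstacle is the bounding of types in the Ramsey step. A priori, the "type" of an internal node could encode its full interaction with the rest of $G_{d,L}$ and grow with $|V(G_{d,L})|$, rendering Ramsey useless. The crucial observation is that, because the tree-model uses only a bounded label alphabet $L$ and a bounded signature table, there is a canonical finite invariant capturing the interaction type of each internal node, independent of the size of the graph. A secondary and more subtle difficulty is ruling out degenerate homogeneous chains: if the extracted canonical structure were a clique, independent set, or perfectly trivial pattern, one could collapse the corresponding level of $T$ to produce a tree-model of smaller height, contradicting the choice of $G_{d,L}$; exploiting this optimality to force a non-trivial adjacency pattern among the $v_i$ is the technical heart of the argument.
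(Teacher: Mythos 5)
This statement is cited as a black-box from \cite{shrubtrans}; the paper you are reading does not prove it, so there is no internal proof to compare against. Evaluating your sketch on its own terms: the Ramsey-on-a-branch strategy has a genuine gap precisely at the step you yourself flag as ``the technical heart.'' You assert that optimality (minimum height) of the tree-model $T$ rules out the degenerate case in which the extracted leaves $v_1,\ldots,v_m$ form a clique or an independent set, but you give no mechanism for this, and it is in fact false. Consider the class of paths itself, which has unbounded shrubdepth. An optimal tree-model of $P_n$ is essentially a balanced binary tree of height $\Theta(\log n)$ obtained by recursively bisecting the path; leaves $v_i$ chosen one per level along a root-to-leaf branch are geometrically spaced along $P_n$ and hence pairwise non-adjacent. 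After your Ramsey refinement you would be handed a long independent set, which is exactly the trivial case you need to exclude, and no collapsing argument applies: the fact that a single branch yields a trivial local pattern says nothing about whether the tree-model as a whole could have smaller height, since the tree has exponentially many other branches whose structure is untouched.

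More broadly, the information available along a single root-to-leaf branch of a tree-model is too impoverished to recover anything path-like; the useful adjacency data in a bounded-shrubdepth tree-model is spread across the whole tree, not concentrated on one branch. The actual proof of this theorem is the main result of \cite{shrubtrans} and is considerably more involved: rather than selecting one branch, it identifies a combinatorial obstruction (a family of explicit ``chain'' or ``comb''-type configurations that must appear once shrubdepth is large, assembled from many incomparable subtrees simultaneously) and shows each such configuration supports a fixed transduction to paths. If you want a self-contained argument you would need to develop something along those lines; the optimality-forces-nontriviality claim in your sketch does not hold and cannot be patched by a local observation about a single branch.
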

    
\begin{lemma}\label{lem:infty-flip-flat-transduction-closed}
    Let $\CC$ be a class of graphs that is distance-$\infty$ flip-flat.
    Every class that is transducible from $\CC$ is also distance-$\infty$ flip-flat.
\end{lemma}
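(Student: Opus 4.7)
The plan is to transfer the flip-flatness property through a transduction using Gaifman locality. Fix a transduction from $\CC$ to $\DD$ given by a symmetric formula $\phi(x,y)$ (we may symmetrize) in the signature of $c$-colored graphs. Consider $G' \in \DD$ obtained from some $G \in \CC$ via a coloring $G^+$ and an induced-subgraph step, so $V(G') \subseteq V(G)$ and edges of $G'$ are given by $\phi$ on $G^+$. Given a large set $W \subseteq V(G')$, the goal is to exhibit a bounded flip of $G'$ and a large subset $W_{\star\star} \subseteq W$ lying in pairwise distinct components of that flip.

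First, I would apply distance-$\infty$ flip-flatness of $\CC$ to $W$ inside $G$, obtaining a $k$-flip $H$ of $G$ and a large subset $W_\star \subseteq W$ whose vertices lie in pairwise distinct connected components of $H$. Since both the original coloring and the $k$-flip can be encoded into a coloring of $H$ with finitely many colors, there is a coloring $H^+$ of $H$ and a formula $\psi$ of the same quantifier rank as $\phi$ such that $G^+ \models \phi(u,v) \iff H^+ \models \psi(u,v)$ for all $u,v$. Now invoke \Cref{lem:gaifman-coloring}: there is a refinement $H^{++}$ of $H^+$ using $t = \const(\phi)$ additional colors and a Gaifman radius $r$ such that, for all $u,v \in V(H)$ at $H$-distance greater than $r$, the truth of $H^{++} \models \psi(u,v)$ depends only on the colors of $u$ and $v$. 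By pigeonhole, I extract $W_{\star\star} \subseteq W_\star$ of size at least $|W_\star|/t$, monochromatic under the $H^{++}$-coloring — still unboundedly large in $|W|$.

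The desired flip of $G'$ is defined using the partition $\KK$ of $V(G')$ induced by the $H^{++}$-colors: for each pair of colors $(X,Y)$, flip the pair iff $\psi(u,v)$ holds for some (hence, by Gaifman locality, any) pair $u \in X$, $v \in Y$ with $\dist_H(u,v) > r$ (and arbitrarily when no such witness exists). Let $G'_\star$ denote the resulting $\KK$-flip of $G'$, whose number of parts is bounded by $c \cdot 2^{k^2} \cdot t = \const(\CC,\phi)$. The key claim is: in $G'_\star$, there are no edges between vertices lying in distinct $H$-components. Indeed, any two such vertices are at $H$-distance $\infty > r$, so their $G'$-adjacency was prescribed solely by their $H^{++}$-colors, and the flip rule was designed to reverse precisely these monochromatic-pair adjacencies. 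Consequently, no $G'_\star$-path can cross between $H$-components, and since all vertices of $W_{\star\star}$ inhabit pairwise distinct $H$-components, they inhabit pairwise distinct components of $G'_\star$.

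The main obstacle is the potential for \emph{paths} (not just direct edges) in $G'_\star$ that weave across $H$-components and reconnect $W_{\star\star}$ vertices. The resolution is the uniform observation above: Gaifman locality controls adjacency between \emph{every} pair of distant $H$-vertices (not just those in $W_{\star\star}$), so a single flip determined only by colors kills \emph{all} inter-component edges simultaneously, not merely those incident to $W_{\star\star}$. One minor technical care point is verifying that the flip rule is consistently defined across color pairs $(X,Y)$ where both local (distance $\le r$) and distant witnesses exist — here the flip is chosen to agree with the distant behavior, and local pairs are simply not guaranteed to behave uniformly, which is harmless because each $H$-component is already a single component in $G'_\star$, regardless of its internal $G'_\star$-edges.
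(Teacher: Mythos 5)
Your proof is correct, but it takes a different route than the paper. The paper simply invokes \cref{fact:transduction-closure} (the flip transfer lemma from \cite{flipwidth}): given the $k$-flip $G'$ of $G$ witnessing flip-flatness, that fact hands you an $\ell$-flip $H'$ of $\phi(G)$ in which any two $H'$-adjacent vertices are at bounded $G'$-distance, so distinct $G'$-components remain distinct $H'$-components, and one restricts to $V(H)$. You instead re-derive the relevant consequence of that fact from scratch via Gaifman locality: encode the flip into a coloring, apply \cref{lem:gaifman-coloring} to get a coloring under which distant adjacency is color-determined, and then define a flip of $G'$ by color classes that cancels exactly those color-determined adjacencies. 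Since all cross-component pairs are at distance $\infty>r$, this kills every inter-component edge uniformly. This is sound and essentially inlines the proof of the flip transfer lemma (which is itself a locality argument) rather than citing it; the paper's route is more modular since the same fact is reused elsewhere (e.g.\ in \cref{lem:breakability-transfer}).

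One remark: the pigeonhole step extracting a monochromatic $W_{\star\star}\subseteq W_\star$ is unnecessary and slightly suggests a confusion. Your argument already shows that \emph{every} pair of vertices of $G'_\star$ lying in distinct $H$-components is non-adjacent, regardless of their colors, so all of $W_\star$ already sits in pairwise distinct $G'_\star$-components. The phrase ``the flip rule was designed to reverse precisely these monochromatic-pair adjacencies'' is also a bit misleading: the flip rule treats all color pairs, not only equal-color pairs, and monochromaticity of $W_{\star\star}$ plays no role. Dropping that step would both simplify the bound on the output subset size and avoid the impression that locality only controls same-color pairs.
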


\begin{lemma}\label{lem:paths-not-infty-flip-flat}
    The class of all paths is not distance-$\infty$ flip-flat.
\end{lemma}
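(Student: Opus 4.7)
The plan is to prove a concrete quantitative statement: every $k$-flip of any path has at most $k^2+1$ connected components. Once this is established, the lemma follows immediately by taking $m := k^2+2$ in the definition of distance-$\infty$ flip-flatness: for any path $P_n$ and any $W \subseteq V(P_n)$, no $k$-flip of $P_n$ admits a subset $W_\star \subseteq W$ of size at least $m$ with vertices in pairwise distinct components, simply because the number of components is bounded by $k^2+1 < m$, regardless of how large $n$ and $|W|$ are.

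To prove the structural claim, I would fix a $k$-flip $H$ of $P = v_1, \ldots, v_n$ given by a coloring $c \colon V(P) \to [k]$ and a symmetric relation $F \subseteq [k]^2$. Enumerate the connected components $C_1, \ldots, C_t$ of $H$ according to the smallest path-index they contain, yielding indices $\alpha_1 < \alpha_2 < \ldots < \alpha_t$ with $v_{\alpha_i}$ the first vertex of $C_i$ on the path. The key observation is that for each $i \geq 2$, the path edge $\{v_{\alpha_i - 1}, v_{\alpha_i}\}$ must be absent from $H$: otherwise $v_{\alpha_i - 1}$ (which lies in some earlier component $C_p$ with $p < i$, since $\alpha_{i-1} \leq \alpha_i - 1 < \alpha_i$) would be connected to $v_{\alpha_i}$, forcing $C_p = C_i$ and contradicting distinctness. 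Since a path edge is absent in $H$ exactly when its endpoint colors form a pair in $F$, this yields pairs $\psi_i := (c_{\alpha_i - 1}, c_{\alpha_i}) \in F$ for $i = 2, \ldots, t$.

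The heart of the argument is then a pigeonhole step: if $t \geq k^2+2$, two such pairs coincide, say $\psi_i = \psi_j = (X,Y)$ with $i < j$. Then $v_{\alpha_i - 1} \in V_X$ and $v_{\alpha_j} \in V_Y$ are at positions differing by $\alpha_j - (\alpha_i - 1) \geq 2$, so they are non-path-adjacent; since $(X,Y) \in F$, the pair $\{v_{\alpha_i - 1}, v_{\alpha_j}\}$ is therefore an edge of $H$. This connects $v_{\alpha_i - 1}$ (in some $C_p$ with $p < i$) to $v_{\alpha_j} \in C_j$, forcing $C_p = C_j$ — impossible since $p < i < j$ and the components are distinct. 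Hence $t \leq k^2+1$.

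The main subtlety I expect lies in the bookkeeping of the first-vertex argument — verifying that $\alpha_i - 1 \geq 1$ for $i \geq 2$, that the position-gap $\alpha_j - (\alpha_i - 1) \geq 2$ correctly places the flipped pair outside the path, and that $v_{\alpha_i-1}$ genuinely belongs to a component with smaller index. The pigeonhole step itself is routine once the right quantity ($(c_{\alpha_i-1}, c_{\alpha_i}) \in F$) has been identified; recognizing this quantity as the correct invariant is, I think, the only creative step in the whole proof.
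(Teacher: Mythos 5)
Your proof is correct, and it takes a genuinely different route from the paper. You prove directly that every $k$-flip of a path has at most $k^2+1$ connected components, via a pigeonhole argument on the ordered color pairs $(c(v_{\alpha_i - 1}), c(v_{\alpha_i})) \in F$ attached to the "breakpoints" where consecutive components of $H$ begin along the path: if two breakpoints carry the same pair $(X,Y)$, the long-range flipped edge between $v_{\alpha_i-1}$ and $v_{\alpha_j}$ merges a component preceding $C_i$ with $C_j$, a contradiction. I verified the bookkeeping you flagged (the gap $\alpha_j - (\alpha_i - 1) \ge 2$ since $\alpha_j \ge \alpha_i + 1$, the earlier-component assignment of $v_{\alpha_i-1}$, and the degenerate case $X = Y$); all are sound. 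The paper instead isolates the $\le 4k$ vertices in small color classes (size $< 5$), observes the resulting graph $G'$ has $\le 8k+1$ components, and then shows that the restricted flip cannot disconnect $G'$ further (using that each remaining class has $\ge 5$ vertices while $G'$ has max degree $2$, so any flipped path edge can be detoured through a third and fourth vertex). The paper's bound is linear in $k$ versus your quadratic one, but since only constancy matters for disproving distance-$\infty$ flip-flatness, both suffice; your argument is more self-contained and avoids the case analysis in the paper's claim.
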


It is easy to see that combining \cref{fact:shrubtrans}, \cref{lem:infty-flip-flat-transduction-closed}, \cref{lem:paths-not-infty-flip-flat} yields the following.

\begin{lemma}
    Every class of graphs with unbounded shrubdepth is not distance-$\infty$ flip-flat.
\end{lemma}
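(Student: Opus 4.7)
The plan is to prove the contrapositive via a short chain of implications that the excerpt has essentially lined up for us. I would proceed by contradiction: suppose $\CC$ has unbounded shrubdepth yet is distance-$\infty$ flip-flat. The strategy is then a three-step transfer argument through the class of all paths.

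First, I would invoke Fact \ref{fact:shrubtrans} to obtain that $\CC$ transduces the class $\mathcal{P}$ of all paths. This is the transduction closure of $\mathcal{P}$ under a fixed formula $\phi(x,y)$ together with a bounded-color unary expansion. Second, I would apply Lemma \ref{lem:infty-flip-flat-transduction-closed} to conclude that $\mathcal{P}$, being transducible from the distance-$\infty$ flip-flat class $\CC$, is itself distance-$\infty$ flip-flat. Finally, Lemma \ref{lem:paths-not-infty-flip-flat} directly contradicts this. Taking the contrapositive yields the claim.

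The whole argument is therefore purely a matter of stringing together the three cited results, so no new combinatorics is required at this level; the actual content lies in the two preparatory lemmas that we are permitted to assume. The only thing worth being careful about in writing the proof is to check that the quantifier order in the definitions matches up across the three results: distance-$\infty$ flip-flatness demands a single bound $k$ and function $N(\cdot)$ working uniformly over all $r$ (the definition has no $r$ parameter), and the transduction-closure statement preserves exactly this uniform $(k,N)$-shape, so composing them causes no parameter drift.

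Since the excerpt itself explicitly flags this implication as an immediate corollary ("It is easy to see that combining \cref{fact:shrubtrans}, \cref{lem:infty-flip-flat-transduction-closed}, \cref{lem:paths-not-infty-flip-flat} yields the following"), the proof will be only a few lines, essentially a \emph{Proof.} paragraph that says: "Assume for contradiction that $\CC$ has unbounded shrubdepth and is distance-$\infty$ flip-flat. By Fact \ref{fact:shrubtrans}, $\CC$ transduces the class of all paths; by Lemma \ref{lem:infty-flip-flat-transduction-closed}, this class inherits distance-$\infty$ flip-flatness from $\CC$; but this contradicts Lemma \ref{lem:paths-not-infty-flip-flat}." The main obstacle, if any, is purely presentational: ensuring that the reader sees why the three cited results line up without needing to re-verify the transduction-closure argument in place, which is established separately.
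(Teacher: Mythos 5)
Your proposal is correct and matches the paper's argument exactly: the paper itself states that the lemma follows by combining Fact~\ref{fact:shrubtrans}, Lemma~\ref{lem:infty-flip-flat-transduction-closed}, and Lemma~\ref{lem:paths-not-infty-flip-flat}, which is precisely the contradiction chain you lay out.
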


Together with \cref{cor:shrubdepth-implies-ff}, the above lemma proves \cref{thm:shrubdepth}.
It remains to prove \cref{lem:infty-flip-flat-transduction-closed} and \cref{lem:paths-not-infty-flip-flat}.
The former is an immediate consequence of the following fact where
$\phi(x,y)$ is a formula in the language of colored graphs 
and $\phi(G)$ denotes the graph with vertex set $V(G)$ and edge set $\{ uv : G \models \phi(u,v) \lor \phi(v,u) \}$.

\begin{fact}[{\cite[Lem. H.3]{flipwidth}}]\label{fact:transduction-closure}
    For every formula $\phi(x,y)$ and $k \in \N$ there exists $s,\ell \in \N$ such that for every colored graph $G$ and for every $k$-flip $G'$ of $G$, there exists an $\ell$-flip $H'$ of $H := \phi(G)$ such that for every two vertices $u$ and $v$ adjacent in $H'$ we have that $\dist_{G'}(u,v)\leq s$.
\end{fact}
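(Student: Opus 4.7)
}

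The plan is to reduce the statement to Gaifman's locality theorem applied to a suitable rewriting of $\phi$. I will proceed as follows.

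First, I will absorb the $k$-flip into the coloring. Since $G'$ is a $k$-flip of $G$, there is a partition $\KK_0$ of $V(G)$ with $|\KK_0| \le k$ and a symmetric $F_0 \subseteq \KK_0^2$ with $G' = G \oplus_{\KK_0} F_0$. As recalled in \Cref{sec:prelims}, enriching $G'$ with $\KK_0$ gives a colored graph $G'^+$ in which the edge relation of $G$ is defined by a quantifier-free formula $\phi_{\KK_0, F_0}(x,y)$. Substituting this formula for every occurrence of $E(x,y)$ in $\phi(x,y)$ yields a formula $\phi^\star(x,y)$, over the signature of $G'^+$, with the same quantifier rank as $\phi$ and with $G \models \phi(u,v) \Longleftrightarrow G'^+ \models \phi^\star(u,v)$ for all $u,v \in V(G)$. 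Note that the number of colors of $G'^+$ is $k$ times the number of colors of $G$, so it is bounded by a function of $\phi$ and $k$ (once one fixes the signature of $G$, which is part of the data of $\phi$).

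Next, I will apply Gaifman's locality theorem (\Cref{lem:gaifman-coloring}) to $\phi^\star$ in $G'^+$. This yields a number $s$ depending only on the quantifier rank of $\phi^\star$ (hence only on $\phi$) and a number $t$ depending only on the signature and quantifier rank of $\phi^\star$ (hence only on $\phi$ and $k$), together with a $t$-coloring $G'^{++}$ of $G'^+$ such that for any two vertices $u,v$ with $\dist_{G'^{++}}(u,v) > s$, the truth of $\phi^\star(u,v)$ in $G'^{++}$ depends only on the colors of $u$ and $v$. Since the underlying graph of $G'^{++}$ is $G'$, distances in $G'^{++}$ coincide with distances in $G'$. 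Let $\KK$ denote the $t$-part partition of $V(G)$ induced by this coloring; we will take $\ell := t$.

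Finally, I will define the flip. For each ordered pair $(X,Y) \in \KK^2$ such that some pair $u \in X$, $v \in Y$ satisfies $\dist_{G'}(u,v) > s$, put $(X,Y) \in F$ iff for every (equivalently, some) such pair we have $uv \in E(H) = E(\phi(G))$; this is well-defined by Gaifman locality and is easily made symmetric by taking the union over $(X,Y)$ and $(Y,X)$. For the remaining pairs $(X,Y)$, set the value of $F$ arbitrarily (say $(X,Y) \notin F$). Let $H' := H \oplus_{\KK} F$, which is an $\ell$-flip of $H$.

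It remains to verify that every edge $uv$ of $H'$ satisfies $\dist_{G'}(u,v) \le s$. Suppose instead that $uv \in E(H')$ with $\dist_{G'}(u,v) > s$, and let $X = \KK(u)$, $Y = \KK(v)$. By construction, $(X,Y) \in F$ iff $uv \in E(H)$; but the flip $\oplus_\KK F$ toggles precisely the $(X,Y)$-pairs with $(X,Y) \in F$, so $uv \in E(H')$ iff $uv \notin E(H) \Leftrightarrow (X,Y) \in F$, contradicting the previous equivalence. Hence no long edges survive in $H'$, as required.

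The only genuine input is Gaifman's locality theorem, already cited as \Cref{lem:gaifman-coloring}; everything else is routine formula rewriting and bookkeeping of parameters, so I do not anticipate a real obstacle, only the need to keep the dependencies of $s$, $t$, $\ell$ on $\phi$ and $k$ transparent.
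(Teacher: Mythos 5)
Your proof is correct and takes the route the paper itself anticipates. The paper states this as a fact cited from \cite{flipwidth} without reproducing a proof, but in \cref{sec:binary}, when restating the more general Lemma~\ref{lem:flip-transfer}, it explicitly remarks that the proof ``is a straightforward consequence of locality of first-order logic'' --- precisely your Gaifman-based argument. Your three steps (absorb the $k$-flip of $G$ into extra unary predicates so that $\phi$ becomes $\phi^\star$ over $G'$ with the same quantifier rank, apply Gaifman's locality theorem to get a coloring $\KK$ under which far-apart pairs have $\phi^\star$-truth-value determined by color, then flip away all long-range $H$-edges class-by-class) are exactly the bookkeeping one expects. Two cosmetic remarks: the relation $F$ you define is already symmetric because $uv\in E(H)$ is an undirected condition (so symmetrizing is vacuous); and in the well-definedness step you should make explicit that $uv\in E(H)$ is $\phi(u,v)\lor\phi(v,u)$ and hence depends only on the unordered pair of Gaifman colors (you apply locality to both $\phi^\star(u,v)$ and $\phi^\star(v,u)$), though this is routine. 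No gaps.
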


\begin{proof}[Proof of \cref{lem:infty-flip-flat-transduction-closed}]
    Assume the class $\CC$ is distance-$\infty$ flip-flat with bounds $N(\cdot)$ and $k$
    and transduces the class $\DD$ using the formula $\phi(x,y)$.
    We will show that also $\DD$ is flip-flat with bounds $N(\cdot)$ and $\ell$, where $\ell$ is the bound obtained from \cref{fact:transduction-closure} for $\phi$ and $k$.
    Let $H\in \DD$, $m\in \N$, and $W \subseteq V(G)$ be a set of size at least $N(m)$.
    Since $\CC$ transduces $\DD$, we have $H = \phi(G)[V(H)]$ for some colored graph $G \in \CC$.
    By distance-$\infty$ flip-flatness, there is a $k$-flip $G'$ of $G$ and a subset $W_\star \subseteq W$ of size at least $m$ whose vertices are pairwise in different components in $G'$.
    By \cref{fact:transduction-closure}, there is also an $\ell$-flip $H'$ of $\phi(G)$ in which the vertices of $W_\star$ are in pairwise different components.
    It follows that $H'[V(H)]$ is the desired $\ell$-flip of $H$.
\end{proof}

\begin{proof}[Proof of \cref{lem:paths-not-infty-flip-flat}]
    Assume towards a contradiction that the class of all paths is distance-$\infty$ flip-flat with bounds $N(\cdot)$ and $k$.
    Let $G$ be the path containing $N(8k+2)$ vertices.
    By flip-flatness there exists a $k$-coloring $\KK$ of $G$ and $F \subseteq \KK^2$ such that $G_\oplus := G \oplus_\KK F$ contains at least $8k + 2$ components.
    Let $\BB := \{ C \in \KK : |C| \geq 5\}$
    be the \emph{big} color classes and $W := \bigcup (\KK \setminus \BB)$ be the at most $4k$ vertices contained in \emph{small} color classes.
    Let $G'$ be the subgraph of $G$ obtained by isolating $W$ and let $F' := F \cap \BB^2$ be the restriction of $F$ to $\BB$.
    $G'_\oplus := G' \oplus_\KK F'$ is a subgraph of $G_\oplus$: every edge $uv$ in $G'_\oplus$ has no endpoint in $W$ and is therefore also present in~$G_\oplus$.
    Since $G'_\oplus$ and $G_\oplus$ share the same vertex set, $G'_\oplus$ has at least as many components as~$G_\oplus$.
    In order to arrive at the desired contradiction, it remains to bound the number of components in $G'_\oplus$.
    Towards this goal, we first bound the number of components in $G'$.
    As $G'$ is obtained from a path by isolating at most $4k$ vertices, $G'$ contains at most $8k + 1$ components: the path is cut in at most $4k$ places leading to $4k + 1$ components plus the additional at most $4k$ isolated vertices.
    \begin{claim}
        If two vertices $u$ and $v$ are adjacent in $G'$, then they are connected in $G'_\oplus$.
    \end{claim}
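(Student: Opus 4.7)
The plan is to exploit that $G$ is a path (so every vertex has degree at most $2$) together with the fact that every ``big'' color class has at least $5$ elements, which provides enough slack to route around the loss of the edge $uv$. First I would note that if $u$ and $v$ are adjacent in $G'$, then neither can belong to $W$ (otherwise the edge would have been destroyed in $G'$), so both lie in big color classes. Let $X := \KK(u)$ and $Y := \KK(v)$, both in $\BB$; in particular $|X|, |Y| \ge 5$. If $(X,Y) \notin F'$, then the edge $uv$ survives the flip, so $u$ and $v$ are already adjacent in $G'_\oplus$. Hence the only nontrivial case is $(X,Y) \in F'$, where the edge $uv$ is removed, and we must exhibit an alternate path.

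The key combinatorial observation I would invoke is that in $G$, and therefore in $G'$, every vertex has degree at most $2$. Thus, for any fixed vertex $a$, the set of vertices ``blocked'' from being $G'$-non-neighbors of $a$ consists of at most two path-neighbors of $a$. Using this, I plan to split into two sub-cases according to whether $X = Y$ or $X \neq Y$. If $X = Y$, then I would pick $w \in X \setminus \{u,v,u',v'\}$, where $u',v'$ denote the at-most-one other $G'$-neighbor of $u$ and $v$ respectively; such a $w$ exists since $|X| \ge 5$. Because $w$ is a $G'$-non-neighbor of both $u$ and $v$ while $(X,X) \in F'$ (as $(X,Y) = (X,X) \in F'$), the flip turns $wu$ and $wv$ into edges of $G'_\oplus$, yielding the length-$2$ path $u - w - v$.

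If $X \ne Y$, the analysis requires a short case split depending on whether $(X,X), (Y,Y)$ lie in $F'$. The cleanest uniform construction I plan to use is to pick $z \in Y$ and $w \in X$ such that (i) $w$ is a $G'$-non-neighbor of $u$, $v$, and $z$, and (ii) $z$ is a $G'$-non-neighbor of $u$ and $v$. Since each of $u,v,z$ has at most two neighbors in $G'$, the set of choices forbidden for $w$ has size at most $8$ and for $z$ at most $4$; here the bound $|X|,|Y| \ge 5$ turns out to be slightly tight, so I would instead work with the size-$5$ bound more carefully, exploiting that many of the forbidden vertices actually coincide (e.g., $u,v$ can only be neighbors of a handful of path-vertices). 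One then verifies, using $(X,Y) \in F'$ together with the current signs of $(X,X)$ and $(Y,Y)$ relative to $F'$, that at least one of the paths $u - z - w - v$, $u - w - v$, or $u - z - v$ lies in $G'_\oplus$.

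The main obstacle I anticipate is purely bookkeeping: making sure that the size-$5$ lower bound genuinely suffices in every sub-case of ``which pairs among $(X,X), (Y,Y), (X,Y)$ lie in $F'$.'' To avoid juggling many subcases, I would prefer a uniform argument that exhibits a path of length at most $3$ in $G'_\oplus$ using only the observations that (a) at most four path-vertices are ``dangerous'' relative to $u$ and $v$, and (b) a flip only alters adjacency between pairs of classes in $F'$, so within each color class the edges are either all preserved or all flipped. The proof should then be short, with the constant $5$ chosen precisely so that each pigeonhole step leaves a nonempty residue.
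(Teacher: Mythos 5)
Your overall plan is right and the $X=Y$ case matches the paper, but the $X\neq Y$ case has a genuine counting gap that your sequential choice of $z$-then-$w$ cannot close. With $|X|=|Y|=5$, after fixing $z\in Y$ non-adjacent to $u$ and $v$ in $G'$ you need $w\in X$ avoiding $N^{G'}_1[v]\cup N^{G'}_1[z]$, a set that can have size $6$ (even $7$ if you also insist $w\notin N^{G'}_1[u]$, which as you note is actually unnecessary for the path $u$--$z$--$w$--$v$ whose three edges are all $X$--$Y$ pairs). Since $|X|=5$, the pigeonhole can fail. Your ``work with the bound more carefully'' hedge does not resolve this: there is a path on $n$ vertices, a $5$-element $X$ and a $5$-element $Y$, and a bad choice of $z$ for which no valid $w$ exists.

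The paper's argument avoids the sequential dependence: it first forms $U\subseteq X$, the ($\ge 3$) vertices non-adjacent to $v$ in $G'$, and $V\subseteq Y$, the ($\ge 3$) vertices non-adjacent to $u$ in $G'$, and then uses that $G'$ has maximum degree $2$ to observe the bipartite graph on $U\times V$ has at most $6<9$ edges, so some pair $u'\in U$, $v'\in V$ is non-adjacent in $G'$. The path $u$--$v'$--$u'$--$v$ then lies in $G'_\oplus$. Note also that this path uses only $X$--$Y$ edges, all flipped by $(X,Y)\in F'$, so the case analysis you anticipate over whether $(X,X)$ or $(Y,Y)$ lie in $F'$ (and the alternative paths $u$--$w$--$v$, $u$--$z$--$v$) is unnecessary. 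You should replace the sequential pigeonhole with this simultaneous pair-counting step; the rest of your proposal then goes through.
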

    \begin{claimproof}
        Since $u$ and $v$ are adjacent, they are from big color classes $\KK(u)$ and $\KK(v)$.
        Assume the adjacency between $\KK(u)$ and $\KK(v)$ was flipped, as otherwise we are done.
        As $G'$ has maximum degree two, we have $|N_1^{G'}[u] \cup N_1^{G'}[v]| \leq 4$.
        If $\KK(u) = \KK(v)$ then there exists a vertex in that class that is adjacent to none of $u$ and $v$ in $G'$ and therefore adjacent to both of them in~$G'_\oplus$ and we are done.
        Otherwise, there are three vertices $U \subseteq \KK(u)$ non-adjacent to $v$ in $G'$ and three vertices $V \subseteq \KK(v)$ non-adjacent to $u$. 
        Again using the fact that $G'$ has maximum degree two, we find $u' \in U$ and $v' \in V$ that are non-adjacent in $G'$. It follows that $(u,v',u',v)$ is a path in $G'_\oplus$.
    \end{claimproof} 
    It follows that $G'_\oplus$ (and also $G_\oplus$) contains at most $8k + 1$ components; a contradiction.
\end{proof}

\subsection{Distance-\texorpdfstring{$\infty$}{Infinity} Deletion-Flatness Characterizes Bounded Treedepth}

\begin{definition}
    A class of graphs $\CC$ is \emph{distance-$\infty$ deletion-flat},
    if there exists a function
    \(N : \N \to \N\) and a constant \(k \in \N\) such that for all \(m \in \N\), \(G \in \CC\) and \(W \subseteq V(G)\) with
    \(|W| \ge N(m)\) there exists
    a set $S\subseteq V(G)$ with $|S| \leq k$ and a subset $W_\star\subset W \setminus S$ with \(|W_\star| \ge m\) such that in 
    $G - S$, no two vertices $u,v\in W_\star$ are in the same connected component.
\end{definition}

In this subsection we relate distance-$\infty$ deletion-flatness
to the graph parameter \emph{treedepth}.
The single vertex graph $K_1$ has treedepth $1$.
For $k > 1$, a graph $G$ has treedepth at most $k$ if there exists a vertex
whose deletion splits $G$ into a disjoint union of (arbitrarily many) graphs of treedepth at most $k-1$.

\begin{theorem}\label{thm:treedepth}
    A class of graphs has bounded treedepth if and only if it is distance-$\infty$ deletion-flat.
\end{theorem}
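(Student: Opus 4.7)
The plan is to prove both directions by adapting the shrubdepth/flipdepth argument (Lemma~\ref{lem:flipdepth}, Theorem~\ref{thm:shrubdepth}), replacing flips with vertex deletions throughout. The structural analogue of a flipdepth decomposition is an \emph{elimination forest}: a rooted forest $F$ on $V(G)$ of depth at most $k$ such that every edge of $G$ connects an ancestor-descendant pair. A graph has treedepth at most $k$ iff it admits such an elimination forest.

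For the forward direction (bounded treedepth $\Rightarrow$ distance-$\infty$ deletion-flat), I will prove by induction on $k$ that every graph $G$ of treedepth at most $k$ and every $W\subseteq V(G)$ admit a set $S\subseteq V(G)$ with $|S|\leq k$ and $W_\star\subseteq W\setminus S$ with $|W_\star|\geq |W|^{1/k}/c_k$ (for some constant $c_k$) such that no two vertices of $W_\star$ lie in the same component of $G-S$. The base case $k=1$ is trivial since $G$ is edgeless. In the inductive step, fix an elimination forest $F$ of depth $\leq k$, with trees $T_1,\ldots,T_p$; since every edge of $G$ goes between ancestor and descendant in $F$, there are no edges of $G$ between different $T_i$. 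Let $W_i:=W\cap V(T_i)$. Either at least $|W|^{1/k}/c_k$ of the $W_i$ are nonempty (in which case pick one vertex from each with $S=\emptyset$), or some $W_i$ contains at least $|W|^{(k-1)/k}\cdot c_k$ of $W$, in which case add the root $r$ of $T_i$ to $S$ and recurse on $G[V(T_i)]-r$, which has treedepth at most $k-1$ via the elimination forest $T_i-r$. Choosing $N_k(m):=(2m)^k$ or similar makes the arithmetic trivially work out.

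For the backward direction (distance-$\infty$ deletion-flat $\Rightarrow$ bounded treedepth), I will invoke the classical characterization of bounded treedepth: a class of graphs has bounded treedepth if and only if there exists $n$ such that no graph in the class contains the path $P_n$ as a (not necessarily induced) subgraph. Suppose $\CC$ is distance-$\infty$ deletion-flat with parameters $k$ and $N(\cdot)$ but has unbounded treedepth. Then for every $n$, some $G\in\CC$ contains $P_n$ as a subgraph. Take $n:=N(k+2)$, let $W$ be the vertex set of such a $P_n$ in $G$, and apply deletion-flatness with $m:=k+2$: this yields $S\subseteq V(G)$ with $|S|\leq k$ and $W_\star\subseteq W\setminus S$ with $|W_\star|\geq k+2$, pairwise in different components of $G-S$. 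However, removing $k$ vertices from the path $P_n$ leaves at most $k+1$ path components, and since $P_n\subseteq G$ (as a subgraph), two vertices in the same component of $P_n-S$ are connected in $G-S$ as well. Hence $W_\star$ contains at most one vertex per component of $P_n-S$, giving $|W_\star|\leq k+1$, a contradiction.

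The forward direction is a routine induction on the elimination forest, analogous to the shrubdepth/cliquewidth/treewidth arguments already carried out in this section. The backward direction relies entirely on the classical path-excluding characterization of bounded treedepth and is then an elementary counting argument about components of a path minus a few vertices. I do not foresee any substantial obstacles; the only minor technical point is choosing an $N(\cdot)$ in the forward direction that comfortably absorbs the $-1$ that appears when the root being added to $S$ also happens to lie in $W$.
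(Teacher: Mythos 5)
Your proof is correct. The forward direction is essentially what the paper intends (the paper says only ``analogous to the proof of \cref{lem:flipdepth}'' and leaves it at that), and your explicit induction over elimination forests is the natural way to carry it out; just note the constants need to be tuned so the recursion closes, as you observe.

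For the backward direction you take a genuinely different and cleaner route. The paper argues through three facts: bounded treedepth is subgraph-closed, classes of unbounded treedepth contain all paths as subgraphs, and (crucially) it re-uses \cref{lem:paths-not-infty-flip-flat}, which says the class of all paths is not distance-$\infty$ \emph{flip}-flat. Piecing this together requires observing both that deletion-flatness is inherited by the subgraph closure and that deletion-flatness implies flip-flatness (deleting $S$ is simulated by a $(|S|+1)$-flip isolating each vertex of $S$); the paper does not spell out either step, and \cref{lem:paths-not-infty-flip-flat} itself needs a somewhat delicate flip-counting argument. You instead take the long path $P_n$ as a subgraph of some $G \in \CC$, set $W = V(P_n)$, apply deletion-flatness directly to $G$, and observe that $W_\star$ can have at most one vertex per component of $P_n - S$, which is at most $k+1$ since a path minus $k$ vertices has at most $k+1$ pieces, while deletion-flatness guarantees $|W_\star| \ge k+2$. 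This is completely elementary, self-contained, and avoids both the flip-flatness result and the subgraph-closure detour. If anything, your argument is the sharper one to record here.
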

\begin{proof}
    Essentially, the definitions of treedepth and distance-$\infty$ deletion-flatness are obtained from the definitions of flipdepth and distance-$\infty$ flip-flatness by replacing flips with vertex deletions.
    Proving that every class of bounded treedepth is distance-$\infty$ flip-flat is therefore analogous to the proof of \cref{lem:flipdepth}. 
    The other direction follows by combining the following easy facts.
    \begin{enumerate}
        \item If a class has bounded treedepth then so does its closure under taking subgraphs.
        \item Every class of unbounded treedepth contains all paths as subgraphs. \cite[Proposition 6.1]{nevsetvril2012sparsity}
        \item The class of all paths is not distance-$\infty$ flip-flat.
    \end{enumerate}
\end{proof}

\section{Binary Structures}\label{sec:binary}
In this section, we lift the some of the 
 characterizations of monadically dependent graph classes to the more general setting of binary structures, that is, structures equipped with a finite number of binary relations.
More precisely, with the appropriate notion of flips for binary structures defined further below, we prove the following.
\begin{theorem}\label{thm:flip-breakability-binary}
    Let $\Sigma$ be a finite signature consisting of binary relation symbols, and let $\cal C$ be a class of $\Sigma$-structures. Then the following conditions are equivalent:
    \begin{enumerate}
        \item $\cal C$ is monadically dependent,
        \item $\cal C$ is flip-breakable,
         \item for every $h\in\N$ there is some $n\in\N$ such that 
          no structure $G\in \cal C$  contains a (minimal) transformer of length $h$ and order $n$.
    \end{enumerate}
\end{theorem}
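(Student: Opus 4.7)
The plan is to adapt the proof developed for graphs in Parts~\ref{part:structure} and~\ref{part:nonstructure} to the binary setting, leveraging the fact that virtually every intermediate notion is already phrased in terms of atomic types rather than the edge relation specifically. Fix the appropriate notion of \emph{$\KK$-flip} of a $\Sigma$-structure, determined by a partition $\KK$ of the universe and, for each $R\in\Sigma$, a symmetric relation $F_R\subseteq\KK^2$ toggling $R$-edges across pairs of classes in $F_R$. With this operation in hand, the definitions of meshes, conductors, (minimal) transformers, prepatterns, and insulators transfer without change once ``atomic type in $G$'' is interpreted over the signature $\Sigma$.

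The implication (2)~$\Rightarrow$~(1) is essentially free: the proof of \cref{prop:fb-implies-mnip} uses only Gaifman's locality theorem, which is valid for arbitrary relational structures, together with an encoding of a $k$-flip by a bounded number of fresh colors---both ingredients work verbatim for binary structures.

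For (3)~$\Rightarrow$~(2), I would reproduce the chain prepattern-free $\Rightarrow$ insulation property $\Rightarrow$ flip-breakable from Part~\ref{part:structure}. Each step is driven by Ramsey arguments on atomic types that are oblivious to the specific signature: the sampling lemma (\cref{lem:sampling}), both insulator-growing lemmas (\cref{thm:orderless-grid-growing} and \cref{lem:order-growing}), and Propositions~\ref{prop:prepatternImpliesInsulation} and~\ref{prop:ge-implies-fb}. By \cref{prop:transformer} combined with \cref{lem:to-minimal}, a prepattern of order $n$ on an insulator of bounded height and cost yields a minimal transformer of bounded length and order $U_{h,k}(n)$. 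Contrapositively, the hypothesis of~(3) forces prepattern-freeness at every fixed insulator height and cost, and the remainder of Part~\ref{part:structure} then delivers flip-breakability.

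The main obstacle is (1)~$\Rightarrow$~(3): showing that a class containing arbitrarily large minimal transformers of some fixed length $h$ is monadically independent. In the graph case this passed through converters of kind $(\alpha,\beta)\in\{\cl,\st,\hg\}^2$ and the explicit patterns of \cref{sec:conv-and-cross} and \cref{sec:to-independence}, whose classification relied on the very limited space of atomic $2$-types in an (edge-colored) graph. In the binary setting this finite kind-set is lost, so I would bypass the classification by a direct transduction argument. Given a minimal transformer $\mesh_1,\ldots,\mesh_h$ of large order $n$ with $\mesh_1$ vertical (capped by $a$) and $\mesh_h$ horizontal (capped by $b$), the disjoint $h$-tuples $\pi_{i,j}=(\mesh_1(i,j),\ldots,\mesh_h(i,j))$ have atomic types depending only on $\otp(i,i')$ and $\otp(j,j')$ by regularity, with non-homogeneity forced between consecutive layers and vertical/horizontal capping at the two ends. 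A further Bipartite Ramsey reduction normalizes the diagonal-versus-off-diagonal behaviour of $\pi_{i,j}$; then a single first-order formula in a monadic expansion---coloring those tuples $\pi_{i,j}$ whose row-column pair should be ``on''---recovers an arbitrary prescribed bipartite relation between $\{a(i):i\in[n]\}$ and $\{b(j):j\in[n]\}$. Since the class of bipartite graphs trivially transduces the class of all graphs, this yields monadic independence and closes the cycle $(2)\Rightarrow(1)\Rightarrow(3)\Rightarrow(2)$.
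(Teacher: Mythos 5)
Your proposal takes a genuinely different route from the paper. The paper does \emph{not} re-derive Parts~\ref{part:structure} and~\ref{part:nonstructure} for binary structures. Instead, it proves a quantifier-free encoding lemma (\cref{lem:encoding-in-graphs}) that maps each $\Sigma$-structure $G$ to a graph $F(G)$ via a transduction with copying, together with a recovery transduction $T$ satisfying $G\in T(F(G))$. All three conditions are then transferred between $\CC$ and $\DD:=F(\CC)$: monadic dependence via \cref{cor:mon-dep-copying}, flip-breakability via the flip-transfer lemma (\cref{lem:flip-transfer,lem:breakability-transfer}), and transformers via \cref{lem:copy-transformer,lem:transformers-through-qftransduction}. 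After the transfer, the equivalence collapses to the already-established graph case (\cref{thm:main-circle}). This is far more economical than re-running the Ramsey machinery.

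Your proposal has two concrete weak points. First, the claim that the insulator definitions and growing lemmas ``transfer without change'' underestimates the work. Properties \ref{itm:rootedness}--\ref{itm:adjacency} are phrased explicitly in terms of the single edge relation $E$ and the corresponding flip data $(F,R)$; for a signature with several binary symbols one would need a separate flip relation per symbol per pair of parts, and the insulator conditions, the sampling lemma, and both growing lemmas would all have to be re-verified against the enlarged atomic-type space and the Gaifman-graph distance. This is likely doable, but it is a substantial re-derivation, not a reinterpretation. Second, your direct transduction argument for the implication ``large transformers $\Rightarrow$ monadically independent'' glosses over the exact point where the paper's graph-level argument works hardest. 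Being vertical only guarantees that $\atp(\mesh_1(i,j),a(i'))$ depends on $\otp(i,i')$ and is not constant; it is entirely possible that $\tau_{=}=\tau_{<}\neq\tau_{>}$ (a half-graph-like cap). In that case the formula you sketch cannot read off the index $i$ directly from $a(i)$, and the ``on''-markers cannot distinguish $(i,j)$ from $(i',j)$ with $i'>i$. The graph-level proof handles this by classifying caps into the three kinds $\cl,\st,\hg$ (\cref{lem:vertical2}) and producing distinct explicit patterns; for $\hg$ one must additionally use a subtraction trick on neighborhoods. Your phrase ``Bipartite Ramsey normalizes the diagonal-versus-off-diagonal behaviour'' names the problem but does not resolve it. The paper sidesteps this entirely by pushing the transformer through $F$ to a graph (\cref{lem:copy-transformer}) and invoking the existing graph-level pattern extraction. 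You should either adopt the encoding strategy, or else work out the half-graph-like cap case explicitly before asserting that the direct transduction succeeds.
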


As an application, we demonstrate that in the setting of ordered graphs, this result allows us to deduce a key result of \cite{twwIV},
stating that monadically dependent classes of ordered graphs have bounded grid rank, and hence bounded twin-width. Moreover, on the non-structure side,
we derive that monadically independent classes of ordered graphs contain some specific patterns that were also exhibited in \cite{twwIV}.
Those patterns form the main ingredient for the forbidden subgraph characterization of monadic dependence for classes of ordered graphs given in \cite{twwIV}.

The proof of Theorem~\ref{thm:flip-breakability-binary} follows by using standard ideas.
We use the fact that for any monadically dependent class of binary structures we can find a monadically dependent graph class,
such that one can transduce back and forth between those classes via a straightforward transduction.
We moreover use the observation that flip-breakability and transformers are preserved under (quantifier-free) transductions.

\subsection{Notions}\label{sec:notions}
We first introduce the setting of binary structures, and define monadic dependence and flip-breakability for  classes of binary structures.

\paragraph{Structures.}
Fix a signature $\Sigma$ consisting of one or more relation symbols, where each symbol has a specified \emph{arity}, which is a positive integer. The signature $\Sigma$ is \emph{binary}
if every relation symbol $R\in\Sigma$ has arity exactly two.
A \emph{$\Sigma$-structure} $G$ consists of a domain, denoted $V(G)$, and an interpretation of each relation symbol  $R\in\Sigma$ of arity $k$ as a relation $R_G\subset V(G)^k$.
A graph $G$ is represented as a structure over the signature $\set{E}$
consisting of one binary relation symbol $E$, which is interpreted 
as the set of pairs $(u,v)\in V(G)^2$ such that $uv\in E(G)$.
In particular, each class of graphs can be viewed as a class of $\{ E \}$-structures.

\paragraph{Gaifman Graph and Distances.}
The \emph{Gaifman graph} of a $\Sigma$-structure $G$ 
is the graph  with vertex set $V(G)$ and
edges between vertices \(u \neq v\)
if and only if there is some symbol $R\in\Sigma$ and tuple $\bar u\in R_G$ such that both $u$ and $v$
occur in $\bar u$.
For two elements $u,v\in V(G)$, we write $\dist_{G}(u,v)$ to denote the distance between $u$ and $v$ in the Gaifman graph of $G$.

\paragraph{Colored Structures.}
If $G$ is a $\Sigma$-structure and $k\in\N$,
then a \emph{$k$-coloring} $G^+$  of $G$ 
is specified by a function mapping each element of $V(G)$ to exactly one color in $\set{1,\ldots,k}$.
Denoting by $\Sigma^{(k)}$ 
the signature $\Sigma$ together with the $k$ unary predicates $U_1,\ldots,U_k$,
we may view a $k$-coloring of $G$ as the $\Sigma^{(k)}$-structure $G^+$ 
 obtained from $G$ by expanding $G$ by unary relation symbols 
representing the respective color classes.

\paragraph{Transductions and Monadic Dependence.}
Fix two relational signatures $\Sigma$ and $\Gamma$ relation
symbols. An \emph{abstract transduction} $T\from\Sigma\to\Gamma$ is a binary relation between $\Sigma$-structures and $\Gamma$-structures.
For a $\Sigma$-structure $G$ we write $T(G)$ to denote the class of 
$\Gamma$-structures that are related to $G$ via $T$,
and say that $T$ \emph{transduces} $H$ from $G$ if 
$H\in T(G)$.
For a class $\CC$ of $\Sigma$-structures we write $T(\CC)$ 
to denote $\bigcup\setof{T(G)}{G\in\CC}$.

A (first-order) \emph{transduction} $T\from\Sigma\to\Gamma$ is specified 
by a number $k$ of colors, and a collection of first-order formulas $\phi_R(x_1,\ldots,x_r)$,
for each symbol $R\in \Gamma$ with arity $r$,
where each of the formulas $\phi_R$ is in the signature $\Sigma^{(k)}$. The transduction $T$ is defined as follows.
First, for a $\Sigma^{(k)}$-structure $G^+$,
let $T_0(G^+)$ denote the structure $H$ with domain $V(G^+)$,
  in which each relation $R\in \Gamma$ of arity $r$ is interpreted as 
$$R_H\quad :=\quad \setof{\ (v_1,\ldots,v_r)\in V(G)^r\ }{\ G^+\models\phi(v_1,\ldots,v_r)\ }.$$
For a $\Sigma$-structure $G$, $T(G)$
consists of all structures that can be obtained (up to isomorphism) in the following steps:
\begin{enumerate}
    \item color $G$ using $k$ colors,
    obtaining a $\Sigma^{(k)}$-structure $G^+$,
    \item construct the structure $H:=T_0(G^+)$ as defined above,
    \item take an induced substructure of $H$.
\end{enumerate}
This finishes the description of $T$.

By a \emph{transduction} we mean a first-order transduction.

\begin{definition}
    A class $\cal C$ of $\Sigma$-structures \emph{transduces} a class $\cal D$ of $\Gamma$-structures
    if there exists a first-order transduction $T\from \Sigma\to \Gamma$ 
    such that $\DD\subset T(\CC)$.        
\end{definition}

\begin{definition}
    A class $\cal C$ of $\Sigma$-structures is \emph{monadically dependent}
    if $\cal C$ does not transduce the class of all (finite) graphs.        
\end{definition}

\paragraph{Flips.}
We use the notion of flips for binary structures defined in \cite[Sec. 7]{flipwidth}.
For a $\Sigma$-structure $G$ and partition $\cal P$ of $V(G)$,
a \emph{$\cal P$-flip} $F$ of $G$ is specified by a $\Sigma$-structure 
$F$ with domain $\cal P$. Applying $F$ to $G$ results 
in the structure $G'$ with domain $V(G)$, in which each
symbol $R\in\Sigma$ is interpreted as the relation
$$R_{G'}~~:=~~ R_{G}~~\triangle \bigcup_{(P,Q)\in  R_F}P\times Q,$$
where $\triangle$ denotes the symmetric difference.

A \emph{$k$-flip} of $G$ is a structure $G'$ obtained as above, 
for some partition $\cal P$ of $V(G)$ into $k$ parts.

\paragraph{Flip-breakability.}
\begin{definition}Fix a signature $\Sigma$ consisting of binary relation symbols.
    A class $\cal C$ of $\Sigma$-structures is \emph{flip-breakable} if for every radius $r\in\N$ there exists an unbounded function $f_r\from \N\to \N$ and a constant $k_r\in \N$ such that for all $G\in\cal C$ and $W\subset V(G)$ there exist subsets $A,B\subset W$ with $|A|,|B|\ge f_r(|W|)$ and a $k_r$-flip $G'$ of $G$ such that $$\dist_{G'}(A,B)>r.$$
\end{definition}
In other words, 
for every $r\in\N$, $G\in\CC$ and $W\subset V(G)$ there are subsets $A,B\subset W$ with $|A|,|B|\ge U_{\CC,r}(|W|)$ and a $\const(\CC,r)$-flip $G'$ of $G$ such that $\dist_{G'}(A,B)>r.$

\paragraph{Transformers and Minimal Transformers.}
All the notions
from Section~\ref{sec:transformers} 
-- of \emph{meshes}, \emph{vertical/horizontal meshes},
\emph{conducting pairs}, \emph{(minimal) transformers} --
can be 
interpreted verbatim in the setting of binary structures.
Indeed, all those notions are defined in terms 
of the atomic types of pairs of vertices, $\atp_G(u,v)$,
in a (colored) graph $G$, which makes sense in any structure.

\medskip
We have defined all the notions involved in the statement of Theorem~\ref{thm:flip-breakability-binary}. We now introduce some tools that will be used in its proof.

\subsection{Flip Transfer Lemma}
We repeat a lemma from \cite{flipwidth}, which allows to ``transfer'' flips 
along a transduction, without decreasing distances too much.
We state the lemma in greater generality than \cite[Lem. H.3]{flipwidth},
namely for binary structures. However, the proof remains the same,
and is a straightforward consequence of locality of first-order logic.

\begin{lemma}[{\cite[Lem. H.3]{flipwidth}}]\label{lem:flip-transfer}
    Fix two finite signatures $\Gamma,\Sigma$ consisting of binary relation symbols and numbers $k,c,q\ge 1$.
    Let $T\from \Sigma\to\Gamma$ be a transduction involving $c$ colors
    and formulas of quantifier rank at most $q$.
    Let $G$ be a $\Sigma$-structure and let $H\in T(G)$.
    For every $k$-flip $G'$ of $G$ there is a $\const(k,c,q)$-flip $H'$ of 
    $H$ such that:
    \begin{align}\label{eq:flip-transfer}
        \dist_{G'}(u,v)&\le 2^q\cdot  \dist_{H'}(u,v)\qquad\text{for all $u,v\in V(G)$.}
    \end{align}
\end{lemma}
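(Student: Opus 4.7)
The plan is a standard application of the locality of first-order logic, in the spirit of Feferman--Vaught and Hanf. I will define $H'$ by a partition of $V(H)$ whose parts collect vertices of the same ``local quantifier-rank-$q$ type'' in $G'$, enriched by the colorings associated to the transduction and to the flip. The guiding principle is that the truth of a fixed FO formula of quantifier rank $q$ on a pair $(u,v)$ depends only on the local types around $u$ and $v$, provided that $u$ and $v$ are sufficiently far apart in the Gaifman graph.

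First I would rebase the transduction onto $G'$. Since $G'$ arises from $G$ via a $k$-flip specified by a partition $\mathcal{P}$ of $V(G)$, the relations of $G$ are recoverable from $G'$ by a quantifier-free formula once each part of $\mathcal{P}$ is marked by a unary predicate. Merging those $k$ predicates with the $c$ colors provided by $T$ yields a $(ck)$-coloring $G'^{+}$ of $G'$ together with formulas $\psi_R$ of quantifier rank at most $q$ (one per $R\in\Gamma$), such that $H$ equals the $\Gamma$-structure obtained by interpreting each $R$ by $\psi_R$ in $G'^{+}$ and restricting to $V(H)\subseteq V(G)$.

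Next, I would set $r := 2^q$ and partition $V(G')$ into classes $\mathcal{Q}$ according to the $q$-equivalence type of the radius-$r$ Gaifman ball (in $G'^{+}$) around each vertex. Because $\Sigma,\Gamma$ are finite and $G'^{+}$ uses boundedly many colors, the number of such classes is bounded by $\const(k,c,q)$. Let $\mathcal{Q}'$ be the restriction of $\mathcal{Q}$ to $V(H)$. I would then define a flip specifier $F$ on $\mathcal{Q}'$ by declaring, for each $R\in\Gamma$ and each pair of parts $(P,P')$, that $(P,P') \in R_F$ iff the ``far-apart default'' value of $R(u,v)$ is $\mathit{true}$ for some (equivalently, every) pair $u\in P, v\in P'$ with $\dist_{G'}(u,v)>r$ (set arbitrarily if no such pair exists). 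Put $H' := H \oplus_{\mathcal{Q}'} F$. By construction, any $(u,v)\in V(H)^{2}$ with $\dist_{G'}(u,v) > r$ lies outside every relation of $H'$, and is therefore non-adjacent in the Gaifman graph of $H'$. Equivalently, every Gaifman edge of $H'$ joins vertices at $G'$-distance at most $2^q$, and concatenating along a shortest $H'$-path yields the desired bound $\dist_{G'}(u,v) \le 2^q \cdot \dist_{H'}(u,v)$.

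The main obstacle is the locality statement that makes $F$ well-defined: the $\Gamma$-atomic type of $(u,v)$ in $H$ must depend only on the local types of $u$ and $v$ in $G'^{+}$ whenever $\dist_{G'}(u,v) > 2^q$. This follows from a standard Ehrenfeucht--Fra\"iss\'e argument --- two pairs with isomorphic radius-$2^q$ neighborhoods on disjoint balls are $q$-equivalent in FO --- but it requires a careful tally of game rounds against neighborhood radius in order to obtain precisely the bound $2^q$ promised in the statement, rather than a weaker $3^q$ or $7^q$ that would come from a naive invocation of Gaifman's theorem.
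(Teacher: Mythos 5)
Your overall scheme matches what the paper (following \cite[Lem.~H.3]{flipwidth}) indicates: rewrite the transduction over the flip $G'$, color vertices by a local FO type, and flip $H$ accordingly so that the ``far-apart default'' is flattened to non-adjacency. The rebasing step is exactly right --- encoding the flip partition $\mathcal{P}$ as $k$ unary marks lets one recover $G$ from $G'$ quantifier-freely, so the formulas $\psi_R$ over $G'^+$ retain quantifier rank $q$, and the $\const(k,c,q)$ bound on the number of colors is correct.

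The concrete gap is in the radius/threshold bookkeeping that you yourself flag as delicate but do not resolve, and as written it is inconsistent. You set the coloring radius to $r = 2^q$ and aim for the far-apart default to apply once $\dist_{G'}(u,v) > 2^q$, while appealing to ``two pairs with isomorphic radius-$2^q$ neighborhoods on disjoint balls are $q$-equivalent.'' But at distance $2^q + 1 \le d \le 2^{q+1}$ the two radius-$2^q$ balls around $u$ and $v$ overlap; they become disjoint only for $\dist_{G'}(u,v) > 2\cdot 2^q$. So the disjoint-decomposition you invoke, with your parameters, yields the threshold $2^{q+1}$ rather than the claimed $2^q$. To actually obtain $2^q$ one must do the Ehrenfeucht--Fra\"iss\'e argument properly: maintain a partition of the selected elements into groups, absorb a round-$i$ spoiler move into an existing group whenever it lies within $2^{q-i}$ of it (so the inter-group distance invariant decays geometrically from $2^q$ to $1$), and handle ``fresh'' groups via Hanf-style counting absorbed into the vertex colors (for which the color must be a full bounded-quantifier-rank type of $(G'^+, w)$, not merely a ball type, so that the global realization counts are encoded). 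This is genuinely more than a naive invocation of Gaifman/Hanf and is not carried out in your sketch. For the paper's downstream use (\cref{lem:breakability-transfer}) only a bound of the form $\const(q)$ is needed, so the looser $2^{q+1}$ (or even $3^q$) would serve just as well; but your proposal asserts the tight $2^q$ while describing an argument that, with the parameters you give, only delivers $2^{q+1}$.
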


The next lemma is an easy consequence.
\begin{lemma}\label{lem:breakability-transfer}
    Fix two finite signatures $\Sigma,\Gamma$ consisting of binary relation symbols.
    Let $\CC$ be a class of $\Sigma$-structures,
    which transduces a class $\DD$ of $\Gamma$-structures.
    If $\CC$ is flip-breakable then $\DD$ is flip-breakable.
\end{lemma}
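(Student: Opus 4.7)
\textbf{Proof plan for \cref{lem:breakability-transfer}.}
The plan is to reduce flip-breakability on $\DD$ to flip-breakability on $\CC$ by invoking the Flip Transfer Lemma. Let $T\colon \Sigma \to \Gamma$ be a transduction witnessing $\DD \subseteq T(\CC)$, and let $c$ and $q$ denote, respectively, the number of colors and the maximum quantifier rank of the formulas used in $T$. Given $r \in \N$, I will define the relevant parameters for $\DD$ in terms of the ones for $\CC$ as follows: set
\[
  r_\CC := 2^q \cdot r, \qquad k_r^\DD := \const(k_{r_\CC}^\CC, c, q), \qquad f_r^\DD := f_{r_\CC}^\CC,
\]
where the constant $\const(\cdot,\cdot,\cdot)$ is the one produced by \cref{lem:flip-transfer} and $(f_r^\CC, k_r^\CC)$ witness flip-breakability of $\CC$. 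Note that $f_r^\DD$ is unbounded since $f_{r_\CC}^\CC$ is, and $k_r^\DD$ depends only on $\DD$ and $r$.

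Now fix $H \in \DD$ and $W \subseteq V(H)$. By definition of transduction, there exist a structure $G \in \CC$, a $c$-coloring $G^+$ of $G$, and an induced substructure relation so that $H$ arises from $T_0(G^+)$; in particular $V(H) \subseteq V(G)$, and consequently $W \subseteq V(G)$. Apply flip-breakability of $\CC$ to $G$, $W$, and radius $r_\CC$. This yields subsets $A, B \subseteq W$ with $|A|, |B| \geq f_{r_\CC}^\CC(|W|) = f_r^\DD(|W|)$ and a $k_{r_\CC}^\CC$-flip $G'$ of $G$ with $\dist_{G'}(A,B) > r_\CC = 2^q r$.

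Next, apply \cref{lem:flip-transfer} to the structures $G$, $H$ and the flip $G'$. This produces a $\const(k_{r_\CC}^\CC, c, q) = k_r^\DD$-flip $H'$ of $H$ satisfying
\[
  \dist_{G'}(u,v) \leq 2^q \cdot \dist_{H'}(u,v) \qquad \text{for all } u,v \in V(H).
\]
Combining this with $\dist_{G'}(a,b) > 2^q r$ for all $a \in A$ and $b \in B$, we get $\dist_{H'}(a,b) > r$ for all such pairs, i.e.\ $\dist_{H'}(A,B) > r$. Since $A, B \subseteq W$ have size at least $f_r^\DD(|W|)$ and $H'$ is a $k_r^\DD$-flip of $H$, this witnesses flip-breakability of $\DD$. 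There is no real obstacle in this argument — it is essentially a direct application of the Flip Transfer Lemma, with the only point to verify being that the constants depend only on the relevant parameters; this is immediate from our definitions.
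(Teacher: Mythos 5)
Your proof is correct and follows essentially the same approach as the paper: apply flip-breakability on $\CC$ at inflated radius $2^q r$ to obtain $A,B$ and a flip $G'$ with $\dist_{G'}(A,B) > 2^q r$, then invoke \cref{lem:flip-transfer} to pull this back to a flip $H'$ of $H$ with $\dist_{H'}(A,B) > r$. The only superficial difference is that the paper first reduces WLOG to $\DD = T_0(\CC^+)$ via a hereditary-closure observation, whereas you apply \cref{lem:flip-transfer} directly to the induced substructure $H$, which is equally valid since the lemma already covers any $H \in T(G)$.
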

\begin{proof}
     Let $T\from \Sigma\to \Gamma$ be a transduction
     witnessing that $\CC$ transduces $\DD$, where $T$ involves $c$ colors.
     Let $q$ be maximal quantifier rank of the formulas involved in $T$.
     Without loss of generality, we may assume that 
     $\DD$ is the class of structures of the form $T_0(G^+)$,
     where $G\in\CC$ and  $G^+$ is a $c$-coloring of $G$, since flip-breakability is preserved by taking the hereditary closure of a class of structures.

     Fix a radius $r$. We prove that $\DD$ satisfies the 
     condition in flip-breakability for  radius $r$.
     By assumption, $\CC$ is flip-breakable for radius $r\cdot 2^q$.
     This means that there is a constant $k=\const(\CC,q,r)$
such that for every graph $G\in \CC$ and set $W\subset V(G)$
there is a $k$-flip $G'$ of $G$ and two sets $A,B\subset W$ of size $U_{\CC,q,r}(|W|)$ such that 
\begin{align}\label{eq:flip-distant}
    \dist_{G'}(A,B)>r\cdot 2^q.    
\end{align}

Let $H\in\DD$, so that $H=T_0(G^+)$ for some $G\in\CC$ and $c$-coloring $G^+$ of $G$. Let $W\subset V(H)=V(G)$.
Then there is a $k$-flip $G'$ of $G$ and two sets $A,B\subset W$ as described above.
Let $H'$ be the $\ell$-flip of $H$ as described in Lemma~\ref{lem:flip-transfer}, for some $\ell=\const(k,c,q)\le \const(\CC,q,r,T)\le\const(\DD,r)$.
Then combining \eqref{eq:flip-distant} with \eqref{eq:flip-transfer}
we conclude that 
\begin{align}\label{eq:flip-distant'}
    \dist_{H'}(A,B)>r.
\end{align}
This proves that the class $\DD$ is flip-breakable with radius $r$. 
As $r$ is arbitrary, the lemma follows.
\end{proof}

\subsection{Encoding Binary Structures in Graphs}
We now show how binary structures can be encoded in graphs, 
in a way which preserves monadic dependence. This encoding is folklore,
but we need to analyze some of its properties related to our context.

To this end, it is convenient to use \emph{transductions with copying}, defined below.
First, the \emph{copying operation}, parameterized by a signature $\Sigma$ and number $\ell$ of copies and denoted $C_\ell$, is an abstract transduction 
from $\Sigma$-structures to $\Sigma\cup\set{M}$-structures,
where $M$ is a binary relation symbol not in $\Sigma$,
and is defined as follows.
Given an input $\Sigma$-structure $G$,
define the $\Sigma\cup\set{M}$-structure $C_\ell(G)$,
with vertex set $V(G)\times [\ell]$,
such that 
\begin{align*}
    C_\ell(G)_M&=\setof{((v,i),(v,j))}{v\in V(G), i,j\in[\ell]},\\
    C_\ell(G)_R&=\setof{((v_1,i),(v_2,i),\ldots,(v_r,i))}{(v_1,\ldots,v_r)\in R_G, i\in[\ell]}\qquad\textit{for $R\in\Sigma$ of arity $r$}.
\end{align*}
A (first-order) \emph{transduction with copying} $T\from\Sigma\to\Gamma$
is a composition of a copying operation
$C_\ell\from \Sigma\to (\Sigma\cup\set{M})$, for some $\ell\in\N$, and a transduction $S\from(\Sigma\cup\set{M})\to \Gamma$.
More precisely, given a $\Sigma$-structure $G$,
we set $T(G):=S(C_\ell(G))$.
We say that $T$ is \emph{quantifier-free}
if all the formulas defining the underlying transduction $S$ 
are quantifier-free formulas.

Transductions with copying are closed under composition, as expressed by the following  
folklore result 
(see, e.g., \cite[Lem. 2.5]{gajarsky2020first}).
\begin{fact}\label{fact:copying-composition}
Let $T\from\Sigma\to\Gamma$ and $S\from \Gamma\to\Delta$ be two transductions with copying. Then the composition $S\circ T\from \Sigma\to\Delta$ is (definable by) a transduction with copying.
\end{fact}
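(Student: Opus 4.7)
The plan is to decompose $T$ and $S$ into their atomic pieces (a copying operation followed by a plain first-order transduction) and then push all copying operations to the outside. Writing $T = S_1 \circ C_{\ell_1}$ and $S = S_2 \circ C_{\ell_2}$, where $S_1, S_2$ are plain first-order transductions, the composition equals
\[
    S \circ T = S_2 \circ C_{\ell_2} \circ S_1 \circ C_{\ell_1}.
\]
To put this into the required form ``FO transduction composed with a single copying'', the crucial step is a commutation identity of the form $C_{\ell_2} \circ S_1 = S_1' \circ C_{\ell_2}$ for some first-order transduction $S_1'$ (over signatures extended by the appropriate copy-edge relation). Once this is in place, the rest is mostly bookkeeping.

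To construct $S_1'$, the key observation is that in $C_{\ell_2}(H)$, the $\ell_2$ copies of each element $v$ form an $M$-fiber, and a non-deterministic coloring step can assign the colors $1,\dots,\ell_2$ to each fiber bijectively. With the ``copy index'' thus available as a unary predicate, one then relativizes every formula $\phi(\bar x)$ of $S_1$ to each of the $\ell_2$ copy-colors independently, so that $S_1'$ simulates $\ell_2$ parallel runs of $S_1$, one per copy. The additional non-deterministic colorings and induced-substructure choices that $S_1$ performs in each copy can be aggregated into a single coloring step and a single restriction step of $S_1'$ by allocating $\ell_2$ independent palettes of colors.

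After this commutation, we rewrite
\[
    S \circ T = (S_2 \circ S_1') \circ (C_{\ell_2} \circ C_{\ell_1}).
\]
Composition of plain first-order transductions is a first-order transduction by the standard formula-rewriting argument, so $S_2 \circ S_1'$ is first-order. Moreover, $C_{\ell_2} \circ C_{\ell_1}$ creates $\ell_1\ell_2$ copies of the input, but with two separate copy-edge relations $M_1, M_2$; the single copy-edge relation $M$ expected from $C_{\ell_1\ell_2}$ is defined by the first-order formula $\exists z.\, M_1(x,z) \wedge M_2(z,y)$. A small first-order transduction, which can be absorbed into $S_2 \circ S_1'$, rewrites the two relations into this single one. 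Thus $S \circ T = Q \circ C_{\ell_1\ell_2}$ for a first-order transduction $Q$, as required.

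The main obstacle is the commutation step $C_{\ell_2} \circ S_1 = S_1' \circ C_{\ell_2}$. Both sides define relations rather than functions due to the non-determinism inside first-order transductions, so the equality must be verified by exhibiting an explicit bijection between the choices made on the two sides: any $\ell_2$ parallel colorings and induced-substructure selections performed in the copies on the right must correspond to $\ell_2$ independent choices of colorings and substructures on the left, and conversely. This is routine but must be tracked with some care to ensure that the transduction on the right-hand side is well-defined and non-deterministically produces exactly the same class of output structures.
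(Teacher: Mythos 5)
Your central commutation identity, $C_{\ell_2} \circ S_1 = S_1' \circ C_{\ell_2}$, does not hold as stated, and the problem is exactly in the place you flag as needing care. On the left-hand side, $S_1$ is applied once — a single non-deterministic choice of coloring and a single choice of induced substructure, yielding one $H \in S_1(G')$ — after which $C_{\ell_2}$ replicates $H$ verbatim: every structure in $C_{\ell_2}(S_1(G'))$ carries the \emph{same} $\Gamma$-substructure in each of its $\ell_2$ copies, and every $M_2$-fiber has exactly $\ell_2$ elements. Your $S_1'$, by allocating $\ell_2$ independent color palettes and imposing no constraint on the final substructure selection over $V(G')\times[\ell_2]$, additionally produces ``mixed'' structures whose copies carry different $\Gamma$-structures, or whose $M_2$-fibers are only partially retained. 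Such a structure is not isomorphic to $C_{\ell_2}(H)$ for any single $H$, so $S_1'\circ C_{\ell_2}$ strictly overgenerates and the identity fails. The "bijection between the choices on the two sides" that you invoke cannot exist: one side has one choice pair, the other has $\ell_2$.

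There is a second gap in the concluding step. To simulate $C_{\ell_2}\circ C_{\ell_1}$ starting from $C_{\ell_1\ell_2}$, one must define the \emph{finer} relations $M_1$ and $M_2$ from the single coarse relation $M$; your formula $\exists z.\,M_1(x,z)\wedge M_2(z,y)$ goes in the opposite, trivial direction. Refining each $M$-fiber into an $[\ell_1]\times[\ell_2]$ grid is not first-order definable from $M$; it must be supplied by a non-deterministic coloring, and distinct $M$-fibers can then receive mutually inconsistent grid-labelings, producing structures not isomorphic to $C_{\ell_2}(C_{\ell_1}(G))$. A correct argument has to meet these two sources of overgeneration head-on — for example, prove only the inclusion $C_{\ell_2}\circ S_1 \subseteq S_1'\circ C_{\ell_2}$ and then show that every extra output of the final composed transduction (arising from inconsistent copy-index colorings or partial fibers) already lies in $(S\circ T)(G)$ — rather than asserting an exact commutation that does not hold.
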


Allowing transductions with copying instead of usual transductions in  the definition of monadic dependence would not affect the notion. This is also folklore, but we sketch a proof below for completeness.
\begin{fact}\label{fact:copying}
Let $\CC$ be a class of $\Sigma$-structures such that 
for some transduction with copying $T$,
the class $T(\CC)$ contains every graph.
Then $\CC$ transduces the class of all graphs
via a transduction without copying.
\end{fact}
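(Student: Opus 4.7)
The plan is to exploit the fact that the class $\mathcal{G}$ of all graphs is closed under the $\ell$-anticlique blowup operation $H \mapsto H^{\otimes \ell}$ (replacing each vertex by $\ell$ pairwise non-adjacent twins), together with the observation that $H$ is recoverable from $H^{\otimes \ell}$ by a simple no-copy transduction (color one vertex of each twin class and restrict). Producing $H^{\otimes \ell}$ via $T$ can then be rearranged into producing $H$ via a no-copy transduction, because the $\ell$ copies created by $C_\ell$ fold naturally into the $\ell$ twins of the output.

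Write $T = S \circ C_\ell$ with $S$ an ordinary first-order transduction using $c$ colors. I would construct a candidate no-copy transduction $T'$ on inputs $G \in \CC$ as follows. Color each $v \in V(G)$ with an $\ell$-tuple from $[c]^\ell$, interpreted as assigning the $S$-colors to the hypothetical copies $(v,1),\dots,(v,\ell)$ in $C_\ell(G^+)$. Rewrite each formula $\phi$ of $S$ --- which quantifies over $V(G) \times [\ell]$ and uses the $M$-relation --- into an equivalent formula $\phi'$ over the newly colored $G$, by expanding each quantifier over $V(G) \times [\ell]$ into a bounded disjunction of quantifiers over $V(G)$ indexed by $[\ell]$, and replacing $M(x,y)$ by equality of the first coordinate (which is first-order recoverable from the encoded colors). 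Finally, output the induced subgraph on a selected subset $V_0 \subseteq V(G)$, identified with the ``first copies'' $V_0 \times \{1\}$. This yields graphs of the form $S(C_\ell(G^+))|_{V_0 \times \{1\}}$.

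To verify $T'(\CC) \supseteq \mathcal{G}$, given $H \in \mathcal{G}$ I would apply the hypothesis $T(\CC) \supseteq \mathcal{G}$ to the blowup $H^{\otimes \ell}$, obtaining $G \in \CC$, a coloring $G^+$, and $V'_0 \subseteq V(G) \times [\ell]$ with $H^{\otimes \ell} \cong S(C_\ell(G^+))|_{V'_0}$. I would then argue that, by suitably enriching the coloring of $G$, one can force $V'_0$ to have the form $V_0 \times [\ell]$ for some $V_0 \subseteq V(G)$; equivalently, force the twin classes of $H^{\otimes \ell}$ to align with the copy classes $\{u\} \times [\ell]$ of $C_\ell(G)$. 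Once this alignment is achieved, the first-copy restriction $V_0 \times \{1\}$ picks exactly one representative per twin class and therefore gives a copy of $H$, so $H \in T'(G)$.

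The main obstacle will be this alignment step: a priori the embedding $V(H) \times [\ell] \hookrightarrow V(G) \times [\ell]$ given by $T$ need not respect the copy structure, and twin classes of $H^{\otimes \ell}$ can scatter across different copy classes of $C_\ell(G)$. I expect to overcome this either by pre-processing $\CC$ using Fact~\ref{fact:copying-composition} (so that each structure of the modified class already carries a canonical $\ell$-labelling of its elements that the transduction can read off) and running $T$ on the resulting, regularised class, or by an ad hoc formula-pushing argument that treats the twin relation of the output as first-order definable and uses it to reroute the embedding onto aligned copy classes.
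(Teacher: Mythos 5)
Your proposal takes a genuinely different route from the paper, and the ``alignment step'' you flag as the main obstacle is indeed a genuine gap; your two suggested fixes do not close it.

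The gap, concretely: the hypothesis gives you \emph{some} $G \in \CC$, \emph{some} $c$-coloring $G^+$, and \emph{some} subset $V'_0 \subseteq V(G)\times[\ell]$ on which $S_0(C_\ell(G^+))$ induces $H^{\otimes\ell}$, but it gives you no control over how the twin classes of $H^{\otimes\ell}$ distribute across the copy fibers $\{u\}\times[\ell]$. Enriching the coloring of $G$ does not change this: coloring affects which edges $\phi$ defines, not where an already-given embedding sits. Moreover the misalignment can be structurally forced rather than accidental. If, say, $\ell = 2$ and $S_0(C_2(G^+))$ is the disjoint union of two copies of a graph definable from $G^+$, then for any $H$ with at least one edge every twin pair of $H^{\otimes 2}$ must land inside a single copy $V(G)\times\{i\}$ (twins in different connected components would be forced to be isolated). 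That is exactly \emph{not} the $V_0\times[\ell]$ shape your $T'$ needs. Your first proposed fix (pre-processing via Fact~\ref{fact:copying-composition}) only says that copying transductions compose, which tells you nothing about the shape of the witnessing set $V'_0$. Your second fix (pushing a formula through the output twin relation) founders for the same reason: twinness is definable in the output, but it cannot relocate where the output vertices sit in $V(G)\times[\ell]$.

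What the paper does differently, and why it escapes this trap: instead of encoding an arbitrary $H$ losslessly as $H^{\otimes\ell}$, it targets the self-similar family of $1$-subdivided cliques $K^{(1)}_n$. Given $K^{(1)}_n \in T(G)$, a Ramsey argument on the copy-coordinates of principal and subdivision vertices produces a $U_\ell(n)$-sized subclique on which all principal vertices sit in a single copy $p$ and all subdivision vertices in a single copy $q$, and a second Ramsey step makes the first coordinate injective. This realizes $K^{(1)}_m$ (for $m \ge U_\ell(n)$) inside a single ``slice'' of the product, from which a copy-free formula $\phi_{p,q}$ recovers it. The size loss from $n$ to $U_\ell(n)$ is harmless because large subdivided cliques still transduce all graphs; pigeonholing over the finitely many $(p,q)$ then yields a single copy-free transduction. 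Your blowup encoding cannot tolerate that loss: Ramsey would hand back only a small induced subgraph of $H^{\otimes\ell}$, not enough to reconstruct $H$. To make your general strategy go through you would have to switch to a target family that is robust under passing to large Ramsey subsets, at which point you effectively recover the paper's argument.
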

\begin{proof}[Proof sketch]
    This can be deduced from the result of Baldwin and Shelah~\cite[Thm. 8.1.8]{baldwin1985second}.
We sketch a more direct argument. 
    Suppose that $\CC$ transduces the class of all graphs using a transduction $T$ with copying. Then \(T\) is of the form $T=S\circ C_\ell$ for some transduction $S$ 
    and number $\ell$, where $C_\ell$ is the $\ell$-times copying operation.
    Let $K^{(1)}_n$ denote the $1$-subdivision of the $n$-clique.
    In particular, 
    \begin{align*}\label{eq:subdivided-cliques}
        \setof{K^{(1)}_n}{n\in\N}\subset T(\CC)    
    \end{align*}
    
    Fix $n\in\N$.
    Then there is a structure $G\in\CC$ 
    such that $K^{(1)}_n\in T(G)$.
We prove that there is a fixed collection  $(T_{p,q}:p,q\in [\ell])$ of $\ell^2$ many transductions (depending only on $T$) such that $K^{(1)}_m\in T_{p,q}(G)$ for some $m\ge U_T(n)$ and $p,q\in [\ell]$.
By the pigeonhole principle, since $n$ is arbitrarily large, and since \(\mathcal{C}\) is hereditary, this proves that 
there is a fixed transduction $T_{p,q}$ 
such that $\setof{K^{(1)}_m}{m\in \N}\subset T_{p,q}(\CC)$.
Therefore, $\CC$ transduces the class of $1$-subdivided cliques, which in turn transduces the class of all graphs.
The conclusion follows by composing the two transductions
without copying, obtaining a single transduction without copying.

\medskip
Denote the principal vertices 
of $K^{(1)}_n$ by $(a_i:i\in[n])$,
and the subdivision vertices by $(b_{i,j}:i,j\in[n], i< j)$.
     As \(T\) creates \(\ell\) copies, we can assume that  $V(K^{(1)}_n)\subset V(G)\times[\ell]$.    
     Let $\pi_1\from V(G)\times[\ell]\to V(G)$
     and $\pi_2\from V(G)\times[\ell]\to [\ell]$
     be the projection mappings selecting the first and second entry of a pair, respectively.

     Color each edge $\set{i,j}$, where $i<j\in [n]$, of the clique $K_n$
     with the color $(\pi_2(a_i),\pi_2(b_{ij}))$.
    By Ramsey's theorem,
    there is a subset $I\subset [n]$ of size $U_\ell(n)$ 
    such that all edges have the same color.
    It follows that there are $p,q\in [\ell]$
    such that $\pi_2(a_i)=p$ for all $i\in I$ 
    and $\pi_2(b_{i,j})=q$ for all $i<j\in I$.

    Let $K^{(1)}_I$ be the subgraph of $K^{(1)}_n$
induced by the vertices $A=\setof{a_{i}}{i\in I}$ and $B=\setof{b_{i,j}}{i<j\in~I}$. 
Then $K^{(1)}_I$ is (isomorphic to) the $1$-subdivision  of the clique on $|I|\ge U_\ell(n)$ vertices.
By another Ramsey argument,
    it is not difficult to show that
    by replacing $I$ by a subset of size $U(|I|)$ 
    (which we denote $I$ as well),
    we may ensure that the mapping $\pi_1$, restricted to 
    $V(K^{(1)}_I)\subset V(G)\times[\ell]$,
    is injective
    (this is similar to the argument in Lemma~\ref{lem:identical-meshes}). 

    For $i\in[\ell]$ 
let $c_i\from V(G)\to V(G)\times[\ell]$
denote the function that maps each vertex $v\in V(G)$
to the copy $(v,i)$ of $v$ in $V(G)\times[\ell]$.
Let $\phi(x,y)$ be the formula underlying the transduction $S$, and let $k$ denote the number of colors that are involved in this formula.
Define sets $A',B'\subset V(G)$ as
\begin{align}
    A':=\pi_1(A)\qquad B':=\pi_1(B).
\end{align}
It follows from the above that $|A'|=|I|, |B'|= {|I|\choose 2} $, $A'\cap B'=\emptyset$.
Furthermore, there is some 
$k$-coloring $\wh H$ of $H:=C_\ell(G)$ 
such that 
\begin{align}\label{eq:1-subd-clique}
    (A'\cup B',\, \setof{ab}{a\in A',b\in B', \wh H\models \phi(c_p(a),c_q(b))})\quad\cong\quad K^{(1)}_I.
\end{align}
% is isomorphic to $K^{(1)}_I$.

The $k$-coloring $H^+$ of $H$ 
induces a $k^\ell$-coloring $G^+$ of $G$,
where each vertex $v\in V(G)$ is colored
by the $\ell$-tuple of colors of the $\ell$ copies of the vertex $v$ in $\wh H$.

\begin{claim}\label{cl:copylessing}
There is a  formula $\phi_{p,q}(x,y)$, depending only on $p,q$, and $\phi$, such that 
the following holds for all $a,b\in V(G)$:
$$\wh G\models\phi_{p,q}(a,b)\quad\iff\quad \wh H \models \phi(c_p(a),c_q(b)).$$
\end{claim}
\begin{claimproof}[Proof sketch]
We show more generally that for every formula $\phi(x_1,\ldots,x_t)$  and for each $f\from[t]\to [\ell]$,
there is a formula $\phi_f(x_1,\ldots,x_t)$, depending only on $\phi$ and $f$,
such that the following holds for all $a_1,\ldots,a_t\in V(G)$:
$$\wh G\models \phi_f(a_1,\ldots,a_t)\quad\iff\quad \wh H\models \phi(c_{f(1)}(a_1),\ldots,c_{f(t)}(a_t)).$$
This is proved by induction on the size of $\phi$. In the base case $\phi$ is an atomic formula and the formula $\phi_f$ is constructed directly. In the inductive step, we only need to consider formulas $\phi$ of the form $\phi_1\lor\phi_2$, $\neg\phi'$ or $\exists x.\phi'$, where $\phi_1,\phi_2,\phi'$ are formulas smaller than $\phi$.
The first two cases, of boolean combinations, follow immediately from the inductive hypothesis.
In the case of a formula $\phi(x_1,\ldots,x_t)=\exists x_{t+1}.\phi'(x_1,\ldots,x_{t+1})$ define $$\phi_{f}(x_1,\ldots,x_t):=\exists x_{t+1}.\bigvee_{f'\supset f}\phi'_{f'}(x_1,\ldots,x_{t+1}),$$
where the disjunction ranges over all functions $f'\from[t+1]\to[\ell]$ extending $f\from[t]\to[\ell]$,
and the formula $\phi'_{f'}$ is obtained by inductive assumption.
It is straightforward to verify that the formula $\phi_f$ has the required property. This completes the induction.
The claim then follows by setting $\phi_{p,q}(x,y):=\phi_f(x,y)$, where $f(1)=p$ and ${f(2)=q}$.
\end{claimproof}

From \cref{cl:copylessing} and 
\eqref{eq:1-subd-clique}
it follows that 
$$(A'\cup B',\, \setof{ab}{a\in A',b\in B', \wh G\models \phi_{p,q}(a,b)})\quad\cong\quad K^{(1)}_I.$$
Moreover, the formula $\phi_{p,q}$ only depends on $\phi$ and the indices $p,q\in[\ell]$.

Let $T_{p,q}$ denote the transduction without copying which first colors the given input structure using $k^\ell$ colors,
then applies the formula $\phi_{p,q}(x,y)$,
defining the edges of a graph, and finally, takes an arbitrary induced subgraph.
The argument above proves that for every $n\in\N$ there are $p,q\in [\ell]$ and $m\ge U_T(n)$ such that 
\[K^{(1)}_m\in T_{p,q}(G).\qedhere\]

\end{proof}

\begin{corollary}\label{cor:mon-dep-copying}
    If $\CC$ is a monadically dependent class of $\Sigma$-structures 
    and $T\from\Sigma\to\Gamma$ is a transduction with copying then 
    $T(\CC)$ is monadically dependent.
\end{corollary}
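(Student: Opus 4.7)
The plan is to argue by contrapositive: assuming that $T(\CC)$ is monadically independent, I would produce a transduction with copying from $\CC$ to the class of all graphs, then invoke \cref{fact:copying} to convert it into a transduction without copying, contradicting the assumed monadic dependence of $\CC$.

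Concretely, suppose $T(\CC)$ is not monadically dependent. Then by definition there exists a (first-order, copying-free) transduction $S\from\Gamma\to\set{E}$ such that $S(T(\CC))$ contains every finite graph. Viewing $S$ as a degenerate transduction with copying (taking $\ell = 1$ in the copying operation), I can apply \cref{fact:copying-composition} to the composition $S\circ T\from \Sigma \to \set{E}$, obtaining that $S\circ T$ is again a transduction with copying. Since $(S\circ T)(\CC) = S(T(\CC))$ contains all graphs, \cref{fact:copying} then furnishes a (copying-free) first-order transduction $U\from\Sigma\to\set{E}$ with $U(\CC)$ containing every graph, contradicting that $\CC$ is monadically dependent. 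This reverses the original assumption and yields the desired statement.

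The only step that requires any care is the first reduction, namely viewing an ordinary transduction as a trivial transduction with copying so that the composition lemma applies: one should check that the $1$-fold copying operation $C_1$ is essentially the identity on structures (modulo adding a trivial relation $M$ interpreted as equality), and that a transduction without copying can always be factored through $C_1$ by ignoring the extra relation $M$. Once this mild bookkeeping is in place, the rest of the argument is an immediate composition-and-collapse of results already proved. Hence, I anticipate no real obstacle; the entire proof is essentially three lines once the boilerplate about $C_1$ is absorbed.
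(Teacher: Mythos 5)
Your proof is correct and follows exactly the paper's own argument: assume $T(\CC)$ is monadically independent, take a witnessing copying-free transduction $S\from\Gamma\to\set{E}$, invoke \cref{fact:copying-composition} to see $S\circ T$ is a transduction with copying, and then apply \cref{fact:copying} to eliminate copying and contradict monadic dependence of $\CC$. The small bookkeeping you flag — viewing $S$ as a degenerate transduction with copying via $C_1$ so that the composition fact applies — is a point the paper glosses over, but it is exactly right and does not change the substance.
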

\begin{proof}
    Assume towards a contradiction that $T(\CC)$ transduces the class of all graphs,
    that is, there is a transduction $S\from \Gamma\to\set{E}$
    such that $(S\circ T)(\CC)$ contains the class of all graphs.
As $S\circ T$ is a transduction with copying by Fact~\ref{fact:copying-composition},
by Fact~\ref{fact:copying} it follows that $\CC$ transduces the class of all graphs, that is, $\CC$ is monadically independent; a contradiction.
\end{proof}

The following lemma allows us to encode binary structures in graphs.

\begin{lemma}\label{lem:encoding-in-graphs}
    Fix a finite signature $\Sigma$ consisting of binary relational symbols, and let $\Gamma=\set{E}$ denote the signature of graphs.
    There are
    \begin{itemize}
        \item a quantifier-free transduction with copying $F\from \Sigma\to \Gamma$,
        \item a transduction (without copying) $T\from \Gamma\to \Sigma$,
    \end{itemize}
 such that 
    \begin{align}\label{eq:recover}
        G\in T(F(G))\qquad\text{for every $\Sigma$-structure $G$.}
    \end{align}
\end{lemma}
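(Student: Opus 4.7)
The plan is to construct $F$ and $T$ explicitly, exploiting the flexibility of copying with quantifier-free formulas, and the nondeterminism inherent in both transductions. Let $s=|\Sigma|$ and write $\Sigma=\{R_1,\ldots,R_s\}$. I would use $\ell=2s+1$ copies per vertex: for each $v\in V(G)$, we have a \emph{main} copy $(v,0)$ and two \emph{relational} copies $(v,i,+)$ and $(v,i,-)$ for each $i\in[s]$. The coloring used in $F$ marks each copy by its type, so that the quantifier-free formulas can refer to the roles of vertices via colors.

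The quantifier-free edge formula of $F$ is a disjunction of several cases. First, for each pair of copy indices $(k,k')$ corresponding to copies of the same vertex, an edge is added via $M(x,y)$ combined with the appropriate color conditions; this creates a gadget around each original vertex which makes the set of copies of each vertex first-order identifiable in the output graph. Second, within the slice $(i,+)$, an edge $(u,i,+)-(w,i,+)$ is added using the quantifier-free formula $C_{i,+}(x)\wedge C_{i,+}(y)\wedge(R_i(x,y)\vee R_i(y,x))$, which is expressible because $R_i$ in $C_\ell(G)$ mirrors the original $R_i$ within each slice. This encodes the symmetric closure $R_i\cup R_i^{-1}$. Similarly, the slice $(i,-)$ encodes the symmetric core $R_i\cap R_i^{-1}$ via $R_i(x,y)\wedge R_i(y,x)$. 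Combining these two, the set of \emph{asymmetric} $R_i$-pairs (those in the closure but not in the core) is an undirected invariant of $F(G)$.

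The transduction $T$ then recovers $G$ in the following stages. Using its own nondeterministic coloring, $T$ first marks the main copies and identifies the copy-of-$v$ gadgets via first-order formulas (using the structural description in Lemma~\ref{lem:instructive-columns}-style reasoning, here straightforward since the gadget is a fixed finite pattern). It then takes the substructure consisting of main copies. For each pair $(u_0,w_0)$ of main copies, the symmetric part of each $R_i$ is read off from the corresponding $(i,-)$ slice, and the pairs in the closure but not in the core are identified as candidates for asymmetric edges. Finally, $T$ uses its coloring to select a direction for each asymmetric pair and defines $R_i$ accordingly. For a specific $G$, $T$'s nondeterministic choices (both in coloring and in the induced substructure step) are sufficient to produce the exact $R_i$ appearing in $G$, because for this specific $G$ the required direction assignment can be realized by an appropriate coloring together with restricting to the main-copy substructure.

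The main obstacle is the direction of asymmetric pairs: encoding directions consistently is delicate because a single vertex may occur as a source of one asymmetric pair and as a target of another. To handle this, the construction must exploit both the structural gadgets produced by $F$ (to make direction recoverable locally, pair by pair, rather than through a global per-vertex flag) and the nondeterminism of $T$ (which picks, for each asymmetric pair, one of the two orientations consistent with the data in $F(G)$). Once the encoding is set up so that the orientation of each pair is determined by first-order-definable local patterns around the two main copies, verifying that $G\in T(F(G))$ becomes routine: one checks that every relation in the output structure of $T$ on $F(G)$, under the coloring witnessing $G$, agrees with the original $R_i$ of $G$.
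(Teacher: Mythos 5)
Your construction of $F$ loses the information that the lemma is supposed to preserve, and the gap you flag at the end is not one that can be papered over by nondeterminism. In the $F$ you describe, each relation $R_i$ is only recorded through the symmetric closure (slice $(i,+)$) and the symmetric core (slice $(i,-)$), both of which are undirected. Every asymmetric pair is therefore stored with its orientation erased. You correctly identify this as the obstacle, and you assert that the orientation should be "recoverable locally" and selected by "$T$'s nondeterministic choices," but you never exhibit a mechanism, and in fact none exists for your $F$. The nondeterminism available to a transduction without copying consists of a $k$-coloring of the vertices and a choice of vertex subset; both are functions on vertices, not on pairs. A simple counting argument makes this concrete: take $\Sigma=\{R\}$ and let $G$ range over tournaments on $n$ vertices (so $R$ is a total, irreflexive, antisymmetric relation). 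For every such $G$, the closure is the complete graph, the core is empty, and the gadget edges are identical, so $F(G)$ is, up to isomorphism, the same graph $H_n$ on $(2s+1)n$ vertices regardless of $G$. For your lemma to hold, $T(H_n)$ would have to contain all non-isomorphic tournaments on $n$ vertices, of which there are on the order of $2^{\binom{n}{2}}/n!$, whereas $T(H_n)$ has at most $k^{(2s+1)n}\cdot 2^{(2s+1)n}$ members (one per coloring and vertex subset). For large $n$ this is a contradiction.

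The missing idea is that the direction must be encoded by $F$ itself, and the cheapest way to do this without unary flags is to exploit the asymmetry between different \emph{copies} of the domain. The paper's $F$ uses $|\Sigma|+2$ copies $V_0, V_1, (V_R)_{R\in\Sigma}$ and records each directed pair $(u,v)\in R_G$ as the \emph{undirected} edge $\{c_0(u),c_R(v)\}$: the source always lives in $V_0$ and the target always in $V_R$, so the roles of the endpoints are distinguished by which copy they belong to, not by any per-pair flag. The auxiliary copy $V_1$ and the pendant edges $\{c_0(v),c_1(v)\}$, $\{c_1(v),c_R(v)\}$ let $T$ match up the copies of the same original vertex. $T$ then recovers $(u,v)\in R_G$ by asking for $v'\in V_R$, $v''\in V_1$ with $\{c_0(u),v'\},\{c_0(v),v''\},\{v',v''\}\in E$. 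You should replace your symmetric-closure/core slices by this source/target-copy encoding; once direction lives in the copy structure rather than in the edge relation, no nondeterministic orientation step is needed and the lemma follows by a direct check of the equivalence.
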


\begin{proof}[Proof sketch]    
Fix a $\Sigma$-structure $G$, and let $V$ denote its vertex set.
We define the graph $F(G)$ as follows.
The vertex set is $$V(F(G)):=V_0\uplus V_1\uplus \biguplus_{R\in\Sigma}V_R,$$
where $V_0,V_1$, and $V_R$, for $R\in\Sigma$, are disjoint copies of 
$V$.
That is, $V(F(G))$ consists of $|\Sigma|+2$ copies of $V(G)$.
Write $c_i(v)\in V_i$ for the copy of vertex $v\in V$ in $V_i$,
for $i\in\set{0,1}\cup\Sigma$.
The edges of $F(G)$ are
\begin{itemize}
    \item $\{c_0(u),c_R(v)\}$, for every $R\in\Sigma$ and $(u,v)\in R_G$,
    \item $\{c_0(v),c_1(v)\}$, for every $v\in V(G)$, and
    \item $\{c_1(v),c_R(v)\}$, for every $v\in V(G)$ and $R\in\Sigma$.
\end{itemize}

We skip the straightforward argument that the operation $F$ can be implemented as a transduction with copying.
We now argue that there is a fixed transduction $T$ 
which recovers $G$ from $F(G)$. 

By identifying $V(G)$ with $V_0$ by via the bijection $c_0$,
we observe that
for every pair $u,v\in V(G)=V_0$  and relation symbol $R\in\Sigma$, 
the following conditions are equivalent:
\begin{itemize}
    \item $(u,v)\in R_G$,
    \item there are vertices $v'\in V_R$ and $v''\in V_1$ 
    such that $\{u,v'\},\{v,v''\},\{v',v''\} \in E(F(G))$.
\end{itemize}
The transduction $T$ colors each of the $|\Sigma|+2$ copies of $V$ comprising $V(F(G))$ by a separate color, using $|\Sigma|+2$ many colors jointly, represented by unary predicates.
For each relation \(R \in \Sigma\), the transduction contains a formula $\phi_R(x,y)$ in the signature of $|\Sigma|+2$-colored graphs.
The formula $\phi_R(x,y)$ recovers the relation $R$ 
by verifying the second condition described above,
which is clearly expressible by a first-order formula involving the unary predicates representing the copies and a binary relation representing adjacency in $F(G)$.

It follows from the equivalence of the two conditions above that $G\in T(F(G))$. As the transduction $T$ does not depend on the $\Sigma$-structure $G$,
the statement follows.
\end{proof}

The following lemma can be seen as a technical amendment to the statement of Lemma~\ref{lem:encoding-in-graphs}.

\begin{lemma}\label{lem:copy-transformer}
    Let $F$ be the transduction constructed in Lemma~\ref{lem:encoding-in-graphs}.
    For every $\Sigma$-structure $G$, if $G$ contains a transformer of length 
    $h$ and order $n$, then there is a graph $H \in F(G)$ that contains a transformer of length at most $3h$ and order $n$.
\end{lemma}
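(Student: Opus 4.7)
I would work with the canonical output $H \in F(G)$ whose vertex set is $V(G) \times [\ell]$ with $\ell = |\Sigma|+2$, and whose edges are exactly those described in the proof of Lemma~\ref{lem:encoding-in-graphs}: for each $u,v \in V(G)$ and $R \in \Sigma$, an edge $\{c_0(u), c_R(v)\}$ iff $R_G(u,v)$, together with the ``spine'' edges $\{c_0(v), c_1(v)\}$ and $\{c_1(v), c_R(v)\}$ for every $v$ and every $R$. Two features are crucial: (i) for any $u,v \in V(G)$ and $R \in \Sigma$, the adjacency of $c_0(u)$ and $c_R(v)$ in $H$ is exactly the atomic fact $R_G(u,v)$; and (ii) for any $\alpha \in \{0\}\cup\Sigma$ and any $u,v \in V(G)$, the vertices $c_1(u)$ and $c_\alpha(v)$ are adjacent in $H$ iff $u=v$. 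In particular, for any mesh $M$ of order $n$ in $G$ and any $\alpha \in \{0\}\cup\Sigma$, the pair $(c_\alpha \circ M, c_1 \circ M)$ is matched in $H$, and hence conducting.

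If $n \le 3$, then every pair of meshes of order $\le 3$ is conducting by definition and every such mesh is simultaneously vertical and horizontal, so simply lifting the original transformer via $c_0$ yields a transformer in $H$ of length $h \le 3h$ and order $n$. Assume therefore $n > 3$. For each $s \in [h-1]$, since $(M_s, M_{s+1})$ is conducting and $n>3$ it is regular, and by Lemma~\ref{lem:identical-meshes} the two meshes are either identical or disjoint. In the identical case, the lifted pair $(c_0 \circ M_s, c_0 \circ M_{s+1})$ is itself regular and non-homogeneous in $H$ (the equality atomic fact separates the diagonal from the off-diagonal), hence conducting. In the disjoint case, the source of non-homogeneity in $G$ must be some $R_s \in \Sigma$ in one of the two possible orientations, and by feature (i) either $(c_0 \circ M_s, c_{R_s} \circ M_{s+1})$ or $(c_{R_s} \circ M_s, c_0 \circ M_{s+1})$ is conducting in $H$.

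I would then construct the sequence of meshes in $H$ as follows. For each $s \in [h]$, assign an entry copy index $\alpha_s^{\mathrm{in}} \in \{0\}\cup\Sigma$ and an exit copy index $\alpha_s^{\mathrm{out}} \in \{0\}\cup\Sigma$ such that: (a) the pair $(c_{\alpha_s^{\mathrm{out}}} \circ M_s, c_{\alpha_{s+1}^{\mathrm{in}}} \circ M_{s+1})$ is conducting in $H$ for each $s<h$ (as prescribed by the case analysis above); (b) $c_{\alpha_1^{\mathrm{in}}} \circ M_1$ is vertical in $H$, witnessed by lifting the $G$-witness $a\colon I\to V(G)$ of the verticality of $M_1$ to an appropriate copy (using that, for $|J|\ge 2$, the distinguishing atomic fact in the verticality of $M_1$ cannot be equality and must therefore be some $R \in \Sigma$); and (c) $c_{\alpha_h^{\mathrm{out}}} \circ M_h$ is horizontal in $H$ by the symmetric argument. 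Whenever $\alpha_s^{\mathrm{in}} \ne \alpha_s^{\mathrm{out}}$, insert the bridge mesh $c_1 \circ M_s$ between them, producing two matched (hence conducting) pairs along the way. Each original mesh contributes at most three meshes in the new sequence, so the total length is at most $3h$, while the indexing sets $I,J$ are preserved and the order remains~$n$.

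The main obstacle is the case analysis required to confirm that, for every possible source of non-homogeneity of a consecutive pair (left-direction relation, right-direction relation, or equality via identical meshes) and for every kind of verticality/horizontality witness in $G$, the copy-index assignment above actually produces a pair that is conducting in $H$; this amounts to checking, case by case, that the adjacency atomic type in $H$ between the chosen copies directly encodes the distinguishing atomic fact from $G$. Regularity of each lifted pair is then free: since $F$ is quantifier-free with copying, the atomic type of $(c_\alpha(u), c_\beta(v))$ in $H$ is determined by $\atp_G(u,v)$ together with the copy indices $(\alpha,\beta)$, so regularity of $(M_s, M_t)$ in $G$ lifts automatically to regularity of $(c_\alpha \circ M_s, c_\beta \circ M_t)$ in $H$.
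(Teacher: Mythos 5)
Your proof is correct and follows essentially the same route as the paper: lift each mesh into $V(G)\times[\ell]$ via the copy maps $c_\alpha$, use the $V_1$ copy as a matching ``bridge'' to move between different copies of the same mesh, and chain the lifted meshes into a transformer of length at most $3h$. You are in fact slightly more careful than the paper's proof sketch, which tacitly assumes that $c_0\circ\mesh$ inherits verticality/horizontality and that non-homogeneity of a conducting pair is always witnessed by some $R\in\Sigma$ in the ``forward'' orientation; your explicit bookkeeping of entry/exit copy indices, together with your separate treatment of the identical-mesh case (where the only witnessing atom is equality), fills those small gaps.
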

\begin{proof}[Proof sketch]
    We use the transformer notation introduced in \Cref{sec:meshesandtransformers}.
    Fix a $\Sigma$-structure $G$ and let $H\in F(G)$ be the graph denoted as $F(G)$ in the proof of \cref{lem:encoding-in-graphs}.
    Let $c_i\from V(G)\to V(H)$ be the functions that maps each vertex $v\in V(G)$
    to the corresponding copy of $v$ in $V(H)$, for $i\in\set{0,1}\cup\Sigma$.
    
    Since each $c_i$ is injective, it follows that 
for every mesh $\mesh\from I\times J\to V(G)$ in $G$
    and  each $i\in\set{0,1}\cup\Sigma$,
    the composition $c_i\circ \mesh$ is a mesh in $H$.
    Furthermore, it is easy to verify that if $\mesh,\mesh'$ is a regular pair of meshes 
    in $G$, then the pair $c_i\circ \mesh,c_j\circ \mesh'$ is a regular pair of meshes in $H$, for each $i,j\in\set{0,1}\cup\Sigma$.
    
    \medskip
    We now argue that if $\mesh,\mesh'\from I\times J\to V(G)$ is a conducting pair 
    of meshes in $G$, then there is $R\in\Sigma$ such that each pair of consecutive meshes in the sequence $$c_0\circ \mesh,c_R\circ \mesh',c_1\circ\mesh',c_0\circ \mesh'$$  is conducting.

    Namely, as the pair $\mesh,\mesh'$ is conducting, it is regular and not homogeneous. It follows that there is some binary relation symbol $R\in\Sigma$ such that $R_G(\mesh(i,j),\mesh'(i,j))$ is not the same for all $i\in I$ and $j\in J$.
    By definition of the edge relation in $H=F(G)$,
    it follows that 
    $E(c_0(\mesh(i,j)),c_R(\mesh'(i,j)))$ is not the same for all $i\in I$ and $j\in J$.
It follows that the pair $c_0\circ \mesh,c_R\circ \mesh'$ is not homogeneous in $H$. As it is regular (see above), it is conducting.
Furthermore, by construction of $F(G)$, the pairs 
$(c_R\circ \mesh',c_1\circ\mesh')$ and $(c_1\circ \mesh',c_0\circ\mesh')$ are conducting, as $E(H)$ forms a matching between the corresponding copies $V_R$ and $V_1$ of $V$, and between the copies $V_1$ and $V_0$ of $V$.

\medskip
By a similar argument as above, it is easy to verify that 
if $\mesh\from I\times J\to V(G)$ is vertical (resp. horizontal) in $G$,
then $c_0\circ\mesh\from I\times J\to V(G)$ is vertical (resp. horizontal) in $H$.

\medskip
Finally, suppose that $\mesh_1,\ldots,\mesh_h$ is a transformer in $G$.
By the arguments above,
there are $R_1,\ldots,R_{h-1}\in \Sigma$ such that the following sequence of meshes is a transformer in $H$:       
\[c_0\circ \mesh_1, c_{R_1}\circ\mesh_2,c_1\circ\mesh_2,c_0\circ\mesh_2,
                     c_{R_2}\circ\mesh_3,c_1\circ\mesh_3,c_0\circ\mesh_3,
                     \ldots,
                     c_{R_{h-1}}\circ\mesh_h,c_1\circ\mesh_h,c_0\circ\mesh_h
\]          
\end{proof}

\begin{lemma}\label{lem:transformers-through-qftransduction}
    Let $F\from\Sigma\to\Gamma$  be a quantifier-free transduction with copying, where $\Sigma$ and \(\Gamma\) consist of binary relational symbols.
    Let $G$ be a $\Sigma$-structure and $H\in F(G)$.
    If $H$ contains a transformer of length $h$ and order $n$ then $G$ contains a transformer of length $h$ and order $U_{h,F}(n)$.
\end{lemma}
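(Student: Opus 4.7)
Factoring the transduction as $F = T \circ C_\ell$ where $T$ is a quantifier-free transduction without copying (with $k$-coloring and quantifier-free formulas defining the symbols in $\Gamma$), fix $H \in F(G)$ witnessed by a $k$-coloring $G_c^+$ of $C_\ell(G)$ such that $H$ is an induced substructure of $T_0(G_c^+)$; in particular, $V(H) \subseteq V(G) \times [\ell]$. Given a transformer $\mesh_1, \ldots, \mesh_h$ of order $n$ in $H$ together with witnesses $a, b$ for the verticality of $\mesh_1$ and the horizontality of $\mesh_h$, decompose each vertex along the copying, writing $\mesh_s(i,j) = (\mu_s(i,j), \kappa_s(i,j))$ with $\mu_s := \pi_1 \circ \mesh_s$ and $\kappa_s := \pi_2 \circ \mesh_s$, and similarly $a(i) = (\alpha(i), \kappa_a(i))$, $b(j) = (\beta(j), \kappa_b(j))$. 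The plan is to pass to a large sub-indexing via Bipartite Ramsey and show that the projections $\mu_1, \ldots, \mu_h$ form a transformer of length $h$ in $G$.

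First, apply Bipartite Ramsey (\Cref{gridramsey}) to a coloring of $(i, i', j, j') \in [n]^4$ that records the atomic types in both $G_c^+$ and $G$ of the $(2h+4)$-tuple obtained by concatenating $\bar u(i,j) := (a(i), \mesh_1(i,j), \ldots, \mesh_h(i,j), b(j))$ with $\bar u(i',j')$, together with all copy indices $\kappa_s(i,j), \kappa_a(i), \kappa_b(j)$. As the number of colors is bounded by $\const(h,F)$, Ramsey yields subsequences $I', J' \subseteq [n]$ of length $U_{h,F}(n)$ on which every copy index is constant (write $\kappa_s \equiv c_s$, $\kappa_a \equiv c_a$, $\kappa_b \equiv c_b$), every vertex color is constant, and every sub-tuple atomic type in either $G_c^+$ or $G$ depends only on $\otp(i,i')$ and $\otp(j,j')$. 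Constancy of copy indices then transfers injectivity of each $\mesh_s|_{I' \times J'}$ to injectivity of $\mu_s|_{I' \times J'}$, so each $\mu_s$ is indeed a mesh in $G$.

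Next, verify that $(\mu_1, \ldots, \mu_h)$ is a transformer. Regularity of every pair $(\mu_s, \mu_t)$ in $G$ is immediate from the Ramsey step. For conductivity of $(\mu_s, \mu_{s+1})$, first observe that quantifier-freeness of $T$ implies the atomic type of $(\mesh_s(i,j), \mesh_{s+1}(i',j'))$ in $H$ is a function of its atomic type in $G_c^+$, so non-homogeneity in $H$ lifts to non-homogeneity in $G_c^+$. If $c_s = c_{s+1}$, the varying part of the atomic type in $G_c^+$ coincides with the $G$-atomic type of the projected pair (the $M$-relation reduces to equality, and the $\Sigma$-relations between same-copy vertices mirror those in $G$), so non-homogeneity transfers to $G$, and $(\mu_s, \mu_{s+1})$ is conducting. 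If $c_s \neq c_{s+1}$, then all $\Sigma$-relations vanish in $C_\ell(G)$ between the two copies and equality in $H$ always fails, so the only varying atomic datum is the $M$-relation, which holds iff $\mu_s(i,j) = \mu_{s+1}(i',j')$ in $G$; by regularity this equality depends only on order types, and injectivity of $\mu_s, \mu_{s+1}$ rules out every pattern except $(\otp(i,i'), \otp(j,j')) = (=,=)$, forcing $\mu_s = \mu_{s+1}$ as functions. The pair $(\mu_s, \mu_s)$ is then trivially conducting via the diagonal/off-diagonal equality. The vertical and horizontal witnesses are handled analogously: the case $c_a \neq c_1$ (respectively $c_b \neq c_h$) is ruled out by injectivity of $\mu_1$ (respectively $\mu_h$) exactly as above, and in the remaining case $c_a = c_1$ (respectively $c_b = c_h$), the projection $\alpha$ (respectively $\beta$) witnesses verticality of $\mu_1$ (respectively horizontality of $\mu_h$) in $G$, using that the sub-pair atomic type is independent of $j$ in $G$ thanks to regularity from Ramsey.

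The main obstacle is the delicate analysis in the different-copy sub-cases, where non-homogeneity in $G_c^+$ arises purely from the $M$-relation and carries no direct $\Sigma$-structural content in $G$; the argument must leverage the injectivity of the projected meshes to collapse pathological identifications into the trivial $\mu_s = \mu_{s+1}$ (or rule them out entirely in the vertical and horizontal cases). It is also essential that the Ramsey coloring capture the $G$-atomic type of the projected configuration alongside the $G_c^+$-atomic type, since the $\Sigma$-relations in $G$ between vertices from different copies are not determined by the $G_c^+$-atomic type alone; this is what allows regularity of $(\mu_s, \mu_t)$ in $G$ and the case analysis to be carried out in a single Ramsey application.
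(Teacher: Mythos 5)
Your proof is correct and follows the same high-level strategy as the paper: project $V(H)\subset V(G)\times[\ell]$ down to $V(G)$, use a Ramsey step to make copy-indices and colors constant and the relevant atomic types regular, and then verify that the projected sequence is a transformer in $G$. The paper's sketch phrases this as ``extracting a transformer in which every mesh is a monochromatic monocopy mesh'' and then declares the rest ``straightforward to verify.''

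Where you add genuine value is in the cross-copy case $c_s\neq c_{s+1}$ (and its analogues for the vertical/horizontal witnesses). The paper's phrase ``straightforward to verify'' hides the fact that when two consecutive monocopy meshes live in different copies, the only non-constant atomic datum in $C_\ell(G)$ is the $M$-relation, and one must then invoke the regularity-plus-injectivity argument (essentially \cref{lem:identical-meshes}) to conclude either $\mu_s=\mu_{s+1}$ (still conducting, via equality) or a contradiction (for the vertical/horizontal witnesses). You are also right that the Ramsey coloring must record $G$-atomic types of the projected tuple in addition to $G_c^+$-atomic types — since for cross-copy pairs the latter do not determine the former — and the paper's sketch does not make this explicit. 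One small point you elide: extracting the $j$-independence of $\atp_G(\mu_1(i,j),\alpha(i'))$ from Ramsey regularity requires restricting $J'$ away from its extremal elements (exactly as in \cref{lem:vertical}); this is routine and the paper also suppresses it. Overall the proposal is a sound, more careful elaboration of the paper's argument rather than a genuinely different route.
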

\begin{proof}[Proof sketch]
Suppose $F$ is a composition of the $\ell$-copying operation with a quantifier-free transduction $S$ without copying that uses $k$ colors.
Let $H\in F(G)$. By definition, there is a $k$-coloring $G_\ell^+$ of $C_\ell(G)$ such that $H$ is an induced subgraph of $S_0(G_\ell^+)$. In particular $V(H)\subset V(G)\times[\ell]$.

Consider the mapping $\pi\from V(H)\to V(G)$ that maps each copy of a vertex $v$ to $v$ itself.
This mapping is injective when restricted to a single copy \(V(G)\times \{i\}\), for some \(i \in [\ell]\).
Every mesh $\mesh\from I\times J\to V(H)$ in $H$ 
induces a function $\pi\circ\mesh\from I\times J\to V(G)$.
Note that $\pi\circ\mesh$ might not be injective, thus it is not necessarily a mesh in $G$.
However, if $\mesh$ is a \emph{monocopy mesh} in $H$, that is, $V(\mesh)\subset V(G)\times\set{i}$ for some $i\in[\ell]$,
then $\pi\circ\mesh$ is a mesh in $G$.

We call a mesh $\mesh$ in $H$ \emph{monochromatic}, if all of its vertices have the same color in $G^+_\ell$.
Using the fact that $S$ is quantifier-free,
it is straightforward to verify that if a monochromatic monocopy mesh $\mesh$ is vertical (resp.\ horizontal) in \(H\),
then $\pi\circ\mesh$ is vertical (resp.\ horizontal) in $G$,
and that if $(\mesh,\mesh')$ is a conducting pair of monochromatic monocopy meshes in $H$,
then $(\pi\circ\mesh,\pi\circ\mesh')$ is a conducting pair of meshes in $G$.

Note that if $H$ contains a transformer $T$ of length $h$ and order $n$, then it also contains a transformer $T'=(\mesh_1,\ldots,\mesh_h)$ of length $h$ and order $U_{h,\ell,k}(n)$ in which each mesh $\mesh_i$ is a monochromatic monocopy mesh,
by a Ramsey argument similar to Lemma~\ref{lem:regularize}.
It follows from the above that the sequence \(\pi\circ \mesh_1,\ldots,\pi\circ\mesh_h\) is a transformer in $G$
of length $h$ and order $U_{h,\ell,k}(n)\ge U_{h,F}(n)$.
\end{proof}

\subsection{Proof of Theorem~\ref{thm:flip-breakability-binary}}
We now prove Theorem~\ref{thm:flip-breakability-binary}.
\begin{proof}[Proof of Theorem~\ref{thm:flip-breakability-binary}]
    Fix a finite signature $\Sigma$ consisting of binary relational symbols, and let $\CC$ be a class of $\Sigma$-structures.

    Let $F\from \Sigma\to\set{E}$ and $T\from\set{E}\to\Sigma$ be the transductions from Lemma~\ref{lem:encoding-in-graphs}. 
    Denote $\DD:=F(\CC)$. Note that $\DD$ is a class of graphs and $\CC\subset T(\DD)$ by \eqref{eq:recover}.
    Then $\CC$ is monadically dependent if and only if 
    $\DD$ is monadically dependent, by Corollary~\ref{cor:mon-dep-copying}.

    We now show that $\CC$ is monadically dependent if and only if it is flip-breakable.
    Suppose first that $\CC$ is monadically dependent.
    Hence, $\DD$ is also monadically dependent, and therefore is flip-breakable 
    by  Theorem~\ref{thm:main-circle}.
    Since $\CC\subset T(\DD)$, it follows that 
    $\CC$ is flip-breakable by 
     Lemma~\ref{lem:breakability-transfer}.
    Conversely, assume now that $\CC$ is flip-breakable.
    By Lemma~\ref{lem:breakability-transfer}, every graph class transducible from $\CC$ is flip-breakable.
    As the class of all graphs is not flip-breakable,
    it follows that $\CC$ is monadically dependent.

    The fact that the class of all graphs is not flip-breakable
    follows immediately from Theorem~\ref{thm:main-circle},
    as clearly the class of all graphs is  monadically independent.
    One could also argue more directly, by showing that already the class 
    of $1$-subdivided cliques is not flip-breakable with radius~$2$,
    by a combinatorial argument similar as in the proof Lemma~\ref{lem:fb-shatter}.

    \medskip

    We now show that if $\CC$ is  monadically  independent then there is some $h\in\N$ such that $\CC$ contains transformers of length $h$ and arbitrarily large order.
    Since $\CC$ is  monadically independent, neither is $\DD$.
    Therefore, by Theorem~\ref{thm:main-circle},
    $\DD$ is not prepattern-free. By Proposition~\ref{prop:transformer}, there is some $h\in\N$ such that for all $n\in\N$, some graph $H_n\in \DD$ contains a  transformer of length $h$ and  order $n$.
    As $H_n\in\DD$ and $\DD=F(\CC)$, for every $n\in\N$ there 
    is some $G_n\in\CC$ such that $H_n \in F(G_n)$.
By Lemma~\ref{lem:transformers-through-qftransduction},
$G_n\in \CC$ contains a transformer of length $h$ and order $U_{F,h}(n)$.
It follows that $\CC$ contains transformers of length $h$ and arbitrarily large order, as required. Moreover, the transformer can be converted to a minimal transformer of order $U_{F,h}(n)$, by Lemma~\ref{lem:to-minimal}.

\medskip

 Finally, suppose that $\CC$ contains transformers of length $h$ and arbitrarily large order. 
 By Lemma~\ref{lem:copy-transformer}, $\DD$ contains transformers of length at most $3h$ and arbitrarily large order.
 Therefore, $\DD$ is monadically independent, and thus neither is $\CC$.
 
 One way to argue that $\DD$ is  monadically independent is by showing directly that $\DD$ transduces the class of all graphs, using the transformers. Another way is 
 to observe that $\DD$ contains one of the families of patterns,
 as described in Proposition~\ref{prop:patterns-main},
 by the same proof as in that proposition. Hence, $\DD$ is  monadically independent by Theorem~\ref{thm:main-circle}.

 \medskip
 This concludes the proof of Theorem~\ref{thm:flip-breakability-binary}.
\end{proof}

\subsection{Two-Set Variant of Flip-Breakability}
The following is a two-set variant of flip-breakability.

\begin{theorem}\label{thm:flip-breakable2-binary}
    Let $\CC$ be a monadically dependent class of binary structures.
    For every $r\ge 1$, structure $G\in\CC$ and sets $A,B\subset V(G)$ with $|A|=|B|=m$,
    there are subsets $A'\subset A$ and $B'\subset B$ of size $U_{r,\CC}(m)$,
    and a $\const(r,\CC)$-flip $G'$ of $G$ such that 
    $$\dist_{G'}(A',B')>r.$$
\end{theorem}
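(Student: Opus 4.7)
The plan is to derive \cref{thm:flip-breakable2-binary} from the one-set flip-breakability of $\CC$ already provided by \cref{thm:flip-breakability-binary}, via a pigeonhole argument combined with iteration. Without loss of generality we may assume $A\cap B=\emptyset$, by replacing $B$ with $B\setminus A$ (which shrinks $B$ by at most half, affecting the quantitative bounds only by a constant factor).

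First, I would apply the one-set flip-breakability of $\CC$ at radius $r$ to the set $W:=A\cup B$ of size $2m$. This yields subsets $W_1,W_2\subseteq W$ of size at least $U_{\CC,r}(2m)$ and a $\const(\CC,r)$-flip $G'$ of $G$ with $\dist_{G'}(W_1,W_2)>r$. By pigeonhole, each of $W_1,W_2$ has at least half its elements in $A$ or at least half in $B$, yielding four cases. In the two \emph{good} cases, where one of $W_1,W_2$ is $A$-majority and the other $B$-majority, we set $A':=W_i\cap A$ and $B':=W_j\cap B$ for the appropriate $i,j\in\set{1,2}$; each has size at least $U_{\CC,r}(2m)/2$ and they remain at distance greater than $r$ in $G'$, as required.

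The main obstacle is the two \emph{bad} cases, in which both $W_1$ and $W_2$ are majority in the same side — say both in $A$. Here we have separated two large subsets of $A$ from each other, but have not broken $A$ from $B$. My plan to handle this is iterative: shrink $A$ to $W_1\cap A$ and re-apply the one-set flip-breakability to the new pair $(W_1\cap A,B)$, repeating until a good case is reached. Showing that this iteration terminates after boundedly many steps is the technical heart of the argument; I would establish this via the transformer-free characterization of monadic dependence given in condition~(3) of \cref{thm:flip-breakability-binary}. Indeed, if the bad case were to recur for arbitrarily many rounds across a sequence of ever-larger instances, then by chaining the far-apart pairs produced in successive rounds one could extract, using Ramsey-style regularization as in \cref{sec:transformers}, transformers in $G$ of some fixed length but of arbitrarily large order, contradicting monadic dependence of $\CC$. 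The resulting bound on iteration depth, together with the pigeonhole and flip-cost losses accumulated at each step, then yields the claimed unbounded function $U_{r,\CC}$ and constant $\const(r,\CC)$.
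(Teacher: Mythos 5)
Your opening move is close to the paper's, but you diverge at the crucial step and end up in a dead end that the paper avoids entirely. The paper applies one-set flip-breakability to $A$ \emph{alone} at radius $2r$ (not to $A\cup B$ at radius $r$), obtaining $A_0,A_1\subset A$ with $\dist_{G'}(A_0,A_1)>2r$. It then observes, by the triangle inequality, that each $b\in B$ can lie within distance $r$ of at most one of $A_0,A_1$ in $G'$; a pigeonhole over $B$ then produces $B'\subset B$ of size at least $|B|/2$ all far from one of the two pieces. That is the whole proof.

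Notice that the situation you label as the ``bad case'' --- both $W_1,W_2$ majority inside $A$, i.e.\ having split $A$ into two far-apart pieces rather than separating $A$ from $B$ --- is precisely the configuration the paper's proof \emph{engineers on purpose}, and then resolves with the triangle-inequality observation. You have the right raw material in hand at that moment; the missing ingredient is the observation that with the gap doubled to $2r$, one side's two far-apart pieces impose a dichotomy on the other side.

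As written, your proposal has a genuine gap: the iterative fallback is not established. You need the number of rounds to be bounded by a constant depending only on $\CC$ and $r$ (uniformly over $G$, $A$, $B$); otherwise the nested applications of $U_{\CC,r}(\cdot)/2$ give no unbounded lower bound on $|A'|,|B'|$. The sketch that unbounded recurrence of the bad case would let one ``chain the far-apart pairs produced in successive rounds'' into large transformers is speculative and does not connect to the paper's transformer machinery, which operates inside a single structure with a fixed flip, not across a sequence of rounds each using a different flip. In short: the iteration is unnecessary, and the argument offered for its termination does not go through as stated.
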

\begin{proof}
    By Theorem~\ref{thm:flip-breakability-binary}, the class $\CC$ is flip-breakable.
Applying flip-breakability to the set $A$ and radius $2r$,
we get two subsets $A_0,A_1\subset A$ 
of size $U_{r,\CC}(|A|)$
and a $\const(r,\CC)$-flip $G'$ of $G$ such that 
$$\dist_{G'}(A_0,A_1)>2r.$$
By the triangle inequality,  every $b\in B$ 
is at distance  greater than $r$ from either $A_0$ or $A_1$ in (the Gaifman graph of) $G'$.
By the pigeonhole principle, there is  a subset $B'$ of $B$ with $|B'|\ge |B|/2$ 
and $A'\in\set{A_0,A_1}$,
such that all elements of $B'$ are at distance larger than $r$ from $A'$ in $G'$. This yields the conclusion.
\end{proof}

\subsection{Ordered Graphs and Twin-Width}
We now derive two consequences of Theorem~\ref{thm:flip-breakability-binary} 
in the context of ordered graphs.
An \emph{ordered graph} is a structure $G=(V,E,<)$, where $(V,E)$ is a graph and $<$ is a total order on $V$. This is naturally viewed as a binary structure over the signature consisting of two binary relation symbols $E$ and $<$.

We reprove the following result of \cite{twwIV}.
For two sets $A,B$ of vertices of a (ordered) graph $G$,
let $r_G(A,B)$ denote the maximum
of the cardinalities of the following two sets
$$\setof{N(a)\cap B}{a\in A},\quad \setof{N(b)\cap A}{b\in B}.$$
In other words, $r_G(A,B)$ is the maximum of the number of distinct rows, and the number of distinct columns, of the $A\times B$ adjacency matrix  between the sets $A$ and $B$.
A \emph{convex partition} of an ordered graph $G=(V,E,<)$ is a partition $\cal P$ of $V$ such that every part of $\cal P$ is an interval with respect to~$<$.

A class $\CC$ of ordered graphs has \emph{bounded grid rank}
if there are constants $\ell,m$ such that for all ${G=(V,E,<)\in\CC}$ 
and convex partitions $\cal L,\cal R$ of size $\ell$, there 
are two intervals $A\in\cal L$ and $B\in \cal R$ such that 
$r_G(A,B)\le m$. We remark that this definition of bounded grid rank is equivalent to the definition we give in the introduction.

\begin{theorem}[{\cite[Thm. 1, (\emph{iv})$\rightarrow$(\emph{ii})]{twwIV}}]\label{thm:grid-rank}
    Every monadically dependent class of ordered graphs has bounded grid rank.
\end{theorem}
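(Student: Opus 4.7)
The plan is to prove the contrapositive. Suppose $\CC$ is a class of ordered graphs, viewed as binary structures over $\Sigma=\set{E,<}$, that lacks bounded grid rank. I would show that $\CC$ is not monadically dependent by appealing to Theorem~\ref{thm:flip-breakability-binary}: it suffices to exhibit in structures of $\CC$ transformers (or equivalently minimal transformers) of arbitrary length and order.

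Unfolding the definition, unbounded grid rank supplies, for every $\ell,m\in\N$, some $G\in\CC$ with convex partitions $\cal L=(L_1,\ldots,L_\ell)$ and $\cal R=(R_1,\ldots,R_\ell)$ with $r_G(L_i,R_j)>m$ for every $(i,j)\in[\ell]^2$. I would first apply a bipartite Ramsey argument to the $2$-coloring of $[\ell]\times[\ell]$ that labels each pair according to whether the rows or the columns of $G[L_i,R_j]$ contribute the $>m$ distinct vectors. Passing to sub-sequences of $\cal L$ and $\cal R$ of size $\Omega(\log\ell)$ on which all surviving pairs are of the same type, I may assume WLOG row-heaviness: $|\setof{N(a)\cap R_j}{a\in L_i}|>m$ for every remaining pair $(i,j)$.

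Next, I would build a transformer directly from the row-heavy structure. For each surviving pair $(i,j)$ row-heaviness provides a family of vertices in $L_i$ with pairwise distinct neighborhoods in $R_j$ together with witnesses in $R_j$. Concatenating these choices across all $(i,j)$ and applying iterated Ramsey reductions (both within each $L_i,R_j$ and across $[\ell]\times[\ell]$), the goal is two regular meshes $\mesh_L,\mesh_R\colon [n]\times[n]\to V(G)$ with $\mesh_L(i,j)\in L_i$ and $\mesh_R(i,j)\in R_j$ forming a conducting pair, where $n$ grows with $\ell$ and $m$. The ordered structure of $G$ is essential: the linear order restricted to each interval supplies the natural reference that, after enough Ramsey cleanup, forces $\mesh_L$ to be vertical and $\mesh_R$ to be horizontal in the sense of Section~\ref{sec:meshesandtransformers}. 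This assembles into a (minimal) transformer of length $\ge 2$ and order $n$, and choosing $\ell,m$ large gives transformers of arbitrary order.

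The main obstacle is the combinatorial construction of the transformer: simultaneously securing regularity, the conducting pair property, and the vertical/horizontal endpoint status demands careful Ramsey bookkeeping in the style of Section~\ref{sec:pre-to-transformers}, in particular Lemmas~\ref{lem:one-step-down} and~\ref{lem:down-down}. A plausible alternative I would attempt first for its directness uses Theorem~\ref{thm:flip-breakable2-binary}: pick one representative $a_i\in L_i$ and $b_j\in R_j$ by an iterated Ramsey selection designed to propagate row-heaviness down to the representatives themselves, and then apply two-set flip-breakability at radius $1$ to $\set{a_i}$ and $\set{b_j}$. The resulting $k$-flip would force $r_G(A',B')\le k$ on the large subsets $A'\subseteq\set{a_i},B'\subseteq\set{b_j}$, directly contradicting the preserved row-heaviness once $\ell,m$ are large enough relative to the flip-breakability constant.
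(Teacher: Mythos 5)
Your primary route—proving the contrapositive by extracting transformers from unbounded grid rank—is a genuine departure from the paper's proof and is also substantially underdeveloped: you would need to reproduce much of the machinery of Section~\ref{sec:pre-to-transformers} (Lemmas~\ref{lem:one-step-down}, \ref{lem:down-down}, etc.) from scratch, with the ordered setting substituted for insulators. The paper instead argues \emph{directly}: it applies the two-set variant of flip-breakability (Theorem~\ref{thm:flip-breakable2-binary}) at radius~$5$ to the sets $L,R$ of interval minima, then uses Lemma~\ref{lem:order-flips} (that in a $k$-flip of a total order, elements not separated by a small exceptional set $S$ are at distance~$\le 2$) to lift the distance bound from the two chosen minima to the \emph{entire} intervals $A\in\cal L$, $B\in\cal R$ containing them, provided $A,B$ avoid $S$. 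Once the full intervals span no Gaifman edges in the $k$-flip $G'$, the bound $r_G(A,B)\le\const(k)$ is immediate because all $E$-adjacencies between $A$ and $B$ in $G$ are dictated by the flip specification.

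Your alternative approach (applying two-set flip-breakability to chosen representatives $a_i\in L_i$, $b_j\in R_j$) is closer to the paper's strategy, but it has a concrete gap you do not resolve. Grid rank $r_G(L_i,R_j)$ counts distinct neighborhoods ranging over \emph{all} vertices of $L_i$ over \emph{all} of $R_j$; choosing a single representative per interval destroys this information, and there is no Ramsey selection that ``propagates row-heaviness to the representatives'' because $r_G(\{a_i\}_i,\{b_j\}_j)$ is a property of a $|\cal L|\times|\cal R|$ bipartite graph on the representatives, a priori unrelated to any $r_G(L_i,R_j)$. The paper evades this entirely: the conclusion $r_G(A,B)\le\const(k)$ is established for the \emph{full} intervals, not for a cross-section of representatives. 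Your choice of radius~$1$ also fails here—radius~$1$ only gives ``no edges between the representative sets,'' while the paper needs radius~$5$ precisely to use Lemma~\ref{lem:order-flips} and reach the vertices of $A$ and $B$ that are not the chosen minima (budget $2+1+2$). In short: the mechanism that makes the paper's proof work—flips of a total order have small-diameter connected components away from a set of $\le k$ cut vertices—is absent from your plan, and without it neither the radius-$1$ argument nor the representative-selection step goes through.
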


In \cite{twwIV} it is furthermore proved that for  classes of ordered graphs, bounded grid rank implies bounded twin-width. 
This is shown using a separate argument involving the Marcus-Tardos theorem.
 Theorem~\ref{thm:grid-rank}
can be deduced from Theorem~\ref{thm:flip-breakability-binary}
(actually, the two-set variant, Theorem~\ref{thm:flip-breakable2-binary}),
as we show further below.

We also show how another of the central results of \cite{twwIV} can be deduced from Theorem~\ref{thm:flip-breakability-binary}.
Whereas Theorem~\ref{thm:grid-rank} above describes the structure that is found in monadically dependent classes of ordered graphs,
the statement below is on the non-structure side, and describes 
the patterns
that can be found in monadically independent classes of ordered graphs.
The following is a reformulation of \cite[Thm. 5, (\emph{iii})$\rightarrow$(\emph{v})]{twwIV}.

\begin{theorem}\label{thm:tww-patterns}
    Let $\CC$ be a class of ordered graphs.
    If $\CC$ is monadically independent then
    for every $n$ and bijection $\sigma\from [n]\to [n]$,
    there is an ordered graph $G=(V,E,<)\in\CC$,  increasing vertex sequences  $a_1<\cdots<a_n$ and $b_1<\cdots<b_n$ in $V$,
    and a symbol $\alpha\in\set{=,\neq,\le,\ge}$
    such that
    \[
        \{a_i,b_j\}\in E(G)\iff j\, \alpha \, \sigma(i) \qquad\text{for all \(i,j \in [n]\).}
    \]
\end{theorem}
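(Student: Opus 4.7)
Plan: The approach is to apply the binary-structure extension of the pattern extraction machinery developed in Parts~\ref{part:structure} and \ref{part:nonstructure}, and then use Ramsey and Marcus-Tardos arguments to convert the obtained combinatorial patterns into the specific permutation patterns required. Since $\CC$ is monadically independent, Theorem~\ref{thm:flip-breakability-binary} gives that $\CC$ is not flip-breakable, and thus (by the third equivalent condition) contains minimal transformers of some fixed length $h$ and arbitrarily large order $n$. Extending the converter analysis of Section~\ref{sec:conv-and-cross} to binary structures yields, from these transformers, large ``converter-style'' patterns with two sequences of roots $\{a_i\}_{i \in I}$ and $\{b_j\}_{j \in J}$ on opposite sides, connected through internal mesh structure whose shape is governed by some converter kind $(\alpha_1, \alpha_2) \in \{\cl, \st, \hg\}^2$.

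The refinement step uses Ramsey arguments on the order. Given such a pattern, we apply Bipartite Ramsey (\Cref{gridramsey}) with the coloring given by the joint order type of the root tuples $(a_i, a_{i'}, b_j, b_{j'})$, obtaining sub-sequences on which the order type depends only on $\otp(i, i')$ and $\otp(j, j')$. After reindexing and possibly reversing, we may assume both root sequences are monotone increasing $a_1 < \cdots < a_n$ and $b_1 < \cdots < b_n$. Additional Ramsey applications on the internal mesh vertices then reduce the bipartite adjacency between the $a_i$'s and $b_j$'s to a structurally regular form, and the converter kind determines which symbol $\alpha \in \{=, \neq, \leq, \geq\}$ governs this reduced pattern: $\alpha \in \{=, \neq\}$ arises from the $\cl$ and $\st$ cases (where connections are effectively ``point-to-point''), while $\alpha \in \{\leq, \geq\}$ arises from the $\hg$ case.

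To extract the specific permutation $\sigma$, we follow the Marcus-Tardos strategy used in \cite{twwIV}. After the reductions above, the adjacency between the $a_i$'s and $b_j$'s takes the form $\{a_i, b_j\} \in E \iff j \,\alpha\, \tau(i)$ for some function $\tau \colon [n] \to [n]$ induced by the internal mesh structure. Taking $n$ sufficiently large (as permitted by Theorem~\ref{thm:flip-breakability-binary}), the Marcus-Tardos theorem applied to the associated 0-1 matrix guarantees that every fixed permutation pattern, in particular $\sigma$, appears as an induced sub-pattern; restricting to the corresponding subsequences of $a_i$'s and $b_j$'s yields the desired pattern. The main technical obstacle is carrying out the full pattern-extraction pipeline of Parts~\ref{part:structure} and \ref{part:nonstructure} in the binary-structure setting while preserving the order structure throughout; the key insight is that all Ramsey-based arguments go through with the natural adaptation of treating the order $<$ uniformly with the edge relation $E$ in the atomic-type colorings.
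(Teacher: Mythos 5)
Your proposal correctly starts from Theorem~\ref{thm:flip-breakability-binary} to obtain transformers, but the two main steps after that diverge from what actually works and one of them is wrong.

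First, the Marcus--Tardos step does not do what you want. You write that after regularization the adjacency has the form $\{a_i,b_j\}\in E\iff j\,\alpha\,\tau(i)$ for some function $\tau$, and that Marcus--Tardos then yields $\sigma$ as a sub-pattern. But Marcus--Tardos is a \emph{density} theorem (a $0$-$1$ matrix avoiding a fixed permutation pattern has $O(n)$ ones); it does not assert that a dense or structured matrix contains \emph{every} permutation. Concretely, if $\alpha$ is $=$ and $\tau$ is a bijection, the matrix $j=\tau(i)$ is itself a permutation matrix and contains only sub-permutations of $\tau$, certainly not an arbitrary $\sigma$. So either you would get the wrong $\sigma$, or you would have to argue $\tau$ can be made arbitrary --- and the latter is exactly where the real argument lies, which your sketch does not supply. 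The paper does not extract $\sigma$ a posteriori by any pigeonholing; instead, $\sigma$ is \emph{chosen up front} as a diagonal in the mesh. A mesh $\mesh\colon I\times J\to V$ provides an $n\times n$ grid of vertices, and one defines $a(i):=\mesh(i,\sigma(i))$ and $b(j):=\mesh'(\sigma^{-1}(j),j)$ (in the $h=2$ case) for the desired $\sigma$; the matched/co-matched pairing between $\mesh$ and $\mesh'$ then gives $\{a(i),b(j)\}\in E\iff j=\sigma(i)$ (or $j\neq\sigma(i)$) directly, with no density argument.

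Second, you propose redoing the entire converter pipeline of Section~\ref{sec:conv-and-cross} in the binary-structure setting. The paper shortcuts this dramatically via Lemma~\ref{lem:meshes-in-orders}: a regular mesh in a linearly ordered set $(V,<)$ is necessarily either vertical and not horizontal, or horizontal and not vertical. Feeding this into the minimality conditions of a minimal transformer (the interior meshes must be neither vertical nor horizontal) forces $h\le 2$, so the full crossing/converter classification is never needed. The order relation also does the work of guaranteeing that $a_1<\cdots<a_n$ and $b_1<\cdots<b_n$ after passing to the right lexicographic order (again via Lemma~\ref{lem:meshes-in-orders}), which your Ramsey-on-order-types step accomplishes less efficiently but is in spirit similar. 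The concrete gap you should fix: drop Marcus--Tardos entirely, observe that transformers in ordered graphs collapse to length at most $2$, and realize $\sigma$ by selecting the $\sigma$-diagonal of the mesh rather than trying to locate $\sigma$ inside a fixed pattern.
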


The following two subsections are dedicated to deriving these two theorems as consequences of \Cref{thm:flip-breakability-binary}.

\subsection{Proof of Theorem~\ref{thm:grid-rank}}
We use a simple lemma from \cite[Lem. G.2]{flipwidth},
which allows us to analyze flips of a totally ordered set $(V,<)$, viewed as a binary structure.

\begin{lemma}[\cite{flipwidth}]\label{lem:order-flips}
Let $L=(V,<)$ be a totally ordered set (viewed as binary structure) and $L'$ be a $k$-flip of $L$. Then there is a set $S$ with $|S|\le k$ such that any two vertices of $V-S$ with no vertex of $S$ between them in the order are at distance at most $2$ in the Gaifman graph of $L'$.
\end{lemma}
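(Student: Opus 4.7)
My plan is to directly analyze what a $k$-flip does to a totally ordered set, and then choose $S$ to consist of the minimum element of each part.

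\medskip
\textbf{Step 1: Unpack adjacency in the Gaifman graph of $L'$.} Let $\mathcal{P}$ be the partition of $V$ into at most $k$ parts, and $F_<\subseteq \mathcal{P}^2$ the relation defining the flip. Fix elements $u\in P$, $v\in Q$ with $u<v$. Unwinding the definition of a $\mathcal{P}$-flip for binary structures: $(u,v)\in <_{L'}$ iff $(P,Q)\notin F_<$, and $(v,u)\in <_{L'}$ iff $(Q,P)\in F_<$. Hence a short case analysis shows:
\begin{itemize}
    \item if $P=Q$, then $u$ and $v$ are always adjacent in $L'$;
    \item if $P\neq Q$, then $u$ and $v$ are non-adjacent in $L'$ iff $(P,Q)\in F_<$ and $(Q,P)\notin F_<$.
\end{itemize}
Call the second condition ``$P\to Q$''. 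Crucially, $\to$ is asymmetric: $P\to Q$ forces $(P,Q)\in F_<$, which forbids $Q\to P$.

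\medskip
\textbf{Step 2: Choose the separating set $S$.} Let $S:=\{\min(P)\colon P\in\mathcal{P}\}$, so $|S|\le k$. Consider arbitrary $u,v\in V\setminus S$ with $u<v$ and no element of $S$ strictly between them. Let $P,Q$ be the parts containing $u,v$.

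\medskip
\textbf{Step 3: Verify distance at most $2$.} If $P=Q$ or if it is not the case that $P\to Q$, then by Step~1, $u$ and $v$ are already adjacent in $L'$. The only nontrivial case is $P\to Q$. Let $v':=\min(Q)$. Since $v\notin S$ but $v'\in S$, we have $v'\neq v$, hence $v'<v$. The assumption that no element of $S$ lies strictly between $u$ and $v$ forces either $v'\le u$ or $v'\ge v$; combined with $v'<v$ we get $v'\le u$, and since $v'\in Q$ and $u\in P\neq Q$, in fact $v'<u$. Now check that $v'$ is a common neighbor:
\begin{itemize}
    \item $v'$ and $v$ are in the same part $Q$, so they are adjacent by Step~1;
    \item $v'\in Q$, $u\in P$, $v'<u$, so the relevant ``bad'' relation is $Q\to P$, which is false by asymmetry of $\to$; thus $v'$ and $u$ are adjacent.
\end{itemize}
Hence $\dist_{L'}(u,v)\le 2$, completing the proof.

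\medskip
There is no real obstacle; the whole argument is an elementary case analysis, once one observes the asymmetry of the ``bad pair'' relation $\to$, which makes the minimum of the later part $Q$ automatically usable as a distance-$2$ witness from the earlier side.
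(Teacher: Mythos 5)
Your argument is correct, and the case analysis is carried out carefully: you correctly unwind the symmetric-difference definition of a flip for the binary relation $<$, derive that for $u<v$ in parts $P,Q$ non-adjacency in the Gaifman graph of $L'$ is equivalent to ``$(P,Q)\in F_<$ and $(Q,P)\notin F_<$'', observe this relation is asymmetric, and then verify that $v'=\min(Q)$ serves as a common neighbor of $u$ and $v$ whenever the bad case $P\to Q$ arises. Note, however, that the present paper does not prove this lemma; it is imported verbatim from the flip-width paper (cited as Lemma G.2 there), so there is no in-paper proof to compare against, and your elementary derivation stands on its own.
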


\begin{proof}[Proof of Theorem~\ref{thm:grid-rank}]

    Let \(\mathcal{C}\) be a monadically dependent class of ordered graphs.
    Let $G=(V,E,<)\in \CC$, and let $\cal L,\cal R$ be two convex partitions of $G$ with $|\cal L|=|\cal R|=\ell$ for some constant $\ell=\const(\CC)$ that will be specified later.
    Let $L=\setof{\min(A)}{A\in \cal L}$ and $R=\setof{\min(B)}{B\in \cal R}$ denote the sets of minimal elements of the intervals in $\cal L$ and in $\cal R$, respectively. Then $|L|=|R|=\ell$.

    Apply Theorem~\ref{thm:flip-breakable2-binary}
    with radius $r=5$,
    yields sets $L'\subset L$ and $R'\subset R$ 
    of size $U_{\CC}(\ell)$,
    and a $k$-flip $G'$ of $G$, for some $k\le \const(\CC)$,
    such that 
    \begin{align}\label{eq:dist-5}
        \dist_{G'}(L',R')>5.    
    \end{align}

    Denote $G'=(V,E',<')$.
    Note that $(V,<')$ is a $k$-flip of $(V,<)$.
    Let $S$ be the set given by Lemma~\ref{lem:order-flips}.
    Then $|S|\le k \le \const(\CC)$.
    There is a fixed constant $\ell$ depending only on $\CC$ such that 
    $|\cal L|=|\cal R|= \ell$ implies
    $|L'|=|R'|>|S|$.
    This is because 
    $$|L'|=|R'|\ge U_{\CC}(\ell)> |S|$$
    for sufficiently large $\ell$, as $|S|\le \const(\CC)$.
    Fixing this sufficiently large constant $\ell=\const(\CC)$, we henceforth assume that $|L'|=|R'|>|S|$.
    
    It follows that there is some interval $A \in \cal L$ with \(\min(A) \in L'\) and \(A \cap S = \emptyset\).
    By Lemma~\ref{lem:order-flips},
    the elements of $L$ are pairwise at distance at most $2$ in the Gaifman graph of $G'$.
    Similarly, there is an interval $B \in \cal R$ 
    with \(\min(B) \in R'\) so that the elements of $B$ are pairwise at distance at most $2$ in the Gaifman graph of $G'$.
    Suppose some $u\in A$ and $v\in B$  are adjacent in the Gaifman graph of $G'$. 
    It follows that $\min(A)$ and $\min(B)$ are at distance at most $5$ in the Gaifman graph of $G'$.
    As $\min(A)\in L'$ and $\min(B)\in R'$, this contradicts \eqref{eq:dist-5}.

    Therefore, the sets $A \in \cal L$ and $B \in \cal R$
    span no edges in $G'$.
    As $(V,E')$ is a $k$-flip of $(V,E)$,
    it follows that $r_{G}(A,B)\le \const(k)\le \const(\CC)$.
\end{proof}

\subsection{Proof of Theorem~\ref{thm:tww-patterns}}
We now prove Theorem~\ref{thm:tww-patterns}, by analyzing the minimal transformers in ordered graphs.
We use the following lemma.
If $I,J$ are two indexing sequences
then the lexicographic order $<_{IJ}$ on $I\times J$ is defined as usual.
Denoting by $I^R$ and $J^R$ the reverse sequences of $I$ and $J$, then we have in total 
$8$ lexicographic orders on $I\times J$, 
namely $<_{KL}$ where $K\in \set{I,I^R}$ and $L\in\set{J,J^R}$, or 
$K\in \set{J,J^R}$ and $L\in\set{I,I^R}$.

\begin{lemma}\label{lem:meshes-in-orders}
    Let $I,J$ be two indexing sequences of length $|I|=|J|\ge 3$,
    and let $(V,<)$ be a totally ordered set.
    Let $\mesh\from I\times J\to V$ be a regular mesh in $(V,<)$.
    Then for one of the eight lexicographic orders $<_{\text{lex}}$ described above, 
    we have that $$\mesh(i,j)<\mesh(i',j')\quad\iff \quad (i,j)<_{\text{lex}} (i',j')\qquad\text{for all $(i,j),(i',j')\in I\times J$}.$$
    Furthermore, 
    for $I'= I-\set{\min(I),\max(I)}$, $J'= J-\set{\min(J),\max(J)}$ with $|I'|=|J'| > 3$,
    we have that $\mesh_{I'\times J'}$
     is either horizontal and not vertical,
    or is vertical and not horizontal in $(V,<)$.

\end{lemma}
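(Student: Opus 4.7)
The plan is as follows. In the binary structure $(V,<)$, the atomic type of a pair $(u,u') \in V^2$ is fully determined by whether $u<u'$, $u=u'$, or $u>u'$. Regularity of $\mesh$ therefore supplies a function $c\from\set{<,=,>}^2 \to \set{<,=,>}$ with $\otp(\mesh(i,j),\mesh(i',j')) = c(\otp(i,i'),\otp(j,j'))$ for all $i,i'\in I$, $j,j'\in J$. Injectivity of $\mesh$ forces $c(=,=) = {=}$ and $c(s,t) \in \set{<,>}$ for $(s,t)\neq(=,=)$, while symmetry of $<$ means $c(\bar s, \bar t)$ is the dual of $c(s,t)$, where $\bar{<} = {>}$.

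I would classify the admissible $c$ by introducing $\alpha := c(=,<)$, $\beta := c(<,=)$, $\gamma := c(<,<)$, $\delta := c(<,>)$ in $\set{<,>}$ and using antisymmetry for the remaining values. Transitivity applied to the chain $\mesh(i,j),\mesh(i,j'),\mesh(i',j')$ forces $\gamma=\alpha$ whenever $\alpha=\beta$, and analogously constrains $\delta$ by $(\alpha,\beta)$ when $\alpha\ne\beta$. Enumerating the eight resulting consistent tuples and matching each against one of the eight lex orders $<_{KL}$ yields the first-part conclusion: the primary axis is $I$ precisely when $\gamma$ agrees with $\beta$ and $J$ when $\gamma$ agrees with $\alpha$, with the directions of the two axes read off from $\beta$ and $\alpha$. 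Since $|I|=|J|\ge 3$, all relevant index order types actually occur, so $c$ is fully determined by $\mesh$.

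For the second part, by the symmetries of swapping $I$ with $J$ and/or reversing their orders, it suffices to treat the lex order $<_{IJ}$. To witness verticality of $\mesh|_{I'\times J'}$, I would define $a\from I' \to V$ by $a(i') := \mesh(s(i'), \min(J))$, where $s(i')$ is the successor of $i'$ in $I$; this exists because $i' < \max(I)$. For $i,i' \in I'$ and $j \in J'$, we have $j > \min(J)$, so the comparison $(i,j) <_{IJ} (s(i'), \min(J))$ reduces to $i \le i'$. Hence $\otp(\mesh(i,j), a(i'))$ depends only on $\otp(i,i')$, and both cases $i \le i'$ and $i > i'$ are realized since $|I'|\ge 2$.

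The main obstacle is proving $\mesh|_{I'\times J'}$ is \emph{not} horizontal in the $<_{IJ}$ order. Suppose for contradiction $b\from J' \to V$ witnesses horizontality, giving a function $\phi\from \set{<,=,>} \to \set{<,>}$ with $\otp(\mesh(i,j), b(j')) = \phi(\otp(j,j'))$. Taking $j=j'$, the type $\phi(=)$ is constant across $i \in I'$, so $b(j)$ sits strictly on a fixed side of the $|I'|>3$ distinct column values $\set{\mesh(i,j):i\in I'}$. If, say, $\phi(=) = {>}$ and $\phi(<) = {<}$, then for $j<j'$ we get $b(j') < \mesh(\min(I'), j')$ from $\phi(=)$ but also $b(j') > \mesh(\max(I'), j)$ from $\phi(<)$; combined with $\mesh(\max(I'), j) > \mesh(\min(I'), j')$, which holds in $<_{IJ}$ since $\max(I') > \min(I')$, this forces $b(j') > \mesh(\min(I'), j')$, contradicting the previous inequality. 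Analogous case analysis rules out $\phi(=)\ne\phi(>)$, forcing $\phi$ to be constant and so violating the non-triviality clause of horizontality. This yields the exclusive-or between vertical and horizontal for $<_{IJ}$; the remaining seven cases follow by symmetry.
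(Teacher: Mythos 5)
Your proof is correct and reaches the same conclusion, but by a somewhat different route in the first part. Where the paper normalizes by reversing/swapping the indexing sequences and then directly verifies the $<_{IJ}$ inequalities on a $3\times3$ grid of indices (deriving the chain $\mesh(i_0,j_0)<\mesh(i_0,j_1)<\mesh(i_1,j_0)<\mesh(i_1,j_1)$), you classify all order-type functions $c\from\set{<,=,>}^2\to\set{<,=,>}$ compatible with injectivity, antisymmetry, and transitivity, and match the eight survivors to the eight lex orders. This is a clean abstraction; the transitivity constraints $\alpha=\beta\Rightarrow\gamma=\alpha$ and $\alpha\ne\beta\Rightarrow\delta=\bar\alpha$ are exactly right, and the remaining free parameter ($\delta$ in the first case, $\gamma$ in the second) picks out which axis is primary. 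The one step you leave implicit is verifying that all eight surviving tuples really are realized by lex orders (so that none need be excluded by a longer transitivity chain); this is true, but should be said. For the second part, both proofs reduce to $<_{IJ}$ by symmetry, give a verticality witness built from the column $\min(J)$ (yours uses $\mesh(\operatorname{succ}_I(i'),\min(J))$, the paper's uses $\mesh(i,\min(J))$; both work), and refute horizontality by contradiction. Your argument that the order-type function $\phi$ must be constant is essentially the same contradiction as the paper's argument that $\atp(\mesh(i,j),b(j'))$ is uniform, just organized differently. Two small things to flesh out in a full writeup: (a) $\phi$ never takes the value $=$, because $\mesh(i,j)=b(j')$ for one $i$ would, by regularity and $|I'|>1$, force it for all $i$, contradicting injectivity; and (b) the remaining subcases of $\phi(=)\ne\phi(<)$ and $\phi(=)\ne\phi(>)$, which you correctly assert are analogous — all four derive the same incompatibility with the lex order.
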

\begin{proof}
    Let $i_{-1}<i_0<i_1\in I$ and $j_{-1}<j_0<j_1\in J$.
    As $<$ is a total order,
    we have that $\mesh(i_0,j_0)<\mesh(i_1,j_0)$
    or  $\mesh(i_0,j_0)>\mesh(i_1,j_0)$.
    Replacing $I$ with $I^R$ if necessary, we may assume that 
    \begin{align}\label{eq:lex1}
        \mesh(i_0,j_0)<\mesh(i_1,j_0).
    \end{align}
    Similarly, replacing $J$ with $J^R$ if necessary, we may assume
    \begin{align}\label{eq:lex2}
        \mesh(i_0,j_0)<\mesh(i_0,j_1).
    \end{align}
    By regularity of $\mesh$ we have that 
     $$\mesh(i_{0},j_{0})<\mesh(i_1,j_{-1})\quad\iff \quad  \mesh(i_{-1},j_{1})<\mesh(i_0,j_0).$$ 
     Thus, exactly one of the following holds:
     $$\mesh(i_{0},j_{0})<\mesh(i_1,j_{-1})\quad\text{or} \quad  \mesh(i_0,j_0)<\mesh(i_{-1},j_{1}).$$ 
    Exchanging the roles of $I$ and $J$ if necessary, we may assume
    \begin{align}\label{eq:lex3}
        \mesh(i_{0},j_{0})<\mesh(i_1,j_{-1}).
    \end{align}
    Now, by regularity of $\mesh$, we derive that
    \begin{align*}
        \mesh(i_0,j_0)\stackrel{\eqref{eq:lex2}}<
        \mesh(i_0,j_1)\stackrel{\eqref{eq:lex3}}<
        \mesh(i_1,j_0)\stackrel{\eqref{eq:lex2}}<    
        \mesh(i_1,j_1).
    \end{align*}
    Therefore, we have
    \begin{align}
        \label{eq:lex4}
        \mesh(i_0,j_0)<\mesh(i_1,j_1).
    \end{align}

    In the following, we use \eqref{eq:lex1},\eqref{eq:lex2},\eqref{eq:lex3},\eqref{eq:lex4}
    to verify that for all $i,i'\in I$ and $j,j'\in J$
$$\mesh(i,j)<\mesh(i',j')\qquad\iff\qquad\text{$i=i'$ and $j<j'$, or $i<i'$}.$$
For the right-to-left implication, assume first that  $i=i'$ and $j<j'$.
Then $\mesh(i,j)<\mesh(i',j')$
follows by regularity and \eqref{eq:lex2}.
If $i<i'$ then $\mesh(i,j)<\mesh(i',j')$ follows from 
\eqref{eq:lex1},\eqref{eq:lex3},\eqref{eq:lex4}.
We prove the left-to-right implication by contrapositive.
Observe that the negation of the condition in the right-hand-side is equivalent to either $i> i'$, or $i=i'$ and $j\ge j'$.
Assuming additionally that $(i,j)\neq(i',j')$, this is equivalent 
to the right-hand side condition above, with $(i,j)$ swapped with $(i',j')$.
Thus, by the implication proved already, we conclude that 
$\mesh(i,j)>\mesh(i',j')$, obtaining the negation of the left-hand side condition, and proving the contrapositive.
Therefore,  $<$ agrees with the lexicographic order $<_{IJ}$ on $V(\mesh)$, proving the first part of the statement.

We now verify the second part of the statement. By symmetry, we may assume that $<$ agrees with $<_{IJ}$ on $V(\mesh)$.
Let $I'= I-\set{\min(I),\max(I)}$ and $J'= J-\set{\min(J),\max(J)}$ and assume $|I'|=|J'|> 3$.
We observe that $\mesh|_{I'\times J'}$ is vertical and not horizontal in $(V,<)$.
Verticality is witnessed by the function $a\from I'\to V$ defined by $a(i):=\mesh(i,\min(J))$.

We show that $\mesh_{I'\times J'}$ is not horizontal.
Assume $b\from J'\to V$
is such that  
\begin{align}\label{eq:depon}
    \atp(\mesh(i,j),b(j'))\qquad\text {depends only on 
    $\otp(j,j')$ for all $i\in I',j,j'\in J'$}
\end{align}
We show that $\atp(\mesh(i,j),b(j'))$
is the same for all $i\in I'$ and $j,j'\in J$.
This will prove that $\mesh_{I'\times J'}$ is not horizontal.

Fix any $j,j'\in J'$ with $\min(J')<j'<\max(J')$.
As $\mesh_{I'\times J'}$ is injective
and $|I'|>1$,
we have that $\mesh(i,j)\neq b(j')$ for some $i\in I'$.
Suppose that $$\mesh(i,j)<b(j')\text{\quad for some $i\in I'$},$$
the other case being symmetric.
By \eqref{eq:depon}, we have that 
$$\mesh(\max(I'),j)<b(j').$$
Pick any $i'\in I'$ with $i'<\max(I')$.
As $<$ agrees with $<_{IJ}$ on $V(\mesh)$, for 
 all $j''\in J'$ we have:
$$\mesh(i',j'')<\mesh(\max(I'),j)<b(j').$$
In particular, $\mesh(i',j'')<b(j')$ for all $j''\in J'$.
As $\min(J')<j'<\max(J')$,
combined with \eqref{eq:depon}, this implies that 
 $\atp(\mesh(i,j''),b(j'))$ is the same for all $i\in I',j',j''\in J'$.
\end{proof}

\begin{proof}[Proof sketch for Theorem~\ref{thm:tww-patterns}]
    Suppose $\CC$ is a class of ordered graphs which is monadically independent.
    By Theorem~\ref{thm:flip-breakability-binary},
    there is some $h\in\N$ such that $\CC$ contains minimal transformers of 
    length $h$ and arbitrarily large order.
Let $G=(V,E,<)$ be an ordered graph
and let  $T=(\mesh_1,\ldots,\mesh_h)$ be a minimal transformer in $G$ of order at least $6$ and length $h$.
By Lemma~\ref{lem:meshes-in-orders} and reducing the order of $T$ by at most $2$,
we may assume that every mesh in $T$ 
is either horizontal and not vertical, or vertical and not horizontal in $(V,<)$.
In particular, every mesh in $T$ is either horizontal or vertical in $G$ (possibly both).
By minimality, the meshes $\mesh_2,\ldots,\mesh_{h-1}$ 
are neither horizontal nor vertical, so it follows that $h\le 2$. 

Let \(n \ge 6\).
As argued above, we can find a transformer \(T\) in $\CC$ of length $h \in \{1,2\}$ and order \(n\),
where each mesh in $T$ is either horizontal and not vertical, or vertical and not horizontal in $(V,<)$.
We show that the following holds for some $m\ge U(n)$.
For every permutation 
$\sigma\from[m]\to[m]$ 
there are strictly monotone functions
$a\from [m]\to V$ and 
$b\from [m]\to V$ 
such that  one of the following cases holds
(uniformly) for all $i,j\in [m]$:
    \begin{description}
        \item[$(=)$]    $\{a(i),b(j)\}\in E(G)\iff j=\sigma(i)$,
        \item[$(\neq)$] $\{a(i),b(j)\}\in E(G)\iff j\neq \sigma(i)$,        
        \item[$(\le)$]  $\{a(i),b(j)\}\in E(G)\iff j\le  \sigma(i)$,
         \item[$(>)$]  $\{a(i),b(j)\}\in E(G)\iff j>  \sigma(i)$.
    \end{description}
    As $n$ is arbitrarily large, this will prove the theorem (observe that the case $(>)$ can be reduced to the analogous case $(\ge)$ by shifting $b$).

Suppose first that $h=2$, that is, $T=(\mesh,\mesh')$
for some conducting pair of meshes 
 $\mesh,\mesh'\from I\times J\to V(G)$,
 where  
$\mesh$ is vertical and not horizontal, and $\mesh'$ is horizontal and not vertical.
By the same argument as in Lemma~\ref{lem:regular-pairs},
we conclude that 
the atomic type $\atp_G(\mesh(i,j),\mesh'(i',j'))$ is 
not the same for all $i,i'\in I$ and $j,j'\in J$, but is the 
same for all $i,i'\in I$ and $j,j'\in J$ with $(i,j)\neq (i',j')$.
It is not difficult to see that in the setting of ordered graphs,
this implies $\mesh$ and $\mesh'$ to be either matched or co-matched in the (unordered) graph $(V,E)$.

Up to reindexing, assume that the indexing sequences of $\mesh$ and \(\mesh'\) are $I=[n]$ and $J=[n]$.
Let $\sigma\from [n]\to [n]$ be any permutation; we view $\sigma$ as a bijection $\sigma\from I\to J$.

As $\mesh$ is not horizontal in \((V,<)\), the linear order $<$ agrees on $V(\mesh)$ 
with one of the four lexicographic orders $<_{KL}$ on $I\times J$ with $K\in \set{I,I^R}$ and $L\in\set{J,J^R}$.
Reversing the order of $I$ if necessary,
we may assume that the sequence $$(\mesh(i,\sigma(i)):i\in I)\subset V(\mesh)$$ 
is strictly increasing.
Similarly, as \(M'\) is not vertical, by reversing the order of $J$ if necessary, we may assume that the sequence
$$(\mesh'(\sigma^{-1}(j),j):j\in J)\subset V(\mesh')$$ 
is strictly increasing. 
Moreover, if $\mesh$ and $\mesh'$ are matched,
for all $i\in I, j\in J$ we have that
$$(\mesh(i,\sigma(i)),\ \mesh'(\sigma^{-1}(j),j))\in E(G)\quad\iff\quad j=\sigma(i).$$
If $\mesh$ and $\mesh'$ are co-matched,
the condition becomes $j\neq\sigma(i)$.

Set $a(i):=\mesh(i,\sigma(i))$ and 
$b(j):=\mesh'(\sigma^{-1}(j)j)$ for $i\in I$ and $J\in J$.
Then $$\{a(i),b(j)\}\in E(G)\iff j=\sigma(i)\qquad\text{for all $i\in I,j\in J$}.$$
Moreover, $(a(i):i\in I)$ and $(b(j):j\in J)$ 
are strictly increasing. 
Therefore, the case $(=)$ or $(\neq)$ described above occurs.
This completes the analysis in the case when $T$ is a transformer of length~$2$.

\medskip
Suppose now that $T$ is a transformer of length $1$ and order $n$,
that is, $T$ consists of a single mesh $\mesh$ which is regular, and is both vertical and horizontal in $G=(V,E,<)$.
We argued already that \(\mesh\) is either vertical or horizontal in the totally ordered set \((V,<)\), but not both.
Suppose (by symmetry) that $\mesh$ is vertical and not horizontal in $(V,<)$.
As $\mesh$ is horizontal in $(V,E,<)$ but not in \((V,<)\),
it needs to be horizontal in \((V,E)\).
By \Cref{lem:vertical2}, 
$\mesh^\trans$ is capped in the graph $(V,E)$.
This means
that  for some $\alpha\in\set{=,\le}$, the mesh
$\mesh^\trans$ is $\alpha$-capped in $H$, where $H$ is either the graph $(V,E)$,
or its edge-complement: 
 There is some function $b\from J'\to V$, where $J'=J-\set{\min(J),\max(J)}$,
so that
\begin{align}\label{eq:mesh-perm}
    \set{\mesh(i,j),b(j')}\in E(H)\quad\iff\quad j\,\alpha\, j'
    \qquad \text{for all $i\in I$ and $j,j'\in J$}.        
\end{align}

As \(M\) is not horizontal in \((V,<)\),
the order \(<\) agrees on $V(\mesh)$ with $<_{KL}$ for $K\in\set{I,I^R}$ and $L\in\set{J,J^R}$.
Reversing the order of $I$ if necessary,
we have that $<$ agrees with $<_{IJ}$ or with $<_{IJ^R}$.

By the Erd\H os-Szekeres theorem, replacing $J'$ by a subsequence of length $m\ge U(n)$, we may assume that the function $b\from J'\to V$ 
is strictly monotone with respect to $<$.
Reversing the order of $J$ if necessary,
we may assume that $b\from J'\to V$ is strictly increasing.

Let $I'\subset I$ be any subsequence such that $|I'|=|J'|=m\ge U(n)$.
Up to reindexing, assume that $I'=[m]$ and $J'=[m]$.
Let $\sigma\from [m]\to[m]$ be an arbitrary permutation; we view $\sigma$ as a bijection $\sigma\from I'\to J'$.

Set $a(i):=\mesh(i,\sigma(i))$ for $i\in I'$.
Then $a\from I'\to V$ is strictly increasing,
as $<$ agrees on $V(M)$ with $<_{IJ}$ or with $<_{IJ^R}$.
By \eqref{eq:mesh-perm} we have $$\set{a(i),b(j)}\in E(H)\quad\iff \quad \sigma(i)\,\alpha\,j\qquad\text{ for }i\in I',j\in J'.$$
We conclude that one of the two cases $(=)$ or $(\neq)$
holds if $\alpha$ is $=$
(depending on whether $H$ is $(V,E)$ or its edge-complement)
and one of the two cases $(\le)$ or $(>)$ holds if $\alpha$ is $\le$.
This completes the sketch of the proof.
\end{proof}

\newpage
\bibliographystyle{plain}
\bibliography{ref}

\begin{thebibliography}{10}

\bibitem{warwick-problems}
{Algorithms, Logic and Structure Workshop in Warwick -- Open Problem Session}.
\newblock
  \url{https://warwick.ac.uk/fac/sci/maths/people/staff/daniel_kral/alglogstr/openproblems.pdf},
  2016.
\newblock [Online; accessed 23-Jan-2023].

\bibitem{adler2014interpreting}
Hans Adler and Isolde Adler.
\newblock Interpreting nowhere dense graph classes as a classical notion of
  model theory.
\newblock {\em European Journal of Combinatorics}, 36:322--330, 2014.

\bibitem{ArnborgLS91}
Stefan Arnborg, Jens Lagergren, and Detlef Seese.
\newblock Easy problems for tree-decomposable graphs.
\newblock {\em J. Algorithms}, 12(2):308--340, 1991.

\bibitem{baldwin1985second}
John~T Baldwin and Saharon Shelah.
\newblock Second-order quantifiers and the complexity of theories.
\newblock {\em Notre Dame Journal of Formal Logic}, 26(3):229--303, 1985.

\bibitem{bonnet2022model}
{\'{E}}douard Bonnet, Jan Dreier, Jakub Gajarsk{\'{y}}, Stephan Kreutzer,
  Nikolas M{\"{a}}hlmann, Pierre Simon, and Szymon Toru\'{n}czyk.
\newblock Model checking on interpretations of classes of bounded local
  cliquewidth.
\newblock In Christel Baier and Dana Fisman, editors, {\em {LICS} '22: 37th
  Annual {ACM/IEEE} Symposium on Logic in Computer Science, Haifa, Israel,
  August 2 - 5, 2022}, pages 54:1--54:13. {ACM}, 2022.

\bibitem{twwVII}
{\'E}douard Bonnet, Colin Geniet, Romain Tessera, and St{\'e}phan Thomass{\'e}.
\newblock Twin-width {VII}: groups.
\newblock {\em arXiv preprint arXiv:2204.12330}, 2022.

\bibitem{twwIV}
{\'E}douard Bonnet, Ugo Giocanti, Patrice Ossona~de Mendez, Pierre Simon,
  St{\'e}phan Thomass{\'e}, and Szymon Toru{\'n}czyk.
\newblock Twin-width {IV}: ordered graphs and matrices.
\newblock In {\em Proceedings of the 54th Annual ACM Symposium on Theory of
  Computing}, STOC 2022, pages 924--937, 2022.

\bibitem{DBLP:journals/algorithmica/BonnetKRTW22}
{\'{E}}douard Bonnet, Eun~Jung Kim, Amadeus Reinald, St{\'{e}}phan
  Thomass{\'{e}}, and R{\'{e}}mi Watrigant.
\newblock Twin-width and polynomial kernels.
\newblock {\em Algorithmica}, 84(11):3300--3337, 2022.

\bibitem{twwI}
{\'E}douard Bonnet, Eun~Jung Kim, St{\'e}phan Thomass{\'e}, and R{\'e}mi
  Watrigant.
\newblock Twin-width {I}: tractable {FO} model checking.
\newblock {\em ACM Journal of the ACM (JACM)}, 69(1):1--46, 2021.

\bibitem{twwII}
Édouard Bonnet, Colin Geniet, Eun~Jung Kim, Stéphan Thomassé, and Rémi
  Watrigant.
\newblock {Twin-width II: Small classes}.
\newblock {\em Combinatorial Theory}, 2(2), 2022.

\bibitem{braunfeld2021characterizations}
Samuel Braunfeld and Michael Laskowski.
\newblock Characterizations of monadic {NIP}.
\newblock {\em Transactions of the American Mathematical Society, Series B},
  8(30):948--970, 2021.

\bibitem{braunfeld2022existential}
Samuel Braunfeld and Michael~C. Laskowski.
\newblock Existential characterizations of monadic nip, 2022.

\bibitem{courcelle1990monadic}
Bruno Courcelle.
\newblock The monadic second-order logic of graphs. i. recognizable sets of
  finite graphs.
\newblock {\em Information and computation}, 85(1):12--75, 1990.

\bibitem{courcelle2000linear}
Bruno Courcelle, Johann~A Makowsky, and Udi Rotics.
\newblock Linear time solvable optimization problems on graphs of bounded
  clique-width.
\newblock {\em Theory of Computing Systems}, 33(2):125--150, 2000.

\bibitem{parameterized_algorithms}
Marek Cygan, Fedor~V. Fomin, Lukasz Kowalik, Daniel Lokshtanov, Daniel Marx,
  Marcin Pilipczuk, Michał Pilipczuk, and Saket Saurabh.
\newblock {\em Parameterized Algorithms}.
\newblock Springer Publishing Company, Incorporated, 1st edition, 2015.

\bibitem{dawar2010homomorphism}
Anuj Dawar.
\newblock Homomorphism preservation on quasi-wide classes.
\newblock {\em Journal of Computer and System Sciences}, 76(5):324--332, 2010.

\bibitem{dawar2007locally}
Anuj Dawar, Martin Grohe, and Stephan Kreutzer.
\newblock Locally excluding a minor.
\newblock In {\em 22nd Annual IEEE Symposium on Logic in Computer Science (LICS
  2007)}, pages 270--279. IEEE, 2007.

\bibitem{ding-unavoidable}
Guoli Ding, Bogdan Oporowski, James~G. Oxley, and Dirk Vertigan.
\newblock Unavoidable minors of large 3-connected binary matroids.
\newblock {\em J. Comb. Theory, Ser. {B}}, 66(2):334--360, 1996.

\bibitem{Downey1996ThePC}
Rodney~G. Downey, Michael~R. Fellows, and Udayan Taylor.
\newblock The parameterized complexity of relational database queries and an
  improved characterization of w[1].
\newblock In {\em International Conference on Discrete Mathematics and
  Theoretical Computer Science}, 1996.

\bibitem{wip}
Jan Dreier, Ioannis Eleftheriadis, Nikolas Mählmann, Rose~M. McCarty,
  Micha{\l} Pilipczuk, and Szymon Toru\'{n}czyk.
\newblock First-order model checking on monadically stable graph classes.
\newblock {\em arXiv preprint arXiv:2311.18740}, 2023.

\bibitem{ssmc}
Jan Dreier, Nikolas M\"{a}hlmann, and Sebastian Siebertz.
\newblock First-order model checking on structurally sparse graph classes.
\newblock In {\em Proceedings of the 55th Annual ACM Symposium on Theory of
  Computing}, STOC 2023, pages 567--580, New York, NY, USA, 2023. Association
  for Computing Machinery.

\bibitem{dreier2022indiscernibles}
Jan Dreier, Nikolas M{\"{a}}hlmann, Sebastian Siebertz, and Szymon
  Toru{\'n}czyk.
\newblock Indiscernibles and flatness in monadically stable and monadically
  {NIP} classes.
\newblock In Kousha Etessami, Uriel Feige, and Gabriele Puppis, editors, {\em
  50th International Colloquium on Automata, Languages, and Programming,
  {ICALP} 2023, July 10-14, 2023, Paderborn, Germany}, volume 261 of {\em
  LIPIcs}, pages 125:1--125:18. Schloss Dagstuhl - Leibniz-Zentrum f{\"{u}}r
  Informatik, 2023.

\bibitem{dvorak-thesis}
Zdenek Dvo{\v{r}}{\'{a}k}.
\newblock {\em Asymptotical structure of combinatorial objects}.
\newblock PhD thesis, Charles University, Faculty of Mathematics and Physics,
  2007.

\bibitem{dvovrak2018induced}
Zden{\v{e}}k Dvo{\v{r}}{\'a}k.
\newblock Induced subdivisions and bounded expansion.
\newblock {\em European Journal of Combinatorics}, 69:143--148, 2018.

\bibitem{DvorakKT13-journal}
Zden{\v{e}}k Dvo{\v{r}}{\'{a}}k, Daniel Kr{\'{a}}l, and Robin Thomas.
\newblock Testing first-order properties for subclasses of sparse graphs.
\newblock {\em J. {ACM}}, 60(5):36:1--36:24, 2013.

\bibitem{EickmeyerGKKPRS17}
Kord Eickmeyer, Archontia~C. Giannopoulou, Stephan Kreutzer, O{-}joung Kwon,
  Michał Pilipczuk, Roman Rabinovich, and Sebastian Siebertz.
\newblock Neighborhood complexity and kernelization for nowhere dense classes
  of graphs.
\newblock In {\em {ICALP}}, volume~80 of {\em LIPIcs}, pages 63:1--63:14.
  Schloss Dagstuhl - Leibniz-Zentrum f{\"{u}}r Informatik, 2017.

\bibitem{DBLP:journals/corr/abs-2306-12611}
David Eppstein and Rose McCarty.
\newblock Geometric graphs with unbounded flip-width.
\newblock In Denis Pankratov, editor, {\em Proceedings of the 35th Canadian
  Conference on Computational Geometry, {CCCG} 2023, Montreal, Canada, July
  31-August 4, 2023}, pages 197--206, 2023.
\newblock Available at \url{https://arxiv.org/abs/2306.12611}.

\bibitem{flum2001fixed}
J{\"o}rg Flum and Martin Grohe.
\newblock Fixed-parameter tractability, definability, and model-checking.
\newblock {\em SIAM Journal on Computing}, 31(1):113--145, 2001.

\bibitem{frick2001deciding}
Markus Frick and Martin Grohe.
\newblock Deciding first-order properties of locally tree-decomposable
  structures.
\newblock {\em Journal of the ACM (JACM)}, 48(6):1184--1206, 2001.

\bibitem{gaifman82}
Haim Gaifman.
\newblock On local and non-local properties.
\newblock In {\em Proceedings of the Herbrand Symposium}, volume 107 of {\em
  Stud. Logic Found. Math.}, pages 105 -- 135. Elsevier, 1982.

\bibitem{gajarsky2020first}
Jakub Gajarsk{\'y}, Stephan Kreutzer, Jaroslav Ne{\v{s}}et{\v{r}}il, Patrice
  Ossona~De Mendez, Micha{\l} Pilipczuk, Sebastian Siebertz, and Szymon
  Toru{\'n}czyk.
\newblock First-order interpretations of bounded expansion classes.
\newblock {\em ACM Transactions on Computational Logic (TOCL)}, 21(4):1--41,
  2020.

\bibitem{flipper-game}
Jakub Gajarsk\'{y}, Nikolas M\"{a}hlmann, Rose McCarty, Pierre Ohlmann,
  Micha{\l} Pilipczuk, Wojciech Przybyszewski, Sebastian Siebertz, Marek
  Soko{\l}owski, and Szymon Toru\'{n}czyk.
\newblock {Flipper Games for Monadically Stable Graph Classes}.
\newblock In Kousha Etessami, Uriel Feige, and Gabriele Puppis, editors, {\em
  50th International Colloquium on Automata, Languages, and Programming (ICALP
  2023)}, volume 261 of {\em Leibniz International Proceedings in Informatics
  (LIPIcs)}, pages 128:1--128:16, Dagstuhl, Germany, 2023. Schloss Dagstuhl --
  Leibniz-Zentrum f{\"u}r Informatik.

\bibitem{shrubdepth-journal}
Robert Ganian, Petr Hlinen{\'{y}}, Jaroslav Nesetril, Jan Obdrz{\'{a}}lek, and
  Patrice~Ossona de~Mendez.
\newblock Shrub-depth: Capturing height of dense graphs.
\newblock {\em Log. Methods Comput. Sci.}, 15(1), 2019.

\bibitem{shrubdepth}
Robert Ganian, Petr Hlin{\v{e}}n{\'y}, Jaroslav Ne{\v{s}}et{\v{r}}il, Jan
  Obdr{\v{z}}{\'a}lek, Patrice Ossona~de Mendez, and Reshma Ramadurai.
\newblock When trees grow low: Shrubs and fast {MSO1}.
\newblock In Branislav Rovan, Vladimiro Sassone, and Peter Widmayer, editors,
  {\em Mathematical Foundations of Computer Science 2012}, pages 419--430,
  Berlin, Heidelberg, 2012. Springer Berlin Heidelberg.

\bibitem{gasarch2012ramsey-proofs}
William Gasarch, Andy Parrish, and Sandow Sinai.
\newblock Three proofs of the hypergraph ramsey theorem (an exposition), 2012.

\bibitem{gravier2004}
Sylvain Gravier, Frédéric Maffray, Jérôme Renault, and Nicolas Trotignon.
\newblock Ramsey-type results on singletons, co-singletons and monotone
  sequences in large collections of sets.
\newblock {\em European Journal of Combinatorics}, 25(5):719--734, 2004.

\bibitem{grohe2008logic}
Martin Grohe.
\newblock Logic, graphs, and algorithms.
\newblock {\em Logic and Automata}, 2:357--422, 2008.

\bibitem{grohe2017deciding}
Martin Grohe, Stephan Kreutzer, and Sebastian Siebertz.
\newblock Deciding first-order properties of nowhere dense graphs.
\newblock {\em Journal of the ACM (JACM)}, 64(3):1--32, 2017.

\bibitem{hodges-shorter}
Wilfrid Hodges.
\newblock {\em A Shorter Model Theory}.
\newblock Cambridge University Press, USA, 1997.

\bibitem{oum2006}
Sang il~Oum and Paul Seymour.
\newblock Approximating clique-width and branch-width.
\newblock {\em Journal of Combinatorial Theory, Series B}, 96(4):514--528,
  2006.

\bibitem{Kreutzer_2011}
Stephan Kreutzer.
\newblock {\em Algorithmic meta-theorems}, page 177–270.
\newblock London Mathematical Society Lecture Note Series. Cambridge University
  Press, 2011.

\bibitem{MCDIARMID2005187}
Colin McDiarmid, Angelika Steger, and Dominic~J.A. Welsh.
\newblock Random planar graphs.
\newblock {\em Journal of Combinatorial Theory, Series B}, 93(2):187--205,
  2005.

\bibitem{nevsetvril2011nowhere}
Jaroslav Ne{\v{s}}et{\v{r}}il and Patrice~Ossona De~Mendez.
\newblock On nowhere dense graphs.
\newblock {\em European Journal of Combinatorics}, 32(4):600--617, 2011.

\bibitem{nevsetvril2012sparsity}
Jaroslav Ne{\v{s}}et{\v{r}}il and Patrice~Ossona De~Mendez.
\newblock {\em Sparsity: graphs, structures, and algorithms}, volume~28.
\newblock Springer Science \& Business Media, 2012.

\bibitem{nevsetvril2021rankwidth}
Jaroslav Ne{\v{s}}et{\v{r}}il, Patrice Ossona~de Mendez, Micha{\l} Pilipczuk,
  Roman Rabinovich, and Sebastian Siebertz.
\newblock Rankwidth meets stability.
\newblock In {\em Proceedings of the 2021 ACM-SIAM Symposium on Discrete
  Algorithms (SODA)}, pages 2014--2033. SIAM, 2021.

\bibitem{NORINE2006754}
Serguei Norine, Paul Seymour, Robin Thomas, and Paul Wollan.
\newblock Proper minor-closed families are small.
\newblock {\em Journal of Combinatorial Theory, Series B}, 96(5):754--757,
  2006.

\bibitem{shrubtrans}
Patrice {Ossona de Mendez}, Michał Pilipczuk, and Sebastian Siebertz.
\newblock Transducing paths in graph classes with unbounded shrubdepth.
\newblock {\em European Journal of Combinatorics}, page 103660, 2022.

\bibitem{PilipczukST18a}
Micha\l{} Pilipczuk, Sebastian Siebertz, and Szymon Toru\'nczyk.
\newblock On the number of types in sparse graphs.
\newblock In {\em 33rd Annual {ACM/IEEE} Symposium on Logic in Computer
  Science, {LICS} 2018}, pages 799--808. {ACM}, 2018.

\bibitem{stable_graphs}
Klaus-Peter Podewski and Martin Ziegler.
\newblock Stable graphs.
\newblock {\em Fundamenta Mathematicae}, 100(2):101--107, 1978.

\bibitem{DBLP:conf/lics/Przybyszewski23}
Wojciech Przybyszewski.
\newblock Distal combinatorial tools for graphs of bounded twin-width.
\newblock In {\em {LICS}}, pages 1--13, 2023.

\bibitem{ramsey}
F.~P. Ramsey.
\newblock {On a Problem of Formal Logic}.
\newblock {\em Proceedings of the London Mathematical Society},
  s2-30(1):264--286, 01 1930.

\bibitem{robertson1986graph}
Neil Robertson and Paul~D Seymour.
\newblock Graph minors. v. excluding a planar graph.
\newblock {\em Journal of Combinatorial Theory, Series B}, 41(1):92--114, 1986.

\bibitem{seese1996linear}
Detlef Seese.
\newblock Linear time computable problems and first-order descriptions.
\newblock {\em Mathematical Structures in Computer Science}, 6(6):505--526,
  1996.

\bibitem{shelah1986monadic}
Saharon Shelah.
\newblock Monadic logic: Hanf numbers.
\newblock In {\em Around classification theory of models}, pages 203--223.
  Springer, 1986.

\bibitem{flipwidth}
Szymon Toru\'{n}czyk.
\newblock Flip-width: Cops and robber on dense graphs.
\newblock In {\em 2023 IEEE 64th Annual Symposium on Foundations of Computer
  Science (FOCS)}, pages 663--700, Los Alamitos, CA, USA, nov 2023. IEEE
  Computer Society.
\newblock Full version available at \url{https://arxiv.org/abs/2302.00352}.

\bibitem{ZHU20095562}
Xuding Zhu.
\newblock Colouring graphs with bounded generalized colouring number.
\newblock {\em Discrete Mathematics}, 309(18):5562--5568, 2009.
\newblock Combinatorics 2006, A Meeting in Celebration of Pavol Hell’s 60th
  Birthday (May 1–5, 2006).

\end{thebibliography}

\end{document}